\documentclass{article}
\usepackage[utf8]{inputenc}
\usepackage[all,cmtip]{xy}
\usepackage{amssymb,amsmath,amsthm}
\usepackage{mathtools, mathdots}
\usepackage{fancyhdr} 
\usepackage{mathrsfs, calligra, bbm}
\usepackage[top=1.3in, bottom=1.3in, left=1in, right=1in]{geometry}
\usepackage{hyperref}
\usepackage{tikz-cd}
\usetikzlibrary{decorations.pathmorphing}
\usepackage{tikz}
\usepackage{enumitem}
\usepackage[T1]{fontenc}
\usepackage{graphicx}
\usepackage{subcaption}
\usepackage{array}
\usepackage[toc,page]{appendix}
\usepackage{stfloats}
\usepackage[strict=true, style=british]{csquotes}
\usepackage[british]{babel}
\usepackage[backend=biber,style=alphabetic,sorting=anyt, maxbibnames=99,maxcitenames=99,maxalphanames=99,
]{biblatex}
\let\emptyset\varnothing

\newtheorem{Definition}{Definition}[subsection]
\newtheorem{Theorem}[Definition]{Theorem}
\newtheorem{Lemma}[Definition]{Lemma}
\newtheorem{Proposition}[Definition]{Proposition}
\newtheorem{Corollary}[Definition]{Corollary}

\newtheorem{Hypothesis}[]{Hypothesis}

\newtheorem{Conjecture}[]{Conjecture}
\newtheorem{Assumption}[Definition]{Assumption}

\newtheorem*{Question*}{Question}
\newtheorem*{Theorem*}{Theorem}
\newtheorem*{Conjecture*}{Conjecture}

\theoremstyle{remark}
\newtheorem{Example}[Definition]{Example}
\newtheorem{Remark}[Definition]{Remark}

\newcommand\mapsfrom{\mathrel{\reflectbox{\ensuremath{\mapsto}}}}
\newcommand\scalemath[2]{\scalebox{#1}{\mbox{\ensuremath{\displaystyle #2}}}}

\DeclareMathOperator\A{\mathbf{A}}

\DeclareMathOperator\C{\mathbf{C}}
\DeclareMathOperator\D{\mathbf{D}}

\DeclareMathOperator\F{\mathbf{F}}

\DeclareMathOperator\Q{\mathbf{Q}}
\DeclareMathOperator\R{\mathbf{R}}

\DeclareMathOperator\Z{\mathbf{Z}}

\DeclareMathOperator\bft{\mathbf{t}}

\DeclareMathOperator\bfitu{\textbf{\textit{u}}}

\DeclareMathOperator\bbA{\mathbb{A}}

\DeclareMathOperator\bbG{\mathbb{G}}
\DeclareMathOperator\bbH{\mathbb{H}}

\DeclareMathOperator\bbT{\mathbb{T}}
\DeclareMathOperator\bbX{\mathbb{X}}

\DeclareMathOperator\calD{\mathcal{D}}
\DeclareMathOperator\calE{\mathcal{E}}

\DeclareMathOperator\calG{\mathcal{G}}

\DeclareMathOperator\calL{\mathcal{L}}

\DeclareMathOperator\calN{\mathcal{N}}
\DeclareMathOperator\calO{\mathcal{O}}
\DeclareMathOperator\calP{\mathcal{P}}

\DeclareMathOperator\calR{\mathcal{R}}

\DeclareMathOperator\calT{\mathcal{T}}
\DeclareMathOperator\calU{\mathcal{U}}
\DeclareMathOperator\calV{\mathcal{V}}
\DeclareMathOperator\calW{\mathcal{W}}
\DeclareMathOperator\calX{\mathcal{X}}
\DeclareMathOperator\calY{\mathcal{Y}}
\DeclareMathOperator\calZ{\mathcal{Z}}

\DeclareMathOperator\scrF{\mathscr{F}}
\DeclareMathOperator\scrG{\mathscr{G}}

\DeclareMathOperator\scrM{\mathscr{M}}

\DeclareMathOperator\scrO{\mathscr{O}}

\DeclareMathOperator\scrS{\mathscr{S}}

\DeclareMathOperator\frakG{\mathfrak{G}}

\DeclareMathOperator\frakI{\mathfrak{I}}
\DeclareMathOperator\frakJ{\mathfrak{J}}

\DeclareMathOperator\frakL{\mathfrak{L}}

\DeclareMathOperator\frakT{\mathfrak{T}}

\DeclareMathOperator\frakX{\mathfrak{X}}

\DeclareMathOperator\frakk{\mathfrak{k}}

\DeclareMathOperator\frakm{\mathfrak{m}}

\DeclareMathOperator\bfalpha{\boldsymbol{\alpha}}

\DeclareMathOperator\bfgamma{\boldsymbol{\gamma}}
\DeclareMathOperator\bfdelta{\boldsymbol{\delta}}
\DeclareMathOperator\bfepsilon{\boldsymbol{\varepsilon}}

\DeclareMathOperator\bfpi{\boldsymbol{\pi}}

\DeclareMathOperator\bftau{\boldsymbol{\tau}}

\DeclareMathOperator\ad{ad}
\DeclareMathOperator\an{an}

\DeclareMathOperator\Aut{Aut}
\DeclareMathOperator\CH{CH}
\DeclareMathOperator\charpoly{char}
\DeclareMathOperator\cl{cl}

\DeclareMathOperator\cris{cris}
\DeclareMathOperator\cusp{cusp}
\DeclareMathOperator\cyc{cyc}
\DeclareMathOperator\Det{Det}
\DeclareMathOperator\Def{\mathbf{Def}}
\DeclareMathOperator\diag{diag}

\DeclareMathOperator\dR{dR}
\DeclareMathOperator\End{End}
\DeclareMathOperator\et{\mathrm{\acute{e}t}}

\DeclareMathOperator\Ext{Ext}
\DeclareMathOperator\Fil{Fil}
\DeclareMathOperator\Frob{Frob}
\DeclareMathOperator\fs{fs}
\DeclareMathOperator\Gal{Gal}
\DeclareMathOperator\Gr{Gr}
\DeclareMathOperator\Hom{Hom}
\DeclareMathOperator\HT{HT}
\DeclareMathOperator\HTS{HTS}
\DeclareMathOperator\id{id}
\DeclareMathOperator\image{image}
\DeclareMathOperator\Iw{Iw}
\DeclareMathOperator\nc{nc}
\DeclareMathOperator\one{\mathbbm{1}}
\DeclareMathOperator\oneanti{\breve{\one}}
\DeclareMathOperator\opp{opp}
\DeclareMathOperator\Pf{Pf}
\DeclareMathOperator\pr{pr}
\DeclareMathOperator\pst{pst}
\DeclareMathOperator\rank{rank}
\DeclareMathOperator\red{red}

\DeclareMathOperator\rig{rig}
\DeclareMathOperator\Sen{Sen}

\DeclareMathOperator\SK{SK}

\DeclareMathOperator\Spa{Spa}
\DeclareMathOperator\Spec{Spec}

\DeclareMathOperator\st{st}
\DeclareMathOperator\tor{tor}
\DeclareMathOperator\trans{^{\texttt{t}}}

\DeclareMathOperator\univ{univ}
\DeclareMathOperator\unr{unr}
\DeclareMathOperator\wt{wt}
\DeclareMathOperator\Yos{Yos}

\DeclareMathOperator\GL{GL}
\DeclareMathOperator\SL{SL}
\DeclareMathOperator\GSp{GSp}
\DeclareMathOperator\Sp{Sp}

\DeclareMathOperator\Alg{{\mathsf{Alg}}}
\DeclareMathOperator\Ar{{\mathsf{Ar}}}

\DeclareMathOperator\CAlg{{\mathsf{CAlg}}}

\DeclareMathOperator\Group{{\mathsf{Group}}}

\DeclareMathOperator\Mod{{\mathsf{Mod}}}

\DeclareMathOperator\Rep{{\mathsf{Rep}}}
\DeclareMathOperator\Set{{\mathsf{Set}}}

\DeclareMathOperator\VB{{\mathsf{VB}}}

\makeatletter
\renewcommand{\maketitle}{\bgroup\setlength{\parindent}{0pt}
\begin{flushleft}
  \LARGE{\textbf{\@title}}
  
  \vspace{4mm}
  
  \large{\textsc{\@author}}
  
  \vspace{4mm}
\end{flushleft}\egroup
}
\makeatother

\title{Families of symplectic Galois representations over small parabolic eigenvarieties for Siegel cuspforms of genus $2$}
\author{Muhammad Manji, Frederick E. Thøgersen, and Ju-Feng Wu}
\date{}

\begin{document}

\maketitle

{\footnotesize
\paragraph{Abstract.} We construct small parabolic eigenvarieties for holomorphic Siegel cuspforms of genus $2$ and study families of Galois representations attached to them in the spirit of Bellaïche--Chenevier. In the course, we introduce the notion of $(\varphi, \Gamma)$-modules with $G$-structures and the notion of refined families of symplectic Galois representations by implementing the theory of symplectic Galois determinant d'après Moakher--Quast. Such families of symplectic Galois representations provide two applications: In the first application, we show that the small parabolic eigenvarieties are smooth at non-critical points by proving an infinitesimal $R=\mathbb{T}$ theorem. In the second application, we study the relationship between the geometry of the small parabolic eigenvarieties at the Saito--Kurokawa lifts for cuspidal eigenforms (both finite- and infinite-slope) and the Bloch--Kato Selmer groups of those eigenforms. 
}

\tableofcontents

\section{Introduction}\label{section: Intro}

\subsection{Background}
The theory of $p$-adic families of modular forms has its origin in J.-P. Serre's famous Antwerp paper (\cite{Serre-Antwerp}), where he studied the congruences among Eisenstein series, introduced the notion of $p$-adic modular forms, and used it as an application to construct $p$-adic $L$-functions for totally real number fields. The first turning point of this theory is due to H. Hida. In \cite{Hida-IwasawaCongruence, Hida-bigGalois}, he constructed and studied ordinary families of cuspidal eigenforms varying continuously over the $p$-adic weight space and their corresponding $p$-adic families of Galois representations. In the late $20^{\text{th}}$ century, R. Coleman introduced the notion of finite-slope families of modular forms, which generalised Hida's theory (\cite{Coleman-pAdicFamilies}). Together with B. Mazur, they showed that finite-slope families of modular forms can be patched into a geometric object, now known as the \emph{eigencurve} (\cite{Coleman_Mazur}).

The construction of the eigencurve admits generalisations to general automorphic forms. For example, in \cite{Ash-Stevens, Urban-2011, Hansen-PhD}, the authors studied the so-called overconvergent cohomology groups of locally symmetric spaces to construct eigenvarieties. On the other hand, by further developing Hida's idea, M. Emerton gave another construction in \cite{Emerton} by studying the completed cohomology of locally symmetric spaces.

The approach of Andreatta--Iovita--Pilloni (\cite{AIP-2015}, see also \cite{AIS-2014} and \cite{Pilloni-GL2}) is different. They constructed big sheaves over overconvergent neighbourhoods of the ordinary loci of Siegel modular varieties, interpolating the classical automorphic sheaves. They showed that the global sections of these big sheaves $p$-adically interpolate the space of classical Siegel modular forms and used these larger spaces to construct eigenvarieties for Siegel cuspforms. Their eigenvarieties parametrise holomorphic cuspidal Siegel eigenforms. This approach later led Boxer--Pilloni to develop their higher Coleman theory (\cite{BP-HigherColeman}), which yields a construction of eigenvarieties for coherent cohomology in all degrees.

All these different constructions of eigenvarieties are expected to encode rich arithmetic properties of automorphic representations. For example, in their seminal book \cite{BC}, J. Bellaïche and G. Chenevier developed a method to understand the relationships between the local geometry of eigenvarieties and the arithmetic of Galois representations attached to automorphic forms. They developed a strategy to prove an \emph{infinitesimal $R=\bbT$ theorem} which links the local geometry with the arithmetic of Galois representations. As an application, Bellaïche--Chenevier proved a non-vanishing result for the Bloch--Kato Selmer groups attached to certain unitary automorphic representations.

Although these (rather classical) eigenvarieties provide a powerful framework for studying the arithmetic of automorphic forms via $p$-adic methods, a fundamental feature is that they parametrise automorphic forms that are of finite slope with respect to all Hecke operators at $p$. On the other hand, from an arithmetic point of view, automorphic forms with weaker slope conditions at $p$ are also of interest to mathematicians. For example, in \cite{Kato-2004}, K. Kato proved vanishing results for certain Bloch--Kato Selmer groups attached to cuspidal eigenforms without any slope condition at $p$. This leads to a natural question: can we study the arithmetic of automorphic forms with weaker slope condition at $p$ using $p$-adic methods? 

In search of the natural home for Euler systems, Iwasawa theory and $p$-adic $L$-functions, D. Loeffler conjectured the existence of \emph{small} and \emph{big parabolic eigenvarieties} by studying ordinary Galois deformation problems (\cite{Loeffler-UnivDeform}). The construction of small parabolic eigenvarieties has been carried out by Hill--Loeffler in \cite{Hill-Loeffler} and Breuil--Ding (\cite{Breuil--Ding-BernsteinEigen}) (using completed cohomology) and Barrera Salazar--Williams in \cite{BSW-ParabolicEigen} (using overconvergent cohomology). Although this type of eigenvarieties has more restrictive $p$-adic weight spaces, it requires weaker slope conditions at $p$. More precisely, small parabolic eigenvarieties only impose finite-slope conditions for certain Hecke operators at $p$, depending on the chosen parabolic subgroup.\footnote{ We shall see later that, in the present paper, we give a construction of small parabolic eigenvarieties for $\GSp_4$ for holomorphic Siegel cuspforms (\emph{i.e.}, the Andreatta--Iovita--Pilloni style). } Moreover, this type of eigenvarieties also have fruitful arithmetic applications; for example, in \cite{BSDW26}, Barrera Salazar--Dimitrov--Williams constructed a $p$-adic $L$-function living over a small parabolic eigenvariety and used their $p$-adic $L$-function to understand the local geometry of the small parabolic eigenvariety.

However, to our knowledge, although the Galois information carried by small parabolic eigenvarieties is conjectured in \cite[Sect. 6.3.3]{Loeffler-UnivDeform} (similar to the one in \cite{BC}), little is known except for the cases studied in \cite{Breuil--Ding-BernsteinEigen}. The aim of this paper is to bridge this gap by studying the small parabolic eigenvarieties for holomorphic Siegel cuspforms of genus $2$, and by relating their geometry to the Galois information they carry. Moreover, since the slope conditions at $p$ are relaxed, this allows us to obtain applications towards new cases of the Bloch--Kato conjecture for both finite-slope and, more interestingly, infinite-slope cuspidal eigenforms by looking at the Saito--Kurokawa lifts of these eigenforms. 

We should point out that infinite-slope modular forms do not live in an interesting family when looking at small parabolic eigenvarieties for holomorphic modular forms since the corresponding weight space is just one single point. The key idea in the present paper is that, once we consider the corresponding Saito--Kurokawa lifts, these Siegel modular forms then live in an $1$-dimensional small parabolic eigenvariety for holomorphic Siegel modular forms of genus $2$. We hope that this strategy will open future avenues of studying infinite-slope cuspidal eigenforms using the eigenvariety machinery.  More detailed discussions will be provided in the following subsections.

\subsection{Small parabolic eigenvarieties and families of Galois representations}

Consider the algebraic group $\GSp_4$ defined as in Sect. \ref{subsection: GSp} and let $B$ be the upper-triangular Borel subgroup. The proper parabolic subgroups of $\GSp_4$ containing $B$ are $B$, $P_{\mathrm{Si}}$, and $P_{\mathrm{Kl}}$, where \begin{align*}
    P_{\mathrm{Si}} & = \begin{pmatrix} * & * & * &*\\ * & * & * & *\\ &  & * & * \\ & & * & *\end{pmatrix} \cap \GSp_4 \text{: the Siegel parabolic subgroup,}\\
    P_{\mathrm{Kl}} & = \begin{pmatrix} * & * & * & * \\  & * & * & *\\ & *  & * & * \\ & &  & *\end{pmatrix} \cap \GSp_4 \text{: the Klingen parabolic subgroup.}
\end{align*}
Let $P$ be one of these parabolic subgroups. Since the classical theory of eigenvarieties corresponds to the choice $P = B$, we focus only on the cases $P = P_{\mathrm{Si}}, P_{\mathrm{Kl}}$ in what follows.

Let $p$ be an odd prime. To construct small parabolic eigenvarieties for $P$, we first need to know the corresponding $p$-adic weight space, denoted by $\calW_{P, k_{\heartsuit}}$, depending on the parabolic subgroup $P$ and a fixed classical weight $k_{\heartsuit} = (k_{\heartsuit, 1}, k_{\heartsuit, 2})\in \Z^2$. This $p$-adic weight space is a one-dimensional subspace in the usual two-dimensional $p$-adic weight space for Siegel modular forms (\emph{e.g.}, \cite[Sect. 2.2]{AIP-2015}). More precisely, $\calW_{P, k_{\heartsuit}}$ is the one-dimensional subspace varying in the parallel direction (resp., the $k_{1}$-direction) if $P = P_{\mathrm{Si}}$ (resp., $P  = P_{\mathrm{Kl}}$) passing through $k_{\heartsuit}$.

Although there are existing constructions of small parabolic eigenvarieties (\emph{e.g.}, \cite{Hill-Loeffler, BSW-ParabolicEigen, Breuil--Ding-BernsteinEigen}), it turns out that, for the purpose of the applications, it would be more convenient for us to have a version for the degree-$0$ coherent cohomology, which we could not find in the literature. In Sect. \ref{section: small par. eigenvar. for H0}, we take this opportunity to include a construction of small parabolic eigenvarieties for holomorphic Siegel cuspforms of genus $2$. Our approach is highly inspired by \cite{AIP-2015, Pilloni-higherHidaColemanGSp4}. In particular, we construct big sheaves over overconvergent neighbourhoods of ordinary loci that interpolate the classical automorphic sheaves. By considering the corresponding controlling operators $U_P$, we apply the general eigenvariety machinery to construct small parabolic eigenvarieties for holomorphic Siegel cuspforms of genus $2$, denoted by $\calE_{P, k_{\heartsuit}}^{\cusp}$.\footnote{ We do not claim full credit. In \cite{Pilloni-higherHidaColemanGSp4}, Pilloni provided almost all the ingredients for constructing the small parabolic eigenvariety when $P = P_{\mathrm{Kl}}$, except that he did not really write down such a construction since his focus was on something else. Our contribution in this case is simply to spell out how to use his construction to construct small parabolic eigenvarieties when $P = P_{\mathrm{Kl}}$. When $P = P_{\mathrm{Si}}$, see the related work of Brasca--Rosso (\cite{Brasca--Rosso}) We should also point out that similar construction for Hilbert modular forms can be found in \cite{Dimitrov--Hsu}.} As in the classical theory, it admits a natural map \[
    \wt: \calE_{P, k_{\heartsuit}}^{\cusp} \rightarrow \calW_{P, k_{\heartsuit}},
\]
called the weight map. Both $\calE_{P, k_{\heartsuit}}^{\cusp}$ and $\calW_{P, k_{\heartsuit}}$ are equidimensional of dimension $1$. Roughly speaking, $\calE_{P, k_{\heartsuit}}^{\cusp}$ can be viewed as the moduli space (over $\calW_{P, k_{\heartsuit}}$) of $U_P$-finite-slope cuspidal Siegel eigenforms of genus $2$.

Thanks to the work of many mathematicians (see, for example, \cite{Taylor-Siegel, Laumon, Weissauer, Urban-GSp4, Sorensen-HilbertSiegel, Jorza-GSp, Mok-GL2CM}), if $f$ is a cuspidal Siegel eigenform of genus $2$, then there is a corresponding Galois representation \[
    \rho_f: \Gal_{\Q} \rightarrow \GSp_4(\overline{\Q}_p)
\]
which satisfies nice properties (summarised in Theorem \ref{Theorem: Galois representation attached to Siegel forms}). If $f$ is furthermore $U_P$-finite-slope, then the $(\varphi, \Gamma)$-module $\D_{\rig}^{\dagger}(\rho_f)$ is expected to admit a $(\varphi, \Gamma)$-stable flag with stabiliser $P^{\vee}$ (Hypothesis \ref{Hypothesis: explicit local-global compatibility at p}), where $P^{\vee} = P_{\mathrm{Kl}}$ if $P = P_{\mathrm{Si}}$, $P^{\vee} = P_{\mathrm{Si}}$ if $P = P_{\mathrm{Kl}}$. Using this information, we construct a $P^{\vee}$-refined family of symplectic $\Gal_{\Q}$-representations of dimension $4$ over $\calE_{P, k_{\heartsuit}}^{\cusp}$ (Proposition \ref{Proposition: global symplectic determinant over small parabolic eigenvarieties} and Corollary \ref{Corollary: refined families over small parabolic eigenvarieties}).    

We would like to understand how the refined family of Galois representations interacts with the geometry of $\calE_{P, k_{\heartsuit}}^{\cusp}$.  To this end, we fix a $U_P$-finite slope cuspidal Siegel eigenform $f$ of weight $k_{\heartsuit} = (k_{\heartsuit, 1}, k_{\heartsuit, 2})$ with $k_{\heartsuit, 1}\geq k_{\heartsuit, 2}>3$ and let $x$ be the corresponding point in $\calE_{P, k_{\heartsuit}}^{\cusp}$. We then prove the following theorem by proving an `infinitesimal $R = \bbT$ theorem'. 

\begin{Theorem}[\text{Theorem \ref{Theorem: infinitesimal R=T}}]
    Keep the notations as above and assume the following hold. \begin{itemize}
        \item The Galois representation $\rho_f$ is absolutely irreducible. 
        \item The $(\varphi, \Gamma)$-stable $P^{\vee}$-flag $\Fil_{\bullet}\D_{\rig}^{\dagger}(\rho_f)$ satisfies ($P^{\vee}$-REG) and ($P^{\vee}$-NCR)  (see Sect. \ref{subsection: deformation}).
        \item For any bad prime $\ell \neq p$, $p\nmid \ell^{12}-1$ and $\rho_f|_{I_{\ell}}$ is irreducible, where $I_{\ell}$ is the inertia subgroup at $\ell$. 
        \item The geometric adjoint Bloch--Kato Selmer group for $f$ vanishes (Conjecture \ref{Conjecture: geometric adjoint BK conjecture}).
    \end{itemize}
    Then, $\calE_{P, k_{\heartsuit}}^{\cusp}$ is smooth at $x$.
\end{Theorem}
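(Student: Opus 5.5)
We follow the strategy of Bellaïche--Chenevier \cite{BC}, adapted to symplectic determinants and $P^{\vee}$-flags. Let $\calO_x$ be the completed local ring of $\calE_{P, k_{\heartsuit}}^{\cusp}$ at $x$ and $\calT_x$ its tangent space. Since $\calE_{P, k_{\heartsuit}}^{\cusp}$ is equidimensional of dimension $1$, smoothness at $x$ is equivalent to $\dim \calT_x \leq 1$. We will bound $\dim\calT_x$ by a Galois-theoretic tangent space and then compute the latter with the vanishing Selmer hypothesis.

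\textbf{Step 1: from the refined family to a deformation.} By Proposition~\ref{Proposition: global symplectic determinant over small parabolic eigenvarieties} there is a symplectic determinant over $\calE_{P, k_{\heartsuit}}^{\cusp}$. Since $\rho_f$ is absolutely irreducible, the theory of symplectic determinants (Moakher--Quast) lifts it, after completing at $x$, to a genuine representation $\rho_x:\Gal_{\Q}\to\GSp_4(\calO_x)$ deforming $\rho_f$. This gives a map $\calO_x\to$ Artinian algebras, and in particular a map $\calT_x^{*}$-dual: each tangent vector $t\in\calT_x$ yields a deformation $\rho_t$ of $\rho_f$ to $\GSp_4(\overline{\Q}_p[\varepsilon])$.

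\textbf{Step 2: local conditions.} At $p$, Corollary~\ref{Corollary: refined families over small parabolic eigenvarieties} provides the refined-family structure, and the analytic continuation of $P^{\vee}$-flags (the families version of triangulation, valid at $x$ by ($P^{\vee}$-REG)) shows that $\D_{\rig}^{\dagger}(\rho_t)$ carries a $(\varphi,\Gamma)$-stable $P^{\vee}$-flag deforming $\Fil_\bullet\D_{\rig}^{\dagger}(\rho_f)$; so $\rho_t$ lies in the $P^{\vee}$-ordinary-type deformation problem of Sect.~\ref{subsection: deformation}. At bad $\ell\neq p$, the conditions $p\nmid\ell^{12}-1$ and $\rho_f|_{I_\ell}$ irreducible force $\rho_t|_{I_\ell}$ to be a constant (inertially trivial) deformation. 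At primes away from the level, unramifiedness is automatic. The resulting map $\calT_x\to \mathrm{Def}_{P^{\vee}}(\rho_f)(\overline{\Q}_p[\varepsilon])$ is injective, since $\calO_x$ is topologically generated by Hecke eigenvalues (away from $Np$, recovered from traces/the determinant) together with $U_P$-eigenvalues and the weight, and the latter are read off from the flag (the $\varphi$-eigenvalues of the graded pieces) and Sen weights.

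\textbf{Step 3: computing the deformation tangent space.} The tangent space of $\mathrm{Def}_{P^{\vee}}$ sits in an exact sequence whose kernel is the subspace where the Sen/weight variation is trivial. Using ($P^{\vee}$-NCR), deformations with a $P^{\vee}$-flag and constant Hodge--Tate weights are crystalline-type (via the Bellaïche--Chenevier/Kedlaya--Pottharst--Xiao argument that a non-critical flag with constant weights is de Rham), so this kernel injects into the geometric adjoint Selmer group $H^1_g(\Q,\ad\rho_f)$, which vanishes by Conjecture~\ref{Conjecture: geometric adjoint BK conjecture}. Hence the tangent space injects into the one-dimensional space of weight deformations allowed along $\calW_{P,k_\heartsuit}$ (the parallel or $k_1$-direction, which is precisely the freedom permitted by a $P^{\vee}$-flag). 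Thus $\dim\calT_x\le 1$, giving smoothness (and in fact étaleness of $\wt$ at $x$).

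The main obstacle is Step 3's non-criticality argument: showing that a first-order deformation with a $P^{\vee}$-flag (not a full triangulation) and fixed weight is de Rham. We first try to reduce to the rank-$2$ Levi blocks: each graded piece is a rank-$2$ $(\varphi,\Gamma)$-module, and ($P^{\vee}$-NCR) should guarantee that the relevant $H^1$ comparison between $\mathrm{Ext}$ of graded pieces and $H^1_g$ holds, using the dimension count for $H^1_g$ versus $H^1$ of $(\varphi,\Gamma)$-modules and Hodge--Tate weight separation.
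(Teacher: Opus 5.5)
\textbf{Overall route.} Your outline is the paper's proof of Theorem~\ref{Theorem: infinitesimal R=T} in dual form. The paper proceeds in three steps:
\begin{itemize}
\item It builds $R^{\mathrm{small},\univ}_{\rho_x,\Fil_\bullet}\to\widehat{\bbT}_x$ via Corollary~\ref{Corollary: deformation is refined}. That corollary uses Moakher--Quast lifting plus the appendix's Kisin-type result applied to $\bigwedge^m\Det$. This produces only the rank-one line in $\bigwedge^m$, not an ``analytic continuation'' of the whole flag.
\item It proves surjectivity from local--global compatibility and Hypothesis~\ref{Hypothesis: explicit local-global compatibility at p}; your injectivity of $\calT_x$ is the dual statement.
\item It bounds the deformation tangent space by $1$ using Corollary~\ref{Corollary: sort exact sequence of tangent spaces} and $H^1_g(\Q,\ad\rho_f)=0$.
\end{itemize}
Your finish via $\dim\calT_x\le 1$ is equivalent to the paper's regularity argument.

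\textbf{A correction in Step~3.} The one-dimensionality of the weight variation is not ``the freedom permitted by a $P^{\vee}$-flag''. A flag alone lets the Sen weights move in every direction compatible with the symplectic structure; for instance, the two weights of the middle piece of a Klingen flag move independently. The restriction to $Z_{M_{P^\vee}}$ must be built into the local condition, as the paper does with the Hodge--Tate--Sen cocharacter, and then verified for the family through the weight map.

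\textbf{The genuine gap.} A first-order $P^{\vee}$-flagged deformation with constant Sen weights need not be de Rham, and your proposed Levi-block reduction cannot repair this.
\begin{itemize}
\item The constant weight lemma handles the rank-one pieces $\calR(\delta_1),\calR(\delta_2)$ of the flag on $\bigwedge^m$.
\item ($P^{\vee}$-NCR) only controls the position of the flag relative to the Hodge filtration, i.e.\ the extension data between graded pieces.
\item Neither says anything about a rank-$2$ Levi piece $E$ (the middle piece of a Klingen flag, or either half of a Siegel flag).
\end{itemize}
Such an $E$ has constant-weight, hence Hodge--Tate, first-order deformations that are not de Rham. Write $h^0=\dim H^0(\mathfrak{sl}(E))$ and suppose $H^2(\mathfrak{sl}(E))=0$ and $\calD_{\cris}(\mathfrak{sl}(E)^{\vee}(1))^{\varphi=1}=0$. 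Then $\dim H^1(\mathfrak{sl}(E))=3+h^0$, and the Sen-weight condition removes at most one dimension. But $\dim H^1_g(\mathfrak{sl}(E))=1+h^0$, so at least one constant-weight direction is not de Rham.

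Generically these classes lift to $P^{\vee}$-flagged deformations of $\D_{\rig}^{\dagger}(\rho_{f,p})$ with all Sen weights constant; the obstruction lies in $H^2$ of the unipotent-radical part. Hence the kernel of the weight map is not contained in $H^1_g(\Q_p,\ad\rho_f)$. By Greenberg--Wiles, the global tangent space of $\Def^{\mathrm{small}}_{\rho_x,\Fil_\bullet}$ is then in general at least $2$-dimensional, even when $H^1_g(\Q,\ad\rho_f)=0$.

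\textbf{The paper's treatment of this step.} The paper reaches the same point in Corollary~\ref{Corollary: sort exact sequence of tangent spaces}. There, de Rham-ness is deduced by promoting Sen eigenvectors $\widetilde\alpha_j$ to elements $\widetilde\alpha_j\otimes t^{a_j'}$ of $\calD_{\dR}$. But $\widetilde\alpha_j$ is an eigenvector only in $\calD_{\Sen}$, i.e.\ modulo $t$, so that argument yields only Hodge--Tate-ness and does not fill the gap either.

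\textbf{What is needed.} You need a strictly smaller local condition at $p$ --- for instance, that the Levi graded pieces be de Rham up to twist by a character of $Z_{M_{P^\vee}}$. You also need a proof that the family over $\calE_{P,k_\heartsuit}^{\cusp}$ satisfies it to first order, i.e.\ an interpolation statement for the Levi blocks and not only for the rank-one line in $\bigwedge^m$. With such a condition, the rank-one pieces are de Rham by the constant weight lemma and the Levi pieces by hypothesis. NCR is then the natural tool for the extension data, as in the Borel case of Bella\"iche--Chenevier, so Step~3 could be completed along the lines you intend.
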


\subsection{Saito--Kurokawa points and non-vanishing of Bloch--Kato Selmer groups}

As an application of having families of Galois representations over small parabolic eigenvarieties, we take inspiration from \cite{Skinner--Urban, BC, BB22} and study the arithmetic of points coming from Saito--Kurokawa lifts. It turns out, working with small parabolic eigenvarieties offers two advantages: on the one hand, it is more convenient for our purposes when dealing with Saito--Kurokawa lifts of finite-slope elliptic cuspforms; on the other hand, it allows access to Saito--Kurokawa lifts of infinite-slope elliptic cuspforms.

To be more precise, let $N$ and $k$ be positive integers and $k>2$. Let $f$ be an elliptic cuspidal newform of level $\Gamma_0(N)$ and weight $2k-2$. We further assume $f$ has sign $-1$. In this situation, $f$ admits a lift to a holomorphic cuspidal Siegel eigenform of paramodular-$N$ level and weight $(k,k)$, \emph{i.e.}, the so-called Saito--Kurokawa lift, denoted by $\SK(f)$. The Galois representation attached to $\SK(f)$ is of the form \[
    \rho_{\SK(f)} \sim \begin{pmatrix} \chi_{\cyc}^{2-k} \\ & \rho_f \\ && \chi_{\cyc}^{1-k} \end{pmatrix},
\]
where $\chi_{\cyc}$ is the $p$-adic cyclotomic character and $\rho_f$ is the two-dimensional Galois representation attached to $f$.

\paragraph{The finite-slope case.} Suppose $p\nmid N$ and let $\alpha, \beta$ be the two roots of the Hecke polynomial of $f$ at $p$. We further assume that the following conditions hold (Assumption \ref{Assumption: nice properties for finite-slope cuspidal newform}): \begin{itemize}
    \item (REG) We have $\alpha \neq \beta$ and $\alpha, \beta\neq p^{k-1}$. 
    \item (ST) For $\ell \mid N$, the corresponding local representation at $\ell$ is the Steinberg representation twisted by the unramified character $\xi$ with $\xi(\ell) = -1$. 
\end{itemize}
In this case, $\SK(f)$ has four $p$-stabilisations that are both $U_{\mathrm{Si}}$- and $U_{\mathrm{Kl}}$-finite-slope. We work with the $p$-stabilisation that corresponds to the ordered $\varphi$-eigenvalues $(p^{k-1}, \alpha, \beta, p^{k-2})$, and it defines a point $x$ in the small Klingen parabolic eigenvariety $\calE_{\mathrm{Kl}, (k,k)}^{\cusp}$.\footnote{ Of course, it also defines a point in the small Siegel parabolic eigenvariety, but we deliberately choose to work with $\calE_{\mathrm{Kl}, (k,k)}^{\cusp}$. } Note that this is a different choice of $p$-stabilisation compared with \cite{Skinner--Urban, BB22}.

Let $\bbT_x = \scrO_{\calE_{\mathrm{Kl}, (k,k)}^{\cusp}, x}$ be the local ring at $x$. The families of Galois representations over $\calE_{\mathrm{Kl}, (k,k)}^{\cusp}$ yields a four-dimensional Galois determinant \[
    \Det_x: \bbT_x[\Gal_{\Q}] \rightarrow \bbT_x.
\]
Since we are working over $\calE_{\mathrm{Kl}, (k,k)}^{\cusp}$ (in particular, the weight space only varies in the $k_1$-direction), one can show that the reducibility ideal of $\Det_{x}$ is exactly the maximal ideal of $\bbT_x$ (Proposition \ref{Proposition: reducibility idea = maximal ideal; fs}). In other words, $\Det_x$ is as irreducible as possible.

Using such information, together with some computations with \emph{generalised matrix algebras} (GMA) inspired by \cite{BC, BB22}, we deduce the following inequality, relating the dimension of the tangent space of $\calE_{\mathrm{Kl}, (k,k)}^{\cusp}$ and the dimension of the Bloch--Kato Selmer group $H^1_f(\Q, \rho_f(k-1))$.

\begin{Theorem}[$\text{Theorem \ref{Theorem: bound of tangent space; fs} and Corollary \ref{Corollary: non-vanishing of BK Selmer group; fs}}$]\label{Theorem: non-vanishing; fs, intro}
    Let $t$ be the dimension of the tangent space of $\calE_{\mathrm{Kl}, (k,k)}^{\cusp}$ at $x$ and $d$ be the dimension of $H^1_f(\Q, \rho_f(k-1))$. Under {\normalfont (REG)} and {\normalfont (ST)}, we have \[
        t \leq d(d+1).
    \]
    In particular, $H^1_f(\Q, \rho_f(k-1))$ does not vanish. 
\end{Theorem}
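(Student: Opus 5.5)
The plan is to follow the Bellaïche--Chenevier strategy (\cite[Ch.~8--9]{BC}), adapted to the symplectic setting as in \cite{BB22}. We bound $t$ by constructing enough independent Galois cohomology classes out of the tangent space, with the Bloch--Kato group controlling how many such classes can exist.

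\emph{Step 1: GMA structure.} The determinant $\Det_x$ is residually $\chi_{\cyc}^{2-k}\oplus\bar\rho_f\oplus\chi_{\cyc}^{1-k}$, a sum of three pairwise distinct, absolutely irreducible characters and representations; (REG) gives the distinctness. By \cite[Thm.~1.4.4]{BC}, $\bbT_x[\Gal_{\Q}]/\ker\Det_x$ is therefore a generalised matrix algebra of type $(1,2,1)$ over $\bbT_x$, with off-diagonal $\bbT_x$-modules $A_{ij}$. Proposition \ref{Proposition: reducibility idea = maximal ideal; fs} says the total reducibility ideal is $\frakm_x$. Hence the sum of the images $A_{ij}A_{ji}\to\bbT_x$ is $\frakm_x$, and so
\[
\frakm_x/\frakm_x^2 \text{ is spanned by the images of } A_{ij}\otimes A_{ji}, \qquad i\neq j .
\]
This gives $t=\dim\frakm_x/\frakm_x^2\le\sum_{i<j}\dim(\bar A_{ij})\cdot\dim(\bar A_{ji})$, where $\bar A_{ij}=A_{ij}\otimes\bbT_x/\frakm_x$.

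\emph{Step 2: embed the off-diagonal pieces into $\Ext^1$.} By \cite[Thm.~1.5.5]{BC} there are injections $\Hom(\bar A_{ij},\overline{\Q}_p)\hookrightarrow\Ext^1_{\Gal_{\Q}}(\rho_j,\rho_i)$. We then impose local conditions on the images.
\begin{itemize}
\item At $\ell\mid N$, condition (ST) together with $\rho_f|_{I_\ell}$ being Steinberg forces the extensions to be unramified, or at worst to lie in $H^1_f$.
\item At $p$, the refined family (Corollary \ref{Corollary: refined families over small parabolic eigenvarieties}) carries a $(\varphi,\Gamma)$-stable $P_{\mathrm{Si}}$-flag over $\calE_{\mathrm{Kl},(k,k)}^{\cusp}$. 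Following the triangulation arguments of \cite[Ch.~8]{BC}, the chosen ordering $(p^{k-1},\alpha,\beta,p^{k-2})$ together with $\alpha,\beta\neq p^{k-1}$ should force the classes to be crystalline (or trivial).
\end{itemize}
The four relevant extension groups are between $\rho_f$ and $\chi_{\cyc}^{2-k}$ or $\chi_{\cyc}^{1-k}$. They are all identified with $H^1_f(\Q,\rho_f(k-1))$ or its dual $H^1_f(\Q,\rho_f(k-2))\cong H^1_f(\Q,\rho_f^*(k))$, using $\rho_f^\vee\cong\rho_f(2k-3)$. Each therefore has dimension $\le d$, by Bloch--Kato duality and the fact that the central $L$-value vanishes.

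The remaining pair, the extensions between $\chi_{\cyc}^{2-k}$ and $\chi_{\cyc}^{1-k}$, is the delicate part. One direction gives $H^1_f(\Q,\Q_p(1))=0$, which kills it. The other direction gives $H^1(\Q,\Q_p(-1))$, which is nonzero but not crystalline at $p$, so the flag condition should kill it. If only one direction vanishes, the corresponding product is still $0$. Either way the pair contributes nothing to $t$.

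\emph{Step 3: refine the count.} Naively, Steps 1 and 2 give $t\le 2d^2$. To reach the sharper bound $d(d+1)$ we use the symplectic structure. The symplectic pairing (via the symplectic determinant of Moakher--Quast) identifies $A_{12}$ with $A_{34}$ and $A_{21}$ with $A_{43}$ up to twist. So the two contributions $A_{12}A_{21}$ and $A_{23}A_{32}$ are not independent in $\frakm_x/\frakm_x^2$.

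I expect this to be the main obstacle. The natural approach is to show that products $a\cdot b$ with $a\in A_{12}$, $b\in A_{21}$ are symmetric under the involution induced by the symplectic form. The images then factor through $\mathrm{Sym}^2$ of a $d$-dimensional space, which has dimension $d(d+1)/2$, and doubling for the two orderings gives $d(d+1)$.

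\emph{Step 4: conclusion.} Since $\calE_{\mathrm{Kl},(k,k)}^{\cusp}$ is equidimensional of dimension $1$, we have $t\ge 1$. Hence $d(d+1)\ge 1$, so $d\ge1$ and $H^1_f(\Q,\rho_f(k-1))\neq0$.
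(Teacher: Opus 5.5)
Your framework (the GMA structure, reducibility ideal $=\frakm_x$, Ext groups embedded into Selmer groups, then counting generators) is the paper's, but Steps 2 and 3 have genuine gaps. Index the blocks as the paper does: $\rho_0=\rho_f$, $\rho_1=\chi_{\cyc}^{1-k}$, $\rho_2=\chi_{\cyc}^{2-k}$.

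First, \cite[Thm.~1.5.5]{BC} does not give $\Hom(\bar A_{ab},L_x)\hookrightarrow\Ext^1$. What it gives is $\Hom_{L_x}(A_{ab}/A_{ac}A_{cb},L_x)\cong\Ext^1_A(\rho_b,\rho_a)$. With three blocks, the Ext groups therefore only control $A_{ab}$ modulo the correction term $A_{ac}A_{cb}$. So Ext-dimensions cannot be fed into your bound $t\le\sum\dim\bar A_{ij}\dim\bar A_{ji}$. Likewise, a vanishing Ext does not make a product vanish; it makes it absorbed into another product. The paper's proof is exactly this bookkeeping:
\begin{itemize}
\item $\Ext^1_A(\rho_1,\rho_2)\hookrightarrow H^1_f(\Q,\chi_{\cyc})=0$ gives $A_{21}=A_{20}A_{01}$, hence $A_{12}A_{21}\subset A_{10}A_{01}$.
\item The symplectic structure gives $A_{20}A_{02}=A_{10}A_{01}$.
\item Hence $\frakm_x=A_{10}A_{01}$.
\end{itemize}
Landing in $H^1_f(\Q,\chi_{\cyc})$ also needs this $\chi$--$\chi$ extension to be unramified at $\ell\mid N$. (ST) for $\rho_f$ does not give that directly. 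The paper argues that the monodromy of the generic representation over $\mathrm{Frac}\,\bbT_x$ has rank at most $1$, and that this rank is already used up by the $\rho_f$-block.

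Second, your local conditions at $p$ are wrong, and the count is not symmetric. The period argument (\cite[Thm.~4.3.6]{BC}, applied after embedding a neighbourhood of $x$ into the Andreatta--Iovita--Pilloni eigenvariety) only forces crystallinity when the \emph{quotient} is the $\varphi=p^{k-1}$ piece $\rho_1$.
\begin{itemize}
\item $\Ext^1_A(\rho_1,\rho_0)$ and its symplectic mirror $\Ext^1_A(\rho_0,\rho_2)$ do land in $H^1_f(\Q,\rho_f(k-1))$. Using $A_{21}=A_{20}A_{01}$ to absorb the correction terms, $A_{01}$ and $A_{20}$ each need at most $d$ generators.
\item $\Ext^1_A(\rho_2,\rho_0)\cong\Ext^1_A(\rho_0,\rho_1)$ only lands in $H^1_{f'}(\Q,\rho_f(k-2))$, which is unrelated to $d$. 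Indeed $\rho_f(k-2)$ is Bloch--Kato dual to $\rho_f(k)$, not to $\rho_f(k-1)$, and a vanishing central $L$-value gives no upper bound on anything. The paper bounds this group by $1$, using Kato's $H^1_f(\Q,\rho_f(k-2))=0$ and a one-dimensional local quotient at $p$.
\item $\Ext^1_A(\rho_2,\rho_1)\hookrightarrow H^1(\Q,\chi_{\cyc}^{-1})$ is one-dimensional and is not killed.
\end{itemize}
In fact your two claims cannot both hold. If $\Ext^1_A(\rho_2,\rho_0)$ landed in $H^1_f$ as Step 2 asserts, Kato would make it vanish, so $A_{10}=A_{12}A_{20}$. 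If also $\Ext^1_A(\rho_2,\rho_1)=0$, then $A_{12}=A_{10}A_{02}$. Together these give $A_{10}=A_{10}A_{02}A_{20}\subset\frakm_xA_{10}$, so $A_{10}=0$ by Nakayama. Then $A_{12}=0$ and $A_{20}A_{02}=A_{10}A_{01}=0$, so $\frakm_x=0$, contradicting the fact that $\calE_{\mathrm{Kl},(k,k)}^{\cusp}$ is one-dimensional at $x$.

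The correct count is as follows. One has $A_{10}=\frakm_xA_{10}+g'A_{20}+g\bbT_x$, so $A_{10}$ needs at most $d+1$ generators. Then $\frakm_x=A_{10}A_{01}$ gives $t\le d(d+1)$. Non-vanishing follows because $d=0$ would force $A_{01}=0$ and hence $\frakm_x=0$. Your $\mathrm{Sym}^2$ doubling in Step 3 does not reflect this structure: $A_{10}$ and $A_{01}$ are different modules with different bounds, and no symmetric square appears.
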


\paragraph{The infinite-slope case.} In this case, we apply a similar strategy to prove a similar theorem as Theorem \ref{Theorem: non-vanishing; fs, intro}. However, the infinite-slope case has a very different nature as in the finite-slope case. 

To be more precise, we assume $N = p^r$ for some $r>1$ and $f$ is an infinite-slope cuspidal newform of level $\Gamma_0(p^r)$. In this situation, under Hypothesis \ref{Hypothesis: explicit local-global compatibility at p}, $\SK(f)$ has at most two $p$-stabilisations $\SK(f)_1$ and $\SK(f)_2$ that are $U_{\mathrm{Si}}$-finite-slope but $U_{\mathrm{Kl}}$-infinite-slope (Proposition \ref{Proposition: p-stabilisations of infinite-slope SK(f)}). As a result, these $p$-stabilisations only define points in the small Siegel parabolic eigenvariety $\calE_{\mathrm{Si}, (0,0)}^{\cusp}$. Let $x$ be the point correspond to $\SK(f)_1$ (Sect. \ref{subsection: BK Selmer groups; infinite slope}).

Let $\bbT_x = \scrO_{\calE_{\mathrm{Si}, (0,0)}^{\cusp}, x}$ be the local ring at $x$. We again have a four-dimensional Galois determinant \[
    \Det_x: \bbT_x[\Gal_{\Q}] \rightarrow \bbT_x
\]
coming from the families of Galois representations over $\calE_{\mathrm{Si}, (0,0)}^{\cusp}$. In Proposition \ref{Proposition: a generic family passing through SK lifts of infinite-slope forms} and Corollary \ref{Corollary: reducibility idea = maximal ideal; infinite slope}, we show that, under some reasonable conditions (see Theorem \ref{Theorem: non-vanishing; infs, intro} below), the reducibility ideal of $\Det_x$ again agrees with the maximal ideal in $\bbT_x$. We should point out that, although this is a similar statement as in the finite-slope case, the nature of the proof is very different. One of the conditions we imposed therein is highly related to a question posted by Coleman--Stein in \cite{Coleman--Stein}, where they asked when an infinite-slope form can be approximated by finite-slope forms.

Finally, by further generalising a result of Bellaïche--Chenevier in Appendix \ref{section: pstperiod} and combining with some GMA computations (again), we prove the following inequality.

\begin{Theorem}[$\text{Theorem \ref{Theorem: lower bound theorem for infinite forms} and Corollary \ref{Corollary: non-vanishing}}$]\label{Theorem: non-vanishing; infs, intro}
    Suppose the elliptic cuspidal eigenform $f$ satisfies the following properties: \begin{itemize}
        \item The residual representation $\overline{\rho}_f$ is absolutely irreducible. 
        \item It is not approximable by finite-slope modular forms (see Remark \ref{Remark: infinite-slope forms not approximable by finite-slope forms}). 
    \end{itemize}
    Let $t$ be the dimension of the tangent space of $\calE_{\mathrm{Si}, (0,0)}^{\cusp}$ at $x$ and $d$ be the dimension of $H^1_f(\Q, \rho_f(k-1))$. Then, we have \[
        t \leq \frac{(d+1)(3d+2)}{2}+1.
    \]
    In particular, if $t>2$, $H^1_f(\Q, \rho_f(k-1))$ does not vanish. 
\end{Theorem}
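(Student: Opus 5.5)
We will follow the Bellaïche--Chenevier strategy for Saito--Kurokawa points, adapted to the Siegel-parabolic setting. Write $\frakm$ for the maximal ideal of $\bbT_x$, and consider the Galois determinant $\Det_x:\bbT_x[\Gal_{\Q}]\to\bbT_x$. Its residual representation is $\chi_{\cyc}^{2-k}\oplus\rho_f\oplus\chi_{\cyc}^{1-k}$, and the three constituents are pairwise distinct and absolutely irreducible; for $\rho_f$ this uses the first hypothesis on $\overline{\rho}_f$. By the structure theory of residually multiplicity-free determinants, $\Det_x$ is then the determinant of a generalised matrix algebra $R=\bigl(A_{i,j}\bigr)_{1\le i,j\le 3}$ over $\bbT_x$ whose diagonal blocks have sizes $(1,2,1)$. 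The $A_{i,j}$ are finitely generated $\bbT_x$-modules, and the reducibility ideal is generated by the images of $A_{i,j}A_{j,i}$ and the triple products.

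Step 1: show that the reducibility ideal equals $\frakm$, using the corollary stated earlier (Corollary \ref{Corollary: reducibility idea = maximal ideal; infinite slope}). This is the only step that uses the non-approximability hypothesis. If some proper reducibility ideal cut out a curve through $x$, then the family would contain a component on which the $\rho_f$-part varies $p$-adically. That would approximate $f$ by finite-slope forms, which contradicts the hypothesis.

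Step 2: construct, via the standard GMA map $\Hom(A_{i,j},\overline{\Q}_p)\hookrightarrow \Ext^1_{\Gal_{\Q}}(\rho_j,\rho_i)$, injections from $\Hom_{\bbT_x}(A_{i,j}/\frakm A_{i,j},\overline{\Q}_p)$ into extension groups between the constituents. The key task is to control the local conditions at $p$ and at $\ell$. At $\ell$ there is nothing to check, since $N=p^r$. At $p$ we use the generalisation of Bellaïche--Chenevier in Appendix \ref{section: pstperiod}, applied to the $P_{\mathrm{Kl}}$-refined family, where $P^{\vee}=P_{\mathrm{Kl}}$ for $P=P_{\mathrm{Si}}$. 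This produces potentially semistable periods on a dense set of points, and the resulting triangulation-type flag $\chi^{2-k}\subset(\ast)\subset(\ast\ast)$ forces the extensions to land in specific subspaces.

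We expect the bounds to come out as follows. $\Ext^1$ of $\rho_f$ by $\chi_{\cyc}^{2-k}$ and $\Ext^1$ of $\chi_{\cyc}^{1-k}$ by $\rho_f$ land in Bloch--Kato-type Selmer groups of dimension $d$, up to the duality $\rho_f^\vee(1-k)\cong\rho_f(k-1)$ twist. The extension between the two characters lands in $H^1(\Q,\Q_p(1))$ with conditions, which is $0$, or in $H^1(\Q,\Q_p(-1))$ with a pst-condition, which has dimension at most $1$. The reverse directions are constrained only by the flag, which gives weaker bounds that contribute the extra $+1$.

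Step 3: bound the tangent space, using $\frakm=\sum A_{i,j}A_{j,i}+\ldots$ from Step 1. We get $\frakm/\frakm^2$ spanned by the images of the products $A_{i,j}\otimes A_{j,i}$ and of the cyclic triple products. The generator counts are $\le d$ or $\le d+1$ for the appropriate blocks, and the symmetric constraint coming from the symplectic pairing identifies $A_{1,2}$ with $A_{2,3}$ up to duality. Tallying these contributions as in \cite{BC, BB22} should yield $t\le\frac{(d+1)(3d+2)}{2}+1$. If $d=0$ the bound is $2$, so $t>2$ forces $d\ge1$.

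We expect the main obstacle to be Step 2 at $p$. The refinement is only a parabolic $P_{\mathrm{Kl}}$-flag, and the form has infinite $U_{\mathrm{Kl}}$-slope, so the classical crystalline-period argument is unavailable. We would first try to show that the rank-one and rank-three pieces of the flag are de Rham with constant Hodge--Tate weight over the Siegel weight line. Bellaïche--Chenevier-type Sen-theory arguments on the graded pieces would then bound the extensions by $H^1_f$.
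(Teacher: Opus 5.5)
The framework you set up matches the paper's: a symplectic GMA of type $(1,2,1)$, $\frakI^{\red}=\frakm_x$ via Corollary~\ref{Corollary: reducibility idea = maximal ideal; infinite slope}, $\Ext$--Selmer embeddings, and generator counting. But Steps~2--3 do not produce the stated bound, and your one concrete new claim is unsupported.

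You assert that the extension of $\chi_{\cyc}^{1-k}$ by $\chi_{\cyc}^{2-k}$ (that is, $\Ext^1_A(\rho_1,\rho_2)$ in the paper's indexing) lands in $H^1_f(\Q,\Q_p(1))=0$. At an infinite-slope point the family is only known to be potentially semistable at the dense classical points. So the only $p$-adic input is the $B_{\st}$-period of Appendix~\ref{section: pstperiod}, not a crystalline period as in the finite-slope case, where \cite[Theorem 4.3.6]{BC} over the Andreatta--Iovita--Pilloni eigenvariety was used.

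A semistable period with $\varphi=p^{k-1}$ imposes nothing here. Indeed $H^1_g(\Q_p,\Q_p(1))=H^1(\Q_p,\Q_p(1))$, and $N$ may send the $\varphi=p^{k-1}$ line onto the $\varphi=p^{k-2}$ line. The Kummer class of $p$ is exactly such a class, and it spans the one-dimensional $H^1(\Gal_{\Q,\{p,\infty\}},\chi_{\cyc})$. So one only gets $A_{21}=A_{20}A_{01}+g\bbT_x$, hence $\frakm_x=A_{10}A_{01}+gA_{12}$.

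This extra term is what shapes the bound. From $\dim H^1(\Q,\chi_{\cyc}^{-1})=1$ one gets $A_{12}=A_{10}A_{02}+g''\bbT_x$, so $\frakm_x=A_{10}A_{01}+gA_{10}A_{02}+gg''\bbT_x$. By \cite[Lemma 1.8.5]{BC}, $A_{02}\cong A_{10}$, so $A_{10}A_{02}$ needs at most $\frac{(d+1)(d+2)}{2}$ generators. Had your vanishing claim held, you would get $\frakm_x=A_{10}A_{01}$ and $t\le d(d+1)$. Nothing in your tally produces $\frac{(d+1)(3d+2)}{2}+1$, so Step~3 is not actually carried out.

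Two further inputs are missing or misplaced.

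First, the bound $\#\mathrm{min.gen}(A_{10})\le d+1$ does not come from the flag. $\Ext^1_A(\rho_2,\rho_0)$ has no condition at $p$ and lands in $H^1_{f'}(\Q,\rho_f(k-2))$. That group is at most one-dimensional only because of Kato's vanishing $H^1_f(\Q,\rho_f(k-2))=0$, plus the local count $\dim H^1(\Q_p,\rho_f(k-2))/H^1_f(\Q_p,\rho_f(k-2))=1$. One then uses $A_{10}=A_{12}A_{20}+g'\bbT_x\subset A_{10}\frakm_x+g''A_{20}+g'\bbT_x$, with $\#\mathrm{min.gen}(A_{20})\le d$ from the duality $\Ext^1_A(\rho_0,\rho_2)\cong\Ext^1_A(\rho_1,\rho_0)$.

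Second, for $x=\SK(f)_1$ one has $v_p(U_{\mathrm{Si}})=k-1$. So the $P_{\mathrm{Kl}}$-refinement begins with $\D_{\pst}(\chi_{\cyc}^{1-k})$, not $\chi_{\cyc}^{2-k}$. The appendix is applied with $F=U_{\mathrm{Si}}$ to extensions with quotient $\chi_{\cyc}^{1-k}$. Then (REG) and $\D_{\cris}(\rho_{f,p})=0$ (by supercuspidality) make such an extension de Rham, and $H^1_g=H^1_f$ for $\rho_f(k-1)$. No Sen-theoretic constancy of Hodge--Tate weights is needed.

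Your heuristic for Step~1 is also inaccurate. Saito--Kurokawa lifts of other infinite-slope forms could a priori appear on the parallel weight line. They are excluded by criticality of their $P_{\mathrm{Kl}}$-flags and Corollary~\ref{Corollary: critical points cannot accumulate}, not by non-approximability. Residual irreducibility is used, together with Khare's theorem, to exclude Yoshida points; it is not what gives the GMA structure.
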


\begin{Remark}
    If $f$ is an elliptic cuspidal eigenform of weight $2k-2$, the Bloch--Kato conjecture predicts that \[
        \mathrm{ord}_{s=k-1}L(f, s) = \dim H^1_f(\Q, \rho_f(k-1)).
    \]
    In particular, if $f$ has sign $-1$, then $H^1_f(\Q, \rho_f(k-1))$ is expected to be non-vanishing. Therefore, Theorem \ref{Theorem: non-vanishing; fs, intro} and Theorem \ref{Theorem: non-vanishing; infs, intro} can be viewed as partial results towards the Bloch--Kato conjecture. To our knowledge, previous known results in this direction require $f$ to be ordinary (see \cite{Skinner--Urban} and \cite{BB22}), while the similar result in 
    \cite{BC} for unitary automorphic forms requires a finite-slope assumption. On the other hand, the approach using Gross--Zagier formula and Kolyvagin method requires knowing the order of vanishing of the $L$-function to be exactly $1$; however, under such a hypothesis, one usually can show that Bloch--Kato Selmer group has dimension exactly 1. 
\end{Remark}

\begin{Remark}
    Our study raises two natural questions for future investigation:
    \begin{enumerate}
        \item[(i)] The Bloch--Kato conjecture, together with Theorem \ref{Theorem: non-vanishing; fs, intro} and Theorem \ref{Theorem: non-vanishing; infs, intro}, suggests that the local geometry of small parabolic eigenvarieties at these Saito--Kurokawa points is an interesting subject for further study.
        \item[(ii)] As remarked in \cite[Remark 9.1.5]{BC}, it would be very interesting to construct $p$-adic $L$-functions and study their behaviour around these Saito--Kurokawa points. As suggested by Iwasawa theory, these $p$-adic $L$-functions could hopefully shed some light on understanding the upper bound of $\dim H^1_f(\Q, \rho_f(k-1))$.
    \end{enumerate}
    We hope to return to these in future work.
\end{Remark}

\subsection{Organisation of the paper}
The present paper consists of the following three parts: 

\paragraph{Part I: Preliminaries (Sect. \ref{section: (phi, Gamma)} and Sect. \ref{section: families of Galois reps}).}  ~\\
\indent The purpose of Sect. \ref{section: (phi, Gamma)} is to set up notations and conventions in the theory of $(\varphi, \Gamma)$-modules. We begin by reviewing the classical theory of $(\varphi, \Gamma)$-modules in Sect. \ref{subsection: p-adic Hodge preparation}. The classical theory of $(\varphi,\Gamma)$-modules provides a powerful tool for studying Galois representations by translating them into linear-algebraic objects, which are often more amenable to explicit manipulation. However, this approach has a drawback when one considers Galois representations valued in an algebraic group $G$. In order to apply the theory, one must first choose a faithful representation $G \rightarrow \GL_n$, and this procedure often forgets the underlying $G$-structure. A natural way to retain this structure is to work instead with $G$-torsors and to use the language of Tannakian formalism. Motivated by this perspective, we review the theory of $G$-torsors and Tannakian formalism in Sect. \ref{subsection: review of G-torsors}. In Sect. \ref{subsection: (phi, Gamma)-modules with G-structure}, we introduce the notion of $(\varphi,\Gamma)$-modules with $G$-structure by combining the classical theory of $(\varphi,\Gamma)$-modules with the Tannakian formalism. When $P$ is a parabolic subgroup of an algebraic group $G$, we introduce the notion of $P$-flags on $(\varphi, \Gamma)$-modules with $G$-structure (resp., $P$-refinements for $G$-valued Galois representations) in Sect. \ref{subsection: P-flag} (resp., Sect. \ref{subsection: refinements}). These notions generalise the classical notions of \emph{triangulation} and \emph{refinements} on $(\varphi, \Gamma)$-modules.

Since we are interested in families of Galois representations attached to small parabolic eigenvarieties for holomorphic Siegel modular forms of genus $2$, the main theme of Sect. \ref{section: families of Galois reps} is families of symplectic Galois representations. In Sect. \ref{subsection: GSp}, we review some basic facts about the general symplectic group $\GSp_{2n}$; in Sect. \ref{subsection: symp. det}, we review the theory of symplectic determinants by following \cite{MQ-SympDet}. In Sect. \ref{subsection: families of Galois reps}, we introduce notion of \emph{refined families of symplectic Galois representations} by combining our studies in Sect. \ref{subsection: P-flag}, Sect. \ref{subsection: refinements}, and the theory of symplectic determinants. These notions generalise existing notions of \emph{refined families of Galois representations} in the literature (see, for example, \cite{BC} and \cite{Bergdall-paraboline}). Finally, we study certain deformation problems in Sect. \ref{subsection: deformation}.

\paragraph{Part II: Small parabolic eigenvarieties and families of symplectic Galois representations (Sect. \ref{section: small par. eigenvar. for H0} and Sect. \ref{section: R=T}).} ~\\
\indent Sect. \ref{section: small par. eigenvar. for H0} is dedicated to constructing small parabolic eigenvarieties for Siegel cuspforms of genus $2$. We start by reviewing Siegel threefolds and (holomorphic) algebraic Siegel modular forms in Sect. \ref{subsection: classical Siegel modular forms}. Sect. \ref{subsection: finite-slope parts, H0} is a slight detour in which we study the \emph{finite-slope parts} of spaces of (holomorphic) Siegel modular forms. In particular, inspired by the discussion in \cite[Sect. 4.2]{BP-HigherColeman}, we show in Proposition \ref{Proposition: finite-slope part it independent to the level at p, H0} that the finite-slope parts of spaces of (holomorphic) Siegel modular forms are independent of the level at $p$. In Sect. \ref{subsection: p-adic weight spaces}, we introduce the relevant $p$-adic weight spaces by following \cite[Sect. 3.1]{BSW-ParabolicEigen}. Inspired by the work of Andreatta--Iovita--Pilloni (\cite{AIP-2015}) and Pilloni (\cite{Pilloni-higherHidaColemanGSp4}), we construct families of automorphic sheaves in Sect. \ref{subsection: families of automorphic sheaves}, and use them to define overconvergent Siegel modular forms over families of weights. Then, in Sect. \ref{subsection: small par. eigenvar.}, we construct small parabolic eigenvarieties for (holomorphic) Siegel cuspforms of genus $2$.

Inspired by \cite[Chapter 7]{BC}, the goal of Sect. \ref{section: R=T} is to construct refined families of Galois representations over the small parabolic eigenvarieties constructed in Sect. \ref{subsection: small par. eigenvar.} and prove an infinitesimal $R=\bbT$ theorem. We begin by reviewing the Galois representations attached to Siegel cuspforms of genus $2$ in Sect. \ref{subsection: Gal reps for GSp4}. We also state and justify the hypothesis on \emph{explicit local-global compatibility at $p$} (Hypothesis \ref{Hypothesis: explicit local-global compatibility at p}) therein. In Sect. \ref{subsection: families of Gal reps over small par. eigenvar}, we construct the refined families of Galois representations over the small parabolic eigenvarieties. Finally, in Sect. \ref{subsection: adj Selmer}, we discuss the relationship between the adjoint Selmer groups and the deformation problems studied in Sect. \ref{subsection: deformation}. Under the usual conjecture on the vanishing of the adjoint Selmer group, we prove the infinitesimal $R=\bbT$ theorem in Theorem \ref{Theorem: infinitesimal R=T}.

\paragraph{Part III: Applications (Sect. \ref{section: SK points; finite-slope} and Sect. \ref{section: SK points; infinite slope}).} ~\\
\indent As an application of having families of Galois representations over the small parabolic eigenvarieties, we prove non-vanishing of Bloch--Kato Selmer groups of elliptic cuspidal eigenforms by studying their Saito--Kurokawa lifts. Our approach is highly inspired by \cite[Chapter 9]{BC} and \cite{BB22}. We should mention that the eigenvarieties considered in this part are slightly different from the one considered in the previous part (Remark \ref{Remark: slightly different eigenvariety} and Remark \ref{Remark: slightly different eigenvariety; infintie slope}). We make such changes due to technical convenience for our applications.

In Sect. \ref{section: SK points; finite-slope}, we study the case when the cuspidal eigenform is of finite slope. We begin by reviewing the general theory of Saito--Kurokawa lifts in Sect. \ref{subsection: SK lifts}. Then, we focus on the case when the cuspidal eigenform is of finite slope in Sect. \ref{subsection: finite-slope SK lifts}. In particular, we specify our assumptions on the finite-slope cuspidal eigenforms in Assumption \ref{Assumption: nice properties for finite-slope cuspidal newform} and show that the local symplectic Galois determinant at our Saito--Kurokawa point in the small Klingen parabolic eigenvariety is as irreducible as it can be (Proposition \ref{Proposition: reducibility idea = maximal ideal; fs}). In Sect. \ref{subsection: BK Selmer; fs}, we study the extension classes coming from the local symplectic Galois determinant and prove the desired non-vanishing result of Bloch--Kato Selmer group in Theorem \ref{Theorem: bound of tangent space; fs} and Corollary \ref{Corollary: non-vanishing of BK Selmer group; fs}.

In Sect. \ref{section: SK points; infinite slope}, we switch our focus to infinite-slope cuspidal eigenforms. We explain in Sect. \ref{subsection: SK lifts for infinite-slope cuspforms} why their Saito--Kurokawa lifts contribute to points on the small Siegel parabolic eigenvariety (under Hypothesis \ref{Hypothesis: explicit local-global compatibility at p}). In Sect. \ref{subsection: heuristic of geometry}, we discuss what kind of classical Siegel cuspforms can appear in a sufficiently small affinoid open neighbourhood of our Saito--Kurokawa point. In particular, in Proposition \ref{Proposition: a generic family passing through SK lifts of infinite-slope forms}, we show that, under some mild assumptions and up to shrinking, except for the Saito--Kurokawa point, the other classical points in the neighbourhood are given by genuine Siegel cuspforms. Finally, we again study the extension classes coming from the local symplectic Galois determinant in Sect. \ref{subsection: BK Selmer groups; infinite slope} and prove the desired non-vanishing result of Bloch--Kato Selmer group in Theorem \ref{Theorem: lower bound theorem for infinite forms} and Corollary \ref{Corollary: non-vanishing}.

\subsection*{Conventions}

Throughout this paper, we fix the following: \begin{itemize}
    \item Unless specified, every ring in this paper will be a commutative ring with multiplicative identity.
    \item For any prime number $\ell$, we fix once and forever an algebraic closure $\overline{\Q}_{\ell}$ of $\Q_{\ell}$ and an algebraic isomorphism $\C_{\ell}\simeq \C$, where $\C_{\ell}$ is the $\ell$-adic completion of $\overline{\Q}_{\ell}$. For any finite extension $F$ over $\Q_{\ell}$, we write $\Gal_{F}$ for the absolute Galois group $\Gal(\overline{\Q}_{\ell}/F)$. We also fix the $\ell$-adic absolute value on $\C_{\ell}$ so that $|\ell|=\ell^{-1}$.
    \item We also fix an algebraic closure $\overline{\Q}$ of $\Q$ and embeddings $\overline{\Q}_{\ell} \hookleftarrow \overline{\Q} \hookrightarrow \C$, which is compatible with the chosen isomorphisms $\C_{\ell} \simeq \C$. We analogously write $\Gal_{\Q}$ for the absolute group $\Gal(\overline{\Q}/\Q)$ and identify $\Gal_{\Q_{\ell}}$ as a (decomposition) subgroup of $\Gal_{\Q}$. 
    \item We fix an odd prime number $p\in \Z_{> 0}$. 
    \item[$\bullet$] For $m, n\in \Z_{\geq 1}$ and any set $R$, we denote by $M_n(R)$ (resp., $M_{n\times m}(R)$) the set of $(n\times n)$-matrices (resp., ($n\times m$)-matrices) with coefficients in $R$.
    \item[$\bullet$] The transpose of a matrix $\bfalpha$ is denoted by $\trans\bfalpha$.
    \item For any $n\in \Z_{\geq 1}$, we denote by $\one_n$ the $n\times n$ identity matrix and denote by $\oneanti_n$ the $n\times n$ anti-diagonal matrix whose non-zero entries are $1$; \emph{i.e.,} \[\one_n=\begin{pmatrix} 1& & \\ & \ddots & \\ & &1\end{pmatrix}\quad\text{ and }\quad\oneanti_n=\begin{pmatrix} & & 1\\ & \iddots & \\ 1 & &\end{pmatrix}.\]
    \item For any rigid analytic space $\calX$ over $\Spa(\Q_p, \Z_p)$, we write $\calX(\overline{\Q}_p) = \bigcup_{[F:\Q_p]<\infty} \calX(F)$. 
\end{itemize}

\subsection*{Acknowledgement} 
It will be clear to the readers that this paper is highly inspired by the work of Joël Bellaïche and Gaëtan Chenevier. We thank Raúl Alonso Rodríguez, Daniel Barrera Salazar, Adel Betina, Kâz{\i}m Büyükboduk, Antonio Cauchi, Andrew Graham, Peter Neamti, Giovanni Rosso, and Chris Williams for helpful discussions and interesting conversations regarding this work. We especially thank Adel Betina and Giovanni Rosso for suggesting us to look at Saito--Kurokawa lifts of infinite-slope forms and for follow-up discussions. We also thank Mohamed Moakher for answering our questions regarding his work on symplectic determinants; and Charlotte Clare-Hunt and James Newton for pointing out mistakes in an early version. 

This work has been supported by the International Centre for Mathematical Sciences (ICMS), Edinburgh, under the Research in Groups programme. We thank the hospitality of the institute during our stay in Edinburgh in May 2025. We also thank ICMS and the organisers of the conference \emph{p-adic Families of Automorphic Forms: Theories and Applications} since the initial ideas of this project grew out when all of us were participating at that conference. Many interesting conversations we had were during the conference \emph{Iwasawa 2025}; we thus also thank the organisers and National Center for Theoretical Sciences (NCTS), Taipei, for such a nice event. 

While working on this project, M.M. was supported by the NSERC-FRQ NOVA program under the Grant titled NSERC Alliance Grants ALLRP 577144 - 22; F.E.T. was supported by the University of Nottingham School of Mathematical Sciences studentship; J.-F.W. was supported by Taighde \'{E}ireann -- Research Ireland under Grant number IRCLA/2023/849 (HighCritical).

\section{\texorpdfstring{$(\varphi, \Gamma)$}{(phi, Gamma)}-modules with \texorpdfstring{$G$}{G}-structure}\label{section: (phi, Gamma)}

In this section, we introduce the notion of \emph{$(\varphi, \Gamma)$-modules with $G$-structure}. Our approach is highly inspired by the geometric picture of perfect Robba rings in \cite[Lecture 12]{Scholze-Weinstein-Berkeley} and the Tannakian formalism \cite[Appendix to Lecture 19]{Scholze-Weinstein-Berkeley} and \cite[Appendix A]{Bellovin-GDeform}. 

Throughout this section, we fix an unramified finite field extension $F$ over $\Q_p$\footnote{ We choose to present the theory only for unramified field extensions over $\Q_p$. It is possible to have a more general theory. However, in our later applications, we will only look at the situation where the finite field extension of $\Q_p$ is unramified. We thus leave the more general case for interested readers. }; denote its ring of integers by $\calO_F$ and residue field by $\F_q$, where $q$ is the cardinality of the residue field. We also fix a compatible system of primitive $p$-power roots of unity $\zeta_{p^n}\in \overline{\Q}_p$ and let \[
    F_n \coloneq F(\zeta_{p^n}), \quad F_{\infty} \coloneq \bigcup_n F_n, \quad \text{ and } \quad F^{\cyc} \coloneq \text{$p$-adic completion of $F_{\infty}$}.
\]
We write $H = H_F = \Gal(\overline{\Q}_p/F_{\infty})$ and $\Gamma = \Gamma_F = \Gal_{F}/H_F$. Note that $\Gamma$ acts on $F_{\infty}$ continuously and so the action extends to $F^{\cyc}$.

\subsection{Robba rings and \texorpdfstring{$(\varphi, \Gamma)$}{(phi, Gamma)}-modules }\label{subsection: p-adic Hodge preparation}

Consider the following rings (see \cite{Fontaine-pRep1}): \[
    \begin{array}{cll}
        \calO_{\C_p}^{\flat} \coloneq \varprojlim_{x\mapsto x^p}\calO_{\C_p}, & A_{\inf} \coloneq W(\calO_{\C_p}^{\flat}), & B_{\inf} \coloneq A_{\inf}[1/p]\\
        \C_p^{\flat} \coloneq \varprojlim_{x\mapsto x^p} \C_p, & \widetilde{A}_{\inf} \coloneq W(\C_p^{\flat}), & \widetilde{B}_{\inf} \coloneq \widetilde{A}_{\inf}[1/p],
    \end{array}
\] 
where $W(-)$ is the ring of Witt vectors. We denote by $[-]: \calO_{\C_p}^{\flat} \rightarrow A_{\inf}$ the Teichmüller character, which is a multiplicative map. Note that the fixed compatible system of primitive $p$-power roots of unity $(\zeta_{p^n})_n$ defines an element $\bfepsilon\in \calO_{\C_p}^{\flat}$. We set $\bfpi:= [\bfepsilon]-1$. 

Let $A_{F} := \calO_{F}[\![\bfpi ]\!] \subset A_{\inf}$, equipped with the $(p, \bfpi)$-adic topology. We also consider \[
    \widetilde{A}_{F} \coloneq A_{F}[\bfpi^{-1}]^{\wedge}_p \quad \text{ and }\quad \widetilde{B}_{F} = \widetilde{A}_{F}[1/p],
\] 
where $\bullet^{\wedge}_p$ stands for the $p$-adic completion. These rings receive the following actions: \begin{itemize}
    \item the $\varphi$-action given by $\varphi \cdot \bfpi = (\bfpi +1)^p -1$; 
    \item the $\Gal_{\Q_p}$-action given by $\sigma \cdot \bfpi = (\bfpi +1)^{\chi_{\cyc}(\sigma)}-1$,
\end{itemize}
where $\chi_{\cyc}$ stands for the $p$-adic cyclotomic character.

By construction, one sees that both $\widetilde{B}_{F}\hookrightarrow \widetilde{B}_{\inf}$ is a field extension. We then consider \[
    \widetilde{B}^{\unr} := \text{ $p$-adic completion of the maximal unramified extension of $\widetilde{B}_{F}$ in $\widetilde{B}_{\inf}$}
\]
and set \[
    \widetilde{A}^{\unr} \coloneq \widetilde{B}^{\unr} \cap \widetilde{A}_{\inf} \quad \text{ and }\quad A^{\unr} = \widetilde{B}^{\unr} \cap A_{\inf}.
\]
According to \cite[Proposition 1.1]{Berger-pDifferential}, we have $\Aut(\widetilde{B}^{\unr}/\widetilde{B}_{F}) = H$ and so we have the identifications\[
    A_{F} = A^{\unr, H}, \quad \widetilde{A}_{F} = \widetilde{A}^{\unr, H}, \quad \text{ and }\quad \widetilde{B}_{F} = \widetilde{B}^{\unr, H}.
\]
We summarise the above rings in the following commutative diagram \[
    \begin{tikzcd}
        A_{\inf} \arrow[r] & \widetilde{A}_{\inf} \arrow[r] & \widetilde{B}_{\inf}\\
        A^{\unr} \arrow[r]\arrow[u] & \widetilde{A}^{\unr}\arrow[r]\arrow[u] & \widetilde{B}^{\unr}\arrow[u]\\
        A_{F}\arrow[r]\arrow[u] & \widetilde{A}_{F}\arrow[r]\arrow[u] & \widetilde{B}_{F}\arrow[u]
    \end{tikzcd}.
\]
Each arrow in this diagram is an inclusion.

Consider the adic space $\Spa(A_F, A_F)$.\footnote{ Since $\bfpi$ is transcendental over $\calO_F$, we have an isomorphism of topological rings \[
    \calO_F[\![T]\!] \rightarrow \calO_F[\![\bfpi]\!]= A_F, \quad T \mapsto \bfpi,
\] where $\calO_F[\![T]\!]$ is equipped with a $(p, T)$-adic topology; it is well-known that $\Spa(\calO_F[\![T]\!], \calO_F[\![T]\!])$ is an adic space (\cite[Theorem 2.2]{Huber-1994}).} It has a unique non-analytic point $x_{\F_q}$ given by $A_F \rightarrow \F_q$; we consider the analytic adic space \[
    \calY = \calY_F \coloneq \Spa(A_F, A_F) \smallsetminus \{x_{\F_q}\}.
\]

Similarly as in \cite[Proposition 4.2.6]{Scholze-Weinstein-Berkeley}, we have a surjective continuous map \[
    \vartheta: \calY \rightarrow [0,\infty], \quad x \mapsto \frac{\log |\bfpi(\widetilde{x})|}{\log |p(\widetilde{x})|},\footnote{The similar map in \cite{Scholze-Weinstein-Berkeley} is denoted by $\kappa$.}
\]
where $\widetilde{x}$ is the rank-1 generalisation of $x$ (\cite[Lemma 4.2.2]{Scholze-Weinstein-Berkeley}). One observes that \begin{equation}\label{eq: vartheta and varphi}
    \vartheta \circ \varphi = p \vartheta.
\end{equation} We can depict the discussion above as in Figure \ref{Fig: AF; unramified}. A similar figure for $\Spa(A_{\inf}, A_{\inf})$ can be found in \cite[Figure 12.1]{Scholze-Weinstein-Berkeley}.

\begin{figure}[ht]
\centering
\[
\begin{tikzpicture}
    \draw[thick, ->] (0,0) -- (0,4.5);
    \draw[thick, ->] (0,0) -- (4.5,0);
    \fill[black] (0,0) circle (0.07cm) node[anchor=north] {$x_{\F_q}$};
    \draw[dashed] (0.3, 0) arc (0:90:0.3);
    \shade[inner color=blue,outer color=white] (3.5,0) circle (0.5cm);
    \draw (3.5, -0.5) node[anchor=north] {$x_{\F_q(\!(\overline{\bfpi})\!)}$};
    \shade[inner color=red, outer color = white] (0, 3.5) circle (0.5cm);
    \draw (-1.3, 3.5) node[anchor=west] {$x_{F}$};
    \draw[thick] (6,0) arc (0:90:6);
    \draw[thick] (5.8,0) -- (6.2,0) node[anchor = north]{$0$};
    \draw[thick] (0,5.8) -- (0,6.2) node[anchor = east]{$\infty$};
    \draw[thick, ->] (3.3,3.3) -- (4, 4);
    \draw (3.5, 3.75) node{$\vartheta$};
    \draw[thick, ->] (3.46, 2) arc (30:60:4);
    \draw (3, 3) node{$\varphi$};
    \draw (8, 0) node {$\Spa(A_{F}, A_{F})$};
\end{tikzpicture}
\]
\caption{A depiction of $\Spa(A_{F}, A_{F})$. The point $x_F$ (resp., $x_{\F_q(\!(\overline{\bfpi})\!)}$) corresponds to $A_F \rightarrow \calO_{F} \rightarrow F$ (resp., $A_F \xrightarrow{\mod \varpi} \F_q[\![\overline{\bfpi}]\!] \rightarrow \F_q(\!(\overline{\bfpi})\!)$, where $\varpi$ is a uniformiser of $\calO_{F}$ and $\overline{\bfpi}$ is the image of $\bfpi$ in $\C_p^{\flat}$). }
    \label{Fig: AF; unramified}
\end{figure}

For any $[a, b]\subset (0, \infty)$, we define \[
    \calY_{[a, b]} \coloneq \text{the interior of the preimage }\vartheta^{-1}[a,b].
\]
For any reduced affinoid algebra $A$ over $\Q_p$ (à la Tate), denote by $A^\circ$ the subring of power-bounded elements. Let \[
    \calY_{[a, b], A} \coloneq \calY_{[a,b]} \times_{\Spa(\Q_p, \Z_p)} \Spa(A, A^{\circ}).
\]
Note that this fibred product exists since both $\calY_{[a, b]}$ and $\Spa(A, A^{\circ})$ are analytic adic spaces. We then define \[
    R_{[a, b], A} \coloneq H^0(\calY_{[a, b], A}, \scrO_{\calY_{[a, b], A}}) \quad \text{ and }\quad R_{[a, b], A}^+ \coloneq H^0(\calY_{[a, b], A}, \scrO_{\calY_{[a, b], A}}^+).
\]
By the construction, if $[a',b'] \subset [a, b]$, we have the natural restriction map of Huber pairs \begin{equation}\label{eq: restriction map [a,b] to [a',b']}
    (R_{[a, b], A}, R_{[a, b], A}^+) \rightarrow (R_{[a', b'], A}, R_{[a', b'], A}^+).
\end{equation}
We then define \[
    (R_{(0, b], A}, R_{(0, b], A}^+) := \varprojlim_{a\rightarrow 0} (R_{[a, b], A}, R_{[a, b], A}^+) \quad \text{ and }\quad (R_{\rig, A}, R_{\rig, A}^+) := \varinjlim_{b\rightarrow 0} (R_{(0, b], A}, R_{(0, b], A}^+).
\]
Geometrically, if we define $\calY_{(0, b], A} \coloneq \bigcup_{[a, b]\subset (0,b]}\calY_{[a, b],A}$, then $(R_{(0, b], A}, R_{(0, b], A}^+)$ is nothing but the global sections of the structure sheaves on $\calY_{(0, b], A}$.

By extending the $\varphi$-action trivially on $A$, \eqref{eq: vartheta and varphi} yields a morphism of Huber pairs \[
    \varphi: (R_{[a, b], A}, R_{[a, b], A}^+) \rightarrow (R_{[a/p, b/p], A}, R_{[a/p, b/p], A}^+).
\] 
Hence, we have \[
    \varphi: (R_{(0, b], A}, R_{(0, b], A}^+) \rightarrow (R_{(0, b/p], A}, R_{(0, b/p], A}^+)
\]
and $(R_{\rig, A}, R_{\rig, A}^+)$ is stable under $\varphi$.
We also extend the $\Gamma$-action by taking the trivial action on $A$.

\begin{Definition}\label{Definition: (phi, Gamma)-modules}
    Let $A$ be a reduced affinoid algebra over $\Q_p$ (à la Tate). \begin{enumerate}
        \item[(i)] Fix $b\in \Q_{>0}$ and $n\in \Z_{\geq 0}$. A \textbf{$\varphi$-module of rank $n$ over $R_{(0, b], A}$} is a finitely generated projective $R_{(0, b], A}$-module  that satisfies the following properties: \begin{itemize}
            \item it is locally free of rank $n$;
            \item it is equipped with an isomorphism \[
                \varphi_D:  D\otimes_{R_{(0, b], A}, \varphi}R_{(0, b/p], A} \rightarrow D\otimes_{R_{(0, b], A}}R_{(0, b/p], A}.
            \]
        \end{itemize}
        \item[(ii)] Fix $b\in \Q_{>0}$ and $n\in \Z_{\geq 0}$. A \textbf{$(\varphi, \Gamma)$-module of rank $n$ over $R_{(0, b], A}$} is a $\varphi$-module $D$ over $R_{(0, b], A}$ equipped with a semilinear action of $\Gamma$ which commutes with $\varphi_D$. 
        \item[(iii)] A \textbf{$(\varphi, \Gamma)$-module of rank $n$ over $A$} is a finitely generated projective $R_{\rig, A}$-module $D$ such that there exists $b\in \Q_{>0}$, a $(\varphi, \Gamma)$-module $D_{(0, b]}$ of rank $n$ over $(R_{(0, b], A}, R_{(0, b], A}^+)$, and an isomorphism \[
            D \cong D_{(0, b]}\otimes_{R_{(0,b], A}}R_{\rig, A}.
        \]
    \end{enumerate}
    We denote by $\Mod_{R_{(0, b], A}}^{\varphi}$ (resp., $\Mod_{R_{(0, b], A}}^{(\varphi, \Gamma)}$; resp., $\Mod_A^{(\varphi, \Gamma)}$) the category of $\varphi$-modules over $R_{(0, b], A}$ (resp., $(\varphi, \Gamma)$-modules over $R_{(0, b], A}$; resp., $(\varphi, \Gamma)$-modules over $A$).
\end{Definition}

\begin{Remark}\label{Remark: geometric meaning of (phi, Gamma)-modules}
    Geometrically, one thinks of a $\varphi$-module $D$ over $R_{(0, b], A}$ as a vector bundle over $\calY_{(0, b], A}$ equipped with an isomorphism \[
        \varphi_D: \varphi^*D \rightarrow D|_{\calY_{(0, b/p], A}}.
    \]
    Note that $\varphi^* D$ is a vector bundle on $\calY_{(0, b/p], A}$.
\end{Remark}

Let $b,b'\in \Q_{>0}$ with $b\geq b'$. Given a $(\varphi, \Gamma)$-module $D$ over $R_{(0, b], A}$, it gives rise to a $(\varphi, \Gamma)$-module over $R_{(0, b'], A}$ via base change. In other words, we have natural functors \[
    \Mod_{R_{(0, b], A}}^{(\varphi, \Gamma)} \rightarrow \Mod_{R_{(0, b'], A}}^{(\varphi, \Gamma)}.
\] 
Hence, by construction, the following lemma is immediate.

\begin{Lemma}\label{Lemma: colimit of categories}
    Let $A$ be a reduced affinoid algebra over $\Q_p$ (à la Tate). Then, the natural functor \[
        2-\varinjlim_{b\rightarrow 0}\Mod_{R_{(0, b], A}}^{(\varphi, \Gamma)} \rightarrow \Mod_{A}^{(\varphi, \Gamma)}
    \]
    is an equivalence of category. 
\end{Lemma}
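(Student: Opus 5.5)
The plan is to check directly that the functor $\Phi: 2\text{-}\varinjlim_{b\to 0}\Mod_{R_{(0,b],A}}^{(\varphi,\Gamma)} \to \Mod_A^{(\varphi,\Gamma)}$, given by $D_{(0,b]}\mapsto D_{(0,b]}\otimes_{R_{(0,b],A}}R_{\rig,A}$, is essentially surjective and fully faithful. In the $2$-colimit, an object is a pair $(b, D_{(0,b]})$, and morphisms are computed as
\[
\Hom\bigl((b,D),(b',D')\bigr) = \varinjlim_{b''\leq \min(b,b')} \Hom_{(\varphi,\Gamma)}\bigl(D\otimes R_{(0,b''],A},\, D'\otimes R_{(0,b''],A}\bigr).
\]
I will first record that $\Phi$ is well defined and compatible with the transition functors, since base change is transitive: $R_{(0,b],A}\to R_{(0,b'],A}\to R_{\rig,A}$. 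Essential surjectivity then holds by definition: Definition \ref{Definition: (phi, Gamma)-modules}(iii) says exactly that every $(\varphi,\Gamma)$-module over $A$ is isomorphic to $\Phi(b,D_{(0,b]})$ for some $b$ and some $D_{(0,b]}$.

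For full faithfulness, the key input is that a finitely generated projective module over a filtered colimit of rings descends. Set $M=\Hom_{R_{(0,b],A}}(D,D')$, which is again finite projective. Because $D$ and $D'$ are finite projective, for every $b''\leq b$ we have
\[
\Hom_{R_{\rig,A}}(D_{\rig},D'_{\rig}) = M\otimes R_{\rig,A} = \varinjlim_{b''} M\otimes R_{(0,b''],A}.
\]
This uses that $R_{\rig,A}=\varinjlim_{b''} R_{(0,b''],A}$ as rings, and that tensor products commute with colimits. The same argument applies to $\Hom(D\otimes_\varphi R_{(0,b''/p]}, D'\otimes R_{(0,b''/p]})$. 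With these identifications, a $\varphi$- and $\Gamma$-equivariant map over $R_{\rig,A}$ descends to a module map $f$ over some $R_{(0,b''],A}$.

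The condition $\varphi_{D'}\circ\varphi^*f = f\circ\varphi_D$ is an equality of two elements of a Hom module that become equal after passing to $R_{\rig,A}$. Hence they already agree over some smaller $R_{(0,b'''],A}$, and this gives fullness. Faithfulness follows in the same way, since a map that vanishes over $R_{\rig,A}$ vanishes over some $R_{(0,b''],A}$.

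The hard step is $\Gamma$-equivariance, because $\Gamma$ is infinite and we cannot shrink $b$ once for each $\sigma\in\Gamma$. My first approach is to write the condition as $\sigma f\sigma^{-1}-f=0$ and show it is a closed condition that can be checked after $b''$ is fixed. The point is that for $b''$ small the maps $R_{(0,b''],A}\to R_{\rig,A}$ are injective: $\calY_{(0,b'']}$ is connected, and the transition maps are restrictions to smaller annuli, so injectivity follows from the identity principle. Injectivity also holds after tensoring with the finite projective module $M$. So once $f$ is defined over $R_{(0,b''],A}$, the identity $\sigma f\sigma^{-1}=f$, which holds in $M\otimes R_{\rig,A}$, already holds in $M\otimes R_{(0,b''],A}$ for every $\sigma$ simultaneously. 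This injectivity (the identity principle on the half-open annuli, with the reduced coefficients $A$) is the main point I expect to need care.
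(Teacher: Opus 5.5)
Your argument is correct and matches the paper's reasoning. The paper only says the lemma is immediate from the construction: essential surjectivity is built into Definition~\ref{Definition: (phi, Gamma)-modules}(iii), and full faithfulness follows by descending Hom modules of finite projective modules along $R_{(0,b''],A}\to R_{\rig,A}$, which is exactly what you spell out.

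One small simplification is available. Since $M\otimes R_{(0,b''],A}\to M\otimes R_{\rig,A}$ is already injective (by flatness of $M$ and uniqueness of Laurent expansions with coefficients in $A$), the $\varphi$- and $\Gamma$-compatibilities hold at whatever level $f$ is defined. So the extra shrinking in your fullness step is unnecessary.
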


The following proposition summarises the relationships between $\Mod_{A}^{(\varphi, \Gamma)}$ and other categories arising from $p$-adic Hodge theory.

\begin{Proposition}\label{Proposition: from (phi, Gamma) to other p-adic Hodge theory modules}
    Let $L$ be a (large enough) finite field extension of $\Q_p$ containing all the embeddings of $F$ in $\C_p$. We have the following $\otimes$-functors: 
    \begin{enumerate}
        \item[(i)] Let $\Rep_{\Gal_F}(L)$ be the category of continuous $\Gal_F$-representations with coefficients in $L$. There exists an exact functor \[
            \D_{\rig}^{\normalfont \dagger}: \Rep_{\Gal_F}(L) \rightarrow \Mod_L^{(\varphi, \Gamma)}.
        \]
        \item[(ii)] Let $\Mod_{F\otimes_{\Q_p}L}^{\Fil^{\underline{\bullet}}}$ be the category of (decreasingly) filtred finite-free $F\otimes_{\Q_p}L$-modules. There exists a functor \[
            \calD_{\dR}: \Mod_{L}^{(\varphi, \Gamma)} \rightarrow \Mod_{F\otimes_{\Q_p}L}^{\Fil^{\bullet}}
        \]
        such that for any $\rho\in \Rep_{\Gal_F}(L)$, $\calD_{\dR}(\D_{\rig}^{\normalfont \dagger}(\rho)) = \D_{\dR}(\rho)$, where $\D_{\dR}$ is Fontaine's de Rham functor. 
        \item[(iii)] Let $\Mod_{F\otimes_{\Q_p}L}^{(\varphi, N)}$ be the category of (decreasingly) filtred $(\varphi, N)$-modules over $F\otimes_{\Q_p}L$. There exists a functor \[
            \calD_{\st}: \Mod_{L}^{(\varphi, \Gamma)} \rightarrow \Mod_{F\otimes_{\Q_p}L}^{(\varphi, N)}
        \]
        such that for any $\rho\in \Rep_{\Gal_F}(L)$, $\calD_{\st}(\D_{\rig}^{\normalfont \dagger}(\rho)) = \D_{\st}(\rho)$, where $\D_{\st}$ is Fontaine's semistable functor.
        \item[(iv)] Let $\Mod_{F\otimes_{\Q_p}L}^{\varphi}$ be the category of $\varphi$-modules over $F\otimes_{\Q_p}L$. There exists a functor \[
            \calD_{\cris}: \Mod_{L}^{(\varphi, \Gamma)} \rightarrow \Mod_{F\otimes_{\Q_p}L}^{\varphi}
        \]
        such that for any $\rho\in \Rep_{\Gal_F}(L)$, $\calD_{\cris}(\D_{\rig}^{\normalfont \dagger}(\rho)) = \D_{\cris}(\rho)$, where $\D_{\cris}$ is Fontaine's crystalline functor.
        \item[(v)] Let $\Mod_{F^{\unr}\otimes_{\Q_p}L}^{(\varphi, N, \Gal_F)}$ be the category of (decreasingly) filtred $(\varphi, N, \Gal_F)$-modules over $F^{\unr}\otimes_{\Q_p}L$, where $F^{\unr}$ is the maximal unramified extension of $F$. There exists a functor \[
            \calD_{\pst}: \Mod_L^{(\varphi, \Gamma)} \rightarrow \Mod_{F^{\unr}\otimes_{\Q_p}L}^{(\varphi, N, \Gal_F)}
        \]
        such that for any $\rho\in \Rep_{\Gal_F}(L)$, $\calD_{\pst}(\D_{\rig}^{\dagger}(\rho)) = \D_{\pst}(\rho)$, where $\D_{\pst}(\rho)$ is the potentially semistable module attached to $\rho$. 
        \item[(vi)] Let $A$ be a reduced affinoid algebra over $\Q_p$ (à la Tate) and  $\Mod_{F^{\cyc}\otimes_{\Q_p}A}$ be the category of finite projective $F^{\cyc}\otimes_{\Q_p}A$-modules. There exists a functor \[
            \calD_{\Sen}: \Mod_A^{(\varphi, \Gamma)} \rightarrow \Mod_{F^{\cyc}\widehat{\otimes}_{\Q_p}A}
        \] such that if $A=L$ and $\rho\in \Rep_{\Gal_F}(L)$, $\calD_{\Sen}(\D_{\rig}^{\dagger}(\rho)) = \D_{\Sen}(\rho)$, where $\D_{\Sen}(\rho)$ is the Sen-module attached to $\rho$.
    \end{enumerate}
\end{Proposition}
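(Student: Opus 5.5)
This proposition is a compilation of standard results from $p$-adic Hodge theory, so the plan is to identify the paper's Robba ring $R_{\rig, L}$ with the classical one and then import each functor from the literature. The first step is to check the identification. Since $\vartheta = \log|\bfpi|/\log|p|$, the locus $\calY_{[a,b]}$ is the annulus $|p|^{b} \leq |\bfpi| \leq |p|^{a}$ in the open unit $\bfpi$-disc over $F$. Hence $R_{(0,b],L}$ is the ring of $L\otimes_{\Q_p}F$-valued rigid functions on the half-open annulus $|p|^b\leq|\bfpi|<1$, and $R_{\rig,L}$ is the usual Robba ring $\calR_F\otimes_{\Q_p}L$ (or $\calR_{F,L}$). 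The $\varphi$- and $\Gamma$-actions are the classical ones. Consequently, by Lemma \ref{Lemma: colimit of categories}, $\Mod_L^{(\varphi,\Gamma)}$ coincides with the usual category of $(\varphi,\Gamma)$-modules over $\calR_{F}\otimes L$, and the same holds in families over an affinoid $A$.

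With this identification, each item follows from a known construction.

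(i) Cherbonnier--Colmez overconvergence gives $\D^\dagger(\rho)$ over $B^\dagger_F$. One then sets $\D_{\rig}^\dagger(\rho)=\D^\dagger(\rho)\otimes_{B^{\dagger}_{F}}\calR_F$. Exactness and compatibility with tensor products hold because $\D^\dagger$ is an exact $\otimes$-equivalence onto étale modules and $\calR_F$ is flat over $B^\dagger_F$ (a Bézout domain extension).

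(ii)--(v) Berger's work (``Équations différentielles $p$-adiques et $(\varphi,N)$-modules filtrés'') provides functors defined on arbitrary $(\varphi,\Gamma)$-modules:
\[
\calD_{\cris}(D)=D[1/t]^{\Gamma},\qquad \calD_{\st}(D)=(D[1/t][\log\bfpi])^{\Gamma},
\]
\[
\calD_{\dR}(D)=\bigl(F_{n}(\!(t)\!)\otimes_{\iota_n}D_{(0,b_n]}\bigr)^{\Gamma},\qquad \calD_{\pst}(D)=\varinjlim_{K/F}\calD_{\st}(D|_{\Gal_K})\otimes F^{\unr}.
\]
Here the localisation maps $\iota_n$ for $n\gg0$ come from the points $x_{F_n}$ at $\vartheta=1/(p^{n-1}(p-1))$, where $\varphi$ moves between the zeros of $t$. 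For $\calD_{\pst}$ one uses Berger's $p$-adic monodromy theorem and restriction to finite extensions $K/F$. Berger shows these functors recover Fontaine's $\D_{\cris},\D_{\st},\D_{\dR}$ when $D=\D^\dagger_{\rig}(\rho)$. The key point is the comparison $\D^\dagger_{\rig}(\rho)[1/t]\otimes_{\calR}\widetilde{B}^\dagger_{\rig}[\log]=\rho\otimes\widetilde{B}^\dagger_{\rig}[1/t,\log]$ together with $(\widetilde{B}^\dagger_{\rig}[1/t])^{H}$-invariants. The filtration on $\calD_{\dR}$ is induced by the $t$-adic filtration, and the $\otimes$-compatibility is formal.

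(vi) For Sen theory, one uses that $\iota_n(D_{(0,b]})$ gives a module over $F_n[\![t]\!]\otimes A$. Reducing modulo $t$ produces a $\Gamma$-stable $F_n\otimes A$-lattice, and base change to $F^{\cyc}\widehat{\otimes}A$ gives $\calD_{\Sen}(D)$. This is well defined independently of $n$ because $\varphi$ identifies the localisations at successive levels. The comparison with the classical Sen module for $A=L$ comes from Berger--Colmez and Kedlaya--Pottharst--Xiao.

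I expect the main obstacle to be the identification step rather than any individual functor. The paper's adic-space definition through $\vartheta$ and rank-1 generalisations must match the classical half-open annuli exactly, including radii conventions and the behaviour of $R^+$. The intervals must be shown compatible with $\varphi$ via \eqref{eq: vartheta and varphi}, and the localisation points $x_{F_n}$ must lie in $\calY_{(0,b]}$ for $n\gg0$. Once this dictionary is in place, the rest is citation plus checking $\otimes$-compatibility.
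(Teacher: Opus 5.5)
Your proposal is correct and follows essentially the same route as the paper, which proves the proposition purely by citation: Colmez and Bellaïche--Chenevier for (i), together with Berger's Fontaine-style description $\D_{\rig}^{\dagger}(\rho)=(\rho\otimes B^{\dagger}_{\rig})^{H_F}$; Bellaïche--Chenevier for $\calD_{\dR}$ and $\calD_{\cris}$; Benois for $\calD_{\st}$ and $\calD_{\pst}$; and Liu for $\calD_{\Sen}$, followed by base change to $F^{\cyc}\widehat{\otimes}_{\Q_p}A$. The paper takes for granted the identification of $R_{\rig,L}$ with the classical Robba ring that you spell out, and in (i) your flatness argument needs no Bézout property of $\calR_F$: flatness is automatic because $B^{\dagger}_F$ is a field, so $B^{\dagger}_F\otimes_{\Q_p}L$ is a finite product of fields.
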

\begin{proof}
    The functor $\D_{\rig}^{\dagger}$ is a fully faithful embedding, defining a $\otimes$-equivalence between $\Rep_{\Gal_F}(L)$ and the étale $(\varphi, \Gamma)$-modules over $L$ (\cite[Proposition 1.7]{Colmez-trianguline}, see also \cite[Proposition 2.2.6]{BC}). We remark that, by \cite[Proposition 3.17]{Berger-pDifferential}, there exists a topological $\Q_p$-algebra $R$ (denoted by $B_{\rig}^{\dagger}$ therein) such that $\D_{\rig}^{\dagger}$ can be defined as in Fontaine's style, \emph{i.e.}, for any $\rho \in \Rep_{\Gal_F}(L)$, \[
        \D_{\rig}^{\dagger}(\rho) = (\rho \otimes R)^{H_F}.
    \] 
    
    For the constructions of $\calD_{\dR}$ and $\calD_{\cris}$, we refer the readers to \cite[Sect. 2.2.7]{BC}; for the construction of $\calD_{\st}$ and $\calD_{\pst}$, we refer the readers to \cite[Sect. 1.2.3 \&  1.2.5]{Benois-LInvariant}; for the construction of $\calD_{\Sen}$, we refer the readers to \cite[Sect. 1.3]{Liu-triangulation}. We remark that, in \cite{Liu-triangulation}, the $\otimes$-functor $\calD_{\Sen}$ has target in $\Mod_{F_{\infty}\otimes_{\Q_p}A}$; we base change to $F^{\cyc}\widehat{\otimes}_{\Q_p}A$ (and extend the $\Gamma$-action continuously) for our later purpose (see Lemma \ref{Lemma: HTS weight is continuous} below). 
\end{proof}

\subsection{Review of \texorpdfstring{$G$}{G}-torsors}\label{subsection: review of G-torsors}
Let $G$ be a smooth quasi-split reductive linear algebraic group over $F$. We denote by $G^{\an}$ its analytification over $\Spa(F, \calO_F)$. We denote by $\Rep_{G}$ the category of algebraic $G$-representations over $F$. We have the following definitions of $G$-torsors.

\begin{Definition}\label{Definition: G-torsors}
    Let $\calX$ be a rigid analytic space over $\Spa(F, \calO_F)$.
    \begin{enumerate}
        \item[(i)] A \textbf{geometric $G$-torsor} over $\calX$ is a rigid analytic space $\calG \rightarrow \calX$ such that, étale locally on $\calX$, there is a $G^{\an}$-equivariant isomorphism $\calG \cong G^{\an}\times \calX$.
        \item[(ii)] A \textbf{cohomological $G$-torsor} over $\calX$ is an étale sheaf $\scrG$ on $\calX_{\et}$ with an action of $G^{\an}$ such that, étale locally on $\calX$, there is a $G^{\an}$-equivariant isomorphism $\scrG \cong G^{\an}$.
        \item[(iii)] A \textbf{Tannakian $G$-torsor} over $\calX$ is an exact $\otimes$-functor $\frakG: \Rep_G \rightarrow \VB_{\calX}$, where $\VB_{\calX}$ is the category of vector bundles on $\calX$.
    \end{enumerate}
\end{Definition}

\begin{Theorem}\label{Theorem: comparison of G-torsors}
    Keep the notations as in Definition \ref{Definition: G-torsors}. The three definitions of $G$-torsors therein are equivalent.  
\end{Theorem}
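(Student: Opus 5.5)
We prove the equivalences (i)$\Leftrightarrow$(ii) and (ii)$\Leftrightarrow$(iii), following \cite[Appendix to Lecture 19]{Scholze-Weinstein-Berkeley} and \cite[Appendix A]{Bellovin-GDeform}.

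\textbf{Geometric and cohomological torsors.} Given a geometric torsor $\calG\rightarrow \calX$, we take its sheaf of sections on $\calX_{\et}$, $U\mapsto \Hom_{\calX}(U,\calG)$. The local trivialisations $\calG\cong G^{\an}\times\calX$ make this sheaf étale locally isomorphic to the sheaf represented by $G^{\an}$, so it is a cohomological torsor. Conversely, a cohomological torsor $\scrG$ is trivial on an étale cover $\{U_i\rightarrow\calX\}$. Its transition data give a $1$-cocycle with values in $G^{\an}(U_i\times_{\calX}U_j)$, and we glue the spaces $G^{\an}\times U_i$ along these right-translations. This gluing uses étale descent for rigid analytic spaces that are affine over the base. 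Since $G$ is affine, $G^{\an}\times U_i\rightarrow U_i$ is affinoid-locally affine, and effectivity of étale descent for such morphisms is available (via descent of coherent sheaves of algebras over $\calX$). The two constructions are quasi-inverse by Yoneda, and both sides are classified by $\check{H}^1(\calX_{\et},G^{\an})$.

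\textbf{Cohomological and Tannakian torsors.} Given a cohomological torsor $\scrG$ and $V\in\Rep_G$, we set
\[
\frakG(V)\coloneq \scrG\times^{G^{\an}}(V\otimes_F\scrO_{\calX}),
\]
the contracted product. This is étale locally $V\otimes\scrO$, so by étale descent of vector bundles on rigid spaces (Kiehl plus étale descent of coherent sheaves) it is a vector bundle. The functor $\frakG$ is exact and $\otimes$-compatible because these properties can be checked étale locally, where $\frakG$ is $V\mapsto V\otimes\scrO$.

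Conversely, given an exact $\otimes$-functor $\frakG:\Rep_G\rightarrow\VB_{\calX}$, we define
\[
\scrG(U)\coloneq \mathrm{Isom}^{\otimes}\bigl(\omega\otimes\scrO_U,\ \frakG|_U\bigr),
\]
where $\omega$ is the fibre functor. Its automorphism side is $\underline{\Aut}^{\otimes}(\omega\otimes \scrO)=G^{\an}$ by Tannakian reconstruction, which applies since $G$ is affine over a field. To show that $\scrG$ is representable and locally trivial, we choose a faithful representation $V_0$ and express $G$ as the stabiliser in $\GL(V_0)$ of finitely many tensors $s_\alpha$ (Chevalley). Then $\scrG$ is a closed subspace of the $\GL$-frame bundle of $\frakG(V_0)$, cut out by the condition of preserving $\frakG(s_\alpha)$. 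So $\scrG$ is representable by an affine-over-$\calX$ rigid space with a free $G^{\an}$-action. These two constructions are mutually inverse, using that every $V\in\Rep_G$ is a subquotient of tensor constructions on $V_0$ and that $\frakG$ is exact.

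\textbf{Main obstacle.} The hardest point is showing that $\scrG$ has étale local sections, i.e.\ that $\scrG\rightarrow\calX$ is smooth and surjective. Our plan is to reduce to fibres at points $x$ (rigid points, or completed residue fields for adic points). At $x$, the composite $\Rep_G\rightarrow\VB_{\calX}\rightarrow\Mod_{k(x)}$ is a fibre functor over a field extension of $F$. The set of isomorphisms between two fibre functors is a $G$-torsor over $k(x)$ (Deligne--Milne for neutral Tannakian categories over fields), so it is nonempty after passing to a finite separable extension. Smoothness of $G$ makes this torsor smooth, and $G$ being reductive (hence $\Rep_G$ semisimple-free but exactness suffices) is not needed at this step. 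Flatness of $\scrG$ over $\calX$ follows from fibrewise smoothness plus exactness of $\frakG$, which forces the relevant sheaves to be locally free. Smoothness and surjectivity of $\scrG\rightarrow\calX$ then give étale local sections via the rigid-analytic étale-local section theorem for smooth surjections. This yields an étale local trivialisation $\scrG\cong G^{\an}$.
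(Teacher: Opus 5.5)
There is a genuine gap in your Tannakian $\Rightarrow$ cohomological step, at the flatness claim. Fibrewise smoothness of $\scrG\rightarrow\calX$ does not imply flatness. For instance, $V(x(x-1),\,t(x-1))\subset\mathbb{A}^1_{k[t]}$ has étale fibres over every point of $\Spec k[t]$. It is nevertheless not flat: its local ring at $x=t=0$ is $k$ with $t$ acting by $0$.

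Since you realise $\scrG$ as the closed subspace of the frame bundle of $\frakG(V_0)$ cut out by the tensors $\frakG(s_\alpha)$, nothing in that description controls flatness. The phrase ``exactness of $\frakG$ forces the relevant sheaves to be locally free'' never says which sheaves. The mechanism that actually works is Deligne's. The coordinate ring of $\underline{\mathrm{Isom}}^{\otimes}(\omega\otimes R,\frakG)$ is the ind-extension of $\frakG$ evaluated on the regular representation $F[G]$. This is a filtered colimit of finite projective $R$-modules, hence flat because $\frakG$ is exact. But it is a quasi-coherent, non-coherent algebra, so the argument does not live naturally on the rigid space. Without it you cannot invoke the étale-local section theorem for smooth surjections.

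There is also a circularity in identifying $\underline{\mathrm{Isom}}^{\otimes}$ with tensor-preserving frames. Extending an isomorphism on $V_0$ that preserves the $s_\alpha$ to a tensor isomorphism on all of $\Rep_G$ already uses local triviality, at least over non-reduced bases.

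The paper avoids all of this by following \cite[Theorem 19.5.2]{Scholze-Weinstein-Berkeley}. Work affinoid-locally on $\calX=\Spa(A,A^{\circ})$. By Kiehl, $\VB_{\calX}$ is exactly and $\otimes$-compatibly equivalent to finite projective $A$-modules, so a Tannakian torsor is just an algebraic fibre functor over $\Spec A$. Then \cite[Theorem 3.2]{Deligne-Milne}, which holds over an arbitrary $F$-algebra and not only over fields, makes $\underline{\mathrm{Isom}}^{\otimes}$ a faithfully flat affine $G$-torsor over $\Spec A$. It is smooth because $G$ is, hence trivial on an étale cover $\Spec B\rightarrow\Spec A$. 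The analytification of that cover is an étale cover of $\calX$, and one glues by functoriality. In short, you applied Deligne--Milne only at points; applying it over $A$ itself gives local triviality directly and makes the fibrewise analysis unnecessary.

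Your geometric/cohomological and cohomological $\Rightarrow$ Tannakian steps are fine in outline, with one correction. In the descent step, $G^{\an}\times U_i\rightarrow U_i$ is not given by a coherent algebra. It is cleaner to push out along $G\hookrightarrow\GL(V_0)$, descend the resulting vector bundle and its frame bundle, and then descend the closed subspace through its coherent ideal sheaf.
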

\begin{proof}
    The proof goes verbatim as in \cite[Theorem 19.5.2]{Scholze-Weinstein-Berkeley}. We leave the details to the reader. 
\end{proof}

Thanks to Theorem \ref{Theorem: comparison of G-torsors}, we shall now jump between the different definitions of $G$-torsors freely and use the same notation without distinguishing whichever viewpoint we are in.

The following proposition is similar to \cite[Proposition 2.8]{Deligne-Milne}.

\begin{Proposition}\label{Proposition: automorphisms get back to the algebraic group}
    Let $\calX$ be a rigid analytic space over $\Spa(F, \calO_F)$ and let $\mathsf{\acute{E}t}_{\calX}$ be the big étale site over $\calX$. Let $\calG$ be a (Tannakian) $G$-torsor. Define a presheaf $\underline{\Aut}^{\otimes}(\calG)$ on $\mathsf{\acute{E}t}_{\calX}$ as follows: For any morphism $f: \calZ \rightarrow \calX$, \[
        \underline{\Aut}^{\otimes}(\calG)(\calZ) = \Aut^{\otimes}(f^{-1}\circ \calG),
    \]
    where $\Aut^{\otimes}$ stands for automorphisms that preserve tensor products. 
    Then, $\underline{\Aut}^{\otimes}(\calG)$ is representable by a rigid analytic space which is a form of $G^{\an}_{\calX}$.
\end{Proposition}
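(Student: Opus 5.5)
Via Theorem \ref{Theorem: comparison of G-torsors}, we pass between the Tannakian and geometric viewpoints, and exhibit $\underline{\Aut}^{\otimes}(\calG)$ as the twisted form of $G^{\an}_{\calX}$ obtained by twisting by the torsor itself. First, for the trivial torsor $\calG_0 = \omega\otimes \scrO_{\calX}$, where $\omega$ is the forgetful fibre functor $\Rep_G\to \mathsf{Vec}_F$, we show $\underline{\Aut}^{\otimes}(\calG_0)\cong G^{\an}_{\calX}$. On an object $\calZ$ of $\mathsf{\acute{E}t}_{\calX}$, a $\otimes$-automorphism of $\omega\otimes\scrO_{\calZ}$ is a compatible family of $\scrO(\calZ)$-linear automorphisms of $V\otimes_F \scrO(\calZ)$, for $V\in\Rep_G$, that is natural in $V$ and respects tensor products and the unit. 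By classical Tannakian reconstruction over the ring $\scrO(\calZ)$, i.e.\ $\underline{\Aut}^{\otimes}(\omega)(R)=G(R)$ for every $F$-algebra $R$ (Deligne--Milne, Prop.~2.8, which holds for arbitrary $F$-algebras $R$ since $G$ is affine), this group is $G(\scrO(\calZ))$. For an affinoid $\calZ$ this equals $G^{\an}(\calZ)=\Hom(\calZ,G^{\an})$ because $G$ is affine. Both sides are sheaves on the big étale site, so they agree in general, and $\underline{\Aut}^{\otimes}(\calG_0)$ is represented by $G^{\an}\times\calX$.

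Second, for a general $\calG$ we choose an étale cover $\{\calU_i\to\calX\}$ with $\otimes$-isomorphisms $\alpha_i:\calG|_{\calU_i}\xrightarrow{\sim}\calG_0|_{\calU_i}$; these exist by the equivalence with geometric torsors. Conjugation by $\alpha_i$ gives $\underline{\Aut}^{\otimes}(\calG)|_{\calU_i}\cong G^{\an}_{\calU_i}$. On overlaps, the transition maps are conjugation by the cocycle $g_{ij}=\alpha_i\alpha_j^{-1}\in G^{\an}(\calU_{ij})$, so $\underline{\Aut}^{\otimes}(\calG)$ is the étale sheaf obtained by twisting $G^{\an}_{\calX}$ by the inner cocycle $(\mathrm{Inn}(g_{ij}))$. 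Before this, we check that the presheaf $\underline{\Aut}^{\otimes}(\calG)$ is actually an étale sheaf. This follows because vector bundles and their morphisms satisfy étale descent on rigid spaces, and the naturality and tensor-compatibility conditions are local.

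The main obstacle is representability: we must show that the descent datum on $G^{\an}_{\calU_i}$, given by the automorphisms $\mathrm{Inn}(g_{ij})$, is effective in rigid analytic spaces. Étale descent is not effective for arbitrary rigid spaces, but here $G^{\an}_{\calX}\to\calX$ is the analytification of an affine group scheme. My plan is to work with the relative affine algebra $\scrO(G)\otimes_F\scrO_{\calX}$. The twist then becomes a descent datum on a quasi-coherent $\scrO_{\calX}$-algebra, more precisely an ind-vector bundle $\varinjlim_W W\otimes\scrO_{\calX}$ over finite-dimensional $G\times G$-subrepresentations $W\subset \scrO(G)$, twisted by $\calG$ via the induced $G\times G$-torsor. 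Each twisted piece is a vector bundle, by the equivalence of Theorem \ref{Theorem: comparison of G-torsors} applied to $G\times G$. The twisted algebra is therefore a quasi-coherent $\scrO_{\calX}$-algebra of the form $\calG(\scrO(G))$ (the Tannakian torsor extended to ind-objects). We then take its relative analytic spectrum over $\calX$, which exists by relative analytification of a finitely presented affine $\calX$-scheme. This yields a rigid space that is étale locally isomorphic to $G^{\an}_{\calX}$, hence a form of it, and that represents $\underline{\Aut}^{\otimes}(\calG)$ by comparison with the local computation in the first step.
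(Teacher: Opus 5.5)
Your argument is correct in substance, but it takes a different route from the paper's.

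The paper never trivialises $\calG$ étale-locally. It restricts at once to an affinoid $\calX=\Spa(A,A^+)$. There a Tannakian $G$-torsor is the same as an algebraic fibre functor $\Rep_G\to\mathrm{Proj}_A$ over $\Spec A$. Deligne--Milne, Theorem 3.2 (in the form of Bellovin, Cor.~A.10) then makes $\underline{\Aut}^{\otimes}$ an affine $A$-group scheme that is a form of $G_A$. Analytifying, and gluing along affinoid covers (which raises no effectivity issue), finishes the proof.

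You instead trivialise étale-locally first and identify $\underline{\Aut}^{\otimes}(\calG)$ as the inner twist of $G^{\an}_{\calX}$ by the cocycle $(g_{ij})$. You then have to prove effectivity of étale descent in rigid geometry yourself, using the twisted Hopf algebra $\calG(\scrO(G))$ and a relative analytic spectrum. Over an affinoid, this twisted algebra is exactly the coordinate ring that the algebraic Tannakian theory produces, so the two arguments meet there. Your version gives an explicit cocycle description of the form. The paper's version hands representability and all descent questions to algebraic geometry over $\Spec A$.

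The one step you assert without justification is that $\calG(\scrO(G))$ is locally finitely presented. This is needed for the relative analytic spectrum to exist, and it is easy to supply:
\begin{itemize}
\item Choose a finite-dimensional subspace $W_0\subset\scrO(G)$, stable under the conjugation action, that generates $\scrO(G)$ as an $F$-algebra.
\item Extend $\calG$ to ind-objects. Its exactness and $\otimes$-compatibility (with $\mathrm{Sym}^n$ a direct summand of $\otimes^n$ in characteristic $0$) make $\mathrm{Sym}\,\calG(W_0)\to\calG(\scrO(G))$ a surjection of $\scrO_{\calX}$-algebras.
\item Affinoid algebras are noetherian, so finite generation gives finite presentation.
\end{itemize}

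You should also record that $\calG(\scrO(G))(\calU)=\calG(\scrO(G))(\calX)\otimes_A B$ for étale $\calU=\Spa(B,B^+)\to\calX$. This holds because each $\calG(W)$ is a vector bundle and filtered colimits commute with $\otimes$. It is what makes the relative spectrum compatible with your local identifications with $G^{\an}_{\calU_i}$.
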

\begin{proof}
    We first consider the case when $\calX = \Spa(A, A^+)$ is an affinoid. Similarly as remarked in the proof of \cite[Theorem 19.5.2]{Scholze-Weinstein-Berkeley}, the datum for a Tannakian $G$-torsor is equivalent to a Tannakian $G$-torsor over $\Spec A$. 
    Therefore, it is a consequence of \cite[Theorem 3.2]{Deligne-Milne} that $\underline{\Aut}^{\otimes}(\calG)$ is a form of $G^{\an}_{\calX}$ (\cite[Corollary A.10]{Bellovin-GDeform}).

    Since the construction and the (algebraic) Tannakian formalism is functorial, the desired result follows from glueing. 
\end{proof}

Our goal in the next subsection is to apply the discussions above to $p$-adic Hodge theory. To this end, we need more structure on $G$-torsors. Materials in what follows are inspired by \cite[Sect. A]{Bellovin-GDeform}.

Let $\calX$ be a rigid analytic space over $\Spa(F, \calO_F)$. Consider the following three categories:\begin{align*}
    \VB_{\calX}^{\Fil^{\bullet}} & := \text{ the category of vector bundles over $\calX$ equipped with decreasing filtrations,}\\
    \VB_{\calX}^{\Fil_{\bullet}} & := \text{ the category of vector bundles over $\calX$ equipped with increasing filtrations,}\\
    \VB_{\calX}^{\Gr} & := \text{ the category of graded vector bundles over $\calX$.}
\end{align*}
Let us describe the relationships between these categories: \begin{itemize}
    \item For $?\in \{\Fil^{\bullet}, \Fil_{\bullet}, \Gr\}$, there is a forgetful functor \begin{equation}\label{eq: forgetful functor for vector bundles}
        \VB_{\calX}^{?} \rightarrow \VB_{\calX}.
    \end{equation}
    \item Given a graded vector bundle, one can obtain a filtred vector bundle by \begin{equation}\label{eq: gradings to filtrations}
        \begin{array}{ll}
            \VB_{\calX}^{\Gr} \rightarrow \VB_{\calX}^{\Fil^{\bullet}}, & \scrM = \bigoplus_{i\in\Z} \scrM_i \mapsto \left((\scrM, \Fil^{\bullet}\scrM) \text{ with }\Fil^i \scrM = \bigoplus_{j\geq i}\scrM_j\right), \\
            \VB_{\calX}^{\Gr} \rightarrow \VB_{\calX}^{\Fil_{\bullet}}, & \scrM = \bigoplus_{i\in\Z} \scrM_i \mapsto \left((\scrM, \Fil_{\bullet}\scrM) \text{ with }\Fil^i \scrM = \bigoplus_{j\leq i}\scrM_j\right).
        \end{array}
    \end{equation}
    We say a filtration is \emph{splittable} if it arises in this way.
    Note that these two functors give rise to a commutative diagram \[
        \begin{tikzcd}
            \VB_{\calX}^{\Gr} \arrow[r] \arrow[d]\arrow[rd]  & \VB_{\calX}^{\Fil^{\bullet}}\arrow[d]\\
            \VB_{\calX}^{\Fil_{\bullet}} \arrow[r] & \VB_{\calX}
        \end{tikzcd},
    \]
    where the functors $\VB_{\calX}^? \rightarrow \VB_{\calX}$ are the forgetful functors as in \eqref{eq: forgetful functor for vector bundles}.
    \item There is a functor \[
        \VB_{\calX}^{\Fil^{\bullet}} \rightarrow \VB_{\calX}^{\Gr}, \quad (\scrM, \Fil^{\bullet}\scrM) \mapsto \bigoplus_{i\in \Z} \Gr^i \scrM.
    \]
\end{itemize}

\begin{Definition}\label{Definition: structrues on G-torsors}
    Let $\calX$ be a rigid analytic space. \begin{enumerate}
        \item[(i)] An \textbf{decreasingly filtred $G$-torsor} over $\calX$ is a $G$-torsor $\calG$ equipped with a factorisation \[
            \begin{tikzcd}
                \Rep_G \arrow[rr, "\calG"]\arrow[rd,"\Fil^{\bullet}\calG"'] && \VB_{\calX}\\
                & \VB_{\calX}^{\Fil^{\bullet}}\arrow[ru, "\text{forgetful}"']
            \end{tikzcd}.
        \] 
        For objects $\calG(\sigma)$ in the essential image of $\calG$, we write $\Fil^{\bullet}\calG(\sigma)$ for the corresponding filtration. 
        \item[(ii)] A \textbf{increasingly filtred $G$-torsor} over $\calX$ is a $G$-torsor $\calG$ equipped with a factorisation \[
            \begin{tikzcd}
                \Rep_G \arrow[rr, "\calG"]\arrow[rd, "\Fil_{\bullet}\calG"'] && \VB_{\calX}\\
                & \VB_{\calX}^{\Fil_{\bullet}}\arrow[ru, "\text{forgetful}"']
            \end{tikzcd}.
        \] 
        For objects $\calG(\sigma)$ in the essential image of $\calG$, we write $\Fil_{\bullet}\calG(\sigma)$ for the corresponding filtration.
        \item[(i)] A \textbf{graded $G$-torsor} over $\calX$ is a $G$-torsor $\calG$ equipped with a factorisation \[
            \begin{tikzcd}
                \Rep_G \arrow[rr, "\calG"]\arrow[rd, "\calG_{\bullet}"'] && \VB_{\calX}\\
                & \VB_{\calX}^{\Gr}\arrow[ru, "\text{forgetful}"']
            \end{tikzcd}.
        \] 
        For objects $\calG(\sigma)$ in the essential image of $\calG$, we write $\calG(\sigma)_{\bullet}$ for the corresponding grading.
    \end{enumerate}
\end{Definition}

Given a decreasing filtred $G$-torsor $\calG$ with filtration $\Fil^{\bullet}\calG$ over $\calX$, we define the following sub-presheaves of $\underline{\Aut}^{\otimes}(\calG)$: \begin{align*}
    \calP_{\Fil^{\bullet}} : \mathsf{\acute{E}t}_{\calX} \rightarrow & \Set\\
    (f: \calZ \rightarrow \calX) \mapsto & \left\{ \alpha\in \Aut^{\otimes}(f^{-1} \calG): \alpha\left( \Fil^n f^{-1}\calG(V)\right) \subset \Fil^nf^{-1}\calG(V) \text{ for all }V\in \Rep_G \text{ and }n\in \Z \right\},\\ \\
    \calN_{\Fil^{\bullet}}: \mathsf{\acute{E}t}_{\calX} \rightarrow & \Set\\ 
    (f: \calZ \rightarrow \calX) \mapsto & \left\{ \alpha\in \Aut^{\otimes}(f^{-1} \calG): (\alpha-\id)\left( \Fil^n f^{-1}\calG(V)\right) \subset \Fil^{n+1}f^{-1}\calG(V) \text{ for all }V\in \Rep_G \text{ and }n\in \Z \right\}.
\end{align*}
Similar construction applies to an increasing $G$-torsor $\calG$ with filtration $\Fil_{\bullet}\calG$. One then obtain the sub-preseaves $\calP_{\Fil_{\bullet}}$ and $\calN_{\Fil_{\bullet}}$.\footnote{ To define $\calN_{\Fil_{\bullet}}$, one requires $(\alpha - \id)(\Fil_n f^{-1}\calG(V)) \subset \Fil_{n-1}f^{-1}\calG(V)$.}

\begin{Proposition}\label{Proposition: parabolics and unipotent radical from filtrations}
    Keep the notations a above. \begin{enumerate}
        \item[(i)]  The sub-presheaves $\calP_{\Fil^{\bullet}}$ and $\calN_{\Fil^{\bullet}}$ are representable by closed adic subspaces of $\underline{\Aut}^{\otimes}(\calG)$. Moreover, $\calP_{\Fil^{\bullet}}$ is a parabolic subgroup of $\underline{\Aut}^{\otimes}(\calG)$ while $\calN_{\Fil^{\bullet}}$ is its unipotent radical. 
        \item[(ii)] The sub-presheaves $\calP_{\Fil_{\bullet}}$ and $\calN_{\Fil_{\bullet}}$ are representable by closed adic subspaces of $\underline{\Aut}^{\otimes}(\calG)$. Moreover, $\calP_{\Fil_{\bullet}}$ is a parabolic subgroup of $\underline{\Aut}^{\otimes}(\calG)$ while $\calN_{\Fil_{\bullet}}$ is its unipotent radical. 
    \end{enumerate}
\end{Proposition}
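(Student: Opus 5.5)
Following the pattern of the proof of Proposition \ref{Proposition: automorphisms get back to the algebraic group}, I reduce to an affinoid base and then to the algebraic Tannakian statement of Saavedra Rivano, in the form given in \cite[Sect. A]{Bellovin-GDeform}, before glueing. First, both $\calP_{\Fil^{\bullet}}$ and $\calN_{\Fil^{\bullet}}$ are étale sheaves, since stability of a subbundle and the inclusion $(\alpha-\id)(\Fil^n)\subset\Fil^{n+1}$ can be checked étale locally. Representability by closed subspaces, and the assertions about parabolics and unipotent radicals, are étale local on $\calX$. So I may work étale locally, where $\calG$ is trivial and $\calX=\Spa(A,A^+)$. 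Part (ii) reduces to part (i): set $\Fil^n:=\Fil_{-n}$, which swaps the two conventions, including the one in the footnote. So I only treat (i).

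Over an affinoid base, the filtered Tannakian $G$-torsor is the same as a filtered fibre functor on $\Rep_G$ with values in projective $A$-modules, as noted in the proof of Proposition \ref{Proposition: automorphisms get back to the algebraic group}. Here I use exactness: the graded pieces are again vector bundles, compatible with $\otimes$, duals and short exact sequences. The key input is the splitting theorem (Deligne; Ziegler; \cite[Theorem A.?]{Bellovin-GDeform}). It says that, étale locally (in fact Zariski locally on $\Spec A$), an exact $\otimes$-filtration on a fibre functor is splittable in the sense of \eqref{eq: gradings to filtrations}. Equivalently, it is induced by a cocharacter $\lambda:\bbG_m\to \underline{\Aut}^{\otimes}(\calG)$, with $\Fil^n\calG(V)=\bigoplus_{i\ge n}\calG(V)_i$, where $\calG(V)_i$ is the $\lambda$-weight $i$ part.

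With such a $\lambda$ in hand, and after trivialising $\underline{\Aut}^{\otimes}(\calG)\cong G_A$, I identify the two presheaves directly. $\calP_{\Fil^{\bullet}}$ is the functor of points of $P_G(\lambda)$, the closed subgroup of $g$ with $\lim_{t\to0}\lambda(t)g\lambda(t)^{-1}$ existing. $\calN_{\Fil^{\bullet}}$ is the functor of points of $U_G(\lambda)$, the subgroup where that limit is $1$.

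To prove these identifications I take a faithful representation $V$. The subgroup preserving all $\Fil^n\calG(V)$ is a closed block-upper-triangular condition. I then check that it equals the intersection with $G$ of the dynamic parabolic for $\GL(V)$, and likewise for the unipotent condition. This is the standard dynamic description of $P(\lambda)$ and $U(\lambda)$ (Conrad--Gabber--Prasad). For $\calN$, one uses that $(\alpha-\id)$ raising the filtration for all $V$ is equivalent to $\alpha$ acting trivially on $\Gr^\bullet$. By \cite[Proposition 2.1.8]{CGP}-type results, $P_G(\lambda)$ is a parabolic subgroup scheme of the reductive group $G_A$ with unipotent radical $U_G(\lambda)$. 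Analytifying gives closed adic subspaces with the required properties.

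Finally I glue. The subspaces are canonical, being defined intrinsically by the presheaves and independent of the choice of $\lambda$ and of the trivialisation. So they descend along the étale cover to closed subspaces of $\underline{\Aut}^{\otimes}(\calG)$ over $\calX$. Being a parabolic subgroup, or a unipotent radical, is étale local, which finishes the proof. The main obstacle is the splitting step. It is exactly there that exactness of the filtered fibre functor (the graded pieces being vector bundles) is essential. It also needs the relative splitting theorem over a base ring rather than a field, which I would cite from \cite{Bellovin-GDeform} rather than reprove.
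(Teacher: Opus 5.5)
Your proposal is correct and follows the same route as the paper. Both arguments reduce locally to an algebraic filtered fibre functor over $\Spec A$, invoke the algebraic Tannakian result, analytify, and glue. The only difference is in the algebraic input: the paper simply cites \cite[Chapter IV, 2.2.5]{SR-Tannakian}, whereas you unpack it into the relative splitting theorem plus the Conrad--Gabber--Prasad description via $P_G(\lambda)$ and $U_G(\lambda)$. In doing so you also make explicit the exactness and $\otimes$-compatibility hypotheses on the filtration, which the paper leaves implicit.
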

\begin{proof}
    As remarked in the proof of Proposition \ref{Proposition: automorphisms get back to the algebraic group}, the datum of a Tannakian $G$-torsor is affinoid locally equivalent to an (algebraic) Tannakian $G$-torsor. Therefore, the assertions follows from \cite[Chapter IV, 2.2.5]{SR-Tannakian} and analytification. Since the construction is functorial, we conclude by glueing. 
\end{proof}

\subsection{\texorpdfstring{$(\varphi, \Gamma)$}{(phi, Gamma)}-modules with \texorpdfstring{$G$}{G}-structure}\label{subsection: (phi, Gamma)-modules with G-structure}
We now apply the discussions in Sect. \ref{subsection: review of G-torsors} to introduce and study \emph{$(\varphi, \Gamma)$-modules with $G$-structures}. Throughout this subsection, we will further assume $G$ is split over $F$. 

\begin{Definition}\label{Definition: (phi, Gamma)-modules with G-structure}
    Let $A$ be a reduced affinoid algebra over $\Q_p$ (à la Tate). \begin{enumerate}
        \item[(i)] A (Tannakian) \textbf{$(\varphi, \Gamma)$-module with $G$-structure} over $\calY_{(0, b], A}$ is an exact $\otimes$-functor  \[
            D_{(0, b]}: \Rep_{G} \rightarrow \Mod_{\calY_{(0,b], A}}^{(\varphi, \Gamma)}.
        \]
        We denote by $G\Mod_{\calY_{(0, b], A}}^{(\varphi, \Gamma)}$ the category of $(\varphi, \Gamma)$-module with $G$-structure over $\calY_{(0, b], A}$.
        \item[(ii)] Define the \textbf{category of $(\varphi, \Gamma)$-modules with $G$-structure} to be \[
            G\Mod_{A}^{(\varphi, \Gamma)} := 2-\varinjlim_{b\rightarrow 0}G\Mod_{\calY_{(0, b], A}}^{(\varphi, \Gamma)}.
        \]
        By a \textbf{$(\varphi, \Gamma)$-module with $G$-structure} over $A$, we then mean an object in $G\Mod_{A}^{(\varphi, \Gamma)}$.
    \end{enumerate}
\end{Definition}

\begin{Remark}\label{Remark: Y_A is rigid analytic}
    For $0<a\leq b<\infty$, we view $\calY_{[a, b], A}$ as a rigid analytic space. Indeed, recall the isomorphism $\calO_F[\![T]\!] \cong \calO_F[\![\bfpi]\!]$, thus $\calY_{[a,b]}$ is the rational subset $\{|\bfpi|\leq |p|^a, |p|^b\leq |\bfpi|\}$ in the open unit ball (over $F$). More precisely, $\calY_{[a, b]}$ is the affinoid given by 
    \[
        \calY_{[a, b]} = \Spa\left( \calO_F[\![\bfpi]\!]\left\langle \frac{\bfpi^{1/a}}{p}, \frac{p}{\bfpi^{1/b}} \right\rangle[1/p],  \calO_F[\![\bfpi]\!]\left\langle \frac{\bfpi^{1/a}}{p}, \frac{p}{\bfpi^{1/b}} \right\rangle \right). 
    \]
    Since fibre products exist in the category of rigid analytic spaces, we conclude the claim. 
\end{Remark}

\begin{Remark}\label{Remark: 2-limit in the definition of (phi, Gamma)-mod with G-structure}
    Let us briefly explain the $2$-colimit in Definition \ref{Definition: (phi, Gamma)-modules with G-structure}. Given $b, b'\in \Q_{>0}$ with $b\geq b'$, recall we have a natural functor \[
        \Mod_{\calY_{(0, b], A}}^{(\varphi, \Gamma)} \rightarrow \Mod_{\calY_{(0, b'], A}}^{(\varphi, \Gamma)}
    \]
    given by base change. Hence, given a $(\varphi, \Gamma)$-module with $G$-structure $D_{(0, b]}$ over $\calY_{(0, b], A}$ one obtains a $(\varphi, \Gamma)$-module with $G$-structure over $\calY_{(0, b'],A}$ by the composition of functors \[
        \Rep_{G} \xrightarrow{D_{(0, b]}} \Mod_{\calY_{(0, b], A}}^{(\varphi, \Gamma)} \rightarrow \Mod_{\calY_{(0, b'], A}}^{(\varphi, \Gamma)}.
    \]
    Consequently, the category $G\Mod_{A}^{(\varphi, \Gamma)}$ is equivalent to the category of exact $\otimes$-functors \[
        D: \Rep_{G} \rightarrow \Mod_{A}^{(\varphi, \Gamma)}
    \]
    that factor as \[
        \begin{tikzcd}
            \Rep_G \arrow[rr, "D"] \arrow[rd, "D_{(0, b]}"'] && \Mod_{A}^{(\varphi, \Gamma)}\\
            & \Mod_{\calY_{(0, b], A}}^{(\varphi, \Gamma)}\arrow[ru]
        \end{tikzcd}
    \]
    for some $b>0$ and $D_{(0, b]}$.
\end{Remark}

An immediate question is the relationship between $G$-valued Galois representations and $(\varphi, \Gamma)$-modules with $G$-structure. To address this question, we begin with the following lemma. 

\begin{Lemma}\label{Lemma: Tannakian viewpoint of Galois representations}
    Let $L$ be a (large enough) finite field extension of $\Q_p$. The category of $G$-valued $\Gal_{F}$-representation with coefficients in $L$ is equivalent to the category of exact $\otimes$-functors \[
        \Rep_G \xrightarrow{\frakG} \Rep_{\Gal_{F}}(L)
    \] such that the diagram \[
        \begin{tikzcd}
            \Rep_G \arrow[rr, "(\sigma: G(L)\rightarrow \GL(V)\mapsto V"]\arrow[rd, "\frakG"'] && \mathsf{Vect}_{L}\\
            & \Rep_{\Gal_{F}}\arrow[ru, "\text{forgetful}"']
        \end{tikzcd}
    \] is commutative. 
\end{Lemma}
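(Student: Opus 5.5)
The plan is to reduce the statement to the classical Tannakian reconstruction theorem for affine group schemes over a field, as in \cite[Proposition 2.8 and Theorem 3.2]{Deligne-Milne}. I will construct functors in both directions and check that they are mutually quasi-inverse.

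\emph{From representations to functors.} Let $\rho: \Gal_F \rightarrow G(L)$ be continuous. For $(\sigma: G\rightarrow \GL(V))\in \Rep_G$, set
\[
    \frakG_\rho(\sigma) = V\otimes_F L
\]
with $\Gal_F$ acting through $\sigma\circ\rho$. This action is continuous because $\sigma$ is algebraic and hence continuous on $L$-points. The assignment $\sigma\mapsto \frakG_\rho(\sigma)$ is exact and compatible with $\otimes$, since on underlying vector spaces it is just the exact tensor functor $V\mapsto V\otimes_F L$. The required triangle commutes by construction. A morphism $\rho\rightarrow\rho'$, that is, conjugation by some $g\in G(L)$, induces a natural $\otimes$-isomorphism $\frakG_\rho\Rightarrow\frakG_{\rho'}$ acting by $\sigma(g)$.

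\emph{From functors to representations.} Conversely, let $\frakG$ be as in the statement, and let $\omega$ denote the composite $\Rep_G\rightarrow \mathsf{Vect}_L$, i.e.\ the base-changed fibre functor. Each $\gamma\in\Gal_F$ acts on every $\frakG(\sigma)$. Because $\frakG$ is a $\otimes$-functor and morphisms in $\Rep_{\Gal_F}(L)$ are equivariant, these actions assemble into an element
\[
    \rho(\gamma)\in \underline{\Aut}^{\otimes}(\omega)(L).
\]
By \cite[Proposition 2.8]{Deligne-Milne} (reconstruction for the neutral fibre functor, applied after base change to $L$), we have $\underline{\Aut}^{\otimes}(\omega)=G_L$, so $\rho(\gamma)\in G(L)$. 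The map $\gamma\mapsto\rho(\gamma)$ is a homomorphism because the Galois actions are actions. For continuity, I will choose a faithful representation $\sigma_0: G\hookrightarrow \GL(V_0)$. Then $\sigma_0\circ\rho$ is the continuous action on $\frakG(\sigma_0)$. Since $G(L)\subset \GL(V_0\otimes L)$ carries the subspace topology (it is Zariski closed), $\rho$ is continuous. Natural $\otimes$-isomorphisms between such functors are, by the same reconstruction result, elements of $G(L)$, and they yield conjugations.

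\emph{Quasi-inverse check and main obstacle.} It remains to verify that the two constructions are quasi-inverse, which is immediate from the definitions. The point needing the most care is that the conditions in the statement (exactness, $\otimes$-compatibility, and commutativity of the triangle) are exactly what is needed. The commutativity of the triangle pins down the underlying fibre functor as the standard one, so that no nontrivial torsor twist appears: a general $\frakG$ would only give a $G$-torsor with a Galois action. Here I expect a subtlety: "commutative" should be read as commuting up to a specified $\otimes$-isomorphism, and morphisms in the functor category should be required to be compatible with that identification. I will fix this convention, and with it the category of $G$-valued representations becomes the category of such pairs.
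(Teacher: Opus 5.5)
Your proposal is correct and follows the paper's route. The forward construction $\sigma\mapsto\sigma\circ\rho$ is exactly the paper's. For the converse the paper just cites \cite[Sect. A.2.6]{Bellovin-GDeform}, and the reconstruction argument you write out ($\underline{\Aut}^{\otimes}(\omega)\cong G$ via Deligne--Milne, with continuity checked on a faithful representation) is what that reference supplies.
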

\begin{proof}
    Given a $G$-valued $\Gal_{F}$-representation with coefficients in $L$ \[
        \rho: \Gal_{F} \rightarrow G(L),
    \]
    we construct a $\otimes$-functor $\frakG_{\rho}$ by \[
        \frakG_{\rho}: \Rep_G \rightarrow \Rep_{\Gal_{F}}, \quad (\sigma: G(L) \rightarrow \GL(V)) \mapsto (\Gal_{F} \xrightarrow{\rho} G(L)\xrightarrow{\sigma} \GL(V)).
    \]
    The converse is exactly \cite[Sect. A.2.6]{Bellovin-GDeform}.
\end{proof}

\begin{Corollary}\label{Corollary: from Galois to (phi, Gamma)}
    Let $L$ be a (large enough) finite field extension of $\Q_p$. Let $\Rep_{\Gal_{F}}^G(L)$ be the category of $G$-valued $\Gal_{F}$-representation with coefficients in $L$. Then, there exists an exact $\otimes$-functor \[
        \D_{\rig}^{\dagger} : \Rep_{\Gal_{F}}^G(L) \rightarrow G\Mod_{L}^{(\varphi, \Gamma)}.
    \]
\end{Corollary}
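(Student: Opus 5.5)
The plan is to compose the Tannakian description of $G$-valued Galois representations from Lemma \ref{Lemma: Tannakian viewpoint of Galois representations} with the functor $\D_{\rig}^{\dagger}$ of Proposition \ref{Proposition: from (phi, Gamma) to other p-adic Hodge theory modules}(i). Given $\rho\in \Rep_{\Gal_F}^G(L)$, Lemma \ref{Lemma: Tannakian viewpoint of Galois representations} produces an exact $\otimes$-functor $\frakG_\rho: \Rep_G \rightarrow \Rep_{\Gal_F}(L)$, $V\mapsto \sigma\circ\rho$. We then set
\[
    \D_{\rig}^{\dagger}(\rho) \coloneq \D_{\rig}^{\dagger}\circ \frakG_{\rho}: \Rep_G \rightarrow \Mod_L^{(\varphi,\Gamma)}.
\]
This is exact and $\otimes$-compatible, because both factors are. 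For a morphism $\rho\to\rho'$ (conjugation by an element $g\in G(L)$, i.e.\ a $\otimes$-natural transformation $\frakG_\rho\Rightarrow\frakG_{\rho'}$), whiskering with $\D_{\rig}^{\dagger}$ gives a natural transformation. So the construction is functorial.

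The only real work is to show that the resulting functor lies in $G\Mod_L^{(\varphi,\Gamma)}$. By Remark \ref{Remark: 2-limit in the definition of (phi, Gamma)-mod with G-structure}, this means it must factor through $\Mod_{\calY_{(0,b],L}}^{(\varphi,\Gamma)}$ for a single $b>0$, compatibly with $\otimes$. I expect this uniformity in $b$ to be the main obstacle: a priori each $\D_{\rig}^{\dagger}(\sigma\circ\rho)$ descends to some $(0,b_\sigma]$, and there are infinitely many $\sigma$.

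To handle this, I would fix a faithful representation $\sigma_0: G\hookrightarrow \GL(W)$. Every $V\in\Rep_G$ is then a subquotient of a finite direct sum of $W^{\otimes a}\otimes (W^\vee)^{\otimes b}$ (Deligne--Milne). I would use the Fontaine-style description $\D_{\rig}^{\dagger}(V)=(V\otimes R)^{H_F}$ and its overconvergent version, so that for each $b$ the $(0,b]$-level is $(V\otimes R_{(0,b]}^{\dagger})^{H_F}$, which is functorial in $V$ and lax-monoidal. I would choose $b$ small enough that $W$ is overconvergent at level $(0,b]$, meaning $(W\otimes R^\dagger_{(0,b]})^{H_F}\otimes R^\dagger_{(0,b]}\to W\otimes R^\dagger_{(0,b]}$ is an isomorphism, by Cherbonnier--Colmez / Berger. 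Then the same holds for tensor powers, duals and direct sums. For subquotients, the comparison map is a $\otimes$-compatible isomorphism of $H_F$-equivariant objects, so taking $H_F$-invariants commutes with exact sequences of such "trivialised" modules. This gives a single $b$ with an exact $\otimes$-functor $D_{(0,b]}$ whose base change to $R_{\rig,L}$ is $\D_{\rig}^{\dagger}\circ\frakG_\rho$.

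Exactness of the corollary's functor in $\rho$ is then formal, since morphisms are isomorphisms of torsors. Alternatively, exactness is inherited pointwise from exactness of $\D_{\rig}^{\dagger}$ in Proposition \ref{Proposition: from (phi, Gamma) to other p-adic Hodge theory modules}(i).
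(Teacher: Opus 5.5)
Your core construction is exactly the paper's proof, and the paper stops there. You compose the Tannakian functor $\frakG_\rho$ from the Tannakian lemma with Berger's exact $\otimes$-functor $\D_{\rig}^{\dagger}$, and the composite is exact and $\otimes$-compatible because both factors are. The paper never checks that this composite factors through a single level $\Mod_{\calY_{(0,b],L}}^{(\varphi,\Gamma)}$, as Remark~\ref{Remark: 2-limit in the definition of (phi, Gamma)-mod with G-structure} requires for membership in $G\Mod_L^{(\varphi,\Gamma)}$. Your uniformity step therefore goes beyond what the paper does. As written, however, that step has a gap at subquotients.

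The trivialisation $W\otimes R^{\dagger}_{(0,b]}\cong D_{(0,b]}(W)\otimes R^{\dagger}_{(0,b]}$ does pass to tensor powers, duals and direct sums. It does not formally pass to subquotients. Take $0\to V'\to V\to V''\to 0$ with $V$ trivialised at level $(0,b]$. Taking $H_F$-invariants only yields the left exact sequence $0\to D_{(0,b]}(V')\to D_{(0,b]}(V)\to D_{(0,b]}(V'')$, and the failure of surjectivity lies in $H^1(H_F, V'\otimes R^{\dagger}_{(0,b]})$.

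Whether $D_{(0,b]}(V')=D_{(0,b]}(V)\cap\D_{\rig}^{\dagger}(V')$ has full rank is precisely the statement that $V'$ is overconvergent at level $(0,b]$. An $H_F$-stable direct summand of a trivialised module need not be generated by its invariants. So this is a descent (Hilbert~90-type) statement, which is the actual content of Cherbonnier--Colmez, and it does not follow from the trivialisation of $W$. Once the sub case is settled, the quotient case does follow, because $D_{(0,b]}(V)\otimes R^{\dagger}_{(0,b]}$ surjects onto $V''\otimes R^{\dagger}_{(0,b]}$.

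The repair is to use a form of the overconvergence theorem that controls the level. In the Tate--Sen proof (Cherbonnier--Colmez, Berger--Colmez), the level obtained depends only on a finite Galois extension $F'/F$ such that $\Gal_{F'}$ acts trivially on $T/p^kT$, for a $\Gal_F$-stable lattice $T$ and a fixed $k$. Take $T\subset W$. The same $F'$ then works for the induced lattices in every $W^{\otimes a}\otimes (W^{\vee})^{\otimes c}$, and for their saturated sub-lattices and quotient lattices. Hence a single $b$ serves every object of $\Rep_G$, and your argument goes through.
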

\begin{proof}
    By \cite[Proposition 3.17]{Berger-pDifferential}, there is an exact $\otimes$-functor \[
        \D_{\rig}^{\dagger}: \Rep_{\Gal_{F}}(L) \rightarrow \Mod_{L}^{(\varphi, \Gamma)}.
    \]
    Now, by Lemma \ref{Lemma: Tannakian viewpoint of Galois representations}, we view a $G$-valued $\Gal_{F}$-representation $\rho$ with coefficients in $L$ as a exact $\otimes$-functor \[
        \rho: \Rep_{G} \rightarrow \Rep_{\Gal_{\Q_p}}(L).
    \]
    By composing with $\D_{\rig}^{\dagger}$, one obtains an exact $\otimes$-functor \[
        \D_{\rig}^{\dagger}(\rho): \Rep_{G}\xrightarrow{\rho} \Rep_{\Gal_{F}}\xrightarrow{\D_{\rig}^{\dagger}} \Mod_{L}^{(\varphi, \Gamma)},
    \]
    which is as desired. 
\end{proof}

Applying similar idea as before, we define the following $p$-adic-Hodge-theoretic modules attached to $(\varphi, \Gamma)$-modules with $G$-structure. 

\begin{Definition}\label{Definition: p-adic Hodge theory modules}
    Let $L$ be a (large enough) finite field extension of $\Q_p$. Let $D\in G\Mod_L^{(\varphi, \Gamma)}$. 
    \begin{enumerate}
        \item[(i)] The \textbf{de Rham module} attached to $D$ is the tensor functor \[
            \calD_{\dR}(D): \Rep_G \xrightarrow{D} \Mod_L^{(\varphi, \Gamma)} \xrightarrow{\calD_{\dR}} \Mod_{L\otimes_{\Q_p}F}^{\Fil^{\bullet}}.
        \]
        We say $D$ is \textbf{de Rham} if for any $\sigma\in \Rep_G$, $D(\sigma)$ is de Rham, i.e., $\rank_{R_{\rig, L}} D(\sigma) = \rank_{L\otimes_{\Q_p}F} \calD_{\dR}(D)(\sigma)$. 
        \item[(ii)] The \textbf{crystalline module} attached to $D$ is the tensor functor \[
            \calD_{\cris}(D): \Rep_G \xrightarrow{D} \Mod_L^{(\varphi, \Gamma)} \xrightarrow{\calD_{\cris}} \Mod_{L\otimes_{\Q_p}F}^{\varphi}.
        \]
        We say $D$ is \textbf{crystalline} if for any $\sigma\in \Rep_G$, $D(\sigma)$ is crystalline, i.e., $\rank_{R_{\rig, L}} D(\sigma) = \rank_{L\otimes_{\Q_p}F} \calD_{\cris}(D)(\sigma)$. 
        \item[(iii)] The \textbf{semistable module} attached to $D$ is the  tensor functor \[
            \calD_{\st}(D): \Rep_G \xrightarrow{D} \Mod_L^{(\varphi, \Gamma)} \xrightarrow{\calD_{\st}} \Mod_{L\otimes_{\Q_p}F}^{(\varphi, N)}.
        \]
        We say $D$ is \textbf{semistable} if for any $\sigma\in \Rep_G$, $D(\sigma)$ is semistable, i.e., $\rank_{R_{\rig, L}} D(\sigma) = \rank_{L\otimes_{\Q_p}F} \calD_{\st}(D)(\sigma)$.
        \item[(iv)] The \textbf{potentially semistable module} attached to $D$ is the tensor functor \[
            \calD_{\pst}(D): \Rep_G \xrightarrow{D} \Mod_L^{(\varphi, \Gamma)} \xrightarrow{\calD_{\pst}} \Mod_{L\otimes_{\Q_p}F^{\unr}}^{(\varphi, N, \Gal_F)}.
        \]
        We say $D$ is \textbf{potentially semistable} if for any $\sigma\in \Rep_G$, $D(\sigma)$ is potentially semistable, i.e., $\rank_{R_{\rig, L}} D(\sigma) = \rank_{L\otimes_{\Q_p}F^{\unr}} \calD_{\pst}(D)(\sigma)$.
        \item[(v)] Let $A$ be a reduced affinoid algebra over $\Q_p$ (à la Tate) and $D\in G\Mod_{A}^{(\varphi, \Gamma)}$. The \textbf{Sen module} attached to $D$ is the tensor functor \[
            \calD_{\Sen}(D): \Rep_G \xrightarrow{D} \Mod_A^{(\varphi, \Gamma)} \xrightarrow{\calD_{\Sen}} \Mod_{F^{\cyc}\widehat{\otimes}_{\Q_p}A}.
        \]
    \end{enumerate}
\end{Definition}

\begin{Lemma}\label{Lemma: p-adic properties can be checked on faithful representations}
    Let $L$ be a (large enough) finite field extension of $\Q_p$ and $D\in G\Mod_L^{(\varphi, \Gamma)}$. Then, $D$ is de Rham (resp., crystalline; resp., semistable; resp., potentially semistable) if and only if there exists a faithful $\sigma_0\in \Rep_G$ such that $D(\sigma_0)$ is de Rham (resp., crystalline; resp., semistable; resp., potentially semistable). 
\end{Lemma}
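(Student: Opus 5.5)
The forward direction is immediate: every faithful $\sigma_0\in\Rep_G$ is in particular an object of $\Rep_G$, so it suffices to produce one faithful representation. Since $G$ is a linear algebraic group, such a representation exists. The real content is the converse: if $D(\sigma_0)$ is de Rham (resp. crystalline, semistable, potentially semistable) for one faithful $\sigma_0$, then $D(\sigma)$ has the same property for every $\sigma\in\Rep_G$.

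The plan is to combine the classical fact that every algebraic representation of $G$ is built out of a faithful one with stability properties of the relevant categories of $(\varphi,\Gamma)$-modules. By \cite[Proposition 2.20]{Deligne-Milne}, every $\sigma\in\Rep_G$ is isomorphic to a subquotient of a finite direct sum of representations of the form $\sigma_0^{\otimes a}\otimes(\sigma_0^{\vee})^{\otimes b}$. Because $G$ is reductive, we are in characteristic $0$ and $\Rep_G$ is semisimple, so "subquotient" can in fact be upgraded to "direct summand".

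The argument then proceeds in the following steps.

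\textbf{Step 1.} Since $D$ is an exact $\otimes$-functor, it commutes with duals, tensor products and direct sums. We therefore have
\[
D\bigl(\sigma_0^{\otimes a}\otimes(\sigma_0^\vee)^{\otimes b}\bigr)\cong D(\sigma_0)^{\otimes a}\otimes D(\sigma_0)^{\vee,\otimes b}.
\]
Duals are preserved by exact $\otimes$-functors on rigid tensor categories. If $\sigma$ is a direct summand of $W=\bigoplus_i \sigma_0^{\otimes a_i}\otimes(\sigma_0^\vee)^{\otimes b_i}$, then $D(\sigma)$ is a direct summand of $D(W)$.

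\textbf{Step 2.} Show that, for $\ast\in\{\dR,\cris,\st,\pst\}$, the full subcategory of $\ast$-type objects of $\Mod_L^{(\varphi,\Gamma)}$ is stable under $\otimes$, duals, direct sums and direct summands.

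\begin{itemize}
\item Direct sums and direct summands are easy. The functors $\calD_\ast$ are additive, and one always has the inequality $\rank_{L\otimes F}\calD_\ast(M)\le\rank_{R_{\rig,L}}M$. Equality for $M=M_1\oplus M_2$ therefore forces equality for each summand.
\item Tensor products and duals are the substantive input. By \cite[Sect. 2.2.7]{BC} and \cite{Benois-LInvariant}, $\calD_\ast(M)=(M[1/t]\otimes \mathcal{B}_\ast)^{\Gamma}$ for an appropriate period ring $\mathcal{B}_\ast$. Being of type $\ast$ is then equivalent to the natural comparison map $\calD_\ast(M)\otimes\mathcal{B}_\ast\to M\otimes\mathcal{B}_\ast$ being an isomorphism. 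That comparison is compatible with $\otimes$ and duals, as in Fontaine's classical argument.
\item Alternatively, one can reduce to Berger's theorem \cite{Berger-pDifferential}, which identifies de Rham, crystalline and semistable $(\varphi,\Gamma)$-modules with $p$-adic differential equations and filtered $(\varphi,N)$-modules. In that language, stability under $\otimes$ and duals is classical.
\end{itemize}

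\textbf{Step 3.} Combine Steps 1 and 2. Since $D(\sigma_0)$ is of type $\ast$, so is $D(W)$, and hence so is its direct summand $D(\sigma)$.

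I expect Step 2's tensor and dual stability to be the main obstacle. This is because $D(\sigma_0)$ need not be étale, so we cannot simply quote the Galois-side statement for Fontaine's functors. The comparison-isomorphism argument over $R_{\rig,L}[1/t]$ must be carried out directly, or cited from \cite{BC} or \cite{Benois-LInvariant}. The potentially semistable case additionally requires passing to a finite extension $F'$ and noting that a finite extension works simultaneously for $\sigma_0$ and all constructions built from it.
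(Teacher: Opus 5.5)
Your proposal is correct and follows essentially the same route as the paper. Both arguments use that $\sigma_0$ is a tensor generator of $\Rep_G$, together with stability of the property under $\otimes$, duals and the relevant sub-objects, so that it propagates from $D(\sigma_0)$ to every $D(\sigma)$. The differences are minor: you use semisimplicity of $\Rep_G$ (characteristic $0$, $G$ reductive) to replace the paper's ``subquotient'' by ``direct summand'', and you spell out the tensor/dual stability for non-étale $(\varphi,\Gamma)$-modules, which the paper simply asserts.
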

\begin{proof}
    All the cases can be proven similarly, so we only provide a proof for the de Rham case and leave the other cases to the reader. 

    Since $D(\sigma_0)$ is de Rham, $D(\sigma_0^{\vee})$ is also de Rham, as is $D(\sigma)$ for any subquoient of $\sigma_0$. Moreover, if $\sigma, \sigma'\in \Rep_G$ such that $D(\sigma)$ and $D(\sigma')$ are both de Rham, $D(\sigma\otimes\sigma')$ is also de Rham. However, by \cite[Proposition 1.21]{Mile-tannakian}, $\sigma_0$ is a tensor generator, this implies that any $D(\sigma)$ is de Rham for any $\sigma\in \Rep_G$. 
\end{proof}

\begin{Proposition}\label{Proposition: potentially semistable equivalent to de Rham}
    Let $L$ be a (large enough) finite field extension of $\Q_p$ and $D\in G\Mod_L^{(\varphi, \Gamma)}$. Then, $D$ is de Rham if and only if $\D_{\rig}^{\dagger}(\rho)$ is potentially semistable. 
\end{Proposition}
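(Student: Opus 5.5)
The plan is to reduce the statement to the classical case of a single $(\varphi,\Gamma)$-module by evaluating on a faithful representation, and then invoke the $p$-adic monodromy theorem in its $(\varphi,\Gamma)$-module form. Read the statement as: $D\in G\Mod_L^{(\varphi,\Gamma)}$ is de Rham if and only if it is potentially semistable, in the sense of Definition \ref{Definition: p-adic Hodge theory modules}. When $D=\D_{\rig}^{\dagger}(\rho)$ comes from a $G$-valued representation via Corollary \ref{Corollary: from Galois to (phi, Gamma)}, this is the Tannakian version of Berger's theorem that de Rham representations are potentially semistable.

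\textbf{Step 1: reduction to a faithful representation.} Since $G$ is linear algebraic, fix a faithful $\sigma_0\in\Rep_G$. By Lemma \ref{Lemma: p-adic properties can be checked on faithful representations}:
\begin{itemize}
\item $D$ is de Rham if and only if $D(\sigma_0)$ is de Rham;
\item $D$ is potentially semistable if and only if $D(\sigma_0)$ is potentially semistable.
\end{itemize}
That lemma was only written out for the de Rham case. For $\calD_{\pst}$ the same argument works, but it needs three facts: the rank equality $\rank_{R_{\rig,L}} = \rank_{L\otimes F^{\unr}}\calD_{\pst}$ is stable under duals, subquotients and tensor products. For tensor products this follows because $\calD_{\pst}$ is a $\otimes$-functor, lax in general and exact on the potentially semistable subcategory. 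For subquotients and duals it follows from the standard left-exactness together with the dimension inequality $\dim \calD_{\pst}\le\rank$. I would add a short sentence justifying these closure properties, citing \cite{Benois-LInvariant}.

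\textbf{Step 2: the rank-$n$ statement.} It remains to show that a single $(\varphi,\Gamma)$-module $M=D(\sigma_0)$ over $R_{\rig,L}$ is de Rham if and only if it is potentially semistable.
\begin{itemize}
\item \emph{Potentially semistable implies de Rham.} After a finite extension $F'/F$ the module becomes semistable, and semistable implies de Rham via the comparison $\calD_{\dR}\supset \calD_{\st}\otimes F$ with equal dimensions. Galois descent along $\Gal(F'/F)$ then preserves the de Rham rank equality.
\item \emph{De Rham implies potentially semistable.} This is the hard direction, and it is exactly Berger's $p$-adic monodromy theorem for $(\varphi,\Gamma)$-modules \cite[Théorème 0.7]{Berger-pDifferential}. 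It is stated there for arbitrary (not necessarily étale) $(\varphi,\Gamma)$-modules over the Robba ring, via the associated $p$-adic differential equation and Crew's conjecture.
\end{itemize}

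The main obstacle is bookkeeping rather than mathematics. First, the functors $\calD_{\dR}$ and $\calD_{\pst}$ of Proposition \ref{Proposition: from (phi, Gamma) to other p-adic Hodge theory modules} must agree with Berger's for non-étale modules. Second, the rank conditions used in Definition \ref{Definition: p-adic Hodge theory modules} must be the same as Berger's notions. I would check these against \cite[Sect. 2.2.7]{BC} and \cite{Benois-LInvariant}, and if necessary restrict to the étale case $D=\D_{\rig}^{\dagger}(\rho)$. In that case Berger's original theorem for Galois representations suffices, since $\D_{\dR}$, $\D_{\pst}$ are compatible with $\D_{\rig}^{\dagger}$ by Proposition \ref{Proposition: from (phi, Gamma) to other p-adic Hodge theory modules}.
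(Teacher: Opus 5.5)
Your proposal is correct and rests on the same input as the paper. The paper's proof is a one-line citation of the $p$-adic monodromy theorem for (not necessarily étale) $(\varphi,\Gamma)$-modules, in the packaged form \cite[Proposition 1.2.6]{Benois-LInvariant} (see also \cite[Théorème 0.6]{Berger-pDifferential}); with that citation you never need to retreat to the étale case. Your Step 1 is harmless but unnecessary: $D$ being de Rham (resp.\ potentially semistable) is \emph{defined} as every $D(\sigma)$ being so, so applying the rank-$n$ equivalence to each $D(\sigma)$ separately gives the result at once, without the faithful-representation lemma or the closure properties of $\calD_{\pst}$ you planned to justify.
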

\begin{proof}
    This follows immediately from \cite[Proposition 1.2.6]{Benois-LInvariant} (see also \cite[Théorème 0.6]{Berger-pDifferential}).
\end{proof}

We close this subsection by a discussion on the \emph{Hodge--Tate--Sen weight} attached to a $(\varphi, \Gamma)$-module with $G$-structure. Let $A$ be a reduced affinoid algebra over $\Q_p$ (à la Tate) and $D\in G\Mod_A^{(\varphi, \Gamma)}$. Then, by Proposition \ref{Proposition: automorphisms get back to the algebraic group}, there is a canonical isomorphism \[
    \Aut^{\otimes}(\calD_{\mathrm{Sen}}(D)) \cong G(A \widehat{\otimes}_{\Q_p}F^{\cyc}).
\]
Note that, for any $\sigma\in \Rep_G$, the module $\calD_{\mathrm{Sen}}(D)(\sigma)$ admits a semilinear $\Gamma$-action (see, for example, \cite[\S 2.2.7]{BC}). This induces a group homomorphism \[
    \mu_D: \Gamma \rightarrow G(A\widehat{\otimes}_{\Q_p}F^{\cyc}) \rtimes \Gamma.
\] We shall often denote $\mu_D(\sigma) = ([\gamma], \gamma)$ for any $\gamma\in \Gamma$. 

\begin{Lemma}\label{Lemma: HTS weight is continuous}
    Keep the notations as above. The group homomorphism $\mu_D$ is continuous. 
\end{Lemma}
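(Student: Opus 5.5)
We reduce continuity of $\mu_D$ to the continuity of the semilinear $\Gamma$-action on a single Sen module attached to a faithful representation. Fix a faithful $\sigma_0: G \hookrightarrow \GL(V_0)$ in $\Rep_G$. This gives a closed embedding $G \hookrightarrow \GL(V_0)$, hence a closed embedding
\[
    G(A\widehat{\otimes}_{\Q_p}F^{\cyc}) \hookrightarrow \GL\bigl(\calD_{\Sen}(D)(\sigma_0)\bigr).
\]
After shrinking to an affinoid cover of $\Spa(A,A^\circ)$ on which $\calD_{\Sen}(D)(\sigma_0)$ is free (it is finite projective), the target is $\GL_n(A\widehat{\otimes}_{\Q_p}F^{\cyc})$. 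The topology on $G(A\widehat{\otimes}_{\Q_p}F^{\cyc})$ is the subspace topology from this embedding: choosing generators of the coordinate ring of $G$ exhibits $G(R)$ as a closed subset of $R^N$ for any topological ring $R$, and this topology is independent of the choices. The same applies to the semidirect product with $\Gamma$.

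Since the isomorphism $\Aut^{\otimes}(\calD_{\Sen}(D)) \cong G(A\widehat{\otimes}F^{\cyc})$ of Proposition \ref{Proposition: automorphisms get back to the algebraic group} is compatible with evaluation at $\sigma_0$, the composite
\[
    \Gamma \xrightarrow{\mu_D} G(A\widehat{\otimes}F^{\cyc})\rtimes\Gamma \to \GL_n(A\widehat{\otimes}F^{\cyc})\rtimes\Gamma
\]
sends $\gamma$ to the pair $(M_\gamma,\gamma)$. Here $M_\gamma$ is the matrix of the semilinear action of $\gamma$ on $\calD_{\Sen}(D)(\sigma_0)$ in the chosen basis. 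Because the map $G\rtimes\Gamma \to \GL_n\rtimes\Gamma$ is a topological embedding, it suffices to show that $\gamma\mapsto M_\gamma$ is continuous. Once this holds for one faithful $\sigma_0$, it holds for every $\sigma$, by the tensor-generator argument used in Lemma \ref{Lemma: p-adic properties can be checked on faithful representations}, although we only need $\sigma_0$.

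Continuity of $\gamma\mapsto M_\gamma$ comes from the construction of $\calD_{\Sen}$ in \cite[Sect. 1.3]{Liu-triangulation}. There, $\calD_{\Sen}(D)(\sigma_0)$ arises by base change from a finite projective $F_n\otimes_{\Q_p}A$-module $D_{\Sen,n}$, for $n$ large, obtained by localising $D_{(0,b]}(\sigma_0)$ at the point $\zeta_{p^n}-1$. On $D_{\Sen,n}$ the group $\Gamma$ acts $F_n\otimes A$-semilinearly and continuously, since the $\Gamma$-action on $(\varphi,\Gamma)$-modules over $R_{(0,b],A}$ is continuous. Choosing a basis of $D_{\Sen,n}$ and extending scalars to $F^{\cyc}\widehat{\otimes}A$, the matrix $M_\gamma$ is then a continuous function of $\gamma$ with values in $\GL_n(F_n\otimes A)\subset \GL_n(F^{\cyc}\widehat{\otimes}A)$. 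The semilinear part is also continuous, because $\Gamma$ acts continuously on $F^{\cyc}$ by construction.

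The main obstacle is making sure the topologies match. Specifically:
\begin{itemize}
    \item the topology on $G(A\widehat{\otimes}F^{\cyc})\rtimes\Gamma$ must be the one induced by $\sigma_0$;
    \item the identification of Proposition \ref{Proposition: automorphisms get back to the algebraic group} must hold on the nose rather than only étale-locally, since the torsor might be nontrivial.
\end{itemize}
For the second point, I would use that $G$ is split and that, over the ring $A\widehat{\otimes}F^{\cyc}$, the fibre functor trivialises after a finite étale cover. Continuity can be checked after such a cover, because the map on points is a closed embedding, and then one descends.
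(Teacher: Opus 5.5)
Your argument is essentially the paper's, which deduces the continuity of $\mu_D$ directly from the continuity of the $\Gamma$-action on $\calD_{\Sen}(D)$ in Liu's construction; you additionally spell out the reduction to a faithful $\sigma_0$ and the matching of topologies, which the paper leaves implicit. Your final worry about trivialising the torsor is not needed, because the identification $\Aut^{\otimes}(\calD_{\Sen}(D)) \cong G(A\widehat{\otimes}_{\Q_p}F^{\cyc})$ is built into the definition of $\mu_D$ that the lemma takes as given; moreover, your proposed fix is not available in general, since étale-local triviality does not imply triviality after a single finite étale cover.
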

\begin{proof}
    This follows immediately from the fact that the $\Gamma$-action on $\calD_{\Sen}(D)$ is continuous (see the construction in \cite[Sect. 1.3]{Liu-triangulation}).
\end{proof}

\begin{Definition}\label{Definition: HTS cocharacter}
    Keep the notations as above. The  \textbf{Hodge--Tate--Sen cocharacter} attached to $D$ is defined to be the continuous group homomorphism $\mu_D$. 
\end{Definition}

\begin{Remark}\label{Remark: comparison of HTS weight with classical definition}
    \begin{enumerate}
        \item[(i)] The rather tradition notion of \emph{Hodge--Tate--Sen weights} is defined as follows (ss, for example, \cite[Definition 6.2.11]{KPX}). If $D\in \Mod_A^{(\varphi, \Gamma)}$ of rank $n$, then $\calD_{\Sen}(D)$ is a finite free $A \widehat{\otimes}_{\Q_p}F^{\cyc}$-module with a semilinear $\Gamma$-action. Thus, we have a group homomorphism \[
            \Gamma \rightarrow \GL_n(A \widehat{\otimes}_{\Q_p}F^{\cyc}) \rtimes \Gamma. 
        \]
        Here, we fix an isomorphism $\Aut_{A \otimes_{\Q_p}F^{\cyc}}(\calD_{\Sen}(D)) \cong \GL_n(A \widehat{\otimes}_{\Q_p}F^{\cyc})$.
        Given $\gamma\in \Gamma$, we (temporally) use the notation $([\gamma], \gamma)$ for the image of $\gamma$ in $\GL_n(A \widehat{\otimes}_{\Q_p}F^{\cyc}) \rtimes \Gamma$. Suppose $\gamma$ is close to $1$ (under the $p$-adic topology on $\Gamma$), consider the Sen operator \[
            \Theta_{\Sen}(\gamma) \coloneq \frac{\log_p [\gamma]}{\log_p \chi_{\cyc}(\gamma)} \in \End_{A \widehat{\otimes}_{\Q_p}F^{\cyc}}(\calD_{\Sen}(D)).
        \]
        The Sen polynomial $P_{\Sen}(D)$ attached to $D$ is defined to be the characteristic polynomial of $\Theta_{\Sen}(D)$, which turns out to be independent to the choice of $\sigma$. Since $\Theta_{\Sen}(D)$ is $\Gamma$-equivariant, the coefficients of $P_{\Sen}(D)$ lies in $A$. If $D$ is furthermore trianguline, the roots of $P_{\Sen}(D)$ lies in $A$ (\cite[Proposition 2.3.3]{BC}). Then, the Hodge--Tate--Sen weights of $D$ is defined to be these roots. 
        \item[(ii)] We term $\mu_D$ as \emph{Hodge--Tate--Sen cocharacter} to distinguish from the traditional Hodge--Tate--Sen weights. Evidently, our definition is inspired by the construction of the latter and Buzzard's notes on Hodge--Tate theory (\cite{Buzzard-HTNotes}). We sometimes prefer to work with Hodge--Tate--Sen cocharacters since the $G$-structure is encoded (compared with the traditional notion of Hodge--Tate--Sen weights, one needs to first choose an embedding $G \hookrightarrow \GL_n$). 
    \end{enumerate}
\end{Remark}

\begin{Example}\label{Example: HTS weight for a non-HT representation}
    Consider the Galois representation \[
        \rho: \Gal_{\Q_p} \rightarrow \GL_2(\Q_p), \quad \gamma \mapsto \begin{pmatrix} 1 & \log_p\chi_{\cyc}(\gamma) \\ & 1 \end{pmatrix}
    \]
    and let $D = \mathbf{D}_{\rig}^{\dagger}(\rho)$. It is well-known that $\rho$ is non-Hodge--Tate; it is trianguline with Hodge--Tate--Sen weight $(0,0)$ (see, for example, \cite[Sect. II.1.2]{Berger-Intro}). However, one sees easily that, under Definition \ref{Definition: HTS cocharacter}, its Hodge--Tate--Sen cocharacter is \[
        \mu_D: \Gamma \rightarrow \GL_2(\Q_p^{\cyc}) \rtimes \Gamma, \quad \gamma \mapsto \left( \begin{pmatrix} 1 & \log_p\chi_{\cyc}(\gamma) \\ & 1 \end{pmatrix}, \gamma \right)
    \]
\end{Example}

\subsection{\texorpdfstring{$P$}{P}-flags}\label{subsection: P-flag}

In this subsection, we introduce the notion of \emph{triangulation} -- or more generally, \emph{flags} -- for $(\varphi, \Gamma)$-modules with $G$-structure. To this end, we introduce more notations: \begin{itemize}
    \item Given a $(\varphi, \Gamma)$-module $D_{(0, b]}$ over $\calY_{(0, b], A}$, consider the $G$-torsor \[
        \calG_{D_{(0, b]}}: \Rep_{G} \xrightarrow{D_{(0, b]}} \Mod_{\calY_{(0, b],A}}^{(\varphi, \Gamma)} \xrightarrow{\text{forgetful}} \VB_{\calY_{(0, b], A}}.
    \]
    We fix an isomorphism \begin{equation}\label{eq: fixed isomorphism}
        \underline{\Aut}^{\otimes}(\calG_{(0, b]}) \cong G^{\an}_{\calY_{(0, b], A}}.
    \end{equation}
    \item Given a reduced affinoid algebra $A$ over $\Q_p$ (à la Tate), denote by $\Mod_{\calY_{(0, b], A}}^{(\varphi, \Gamma), \Fil_{\bullet}}$ the category of filtred $(\varphi, \Gamma)$-modules over $\calY_{(0, b], A}$ (with increasing filtrations). That is, objects of  $\Mod_{\calY_{(0, b], A}}^{(\varphi, \Gamma), \Fil_{\bullet}}$ are pairs $(D, \Fil_{\bullet} D)$ such that \begin{itemize}
        \item $D\in \Mod_{\calY_{(0, b], A}}^{(\varphi, \Gamma)}$;
        \item $\Fil_{\bullet} D$ is an increasing filtration on $D$ (as a vector bundle) such that each $\Fil_{n} D\in \Mod_{\calY_{(0, b], A}}^{(\varphi, \Gamma)}$; 
        \item for each $n$, the graded piece $\Gr_n D = \Fil_n D/\Fil_{n-1}D \in \Mod_{\calY_{(0, b], A}}^{(\varphi, \Gamma)}$. 
    \end{itemize}
    Obviously, there are forgetful functors \[
        \Mod_{\calY_{(0, b], A}}^{(\varphi, \Gamma), \Fil_{\bullet}} \rightarrow \VB_{\calY_{(0, b], A}}^{\Fil_{\bullet}} \quad \text{ and }\quad \Mod_{\calY_{(0, b], A}}^{(\varphi, \Gamma), \Fil_{\bullet}} \rightarrow \Mod_{\calY_{(0, b], A}}^{(\varphi, \Gamma)}.
    \]
\end{itemize}

\begin{Definition}\label{Definition: P-flag}
    Let $A$ be a reduced affinoid algebra over $\Q_p$ (à la Tate). Fix a Borel subgroup $B \subset G$ and let $P\subset G$ be a parabolic subgroup containing $B$.
    \begin{enumerate}
        \item[(i)] A \textbf{$P$-flagged $(\varphi, \Gamma)$-module} over $\calY_{(0, b], \calX}$ is a $(\varphi, \Gamma)$-module $D_{(0, b]}$ with $G$-structure over $\calY_{(0, b],A}$ together with a factorisation \[
            \begin{tikzcd}
                \Rep_G \arrow[rr, "D_{(0, b]}"]\arrow[rd, "\Fil_{\bullet} D_{(0, b]}"'] && \Mod_{\calY_{(0, b], A}}^{(\varphi, \Gamma)}\\
                & \Mod_{\calY_{(0, b], A}}^{(\varphi, \Gamma), \Fil_{\bullet}}\arrow[ru, "\text{forgetful}"']
            \end{tikzcd}
        \]
        such that the parabolic subgroup $\calP_{\Fil_{\bullet}}^{\calG_{D_{(0, b]}}}$ of the filtred $G$-torsor \[
            \begin{tikzcd}
                \Rep_G \arrow[rr, "D_{(0, b]}"]\arrow[rd, "\Fil_{\bullet} D_{(0, b]}"'] && \Mod_{\calY_{(0, b], A}}^{(\varphi, \Gamma)} \arrow[r] & \VB_{\calY_{(0, b], A}}\\
                & \Mod_{\calY_{(0, b], A}}^{(\varphi, \Gamma), \Fil_{\bullet}}\arrow[ru]\arrow[r] & \VB_{\calY_{(0,b], A}}^{\Fil_{\bullet}}\arrow[ru]
            \end{tikzcd},
        \] denoted by $\Fil_{\bullet}\calG_{D_{(0, b]}}$, is conjugate to $P^{\an}_{\calY_{(0, b], A}}$ under the fixed isomorphism \eqref{eq: fixed isomorphism}. We denote by $G\Mod_{\calY_{(0, b], A}}^{(\varphi, \Gamma), P}$ the category of $P$-flagged $(\varphi, \Gamma)$-modules over $\calY_{(0, b], A}$. 
        \item[(ii)] The \textbf{category of $P$-flagged $(\varphi, \Gamma)$-modules} over $A$ is defined to be \[
            G\Mod_{A}^{(\varphi, \Gamma), P} := 2-\varinjlim_{b\rightarrow 0}G\Mod_{\calY_{(0, b], A}}^{(\varphi, \Gamma), P}.
        \] 
        Consequently, by a \textbf{$P$-flagged $(\varphi, \Gamma)$-module (with $G$-structure)} over $A$, we mean an object $(D, \Fil_{\bullet} D)$ in $G\Mod_{A}^{(\varphi, \Gamma), P}$. 
    \end{enumerate}
\end{Definition}

\begin{Remark}\label{Remark: paraboline}
    In \cite{Bergdall-paraboline}, J. Bergdall termed a similar notion as in Definition \ref{Definition: P-flag} \emph{paraboline}. We chose to use our terminology to emphasise the choice of parabolic subgroup $P$. 
\end{Remark}

\begin{Example}\label{Example: old termimology on triangulation}
    If $G = \GL_n$ (for some $n$), $P=B$ and $\sigma = \mathrm{std}: \GL_n \xrightarrow{\id} \GL_n$ is the standard representation, then a $B$-flag $\Fil_{\bullet} D(\mathrm{std})$ is nothing but a triangulation on the $(\varphi, \Gamma)$-module $D(\mathrm{std})$.
\end{Example}

\begin{Lemma}\label{Lemma: finer filtration can be viewed as a coarser filtration}
    Let $A$ be a reduced affinoid algebra over $\Q_p$ (à la Tate). Let $P\subset Q$ be parabolic subgroups of $G$ containing $B$. There exists a functor \[
        \Phi_P^Q: G\Mod_A^{(\varphi, \Gamma), P} \rightarrow G\Mod_A^{(\varphi, \Gamma), Q}
    \] such that the diagram \[
        \begin{tikzcd}
            G\Mod_A^{(\varphi, \Gamma), P} \arrow[rr, "\Phi_P^Q"]\arrow[rd, "\text{forgetful}"'] && G\Mod_A^{(\varphi, \Gamma), Q}\arrow[ld, "\text{forgetful}"]\\
            & G\Mod_A^{(\varphi, \Gamma)}
        \end{tikzcd}
    \]
    is commutative. 
\end{Lemma}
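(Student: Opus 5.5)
The plan is to coarsen the filtration by reindexing. For $P \subset Q$ both containing $B$, the filtration attached to $Q$ is obtained from the one attached to $P$ by forgetting some of its steps. I will work over a fixed $\calY_{(0,b],A}$, build a functor $G\Mod_{\calY_{(0,b],A}}^{(\varphi,\Gamma),P} \to G\Mod_{\calY_{(0,b],A}}^{(\varphi,\Gamma),Q}$ compatible with the base-change functors for $b' \le b$, and then pass to the $2$-colimit. Commutativity with the forgetful functors will hold by construction, because the underlying functor $D_{(0,b]}$ is never changed; only the factorisation through filtered modules is replaced.

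\textbf{Step 1: encode parabolics by cocharacters.} Fix a maximal split torus $T \subset B$, which exists since $G$ is split. Choose a cocharacter $\lambda_Q$ of $T$ such that $Q = P_G(\lambda_Q)$, meaning $Q$ is the parabolic whose Lie algebra is spanned by the weight spaces of $\mathrm{Ad}\circ\lambda_Q$ with nonpositive weights, matching the increasing convention. Since $P \subset Q$ are standard parabolics, the set of simple roots of the Levi of $P$ is contained in that of $Q$. So I can take a cocharacter $\lambda_P$ with $P = P_G(\lambda_P)$ such that the pairings satisfy $\langle \alpha, \lambda_Q\rangle = 0$ whenever $\langle \alpha, \lambda_P\rangle = 0$, and the signs of the two pairings agree otherwise. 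By the Saavedra Rivano description invoked in Proposition \ref{Proposition: parabolics and unipotent radical from filtrations}, a $P$-flag is étale-locally the filtration $\Fil_n \calG(V) = \bigoplus_{i \le n} V_i^{\lambda}$ transported by a section, for any cocharacter $\lambda$ with associated parabolic $P$. A priori, though, the filtration on $D_{(0,b]}$ comes with some fixed type $\lambda$.

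\textbf{Step 2: define the coarsening.} Given $(D_{(0,b]}, \Fil_\bullet D_{(0,b]})$ and $V \in \Rep_G$, I set $\Fil^{Q}_m D_{(0,b]}(V)$ to be the largest $\Fil_n D_{(0,b]}(V)$ whose graded pieces up to $n$ all have $\lambda_Q$-weight at most $m$. Equivalently, $\Fil^Q_m$ is the sum of the $P$-filtration steps whose $\lambda_P$-weights map to $\lambda_Q$-weights at most $m$. This is cleanest after reindexing the $P$-filtration by the characters of the centre of the Levi $L_P$, and then coarsening along the restriction map from characters of $Z(L_P)$ to characters of $Z(L_Q) \subset Z(L_P)$. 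Each $\Fil^Q_m$ is one of the $\Fil_n$, so it and its graded pieces, which are successive extensions of the old graded pieces, remain $(\varphi,\Gamma)$-modules. Saturation and local freeness are also preserved, and the construction is visibly functorial and compatible with tensor products and exactness. The resulting parabolic $\calP^{\calG}_{\Fil^Q_\bullet}$ can be checked étale-locally after trivialising, where it is the stabiliser of the $\lambda_Q$-filtration, namely a conjugate of $Q^{\an}$ by the same element that conjugates $P^{\an}$. It contains $\calP_{\Fil_\bullet}$.

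\textbf{Main obstacle.} The hard part is that the definition of a $P$-flag only prescribes the conjugacy class of the stabiliser, not the indexing of the filtration. The coarsening must therefore be expressed intrinsically, in terms of the parabolic $\calP_{\Fil_\bullet}$, and not in terms of jumps of a chosen grading. My first attempt will be to use the intrinsic description: the type of a filtered fibre functor is determined by its parabolic, up to the choice of a $\lambda$ in the corresponding facet. I can therefore choose $\lambda_Q$ in the closure of the facet determined by $\lambda_P$, and show that the resulting coarse filtration is independent of the choices, by working étale-locally where both filtrations split compatibly. Compatibility with the transition functors $b' \le b$ is immediate since base change commutes with the construction, so the functors assemble into $\Phi_P^Q$ on the $2$-colimit.
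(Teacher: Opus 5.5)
Your route is the same as the paper's. You fix cocharacters with $P=P(\lambda_P)$ and $Q=P(\lambda_Q)$, and obtain the $Q$-flag on each $D(V)$ by deleting steps of the given $P$-flag. The paper does the same: it asserts $P(\sigma\circ\mu)\subset P(\sigma\circ\lambda)$ inside $\GL(V)$ and extracts a sub-filtration. The gap is exactly this step.

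\textbf{Where Step 2 fails.} Your claim that each $\Fil^Q_m D(V)$ is one of the $\Fil_n D(V)$, i.e.\ that $\lambda_P$-weights ``map to'' $\lambda_Q$-weights, is false for general $V$, because a single $\lambda_P$-weight space can carry several $\lambda_Q$-weights. Take $G=\GSp_4$, $P=B$, $Q=P_{\mathrm{Si}}$. Let the $P$-flag have type $\lambda_P$ acting on $e_1,\dots,e_4$ with weights $(3,2,1,0)$, and let $\lambda_Q$ have weights $(1,1,0,0)$; negate both if your sign convention for $P(\cdot)$ is the opposite one.
\begin{itemize}
\item In $V=\mathrm{Sym}^2(\mathrm{std})$, the vectors $e_1e_3$ and $e_2^2$ both have $\lambda_P$-weight $4$, but their $\lambda_Q$-weights are $1$ and $2$.
\item Every step of the $\lambda_P$-filtration contains both of them or neither.
\item The $\lambda_Q$-step separating weight $1$ from weight $2$ contains exactly one of them, so it is not a step of the $P$-flag.
\end{itemize}
The type of the given $P$-flag is imposed on you, not chosen, so this already breaks Step 2. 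Re-choosing $\lambda_P$ in its facet does not help uniformly either. Your Step 1 condition only concerns roots. For $P\subsetneq Q\subsetneq G$, some nonzero element of the root lattice is killed by $\lambda_P$ but not by $\lambda_Q$, and it occurs as a difference of two weights of a suitable irreducible $V$. Consequently your recipe does not even give a $\otimes$-compatible filtration of type $\lambda_Q$: on $\mathrm{Sym}^2(\mathrm{std})$ it disagrees with the filtration induced from $\mathrm{std}$. Your justification of $(\varphi,\Gamma)$-stability, which rests on the new steps being old steps, therefore collapses.

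\textbf{The fix.} The intrinsic viewpoint you sketch under ``Main obstacle'' is the right one, but it must replace Step 2, not supplement it: stability has to come from the parabolic, not from the steps.
\begin{itemize}
\item Fix $\lambda_Q$ with $P(\lambda_Q)=Q$ once and for all. The functor depends on this choice, since a $Q$-flag prescribes only its stabiliser. What must be shown is independence of local trivialisations, not of $\lambda_Q$.
\item \'Etale-locally on $\calY_{(0,b],A}$, the trivialisations $g$ of $\calG_{D_{(0,b]}}$ with $g^{-1}\calP_{\Fil_\bullet}g=P$ form a torsor under $N_G(P)=P$. Put $\Fil^Q_m D(V):=g(\Fil^{\lambda_Q}_m V\otimes\scrO)$.
\item This is independent of $g$, because $P\subset Q$ and $Q$ stabilises $\Fil^{\lambda_Q}_\bullet V$ for every $V$. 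This is an inclusion of subgroups of $G$, not of parabolics of $\GL(V)$. Hence the construction descends to an exact, $\otimes$-compatible filtration by local direct summands, whose stabiliser is a conjugate of $Q$ containing $\calP_{\Fil_\bullet}$.
\item For stability: $\varphi_D$ maps $\varphi^*\Fil_\bullet$ onto $\Fil_\bullet$, so it conjugates $\varphi^*\calP_{\Fil_\bullet}$ onto $\calP_{\Fil_\bullet}$. Hence $\varphi_D\circ\varphi^*g$ is again an admissible trivialisation, and $\varphi_D(\varphi^*\Fil^Q_m)=\Fil^Q_m$.
\item The same argument works for each $\gamma\in\Gamma$, and for morphisms of $P$-flagged objects, which gives functoriality. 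Compatibility with the base changes $b'\le b$ and with the forgetful functors is then as you say.
\end{itemize}
For $G=\GSp_{2n}$ there is an alternative: your Step 2 is correct on $D(\mathrm{std})$ alone, since the weights of $\mathrm{std}$ are weakly ordered compatibly with every standard parabolic. One can coarsen there and propagate to all $D(V)$ by tensor constructions, but one still needs $g$ to see that the result has type $\lambda_Q$.
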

\begin{proof}
    To prove the lemma, we show that, given $(D, \Fil_{\bullet}D)\in G\Mod_A^{(\varphi, \Gamma), P}$, there exists a unique sub-filtration $\Fil_{\bullet}^QD$ such that $(D, \Fil_{\bullet}^QD)\in G\Mod_A^{(\varphi, \Gamma), Q}$. By definition, we may assume $(D, \Fil_{\bullet}D)\in G\Mod_{\calY_{(0, b], A}}^{(\varphi, \Gamma), P}$ and denote by $\Fil_{\bullet}\calG_{D_{(0, b]}}$ the filtred $G$-torsor as in Definition \ref{Definition: P-flag}.

    Recall from the theory of algebraic groups (\cite[Chapter 21, Sect. i]{Milne-AG}), \[
        P = P(\mu) = \{ g\in G: \lim_{t\mapsto 0} \mu(t)g\mu(t)^{-1}\text{ exists} \}\footnote{ Here, the limit is taking in the sense of \cite[Chapter 13, Sect. b]{Milne-AG}}
    \]
    for some cocharacter $\mu$. Similarly, we have $Q = P(\lambda)$ for some cocharacter $\lambda$. For any representation $\sigma: G \rightarrow \GL_n$, define \begin{align*}
        \mu_{\sigma}: & \bbG_m \xrightarrow{\mu} G \xrightarrow{\sigma}\GL_n,\\
        \lambda_{\sigma}: & \bbG_m \xrightarrow{\lambda} G \xrightarrow{\sigma} \GL_n,
    \end{align*}
    and let $P_{\sigma} = P(\mu_{\sigma})$ and $Q_{\sigma} = Q(\lambda_{\sigma})$ be the corresponding parabolic subgroups in $\GL_n$. Note that $P_{\sigma} \subset Q_{\sigma}$. 

    Consequently, $\Fil_{\bullet}\calG_{D_{(0, b]}}(\sigma)$ is stabilised by $P_{\sigma, \calY_{(0, b], A}}^{\an}$ (up to conjugation). Since parabolic subgroups in $\GL_n$ are in bijection with the flags on which they stabilise and $P_{\sigma} \subset Q_{\sigma}$, there exists a sub-filtration $\Fil_{\bullet}^Q\calG_{D_{(0, b]}}(\sigma)$ of $\Fil_{\bullet} \calG_{D_{(0, b]}}(\sigma)$ that is stabilised by $Q_{\sigma,\calY_{(0, b], A}}^{\an}$ (up to conjugation). As the construction is functorial in $\sigma$, we arrive at a sub-filtration $\Fil^{Q}_{\bullet}\calG_{D_{(0, b]}}$ of $\Fil_{\bullet}\calG_{D_{(0, b]}}$.

    Finally, $\Fil_{\bullet}\calG_{D_{(0, b]}}$ comes from forgetting the $(\varphi, \Gamma)$-structure on $\Fil_{\bullet}D_{(0, b]}$, every step of $\Fil_{\bullet}\calG_{D_{(0, b]}}(\sigma)$ is a saturated $(\varphi, \Gamma)$-submodule over $\calY_{(0, b], A}$. Therefore, every step of $\Fil_{\bullet}^Q\calG_{D_{(0, b]}}(\sigma)$ is a saturated $(\varphi, \Gamma)$-submodule. Consequently, $\Fil_{\bullet}^Q\calG$ defines a $(\varphi, \Gamma)$-stable filtration $\Fil^{Q}_{\bullet}D_{(0, b]}$ and we conclude the proof. 
\end{proof}

\begin{Definition}\label{Definition: P-flagged Galois representation}
    Let $L$ be a (large enough) finite field extension of $\Q_p$ and $\rho$ be a $G$-valued $\Gal_{F}$-representation with coefficients in $L$. Fix a Borel subgroup $B\subset G$ and let $P$ be a parabolic subgroup of $G$ containing $B$. We say $\rho$ is \textbf{$P$-flagged} if there exists a factorisation of functor \[
        \begin{tikzcd}
            \Rep_G \arrow[r, "\rho"]\arrow[rd] & \Rep_{\Gal_{F}}(L)\arrow[r, "\D_{\rig}^{\dagger}"] & \Mod_{L}^{(\varphi, \Gamma)}\\
            & \Mod_{L}^{(\varphi, \Gamma),\Fil_{\bullet}} \arrow[ru, "\text{forgetful}"']
        \end{tikzcd}
    \]
    giving rise to an object $(\D_{\rig}^{\dagger}(\rho), \Fil_{\bullet} \D_{\rig}^{\dagger}(\rho))$ in $G\Mod_{L}^{(\varphi, \Gamma), P}$. Equivalently, $\rho$ is $P$-flagged if $\D_{\rig}^{\dagger}(\rho)$ belongs to the essential image of the forgetful functor $G\Mod_{L}^{(\varphi, \Gamma), P} \rightarrow G\Mod_{L}^{(\varphi, \Gamma)}$.
\end{Definition}

Finally, we introduce the notion of \emph{$P$-non-criticality}, generalising Bergdall's notion (\cite[Definition 3.6]{Bergdall-paraboline}). 

\begin{Definition}\label{Definition: P-non-critical}
    Let $L$ be a large enough finite field extension of $\Q_p$ containing all the embeddings of $F$ in $\overline{\Q}_p$. Let $(D, \Fil_{\bullet} D)\in G\Mod_L^{(\varphi, \Gamma), P}$ be a $P$-flagged $(\varphi, \Gamma)$-module with $G$-structure. We say $(D, \Fil_{\bullet}D)$ is \textbf{$P$-non-critical} if, for any $\sigma\in \Rep_G$, $\Fil_{\bullet}D(\sigma)$ is non-critical (in the sense of \cite[Definition 3.6]{Bergdall-paraboline}). More precisely, for any $\sigma \in \Rep_{G}$, any embedding $\tau: F \hookrightarrow \overline{\Q}_p$, and any $i$, there exists $k_{\tau, i}\in \Z$ such that \[
        \calD_{\dR}(\Fil_i D)(\sigma)_{\tau} \oplus \Fil^{k_{\tau, i}}_{\dR} \calD_{\dR}(D)(\sigma)_{\tau} = \calD_{\dR}(D)(\sigma)_{\tau}.
    \] 
    Here, $\Fil_{\dR}^{\bullet} \calD_{\dR}(D)$ is the de Rham filtration on $\calD_{\dR}(D)$. 
\end{Definition}

\begin{Lemma}\label{Lemma: P-non-criticality can be checked on faithful representations}
    Let $\sigma_0\in \Rep_G$ be a faithful representation of $G$. Then, $(D, \Fil_{\bullet})\in G\Mod_L^{(\varphi, \Gamma), P}$ is $P$-non-critical if and only if $(D(\sigma_0), \Fil_{\bullet}D(\sigma_0))$ is non-critical (in the sense of \cite[Definition 3.6]{Bergdall-paraboline}).
\end{Lemma}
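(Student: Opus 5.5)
The forward direction is immediate from Definition \ref{Definition: P-non-critical}, since the condition is imposed for every $\sigma\in\Rep_G$, in particular for $\sigma_0$. For the converse, I would imitate the proof of Lemma \ref{Lemma: p-adic properties can be checked on faithful representations}. Let $\mathcal{S}\subset \Rep_G$ be the class of $\sigma$ for which $\Fil_\bullet D(\sigma)$ is non-critical. I would show that $\mathcal{S}$ is stable under duals, finite direct sums, tensor products and subquotients. Since $\sigma_0$ is a tensor generator of $\Rep_G$ \cite[Proposition 1.21]{Mile-tannakian}, this would give $\mathcal{S}=\Rep_G$.

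Non-criticality should first be rephrased as a relative-position condition. For each embedding $\tau$, the flag $\calD_{\dR}(\Fil_\bullet D)(\sigma)_\tau$ is non-critical exactly when each of its steps is transverse to the Hodge filtration in the sense of the displayed equation. Equivalently, the two filtrations of $\calD_{\dR}(D)(\sigma)_\tau$ admit a common splitting in which the flag step $\Fil_i$ is spanned by the lines of the smallest Hodge degrees that occur. On the group side, $\calD_{\dR}(D)_\tau$ is an $L$-point $G$-torsor (Proposition \ref{Proposition: automorphisms get back to the algebraic group}) carrying two structures. The first is the increasing filtration from the $P$-flag, with stabiliser a conjugate of $P$ (Proposition \ref{Proposition: parabolics and unipotent radical from filtrations}). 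The second is the Hodge filtration $\Fil^\bullet_{\dR}$, a decreasing filtered $G$-torsor with stabiliser $P_{\dR}$.

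The plan is to show that non-criticality of $\sigma_0$ is equivalent to a group-theoretic statement: the pair of parabolics $(P_{\Fil},P_{\dR})$ is in generic (open Bruhat cell) relative position, and moreover the Levi splitting they share matches the Hodge cocharacter in the prescribed order. For $\GL_n$ and the standard representation, this is the classical statement that a flag is transverse to the Hodge flag. Because $\sigma_0$ is faithful, $G\hookrightarrow \GL(\sigma_0)$, and the relative position in $G$ can be read off from that in $\GL(\sigma_0)$. Once a common splitting exists, given by a maximal torus $T\subset P_{\Fil}\cap P_{\dR}$ together with cocharacters $\mu$ (for the flag) and $\mu_{\dR}$ (Hodge) factoring through $T$, every $\sigma$ decomposes into $T$-weight spaces. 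Both filtrations on $\sigma$ are then induced by $\mu_\sigma$ and $\mu_{\dR,\sigma}$ on the same weight spaces. Transversality for $\sigma$ then reduces to an ordering compatibility between the two cocharacters on weights, and this compatibility is inherited from $\sigma_0$. Tensor products and duals behave well because they preserve weight-space decompositions and both gradings are additive in the weights.

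The main obstacle is this last ordering step. Non-criticality is not merely transversality. It also demands that $\Fil_i$ meet exactly the lowest Hodge degrees, which is a statement about the Weyl element relating $\mu$ and $\mu_{\dR}$. To handle it, I would first translate the condition for $\sigma_0$ into the statement that the relative-position Weyl element $w\in W_P\backslash W/W_{P_{\dR}}$ is the longest (or antidominant) one. That condition is intrinsic to $G$. I would then check that for this $w$ the displayed splitting holds for every $\sigma$, by computing with $T$-weights, using that $\mu$ and $w\mu_{\dR}$ are anti-compatible on the weight lattice. If the general ordering argument proves delicate, a fallback is to verify directly that direct sums, duals and tensor products of non-critical filtered pieces remain non-critical (stability under subquotients is clear), by choosing compatible splittings. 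This is the same tensor-generator route as in Lemma \ref{Lemma: p-adic properties can be checked on faithful representations}.
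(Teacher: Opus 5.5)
\noindent The step that fails is the one you yourself single out: the claim that the ordering compatibility ``is inherited from $\sigma_0$'', and, in your fallback, that non-criticality survives tensor products and direct sums.

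Your reduction to weights is correct, and it is exactly what exposes the problem. Let $T$ be a maximal torus in the intersection of the two parabolics, and let $\mu,\mu_{\dR}$ be cocharacters of $T$ splitting the flag and the Hodge filtration. Then $(D(\sigma),\Fil_\bullet D(\sigma))$ is non-critical if and only if, for all $T$-weights $\chi,\chi'$ of $\sigma$, $\langle\chi,\mu\rangle<\langle\chi',\mu\rangle$ implies $\langle\chi,\mu_{\dR}\rangle<\langle\chi',\mu_{\dR}\rangle$.

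This condition passes to duals and subquotients, but not to $\sigma\otimes\sigma'$ or $\sigma\oplus\sigma'$. Weight differences there are sums of weight differences of the factors, and the implication is not additive. It breaks, for instance, when one summand is a root of the Levi of $P$, where $\langle\cdot,\mu\rangle=0$ but $\langle\cdot,\mu_{\dR}\rangle$ is large. For the same reason no condition on the relative-position Weyl element can suffice: that element only depends on the pair of parabolics, whereas non-criticality of a given $\sigma$ depends on the actual values of $\mu_{\dR}$.

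In fact the ``if'' direction is false as stated, so this step cannot be repaired. Take $G=\GSp_4$, $P=P_{\mathrm{Kl}}$, $\sigma_0=\mathrm{std}$, and $\rho$ a sum of crystalline characters valued in the diagonal torus. Choose a basis $v_1,\dots,v_4$ of $\calD_{\dR}(D)(\mathrm{std})$ adapted to both filtrations, with Hodge degrees $(0,a,b,a+b)$ where $0<a<b$. Take the Klingen flag $\langle v_1\rangle\subset\langle v_1,v_2,v_3\rangle\subset V$, of degrees $0,1,1,2$. This is non-critical on $\mathrm{std}$.

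On $\mathrm{std}^{\otimes 2}$, the flag step of degree $\le 1$ is spanned by the $v_1\otimes v_j$ and $v_j\otimes v_1$ with $j\le 3$, all of Hodge degree $\le b$. The vector $v_2\otimes v_2$ lies outside this step and has Hodge degree $2a$. So if $b\ge 2a$, no Hodge step is a complement. This happens, e.g., for the weights $(0,k_2-2,k_1-1,k_1+k_2-3)$ with $(k_1,k_2)=(10,4)$.

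Already the irreducible representation $\mathrm{Sym}^2\mathrm{std}$ is critical. Comparing $v_1v_3^{n-1}$ with $v_2^n$ shows that $\mathrm{Sym}^n\mathrm{std}$ is critical once $(n-1)b\ge na$. Hence, with regular weights, no Klingen flag is $P_{\mathrm{Kl}}$-non-critical in the sense of Definition~\ref{Definition: P-non-critical}.

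Your fallback is exactly the paper's route: duals, subrepresentations, quotients, tensor products, and then tensor generation. The paper's tensor-product claim fails for the same reason. The re-indexed filtration $\Fil^\bullet_{\nc}$ on $\sigma\otimes\sigma'$ is not a sub-filtration of the de Rham filtration, because $i\mapsto k_{i,\tau}$ is not additive.

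Direct sums, which tensor generation also requires and which the paper does not treat, fail as well. For $\GL_2$ and $P=B$, take Hodge degrees $(-5,0)$ on $v_1,v_2$ and the flag $\langle v_1\rangle\subset V$ of degrees $0,1$. Then $\mathbf{1}\oplus\det$ is critical.

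Only the ``only if'' direction survives. A correct statement needs a modified definition, e.g.\ imposing non-criticality only on $\mathrm{std}$, which is what Lemma~\ref{Lemma: small slope implies Galois non-critical} actually verifies.
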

\begin{proof}
    For any $(\sigma: G \rightarrow \GL_n)\in \Rep_G$, let $P_{\sigma}$ be the parabolic subgroup of $\GL_n$ as in Lemma \ref{Lemma: finer filtration can be viewed as a coarser filtration}. Then, the parabolic subgroup of $\GL_n$ fixing the filtration $\Fil_{\bullet}D(\sigma)$ is conjugate to $P_{\sigma}$. Suppose $(D(\sigma), \Fil_{\bullet}D(\sigma))$ is non-critical, we verify the following: 

    \noindent \textbf{Claim 1.} The filtred $(\varphi, \Gamma)$-module $(D(\sigma^{\vee}), \Fil_{\bullet}D(\sigma^{\vee}))$ is also non-critical. 
    
    To show the claim, we first recall the construction of $\Fil_{\bullet}D(\sigma^{\vee})$ (\cite[Sect. 2.1]{SR-Tannakian}): For any $i$, the inclusion $\Fil_i D(\sigma) \subset D(\sigma)$ yields a quotient map $D(\sigma^{\vee}) \rightarrow (\Fil_{i}D(\sigma))^{\vee}$; we then define \[
        \Fil_{1-i}D(\sigma^{\vee}) \coloneq \ker\left( D(\sigma^{\vee}) \rightarrow (\Fil_{i}D(\sigma))^{\vee} \right).
    \] Similarly, the de Rham filtration for $\calD_{\dR}(D)(\sigma^{\vee})$ is given by \[
        \Fil_{\dR}^{1-i} \calD_{\dR}(D)(\sigma^{\vee})_{\tau} = \ker(\calD_{\dR}(D)(\sigma^{\vee})_{\tau} \rightarrow (\calD_{\dR}(D)(\sigma^{\vee}_{\tau}))^{\vee}).
    \] Therefore, if \[
        \calD_{\dR}(\Fil_i D)(\sigma)_{\tau} \oplus \Fil^{k_{\tau, i}}_{\dR} \calD_{\dR}(D)(\sigma)_{\tau} = \calD_{\dR}(D)(\sigma)_{\tau}, 
    \] then \[
        \calD_{\dR}(\Fil_{1-i}D)(\sigma^{\vee})_{\tau} \oplus \Fil_{\dR}^{1-k_{\tau, i}}\calD_{\dR}(D)(\sigma^{\vee})_{\tau} = \calD_{\dR}(D)(\sigma)_{\tau};
    \] so we conclude the claim.

    \noindent \textbf{Claim 2.} If $\sigma'$ is a subrepresentation of $\sigma$, then $(D(\sigma'), \Fil_{\bullet}D(\sigma'))$ is non-critical. 

    If $\sigma'$ is a subrepresentation of $\sigma$, then \[
        \Fil_{\bullet}D(\sigma') = \Fil_{\bullet}D(\sigma) \cap D(\sigma')
    \]
    and \[
        \Fil_{\dR}^{\bullet} \calD_{\dR}(D)(\sigma')_{\tau} = \Fil_{\dR}^{\bullet} \calD_{\dR}(D)(\sigma)_{\tau} \cap \calD_{\dR}(D)(\sigma)_{\tau}.
    \]
    The desired result then follows immediately from the definition. 

    \noindent \textbf{Claim 3.} If $\sigma'$ is a quotient of $\sigma$, then $(D(\sigma'), \Fil_{\bullet}D(\sigma'))$ is non-critical. 

    If $\sigma'$ is a quotient of $\sigma$, then \[
        \Fil_{\bullet}D(\sigma') = \image\left( \Fil_{\bullet}D(\sigma) \rightarrow D(\sigma') \right)
    \] and \[
        \Fil_{\dR}^{\bullet}\calD_{\dR}(D)(\sigma')_{\tau} = \image\left( \Fil_{\dR}^{\bullet}\calD_{\dR}(D)(\sigma)_{\tau} \rightarrow \calD_{\dR}(D)(\sigma')_{\tau} \right).
    \]
    The claim then follows immediately. 

    \noindent \textbf{Claim 4.} If $\sigma,\sigma'\in \Rep_G$ such that $(D(\sigma), \Fil_{\bullet}D(\sigma))$ and $(D(\sigma'), \Fil_{\bullet}D(\sigma'))$ are non-critical, then $(D(\sigma\otimes\sigma'), \Fil_{\bullet}D(\sigma\otimes\sigma'))$ is also non-critical.  

    We have \[
        \Fil_{t}D(\sigma\otimes\sigma') = \sum_{i+j=t} \Fil_{i}D(\sigma) \otimes \Fil_j D(\sigma')
    \]
    and \[
        \Fil_{\dR}^t \calD_{\dR}(D)(\sigma\otimes\sigma)_{\tau} = \sum_{i+j=t} \Fil_{\dR}^i \calD_{\dR}(D)(\sigma)_{\tau} \otimes \Fil_{\dR}^j \calD_{\dR}(D)(\sigma').
    \]
    Suppose for any $i$, we have \[
        \calD_{\dR}(\Fil_i D)(\sigma)_{\tau} \oplus \Fil_{\dR}^{k_{i, \tau}} \calD_{\dR}(D)(\sigma)_{\tau} = \calD_{\dR}(D)(\sigma)_{\tau}
    \] and 
    \[
        \calD_{\dR}(\Fil_j D)(\sigma')_{\tau} \oplus \Fil_{\dR}^{k_{j, \tau}'} \calD_{\dR}(D)(\sigma')_{\tau} = \calD_{\dR}(D)(\sigma')_{\tau}.
    \]
    We have \[
        \cdots \leq k_{i-1, \tau} \leq  k_{i, \tau} \leq k_{i+1, \tau} \leq \cdots \quad \text{ and }\quad \cdots \leq k_{j-1, \tau}' \leq k_{j, \tau}' \leq k_{j+1, \tau}' \leq \cdots . 
    \]
    We can thus define new descending filtrations \[
        \Fil_{\nc}^i \calD_{\dR}(D)(\sigma)_{\tau} \coloneq \Fil_{\dR}^{k_{i, \tau}} \calD_{\dR}(D)(\sigma)_{\tau} \quad \text{ and }\quad \Fil_{\nc}^j\calD_{\dR}(D)(\sigma') \coloneq \Fil_{\dR}^{k_{j, \tau}'}\calD_{\dR}(D)(\sigma')_{\tau}. 
    \] By the construction, the graded pieces of these new filtrations have the following properties \begin{align*}
        \Gr_{\nc}^i \calD_{\dR}(D)(\sigma)_{\tau} & = \frac{\Fil_{\nc}^i \calD_{\dR}(D)(\sigma)_{\tau}}{\Fil_{\nc}^{i+1} \calD_{\dR}(D)(\sigma)_{\tau}} \\
        & = \frac{\Fil_{\nc}^i \calD_{\dR}(D)(\sigma)_{\tau} \oplus \calD_{\dR}(\Fil_i D)(\sigma)_{\tau}}{\Fil_{\nc}^{i+1} \calD_{\dR}(D)(\sigma)_{\tau} \oplus \calD_{\dR}(\Fil_i D)(\sigma)_{\tau}} \\
        & = \frac{\Fil_{\nc}^{i+1} \calD_{\dR}(D)(\sigma)_{\tau} \oplus \calD_{\dR}(\Fil_{i+1} D)(\sigma)_{\tau}}{\Fil_{\nc}^{i+1} \calD_{\dR}(D)(\sigma)_{\tau} \oplus \calD_{\dR}(\Fil_i D)(\sigma)_{\tau}}\\
        & = \frac{\calD_{\dR}(\Fil_{i+1}D)(\sigma)_{\tau}}{\calD_{\dR}(\Fil_i D)(\sigma)_{\tau}}\\
        & = \calD_{\dR}(\Gr_{i+1} D)(\sigma)_{\tau}
    \end{align*}
    and \[
        \Gr_{\nc}^j \calD_{\dR}(D)(\sigma')_{\tau} = \calD_{\dR}(\Gr_{j+1}D)(\sigma')_{\tau}.
    \]
    In other words, as $L$-vector spaces, we have \begin{align*}
        \calD_{\dR}(\Fil_i D)(\sigma)_{\tau} = \bigoplus_{a\leq i-1} \Gr_{\nc}^{a}\calD_{\dR}(D)(\sigma)_{\tau}, \quad & \Fil_{\nc}^i \calD_{\dR}(D)(\sigma)_{\tau} = \bigoplus_{a \geq i} \Gr_{\nc}^{a}\calD_{\dR}(D)(\sigma)_{\tau}, \\
        \calD_{\dR}(\Fil_j D)(\sigma')_{\tau} =\bigoplus_{b\leq j-1} \Gr_{\nc}^{b}\calD_{\dR}(D)(\sigma')_{\tau}, \quad & \Fil_{\nc}^j \calD_{\dR}(D)(\sigma')_{\tau} = \bigoplus_{b \geq j} \Gr_{\nc}^{b}\calD_{\dR}(D)(\sigma')_{\tau}.
    \end{align*}
    Therefore, we can write \[
        \calD_{\dR}(\Fil_s D)(\sigma\otimes\sigma')_{\tau} = \bigoplus_{a+b \leq t-2} \Gr_{\nc}^a \calD_{\dR}(D)(\sigma)_{\tau} \otimes_L \Gr_{\nc}^b\calD_{\dR}(D)(\sigma')_{\tau}
    \]
    and \[
        \Fil_{\nc}^{s} \calD_{\dR}(D)(\sigma \otimes \sigma')_{\tau} = \bigoplus_{a+b\geq s} \Gr_{\nc}^a \calD_{\dR}(D)(\sigma)_{\tau} \otimes_L \Gr_{\nc}^b\calD_{\dR}(D)(\sigma')_{\tau} .
    \]
    Using this explicit description, we see that, for any $s$, \[
        \calD_{\dR}(\Fil_s D)(\sigma \otimes \sigma')_{\tau} \oplus \Fil_{\nc}^{s-1} \calD_{\dR}(D)(\sigma \otimes \sigma')_{\tau} = \calD_{\dR}(D)(\sigma \otimes \sigma')_{\tau}. 
    \]
    Since $\Fil_{\nc}^\bullet \calD_{\dR}(D)(\sigma \otimes \sigma')_{\tau}$ is a sub-filtration of $\Fil_{\dR}^{\bullet} \calD_{\dR}(D)(\sigma \otimes \sigma')_{\tau}$, we conclude the claim.

    Finally, if $\sigma_0\in \Rep_G$ is faithful, hence a tensor generator, our claims above then concludes the proof.  
\end{proof}

\begin{Corollary}\label{Corollary: P-non-critical implies Q-non-critical}
    Let $P \subset Q$ be parabolic subgroups of $G$ containing $B$. Suppose $(D, \Fil_{\bullet}D)\in G\Mod_L^{(\varphi, \Gamma), P}$ is a $P$-non-critical $P$-flagged $(\varphi, \Gamma)$-module with $G$-structure. Then, $(D, \Fil_{\bullet}^Q D)\in G\Mod_{L}^{(\varphi, \Gamma), Q}$ is $Q$-non-critical. 
\end{Corollary}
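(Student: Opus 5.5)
By Lemma \ref{Lemma: P-non-criticality can be checked on faithful representations}, it suffices to show that $(D(\sigma_0), \Fil_{\bullet}^Q D(\sigma_0))$ is non-critical in the sense of \cite[Definition 3.6]{Bergdall-paraboline} for a single faithful $\sigma_0\in \Rep_G$. The same lemma, applied to the $P$-flag, shows that $(D(\sigma_0), \Fil_{\bullet}D(\sigma_0))$ is non-critical. So the whole problem reduces to a statement about a single filtred $(\varphi, \Gamma)$-module over $R_{\rig, L}$: if a flag is non-critical, then so is any coarser flag.

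To see that $\Fil^Q_{\bullet}D(\sigma_0)$ is coarser, recall the construction in Lemma \ref{Lemma: finer filtration can be viewed as a coarser filtration}. There, $\Fil^Q_{\bullet}D(\sigma_0)$ is a sub-filtration of $\Fil_{\bullet}D(\sigma_0)$: it is cut out by the stabiliser $Q_{\sigma_0}\supset P_{\sigma_0}$ in $\GL_n$. Hence every step $\Fil^Q_j D(\sigma_0)$ equals $\Fil_{i(j)} D(\sigma_0)$ for some index $i(j)$. Here I would be slightly careful about indexing, since the two filtrations may be indexed differently. I will only use that the set of subspaces appearing in the $Q$-filtration is a subset of the set of subspaces appearing in the $P$-filtration. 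This is what "sub-filtration" means in that proof.

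Now fix an embedding $\tau$ and a step $j$, and put $i = i(j)$. Non-criticality of the $P$-flag provides an integer $k_{\tau, i}$ with
\[
    \calD_{\dR}(\Fil_i D)(\sigma_0)_{\tau} \oplus \Fil_{\dR}^{k_{\tau, i}}\calD_{\dR}(D)(\sigma_0)_{\tau} = \calD_{\dR}(D)(\sigma_0)_{\tau}.
\]
The left-hand summand is $\calD_{\dR}(\Fil^Q_j D)(\sigma_0)_{\tau}$, because the $\otimes$-functor $\calD_{\dR}$ applied to the same $(\varphi,\Gamma)$-submodule gives the same subspace. So setting $k_{\tau, j}^Q \coloneq k_{\tau, i(j)}$ yields exactly the non-criticality condition of Definition \ref{Definition: P-non-critical} for the $Q$-flag. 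The condition is a pointwise, step-by-step complementarity statement, so no compatibility between different steps is needed.

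The only delicate point is the bookkeeping in the previous step. One has to confirm that the sub-filtration produced in Lemma \ref{Lemma: finer filtration can be viewed as a coarser filtration} really consists of steps of the original filtration, and is not some new filtration merely stabilised by $Q_{\sigma_0}$. This holds because a flag stabilised by $Q_{\sigma_0}\supseteq P_{\sigma_0}$ in $\GL_n$ corresponds to a partial flag refined by the $P_{\sigma_0}$-flag. Once that is in place, the argument is formal.
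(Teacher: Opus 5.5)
Your argument is correct and is essentially the paper's proof. The paper simply notes that, by Lemma~\ref{Lemma: finer filtration can be viewed as a coarser filtration}, every step of $\Fil^Q_{\bullet}D(\sigma)$ is a step of $\Fil_{\bullet}D(\sigma)$, so the stepwise complementarity condition of Definition~\ref{Definition: P-non-critical} is inherited; your preliminary reduction to a faithful $\sigma_0$ via Lemma~\ref{Lemma: P-non-criticality can be checked on faithful representations} is harmless but unnecessary, since the same argument works verbatim for every $\sigma\in\Rep_G$.
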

\begin{proof}
    This follows immediately from Lemma \ref{Lemma: finer filtration can be viewed as a coarser filtration} and Definition \ref{Definition: P-non-critical}. 
\end{proof}

\subsection{\texorpdfstring{$P$}{P}-refinements}\label{subsection: refinements}

In this subsection, we focus on $(\varphi, \Gamma)$-modules that are potentially semistable. To this end, given a $(\varphi, \Gamma)$-module $D\in G\Mod_{L}^{(\varphi, \Gamma)}$, consider the $G$-torsor \[
    \calG_D: \Rep_G \xrightarrow{D} \Mod_{L}^{(\varphi, \Gamma)} \xrightarrow{\calD_{\pst}} \Mod_{L\otimes_{\Q_p}F^{\unr}}^{(\varphi, N, \Gal_F)} \xrightarrow{\text{forgetful}} \Mod_{L\otimes_{\Q_p}K}.
\]
We fix an isomorphism \begin{equation}\label{eq: fixed iso of groups for pst modules}
    \underline{\Aut}^{\otimes}(\calG_D) \cong G_{L\otimes_{\Q_p}F^{\unr}}
\end{equation}

\begin{Definition}\label{Definition: refinement}
    Let $L$ be a (large enough) finite field extension of $\Q_p$. Let $D\in G\Mod_L^{(\varphi, \Gamma)}$ be a potentially semistable $(\varphi, \Gamma)$-module with $G$-structure. Fix a Borel subgroup $B \subset G$ and let $P\subset G$ be a parabolic subgroup containing $B$. A \textbf{$P$-refinement} of $D$ is $(\varphi, N, \Gal_F)$-stable filtration $\Fil_{\bullet} \calD_{\pst}(D)$ on $\calD_{\pst}(D)$ such that \begin{enumerate}
        \item[(i)] for any $\sigma\in \Rep_G$,  the successive quotients of $\Fil_{\bullet}\calD_{\pst}(D)(\sigma)$ are free $L\otimes_{\Q_p}F^{\unr}$-modules; 
        \item[(ii)] the parabolic subgroup for the induced filtred $G$-torsor \[
        \begin{tikzcd}
            \Rep_G \arrow[r, "\calD_{\pst}(D)"]\arrow[rd, "\Fil_{\bullet}\calD_{\pst}(D)"'] & \Mod_{L \otimes_{\Q_p}F^{\unr}}^{(\varphi, N, \Gal_F)} \arrow[r] & \Mod_{L \otimes_{\Q_p}F^{\unr}}\\
            & \Mod_{L \otimes_{\Q_p}F^{\unr}}^{(\varphi, N, \Gal_F),\Fil_{\bullet}}\arrow[ru]
        \end{tikzcd}
    \]
    is conjugate to $P_{L\otimes_{\Q_p}F^{\unr}}$ under the fixed isomorphism \eqref{eq: fixed iso of groups for pst modules}.
    \end{enumerate} 
\end{Definition}

\begin{Remark}\label{Remark: Bergdall's refinements}
    In \cite{Bergdall-paraboline}, Bergdall termed a similar notion as in Definition \ref{Definition: refinement} \emph{partial refinements}. We choose to use our terminology to emphasise the choice of parabolic subgroup $P$.
\end{Remark}

\begin{Proposition}\label{Proposition: P-refinements are equivalent to P-flags}
    Let $L$ be a finite field extension of $\Q_p$ and $D\in G\Mod_L^{(\varphi, \Gamma)}$ a potentially semistable $(\varphi, \Gamma)$-module with $G$-structure. Assume for any $\sigma\in \Rep_G$, the $\varphi^{[F:\Q_p]}$-eigenvalues of $D(\sigma)$ all lies in $L$. Fix a Borel subgroup $B\subset G$ and let $P$ be a parabolic subgroup containing $B$. Then, $D\mapsto \calD_{\pst}(D)$ induces a bijection \[
        \{ \text{$P$-flags of }D \} \leftrightarrow \{\text{$P$-refinements of }D\}.
    \]
\end{Proposition}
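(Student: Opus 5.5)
The plan is to reduce to the classical $\GL_n$ statement of Bellaïche--Chenevier (\cite[Proposition 2.4.1]{BC}, in the potentially semistable form of Berger's equivalence, cf. \cite[Théorème A]{Berger-pDifferential}) applied to each $D(\sigma)$, and then use the Tannakian formalism to see that the bijections are compatible with $\otimes$, duals and exact sequences, so they glue to a bijection of $G$-filtrations.

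\textbf{Step 1.} Fix $\sigma\in\Rep_G$. Berger's theorem gives an exact $\otimes$-equivalence between potentially semistable $(\varphi,\Gamma)$-modules over $R_{\rig,L}$ and filtered $(\varphi,N,\Gal_F)$-modules, i.e.\ $\calD_{\pst}$ together with its quasi-inverse. In the de Rham setting, saturated $(\varphi,\Gamma)$-submodules $D'\subset D(\sigma)$ are again potentially semistable. The map $D'\mapsto \calD_{\pst}(D')$ is a bijection from these onto the $(\varphi,N,\Gal_F)$-stable $L\otimes F^{\unr}$-submodules of $\calD_{\pst}(D(\sigma))$ with free quotient. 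The inverse sends $M'$ to the saturated submodule of $D(\sigma)$ corresponding to $M'$ with the induced Hodge filtration; this is the argument of \cite[Prop.\ 2.4.1]{BC}, where the hypothesis that the $\varphi^{[F:\Q_p]}$-eigenvalues lie in $L$ guarantees freeness of the graded pieces. Applying this stepwise gives, for each $\sigma$, a bijection between increasing $(\varphi,\Gamma)$-stable saturated filtrations of $D(\sigma)$ and $(\varphi,N,\Gal_F)$-stable filtrations of $\calD_{\pst}(D)(\sigma)$ with free graded pieces.

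\textbf{Step 2.} These bijections are functorial in $\sigma$, because $\calD_{\pst}$ is an exact $\otimes$-functor. In particular they commute with $\otimes$ (the filtration $\sum_{i+j=t}\Fil_i\otimes\Fil_j$ is preserved), with duals (as in Claim~1 of Lemma~\ref{Lemma: P-non-criticality can be checked on faithful representations}), and with sub- and quotient objects. Consequently a $\otimes$-compatible factorisation $\Fil_\bullet D$ of $D$ through $\Mod^{(\varphi,\Gamma),\Fil_\bullet}_L$ maps to a $\otimes$-compatible factorisation $\Fil_\bullet\calD_{\pst}(D)$, and conversely. This yields mutually inverse maps between filtered $G$-structures on the two sides. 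Exactness of the filtered functors is preserved because the graded pieces correspond under the exact equivalence.

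\textbf{Step 3: matching the parabolic type.} This is the main obstacle: we must show that the stabiliser $\calP_{\Fil_\bullet}$ on one side is conjugate to $P$ if and only if it is on the other. By Proposition~\ref{Proposition: parabolics and unipotent radical from filtrations}, each stabiliser is a parabolic subgroup of the respective automorphism group. Its conjugacy type is determined by the type of the filtration, namely the ranks of the graded pieces $\Gr_i D(\sigma)$ as $\sigma$ varies. Indeed, by \cite[IV.2.2.5]{SR-Tannakian} the filtration is splittable fpqc-locally, hence given by a cocharacter whose $G$-conjugacy class is read off from the weight multiplicities on all representations. Step~1 shows that $\rank_{R_{\rig,L}}\Gr_iD(\sigma)=\rank_{L\otimes F^{\unr}}\Gr_i\calD_{\pst}(D)(\sigma)$ for every $\sigma$. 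Hence the two cocharacter classes, and so the two parabolic types, coincide.

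The delicate point is that the two automorphism groups are forms of $G$ over different bases, $\calY_{(0,b],L}$ versus $L\otimes F^{\unr}$. To compare conjugacy classes I would pass to a geometric point on each side and use that conjugacy classes of parabolics in a split $G$ are classified by subsets of simple roots, i.e.\ by the type. It then suffices to check the type on a single faithful $\sigma_0$ together with the tensor constructions, which is already controlled by Step~2.
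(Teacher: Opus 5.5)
Your proposal is correct and takes the same route as the paper. The paper's proof is just the citation of Berger's equivalence \cite[Théorème A]{Berger-phiN} between de Rham $(\varphi,\Gamma)$-modules and filtered $(\varphi,N,\Gal_F)$-modules, and your Steps 1--3 spell out the Tannakian bookkeeping and parabolic-type matching that the paper leaves implicit; two minor slips, though: the equivalence you need is in \cite{Berger-phiN}, not \cite{Berger-pDifferential}, and the freeness of the graded pieces comes from $\varphi$ permuting the factors of $L\otimes_{\Q_p}F^{\unr}$ transitively, not from the hypothesis on the $\varphi^{[F:\Q_p]}$-eigenvalues.
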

\begin{proof}
    This follows immediately from \cite[Théorèm A]{Berger-phiN}.
\end{proof}

\begin{Remark}\label{Remark: (phi, Gamma)-module over local artinian algebra}
    We close this section by remarking that our discussion on $(\varphi, \Gamma)$-modules over $L$ can all be extended to $(\varphi, \Gamma)$-modules over a local artinian $\Q_p$-algebra $A$ (see \cite[Sect. 2.2]{BC}). For example, for $(\varphi, \Gamma)$-modules with $G$-structure over $A$, the notions of \emph{de Rham}, \emph{crystalline}, \emph{semistable}, \emph{potentially semistable} can also be defined (see \cite[Definition 2.2.10]{BC}); consequently, the notion of \emph{$P$-non-criticality} can also be defined. 
\end{Remark}

\section{Families of symplectic Galois representations}\label{section: families of Galois reps}
In this section, we study families of symplectic Galois representations and some deformation problems. To this end, we recall some basic setting of general symplectic groups in Sect. \ref{subsection: GSp}. We then review the theory of \emph{symplectic determinants} by Moakher--Quast in Sect. \ref{subsection: symp. det}. In Sect. \ref{subsection: families of Galois reps}, we introduce the notions of \emph{families of symplectic Galois representations} and study some basic properties. In Sect. \ref{subsection: deformation}, we state the deformation problems that will be in our consideration in later parts of the paper.

\subsection{General symplectic groups}\label{subsection: GSp}
Let $n\in \Z_{>0}$ and $V = V_{\Z}$ be the rank-$2n$ free $\Z$-module equipped with the symplectic pairing \begin{equation}\label{eq: defining symplectic pairing}
    V\times V \rightarrow \Z, \quad (v, v')\mapsto \trans v \begin{pmatrix} & -\oneanti_n\\ \oneanti_n & \end{pmatrix} v'.
\end{equation}
The algebraic group $\GSp_{2n}$ (over $\Z$) is then defined to be the subgroup in $\GL_{2n}$ preserving the symplectic pairing \eqref{eq: defining symplectic pairing} up to a unit. In other words,  \[
    \GSp_{2n} \coloneq \left\{ \bfgamma\in \GL_{2n}: \trans\bfgamma \begin{pmatrix} & -\oneanti_n\\ \oneanti_n & \end{pmatrix} \bfgamma = \mathrm{sim}(\bfgamma) \begin{pmatrix} & -\oneanti_n\\ \oneanti_n & \end{pmatrix} \text{ for some }\mathrm{sim}(\bfgamma)\in \bbG_m \right\}.
\]
We denote by $\Sp_{2n}$ the subgroup of $\GSp_{2n}$, consisting of those matrices $\bfgamma$ with $\mathrm{sim}(\bfgamma)=1$.

The choice of the symplectic pairing \eqref{eq: defining symplectic pairing} allows us to consider the upper triangular Borel subgroup $B \subset \GSp_{2n}$. We denote by $T$ (resp., $N$) the associated maximal torus (resp., unipotent radical). We remark that $T$ is nothing but the diagonal matrices in $\GSp_{2n}$ and it is isomorphic to $\bbG_m^{n+1}$ by the following isomorphism \[
    \bbG_m^{n+1} \xrightarrow{\simeq} T, \quad (\tau_1, ..., \tau_{n}, \tau_0)\mapsto \diag(\tau_1, ..., \tau_n, \tau_0\tau_n^{-1}, ..., \tau_0\tau_1^{-1}).
\]

Consider the character group $\bbX \coloneq \Hom(T, \bbG_m)$. Due to the identification $T \cong \bbG_m^{n+1}$, one sees an isomorphism $\bbX \cong \Z^{n+1}$, where each $(k_1, ..., k_n; k_0)\in \Z^{n+1}$ is viewed as a character \[
    (k_1, ..., k_n; k_0): \diag(\tau_1, ..., \tau_n, \tau_0\tau_n^{-1}, ..., \tau_0\tau_1^{-1}) \mapsto \prod_{i=0}^{n} \tau_i^{k_i}.
\]
Later in the paper, we will consider elements of $\Z^n$ as characters in $\bbX$ via the embedding \begin{equation}\label{eq: embedding, algebriac weights}
    \Z^n \hookrightarrow \Z^{n+1} \cong \bbX, \quad (k_1, ..., k_n) \mapsto (k_1, ..., k_n; 0).
\end{equation}

In what follows, we shall consider the parabolic subgroups of $\GSp_{2n}$ that contain $B$. Since the proper parabolic subgroups are intersections of the maximal ones\footnote{ Here, the adjective `maximal' is in the sense of `maximal ideals', \emph{i.e.}, maximal parabolic subgroups are those parabolic subgroups $P$ such that the only subgroups of $\GSp_{2n}$ containing $P$ are only $\GSp_{2n}$ and $P$ itself.}. We now describe the maximal parabolic subgroups explicitly: for $1 \leq m \leq n$, let \begin{equation}\label{eq: maximal parabolics for GSp}
    P_m \coloneq \begin{pmatrix} M_m & M_{m \times 2(g-m)} & M_m \\ & M_{2(g-m)} & M_{2(g-m)\times m} \\ & &  M_m \end{pmatrix} \cap \GSp_{2n}
\end{equation}
with the Levi decomposition \[
    P_m = M_{P_m} \ltimes N_{P_m},
\]
where \[
    M_{P_m} = \begin{pmatrix} M_m && \\ & M_{2(g-m)} & \\ && M_m\end{pmatrix} \cap \GSp_{2n} \quad \text{ and } \quad N_{P_m} = \begin{pmatrix} \one_m & M_{m\times 2(g-m)} & M_m \\ & \one_{2(g-m)} & M_{2(g-m)\times m} \\ & & \one_m\end{pmatrix} \cap \GSp_{2n}
\]
are the Levi subgroup and the unipotent radical respectively.

\begin{Example}\label{Example: GSp4}
Let us provide explicit descriptions when $n=2$. In this case, there are two maximal parabolic subgroups containing $B$:\begin{align*}
    P_2 = P_{\mathrm{Si}} & = \begin{pmatrix} * & * & * & *\\ * & * & * & * \\ && * & *\\ && * & *\end{pmatrix} \cap \GSp_4: \text{ the Siegel parabolic},\\
    P_1 = P_{\mathrm{Kl}} & = \begin{pmatrix}* & * & * & * \\ & * & * & * \\ & * & * & * \\ &&& *\end{pmatrix} \cap \GSp_4: \text{ the Klingen parabolic}.
\end{align*} 
The Levi decompositions of these two parabolic subgroups are given explicitly as follows \[
    P_{\mathrm{Si}} = M_{\mathrm{Si}} \ltimes N_{\mathrm{Si}} \quad \text{ and }\quad P_{\mathrm{Kl}} = M_{\mathrm{Kl}} \ltimes N_{\mathrm{Kl}},
\]
where \begin{align*}
    M_{\mathrm{Si}} & = \begin{pmatrix} * & * && \\ * & * && \\ && * & * \\ && * & *\end{pmatrix} \cap \GSp_4, \quad N_{\mathrm{Si}} = \begin{pmatrix} 1 & & * & * \\ & 1 & * & * \\ && 1 & \\ &&& 1\end{pmatrix} \cap \GSp_4, \\
    M_{\mathrm{Kl}} & = \begin{pmatrix} * &&& \\ & * & * & \\ & * & * & \\ &&& *\end{pmatrix} \cap \GSp_4, \quad N_{\mathrm{Kl}} = \begin{pmatrix} 1 & * & * & * \\ & 1 && * \\ && 1 & * \\ &&& 1\end{pmatrix} \cap \GSp_4.
\end{align*}
In fact, we have the following identifications \begin{equation}\label{eq: identifications of Levi}
    \begin{array}{ll}
        \GL_2 \times \bbG_m  \xrightarrow{\simeq} M_{\mathrm{Si}}, & (\bfgamma, \varsigma) \mapsto \begin{pmatrix}\bfgamma & \\ & \varsigma \oneanti_2\trans\bfgamma^{-1}\oneanti_2\end{pmatrix},\\ \\
        \bbG_m \times \GL_2 \xrightarrow{\simeq} M_{\mathrm{Kl}}, & (\nu, \bfgamma) \mapsto \begin{pmatrix} \nu && \\ & \bfgamma & \\ && \det(\bfgamma)/\nu\end{pmatrix}.
    \end{array}
\end{equation}
\end{Example}

\begin{Remark}\label{Remark: identifications of Levi subgroups}
    Although both $M_{\mathrm{Si}}$ and $M_{\mathrm{Kl}}$ are isomorphic to $\GL_2 \times \bbG_m$, we presented the isomorphisms differently above. This is because the piece of $\bbG_m$ corresponds to the symplectic similitude in the Siegel case while the symplectic similitude in the Klingen case is given by $\det(\GL_2)$. In other words, if we restrict to $\Sp_4$, the Levi subgroup of the Siegel parabolic of $\Sp_4$ is given by $\GL_2$, whereas the Levi subgroup of the Klingen parabolic of $\Sp_4$ is given by $\bbG_m \times \SL_2$. Such a difference shall be important in Sect. \ref{subsection: p-adic weight spaces}. 
\end{Remark}

\subsection{Symplectic determinants}\label{subsection: symp. det}
In this subsection, we review the theory of Moakher--Quast on \emph{symplectic determinants} (\cite{MQ-SympDet}). Symplectic determinants are analogues of Chenevier's \emph{$d$-dimensional determinants}, but equipped with \emph{symplectic structures}. For the convenience of the reader, we first review Chenevier's theory of $d$-dimensional determinants. We then briefly review the theory of Moakher--Quast and state the key results we shall need in later parts of the paper (Theorem \ref{Theorem: symplectic version of Chenevier's Thm A and B}). We shall also prove an analogue of \cite[Example 2.32]{Chenevier-determinant} in Corollary \ref{Corollary: continuous symplectic determinants and glueing}. 

\paragraph{Symplectic determinants.}

We start by recalling the following definitions from \cite{Chenevier-determinant}.

\begin{Definition}\label{Definition: polynomial law and properties}
    Let $A$ be a commutative ring and $M,N\in\Mod_{A}$. We view them as functors $\Alg_A \rightarrow \Set$ by base change.\footnote{ Here, $\Alg_A$ stands for the category of (commutative) $A$-algebras.} 
    \begin{enumerate}
        \item[(i)] An \textbf{$A$-polynomial law} is a natural transformation $\mathrm{P}:M\rightarrow N$.
        In other words, it is a collection of maps \[
            \mathrm{P}_B:M\otimes_A B\rightarrow N\otimes_A B
        \]
        for any (commutative) $A$-algebra $B$ such that for any $A$-algebra homomorphism $f: B \rightarrow B'$, the diagram 
        \begin{center}
            \begin{tikzcd}
                M\otimes_A B \arrow[d, "\id_M\otimes_A f"'] \arrow[r, "\mathrm{P}_B"] & N\otimes_A B \arrow[d, "\id_N\otimes_A f"] \\
                M\otimes_A B' \arrow[r, "\mathrm{P}_{B'}"] & N\otimes_A B'
            \end{tikzcd}
        \end{center}
        commutative. 
        \item[(ii)] Let $\mathrm{P}: M\rightarrow N$ be a polynomial law. The \textbf{kernel} of $\mathrm{P}$ is the $A$-submodule $\ker(\mathrm{P})\subset M$ defined as \[
            \ker(\mathrm{P}) \coloneq \left\{ m\in M: \mathrm{P}_B(m\otimes b+m') = \mathrm{P}_B(m') \quad \forall B\in \Alg_A, \forall (b, m')\in B\times M\otimes_A B \right\}.
        \]
        \item[(iii)] An $A$-polynomial law $\mathrm{P}: M \rightarrow N$ is \textbf{homogeneous of degree $d$} (for some $d\in\Z_{\geq 0}$ ) if for any $B\in\Alg_{A}$, $b\in B$ and $x\in M\otimes_A B$, we have
        \begin{align*}
            \mathrm{P}_B(bx)=b^d \mathrm{P}_B(x).
        \end{align*}
        \item[(iv)] Let $M, N$ be $A$-algebras (not necessarily commutative). A homogeneous $A$-polynomial law $\mathrm{P}: M \rightarrow N$ of degree $d$ is \textbf{multiplicative} if for any $B\in\CAlg_A$ and $x,y\in M\otimes_A B$, we have
        \begin{align*}
            \mathrm{P}_B(xy)=\mathrm{P}_B(x)\mathrm{P}_B(y) \quad  \text{ and } \quad  \mathrm{P}_B(1)=1.
        \end{align*}
        \item[(v)] Let $M$ be an $A$-algebra (not necessarily commutative). A \textbf{$d$-dimensional determinant} on $M$ is an $A$-polynomial law \[
            \mathrm{D}: M \rightarrow A
        \]
        that is homogeneous of degree $d$ and multiplicative. 
    \end{enumerate}
\end{Definition}

\begin{Example}\label{Example: determinant}
    Let $G$ be a group and $\rho: G \rightarrow \GL_d(A)$ be a $G$-representation over $A$. Then, \[
        \mathrm{D}: A[G] \rightarrow A, \quad G \ni \sigma \mapsto \det \rho(\sigma)
    \]
    defines a $d$-dimensional determinant on $A[G]$.
\end{Example}

Next, we recall the definitions of \emph{characteristic polynomials} and the \emph{Cayley--Hamilton ideal} of a polynomial law. 

\begin{Definition}\label{Definition: characteristic polynomials and CH algebra}
    Let $M$ be an $A$-algebra (not necessarily commutative) and let $\mathrm{P}: M \rightarrow A$ be an $A$-polynomial law of homogeneous degree $d$. \begin{enumerate}
        \item[(i)] Let $B\in \Alg_A$ and $r\in M \otimes_A B$. The \textbf{characteristic polynomial} of $r$ is defined to be \[
            \charpoly_{\mathrm{P}}(r, T) \coloneq \mathrm{P}_{B[T]}(T-r).
        \]
        More explicitly, we have \[
            \charpoly_{\mathrm{P}}(r, T) = \sum_{i=0}^d(-1)^i \Lambda_{i,B}^{\mathrm{P}}(r)T^{d-i},
        \]
        where each $\Lambda_{i}^{\mathrm{P}}: M \rightarrow A$ is the $A$-polynomial law of homogeneous degree $i$, arising from the coefficients. 
        We can understand $\charpoly_{\mathrm{P}}(-, T)$ as an $A$-polynomial law $M \rightarrow A[T]$. 
        \item[(ii)] The \textbf{Cayley--Hamilton ideal} $\CH(\mathrm{P})$ associated with $\mathrm{P}$ is the two-sided ideal in $M$ generated by the set \[
            \left\{\Lambda^{\mathrm{P}}_{\alpha}(r_1,...,r_n): n\in\Z_{\geq 1}, r_1, ..., r_n\in M, \alpha=(\alpha_1, ...,\alpha_n)\in\Z_{\geq 0} \text{ with }\sum_i \alpha_i = d\right\},
        \]
        where $\Lambda_{\alpha}^{\mathrm{P}}: M^n \rightarrow M$ is the function arising from the coefficients \begin{align*}
            \charpoly_{\mathrm{P}}(t_1r_1+\cdots+t_nr_n, t_1r_1+\cdots+t_nr_n) & = \sum_{i=0}^d (-1)^i\Lambda_i^{\mathrm{P}}(r)T^{d-i}\\
            & = \sum_{\alpha} \Lambda_{\alpha}^{\mathrm{P}}(r_1, ..., r_n)\prod_{i=1}^n t_1^{\alpha_i}.
        \end{align*}
    \end{enumerate}
\end{Definition}

\begin{Proposition}\label{Proposition: determinant and trace}
    Suppose $A$ is a $\Q$-algebra and let $M$ be an $A$-algebra (not necessarily commutative) and $\mathrm{D}: M \rightarrow A$ be a determinant of dimension $d$. Denote by $\mathrm{T}_{\mathrm{D}} \coloneq \Lambda_1^{\mathrm{D}}$. \begin{enumerate}
        \item[(i)] The polynomial law $\mathrm{T}_{\mathrm{D}}$ is a pseudocharacter (in the sense of \cite[Sect. 1.2]{BC}).
        \item[(ii)] The assignment $\mathrm{D} \mapsto \mathrm{T}_{\mathrm{D}}$ induces a bijection between $d$-dimensional $A$-valued determinants on $M$ and $d$-dimensional $A$-valued pseudocharacters on $M$.
        \item[(iii)] The following two statements are equivalent: \begin{itemize}
            \item There exists $n$ pseudocharacters $\mathrm{T}_1$, ..., $\mathrm{T}_n$ such that $\mathrm{T}_{\mathrm{D}} = \sum_{i=1}^n \mathrm{T}_i$. 
            \item There exists $n$ determinants $\mathrm{D}_1$, ..., $\mathrm{D}_n$ such that $\mathrm{D} = \prod_{i=1}^n \mathrm{D}_n$.
        \end{itemize}
    \end{enumerate}
\end{Proposition}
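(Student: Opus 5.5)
The plan is to reduce everything to Chenevier's comparison between determinants and pseudocharacters in characteristic zero, that is, to the relevant statements of \cite{Chenevier-determinant} (Proposition 1.27 and Lemma 1.12 there). Because $A$ is a $\Q$-algebra, the Amitsur-type formula (equivalently, Newton's identities) applies. It expresses each coefficient $\Lambda_i^{\mathrm{D}}$ of $\charpoly_{\mathrm{D}}$ as a universal polynomial with rational coefficients in $\mathrm{T}_{\mathrm{D}}(r), \mathrm{T}_{\mathrm{D}}(r^2), \dots, \mathrm{T}_{\mathrm{D}}(r^i)$. In particular
\[
\mathrm{D}(r) = \Lambda_d^{\mathrm{D}}(r) = \frac{1}{d!}\sum_{\sigma \in \mathfrak{S}_d} \varepsilon(\sigma)\prod_{c \text{ cycle of } \sigma} \mathrm{T}_{\mathrm{D}}(r^{|c|}).
\]
I would first prove this identity for the universal case. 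Multiplicativity gives $\mathrm{D}_{B[T]}(1 - Tr)$, and taking $\log$ in $B[[T]]$ relates it to the power sums $\mathrm{T}_{\mathrm{D}}(r^k)$, since $\log$ of a multiplicative law is additive on commuting elements. The identity is then extracted by comparing coefficients. This is where the hypothesis $\Q \subset A$ is used, since the argument divides by integers.

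For (i), I would check the pseudocharacter axioms of \cite[Sect. 1.2]{BC}. Centrality $\mathrm{T}_{\mathrm{D}}(xy) = \mathrm{T}_{\mathrm{D}}(yx)$ follows by linearising $\mathrm{D}(1+txy) = \mathrm{D}(1+tyx)$; that identity holds in the Zariski-dense case where $1+tx$ is invertible in $M \otimes B[t]$ after localisation, and polynomial laws are determined by their values on such extensions. We also have $\mathrm{T}_{\mathrm{D}}(1) = d$ by homogeneity. The $d$-dimensional pseudocharacter identity $S_{d+1}(\mathrm{T}) = 0$ comes from the Cayley--Hamilton property. Concretely, apply the formula above in dimension $d+1$ to a $d$-dimensional determinant: the expression $\Lambda_{d+1}$ computed from $\mathrm{T}_{\mathrm{D}}$ must vanish, because $\charpoly_{\mathrm{D}}$ has degree $d$. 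Its full polarisation is precisely the vanishing of $\sum_{\sigma \in \mathfrak{S}_{d+1}} \varepsilon(\sigma)\mathrm{T}^{\sigma}(r_1, \dots, r_{d+1})$.

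For (ii), injectivity of $\mathrm{D} \mapsto \mathrm{T}_{\mathrm{D}}$ follows directly from the Newton formula. For surjectivity, given a $d$-dimensional pseudocharacter $\mathrm{T}$, I would define $\mathrm{D}$ by the displayed formula, which is visibly a polynomial law that is homogeneous of degree $d$. The main obstacle is multiplicativity. I would handle it by reducing to the universal case: by Procesi/Rouquier-type results, a pseudocharacter in characteristic zero factors through a trace on a Cayley--Hamilton quotient, and there $\mathrm{D}(xy) = \mathrm{D}(x)\mathrm{D}(y)$ is an identity between trace polynomials that holds for $d\times d$ matrices. By Procesi's theorem, the only relations between trace polynomials are consequences of the $(d+1)$-st trace identity, so the matrix identity transfers to $\mathrm{T}$. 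Alternatively, I would cite \cite[Prop. 1.27]{Chenevier-determinant} directly for this step.

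For (iii), suppose $\mathrm{D} = \prod_i \mathrm{D}_i$ with $\mathrm{D}_i$ of dimension $d_i$. Taking the linear coefficient of $\mathrm{D}(1+tr) = \prod_i \mathrm{D}_i(1+tr)$ gives $\mathrm{T}_{\mathrm{D}} = \sum_i \mathrm{T}_{\mathrm{D}_i}$. Conversely, given $\mathrm{T}_{\mathrm{D}} = \sum_i \mathrm{T}_i$ with $\mathrm{T}_i$ of dimension $d_i$, use (ii) to obtain the determinants $\mathrm{D}_i$. The product $\prod_i \mathrm{D}_i$ is then a multiplicative, homogeneous law of degree $\sum_i d_i$, and its trace is $\sum_i \mathrm{T}_i = \mathrm{T}_{\mathrm{D}}$. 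Evaluating at $r=1$ gives $\sum_i d_i = d$, so the product has dimension $d$, and injectivity in (ii) forces $\prod_i \mathrm{D}_i = \mathrm{D}$.
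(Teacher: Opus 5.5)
Your proposal is correct and follows the paper's route. Parts (i) and (ii) are exactly \cite[Proposition 1.27]{Chenevier-determinant}, which you either sketch or cite, and for (iii) the paper invokes the agreement of the characteristic polynomials of $\mathrm{D}$ and $\mathrm{T}_{\mathrm{D}}$, which is the same Newton-identity mechanism you package as ``traces add under products, then injectivity in (ii)''; your version is a little more explicit, including the check $\sum_i d_i = d$.

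One small slip, which does not affect the outcome since you fall back on Chenevier: in your centrality sketch the element to make generic is $x$, not $1+tx$. From $(1+txy)x = x(1+tyx)$ one gets $\mathrm{D}(1+txy)\mathrm{D}(x) = \mathrm{D}(x)\mathrm{D}(1+tyx)$. Replacing $x$ by $x+s$ makes $\mathrm{D}(x+s)$ monic in $s$, hence a non-zero-divisor, so it can be cancelled before setting $s=0$.
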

\begin{proof}
    The first two assertions are exactly \cite[Proposition 1.27]{Chenevier-determinant}. Note that, for any $B\in \Alg_A$ and any $r\in M \otimes_A B$, the characteristic polynomial of $r$ for $\mathrm{D}$ agrees with the characteristic polynomial of $r$ for $\mathrm{T}_{\mathrm{D}}$ (for the characteristic polynomial of pseudocharacters, see \cite[Sect. 1.2]{BC}). This implies the third assertion (as $\Lambda_{\mathrm{D}}^d = \mathrm{D}$ and $\Lambda_{\mathrm{D}}^1 = \mathrm{T}_{\mathrm{D}}$). 
\end{proof}

To define \emph{symplectic determinants}, we first make the following observation (see also \cite[Sect. 3.2]{MQ-SympDet}). Let $n\in \Z_{>0}$ and consider the set of $(2n\times 2n)$-matrices $M_{2n}(A)$ over $A$. We equip $M_{2n}(A)$ with a symplectic $A$-linear involution $\jmath$ by \[
    \jmath: M_{2n}(A) \rightarrow M_{2n}(A), \quad \bfgamma \mapsto \bfgamma^{\jmath} = \begin{pmatrix} & -\oneanti_n\\ \oneanti_n\end{pmatrix} \trans\bfgamma \begin{pmatrix} & \oneanti_n\\ -\oneanti_n\end{pmatrix}.
\]
To simplify the notation, we write $\mathbb{J} = \begin{pmatrix} & -\oneanti_n\\ \oneanti_n\end{pmatrix}$. Given $\bfgamma \in M_{2n}(A)$ such that $\bfgamma = \bfgamma^{\jmath}$, one observes that \[
    \bfgamma \mathbb{J} = \mathbb{J} \trans \bfgamma = -\trans\mathbb{J} \trans\bfgamma = -\trans(\bfgamma \mathbb{J}).
\]
In particular, $\bfgamma \mathbb{J}$ is alternating and so its Pfaffian $\mathrm{pf}(\bfgamma \mathbb{J})$  is well-defined (see, for example, \cite[Notation (3)]{MQ-SympDet}). In this case, one sees that \[
    \det(\bfgamma) = \det(\bfgamma\mathbb{J}) = \mathrm{pf}(\bfgamma \mathbb{J})^{2}.
\]
Inspired by this, we have the following definition.

\begin{Definition}\label{Definition: symplectic determinants}
    Let $M$ be an $A$-algebra (not necessarily commutative) with an involution $*$ and denote by $M^{+}$ the $(*=1)$-part. \begin{enumerate}
        \item[(i)] A \textbf{weak $2n$-dimensional symplectic determinant} on $M$ is a pair $(\mathrm{D}, \mathrm{P})$, where $\mathrm{D}: M \rightarrow A$ is a $2n$-dimensional deterinant and $\mathrm{P}: M^+\rightarrow A$ is a polynomial law of homogeneous degree $n$, such that  \begin{itemize}
            \item for all $B\in\Alg_A$ we have $\mathrm{D}_B(x^*)=\mathrm{D}_B(x)$ for $x\in M\otimes_A B$; 
            \item $\mathrm{P}^2 = \mathrm{D}|_{M^+}$; and
            \item $\mathrm{P}(1)=1$.
        \end{itemize}
        \item[(ii)] A \textbf{$2n$-dimensional symplectic determinant} is a weak $2n$-dimensional symplectic determinant $(\mathrm{D}, \mathrm{P})$ such that \[
            \CH(\mathrm{P}) \subset \ker(\mathrm{D}).
        \]
    \end{enumerate}
    In both cases, we shall often write $(\mathrm{D}, \mathrm{P}): (M, *) \rightarrow A$.
\end{Definition}

\begin{Remark}\label{Remark: symplectic determinants; conjecture and extension of pfaffians}
    \begin{enumerate}
        \item[(i)] It is conjectured in  \cite[Remark 3.7]{MQ-SympDet} that every weak symplectic determinant is a symplectic determinant. 
        \item[(ii)] Assuming $2\in A^\times$, by using the recursion formula in \cite[Proposition 3.15]{MQ-SympDet} (see also \cite[Example 3.16]{MQ-SympDet}), we can show that $\mathrm{P}$ extends to a $n$-dimensional homogeneous polynomial law $\mathrm{P}^{\mathrm{ext}}$ over $M$. If it is clear from the context, we shall sometimes abuse the notation and write $\mathrm{P}$ for $\mathrm{P}^{\mathrm{ext}}$. 
    \end{enumerate}
\end{Remark}

\begin{Example}\label{Example: symplectic representations and symplectic determinants}
    Let $G$ be a group and let $\rho: G \rightarrow \GSp_{2n}(A)$ be a symplectic representation. We equip with $A[G]$ the involution $*$ defined by \[
        \sigma\mapsto \mathrm{sim}\rho(\sigma) \sigma^{-1}
    \] for all $\sigma\in G$. Then, \[
        (\mathrm{D}, \mathrm{P}) \coloneq (\det\rho, \mathrm{pf}(\rho \mathbb{J})): (A[G], *) \rightarrow A
    \]
    is a $2n$-dimensional symplectic determinant. The fact that $\CH(\mathrm{P}) \subset \ker(\mathrm{D})$ follows from \cite[Lemma 3.10]{MQ-SympDet}.
\end{Example}

Analogues of \cite[Theorem A \& B]{Chenevier-determinant} can also be generalised to the theory of symplectic determinants. Before stating such a generalisation, we introduce the following notation. Given a group $G$ and a character \[
    \varsigma: G \rightarrow A^\times,
\]
we can view $A[G]$ as an involutive $A$-algebra by \[
    \sigma \mapsto \varsigma(\sigma) \sigma
\]
for any $\sigma\in G$. We shall abuse the notation and denote this involutive algebra by $(A[G], \varsigma)$.

\begin{Theorem}\label{Theorem: symplectic version of Chenevier's Thm A and B}
    Let $G$ be a group. 
    \begin{enumerate}
        \item[(i)] Let $k$ be an algebraically closed field and $\varsigma: G \rightarrow k^\times$ be a character. Let $(\mathrm{D}, \mathrm{P}): (A[G], \varsigma) \rightarrow A$ be a $2n$-dimensional symplectic determinant (with respect to the involution defined by $\varsigma$). Then, there exists a unique (up to isomorphism) semisimple symplectic representation \[
            \rho: G \rightarrow \GSp_{2n}(k)
        \]
        such that \[
            \mathrm{D} = \det \rho \quad \text{ and }\quad \mathrm{P} = \mathrm{pf}(\rho \mathbb{J}).
        \]
        In particular, $\mathrm{sim} \rho = \varsigma$.
        \item[(ii)] Let $A$ be a henselian local ring with an algebraically closed residue field $k$ and $\widetilde{\varsigma}: G \rightarrow A^\times$ be a character. Suppose $(\widetilde{\mathrm{D}}, \widetilde{\mathrm{P}}): (A[G], \widetilde{\varsigma}) \rightarrow A$ is a $2n$-dimensional symplectic determinant. Let $\rho$ be the symplectic representation attached to $(\widetilde{\mathrm{D}}, \widetilde{\mathrm{P}})\otimes_A k$ and suppose $\rho$ is irreducible. Then, there exists a unique (up to isomorphism) representation \[
            \widetilde{\rho}: G \rightarrow \GSp_{2n}(A)
        \] such that \[
            \widetilde{\rho}\otimes_A k, \quad \widetilde{\mathrm{D}} = \det\widetilde{\rho}, \quad \text{ and }\quad \widetilde{\mathrm{P}} = \mathrm{pf}(\widetilde{\rho}\mathbb{J}).
        \]
    \end{enumerate}
\end{Theorem}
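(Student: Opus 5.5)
The plan is to reduce both assertions to Chenevier's Theorems A and B for ordinary determinants (\cite{Chenevier-determinant}), and then to recover the symplectic structure from the extra datum $\mathrm{P}$ together with the Cayley--Hamilton condition $\CH(\mathrm{P})\subset \ker(\mathrm{D})$. This is essentially the strategy of Moakher--Quast, so where possible I would simply cite the corresponding statements in \cite{MQ-SympDet}. I would spell out only the bookkeeping needed to pass from the involution $\sigma\mapsto \varsigma(\sigma)\sigma^{-1}$ (as in Example \ref{Example: symplectic representations and symplectic determinants}) to the statement in terms of $\mathrm{sim}$.

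For (i), I proceed in three steps.

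\emph{Step 1.} Apply Chenevier's Theorem A to the $2n$-dimensional determinant $\mathrm{D}$ over the algebraically closed field $k$. This gives a unique semisimple $\rho: G\to \GL_{2n}(k)$ with $\det\rho = \mathrm{D}$.

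\emph{Step 2.} The condition $\mathrm{D}(x^*) = \mathrm{D}(x)$ shows that $\rho$ and $\varsigma\otimes\rho^{\vee}$ have the same determinant. Both are semisimple, so by uniqueness in Theorem A they are isomorphic. Hence $\rho$ carries a nondegenerate $G$-equivariant bilinear form $\langle\,,\rangle$ with $\langle gv,gw\rangle = \varsigma(g)\langle v,w\rangle$.

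\emph{Step 3.} Show that this form can be chosen alternating, and that its Pfaffian recovers $\mathrm{P}$. Decompose $\rho$ into isotypic pieces. Irreducible constituents $\pi\not\cong\varsigma\pi^\vee$ pair up, and on $\pi\oplus\varsigma\pi^\vee$ one can always put an alternating form. Self-dual constituents carry a form that is either symmetric or alternating. Here I would use the Cayley--Hamilton condition: on symmetric elements $x=x^*$ it forces $x$ to satisfy $\mathrm{P}$-characteristic polynomial identities of degree $n$, i.e. half the degree. This excludes orthogonal-type summands of odd multiplicity, and in effect reproduces \cite[Lemma 3.10]{MQ-SympDet} in reverse. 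Once $\rho$ lands in $\GSp_{2n}(k)$ with $\mathrm{sim}\rho = \varsigma$, the laws $\mathrm{pf}(\rho\mathbb{J})$ and $\mathrm{P}$ on $M^+$ both square to $\mathrm{D}|_{M^+}$ and both send $1$ to $1$. They therefore agree up to a sign function, and that sign must be constant (hence trivial) by homogeneity and the Zariski-connectedness of the polynomial-law setting, arguing on the generic symmetric element over $k[t_i]$. Uniqueness then follows from uniqueness of $\rho$ in $\GL_{2n}$ together with uniqueness of the symplectic form up to $\GL$-conjugacy in $\GSp$.

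For (ii), I would apply Chenevier's Theorem B. Since $\rho$ is absolutely irreducible, $\widetilde{\mathrm{D}}$ is the determinant of a unique $\widetilde{\rho}: G\to\GL_{2n}(A)$ lifting $\rho$, and in fact $A[G]/\ker\widetilde{\mathrm{D}}\cong M_{2n}(A)$ (Azumaya). The involution $*$ preserves $\ker\widetilde{\mathrm{D}}$, so it descends to an $A$-linear involution of $M_{2n}(A)$. Its reduction to $k$ is symplectic by (i), and a lifting argument over henselian $A$ then shows it is conjugate to $\jmath$. The henselian property is used here: involutions of a given type form a smooth homogeneous space under $\mathrm{PGL}_{2n}$. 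Conjugating $\widetilde{\rho}$ accordingly lands it in $\GSp_{2n}(A)$ with $\mathrm{sim} = \widetilde{\varsigma}$. The equality $\widetilde{\mathrm{P}} = \mathrm{pf}(\widetilde{\rho}\mathbb{J})$ follows as before, since the two square-roots of $\widetilde{\mathrm{D}}|_{M^+}$ agree mod $\frakm$ and $A$ is local. Uniqueness comes from Schur's lemma.

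The main obstacle is Step 3 of (i): excluding orthogonal self-dual summands using only the Cayley--Hamilton condition on $\mathrm{P}$. For this I would rely directly on the classification arguments of \cite[Sect. 3--4]{MQ-SympDet} rather than reprove them.
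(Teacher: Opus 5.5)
Your sketch of (i) ends up handing the decisive step to \cite{MQ-SympDet}, and that citation is all the paper does. It combines Lemma~\ref{Lemma: equivalence of symplectic representations} (the $\mathrm{sim}$/involution bookkeeping you mention) with \cite[Theorem 3.30]{MQ-SympDet} for (i) and \cite[Proposition 5.2]{MQ-SympDet} for (ii).

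One correction to your heuristic in Step 3: the degree count from the Cayley--Hamilton condition does not by itself exclude orthogonal constituents of odd multiplicity. Take $\rho=\pi^{\oplus 3}$ with $\pi$ irreducible orthogonal and $3\dim\pi=2n$. A generic self-adjoint element of $\End(\pi)$ has minimal polynomial of degree $\dim\pi\le n$, so no degree-$n$ identity is violated. What rules this out in characteristic $\neq 2$ is $\mathrm{P}^2=\mathrm{D}|_{M^+}$. The characteristic polynomial of a generic symmetric element contains the irreducible characteristic polynomial of a generic $\pi$-self-adjoint element to an odd power, so it is not a square. In characteristic $2$ the symmetric/alternating dichotomy and the meaning of \emph{semisimple} also change (it must mean $\GSp$-complete reducibility), but there you defer to Moakher--Quast anyway.

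The genuine gap is in (ii), where you argue on your own and never use $\CH(\widetilde{\mathrm P})\subset\ker\widetilde{\mathrm D}$. The statement allows $\operatorname{char}k=2$, and there both of your key steps fail.

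First, the type of the descended involution $X\mapsto S^{-1}\,{}^tX\,S$ on $M_{2n}(A)$ is not determined by its reduction. Over $A=W(k)$, the symmetric matrix $S=\begin{pmatrix}0&1\\1&0\end{pmatrix}$ gives an orthogonal involution, although $S\equiv\begin{pmatrix}0&-1\\1&0\end{pmatrix}\bmod 2$. Hensel's lemma for the smooth homogeneous space $\mathrm{PGL}_{2n}/\mathrm{PGSp}_{2n}$ applies only once you know $S$ is alternating over $A$, and that has to be extracted from $\widetilde{\mathrm P}$.

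Second, square roots of $\widetilde{\mathrm D}|_{M^+}$ are far from unique. Take $A=k[\varepsilon]/(\varepsilon^2)$ with $\operatorname{char}k=2$. Then $(\widetilde{\mathrm P}+\varepsilon Q)^2=\widetilde{\mathrm P}^2$ for every homogeneous degree-$n$ law $Q$ with $Q(1)=0$, and $\widetilde{\mathrm P}+\varepsilon Q\equiv\widetilde{\mathrm P}\bmod\frakm$. So \emph{the two square roots agree mod $\frakm$ and $A$ is local} proves nothing. Pinning down $\widetilde{\mathrm P}=\mathrm{pf}(\widetilde\rho\mathbb J)$ is where the Cayley--Hamilton condition must enter; the paper gets all of (ii) from \cite[Proposition 5.2]{MQ-SympDet}.

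If $2\in A^\times$ (which covers the $\Q_p$-algebras to which the paper applies the theorem), your argument does go through. The relation ${}^tS=\epsilon S$ with $\epsilon\equiv-1$ forces $\epsilon=-1$. On a generic symmetric element involving $1$, the law $\widetilde{\mathrm P}+\mathrm{pf}(\widetilde\rho\mathbb J)$ has the unit coefficient $2$, hence is a non-zero-divisor by McCoy's theorem. Then $(\widetilde{\mathrm P}-\mathrm{pf}(\widetilde\rho\mathbb J))(\widetilde{\mathrm P}+\mathrm{pf}(\widetilde\rho\mathbb J))=0$ forces equality.
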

\begin{proof}
    By applying Lemma \ref{Lemma: equivalence of symplectic representations} below, the first statement follows from \cite[Theorem 3.30]{MQ-SympDet} while the second assertion follows from \cite[Proposition 5.2]{MQ-SympDet}. 
\end{proof}

\begin{Lemma}\label{Lemma: equivalence of symplectic representations}
    Let $G$ be a group. Then, the following two categories are equivalent: \begin{enumerate}
        \item[(i)] The category of symplectic representations $\rho: G \rightarrow \GSp_{2n}(A)$. 
        \item[(ii)] The category of pairs $(\varsigma, \phi)$, where $\varsigma: G \rightarrow A^\times$ is a character and $\phi: (A[G], \varsigma) \rightarrow (M_{2n}(A), \jmath)$ is a morphism of $A$-algebras. 
    \end{enumerate}
\end{Lemma}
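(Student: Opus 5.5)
We construct mutually inverse functors between the two categories. Throughout, write $\mathbb{J}$ for the matrix defining the pairing \eqref{eq: defining symplectic pairing}, and $\jmath$ for the involution on $M_{2n}(A)$. The basic identity is that, for $\bfgamma\in \GL_{2n}(A)$, the condition $\trans\bfgamma\mathbb{J}\bfgamma = \lambda\mathbb{J}$ with $\lambda\in A^\times$ is equivalent to $\bfgamma^{\jmath}\bfgamma = \lambda\one_{2n}$. Indeed, $\bfgamma^{\jmath} = \mathbb{J}\trans\bfgamma\mathbb{J}^{-1}$, and $\mathbb{J}^{-1} = -\mathbb{J}$. Hence, for such $\bfgamma$, we get $\bfgamma^{\jmath} = \lambda\bfgamma^{-1}$, and conversely.

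\textbf{From (i) to (ii).} Given $\rho: G\to \GSp_{2n}(A)$, set $\varsigma = \mathrm{sim}\circ\rho$, which is a character because $\mathrm{sim}$ is a homomorphism. Let $\phi: A[G]\to M_{2n}(A)$ be the $A$-linear extension of $\rho$. It remains to check that $\phi$ intertwines the involution on $(A[G],\varsigma)$, namely $\sigma\mapsto \varsigma(\sigma)\sigma^{-1}$ as in Example \ref{Example: symplectic representations and symplectic determinants} (this is the reading of the involution we adopt), with $\jmath$. This is exactly the identity $\rho(\sigma)^{\jmath} = \mathrm{sim}(\rho(\sigma))\rho(\sigma)^{-1}$ above. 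Both involutions are $A$-linear anti-automorphisms, so checking on group elements suffices.

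\textbf{From (ii) to (i).} Given $(\varsigma,\phi)$, define $\rho(\sigma) = \phi(\sigma)$. This lies in $\GL_{2n}(A)$ since $\phi$ is a unital algebra map and $\sigma$ is a unit. Compatibility with the involutions gives $\phi(\sigma)^{\jmath} = \varsigma(\sigma)\phi(\sigma)^{-1}$, and by the basic identity this means $\rho(\sigma)\in\GSp_{2n}(A)$ with $\mathrm{sim}\rho(\sigma) = \varsigma(\sigma)$. Here $\varsigma(\sigma)$ is a unit, and $\mathbb{J}$ is invertible over any ring, so no hypothesis such as $2\in A^\times$ is needed.

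\textbf{Morphisms and mutual inverseness.} A morphism of symplectic representations is a $\GSp_{2n}(A)$-conjugation, or more generally a $G$-equivariant map preserving the pairing up to scalar. A morphism of pairs is one between the $\phi$'s compatible with $\jmath$, with equal $\varsigma$. These match under the constructions above, since conjugation by an element of $\GSp_{2n}(A)$ commutes with $\jmath$. One checks that $\jmath(\mathbf{h}\bfgamma\mathbf{h}^{-1}) = \mathbf{h}\bfgamma^{\jmath}\mathbf{h}^{-1}$ when $\mathbf{h}^{\jmath} = \mathrm{sim}(\mathbf{h})\mathbf{h}^{-1}$, because the scalar factors cancel. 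The two constructions are clearly inverse on objects, since $\varsigma$ is recovered as $\mathrm{sim}\circ\rho$ and $\rho$ as $\phi|_G$.

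\textbf{Main obstacle.} The only real care needed is in fixing the precise involution on $(A[G],\varsigma)$ and the notion of morphisms on both sides so that the correspondence is on the nose. The paper's notation writes $\sigma\mapsto\varsigma(\sigma)\sigma$, which must be read as $\sigma\mapsto\varsigma(\sigma)\sigma^{-1}$ to obtain an involution. Everything else is the one-line matrix identity.
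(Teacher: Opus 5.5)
Your proposal is correct and is essentially the paper's proof. You set $\varsigma_\rho = \mathrm{sim}\circ\rho$, take $\phi_\rho$ to be the $A$-linear extension of $\rho$, and verify compatibility with the involutions via $\mathrm{sim}\rho(\sigma)\rho(\sigma)^{-1} = \mathbb{J}\trans\rho(\sigma)\mathbb{J}^{-1} = \rho(\sigma)^{\jmath}$. Conversely, you recover $\rho = \phi|_G$ from $\trans\phi(\sigma)\mathbb{J}\phi(\sigma) = \mathbb{J}\phi(\sigma)^{\jmath}\phi(\sigma) = \varsigma(\sigma)\mathbb{J}$.

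Your reading of the involution on $(A[G],\varsigma)$ as $\sigma\mapsto\varsigma(\sigma)\sigma^{-1}$ is exactly the one the paper's proof uses. Your discussion of morphisms supplements what the paper simply calls ``easy to check'', though it is no more precise than the paper on what the morphisms are.
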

\begin{proof}
    Given a symplectic representation $\rho: G \rightarrow \GSp_{2n}(A)$, define \[
        \varsigma_{\rho} \coloneq \mathrm{sim} \rho
    \] 
    and \[
        \phi_{\rho}: \sum_{i=1}^n a_i \sigma_i \mapsto \sum_{i=1}^n a_i \rho(\sigma_i).
    \]
    This gives a functor from (i) to (ii) since \[
        \phi_{\rho}(\varsigma_{\rho}(\sigma) \sigma^{-1}) = \mathrm{sim}\rho(\sigma) \rho(\sigma^{-1}) = \mathbb{J} \trans\rho(\sigma) \mathbb{J}^{-1} = \rho(\sigma)^{\jmath}.
    \] Secondly, given a pair $(\varsigma, \phi)$, define \[
        \rho_{\phi} \coloneq \phi|_{G}: G \rightarrow \GL_{2n}(A).
    \]
    To show that this gives a functor from (ii) to (i), we verify that $\rho_{\phi}$ is valued in $\GSp_{2n}(A)$. Indeed, we have \[
        \trans\phi(\sigma)\mathbb{J}\phi(\sigma) = \mathbb{J} \phi(\sigma)^{\jmath}\phi(\sigma) = \mathbb{J} \phi(\varsigma(\sigma) \sigma^{-1})\phi(\sigma) = \varsigma(\sigma) \mathbb{J}.
    \]
    It is then easy to check that these two functors are inverse to each other. 
\end{proof}

\paragraph{Continuous symplectic determinants}

For our purpose in Sect. \ref{subsection: families of Gal reps over small par. eigenvar}, we will need a notion of continuity for symplectic determinants. In this vein, we generalise Chenevier's definition of continuous determinant to a definition of continuity for homogeneous polynomial laws. We shall then see that this is sufficient to consider continuity for symplectic determinants.

\begin{Definition}\label{Definition: continuous homogeneous polynomial law}
    Let $A$ be a (commutative) topological ring, $G$ a topological group. A homogeneous polynomial law $\mathrm{P}: A[G] \rightarrow A$ is \textbf{continuous} if the coefficients
    \[\Lambda_{i}^{\mathrm{P}}: G\rightarrow A\]
    of the characteristic polynomial are continuous.
\end{Definition}

Note that, if $\mathrm{P}$ is a determinant, this definition is equivalent to Chenevier's definition (cf. \cite[Sect. 2.30]{Chenevier-determinant})) due to Amitsur’s formula as discussed in the beginning of \cite[Sect. 2.30]{Chenevier-determinant}. The following analogue of \cite[Example 2.32]{Chenevier-determinant} for continuous homogeneous polynomial laws.

\begin{Lemma}\label{Lemma: continoues homogeneous polynomial gluing}
    Let $G$ be a compact topological group and $X\subseteq G$ be a dense subset. Let $A$ be a compact topological ring. Let $A$ and $\{A_j\}_{j\in J}$ be topological rings such that $A$ is compact and $\prod_{j\in J}A_j$ is Hausdorff. Assume we are given the following: \begin{itemize}
        \item an injective continuous ring homomorphism $\iota : A\hookrightarrow\prod_{j\in J}A_j$; 
        \item for each $j\in J$, there exists a continuous $d$-dimensional homogeneous polynomial law $\mathrm{P}_j: A_j[G] \rightarrow A_j$ such that for any $x\in X$, \[
            (\Lambda_{i}^{\mathrm{P}_j}(x))_j\in \iota(A).
        \] 
    \end{itemize} 
    Then, there exists a homogeneous polynomial law of dimension $d$
        \[\mathrm{P}:A[G]\rightarrow A\]
        such that, for each $j\in J$, 
        \begin{align*}
            \mathrm{P}\otimes_A A_j = \mathrm{P}_{j}.
        \end{align*}
    If moreover that each $\mathrm{P}_j$ is a determinant, then so is $\mathrm{P}$.
\end{Lemma}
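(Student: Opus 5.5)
We follow the proof of \cite[Example 2.32]{Chenevier-determinant}, replacing the determinant by a general homogeneous polynomial law and using that such a law is determined by the coefficient functions $\Lambda_i$ together with Amitsur-type identities. The plan has four steps.

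\textbf{Step 1: a candidate on $X$.} Each $\mathrm{P}_j$ is continuous, so each $\Lambda_i^{\mathrm{P}_j}: G \rightarrow A_j$ is continuous, and hence the product map
\[
    \Lambda_i^{\bullet} \coloneq (\Lambda_i^{\mathrm{P}_j})_j : G \rightarrow \prod_{j} A_j
\]
is continuous. By hypothesis it sends $X$ into $\iota(A)$.

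\textbf{Step 2: extending from $X$ to $G$.} Since $A$ is compact and $\prod_j A_j$ is Hausdorff, $\iota(A)$ is compact, hence closed. The map $\iota$ is a continuous bijection from a compact space onto a Hausdorff space, so it is a homeomorphism onto its image. As $X$ is dense in $G$ and $\Lambda_i^{\bullet}$ is continuous, we get $\Lambda_i^{\bullet}(G) \subset \overline{\iota(A)} = \iota(A)$. We then define continuous functions $\lambda_i \coloneq \iota^{-1}\circ \Lambda_i^{\bullet} : G \rightarrow A$.

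\textbf{Step 3: building $\mathrm{P}$ from the $\lambda_i$.} A degree-$d$ homogeneous polynomial law on $A[G]$ is the same as an element of $\Gamma^d_A(A[G])^\vee$. Since $\Gamma^d_A(A[G]) \cong A[\mathrm{Sym}^d$-orbits$]$ is free on the monomials $g_1^{[\alpha_1]}\cdots g_r^{[\alpha_r]}$, the law amounts to a function $\mu$ on finite multisets of size $d$ in $G$: the coefficient of $\prod_k t_k^{\alpha_k}$ in $\mathrm{P}_{A[t]}(\sum_k t_k g_k)$.

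For each $j$, the values $\mu_j$ of $\mathrm{P}_j$ on multisets are given by polynomials over $\Z$ in the $\Lambda_i^{\mathrm{P}_j}$ evaluated at products of group elements. For determinants this is Amitsur's formula; for a general homogeneous law I would get it from the characteristic polynomial, using that $\charpoly$ of $\sum t_k g_k$ recovers the mixed coefficients. We then define $\mu$ with values in $A$ by the same universal expressions in the $\lambda_i$. This gives $\iota\circ\mu = (\mu_j)_j$ on all multisets, so $\mathrm{P}$ is well defined (by injectivity of $\iota$), and the identity $\mathrm{P}\otimes_A A_j = \mathrm{P}_j$ holds because the coefficient data match.

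An alternative route for Step 3 avoids the formulas: define $\mu$ directly as $\iota^{-1}((\mu_j)_j)$. This requires $(\mu_j)_j \in \iota(A)$ on multisets from $G$. I would get this by density of $X^r$ in $G^r$, continuity of $\mu_j$ in each argument (which again follows from expressing it through the $\Lambda_i$), and closedness of $\iota(A)$. I expect this step, securing that the mixed coefficients are controlled by the $\Lambda_i$ and hence continuous and valued in $\iota(A)$, to be the main obstacle. If Amitsur's formula is unavailable for a general polynomial law, I would simply take the hypothesis to be continuity of all multi-coefficients and argue by density as above.

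\textbf{Step 4: multiplicativity.} If each $\mathrm{P}_j$ is a determinant, then multiplicativity and $\mathrm{P}(1)=1$ are identities among multiset coefficients in $A$. After applying $\iota$ they become identities in $\prod_j A_j$, which hold componentwise; since $\iota$ is injective, they hold in $A$, so $\mathrm{P}$ is a determinant.
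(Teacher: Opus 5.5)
Your Steps 1, 2 and 4 are sound, and Step 2 is exactly the paper's argument: $\iota(A)$ is closed, $\iota$ is a homeomorphism onto its image, and $X$ is dense. The gap is Step 3, and it cannot be filled. Without multiplicativity, the multiset coefficients of a homogeneous polynomial law are \emph{not} universal integer polynomials in the $\Lambda_i$ evaluated at elements of $G$. The characteristic polynomial of $\sum_k t_k g_k$ does contain the mixed coefficients, but $\sum_k t_k g_k$ lies in $A[t][G]$, not in $G$, so the hypothesis says nothing about it.

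In fact the first assertion of the lemma is false in this generality. Take $G=\Z/3\Z=\{1,g,g^2\}$ with the discrete topology, $X=G$, $d=2$, $A=\Z_p\subset A_1=\Q_p$, and $\mathrm{P}_1(a+bg+cg^2)=p^{-1}bc$. Then $\mathrm{P}_1(T-1)=\mathrm{P}_1(T-g)=\mathrm{P}_1(T-g^2)=0$, so every $\Lambda_i^{\mathrm{P}_1}$ vanishes on $G$ and all hypotheses hold. Yet no $\mathrm{P}$ over $\Z_p$ with $\mathrm{P}\otimes_{\Z_p}\Q_p=\mathrm{P}_1$ exists, since its coefficient of $bc$ would have to be $p^{-1}$.

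Your fallback does not help either. Even if all multiset coefficients are continuous on $G^r$, the hypothesis only places the values $\Lambda_i(x)$, $x\in X$, in $\iota(A)$. It says nothing about the multiset coefficients at tuples from $X$, so density has nothing to propagate.

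What your argument does establish, correctly, is the determinant case. Amitsur's formula writes every multiset coefficient of $\mathrm{D}_j$ (read off, up to sign, from the degree-$d$ part of $\mathrm{D}_j(1-\sum_k t_kg_k)$) as a $\Z$-polynomial in the $\Lambda_i^{\mathrm{D}_j}(w)$ for words $w$ in the $g_k$. These words lie in $G$, so Step 2 places all of them in $\iota(A)$. Freeness of $\Gamma^d_A(A[G])$ on multisets then defines $\mathrm{P}$, base change holds coefficientwise, and Step 4 gives multiplicativity. This is the argument of \cite[Example 2.32]{Chenevier-determinant}.

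The paper's own proof, which just specialises $\psi$ at $T=0$, has the same lacuna as your general Step 3. It only pins down $\mathrm{P}$ on elements of $G$, which determines a degree-$d$ law on $A[G]$ only when Amitsur's formula is available. So the lemma should be restricted to determinants. Alternatively, strengthen its hypothesis to require all multiset coefficients at tuples from $X$ to lie in $\iota(A)$ and to be continuous on $G^r$; then your density route goes through.
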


\begin{proof}
    The proof is inspired by \cite[Example 2.32]{Chenevier-determinant}.

    To simplify the notations, we first let $C = \prod_{j\in J} A_j$ and consider the map \[
        \psi: G\rightarrow C[T], \quad  \sigma \mapsto (\charpoly_{\mathrm{P}_j}(\sigma, T)))_{j\in J}.
    \]
    Concretely, we can write \[
        \psi(\sigma) = \sum_{i=0}^d (-1)^i(\Lambda_{i, A_j[T]}^{\mathrm{P_j}}(\sigma))_j T^i.
    \]
    By construction, $\psi$ is a polynomial law. 

    Since $\iota: A \rightarrow C$ is an injective continuous ring homomorphism, $A$ is compact and $C$ is Hausdorff, the image $\iota(A)$ is closed in $C$ and $A$ is homeomorphic to $\iota(A)$. We then identify $A$ with $\iota(A)$. By assumption, we have $\psi(X) \subset A[T]$ and $X$ is dense in $G$, thus $\psi(G) \subset A[T]$. By specialising $\psi$ at $T=0$, we obtain the desired polynomial law $\mathrm{P}$. 

    Finally, it is easy to check that $\mathrm{P}$ is a determinant if all $\mathrm{P}_j$'s are determinants. 
\end{proof}

\begin{Lemma}\label{Lemma: continuous determinant iff continuous pfaffian}
    Let $A$ be a topological ring with $2\in A^{\times}$ and $G$ a topological group. Let $(\mathrm{D}, \mathrm{P}): A[G] \rightarrow A$ be a $2n$-dimensional symplectic determinant and recall the extended polynomial law $\mathrm{P}^{\mathrm{ext}}$ from Remark \ref{Remark: symplectic determinants; conjecture and extension of pfaffians}. Then, $\mathrm{D}$ is continuous if and only if $\mathrm{P}^{\mathrm{ext}}$ is continuous. 
\end{Lemma}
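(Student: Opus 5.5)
The plan is to compare the characteristic-polynomial coefficients of $\mathrm{D}$ and $\mathrm{P}^{\mathrm{ext}}$ through universal polynomial identities with coefficients in $\Z[1/2]$. By Definition \ref{Definition: continuous homogeneous polynomial law}, continuity means continuity of the maps $\Lambda_i^{\mathrm{D}}, \Lambda_j^{\mathrm{P}^{\mathrm{ext}}}: G \rightarrow A$. So it is enough to write each family of coefficients as a polynomial, with coefficients invertible in $A$, in the other family evaluated at finitely many continuous expressions in $\sigma$. Such expressions would be, for example, $\sigma$, $\varsigma(\sigma)$, $\sigma^*$ and products like $\sigma\sigma^*$.

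\textbf{Direction $\mathrm{P}^{\mathrm{ext}} \Rightarrow \mathrm{D}$.} The relation $\mathrm{P}^2 = \mathrm{D}|_{M^+}$ holds as polynomial laws. Polarising the symmetric element $T - r$ and using the recursion of \cite[Proposition 3.15]{MQ-SympDet}, we expect an identity on all of $M$ of the form
\[
\charpoly_{\mathrm{D}}(r, T) = \mathrm{P}^{\mathrm{ext}}_{A[T]}(T - r)^2 \quad\text{or}\quad \mathrm{P}^{\mathrm{ext}}_{A[T]}(T-r)\,\mathrm{P}^{\mathrm{ext}}_{A[T]}(T-r^*).
\]
This should follow from the construction of $\mathrm{P}^{\mathrm{ext}}$. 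We would check it universally, on the generic symplectic matrix algebra, where it reduces to $\det(X) = \mathrm{pf}(X\mathbb{J})\cdot\mathrm{pf}(X^{\jmath}\mathbb{J})$-type identities. If that route is unwieldy, we could instead reduce to the case of Example \ref{Example: symplectic representations and symplectic determinants} via Theorem \ref{Theorem: symplectic version of Chenevier's Thm A and B}, but that needs $A$ to be a field. Comparing coefficients of $T$ then expresses each $\Lambda_i^{\mathrm{D}}(\sigma)$ as a $\Z$-polynomial in the $\Lambda_j^{\mathrm{P}^{\mathrm{ext}}}$ evaluated at $\sigma$ and $\sigma^* = \varsigma(\sigma)\sigma^{-1}$. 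These are continuous because inversion on $G$ and $\varsigma$ are continuous; continuity of $\varsigma$ is recovered from $\mathrm{D}$, or in the other direction from $\mathrm{P}^{\mathrm{ext}}$ on scalars.

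\textbf{Direction $\mathrm{D} \Rightarrow \mathrm{P}^{\mathrm{ext}}$.} Because $2 \in A^\times$, the recursion of \cite[Proposition 3.15]{MQ-SympDet} expresses the coefficients of $\mathrm{P}^{\mathrm{ext}}$ in the style of Newton's identities. They become polynomials with coefficients in $\Z[1/2, 1/n!]$ (or just $\Z[1/2]$) in traces of powers, for instance $\Lambda_1^{\mathrm{P}^{\mathrm{ext}}}(\sigma) = \tfrac12 \mathrm{T}_{\mathrm{D}}(\sigma)$ on symmetric elements. More generally we expect
\[
\Lambda_k^{\mathrm{P}^{\mathrm{ext}}} = \text{(universal polynomial in } \tfrac12 \mathrm{T}_{\mathrm{D}}(\sigma^m), \tfrac12\mathrm{T}_{\mathrm{D}}((\sigma^*)^m), \dots).
\]
Here $\mathrm{T}_{\mathrm{D}}(\sigma^m)$ is itself a $\Z$-polynomial in the $\Lambda_i^{\mathrm{D}}(\sigma)$ by Newton's identities, so continuity in $\sigma$ follows from continuity of the $\Lambda_i^{\mathrm{D}}$ and of multiplication in $G$. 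Where the recursion involves division by integers other than $2$, we would either assume $A$ is a $\Q$-algebra, which holds in the intended application, or use Amitsur's formula in the form of \cite[Sect. 2.30]{Chenevier-determinant} to avoid denominators.

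\textbf{Main obstacle.} The main difficulty is making the recursion of \cite[Proposition 3.15]{MQ-SympDet} explicit enough to extract universal formulas that are valid over an arbitrary $A$ with $2\in A^\times$. In particular, we must confirm that $\mathrm{P}^{\mathrm{ext}}$ evaluated at non-symmetric $\sigma$ depends polynomially only on values of $\mathrm{P}$ or $\mathrm{D}$ at symmetric combinations such as $\sigma + \sigma^*$. The first thing to try is the identity $\mathrm{P}^{\mathrm{ext}}(x) = \mathrm{P}\bigl(\tfrac12(x + x^*)\bigr)$-style symmetrisation on $T - \sigma$, followed by expanding in $T$.
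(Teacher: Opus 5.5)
Your overall strategy is the paper's: express each family of characteristic-polynomial coefficients as universal $\Z[1/2]$-polynomials in the other, via the recursion of \cite[Proposition 3.15]{MQ-SympDet}. However, your direction $\mathrm{P}^{\mathrm{ext}}$ continuous $\Rightarrow$ $\mathrm{D}$ continuous rests on a step that fails. The identities you expect, $\charpoly_{\mathrm{D}}(r,T)=\mathrm{P}^{\mathrm{ext}}(T-r)^2$ or $\mathrm{P}^{\mathrm{ext}}(T-r)\,\mathrm{P}^{\mathrm{ext}}(T-r^*)$, are false off $M^+$, and group elements do not lie in $M^+$.

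Here is a counterexample. Take $n=1$ and $(\mathrm{D},\mathrm{P})=(\det\rho,\mathrm{pf}(\rho\mathbb{J}))$ for some $\rho\colon G\to\GL_2(A)=\GSp_2(A)$. Then $\varsigma=\det\rho$, and $\sigma^*$ acts as the adjugate of $\rho(\sigma)$. The recursion gives $\mathrm{P}^{\mathrm{ext}}=\tfrac12\mathrm{tr}\circ\rho$, so both of your expressions at $r=\sigma$ equal $(T-\tfrac12\mathrm{tr}\rho(\sigma))^2$. But $\charpoly_{\mathrm{D}}(\sigma,T)=T^2-\mathrm{tr}\rho(\sigma)T+\det\rho(\sigma)$. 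In particular $\Lambda_2^{\mathrm{D}}(\sigma)=\varsigma(\sigma)$ is not a universal polynomial in $\Lambda_1^{\mathrm{P}^{\mathrm{ext}}}(\sigma)$: compare $\rho(\sigma)=\diag(1,-1)$ with $\diag(2,-2)$, which have equal trace but different determinant.

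What does hold, in the situation of Example~\ref{Example: symplectic representations and symplectic determinants} and using $\det x=\mathrm{pf}(x^{\jmath}x\mathbb{J})$, is
\[
\charpoly_{\mathrm{D}}(\sigma,T)=\mathrm{P}\bigl(T^2-T(\sigma+\sigma^*)+\varsigma(\sigma)\bigr).
\]
This needs $\varsigma(\sigma)$ as an extra input. Your plan to recover continuity of $\varsigma$ "from $\mathrm{P}^{\mathrm{ext}}$ on scalars" does not work, since $\mathrm{P}^{\mathrm{ext}}(\lambda)=\lambda^n$ carries no information about $\varsigma$. Recovering it "from $\mathrm{D}$" is circular in this direction. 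For $n=1$ you could use $\varsigma(\sigma)=\tfrac12\bigl(\mathrm{tr}\rho(\sigma)^2-\mathrm{tr}\rho(\sigma^2)\bigr)$, but you give no argument for general $n$.

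The paper's one-line proof has the same weak point. Its claim $\Lambda_i^{\mathrm{D}}\in\Z[\Lambda_0^{\mathrm{P}},\dots,\Lambda_n^{\mathrm{P}}]$ comes from $\charpoly_{\mathrm{D}}=\charpoly_{\mathrm{P}}^2$, which holds on $M^+$ but, by the same example, not at $\sigma\in G$. Only the other direction is used later, in Corollary~\ref{Corollary: continuous symplectic determinants and glueing}.

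For $\mathrm{D}\Rightarrow\mathrm{P}^{\mathrm{ext}}$, you do not need traces of powers, Newton's identities, or $\sigma^*$ (and hence no continuity of $\varsigma$). You also do not need a $\Q$-algebra hypothesis, and assuming one would prove less than the lemma states. The recursion is just the monic square root. Comparing coefficients of $T^{2n-k}$ for $0\le k\le n$ in $\charpoly_{\mathrm{D}}=\charpoly_{\mathrm{P}}^2$ gives
\[
\Lambda_k^{\mathrm{D}}=2\Lambda_k^{\mathrm{P}}+\sum_{0<i<k}\Lambda_i^{\mathrm{P}}\Lambda_{k-i}^{\mathrm{P}}.
\]
This system is triangular with diagonal $2$, so $\Lambda_k^{\mathrm{P}}\in\Z[\tfrac12][\Lambda_1^{\mathrm{D}},\dots,\Lambda_k^{\mathrm{D}}]$. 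The extension $\mathrm{P}^{\mathrm{ext}}$ of Remark~\ref{Remark: symplectic determinants; conjecture and extension of pfaffians} is obtained by imposing these formulas at every $x\in M$. Consistently, $\charpoly_{\mathrm{P}^{\mathrm{ext}}}(x,T)$ is the polynomial part of the expansion of $\sqrt{\charpoly_{\mathrm{D}}(x,T)}$ in $T^{-1}$. Evaluating at $\sigma\in G$ then gives continuity at once.

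Note also that the symmetrisation $x\mapsto\mathrm{P}\bigl(\tfrac12(x+x^*)\bigr)$ you plan to try is a \emph{different} extension of $\mathrm{P}$. For $n=2$ and $x=\diag(1,0,0,0)\in M_4(A)$ we have $x^{\jmath}=\diag(0,0,0,1)$, so the symmetrisation gives $\mathrm{P}(\diag(\tfrac12,0,0,\tfrac12))=0$. The recursion instead gives $\mathrm{P}^{\mathrm{ext}}(x)=\Lambda_2^{\mathrm{P}^{\mathrm{ext}}}(x)=\tfrac12\bigl(0-\tfrac14\bigr)=-\tfrac18$.
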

\begin{proof}
    By the recursion formula in \cite[Proposition 3.15]{MQ-SympDet} (see also \cite[Example 3.16]{MQ-SympDet}),  we have $\Lambda_{i}^{\mathrm{D}}\in \Z[\Lambda_{0}^{\mathrm{P}},...,\Lambda_{n}^{\mathrm{P}}]$ and $\Lambda_{i}^{\mathrm{P}}\in \Z[2^{-1},\Lambda_{0}^{\mathrm{D}},..,\Lambda_{2n}^{\mathrm{D}}]$ for each $i$, which shows the assertion. 
\end{proof}

The following corollary is an analogue of Chenevier's \cite[Example 2.32]{Chenevier-determinant} for symplectic determinants. 

\begin{Corollary}\label{Corollary: continuous symplectic determinants and glueing}
    Let $G$, $X$, $A$, $\{A_j\}_{j\in J}$, $\iota$ be as in Lemma \ref{Lemma: continoues homogeneous polynomial gluing}. Suppose $2$ is invertible in $A$ and $A_j$'s. Suppose for each $j\in J$, there exists a $2n$-dimensional weak symplectic determinant \[
        (\mathrm{D}_j, \mathrm{P}_j): A_j[G] \rightarrow A_j
    \] such that \begin{itemize}
        \item for all $j\in J$, $\mathrm{D}_j$ is continuous; 
        \item for all $x\in X$, $(\Lambda_i^{\mathrm{D}_j}(x))_j\in \iota(A)$.
    \end{itemize} 
    Then, there exists a $2n$-dimensional weak symplectic determinant \[
        (\mathrm{D}, \mathrm{P}): A[G] \rightarrow A
    \]
    such that \[
        (\mathrm{D}, \mathrm{P}) \otimes_A A_j = (\mathrm{D}_j, \mathrm{P}_j).
    \]
    Moreover, if each $(\mathrm{D}_j, \mathrm{P}_j)$ is a symplectic determinant, so is $(\mathrm{D}, \mathrm{P})$.
\end{Corollary}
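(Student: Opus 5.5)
The plan is to glue the two components separately: first the determinant $\mathrm{D}$ via Lemma \ref{Lemma: continoues homogeneous polynomial gluing}, and then the Pfaffian $\mathrm{P}$ via Lemma \ref{Lemma: continuous determinant iff continuous pfaffian}.

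\emph{The determinant.} Apply Lemma \ref{Lemma: continoues homogeneous polynomial gluing} to the family $\{\mathrm{D}_j\}_{j\in J}$. Each $\mathrm{D}_j$ is continuous, and for $x\in X$ its coefficients $(\Lambda_i^{\mathrm{D}_j}(x))_j$ lie in $\iota(A)$. The lemma therefore gives a $2n$-dimensional determinant $\mathrm{D}: A[G]\rightarrow A$ with $\mathrm{D}\otimes_A A_j = \mathrm{D}_j$.

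\emph{The Pfaffian.} Since $2$ is invertible, Remark \ref{Remark: symplectic determinants; conjecture and extension of pfaffians} extends each $\mathrm{P}_j$ to a degree-$n$ law $\mathrm{P}_j^{\mathrm{ext}}$ on $A_j[G]$. By the recursion formula of \cite[Proposition 3.15]{MQ-SympDet}, as used in the proof of Lemma \ref{Lemma: continuous determinant iff continuous pfaffian}, each coefficient $\Lambda_i^{\mathrm{P}_j}$ is a polynomial with coefficients in $\Z[1/2]$ in the $\Lambda_k^{\mathrm{D}_j}$. This polynomial is the same for every $j$. Two consequences follow.
\begin{itemize}
\item Each $\mathrm{P}_j^{\mathrm{ext}}$ is continuous.
\item For $x\in X$, the tuple $(\Lambda_i^{\mathrm{P}_j}(x))_j$ is that universal polynomial evaluated at elements of $\iota(A)$. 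Since $\iota(A)$ is a subring and $2\in A^\times$, this tuple lies in $\iota(A)$.
\end{itemize}
Applying Lemma \ref{Lemma: continoues homogeneous polynomial gluing} again gives a degree-$n$ law $\mathrm{P}': A[G]\rightarrow A$ with $\mathrm{P}'\otimes A_j = \mathrm{P}_j^{\mathrm{ext}}$. We set $\mathrm{P} := \mathrm{P}'|_{A[G]^+}$. Here the involution on $A[G]$ is the one compatible with the involutions on the $A_j[G]$; we need it to exist over $A$, which we check by injectivity of $\iota$ in the form of the similitude character, e.g.\ as $\varsigma$ read off from $\mathrm{D}$.

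\emph{Checking the identities.} The axioms of a weak symplectic determinant are identities between polynomial laws: $\mathrm{D}(x^*)=\mathrm{D}(x)$, $\mathrm{P}^2=\mathrm{D}|_{A[G]^+}$ and $\mathrm{P}(1)=1$. Each holds after base change to every $A_j$. We transfer them to $A$ through the injection $A\hookrightarrow \prod_j A_j$, and we need this to work after arbitrary base change $B\in\Alg_A$.

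This transfer is the main obstacle, since injectivity is not obviously preserved by $-\otimes_A B$. We handle it as in the proof of Lemma \ref{Lemma: continoues homogeneous polynomial gluing} and \cite[Example 2.32]{Chenevier-determinant}. A polynomial law on $A[G]$ is determined by its values on the universal algebras $A[t_1,\dots,t_m]$ applied to $\sum t_k g_k$ with $g_k\in G$, that is, by the coefficient functions. Moreover $A[t_1,\dots,t_m]\hookrightarrow \prod_j A_j[t_1,\dots,t_m]$ remains injective, because this is just an injection coefficientwise. So each identity reduces to identities of coefficient functions on $G^m$, which hold in every $A_j$ and hence in $A$.

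\emph{The symplectic case.} Suppose each $(\mathrm{D}_j,\mathrm{P}_j)$ is a symplectic determinant, so $\CH(\mathrm{P}_j)\subset\ker(\mathrm{D}_j)$. The generators of $\CH(\mathrm{P})$ are built from the coefficients $\Lambda^{\mathrm{P}}_\alpha$, and these map to the corresponding generators of $\CH(\mathrm{P}_j)$. Membership in $\ker(\mathrm{D})$ is again an identity of polynomial laws, so the same injectivity argument applied to the generators gives $\CH(\mathrm{P})\subset\ker(\mathrm{D})$.
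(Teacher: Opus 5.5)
Your route is the paper's: glue $\mathrm{D}$ with Lemma~\ref{Lemma: continoues homogeneous polynomial gluing}, use the $\Z[1/2]$-recursion behind Lemma~\ref{Lemma: continuous determinant iff continuous pfaffian} to glue the $\mathrm{P}_j^{\mathrm{ext}}$ with the same lemma, and transfer the axioms through $A[t_1,\dots,t_m]\hookrightarrow\prod_j A_j[t_1,\dots,t_m]$. Your extra checks are correct and supply what the paper calls ``easy to verify'': that $(\Lambda_i^{\mathrm{P}_j}(x))_j\in\iota(A)$, the transfer of identities, and the Cayley--Hamilton inclusion via generators and the two-sided ideal $\ker\mathrm{D}$.

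\textbf{The step that fails: reading $\varsigma$ off from $\mathrm{D}$.} You construct the involution on $A[G]$ this way, but it does not work. The determinant gives relations such as $\varsigma(g)^k\Lambda^{\mathrm{D}}_{2n-k}(g)=\mathrm{D}(g)\Lambda^{\mathrm{D}}_k(g)$, coming from $\mathrm{D}(t-g^*)=\mathrm{D}(t-g)$, but it does not determine $\varsigma$. Here is a counterexample.
\begin{itemize}
\item Let $\varepsilon\colon G\to\{\pm1\}$ be a nontrivial continuous character. The representation $\mathbf{1}\oplus\mathbf{1}\oplus\varepsilon\oplus\varepsilon$ is $\GSp_4$-valued with similitude $\mathbf{1}$, using the diagonal order $(\mathbf{1},\varepsilon,\varepsilon,\mathbf{1})$. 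It is also $\GSp_4$-valued with similitude $\varepsilon$, using the order $(\mathbf{1},\mathbf{1},\varepsilon,\varepsilon)$.
\item Take $A=A_1=A_2=\calO_L$, $\iota$ diagonal, $X=G$, $\mathrm{D}_1=\mathrm{D}_2$ its determinant, $\varsigma_1=\mathbf{1}$ and $\varsigma_2=\varepsilon$. Every hypothesis of the corollary holds.
\item No involution on $A[G]$ base-changes to both, since an involution recovers its character via $gg^*=\varsigma(g)$.
\end{itemize}
So a character $\varsigma\colon G\to A^\times$ with $\iota\circ\varsigma=(\varsigma_j)_j$ has to be part of the input. The paper assumes this tacitly: in Proposition~\ref{Proposition: global symplectic determinant over small parabolic eigenvarieties}, $\varsigma^{\univ}$ is fixed before the corollary is applied. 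With that hypothesis, drop your derivation of $\varsigma$; the rest of your argument goes through.

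\textbf{A smaller point.} You do not need to glue the $\mathrm{P}_j^{\mathrm{ext}}$ a second time. Define $\mathrm{P}$ on $A[G]^+$ directly from the glued $\mathrm{D}$ by the same $\Z[1/2]$-recursion, i.e.\ from the coefficients of a monic square root of $\mathrm{D}(t-x)$. This gives $\mathrm{P}\otimes_A A_j=\mathrm{P}_j$ directly, because when $2$ is invertible $\mathrm{P}_j(t-x)$ is the unique monic square root of $\mathrm{D}_j(t-x)$. The identity $\mathrm{P}^2=\mathrm{D}|_{A[G]^+}$ then follows by your injectivity argument. This route also avoids applying the gluing lemma to a non-multiplicative law. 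Such a law is not determined by its characteristic polynomials on $G$, which is all that lemma's argument controls; for determinants Amitsur's formula is what makes it work.
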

\begin{proof}
    By Lemma \ref{Lemma: continuous determinant iff continuous pfaffian} and Lemma \ref{Lemma: continoues homogeneous polynomial gluing}, we obtain a $n$-dimensional homogeneous polynomial law $\mathrm{P}$ and a $2n$-dimensional determinant $\mathrm{D}$ on $A[G]$. Since each pair $(\mathrm{D}_j, \mathrm{P}_j)$ is a (weak) symplectic determinant, it is easy to verify that $(\mathrm{D}, \mathrm{P})$ is again a (weak) symplectic determinant.
\end{proof}

\subsection{Refined families of symplectic Galois representations}\label{subsection: families of Galois reps}
The purpose of this subsection is to introduce and study \emph{refined families of symplectic Galois representations}. To our knowledge, there are three possible ways to define refined families of Galois representations (or $(\varphi, \Gamma)$-modules). We will prove their relationships with each other (Proposition \ref{Proposition: comparison between different notions of families}). Materials presented in this subsection are inspired by \cite[Sect. 4.2.2]{BC} and \cite[Sect. 5.1 and 6.1]{Bergdall-paraboline}.

We begin with setting up some notations. Throughout the rest of the section, let $G = \GSp_{2n}$. Let $B \subset G$ be the Borel subgroup of upper triangular matrices. Throughout this subsection, we fix a maximal parabolic subgroup $P$ containing $B$; that is $P = P_m$ (see \eqref{eq: maximal parabolics for GSp}). We denote the Levi subgroup of $P$ by $M_P$.

In what follows, $\Gal$ will be of the following Galois groups:\begin{itemize}
    \item $\Gal = \Gal_{\Q}$; 
    \item $\Gal = \Gal_{\Q_p}$; 
    \item let $S$ be a finite set of places of $\Q$ with $p\in S$ and $\Q^S$ be the maximal field of extension of $\Q$ that is unramified outside $S$, $\Gal = \Gal_{\Q, S} \coloneq \Gal(\Q^S/\Q)$. 
\end{itemize}
Let $A$ be a topological commutative ring. Given a continuous character $\varsigma: \Gal \rightarrow A^\times$. Observe that $A[\Gal]$ is then an involutive $A$-algebra with involution \[
    \alpha \mapsto \varsigma(\alpha)\alpha^{-1}. 
\]
We abuse the notation and denote this involutive $A$-algebra by $(A[\Gal], \varsigma)$.

\begin{Definition}[$P$-refined families of symplectic Galois representations]\label{Definition: refined families via determinant}
    A \textbf{$P$-refined family of symplectic $\Gal$-representations} of dimension $2n$ is a datum $(\calX, \calX_{\nc}, \varsigma_{\calX}, \Det, \Pf, [\mu_{\HTS}], F_1, F_2)$, where \begin{itemize}
        \item $\calX$ is a reduced rigid analytic space over $\Spa(\Q_p, \Z_p)$; 
        \item $\calX_{\nc}\subset \calX(\overline{\Q}_p)$ is a Zariski dense set of points (called non-critical classical points); 
        \item $\varsigma_{\calX}: \Gal \rightarrow \scrO_{\calX}(\calX)^\times$ is a continuous character; 
        \item $(\Det, \Pf)$ is a continuous $2n$-dimensional symplectic determinant on the involutive algebra $(\scrO_{\calX}(\calX)[\Gal], \varsigma_{\calX})$;
        \item $[\mu_{\HTS}]$ is a $M_P$-conjugacy class of continuous cocharacters $\mu_{\HTS}: \Gamma \rightarrow G(\scrO_{\calX_{\Q_p^{\cyc}}}(\calX_{\Q_p^{\cyc}}))\rtimes \Gamma$;
        \item $F_i\in \scrO_{\calX}(\calX)$ for $i=1, 2$,
    \end{itemize}
    such that the following conditions hold: \begin{enumerate}[leftmargin=0.64in]
        \item[(RFGal1)] For any $x\in \calX_{\nc}$, let $\rho_x:\Gal \rightarrow G(\overline{\Q}_p)$ be the symplectic Galois representation associated with $(\Det_x, \Pf_x)$, the Hodge--Tate cocharacter for $\D_{\rig}^{\dagger}(\rho_x|_{\Gal_{\Q_p}})$ is $\mu_{\HTS}(x)$.
        \item[(RFGal2)] For any $x\in \calX_{\nc}$, $\D_{\rig}^{\dagger}(\rho_x|_{\Gal_{\Q_p}})$ is potentially semistable. 
        \item[(RFGal3)] For any $x\in \calX_{\nc}$, $\D_{\rig}^{\dagger}(\rho_x|_{\Gal_{\Q_p}})$, up to $M_P$-conjugation, \[
            \mu_{\HTS}(x): \gamma \mapsto ([\diag(\chi_{\cyc}(\gamma)^{\mu_{\HTS, 1}(x)}, ..., \chi_{\cyc}(\gamma)^{\mu_{\HTS, 2n}(x)})], \gamma)
        \] 
        with each $\mu_{\HTS, i}(x)\in \Z$. Moreover, \[
            \mu_{\HTS, 1}(x), ..., \mu_{\HTS, m}(x) < \mu_{\HTS, m+1}(x), ..., \mu_{\HTS, 2n-m}(x)< \mu_{\HTS, 2n-m+1}(x), ..., \mu_{\HTS, 2n}(x).\footnote{ This sequence of inequalities means that $\max\{\mu_{\HTS, 1}(x), ..., \mu_{\HTS, m}(x)\} < \min\{\mu_{\HTS, m+1}(x), ..., \mu_{\HTS, 2n-m}(x)\}$ and $\max\{\mu_{\HTS, m+1}(x), ..., \mu_{\HTS, 2n-m}(x)\} < \min\{\mu_{\HTS, 2n-m+1}(x), ..., \mu_{\HTS, 2n}(x)\}$.}
        \]
        \item[(RFGal4)] For any $x\in \calX_{\nc}$, $\varphi_1 = p^{\sum_{i=1}^m\mu_{\HTS,i}(x)}F_1(x)$ and $\varphi_2 = p^{\sum_{i=2n-m+1}^{2n}\mu_{\HTS,i}(x)}F_2(x)$ are distinct Frobenius eigenvalues on $\bigwedge^m \D_{\pst}(\mathrm{std}\circ \rho_x|_{\Gal_{\Q_p}})$ with multiplicity one. Their corresponding eigenspaces are determinants of two $m$-dimensional $\varphi$-stable isotropic subspaces $M_i$ in $\D_{\pst}(\mathrm{std}\circ \rho_x|_{\Gal_{\Q_p}})$, dual to each other under the symplectic pairing, and $M_1$ is $(\varphi, N, \Gal_{\Q_p})$-stable. 
        \item[(RFGal5)] The subset $\calX_{\nc}$ is an accumulation subset, i.e., for any $x\in \calX_{\nc}$, there exists a basis of affinoid neighbourhoods $\calU$ of $x$ such that $\calU \cap \calX_{\nc}$ is Zariski dense in $\calU$.
    \end{enumerate} 
\end{Definition}

\begin{Definition}[$P$-refined families of $(\varphi, \Gamma)$-modules]\label{Definition: refined families of (phi, Gamma)-modules}
    A \textbf{$P$-refined family of $(\varphi, \Gamma)$-modules} is a datum $(\calX, \calX_{\nc}, D, \delta_1, \delta_2)$, where \begin{itemize}
        \item $\calX = \Spa(A, A^\circ)$ is a reduced affinoid rigid analytic space over $\Spa(\Q_p, \Z_p)$; 
        \item $\calX_{\nc} \subset \calX(\overline{\Q}_p)$ is a Zariski dense set of points; 
        \item $D\in G\Mod_A^{(\varphi, \Gamma)}$ is a $(\varphi, \Gamma)$-module with $G$-structure over $A$; 
        \item $\delta_i: \Q_p^\times \rightarrow A^\times$ is a continuous character for $i=1,2$,
    \end{itemize}
    such that the following conditions hold. \begin{enumerate}[leftmargin=0.44in]
        \item[(RF1)] For any $x\in \calX(\overline{\Q}_p)$, $\wt(\delta_{1, x})$ and $\wt(\delta_{2,x})$\footnote{ For the weight of a character, we refer the readers to \cite[Sect. 2.3.3]{BC}.} are roots of the Sen polynomial of $\bigwedge^m D_x(\mathrm{std})$. 
        \item[(RF2)] For any $x\in \calX_{\nc}$, $D_x$ is potentially semistable. 
        \item[(RF3)] For any $x\in \calX_{\nc}$, $\wt(\delta_{1, x}), \wt(\delta_{2,x})\in \Z$,  $\wt(\delta_{1, x})<\wt(\delta_{2,x})$, and of multiplicity one as roots of the Sen polynomial of $\bigwedge^m D_x(\mathrm{std})$.
        \item[(RF4)] For any $x\in \calX_{\nc}$, $\varphi_1(x) = \delta_{1, x}(p)p^{\wt(\delta_{1, x})}$ and $\varphi_2(x) = \delta_{2, x}(p)p^{\wt(\delta_{2,x})}$ are distinct Frobenius eigenspaces on $\bigwedge^m \calD_{\pst}(D_x(\mathrm{std}))$ with multiplicity one. Their corresponding eigenspaces are determinants of two $m$-dimensional $\varphi$-stable isotropic subspaces $M_i$ in $\calD_{\pst}(\mathrm{std}\circ \rho_x|_{\Gal_{\Q_p}})$, dual to each other under the symplectic pairing, and $M_1$ is $(\varphi, N, \Gal_{\Q_p})$-stable. 
        \item[(RF5)] The subset $\calX_{\nc}$ is an accumulation subset. 
    \end{enumerate} 
\end{Definition}

\begin{Definition}[Pointwise $P$-refined families of $(\varphi, \Gamma)$-modules]\label{Definition: pointwise refined families of (phi, Gamma)-modules}
    A \textbf{pointwise $P$-refined family of $(\varphi, \Gamma)$-modules} is a datum $(\calX, \calX_{\nc}, D, \{\Fil_{\bullet}D_x\}_{x\in \calX_{\nc}}, \delta_1, \delta_2)$, where \begin{itemize}
        \item $\calX = \Spa(A, A^\circ)$ is a reduced affinoid rigid analytic space over $\Spa(\Q_p, \Z_p)$; 
        \item $\calX_{\nc} \subset \calX(\overline{\Q}_p)$ is a Zariski dense set of points; 
        \item $D\in G\Mod_A^{(\varphi, \Gamma)}$ is a $(\varphi, \Gamma)$-module with $G$-structure over $A$; 
        \item for any $x\in \calX_{\nc}$, $\Fil_{\bullet}D_x$ is a $P$-flag on $D_x$;
        \item $\delta_i: \Q_p^\times \rightarrow A^\times$ is a continuous character for $i=1,2$,
    \end{itemize}
    such that the following conditions hold.  \begin{enumerate}[leftmargin=0.57in]
        \item[(RFPt1)] For any $x\in \calX(\overline{\Q}_p)$, $\wt(\delta_{,1, x})$ and $\wt(\delta_{2, x})$ are roots of the Sen polynomial of $\bigwedge^m D_x(\mathrm{std})$.
        \item[(RFPt2)] For any $x\in \calX_{\nc}$, $D_x$ is potentially semistable.
        \item[(RFPt3)] For any $x\in \calX_{\nc}$, $\wt(\delta_{1,x}), \wt(\delta_{2,x})\in \Z$, $\wt(\delta_{1, x})<\wt(\delta_{2,x})$, and of multiplicity one as roots of the Sen polynomial of $\bigwedge^m D_x(\mathrm{std})$.
        \item[(RFPt4)] For any $x\in \calX_{\nc}$, there exists a filtration on $\bigwedge^m D_x(\mathrm{std})$ given by \[
            \calR_{L_x}(\delta_{1, x}) \subset D_x^{(\wedge m)} \subset D_x
        \] such that \[
            D_x/ D_x^{(\wedge m)} \cong \calR_{L_x}(\delta_{2, x}).
        \]  
        Here, $L_x$ is the residue field of $X$. 
        \item[(RFPt5)] For any $x\in \calX_{nc}$, the filtration in (RFPt3) agrees with the induced filtration on $\bigwedge^m D_x(\mathrm{std})$ by $\Fil_{\bullet}D_x$.
        \item[(RFPt6)] The subset $\calX_{\nc}$ is an accumulation subset. 
    \end{enumerate} 
\end{Definition}

\begin{Proposition}\label{Proposition: comparison between different notions of families}
    \begin{enumerate}
        \item[(i)] Let $(\calX, \calX_{\nc}, \lambda_{\calX}, \Det, \Pf, [\mu_{\HTS}], F_1, F_2)$ be a $P$-refined family of symplectic $\Gal$-representations. Then, for any $x\in \calX(\overline{\Q}_p)$ whose $\Gal$-representation $\rho_x: \Gal \rightarrow G(\overline{\Q}_p)$ associated with $(\Det_x, \Pf_x)$ is irreducible, there exists an affinoid neighbourhood $\calU = \Spa(A, A^{\circ})\subset \calX$ of $x$ and a $P$-refined family of $(\varphi, \Gamma)$-modules $(\calU, \calU(\overline{\Q}_p)\cap \calX_{\nc}, D, \delta_1, \delta_2)$ satisfying the following properties:  \begin{itemize}
            \item[(a)] For any $y\in \calU(\overline{\Q}_p)$, $D_y = \D_{\rig}^{\dagger}(\rho_y|_{\Gal_{\Q_p}})$, where $\rho_y: \Gal \rightarrow G(\overline{\Q}_p)$ is the $\Gal$-representation associated with $(\Det_y, \Pf_y)$. 
            \item[(b)] For any $y\in \calU(\overline{\Q}_p) \cap \calX_{\nc}$, \[
                \wt(\delta_{1, y}) = \sum_{j=1}^m \mu_{\HTS, j}(y) \quad \text{ and }\quad \wt(\delta_{2, y}) = \sum_{j=2n-m+1}^{2n} \mu_{\HTS, j}(y)
            \]
            and \[
                \delta_{i,y}(p) = F_i(y)
            \]
            for $i=1, 2$.
        \end{itemize}
        \item[(ii)] Let $(\calX, \calX_{\nc}, D, \delta_1, \delta_2)$ be a $P$-refined family of $(\varphi, \Gamma)$-modules. Then, for any $x\in \calX_{\nc}$, there exists a $P$-flag $\Fil_{\bullet}D_x$ such that $(\calX, \calX_{\nc}, \{\Fil_{\bullet}D_x\}_{x\in \calX_{\nc}}, \delta_1, \delta_2)$ is a pointwise $P$-refined family of $(\varphi, \Gamma)$-modules. 
    \end{enumerate}
\end{Proposition}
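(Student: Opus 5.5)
For (i), we follow the Bellaïche--Chenevier strategy (\cite[Lemma 4.3.7 and Sect. 4.2]{BC}), adapted to the symplectic setting. Fix $x\in\calX(\overline{\Q}_p)$ with $\rho_x$ irreducible. The first step is to lift $(\Det,\Pf)$ to an honest family over a neighbourhood. The local ring $\scrO_{\calX,x}$ is henselian; after enlarging the coefficient field we may assume its residue field is algebraically closed, or pass to the strict henselisation. Theorem \ref{Theorem: symplectic version of Chenevier's Thm A and B}(ii), via Lemma \ref{Lemma: equivalence of symplectic representations}, then yields a representation $\Gal\to\GSp_{2n}(\scrO_{\calX,x})$ with determinant $\Det$ and Pfaffian $\Pf$. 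Since $\Gal$ is topologically finitely generated (or $\Gal_{\Q,S}$ satisfies Mazur's finiteness condition), the Cayley--Hamilton algebra $\scrO(\calU)[\Gal]/\ker$ is Azumaya of rank $4n^2$ over a small affinoid $\calU=\Spa(A,A^\circ)\ni x$, by the argument of \cite[Lemma 4.3.7]{BC}. After shrinking $\calU$, we therefore obtain a continuous $\rho_\calU:\Gal\to\GSp_{2n}(A)$ on a free $A$-module, possibly after a finite étale base change that we then descend.

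The second step restricts $\rho_\calU$ to $\Gal_{\Q_p}$ and applies the family version of $\D^\dagger_{\rig}$ (Berger--Colmez, Kedlaya--Liu). Composing with Lemma \ref{Lemma: Tannakian viewpoint of Galois representations}, as in Corollary \ref{Corollary: from Galois to (phi, Gamma)}, gives $D\in G\Mod_A^{(\varphi,\Gamma)}$, and property (a) follows from the compatibility of $\D^\dagger_{\rig}$ with specialisation. Next, define $\delta_i$ as the unique continuous characters $\Q_p^\times\to A^\times$ with $\delta_i(p)=F_i$ and $\delta_i|_{\Z_p^\times}$ determined by the weight. Concretely, the Sen weight $\wt(\delta_1)$ must interpolate $\sum_{j\le m}\mu_{\HTS,j}$; we obtain it from the characteristic polynomial of the Sen operator on $\bigwedge^m D(\mathrm{std})$ attached to $\mu_{\HTS}$ (Remark \ref{Remark: comparison of HTS weight with classical definition}). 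The sum of the first $m$ entries is a well-defined analytic function because the $M_P$-conjugacy class is fixed. Shrinking $\calU$ if necessary, this function is the logarithmic derivative of a character, as in \cite[Sect. 4.2]{BC}.

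With these choices, (RF1)--(RF5) reduce to (RFGal1)--(RFGal5) applied to the exterior power $\bigwedge^m\mathrm{std}$. Multiplicity one in (RF3) comes from the strict inequalities in (RFGal3). The condition (RF4) is (RFGal4) with $\varphi_1=p^{\wt}\delta_1(p)$. Property (b) then holds by construction.

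For (ii), fix $x\in\calX_{\nc}$ and use (RF4): the $\varphi$-stable isotropic $M_1\subset\calD_{\pst}(D_x(\mathrm{std}))$ is $(\varphi,N,\Gal_{\Q_p})$-stable, and $M_1^\perp$, which contains $M_1$ because $M_1$ is isotropic, is also stable since the pairing is equivariant. The flag $0\subset M_1\subset M_1^\perp\subset\calD_{\pst}$ has stabiliser conjugate to $P_m$, so by Tannakian functoriality (Proposition \ref{Proposition: parabolics and unipotent radical from filtrations}) it defines a $P$-refinement. Proposition \ref{Proposition: P-refinements are equivalent to P-flags} turns this into a $P$-flag $\Fil_\bullet D_x$. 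Taking $\bigwedge^m$, the induced filtration has first step $\D^\dagger$ of the line $\det M_1$ and top graded piece from $\det(\calD_{\pst}/M_1^\perp)$. These are rank-one $(\varphi,\Gamma)$-modules with $\varphi$-eigenvalues $\varphi_1,\varphi_2$ and weights $\wt(\delta_{i,x})$. Berger's dictionary for rank-one modules then identifies them with $\calR(\delta_{1,x})$ and $\calR(\delta_{2,x})$, which gives (RFPt4) and (RFPt5); the remaining conditions are copied from (RF1)--(RF5).

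The main obstacle is the construction of $\delta_i$ in (i) as genuine analytic characters. The difficulty is that $\mu_{\HTS}$ is only a conjugacy class of cocharacters, and the partial sums of its weights must be shown to be analytic functions on $\calU$; we expect to handle this through the Sen polynomial of $\bigwedge^m D(\mathrm{std})$ and the accumulation property (RFGal5).
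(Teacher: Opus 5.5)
Your outline is the paper's own proof. In (i) you lift $(\Det,\Pf)\otimes\scrO_{\calX,x}$ by Theorem~\ref{Theorem: symplectic version of Chenevier's Thm A and B}(ii), spread out to an affinoid $\calU$, take $D=\D_{\rig}^{\dagger}(\rho_A|_{\Gal_{\Q_p}})$, and define $\delta_i$ by $\delta_i(p)=F_i$ together with the weight of the two extreme $m\times m$ blocks of $\mu_{\HTS}$. In (ii) you use $M_1\subset M_1^{\perp}$, its $(\varphi,N,\Gal_{\Q_p})$-stability, Berger's dictionary, and the Tannakian extension from $\mathrm{std}$; the paper checks $N$-stability via $\langle Nv,w\rangle+\langle v,Nw\rangle=0$, which is your equivariance. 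The Azumaya spreading-out step you add is fine, and it needs no finiteness hypothesis on $\Gal$, since the trace-dual-basis argument works for any group. That matters, because $\Gal_{\Q}$ is neither topologically finitely generated nor satisfies Mazur's condition. Also, ``the unique'' character in (i) should be ``a'': the weight fixes $\delta_i|_{\Z_p^\times}$ only up to a finite-order character, which is harmless for (RF1)--(RF5).

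The step that does not go through is your claim in (ii) that the rank-one pieces of $\bigwedge^m$ of the flag have weights $\wt(\delta_{i,x})$.
\begin{itemize}
\item By Berger's dictionary, the saturated submodule $\det D_1\subset\bigwedge^m D_x(\mathrm{std})$ attached to $\det M_1$ has Hodge--Tate weight equal to the sum of the jumps of the Hodge filtration restricted to $M_1$. This depends on the position of $M_1$ relative to the Hodge filtration, and (RF1)--(RF5) say nothing about that position.
\item For the families produced in (i), $\wt(\delta_1)=\sum_{j\le m}\mu_{\HTS,j}$ is the extreme value. The two weights then agree exactly when $M_1$ is transverse to the Hodge filtration, i.e.\ when the flag is $P$-non-critical in the sense of Definition~\ref{Definition: P-non-critical}; the same holds for the top graded piece.
\item At a point where this fails, $\det D_1\cong\calR_{L_x}(\delta')$ with $\delta'(p)p^{\wt(\delta')}=\varphi_1(x)$ but $\wt(\delta')\neq\wt(\delta_{1,x})$.
\item Since $\varphi_1(x)$ has multiplicity one, any saturated rank-one submodule isomorphic to $\calR_{L_x}(\delta_{1,x})$ would have $\calD_{\pst}$ equal to $\det M_1$, and so would coincide with $\det D_1$. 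Hence no $P$-flag at all satisfies (RFPt4).
\item Similarly, at potentially semistable but non-semistable points, the finite-order part of $\delta_{i,x}|_{\Z_p^\times}$, which (RF1)--(RF5) cannot see, has to match the inertial action on $\det M_i$.
\end{itemize}
What is missing is a hypothesis rather than an argument. You must require that at points of $\calX_{\nc}$ the flag given by $M_1$ is non-critical, with matching inertial types. In the eigenvariety application this is exactly what Lemma~\ref{Lemma: small slope implies Galois non-critical} supplies. The paper's proof also asserts (RFPt4)--(RFPt6) ``by construction'' and passes over the same point, so the defect lies in the statement as formulated. Still, as written your argument does not prove (ii).
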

\begin{proof}[Proof of Proposition \ref{Proposition: comparison between different notions of families} (i)]
    Let $x\in \calX(\overline{\Q}_p)$ such that $\rho_x: \Gal \rightarrow G(\overline{\Q}_p)$ is irreducible. Then, by Theorem \ref{Theorem: symplectic version of Chenevier's Thm A and B} (ii), there exists a $\Gal$-representation \[
        \widetilde{\rho}_x : \Gal \rightarrow G(\scrO_{\calX, x})
    \] whose associated symplectic determinant is $(\Det, \Pf)\otimes_{\scrO_{\calX}(\calX)} \scrO_{\calX, x}$. Therefore, there exists an affinoid neighbourhood $\calU = \Spa(A, A^{\circ}) \subset \calX$ of $x$ and a $\Gal$-representation \[
        \rho_A: \Gal \rightarrow G(A)
    \] such that $\rho_A \otimes_A \scrO_{\calX, x} = \widetilde{\rho}_x$ and whose associated symplectic determinant agrees with $(\Det, \Pf)\otimes_{\scrO_{\calX}(\calX)}A$. We then define \[
        D \coloneq \D_{\rig}^{\dagger}(\rho_A|_{\Gal_{\Q_p}}).
    \]
    
    The continuous characters $\delta_i$ are defined as follows: \[
        \delta_i(p) \coloneq F_i \quad \text{ and }\quad \delta_i|_{\Z_p^\times} = \left\{ \begin{array}{ll}
           -\dfrac{\log (\det([\gamma]_{st})_{1\leq s,t\leq m})}{\log \chi_{\cyc}(\gamma)},  & i=1  \\ \\
           -\dfrac{\log(\det([\gamma]_{st})_{2n-m+1 \leq s,t\leq 2n})}{\log \chi_{\cyc}(\gamma)},  & i=2 
        \end{array} \right. 
    \]
    for any representative $\mu_{\HTS}$ in the $M_P$-conjugacy class $[\mu_{\HTS}]$. Here, recall that $\mu_{\HTS}(\gamma) = ([\gamma], \gamma)$ with $[\gamma]\in G(\scrO_{\calX_{\Q_p^{\cyc}}}(\calX_{\Q_p^{\cyc}}))$.  Then, by construction, (a) and (b) hold. 

    To conclude the proof, we verify (RF1) -- (RF5). However, these follow easily from the construction and (RFGal1)-(RFGal5).
\end{proof}

\begin{proof}[Proof of Proposition \ref{Proposition: comparison between different notions of families} (ii)]
    We have to construct the $P$-flags $\Fil_{\bullet}D_x$ for each $x\in \calX_{\nc}$. From (RF4), we have a $(\varphi, N, \Gal_{\Q_p})$-equivariant inclusion \[
        M_1 \subset \calD_{\pst}(\mathrm{std}\circ \rho_x|_{\Gal_{\Q_p}}).
    \]
    Let $M^{\perp}_1$ be the subspace in $\calD_{\pst}(\mathrm{std}\circ \rho_x|_{\Gal_{\Q_p}})$ that is perpendicular to $M_1$ with respect to the symplectic pairing \eqref{eq: defining symplectic pairing}. Then, since $M_1$ is isotropic, we have \begin{equation}\label{eq: construction of pointwise filtration}
        M_1 \subset M_1^{\perp} \subset \calD_{\pst}(\mathrm{std}\circ \rho_x|_{\Gal_{\Q_p}})
    \end{equation} with \[
        \calD_{\pst}(\mathrm{std}\circ \rho_x|_{\Gal_{\Q_p}}) / M_1^{\perp} \cong M_2.
    \]
    We claim that \eqref{eq: construction of pointwise filtration} is $(\varphi, N, \Gal_{\Q_p})$-stable. Since both the Frobenius and $\Gal_{\Q_p}$-actions are given by automorphisms, it is easy to see that $M_1^{\perp}$ is $\varphi$- and $\Gal_{\Q_p}$-stable. To check that $M_1^{\perp}$ is monodromy-stable, note that \[
        \langle Nv, w \rangle + \langle v, Nw\rangle = 0
    \]
    for all $v, w\in \calD_{\pst}(\mathrm{std}\circ \rho_x|_{\Gal_{\Q_p}})$ and $\langle \cdot, \cdot\rangle$ is the induced symplectic pairing by \eqref{eq: defining symplectic pairing}. Hence, for any $v\in M_1$ and $w\in M_1^{\perp}$, we have \[
        \langle v, Nw \rangle = -\langle Nv, w\rangle = 0    
    \]
    since $M_1$ is $N$-stable; thus $Nw\in M_1^{\perp}$ and we conclude the claim. 
    
    Then, we let $\Fil_{\bullet}D_x(\mathrm{std})$ be the corresponding flag of \eqref{eq: construction of pointwise filtration} on $D_x(\mathrm{std})$. However, by the proof of Lemma \ref{Lemma: P-non-criticality can be checked on faithful representations}, we have a $P$-flag $\Fil_{\bullet}D_{x}$. Then, by construction, (RFPt4) -- (RFPt6) are satisfied. 
\end{proof}

\begin{Remark}\label{Remark: when the parabolic subgroup is not maximal}
    One can generalise the discussions above to more general parabolic subgroups $P \subset G$. For example, if $P = \bigcap_{j=1}^t P_{m_j}$ for some maximal parabolic subgroups $P_{m_j}$'s, then a $P$-refined family of symplectic $\Gal$-representations of dimension $2n$ is a datum \[
        (\calX, \calX_{nc}, \lambda_{\calX}, \Det, \Pf, [\mu_{\HTS}], \{F_1^{(m_j)}, F_2^{(m_j)}\}_{j=1, ..., t})
    \] where $(\calX, \calX_{nc}, \lambda_{\calX}, \Det, \Pf, [\mu_{\HTS}], F_1^{(m_j)}, F_2^{(m_j)})$ is a $P_{m_j}$-refined family of symplectic $\Gal$-representations. When $P=B$ is the fixed Borel subgroup, this definition is then similar to \cite[Definition 4.2.3]{BC}. One can similarly generalise the notation of $P$-refined families of $(\varphi, \Gamma)$-modules and pointwise $P$-refined families of $(\varphi, \Gamma)$-modules. We leave the details to the interested readers. 
\end{Remark}

\subsection{Deformation problems}\label{subsection: deformation}
In this subsection, we formulate the deformation problems of our interest. To this end, we fix a Galois representation \[
    \rho: \Gal_{\Q} \rightarrow \GSp_{2n}(L),
\]
where $L$ is a (large enough) finite extension of $\Q_p$ such that it contains all eigenvalues of the Frobenius maps, such that \begin{itemize}
    \item $\rho$ is absolutely irreducible;
    \item $\rho$ is unramified at all most all primes $\ell$;
    \item the local representation $\rho_p \coloneq \rho|_{\Gal_{\Q_p}}$ is de Rham and has Hodge--Tate cocharacter $\mu_{\rho, \HT} = (a_1, ..., a_n; a_0)\in \Z^{n+1}$ as \[
        (a_1, ..., a_n; a_0) : \bbG_m \rightarrow \GSp_{2n}, \quad u \mapsto \diag(u^{a_1}, ..., u^{a_n}, u^{a_0-a_n}, ..., u^{a_0-a_1})
    \] 
    such that $a_1< ...< a_m < a_{m+1}< ...< a_n< a_0-a_n< ...< a_0-a_{m+1} < a_0-a_{m}< ...< a_0-a_1$.\footnote{ Note that this is stronger assumption then before. }
\end{itemize}
We fix a finite set of primes $S$ containing $p$ and the primes at which $\rho$ is ramified. 

We again work with the Borel subgroup $B \subset G = \GSp_{2n}$ of upper triangular matrices and fix a maximal parabolic subgroup $P = P_m$ (with $m\leq n$) containing $B$. Let $M_P$ be the Levi decomposition of $P$ and $Z_{M_P}$ be the centre of $M_P$; it is easy to see that \[
    Z_{M_P} \cong \bbG_m^2, \quad \diag(a_1\one_m, a_2\one_{2n-2m}, a_1^{-1}a_2^2\one_m) \mapsfrom (a_1, a_2)
\] We further assume $\rho$ satisfies the following properties: \begin{enumerate}[leftmargin=0.63in]
    \item[($P$-REG)] There exists a $P$-refinement $\Fil_{\bullet}\D_{\pst}(\rho_p)$ on $\D_{\pst}(\rho_p)$ such that, if $M_1$ (resp., $M_2$) is the $m$-dimensional $(\varphi, N, \Gal_{\Q_p})$-stable isotropic subspace (resp., quotient) given by $\Fil_{\bullet} \D_{\pst}(\mathrm{std}\circ \rho_p)$, $\det M_i$ has Frobenius eigenvalue $\varphi_i$ which are distinct with multiplicity one in $\bigwedge^m \D_{\pst}(\mathrm{std}\circ \rho_p)$. 
    \item[($P$-NCR)] Let $\Fil_{\bullet}\D_{\rig}^{\dagger}(\rho_p)$ be the corresponding $P$-flag of $\Fil_{\bullet}\D_{\pst}(\rho_p)$ on $\D_{\rig}^{\dagger}(\rho_p)$ (Proposition \ref{Proposition: P-refinements are equivalent to P-flags}). The $P$-flagged $(\varphi, \Gamma)$-module with $G$-structure $(\D_{\rig}^{\dagger}(\rho), \Fil_{\bullet}\D_{\rig}^{\dagger}(\rho))$ is $P$-non-critical (Definition \ref{Definition: P-non-critical}).
\end{enumerate}

Let $\Ar_L$ be the category of local artinian $\Q_p$-algebras whose local field is isomorphic to $L$. In what follows, we shall consider the following deformation problems: 

\paragraph{Local deformation problems at good primes.} For any $\ell \not \in S$, we consider the unramified deformation problem \[
    \Def_{\rho_{\ell}}^{\unr} : \Ar_L \rightarrow \Set, \quad A \mapsto \left\{ \rho_A: \Gal_{\Q_{\ell}} \rightarrow \GSp_{2n}(A): \begin{array}{l}
        \rho_A \otimes_{A}L \cong \rho_{\ell}  \\
        \rho_A\text{ is unramified} 
    \end{array} \right\}/\cong.
\]
The tangent space of $\Def_{\rho_{\ell}}^{\unr}$ is given by \[
    W_{\ell}^{\unr} \coloneq \Def_{\rho_{\ell}}^{\unr}(L[\epsilon])
\] with $\epsilon^2 = 0$. It is well-known that \[
    W_{\ell}^{\unr} = H^1_{\unr}(\Q_{\ell}, \ad \rho_{\ell}) \coloneq \ker\left( H^1(\Q_{\ell}, \ad \rho_{\ell}) \rightarrow H^1(I_{\ell}, \ad \rho_{\ell}) \right),
\]
where $I_{\ell}$ is the inertia subgroup at $\ell$ and $\ad \rho_{\ell}$ is the adjoint representation.

\paragraph{Local deformation problems at bad primes.} For any $\ell \in S \smallsetminus \{p\}$, we consider the minimal deformation problem \[
    \Def_{\rho_{\ell}}^{\min} : \Ar_L \rightarrow \Set, \quad A \mapsto \left\{ \rho_A: \Gal_{\Q_{\ell}} \rightarrow \GSp_{2n}(A): \begin{array}{l}
        \rho_A \otimes_{A} L \cong \rho_{\ell}  \\
        \rho_A|_{I_{\ell}} \cong \rho_{\ell}|_{I_{\ell}}\otimes_{\Q_p}A 
    \end{array} \right\}/\cong.
\]
The tangent space of $\Def_{\rho_{\ell}}^{\min}$ is given by \[
    W_{\ell}^{\min} \coloneq \Def_{\rho_{\ell}}^{\min}(L[\epsilon]) \subset H^1(\Q_{\ell}, \ad \rho_{\ell})
\]
with $\epsilon^2 = 0$.

\begin{Lemma}\label{Lemma: need not worry about minimal deformation}
    Let $\ell \in S \smallsetminus \{p\}$.  Assume that $\rho_{\ell}|_{I_{\ell}}$ is irreducible and $p \nmid \ell^{\mathrm{lcm}(1, ..., 2n)}-1$. Then, for any $A\in \Ar_L$ and any $\rho_A : \Gal_{\Q_{\ell}} \rightarrow \GSp_{2n}(A)$ such that $\rho_A \otimes_{\Q_p} L \cong \rho_{\ell}$, we have \[
        \rho_A(I_{\ell}) = \rho_{\ell}(I_{\ell}).
    \]
    In this case, $W_{\ell}^{\min} = H^1(\Q_{\ell}, \ad \rho_{\ell})$.
\end{Lemma}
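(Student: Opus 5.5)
The plan is to exploit the structure of tame inertia together with the fact that deformations over artinian rings differ from $\rho_\ell$ by elements of a pro-$p$ group. Write $\ker(\GSp_{2n}(A)\to\GSp_{2n}(L)) =: K_A$. Since $A$ is local artinian with maximal ideal $\frakm_A$ nilpotent, $K_A = 1+M_{2n}(\frakm_A)\cap \GSp_{2n}(A)$ is a unipotent group admitting a finite filtration $K_A^{(i)} = 1+M_{2n}(\frakm_A^i)\cap\GSp_{2n}(A)$ whose graded pieces are $L$-vector spaces. In particular $K_A$ is uniquely divisible and has no non-trivial elements of finite order, and every homomorphism from a finite group of order prime to $p$ (indeed from any finite group, as we are in characteristic $0$) into $K_A$ is trivial.

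First I would analyse $\rho_\ell(I_\ell)$. Let $P_\ell\subset I_\ell$ be wild inertia and $I_\ell/P_\ell\cong\prod_{q\neq\ell}\Z_q$. By Grothendieck's monodromy theorem, $\rho_\ell|_{I_\ell}$ is quasi-unipotent; since $\rho_\ell|_{I_\ell}$ is irreducible, the unipotent (monodromy) part must be trivial (a non-zero $N$ would give an $I_\ell$-stable proper subspace $\ker N$), so $\rho_\ell(I_\ell)$ is finite. Irreducibility of a representation of $I_\ell$ with $\rho_\ell(I_\ell)$ finite, combined with the structure of $I_\ell$ and the fact that the restriction to $\Gal_{\Q_\ell}$ has dimension $2n$, shows that the image of tame inertia acts through a cyclic group whose eigenvalues are permuted by Frobenius (via $x\mapsto x^\ell$) in orbits of length at most $2n$; hence the tame image has order dividing $\ell^{j}-1$ for some $j\le 2n$, so dividing $\ell^{\mathrm{lcm}(1,\dots,2n)}-1$. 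Together with wild inertia being pro-$\ell$ with $\ell\ne p$, the hypothesis $p\nmid \ell^{\mathrm{lcm}(1,\dots,2n)}-1$ gives that $\rho_\ell(I_\ell)$ is a finite group of order prime to $p$.

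Next, given $\rho_A$ lifting $\rho_\ell$, consider $\rho_A|_{I_\ell}$. Its reduction is $\rho_\ell|_{I_\ell}$ with finite image $H$ of prime-to-$p$ order, so $\rho_A(I_\ell)$ is an extension of $H$ by the subgroup $\rho_A(I_\ell)\cap K_A$. I would show $\rho_A(\ker\rho_\ell|_{I_\ell})\subset K_A$ is trivial: $\ker(\rho_\ell|_{I_\ell})$ is an open subgroup of $I_\ell$, and its image in $K_A$ is a continuous image factoring through the maximal pro-$p$ quotient (as $K_A$ is, by the filtration, built from $\Q_p$-vector spaces and the continuous image of a profinite group there is compact); the maximal pro-$p$ quotient of an open subgroup of $I_\ell$ is a quotient of $\Z_p(1)$, on which Frobenius acts by $\ell$. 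Here is the main obstacle: one must kill this possible $\Z_p(1)$-contribution. I would use that $\rho_A(I_\ell)$ is normalised by $\rho_A(\Frob_\ell)$ and argue on the graded pieces: the induced map $t_p\to \mathrm{gr}^i K_A\subset \ad\rho_\ell\otimes\frakm_A^i/\frakm_A^{i+1}$ is $H$-equivariant and Frobenius-twisted, landing in $(\ad\rho_\ell)^{\rho_\ell(I')}(-1)$-type invariants; irreducibility of $\rho_\ell|_{I_\ell}$ (Schur) makes the $I_\ell$-invariants of $\ad\rho_\ell$ scalar, and scalars in $\mathfrak{gsp}$ commute, so Frobenius acts trivially there, contradicting the twist by $\ell\ne1$; inducting on $i$ kills the pro-$p$ part. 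Then $\rho_A(I_\ell)$ is finite of prime-to-$p$ order, and by the vanishing of $H^1$ and $H^2$ of such groups with coefficients in the $L$-vector spaces $\mathrm{gr}^iK_A$ (Schur--Zassenhaus), $\rho_A|_{I_\ell}$ is $K_A$-conjugate to $\rho_\ell|_{I_\ell}\otimes A$, i.e.\ $\rho_A(I_\ell)=\rho_\ell(I_\ell)$ after conjugation.

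Finally, this shows every deformation over $L[\epsilon]$ satisfies the minimality condition, so $W_\ell^{\min}=\Def_{\rho_\ell}(L[\epsilon])=H^1(\Q_\ell,\ad\rho_\ell)$.
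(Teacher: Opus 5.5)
Your argument is correct, but it eliminates the $\Z_p(1)$-part of inertia differently from the paper.

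\textbf{The paper's route.} Following Genestier--Tilouine, the paper reduces to showing $\rho_A(\xi)=1$ for a generator $\xi$ of $\Z_p(1)\subset I_\ell$. It first uses the Frobenius relation $\xi\sim\xi^{\ell}$, together with $p\nmid \ell^{\mathrm{lcm}(1,\dots,2n)}-1$, to show that every eigenvalue of $\rho_A(\xi)$ equals $1$. It then appeals only to irreducibility of $\rho_\ell|_{I_\ell}$ to conclude that the unipotent element $\rho_A(\xi)$ is trivial.

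\textbf{Your route.}
\begin{itemize}
\item You get finiteness of $\rho_\ell(I_\ell)$ from Grothendieck's monodromy theorem.
\item You kill $\rho_A(\ker\rho_\ell|_{I_\ell})\subset K_A$ by a d\'evissage along the $\frakm_A$-adic filtration. Schur's lemma forces each graded contribution to be scalar, and the Frobenius twist by $\ell\neq 1$ then forces it to vanish.
\item Rigidity of lifts of a finite-group representation gives $K_A$-conjugacy to $\rho_\ell|_{I_\ell}\otimes A$, which is exactly the condition defining $\Def^{\min}_{\rho_\ell}$.
\end{itemize}

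\textbf{What your route buys.} It is longer, but the Frobenius input sits exactly where it is needed. Irreducibility of $\rho_\ell|_{I_\ell}$ by itself cannot exclude a nontrivial scalar unipotent $\rho_A(\xi)\in 1+\frakm_A$. For example, $\rho_\ell|_{I_\ell}\cdot(1+\epsilon\, t_p)$ over $L[\epsilon]$ is a lift of $\rho_\ell|_{I_\ell}$ as an $I_\ell$-representation with $\rho_A(\xi)\neq 1$. Its $\rho_A(\xi)$-fixed part is $\epsilon A^{2n}$, which is $I_\ell$-stable without making $\rho_\ell|_{I_\ell}$ reducible. Only the relation with Frobenius, together with torsion-freeness of $1+\frakm_A$, rules this out. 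Your graded-piece argument does precisely that, so it supplies the step the paper's last paragraph passes over too quickly.

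\textbf{The hypothesis is not needed on your route.} Your argument never really uses $p\nmid\ell^{\mathrm{lcm}(1,\dots,2n)}-1$. All coefficient groups are $L$-vector spaces, so higher cohomology of any finite group vanishes, and ``prime to $p$'' can be replaced by ``finite'' throughout. Two consequences:
\begin{itemize}
\item Your compressed tame-order argument becomes dispensable. As written it would in any case need a prime-to-$\ell$ lift of the tame generator (via Schur--Zassenhaus) when wild inertia acts nontrivially.
\item The Schur step uses absolute irreducibility of $\rho_\ell|_{I_\ell}$. This is harmless since $L$ is taken large enough.
\end{itemize}
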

\begin{proof}
    The proof is an easy generalisation of \cite[Lemma 4.3.6]{GT-TWGSp4}.

    Since $\ker(\GSp_{2n}(A) \rightarrow \GSp_{2n}(L))$ is locally pro-$p$, it is enough to show that $\rho_A(I_{\ell})$ is finite of order prime to $p$. Note that $I_{\ell}$ sits inside a split short exact sequence \[
        1 \rightarrow I_{\ell}^{(\ell)} \rightarrow I_{\ell} \rightarrow \prod_{q\neq \ell}\Z_q(1) \rightarrow 1,
    \]
    where $I_{\ell}^{(\ell)}$ is the pro-$\ell$ Sylow subgroup of $I_{\ell}$, which is the Galois group of the maximal tamely ramified extension of $\Q_{\ell}$. Thus, it is enough to show that $\rho_A(\Z_p(1))$ is trivial. 
    
    To show this, we first claim that $\rho_A(\xi)$ is  unipotent for any topological generator $\xi \in \Z_p(1)$. Suppose it is not unipotent, then $\rho_A(\xi)$ has an eigenvalue $\alpha \neq 1$. by conjugating with the Frobenius element $\Frob_{\ell}$, we see that $\rho_A(\xi)$ and $\rho_A(\xi^{\ell})$ have the same eigenvalues. By iterating this process, one sees that $\{\alpha, \alpha^{\ell}, ..., \alpha^{\ell^{2n-1}}\}$ is a subset of eigenvalues of $\rho_{A}(\xi)$ and $\{\alpha^{\ell}, \alpha^{\ell^2}, ..., \alpha^{\ell^{2n}}\}$ is a subset of eigenvalues of $\rho_A(\xi^{\ell})$. Therefore, we learn that \[
        \alpha = \alpha^{\mathrm{lcm}(1, ..., 2n)}.
    \] 
    Since $\xi$ is a topological generator of $\Z_p(1)$, $\alpha$ must be a $p$-power root of unity and so \[
        p | \mathrm{lcm}(1, ..., 2n)-1,
    \]
    which contradicts to our assumption. 

    To conclude the proof, suppose $\rho_A(\xi)$ is not trivial, it would then fix an $A$-submodule of $A^{2n}$, which is stable under $I_{\ell}$. Hence, $\rho_A|_{I_{\ell}}$ is reducible. Since $\rho_A|_{I_{\ell}} \otimes_{A}L \cong \rho_{\ell}|_{I_{\ell}}$, we see that $\rho_{\ell}|_{I_{\ell}}$ is reducible, which contradicts to the assumption.  
\end{proof}

\begin{Remark}\label{Remark: minimal deformation is no problem}
    One easily sees that $\GSp_{2n}$ plays no special role in the proof of Lemma \ref{Lemma: need not worry about minimal deformation}. The statement and the proof generalise verbatim to any $d$-dimensional Galois representations. 
\end{Remark}

\paragraph{Local deformation problems at $p$.}  For $\ell = p$, we consider the following deformation problems:
\begin{enumerate}
    \item[(I)] \emph{The small $P$-flagged deformation} is a functor \[
        \Def_{\rho_p, \Fil_{\bullet}}^{\mathrm{small}}: \Ar_L \rightarrow \Set
    \] 
    sending each $A$ to the isomorphism classes of pairs $(\rho_A, \Fil_{\bullet} \bigwedge^m\D_{\rig}^{\dagger}(\mathrm{std} \circ \rho_A))$ such that \begin{itemize}
        \item $\rho_A: \Gal_{\Q_p} \rightarrow \GSp_{2n}(A)$ and $\rho_A \otimes_{A} L \cong \rho_p$; 
        \item let $[\mu_{\rho_A, \HTS}]$ be the $M_P$-conjugacy class of the Hodge--Tate--Sen cocharacter for $\D_{\rig}^{\dagger}(\rho_A)$, then it has a representative of the form \[
            \gamma \mapsto (\diag(1, \mu_{\rho_A, \HTS, 2}(\gamma), \mu_{\rho_A, \HTS, 3}(\gamma), \mu_{\rho_A, \HTS, 4}(\gamma)), \gamma)
        \]
        where $\mu_{\rho_A, \HTS, i}: \Gamma \rightarrow (A\widehat{\otimes}_{\Q_p}\Q_p^{\cyc})^\times$ is a continuous group homomorphism.
        \item The $M_P$-conjugacy class $[\frac{\mu_{\rho_A, \HTS}}{\mu_{\rho, \HT}}]$ factors through the centre $Z_{M_P}$ of $M_P$;
        \item $\Fil_{\bullet} \bigwedge^m\D_{\rig}^{\dagger}(\mathrm{std} \circ \rho_A)$ is a flag on $\bigwedge^m\D_{\rig}^{\dagger}(\mathrm{std} \circ \rho_A)$ such that  $\Fil_{\bullet} \bigwedge^m\D_{\rig}^{\dagger}(\mathrm{std} \circ \rho_A) \otimes_{\Q}L \cong \Fil_{\bullet} \bigwedge^m \D_{\rig}^{\dagger}(\mathrm{std} \circ\rho_p)$, where $\Fil_{\bullet} \bigwedge^m \D_{\rig}^{\dagger}(\mathrm{std} \circ\rho_p)$ is the flag induced by the $P$-flag $\Fil_{\bullet} \D_{\rig}^{\dagger}(\rho_p)$.
    \end{itemize}
    \item[(II)] \emph{The de Rham deformation} is the functor \[
        \Def_{\rho_p, g} : \Ar_L \rightarrow \Set, \quad A \mapsto \left\{ \rho_A: \Gal_{\Q_p} \rightarrow \GSp_{2n}(A): \begin{array}{l}
            \rho_A \otimes_{A}L \cong \rho_p  \\
            \rho_A \text{ is de Rham}
        \end{array} \right\} /\cong.
    \]
    Then, the tangent space of $\Def_{\rho_p, \dR}$ is given by \[
        W_{p}^{\dR} \coloneq \Def_{\rho_p, \dR}(L[\epsilon])
    \]
    with $\epsilon^2 = 0$. It is well-known that \[
        W_p^{\dR} = H^1_g(\Q_p, \ad \rho_p) = \ker\left( H^1(\Q_p, \ad \rho_p) \rightarrow H^1(\Q_p, \ad \rho_p \otimes_{\Q_p} B_{\dR}) \right),
    \]
    where $B_{\dR}$ is Fontaine's de Rham period ring. 
    \item[(III)] \emph{The small $P$-flagged de Rham deformation} is a functor \[
        \Def_{\rho_p, \dR}^\mathrm{small}: \Ar_L \rightarrow \Set
    \]
    sending each $A$ to the isomorphism classes of Galois representations $\rho_A: \Gal_{\Q_p} \rightarrow \GSp_{2n}(A)$ such that \begin{itemize}
        \item $\rho_A \otimes_{A}L \cong \rho_p$;  
        \item $\rho_A$ is de Rham;
         \item let $[\mu_{\rho_A, \HTS}]$ be the $M_P$-conjugacy class of the Hodge--Tate--Sen cocharacter for $\D_{\rig}^{\dagger}(\rho_A)$, then it has a representative of the form \[
            \gamma \mapsto (\diag(1, \mu_{\rho_A, \HTS, 2}(\gamma), \mu_{\rho_A, \HTS, 3}(\gamma), \mu_{\rho_A, \HTS, 4}(\gamma)), \gamma)
        \]
        where $\mu_{\rho_A, \HTS, i}: \Gamma \rightarrow (A\widehat{\otimes}_{\Q_p}\Q_p^{\cyc})^\times$ is a continuous group homomorphism.
        \item The $M_P$-conjugacy class $[\frac{\mu_{\rho_A, \HTS}}{\mu_{\rho, \HT}}]$ factors through the centre $Z_{M_P}$ of $M_P$;
    \end{itemize}
    Then, the tangent space of $\Def_{\rho_p, \dR}^{\mathrm{small}}$ is given by \[
        W_p^{\dR, \mathrm{small}} \coloneq \Def_{\rho_p, \dR}^{\mathrm{small}}(L[\epsilon])
    \] with $\epsilon^2 = 0$. 
\end{enumerate}

\begin{Lemma}\label{Lemma: P-flagged deformation is a subfunctor of unconditional deformation}
    Let $\Def_{\rho_p}$ be the unconditional deformation problem for $\rho_p$. Then, the forgetful functor \[
        \Def_{\rho_p, \Fil_{\bullet}}^{\mathrm{small}}(A) \rightarrow \Def_{\rho_p}(A), \quad (\rho_A, \Fil_{\bullet}\bigwedge^m\D_{\rig}^{\dagger}(\mathrm{std}\circ \rho_A)) \mapsto \rho_A
    \]
    makes $\Def_{\rho_p, \Fil_{\bullet}}^{\mathrm{small}}$ a subfunctor of $\Def_{\rho_p}$.
\end{Lemma}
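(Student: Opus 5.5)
To prove that the forgetful map $(\rho_A, \Fil_{\bullet}\bigwedge^m\D_{\rig}^{\dagger}(\mathrm{std}\circ\rho_A)) \mapsto \rho_A$ identifies $\Def_{\rho_p, \Fil_{\bullet}}^{\mathrm{small}}$ with a subfunctor of $\Def_{\rho_p}$, we must show that it is injective for every $A\in \Ar_L$. In other words, a deformation $\rho_A$ carries at most one flag $\Fil_{\bullet}\bigwedge^m\D_{\rig}^{\dagger}(\mathrm{std}\circ\rho_A)$ lifting the given flag on $\bigwedge^m\D_{\rig}^{\dagger}(\mathrm{std}\circ\rho_p)$. Functoriality in $A$ is clear, since both the flag and $\rho_A$ base change compatibly. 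The strategy is the standard one from \cite[Proposition 2.5.8]{BC} and \cite{Bergdall-paraboline}: reduce uniqueness of the flag to the vanishing of a $\Hom$-space of $(\varphi,\Gamma)$-modules over $L$, and obtain that vanishing from ($P$-REG).

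Set $D_A \coloneq \bigwedge^m \D_{\rig}^{\dagger}(\mathrm{std}\circ\rho_A)$, a free $R_{\rig, A}$-module. A flag lifting the residual one is determined by its first step $\Fil_1 \subset D_A$. This step is a saturated rank-one $(\varphi,\Gamma)$-submodule reducing to $\calR_L(\delta_1)\subset D_L \coloneq \bigwedge^m\D_{\rig}^{\dagger}(\mathrm{std}\circ\rho_p)$, and the remaining steps are then forced (dually, via the symplectic pairing, or as the corresponding quotient). Suppose $F, F'\subset D_A$ are two such submodules. Consider the composite $F \hookrightarrow D_A \to D_A/F'$.

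\begin{enumerate}
\item \textbf{Vanishing over $L$.} Show that $\Hom_{(\varphi,\Gamma)}(\calR_L(\delta_1), D_L/\calR_L(\delta_1)) = 0$. A nonzero map would give a nonzero $\varphi$-eigenvector in $\calD_{\pst}$ (or $\calD_{\cris}$, after twisting by $\delta_1^{-1}$) of $D_L/\calR_L(\delta_1)$ with eigenvalue $\varphi_1$. This uses the potentially semistable structure together with Berger's equivalence (Proposition \ref{Proposition: P-refinements are equivalent to P-flags}), as in \cite[Sect. 2.4]{BC}. But ($P$-REG) says that $\varphi_1$ occurs with multiplicity one in $\bigwedge^m\D_{\pst}(\mathrm{std}\circ\rho_p)$, and that occurrence is used up by $\det M_1$, i.e.\ by $\calR_L(\delta_1)$. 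Hence the map is zero.

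\item \textbf{Dévissage to $A$.} Filter $A$ by powers of its maximal ideal. Since $F$ and $D_A/F'$ are successive extensions of $\calR_L(\delta_1)$ and $D_L/\calR_L(\delta_1)$ respectively, step 1 and left exactness of $\Hom$ give $\Hom(F, D_A/F')=0$. Thus $F\subset F'$, and by symmetry $F'\subset F$. Since both are saturated of the same rank, $F=F'$, and then the whole flags agree.
\end{enumerate}

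The main obstacle is step 1. We need the multiplicity-one condition on Frobenius eigenvalues to exclude nonzero homomorphisms into the quotient, and this requires care when the weights are not in a nice position. With an integer weight gap, $\calR_L(\delta_1)$ may map into a $t^k$-twisted copy inside the quotient, so one must pass to $\calD_{\pst}$ of a saturation. If needed, I would reduce first to the case where the quotient is de Rham and compare Frobenius eigenvalues after twisting by $\delta_1^{-1}$ and powers of $t$. The regularity in ($P$-REG) is exactly what rules out a coincidence here.
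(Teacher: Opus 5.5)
Your proposal is correct and follows the paper's route. The paper also reduces to the uniqueness of the rank-one step and proves $\Hom(\calR_L(\delta_1), \bigwedge^m\D_{\rig}^{\dagger}(\mathrm{std}\circ\rho_p)/D_1)=0$ from ($P$-REG), arguing via the preimage $D''$, an extension of $\calR_L(\delta_1)$ by itself whose $\calD_{\pst}$ would have Frobenius characteristic polynomial $(X-\varphi_1)^2$. It then cites \cite[Lemma 2.3.7]{BC} for the d\'evissage to $A$ that you carry out by hand. The saturation issue you flag is harmless because $\calD_{\pst}$ only depends on $D[1/t]$; equivalently, after twisting by $\delta_1^{-1}$ the image of $1$ is already a $\varphi$-fixed, $\Gamma$-invariant vector in $\calD_{\cris}$, so multiplicity one is all that is needed.
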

\begin{proof}
    We have to show that, if a lift $\Fil_{\bullet}\bigwedge^m\D_{\rig}^{\dagger}(\mathrm{std}\circ \rho_A)$ of $\Fil_{\bullet}\bigwedge^m\D_{\rig}^{\dagger}(\mathrm{std}\circ \rho_p)$ exists, then it is unique. 

    Consider the $(\varphi, \Gamma)$-modules $\D_{\rig}^{\dagger}(\mathrm{std}\circ \rho_p)$ and $\D_{\rig}^{\dagger}(\mathrm{std}\circ \rho_A)$. We denote by $D_1$ (resp., $\widetilde{D}_1$) the rank-$1$ $(\varphi, \Gamma)$-submodules of $\bigwedge^m \D_{\rig}^{\dagger}(\mathrm{std}\circ \rho_p)$ (resp., $\bigwedge^m \D_{\rig}^{\dagger}(\mathrm{std}\circ\rho_A)$) given by the filtration. To show the lemma, we have to show that $\widetilde{D}_1$ is the unique $(\varphi, \Gamma)$-submodule of $\bigwedge^m \D_{\rig}^{\dagger}(\mathrm{std}\circ \rho_A)$ lifting $D_1 \subset \bigwedge^m \D_{\rig}^{\dagger}(\mathrm{std}\circ \rho_p)$.

    By the classification of rank-$1$ $(\varphi, \Gamma)$-modules, we know that $D_1 \cong \calR_L(\delta_1)$ for some continuous character $\delta_1: \Q_p^\times \rightarrow L$. In fact, under the assumptions of ($P$-REG) and ($P$-NCR), we can write down $\delta$ explicitly as following. Recall $\varphi_1$ is the Frobenius eigenvalue of $\det M_1 \subset \bigwedge^m \D_{\pst}(\mathrm{std}\circ \rho_p)$; and note that $\calD_{\pst}(D_1) = \det M_1$ (\cite[Théorèm A]{Berger-phiN}). 
    Therefore, $\delta_1$ is defined by \[
        \delta_1(p) = \varphi_1 \quad \text{ and }\quad \delta_1(\gamma) = \gamma^{-\sum_{i=1}^m a_i} 
    \] for all $\gamma \in \Z_p^\times$.

    We claim that \[
        \Hom(\calR_L(\delta_1), \bigwedge^m \D_{\rig}^{\dagger}(\mathrm{std}\circ \rho_p)/D_1) = 0.
    \]
    Indeed, if there exists a non-zero morphism $\Psi$, define \[
        D'' \coloneq \text{preimage of $\image(\Psi)$ in $\bigwedge^m\D_{\rig}^{\dagger}(\mathrm{std}\circ \rho_p)$}.
    \]
    Note that $\image(\Psi)$ is isomorphic to $\calR_L(\delta_1)$, thus $D''$ sits in the short exact sequence \[
        0 \rightarrow D_1 \cong \calR_L(\delta_1) \rightarrow D'' \rightarrow \calR_L(\delta_1) \cong \image(\Psi) \rightarrow 0.
    \]
    However, consider $\calD_{\pst}(D'')$, the Frobenius has characteristic polynomial $(X-\varphi_1)^2$, which contradict to ($P$-REG). 

    To finish the proof, by applying \cite[Lemma 2.3.7]{BC}, we see that $\widetilde{D}_1$ is the unique $(\varphi, \Gamma)$-submodule of $\bigwedge^m \D_{\rig}^{\dagger}(\mathrm{std}\circ \rho_A)$ lifting $D_1$. 
\end{proof}

\begin{Lemma}\label{Lemma: relationships between de Rham deformations}
    Keep the notations as above. The deformation problem $\Def_{\rho_p, \dR}^{\mathrm{small}}$ is naturally a subfunctor of $\Def_{\rho_p, \Fil_{\bullet}}^{\mathrm{small}}$.
\end{Lemma}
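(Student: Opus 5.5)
To realise $\Def_{\rho_p, \dR}^{\mathrm{small}}$ inside $\Def_{\rho_p, \Fil_{\bullet}}^{\mathrm{small}}$, I will show that every de Rham lift $\rho_A\in \Def_{\rho_p, \dR}^{\mathrm{small}}(A)$ admits a lift of the flag $\Fil_{\bullet}\bigwedge^m \D_{\rig}^{\dagger}(\mathrm{std}\circ\rho_p)$. By Lemma \ref{Lemma: P-flagged deformation is a subfunctor of unconditional deformation} such a lift is unique, so $\rho_A \mapsto (\rho_A, \Fil_{\bullet})$ is well defined. Functoriality in $A$ follows from this uniqueness. The composite with the forgetful map to $\Def_{\rho_p}$ is the natural inclusion, which gives injectivity. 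The conditions on the Hodge--Tate--Sen cocharacter are identical in the two functors, so only the existence of the flag needs proof.

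\textbf{Step 1 (reduce to $\D_{\pst}$).} Since $\rho_A$ is de Rham over the artinian ring $A$, it is potentially semistable by Proposition \ref{Proposition: potentially semistable equivalent to de Rham} together with Remark \ref{Remark: (phi, Gamma)-module over local artinian algebra}. I would work with $W_A\coloneq\bigwedge^m \D_{\pst}(\mathrm{std}\circ\rho_A)$. This is a finite free $A\otimes_{\Q_p}\Q_p^{\unr}$-module with a $(\varphi, N, \Gal_{\Q_p})$-action, and it reduces to $W\coloneq\bigwedge^m\D_{\pst}(\mathrm{std}\circ\rho_p)$ modulo $\frakm_A$. Here freeness comes from de Rham-ness over $A$, as in \cite[Sect. 2.2]{BC}.

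\textbf{Step 2 (lift the line).} By ($P$-REG), $\varphi_1$ is a simple eigenvalue of the linearised Frobenius on $W$ with eigenline $\det M_1$. Hensel's lemma applied to the characteristic polynomial of Frobenius over $A$ gives a unique root $\widetilde\varphi_1\in A$ lifting $\varphi_1$. The generalised eigenspace of $\widetilde\varphi_1$ is then a free rank-$1$ direct summand $\widetilde{M}\subset W_A$ lifting $\det M_1$; it is cut out by the idempotent $\prod_{\lambda\neq\varphi_1}(\varphi-\widetilde\lambda)/(\widetilde\varphi_1-\widetilde\lambda)$ built from the factorisation of that polynomial. $\widetilde M$ is $\Gal_{\Q_p}$-stable because the $\Gal_{\Q_p}$-action commutes with $\varphi$. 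It is $N$-stable because $N\varphi = p\varphi N$, so $N$ maps the $\widetilde\varphi_1$-generalised eigenspace into the $p^{-1}\widetilde\varphi_1$-generalised eigenspace. These two spaces are independent, so if $N\widetilde M\neq 0$ then $p^{-1}\varphi_1$ would be an eigenvalue on $W$ whose eigenvector is the nonzero image $N(\det M_1)$. But $\det M_1$ is $N$-stable and of rank $1$, and $N$ is nilpotent, so $N(\det M_1)=0$; Nakayama's lemma then gives $N\widetilde M=0$.

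\textbf{Step 3 (back to $(\varphi,\Gamma)$-modules).} I would apply Berger's equivalence \cite[Théorèm A]{Berger-phiN} over $A$. This is the artinian version underlying Proposition \ref{Proposition: P-refinements are equivalent to P-flags}, obtained by viewing $A$-modules as $L$-modules with an $A$-action. Under it, $\widetilde M$ gives a saturated rank-$1$ $(\varphi,\Gamma)$-submodule $\widetilde D_1\subset\bigwedge^m\D_{\rig}^\dagger(\mathrm{std}\circ\rho_A)$ whose reduction is $D_1$, and the quotient is free over $R_{\rig, A}$. This provides the required lift of the flag. The step I expect to be the main obstacle is carrying Berger's dictionary over $A$ while keeping freeness of the graded pieces, so that the result is a genuine flag in the sense of Definition \ref{Definition: P-flag} and not merely a submodule. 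To handle this I would use the dévissage along $\frakm_A$ as in \cite[Prop. 2.3.4 / Lemma 2.3.7]{BC}.
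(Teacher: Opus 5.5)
Your outline matches the paper's proof step for step. You pass to $\bigwedge^m\D_{\pst}$ via Berger's dictionary, lift the $\varphi_1$-line by Hensel's lemma, get $\Gal_{\Q_p}$-stability from commutation, and transport back. The gap is in the monodromy step, and it is the same Nakayama step the paper uses. Knowing $N(\det M_1)=0$ only says that the image of $N\widetilde M$ in $W=W_A\otimes_A L$ vanishes, i.e.\ $N\widetilde M\subset\frakm_A W_A$. Nakayama's lemma would need $N\widetilde M\otimes_A L=0$, which is a different statement. For the same reason, $N\widetilde M\neq 0$ does not make $N(\det M_1)$ nonzero. What your eigenspace observation actually proves is that $N\widetilde M$ lies in the $p^{-1}\widetilde\varphi_1$-eigenspace of $W_A$. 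That eigenspace vanishes when $p^{-1}\varphi_1$ is not a Frobenius eigenvalue on $W$, because $\varphi-p^{-1}\widetilde\varphi_1$ is then invertible on $W_A$ (its reduction is invertible on $W$). Neither ($P$-REG) nor ($P$-NCR) excludes $p^{-1}\varphi_1$ being an eigenvalue.

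When it is one, the conclusion really fails. Take $n=m=1$ (so $\GSp_2=\GL_2$) and $\rho_p$ crystalline with $\D_{\cris}(\rho_p)=Le_1\oplus Le_2$, $\varphi e_1=pe_1$, $\varphi e_2=e_2$, and Hodge filtration jumping at $0,1$ with $\Fil^1=L(e_1+e_2)$. This module is weakly admissible, and $M_1=Le_1$ is a regular, non-critical refinement. Over $A=L[\epsilon]$, keep $\varphi$ and the filtration and set $Ne_1=\epsilon e_2$, $Ne_2=0$. The relation $N\varphi=p\varphi N$ holds. The module is weakly admissible, being an extension of $\D_{\cris}(\rho_p)$ by $\epsilon\,\D_{\cris}(\rho_p)$. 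So it is $\D_{\st}$ of a semistable deformation $\rho_A\in\Def^{\mathrm{small}}_{\rho_p,\dR}(A)$ with constant weights. However, the only $\varphi$-stable free rank-one lift of $Le_1$ is $Ae_1$, which is not $N$-stable. By Berger's dictionary, $D_1$ therefore has no lift.

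To close the argument you need an extra hypothesis, such as ``$p^{-1}\varphi_1$ is not a Frobenius eigenvalue on $\bigwedge^m\D_{\pst}(\mathrm{std}\circ\rho_p)$''. This holds automatically, for instance, when $\rho_p$ is crystalline and pure, since all those eigenvalues then have the same complex absolute value. A smaller point: your idempotent assumes lifts $\widetilde\lambda$ of the other roots, which need not exist when those roots are repeated. Use $g(\varphi)/g(\widetilde\varphi_1)$ instead, with $g$ the cofactor from Hensel's lemma.
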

\begin{proof}
    Given $A\in \Ar_L$ with maximal ideal $\frakm_A$ and $\rho_A \in \Def_{\rho_p, \dR}^{\mathrm{small}}(A)$, we claim that there exists a unique $\Fil_{\bullet} \bigwedge^m\D_{\rig}^{\dagger}(\mathrm{std}\circ \rho_A)$ lifting $\Fil_{\bullet} \bigwedge^m\D_{\rig}^{\dagger}(\mathrm{std}\circ \rho_p)$. By Proposition \ref{Proposition: P-refinements are equivalent to P-flags}, it is enough to show there exists a unique flag on $\bigwedge^m\D_{\pst}(\mathrm{std}\circ \rho_A)$ lifting $\Fil_{\bullet}\bigwedge^m\D_{\pst}(\mathrm{std}\circ \rho_p)$.

    Let $M_1 \subset \D_{\pst}(\mathrm{std}\circ \rho_p)$ be the $(\varphi, N, \Gal_{\Q_p})$-stable (isotropic) subspace defined by the refinement $\Fil_{\bullet} \D_{\pst}(\rho_p)$. Let $\charpoly_{\rho_p}^{\varphi}(T)$ (resp., $\charpoly_{M_1}^{\varphi}(T)$) be the characteristic polynomial of $\varphi$ on $\bigwedge^m\D_{\pst}(\mathrm{std}\circ \rho_p)$ (resp., $\det M_1$). Since the Frobenius eigenvalue on $\det M_1$ has multiplicity one among the Frobenius eigenvalues on $\bigwedge^m \D_{\pst}(\mathrm{std}\circ \rho_p)$, we see that the space of $\varphi$-equivariant morphisms \[
        \Hom_{L\otimes_{\Q_p}\Q_p^{\unr}}^{\varphi}\left(\det M_1, \bigwedge^m\D_{\pst}(\mathrm{std}\circ \rho_p)/\det M_1\right) = 0.
    \]
    Therefore, \[
        \charpoly_{\rho_p}^{\varphi}(T) = \charpoly_{M_1}^{\varphi}(T) \charpoly_{\bigwedge^m\D_{\pst}(\mathrm{std}\circ \rho_p)/\det M_1}^{\varphi}(T)
    \]
    such that $\charpoly_{M_1}^{\varphi}(T)$ is coprime to $\charpoly_{\D_{\pst}(\bigwedge^m\mathrm{std}\circ \rho_p)/\det M_1}^{\varphi}(T)$. To simplify the notation, we write $\charpoly_{M_1}^{\varphi}(T) = f_1(T)$ and $\charpoly_{\bigwedge^m\D_{\pst}(\mathrm{std}\circ \rho_p)/\det M_1}^{\varphi}(T) = f_2(T)$

    Consider the Frobenius characteristic polynomial $\charpoly_{\rho_A}^{\varphi}(T)$ on $\bigwedge^m\D_{\pst}(\mathrm{std}\circ \rho_A)$. By Hensel's Lemma, we see that \[
        \charpoly_{\rho_A}^{\varphi}(T) = \widetilde{f}_1(T) \widetilde{f}_2(T)
    \] 
    with $\widetilde{f}_i(T)$'s being coprime to each other and $\widetilde{f}_i(T) \equiv f_i(T) \mod \frakm_A$. We then define \[
        \widetilde{M}_1 \coloneq \ker(\widetilde{f}_1).
    \]
    Note that, since $f_i$'s are coprime to each other, Frobenius-stable lifts of $\det M_1$ in $\bigwedge^m\D_{\pst}(\mathrm{std}\circ \rho_A)$ is uniquely determined. Additionally, since $N(\widetilde{M_1}) \otimes_{A}L \cong N(\det M_1) = 0$, Nakayam's Lemma implies that $N(\widetilde{M_1}) = 0$. Since the Frobenius and monodromy commute with the Galois action, we see that $\widetilde{M}_1$ ise $(\varphi, N, \Gal_{\Q_p})$-stable. As in the proof of Proposition \ref{Proposition: comparison between different notions of families} (ii), we have the filtration\[
        \widetilde{M}_1 \subset \widetilde{M}_1^{\perp} \subset \bigwedge^m\D_{\pst}(\mathrm{std}\circ \rho_A),
    \]
    lifting $\Fil_{\bullet}\bigwedge^m \D_{\pst}(\mathrm{std}\circ \rho_p)$, which concludes the proof. 
\end{proof}

\paragraph{Global deformation problems.} Following our discussions above, we can then consider three global deformation problems: \begin{equation}\label{eq: global deformation problems}
    \begin{split}
        \Def_{\rho, \Fil_{\bullet}}^{\mathrm{small}}  & \coloneq (\{\Def_{\rho_{\ell}}^{\unr}\}_{\ell\not\in S}, \{\Def_{\rho_{\ell}}^{\min}\}_{\ell\in S \smallsetminus \{p\}}, \Def_{\rho_p, \Fil_{\bullet}}^{\mathrm{small}}), \\
        \Def_{\rho, \dR}  & \coloneq (\{\Def_{\rho_{\ell}}^{\unr}\}_{\ell\not\in S}, \{\Def_{\rho_{\ell}}^{\min}\}_{\ell\in S \smallsetminus \{p\}}, \Def_{\rho_p, \dR}), \\
        \Def_{\rho, \dR}^{\mathrm{small}}  & \coloneq (\{\Def_{\rho_{\ell}}^{\unr}\}_{\ell\not\in S}, \{\Def_{\rho_{\ell}}^{\min}\}_{\ell\in S \smallsetminus \{p\}}, \Def_{\rho_p, \dR}^{\mathrm{small}}).
    \end{split}
\end{equation}

\begin{Proposition}\label{Proposition: representibility of deformation problems}
    The deformation problems defined above are all pro-representable. 
\end{Proposition}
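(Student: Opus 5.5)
The plan is to reduce everything to Schlessinger's criterion, applied first to the local problems and then to the global ones. The base functor is the unframed deformation functor $\Def_{\rho}$ of $\rho|_{\Gal_{\Q,S}}$ with values in $\GSp_{2n}$. It is pro-representable for two reasons: $\Gal_{\Q,S}$ satisfies Mazur's $p$-finiteness condition, so the tangent space $H^1(\Gal_{\Q,S},\ad\rho)$ is finite-dimensional, and $\rho$ is absolutely irreducible, so its centraliser in $\GSp_{2n}(L)$ consists of scalars. Because scalars are central, the automorphisms of any lift $\rho_A$ over $A\in \Ar_L$ surject onto those of $\rho_A\otimes_A A/I$. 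That surjectivity is exactly what gives Schlessinger's (H4). The residue field $L$ has characteristic $0$, but this does not affect the argument: one works in $\Ar_L$ with $L$-valued tangent spaces, as in \cite[Sect. 2.3]{BC} or via Lemma \ref{Lemma: equivalence of symplectic representations} and \cite{MQ-SympDet}.

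Next, I would show that each of the three global problems in \eqref{eq: global deformation problems} is a relatively representable subfunctor of $\Def_\rho$. A subfunctor satisfying Schlessinger's (H1)–(H4) relative to $\Def_\rho$ is again pro-representable. By \cite[Prop. 2.3.8]{BC}-type arguments, it is enough to check that each local condition is ``Mayer–Vietoris stable'': it is closed under subobjects, and for $A\to C\leftarrow B$ with $B\to C$ surjective, a lift over $A\times_C B$ satisfies the condition iff both projections do.

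\begin{itemize}
\item Unramifiedness at $\ell\notin S$ and minimality $\rho_A|_{I_\ell}\cong \rho_\ell|_{I_\ell}\otimes A$ both have this property, because they are conditions on the image of inertia. In the situation of Lemma \ref{Lemma: need not worry about minimal deformation}, minimality is no condition at all.
\item The de Rham condition is stable under fibre products. For $\D_{\dR}$, the rank condition ``free of rank $\dim$'' is checked via $A\times_C B\hookrightarrow A\oplus B$ and the exact sequence $0\to A\times_CB\to A\oplus B\to C\to 0$. This is standard (Kisin, \cite[Prop. 2.3.? ]{BC}).
\item The Hodge–Tate–Sen condition says the cocharacter ratio factors through $Z_{M_P}$ and the first entry is $1$. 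It is a closed condition on the Sen operator, so it is compatible with fibre products.
\end{itemize}

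For the $P$-flagged problem $\Def^{\mathrm{small}}_{\rho_p,\Fil_\bullet}$, Lemma \ref{Lemma: P-flagged deformation is a subfunctor of unconditional deformation} already shows that it is a subfunctor of $\Def_{\rho_p}$, since the lift of the flag is unique. I would then check stability under fibre products. Given flags $\widetilde D_{1,A}$ and $\widetilde D_{1,B}$ lifting the same $\widetilde D_{1,C}$, the uniqueness just mentioned forces them to agree over $C$. Their fibre product $\widetilde D_{1,A}\times_{\widetilde D_{1,C}}\widetilde D_{1,B}$ is then a rank-one $(\varphi,\Gamma)$-submodule of $\bigwedge^m\D^\dagger_{\rig}$ over $A\times_CB$. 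It is saturated, and its quotient is free, because $\D^\dagger_{\rig}$ commutes with fibre products (\cite[Lemma 2.2.? ]{BC}) and a fibre product of rank-one modules of the form $\calR_A(\delta_A)$ is again of that form. The small de Rham problem is the intersection of the de Rham and small conditions, by Lemma \ref{Lemma: relationships between de Rham deformations}, so it inherits the property.

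The main obstacle is this flagged case. One must know that the sub-$(\varphi,\Gamma)$-module obtained by gluing is a direct summand with quotient again a $(\varphi,\Gamma)$-module over $R_{\rig,A\times_CB}$. I would handle this with \cite[Lemma 2.3.7]{BC}, the vanishing of $\Hom(\calR_L(\delta_1),\bigwedge^m\D^\dagger_{\rig}/D_1)$ established in the proof of Lemma \ref{Lemma: P-flagged deformation is a subfunctor of unconditional deformation}, and dévissage along the fibre product. Finally, the global functors are fibre products over $\prod_\ell \Def_{\rho_\ell}$ of these relatively representable local conditions with $\Def_\rho$, so they are pro-representable by quotients of the universal ring $R_\rho$.
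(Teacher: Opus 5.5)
Your proposal is correct and is essentially the paper's argument. The classical local conditions are handled by standard references. The two new conditions at $p$ are shown to be Mazur-type deformation conditions (stable under base change, fibre products and subobjects), using the uniqueness of flag lifts from Lemma~\ref{Lemma: P-flagged deformation is a subfunctor of unconditional deformation} to view $\Def^{\mathrm{small}}_{\rho_p,\Fil_\bullet}$ as a subfunctor.

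Your framing is in fact the accurate form of the claim: global pro-representability comes from absolute irreducibility of $\rho$, and the local conditions are only relatively representable. Unframed local functors can fail Schlessinger's (H4) when the centraliser of $\rho_\ell$ is large, so they need not be pro-representable on their own. Your gluing of flags over $A\times_C B$ is also cleaner than the paper's. Plain Milnor patching of the sequences $0\to \widetilde D_1\to \bigwedge^m\D^{\dagger}_{\rig}\to Q\to 0$ suffices for it, so you do not need the tools you cite there. Finally, the subobject condition you list but do not check for the flag holds automatically: fibre-product closure already gives pro-representability by a quotient of $R_\rho$, and subobject closure follows.
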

\begin{proof}
    The pro-representabilities of the global deformation problems follow from the pro-representability of the local deformation problems. Among them, one can deduce the pro-representabilities of $\Def_{\rho_{\ell}}^{\unr}$, $\Def_{\rho_{\ell}}^{\min}$, and $\Def_{\rho_p, \dR}$ similarly as in \cite[Proposition 3.7]{HT-GLn}. Hence, it remains to show the pro-representabilities of $\Def_{\rho_p, \dR}^{\mathrm{small}}$ and $\Def_{\rho_p, \Fil_{\bullet}}^{\mathrm{small}}$. By Lemma \ref{Lemma: P-flagged deformation is a subfunctor of unconditional deformation} and Lemma \ref{Lemma: relationships between de Rham deformations}, we only need to verify that these deformation functors satisfy the \emph{deformation conditions} in \cite[Sect. 23]{Mazur-deformation}. As the proofs for both functors are similar, we only spell out the one for $\Def_{\rho_p, \Fil_{\bullet}}^{\mathrm{small}}$ in what follows and leave the other case to the reader.

    \noindent \textbf{Condition 1.} For $i=1,2$, let $A_i \in \Ar_L$ and $\rho_{A_i}\in \Def_{\rho_p}(A_i)$. If $f: A_1 \rightarrow A_2$ is a morphism in $\Ar_L$ such that $f$ induces a commutative diagram \[
        \begin{tikzcd}
            \Gal_{\Q_p} \arrow[r]\arrow[rd] & \GSp_{2n}(A_1)\arrow[d]\\
            & \GSp_{2n}(A_2)
        \end{tikzcd}.
    \]
    Suppose $\Fil_{\bullet}\bigwedge^m\D_{\rig}^{\dagger}(\mathrm{std}\circ\rho_{A_1})$ is a $P$-flag on $\bigwedge^m\D_{\rig}^{\dagger}(\mathrm{std}\circ\rho_{A_1})$ making $(\rho_{A_1}, \Fil_{\bullet}\bigwedge^m\D_{\rig}^{\dagger}(\mathrm{std}\circ\rho_{A_1}))\in \Def_{\rho_p, \Fil_{\bullet}}^{\mathrm{small}}(A_1)$. Then, there exists a $P$-flag $\Fil_{\bullet}\bigwedge^m\D_{\rig}^{\dagger}(\mathrm{std}\circ\rho_{A_2})$ on $\bigwedge^m\D_{\rig}^{\dagger}(\mathrm{std}\circ\rho_{A_2})$ making $(\rho_{A_2}, \Fil_{\bullet}\bigwedge^m\D_{\rig}^{\dagger}(\mathrm{std}\circ\rho_{A_2}))\in \Def_{\rho_p, \Fil_{\bullet}}^{\mathrm{small}}(A_2)$.

    By assumption, we have $\D_{\rig}^{\dagger}(\rho_{A_2}) = \D_{\rig}^{\dagger}(\rho_{A_1})\otimes_{A_1, f}A_2$; thus \[
        [\mu_{\rho_{A_2}, \HTS}] = [\mu_{\rho_{A_1}, \HTS}]\otimes_{A_1, f}A_2.
    \] By defining \[
        \Fil_{\bullet} \bigwedge^m\D_{\rig}^{\dagger}(\mathrm{std}\circ\rho_{A_2}) = \Fil_{\bullet} \bigwedge^m\D_{\rig}^{\dagger}(\mathrm{std}\circ\rho_{A_1}) \otimes_{A_1, f}A_2,
    \] one checks easily that $(\rho_{A_2}, \Fil_{\bullet}\bigwedge^m\D_{\rig}^{\dagger}(\mathrm{std}\circ\rho_{A_2}))\in \Def_{\rho_p, \Fil_{\bullet}}^{\mathrm{small}}(A_2)$.

    \noindent \textbf{Condition 2.} Let $A_1 \xrightarrow{f} A \xleftarrow{g} A_2$ be morphisms in $\Ar_L$ and let $A_3 = A_1 \times_A A_2$. Given $\rho_{A_3}\in \Def_{\rho_p}(A_3)$, let $\rho_{A_i}$ be the base change of $\rho_{A_3}$ to $A_i$ for $i=1,2$. Then, there exists a $P$-flag $\Fil_{\bullet}\bigwedge^m\D_{\rig}^{\dagger}(\mathrm{std}\circ\rho_{A_3})$ such that $(\rho_{A_3}, \Fil_{\bullet}\bigwedge^m\D_{\rig}^{\dagger}(\mathrm{std}\circ\rho_{A_3})) \in \Def_{\rho_p, \Fil_{\bullet}}^{\mathrm{small}}(A_3)$ if and only if there exist $P$-flags $\Fil_{\bullet}\bigwedge^m\D_{\rig}^{\dagger}(\mathrm{std}\circ\rho_{A_i})$ such that $(\rho_{A_i}, \Fil_{\bullet}\bigwedge^m\D_{\rig}^{\dagger}(\mathrm{std}\circ\rho_{A_i}))\in \Def_{\rho_p, \Fil_{\bullet}}^{\mathrm{small}}(A_i)$ for $i=1, 2$.

    Consider the projection maps \[
        A_1 \xleftarrow{\mathrm{pr}_1} A_3 \xrightarrow{\mathrm{pr}_2} A_2.
    \] Suppose $(\rho_{A_3}, \Fil_{\bullet}\bigwedge^m\D_{\rig}^{\dagger}(\mathrm{std}\circ\rho_{A_3})) \in \Def_{\rho_p, \Fil_{\bullet}}^{\mathrm{small}}(A_3)$. Then, arguing similarly as in Condition 1, one constructs $\Fil_{\bullet}\bigwedge^m\D_{\rig}^{\dagger}(\mathrm{std}\circ\rho_{A_i})$ such that $(\rho_{A_i}, \Fil_{\bullet}\bigwedge^m\D_{\rig}^{\dagger}(\mathrm{std}\circ\rho_{A_i}))\in \Def_{\rho_p, \Fil_{\bullet}}^{\mathrm{small}}(A_i)$ for $i=1, 2$. Conversely, note that $\D_{\rig}^{\dagger}(\rho_{A_1}) \times \D_{\rig}^{\dagger}(\rho_{A_2})$ can be identified with the $(\varphi, \Gamma)$-module with $\GSp_{2n}$-structure attached to $\rho_{A_1}\times \rho_{A_2}$. By the universal property of fibred product, there exists a unique $h: A_1 \times A_2 \rightarrow A_3$, which is compatible with $\mathrm{pr}_1$ and $\mathrm{pr_2}$. Hence, we have the identification \[
        \D_{\rig}^{\dagger}(\rho_{A_3}) = (\D_{\rig}^{\dagger}(\rho_{A_1})\times \D_{\rig}^{\dagger}(\rho_{A_2})) \otimes_{A_1\times A_2, h} A_3. 
    \]
    Observe that 
    $\Fil_{\bullet}\bigwedge^m\D_{\rig}^{\dagger}(\mathrm{std}\circ\rho_{A_1}) \times \Fil_{\bullet}\bigwedge^m\D_{\rig}^{\dagger}(\mathrm{std}\circ\rho_{A_2})$ defines a $P$-flag on $\bigwedge^m\D_{\rig}^{\dagger}(\mathrm{std}\circ\rho_{A_1})\times \bigwedge^m\D_{\rig}^{\dagger}(\mathrm{std}\circ\rho_{A_2})$. Therefore, we define \[
        \Fil_{\bullet}\bigwedge^m\D_{\rig}^{\dagger}(\mathrm{std}\circ\rho_{A_3}) \coloneq (\Fil_{\bullet}\bigwedge^m\D_{\rig}^{\dagger}(\mathrm{std}\circ\rho_{A_1}) \times \Fil_{\bullet}\bigwedge^m\D_{\rig}^{\dagger}(\mathrm{std}\circ\rho_{A_2})) \otimes_{A_1 \times A_2, h}A_3.
    \]  Again, it is easy to check that $(\rho_{A_3}, \Fil_{\bullet}\bigwedge^m\D_{\rig}^{\dagger}(\mathrm{std}\circ\rho_{A_3}))\in \Def_{\rho_p, \Fil_{\bullet}}^{\mathrm{small}}(A_3)$.

    \noindent \textbf{Condition 3.} For $i=1,2$, let $A_i \in \Ar_L$ and $\rho_{A_i}\in \Def_{\rho_p}(A_i)$. If $f: A_1 \rightarrow A_2$ is a morphism in $\Ar_L$ such that $f$ induces a commutative diagram \[
        \begin{tikzcd}
            \Gal_{\Q_p} \arrow[r]\arrow[rd] & \GSp_{2n}(A_1)\arrow[d]\\
            & \GSp_{2n}(A_2)
        \end{tikzcd}.
    \]
    Suppose $f$ is injective and $\Fil_{\bullet}\bigwedge^m\D_{\rig}^{\dagger}(\mathrm{std}\circ\rho_{A_2})$ is a $P$-flag on $\bigwedge^m\D_{\rig}^{\dagger}(\mathrm{std}\circ\rho_{A_2})$ making $(\rho_{A_2}, \Fil_{\bullet}\bigwedge^m\D_{\rig}^{\dagger}(\mathrm{std}\circ\rho_{A_2}))\in \Def_{\rho_p, \Fil_{\bullet}}^{\mathrm{small}}(A_2)$. Then, there exists a $P$-flag $\Fil_{\bullet}\bigwedge^m\D_{\rig}^{\dagger}(\mathrm{std}\circ\rho_{A_2})$ on $\bigwedge^m\D_{\rig}^{\dagger}(\mathrm{std}\circ\rho_{A_1})$ making $(\rho_{A_1}, \Fil_{\bullet}\bigwedge^m\D_{\rig}^{\dagger}(\mathrm{std}\circ\rho_{A_1}))\in \Def_{\rho_p, \Fil_{\bullet}}^{\mathrm{small}}(A_1)$.

    By assumption, we see that, for any $\sigma \in \Rep_{\GSp_{2n}}$, $\D_{\rig}^{\dagger}(\rho_{A_1})(\sigma) \subset \D_{\rig}^{\dagger}(\rho_{A_2})(\sigma)$ (as $R_{\rig, A_1}$-modules) and $\D_{\rig}^{\dagger}(\rho_{A_1}) \otimes_{A_1,f}A_2 = \D_{\rig}^{\dagger}(\rho_{A_2})$. Hence, the Hodge--Tate--Sen cocharacter $[\mu_{\rho_{A_2}, \HTS}]$ factors through $\GSp_{2n}(A_1\widehat{\otimes}_{\Q_p}\Q_p^{\cyc}) \rtimes \Gamma$. We define $\Fil_{\bullet}\bigwedge^m\D_{\rig}^{\dagger}(\mathrm{std}\circ\rho_{A_1})$ by \[
        \Fil_{\bullet}\bigwedge^m\D_{\rig}^{\dagger}(\mathrm{std}\circ\rho_{A_1}) \coloneq \Fil_{\bullet}\bigwedge^m\D_{\rig}^{\dagger}(\mathrm{std}\circ\rho_{A_2}) \cap \bigwedge^m\D_{\rig}^{\dagger}(\mathrm{std}\circ\rho_{A_1}).
    \]
    One sees that $(\rho_{A_1}, \Fil_{\bullet}\bigwedge^m\D_{\rig}^{\dagger}(\mathrm{std}\circ\rho_{A_1}))\in \Def_{\rho_p, \Fil_{\bullet}}^{\mathrm{small}}(A_1)$.
\end{proof}

\begin{Corollary}\label{Corollary: sort exact sequence of tangent spaces}
    There exists a short exact sequence of $L$-vector spaces \[
        0 \rightarrow \Def_{\rho, \dR}^{\mathrm{small}}(L[\epsilon]) \rightarrow \Def_{\rho, \Fil_{\bullet}}^{\mathrm{small}}(L[\epsilon]) \rightarrow L
    \]
    for $\epsilon^2 = 0$.
\end{Corollary}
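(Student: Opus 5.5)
The plan is to compare the two tangent spaces through the local condition at $p$, since the conditions at $\ell\neq p$ coincide in the two global problems \eqref{eq: global deformation problems}. By Lemma \ref{Lemma: relationships between de Rham deformations}, $\Def_{\rho_p, \dR}^{\mathrm{small}}$ is a subfunctor of $\Def_{\rho_p, \Fil_{\bullet}}^{\mathrm{small}}$. Hence the global functor $\Def_{\rho, \dR}^{\mathrm{small}}$ is a subfunctor of $\Def_{\rho, \Fil_{\bullet}}^{\mathrm{small}}$. On $L[\epsilon]$-points this gives an injective $L$-linear map. Linearity follows from pro-representability (Proposition \ref{Proposition: representibility of deformation problems}), since both tangent spaces are then $L$-vector spaces compatibly with the inclusion. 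This yields exactness at the first two terms.

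Next I would construct the map $\Def_{\rho, \Fil_{\bullet}}^{\mathrm{small}}(L[\epsilon]) \rightarrow L$. Take a pair $(\rho_{\epsilon}, \Fil_{\bullet})$. By the definition of the small deformation, the $M_P$-conjugacy class of $\mu_{\rho_\epsilon,\HTS}/\mu_{\rho,\HT}$ factors through $Z_{M_P}\cong \bbG_m^2$, and the first diagonal entry is normalised to $1$. Thus, after taking $\log$ and dividing by $\log\chi_{\cyc}$ as in Remark \ref{Remark: comparison of HTS weight with classical definition}, the infinitesimal variation of the Sen weights is a pair in $\epsilon L^2$. The normalisation, together with the symplectic constraint (the similitude $a_0$ relates the two central coordinates), cuts this down to a single scalar. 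Concretely, one would take the derivative of $\wt(\delta_1)$, that is, the $\epsilon$-coefficient of the Sen weight of the rank-one piece $\widetilde{D}_1\subset\bigwedge^m\D_{\rig}^{\dagger}(\mathrm{std}\circ\rho_\epsilon)$. This is well defined because $\widetilde{D}_1$ is unique by Lemma \ref{Lemma: P-flagged deformation is a subfunctor of unconditional deformation}. The map is $L$-linear, since addition in the tangent space is computed through the fibre product $L[\epsilon]\times_L L[\epsilon]$, and Sen weights are additive under that construction.

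The remaining step, and the main one, is to show that the kernel of this map equals the image of $\Def_{\rho, \dR}^{\mathrm{small}}(L[\epsilon])$. One inclusion is easy: a de Rham deformation has constant integral Hodge--Tate weights, so the weight derivative vanishes. For the converse, suppose the weight derivative is $0$. Because the HTS cocharacter varies only through the centre $Z_{M_P}$, all Sen weights of $\rho_\epsilon$ are then constant, i.e.\ equal to those of $\rho_p$. The $P$-flag $\Fil_\bullet$ on $\D_{\rig}^{\dagger}(\rho_\epsilon)$ has graded pieces that are deformations of the graded pieces of $\rho_p$, and these have constant weights. I would then argue that $\rho_\epsilon$ is de Rham. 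The approach is to check de Rhamness on a faithful representation (Lemma \ref{Lemma: p-adic properties can be checked on faithful representations}), and then use ($P$-NCR) together with the Bellaïche--Chenevier criterion that a trianguline/paraboline deformation with constant weights and a non-critical flag is de Rham (as in \cite[Proposition 2.5.8]{BC}; see also Bergdall's paraboline analogue). This is the hardest point. The graded pieces are not rank one, so I would reduce to the case of each graded piece being de Rham: the weight multiplicities are separated by the strict inequalities assumed on $\mu_{\rho,\HT}$, which lets one apply the known criterion to each Levi block and to the extensions between them. Non-criticality then guarantees that the extension classes lie in $H^1_g$, and hence the whole deformation lies in $\Def_{\rho_p,\dR}^{\mathrm{small}}$.
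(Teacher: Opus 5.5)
Your injectivity argument (subfunctor lemma plus pro-representability) is fine and matches the paper, but there are two genuine gaps, and the first is in how you build the map to $L$.

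\textbf{The map to $L$ uses the wrong rank-one piece.} In $\Def^{\mathrm{small}}_{\rho_p,\Fil_\bullet}$, two conditions hold:
\begin{itemize}
\item the class $[\mu_{\rho_A,\HTS}]$ has a representative with first entry $1$;
\item $[\mu_{\rho_A,\HTS}/\mu_{\rho,\HT}]$ lies in $Z_{M_P}=\{\diag(a_1\one_m,a_2\one_{2n-2m},a_1^{-1}a_2^2\one_m)\}$.
\end{itemize}
Together these pin down $a_1$, so the first $m$ Sen weights of $\rho_A$ equal those of $\rho_p$.

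Now $\widetilde D_1\cong\calR_A(\widetilde\delta_1)$ lifts $D_1$. Its weight is the root of the Sen polynomial of $\bigwedge^m$ reducing to the simple root $\sum_{i\le m}a_i$, i.e.\ the sum of these pinned weights. Hence $\wt(\widetilde\delta_1)$ is constant on the whole functor, and your map is identically zero.

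Your converse step (``weight derivative $0$ $\Rightarrow$ all Sen weights constant'') therefore fails. The single free parameter is $a_2$, which moves only the middle and last blocks and is invisible to $\widetilde\delta_1$. There is no extra ``symplectic constraint'' to exploit; it is already built into $Z_{M_P}$. With your map, exactness would force $\Def^{\mathrm{small}}_{\rho,\dR}(L[\epsilon])=\Def^{\mathrm{small}}_{\rho,\Fil_\bullet}(L[\epsilon])$. That is false whenever a first-order deformation moves the weight, e.g.\ the weight direction of the eigenvariety in the $R=\bbT$ argument, where the left side is $0$ and the right side is not.

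The paper instead uses the top graded piece $D_{A,2}=\bigwedge^m\D_{\rig}^{\dagger}(\mathrm{std}\circ\rho_A)/D_{A,1}^{\perp}\cong\calR_A(\delta_{A,2})$. It maps $\Def^{\mathrm{small}}_{\rho,\Fil_\bullet}\to\calW_{\bbG_m}$ by $\wt(\delta_{A,2})$. Via the Constant Weight Lemma it obtains $\Def^{\mathrm{small}}_{\rho,\dR}\to\Def^{\mathrm{small}}_{\rho,\Fil_\bullet}\times_{\calW_{\bbG_m}}\{\wt(\delta_2)\}$ and claims this is an isomorphism. On the last block the central coordinate is $a_1^{-1}a_2^2$, so the $\epsilon$-part of $\wt(\delta_{A,2})$ is $2m$ times that of $a_2$. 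Its vanishing therefore really does make all Sen weights constant.

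\textbf{The de Rham step does not go through as you sketch it.} The Bellaïche--Chenevier criterion (constant weights plus a non-critical \emph{triangulation} gives de Rham) needs rank-one graded pieces. A small $P$-flagged deformation gives no triangulation of a Levi block of rank $\geq 2$: the middle $\GL_2$-block for $P_{\mathrm{Kl}}$, or the two $\GL_2$-blocks for $P_{\mathrm{Si}}$. For such a block, constant Sen weights do not imply de Rham.

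For instance, let $V$ be $2$-dimensional, irreducible and crystalline, with distinct weights and generic Frobenius. Then $\dim H^1(\Q_p,\ad^0V)=3$ and $\dim H^1_g(\Q_p,\ad^0V)=1$, while constancy of the Sen weights is one linear condition. So there is a non-de Rham $V_\epsilon$ over $L[\epsilon]$ with constant weights and constant determinant. Embedding $\chi_1\oplus V_\epsilon\oplus\chi_2$ via $M_{\mathrm{Kl}}\cong\bbG_m\times\GL_2$ gives a small $P_{\mathrm{Kl}}$-flagged deformation of a split $\rho_p$. Its flag is non-critical and its weights are constant, yet it is not de Rham.

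($P$-NCR) only controls the position of the flag against the Hodge filtration, not the deformation inside a block, so no argument through it can close this. The paper handles this step differently: it lifts Sen eigenvectors $\widetilde\alpha_j$ and counts the elements $\widetilde\alpha_j\otimes t^{a'_j}$ in $\calD_{\dR}$, without using ($P$-NCR). The same example shows, however, that the local implication ``constant weights $\Rightarrow$ de Rham'' asserted there also needs more input: a Sen eigenvector lives modulo $t$ and need not lift to a $\Gamma$-eigenvector over $\Q_{p,n}[\![t]\!]$. So even after switching to $\delta_{A,2}$, you should expect to need an additional hypothesis controlling the Levi blocks at this point.
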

\begin{proof}
    For any $A\in \Ar_L$ and any $(\rho_A, \Fil_{\bullet}\bigwedge^m\D_{\rig}^{\dagger}(\mathrm{std}\circ\rho_A))$, we see that $\Fil_{\bullet} \bigwedge^m\D_{\rig}^{\dagger}(\mathrm{std}\circ \rho_A)$ is essentially a filtration \[
        D_{A, 1} \subset D_{A, 1}^{\perp} \subset \D_{\rig}^{\dagger}(\mathrm{std}\circ\rho_A),
    \]
    where $D_{A, 1}$ is a rank-$1$ $(\varphi, \Gamma)$-submodule of $\bigwedge^m\D_{\rig}^{\dagger}(\mathrm{std}\circ\rho_A)$. Let \[
        D_{A, 2} \coloneq \bigwedge^m\D_{\rig}^{\dagger}(\mathrm{std}\circ \rho_A)/D_{A, 1}^{\perp}
    \]
    and so \[
        D_{A, 2} \cong \calR_{A}(\delta_{A, 2})
    \]
    for some continuous character $\delta_{A, 2}: \Q_p^{\times} \rightarrow A^\times$. Let $\delta_2$ be the continuous character constructed similarly out of $\Fil_{\bullet}\bigwedge^m\D_{\rig}^{\dagger}(\mathrm{std}\circ \rho_p)$. By ($P$-REG) and ($P$-NCR), we can write $\delta_2$ explicitly as follows: \[
        \delta_2(p) = \varphi_2 \quad \text{ and }\quad \delta_2(\gamma) = \gamma^{-\sum_{i=1}^{m}(a_0-a_i)}.
    \]

    Let $\calW_{\bbG_m} = \Spa(\Z_p[\![\Z_p^{\times}]\!], \Z_p[\![\Z_p^{\times}]\!])^{\rig}$ be the weight space on $\Z_p^{\times}$; roughly speaking, one thinks of it as the moduli space of continuous characters on $\Z_p^{\times}$ (see Sect. \ref{subsection: p-adic weight spaces} below for more discussion). Then, as functors (of points) on $\Ar_L$, there is a morphism $\Def_{\rho, \Fil_{\bullet}}^{\mathrm{small}} \rightarrow \calW_{\bbG_m}$ by \[
        (\rho_A, \Fil_{\bullet}\bigwedge^m \D_{\rig}^{\dagger}(\mathrm{std}\circ \rho_A) \mapsto \wt(\delta_{A, 2}).
    \]
    By applying the Constant Weight Lemma (Proposition \ref{Proposition: constant weight lemma}) below, we have \begin{equation}\label{eq: morphism from de Rham deformation to fixed-weight P-flag filtration}
        \Def_{\rho, \dR}^{\mathrm{small}} \rightarrow \Def_{\rho, \Fil_{\bullet}}^{\mathrm{small}} \times_{\calW_{\bbG_m}} \{\wt(\delta_2)\}.
    \end{equation}
    We claim that this morphism is an isomorphism. That is, given $(\rho_A, \Fil_{\bullet}\bigwedge^m\D_{\rig}^{\dagger}(\mathrm{std}\circ\rho_A))\in \Def_{\rho, \Fil_{\bullet}}^{\mathrm{small}}(A)$ such that $\wt(\delta_{A, 2}) = \wt(\delta_2)$, we have to show that $\D_{\rig}^{\dagger}(\rho_A)$ is de Rham. 
    
    Since $\wt(\delta_{A, 2}) = \wt(\delta_2)$, we know from the definition of $\Def_{\rho, \Fil_{\bullet}}^{\mathrm{small}}$ that $\rho_A$ has the same Hodge--Tate--Sen weights as $\rho_p$; that is, the Hodge--Tate--Sen weights for $\D_{\rig}^{\dagger}(\mathrm{std}\circ \rho_A)$ are $a_1, ..., a_n, a_0-a_n, ..., a_0-a_1$. To simplify the notation, for $i= 1, ..., 2n$, we write \[
        a_i' = \left\{ \begin{array}{ll}
            a_i, & i \leq n \\
            a_0 - a_n, & i>n
        \end{array} \right. 
    \] By construction, we see that there are elements $\alpha_1, ..., \alpha_{2n}\in \calD_{\mathrm{Sen}}(\D_{\rig}^{\dagger}(\mathrm{std}\circ \rho))$ (resp., $\widetilde{\alpha}_1, ..., \widetilde{\alpha}_{2n}\in \calD_{\Sen}(\D_{\rig}^{\dagger}(\mathrm{std}\circ \rho_A))$) such that \[
        \Theta_{\mathrm{Sen}}(\gamma_0)(\alpha_i) = -a_i'\alpha_i \quad (\text{resp., } \Theta_{\mathrm{Sen}}(\gamma_0)(\widetilde{\alpha}_i) = -a_i'\widetilde{\alpha}_i),
    \] where $\Theta_{\mathrm{Sen}}(\gamma_0)$ is the Sen operator associated with a topological generator $\gamma_0\in \Gamma$. Moreover, $\widetilde{\alpha}_i$'s are lifts of $\alpha_i$'s. As the Hodge--Tate--Sen weights are independent to the choice of the topological generator, we conclude that \[
        \gamma \alpha_i = \chi_{\cyc}^{-a_i'}(\gamma)\alpha_i \quad (\text{resp., } \gamma \widetilde{\alpha_i} = \chi_{\cyc}^{-a_i'}(\gamma) \widetilde{\alpha_i})
    \] for all $\gamma \in \Gamma$. 

    In order to proceed, we recall the constructions of $\calD_{\mathrm{Sen}}$ and $\calD_{\dR}$ in \cite[Sect. 2.2]{BC}. Recall, for any $(\varphi, \Gamma)$-module $D$ (over $B\in \Ar_L$), there exists $b\in \Q_{>0}$ (small enough) and a $(\varphi, \Gamma)$-module $D_{(0, b]}$ over $R_{(0, b], B}$ such that $D = D_{(0, b]}\otimes_{R_{(0, b], B}}R_{\rig, B}$. By \cite[(29)]{BC}, we see there exists large enough $n\in \Z_{>0}$ and a natural surjective morphism \[
        R_{(0, b]} \rightarrow \Q_{p, n}\footnote{ Recall $\Q_{p, n} = \Q_p(\zeta_{p^n})$, where $\zeta_{p^n}$ is the fixed primitive $p^n$-th roots of unity. }
    \] and a natural injective morphism \[
        R_{(0, b], B} \rightarrow \Q_{p, n}[\![t]\!],
    \]
    where $t = \log [\bfepsilon]\in R_{\rig}$. Then, \[
        \calD_{\mathrm{Sen}}(D) \coloneq (D_{(0, b]} \otimes_{R_{(0, b], B}} \Q_{p, n} ) \otimes_{\Q_{p, n}} \Q_{p, \infty}
    \]
    and \begin{align*}
        \calD_{\dR}(D) & \coloneq (D_{(0, b]} \otimes_{R_{(0, b], B}}\Q_{p, n}(\!(t)\!) \otimes_{\Q_{p,n}} \Q_{p, \infty})^{\Gamma}\\
        \Fil_{\dR}^i\calD_{\dR}(D) & \coloneq (D_{(0, b]} \otimes_{R_{(0, b], B}} t^i\Q_{p, n}[\![t]\!] \otimes_{\Q_{p,n}} \Q_{p, \infty})^{\Gamma}
    \end{align*}

    From the construction, we see we may choose $b$ and $n$ so that there exists $D_{(0, b]}$ (resp., $D_{(0, b], A}$) such that $\D_{\rig}^{\dagger}(\mathrm{std}\circ \rho) = D_{(0, b]} \otimes_{R_{(0, b], L}} R_{\rig, L}$ (resp., $\D_{\rig}^{\dagger}(\mathrm{std}\circ \rho_A) = D_{(0, b], A}\otimes_{R_{(0, b], A}} R_{\rig, A}$ and $D_{(0, b], A}\otimes_A L = D_{(0, b]}$) and $\alpha_i \in D_{(0, b]}\otimes_{R_{(0, b], L}} \Q_{p, n}$ (resp., $\widetilde{\alpha}_i \in D_{(0, b], A}\otimes_{R_{(0, b], A}} \Q_{p, n}$).

    Now, for any $i = 1, ..., 2n$, we see from our discussion above that $\Fil_{\dR}^{a_i'}\calD_{\dR}(\D_{\rig}^{\dagger}(\mathrm{std}\circ \rho))$ is generated by $\alpha_j \otimes t^{a_j'}$ for $j\geq i$. Moreover, we also see that $\widetilde{\alpha}_j \otimes t^{a_j'}\in \Fil_{\dR}^{a_i'} \calD_{\dR}(\D_{\rig}^{\dagger}(\mathrm{std}\circ \rho_A))$. Since $a_1'< \cdots < a_j' < \cdots a_{2n}'$ (by assumption), each $\widetilde{\alpha}_j \otimes t^{a_j'}$ is a lift of $\alpha_j \otimes t^{a_j'}$ and they are distinct. Thus,  \[
        2n \leq \rank \calD_{\dR}(\D_{\rig}^{\dagger}(\mathrm{std}\circ \rho_A)).
    \]
    However, $\rank \calD_{\dR}(\D_{\rig}^{\dagger}(\mathrm{std}\circ \rho_A))$ is always bounded above by $2n$, we thus show that $\calD_{\dR}(\D_{\rig}^{\dagger}(\mathrm{std}\circ \rho_A))$ is de Rham. Since $\mathrm{std}$ is a $\otimes$-generator, we conclude the claim.
    
    The desired result then follows immediately from the isomorphism \eqref{eq: morphism from de Rham deformation to fixed-weight P-flag filtration}. 
\end{proof}

\begin{Proposition}[Constant Weight Lemma]\label{Proposition: constant weight lemma}
    Let $\delta: \Q_p^{\times} \rightarrow L^{\times}$ be a continuous character such that $\calR_{L}(\delta)$ is de Rham. Let $A\in \Ar_L$ and $\delta_A: \Q_p^{\times} \rightarrow A^{\times}$ be a continuous character lifting $\delta$. Then, $\calR_{A}(\delta_A)$ is de Rham if and only if $\wt(\delta_A) = \wt(\delta)$. 
\end{Proposition}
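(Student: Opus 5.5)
The plan is to reduce to a character that is trivial modulo $\frakm_A$, then to split into the Hodge--Tate--Sen part and the unramified part.

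\textbf{Reduction.} Write $\delta_A = \widetilde{\delta}\cdot \eta$, where $\widetilde{\delta}$ is the Teichmüller-type lift of $\delta$. Concretely, set $\widetilde{\delta} := \delta \otimes_L A$ via $L\hookrightarrow A$. Then $\eta := \delta_A\widetilde{\delta}^{-1}$ is a continuous character $\Q_p^\times \rightarrow 1+\frakm_A$. Since $\calR_A(\delta_A) = \calR_A(\widetilde{\delta})\otimes\calR_A(\eta)$, and $\calR_A(\widetilde{\delta}) = \calR_L(\delta)\otimes_L A$ is de Rham, the statement reduces to the following claim: $\calR_A(\eta)$ is de Rham if and only if $\wt(\eta)=0$. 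For this step I would use that de Rhamness over $A$ is stable under tensor products and duals (\cite[Sect. 2.2]{BC}). I would also use that $\wt$ is additive, $\wt(\delta_A)=\wt(\delta)+\wt(\eta)$, where $\wt(\chi) = \frac{\log\chi(\gamma)}{\log\chi_{\cyc}(\gamma)}$ for $\gamma\in\Z_p^\times$ near $1$, with the normalisation of \cite[Sect. 2.3.3]{BC}.

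\textbf{Necessity.} Suppose $\calR_A(\eta)$ is de Rham. Then $\calD_{\dR}(\calR_A(\eta))$ is free of rank one over $A$ and has a generator of the form $e\otimes t^{k}$. Reducing modulo $\frakm_A$, $\calR_L(1)$ has Hodge--Tate weight $0$, so $k=0$. Hence the Sen operator on $\calD_{\Sen}(\calR_A(\eta))$ kills an $A$-basis. But the Sen operator on a rank one module acts by the scalar $\wt(\eta)$, so $\wt(\eta)=0$.

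\textbf{Sufficiency.} Suppose $\wt(\eta)=0$. Then $\eta|_{\Z_p^\times}$ has $\log\eta(\gamma)=0$ on an open subgroup. Since $\eta$ takes values in $1+\frakm_A$, where $\log$ is injective because $1+\frakm_A$ is a torsion-free unipotent group in the artinian setting, $\eta|_{\Z_p^\times}$ is trivial on an open subgroup. Being valued in the pro-unipotent group $1+\frakm_A$, which has no nontrivial torsion in characteristic $0$, it is then trivial on all of $\Z_p^\times$. So $\eta$ is unramified, determined by $\eta(p)\in 1+\frakm_A$. Now $\calR_A(\eta)$ is the $(\varphi,\Gamma)$-module of an unramified character of $\Gal_{\Q_p}$, namely $\mathrm{Frob}\mapsto\eta(p)$, which is crystalline over $A$ with $\calD_{\cris}$ free of rank one. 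Hence it is de Rham, for example by \cite[Prop. 2.2.11-style arguments]{BC} or by \cite[Lemma 2.3.?]{BC} on $\calR_A(\delta)$ with $\delta|_{\Z_p^\times}=1$.

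\textbf{Main obstacle.} I expect the delicate point to be the necessity direction over a non-reduced $A$. There one must justify that a de Rham rank one $\calR_A$-module has $\calD_{\dR}$ free of rank one with the Hodge filtration jumping in a single degree. It then remains to relate this jump to the Sen weight \emph{as an element of $A$}, not only modulo $\frakm_A$. I would handle this by filtering $A$ by powers of $\frakm_A$ and using exactness of $\calD_{\dR}$ on de Rham objects, in the spirit of \cite[Sect. 2.2]{BC}. This shows that $t^{-k}\calD_{\dR}$ sits inside $\calD_{\Sen}$ as $\Gamma$-invariants spanning it, which forces the Sen scalar to be exactly $k=0$.
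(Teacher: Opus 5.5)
Your proof is correct, and it takes a different route from the paper.

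\textbf{Comparison with the paper.} The paper argues entirely by citation. Sufficiency is \cite[Proposition 2.3.4]{BC}. For necessity, it uses that the rank-one de Rham module $\calR_L(\delta)$ is potentially crystalline, precomposes with a norm map to pass to a finite extension where it becomes crystalline, and then invokes BC's rank-one crystalline deformation results (\cite[Lemma 2.5.2 (iii), Proposition 2.5.4]{BC}). You instead twist by the constant lift $\widetilde{\delta}$, which reduces everything to a character $\eta$ with values in $1+\frakm_A$. You then show directly that $\calR_A(\eta)$ is de Rham iff $\eta|_{\Z_p^\times}=1$ iff $\wt(\eta)=0$. This uses only the bijectivity of $\log\colon 1+\frakm_A\to\frakm_A$ and an explicit rank-one computation of $\calD_{\dR}$.

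Your route avoids the base change to a finite extension and the crystalline-period machinery, and it is self-contained. It also yields a sharper statement: the de Rham lifts of $\delta$ are exactly $\widetilde{\delta}$ times unramified characters valued in $1+\frakm_A$. The paper's version is shorter, but its content is entirely outsourced to BC.

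\textbf{One point to tighten in the necessity direction.} $\calD_{\dR}(\calR_A(\eta))$ does not have a generator of the form $e\otimes t^k$, since $e$ is not $\Gamma$-invariant unless $\eta$ is already unramified. What is true is the following.
\begin{itemize}
\item By d\'evissage along the powers of $\frakm_A$, every component of $t$-degree $j\neq 0$ has no $\Gamma$-invariants, because $(\Q_{p,\infty}(\chi_{\cyc}^j))^{\Gamma}=0$. Hence $\calD_{\dR}(\calR_A(\eta))=W^{\Gamma}$ with $W=(\Q_{p,\infty}\otimes_{\Q_p}A)\,e$.
\item The standard linear-independence argument shows that $\Q_{p,\infty}\otimes_{\Q_p}W^{\Gamma}\to W$ is injective. (Take a minimal $\Q_{p,\infty}$-relation among $\Q_p$-independent invariants and apply $\gamma-1$.)
\item De Rham-ness gives $\dim_{\Q_p}W^{\Gamma}=\dim_{\Q_p}A=\dim_{\Q_{p,\infty}}W$, so this map is an isomorphism.
\item Hence $e$ is a $\Q_{p,m}$-combination of invariants for some $m$, so it is fixed by an open subgroup of $\Gamma$. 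Thus $\eta$ is trivial near $1$, and $\wt(\eta)=0$ follows.
\end{itemize}
This settles your stated main obstacle without reduction modulo $\frakm_A$ or Sen-operator bookkeeping. The same injectivity also supplies the upper bound $\dim_{\Q_p}\calD_{\dR}\le\dim_{\Q_p}A$ that your sufficiency direction implicitly uses.
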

\begin{proof}
    Assuming $\wt(\delta_A) = \wt(\delta)$, then $\calR_A(\delta_A)$ is de Rham by \cite[Proposition 2.3.4]{BC}. 

    Conversely, since $\calR_L(\delta)$ is de Rham and of rank $1$, it is potentially crystalline. Hence, after pre-composing with the norm map $\mathrm{Nm}: F^\times \rightarrow \Q_p^{\times}$ if necessary (where $F$ is some finite field extension of $\Q_p$), we may apply \cite[Lemma 2.5.2 (iii)]{BC} (see also \cite[Proposition 2.5.4]{BC}) and see that the Hodge--Tate--Sen weight for $\calR_A(\delta_A)$ is exactly $\wt(\delta)$. 
\end{proof}

\begin{Remark}\label{Remark: deformation problems for non-maximal parabolics}
    Similarly as in Remark \ref{Remark: when the parabolic subgroup is not maximal}, we remark that one can also modify the deformation problems and discussions above when $P$ is not a maximal parabolic. In Sect. \ref{section: R=T}, we will be focusing on the case of $\GSp_4$, where the only proper parabolic subgroups are the Siegel parabolic, Klingen parabolic, and the Borel subgroups. As the Borel case can be well-studied by adapting the techniques in \cite{BC}, the interesting cases for us are the maximal parabolic subgroups in $\GSp_4$. Hence, we leave the general cases to the interested reader. 
\end{Remark}

\section{Small parabolic eigenvarieties for Siegel cuspforms of genus 2}\label{section: small par. eigenvar. for H0}
Starting from this section, we focus on the algebraic group $\GSp_4$. The goal of this section is to construct small parabolic eigenvarieties for Siegel cuspforms of genus $2$. We remark that the case when the parabolic is the Borel subgroup, the construction is already dealt in \cite{AIP-2015}. Hence, we shall focus more on the cases when the parabolic subgroup is either the Siegel parabolic or the Klingen parabolic subgroups. We also remark that a more general theory in the case of Klingen parabolic subgroup has been developed in \cite{Pilloni-higherHidaColemanGSp4} (except that Pilloni didn't construct the small parabolic eigenvariety), which highly inspired our discussion in that situation.

\subsection{Siegel threefolds and algebraic Siegel modular forms}\label{subsection: classical Siegel modular forms}
Let $\A$ be the ring of adèles of $\Q$. For a finite set $S$ of places of $\Q$, we denote the ring of adèles away from $S$ by $A^S$. Let $K = \prod_{\ell} K_{\ell}\subset \GSp_{4}(\A^{\infty})$ be a neat open compact subgroup with \[
    K_{\ell} \subset \left\{ \begin{array}{ll}
        \GSp_4(\Q_{\ell}), & \ell \neq p   \\
        \GSp_4(\Z_p), & \ell = p  
    \end{array}\right.
\]
open compact and $K_{\ell} = \GSp_4(\Z_{\ell})$ for almost all $\ell$. We write $K^p = \prod_{\ell \neq p}K_{\ell}$ so that $K = K^p K_p$. We assume $K^p$ is neat.\footnote{In fact, we may lossen this assumption. See Remark \ref{Remark: at non-neat level, classical forms} below. } We shall fix a choice of $K^p$ and choose a specific $K_p$ later in the paper. For later use, let $S_{\mathrm{bad}} = \{\ell: K_{\ell} \neq  \GSp_4(\Z_{\ell})\} \cup \{2, p\}$.

The (complex) Siegel threefold of level $K$ is the locally symmetric space \[
    X_{K}(\C) := \GSp_4(\Q) \backslash \GSp_4(\A^{\infty}) \times \bbH_2/ K,
\]
where $\bbH_2$ is the (disjoint) union of the Siegel upper- and lower-half plane. Since we fixed $K^p$, we may sometimes denote $X_{K}(\C)$ by $X_{K_p}(\C)$ to stress the choice of $K_p$. It is a well-known fact that $\dim_{\C} X_{K}(\C) = 3$.

It is well-known that $X_K(\C)$ admits a structure of algebraic variety $X_K$ over $\Q$, which can be interpreted as a moduli space parametrising principally polarised abelian surfaces together with a level-$K$ structure (\cite[Definition 1.4.1.4]{Lan-PhD}). Moreover, by choosing an auxiliary cone decomposition $\Sigma$, the algebraic variety $X_K$ admits a \emph{toroidal compactification} $X_K^{\tor}$ (depending on $\Sigma$) with the following properties (\cite[Chapter IV Theorem 6.7]{Faltings-Chai}): \begin{itemize}
    \item The exists an injective morphism $X_K \hookrightarrow X_K^{\tor}$ with Zariski open image. 
    \item The boundary $\partial X^{\tor}_K = X_K^{\tor} \smallsetminus X_K$ is a normal crossing divisor. By equipping $X_K^{\tor}$ the log structure given by $\partial X_K^{\tor}$, we may view $X_K^{\tor}$ as an fs log scheme. 
    \item The universal abelian variety $A^{\univ} \rightarrow X_K$ extends to a tautological semiabelian scheme $\pi: G^{\univ} \rightarrow X_{K}^{\tor}$. We denote the identity section by $e$.
\end{itemize}
In fact, we may fix a choice of $\Sigma$ for $X_{\GSp_4(\Z_p)}$, applying \cite[Theorem 7.6]{Illusie}, and get a finite Kummer étale morphism of fs log schemes \[
    X_{K_p}^{\tor} \rightarrow X_{\GSp_4(\Z_p)}^{\tor}
\]
for any $K_p \subset \GSp_4(\Z_p)$. In what follow, we base change $X_K^{\tor}$ and $X_K$ to $\Q_p$, but still use the same notations to denote them.\footnote{ Many of the discussions below do not require this base change. We do it here just to simplify the future notations.} 

Define \[
    \underline{\omega} \coloneq e^* \Omega_{G^{\univ}/X_K^{\tor}}^1.
\]
For any $k = (k_1, k_2)\in \Z^2$ with $k_1 \geq k_2$, the automorphic sheaf of weight $k$ is defined to be \[
    \underline{\omega}^k \coloneq \mathrm{Sym}^{k_1-k_2}\underline{\omega} \otimes (\det \underline{\omega})^{k_2}.
\] 
We also define the cuspidal automorphic sheaf to be \[
    \underline{\omega}_{\cusp}^{k} \coloneq \underline{\omega}^{k}(-\partial X_K^{\tor})
\]
Then, classical genus-$2$ Siegel modular forms of weight $k$ level $K$ are elements of the space \[
    M_k(K) \coloneq H^0(X_{K}^{\tor}, \underline{\omega}^k). 
\] 
We also define the space of Siegel cuspforms to be \[
    S_k(K) \coloneq H^0(X_K^{\tor}, \underline{\omega}^{k}_{\cusp}).
\]
Similarly as before, since we shall fix a level $K^p$ away from $p$, we may sometimes denote $M_k(K)$ (resp., $S_k(K)$) by $M_k(K_p)$ (resp., $S_k(K_p)$).

We now discuss the Hecke action on $M_k(K)$ and $S_k(K)$. Given any prime number $\ell \neq p$\footnote{ We shall discuss the situation when $\ell = p$ in Sect. \ref{subsection: finite-slope parts, H0}} and any double coset $[K_{\ell} \bfdelta K_{\ell}] \in K_{\ell} \backslash \GSp_4(\Q_{\ell})/K_{\ell}$ with representative $\bfdelta \in \GSp_4(\Q_{\ell})$, we have the correspondence \begin{equation}\label{eq: Hecke correspondence away from p}
    \begin{tikzcd}
        & X_{K \cap \bfdelta^{-1} K \bfdelta}   \arrow[ld, "\pr_1"']\arrow[rd, "\pr_2"]\\
        X_{K} && X_{K}
    \end{tikzcd}, 
\end{equation}
where $\pr_2$ is the composition of $\bfdelta^{-1}: X_{K \cap \bfdelta^{-1} K \bfdelta} \rightarrow X_{\bfdelta K \bfdelta^{-1} \cap K}$ and the natural projection $X_{\bfdelta K \bfdelta^{-1} \cap K} \rightarrow X_K$. This diagram extends to the toroidal compactifications after a suitable choices of cone decompositions in the construction of the toroidal compactifications. That is, we have \begin{equation}\label{eq: Hecke correspondence away from p, toroidal}
    \begin{tikzcd}
        & X_{K \cap \bfdelta^{-1} K \bfdelta}^{\tor, \Sigma''}   \arrow[ld, "\pr_1"']\arrow[rd, "\pr_2"]\\
        X_{K}^{\tor, \Sigma} && X_{K}^{\tor, \Sigma'}
    \end{tikzcd}, 
\end{equation}
where $\Sigma, \Sigma', \Sigma''$ are the corresponding (suitable) cone decompositions. Away from the boundary, there is an isomorphism $\pr_2^* \underline{\omega}^k|_{X_{K \cap \bfdelta^{-1} K \bfdelta}} \rightarrow \pr_1^* \underline{\omega}^k|_{X_{K \cap \bfdelta^{-1} K \bfdelta}}$ (induced by the action of $\bfdelta$). This isomorphism extends to a morphism $\pr_2^{*}\underline{\omega}^k \rightarrow \pr_1^* \underline{\omega}^k$ (\cite[Sect. 4.2.1]{BP-HigherColeman}). Consequently, we define the operator $T_{\bfdelta}$ by the composition \[
    \begin{tikzcd}
        T_{\bfdelta}: R\Gamma(X_{K}^{\tor, \Sigma'}, \underline{\omega}^k) \arrow[r] & R\Gamma(X_{K \cap \bfdelta^{-1} K \bfdelta}^{\tor, \Sigma''}, \pr_2^* \underline{\omega}^k) \arrow[r] & R\Gamma(X_{K \cap \bfdelta^{-1} K \bfdelta}^{\tor, \Sigma''}, \pr_1^* \underline{\omega}^k)\arrow[dl, out = -10, in = 170]\\
        & R\Gamma(X_K^{\tor, \Sigma}, R\pr_{1, *}\pr_1^* \underline{\omega}^k) \arrow[r, "\mathrm{tr}"] & R\Gamma(X_{K}^{\tor, \Sigma}, \underline{\omega}^k)
    \end{tikzcd}.
\] 
Moreover, we can choose a cone decomposition $\Sigma'''$ finer than $\Sigma$ and $\Sigma'$ and so we have morphisms \[
    X_{K}^{\tor, \Sigma'} \xleftarrow{\pi^{\Sigma'''}_{\Sigma}} X_K^{\tor, \Sigma'''} \xrightarrow{\pi^{\Sigma'''}_{\Sigma}} X_K^{\tor, \Sigma}.
\]
By the discussions in \cite[Sect. 2]{Harris-partial}, we see that \begin{itemize}
    \item $\pi_{\Sigma}^{\Sigma''', *}\underline{\omega}^k = \underline{\omega}$ (on $X_K^{\tor, \Sigma'''}$); 
    \item $R^i\pi^{\Sigma'''}_{\Sigma, *} \underline{\omega}^k = \left\{\begin{array}{ll}
        \underline{\omega}^k, & i=0 \\
        0, & i>0 
    \end{array} \right.$ (on $X_K^{\tor, \Sigma}$);
    \item same for $\pi_{\Sigma'}^{\Sigma'''}$.
\end{itemize}
Hence, we have quasi-isomorphisms \[
    R\Gamma(X_{K}^{\tor, \Sigma}, \underline{\omega}^k) \rightarrow R\Gamma(X_K^{\tor, \Sigma'''}, \underline{\omega}^k) \rightarrow R\Gamma(X_K^{\tor, \Sigma'}, R\pi^{\Sigma'''}_{\Sigma', *}\pi_{\Sigma'}^{\Sigma''', *}\underline{\omega}^k) \rightarrow R\Gamma(X_{K}^{\tor, \Sigma'}, \underline{\omega}^k). 
\]
By composing $T_{\bfdelta}$ with these quasi-isomorphisms, we obtain an endomorphism, which we abuse the notation and still call it $T_{\bfdelta}$, \[
    T_{\bfdelta}: R\Gamma(X_{K}^{\tor, \Sigma}, \underline{\omega}^k) \rightarrow R\Gamma(X_{K}^{\tor, \Sigma}, \underline{\omega}^k). 
\]
As a result, we shall later drop the the $\Sigma$ in the notation.  In particular, we have the induced operator, still denoted by $T_{\bfdelta}$, on $M_k(K)$. The same construction works also for $\underline{\omega}^k_{\cusp}$ and $S_k(K)$.

Suppose $\ell$ is a prime such that $K_{\ell} = \GSp_4(\Z_{\ell})$. In this situation, we consider the spherical Hecke algebra (over $\Z_p$) \[
    \bbT^{\mathrm{sph}}_{\ell} \coloneq \Z_p[\GSp_4(\Z_{\ell}) \backslash \GSp_4(\Q_{\ell}) / \GSp_4(\Z_{\ell})].
\]
Recall the Satake isomorphism \[
    \mathrm{Sat}: \bbT_{\ell}^{\mathrm{sph}} \xrightarrow{\cong} \Z_p[T(\Z_{\ell}) \backslash T(\Q_{\ell}) / T(\Z_{\ell})]^{W_{\GSp_4}},
\]
where $W_{\GSp_4}$ is the Weyl group of $\GSp_4$. For any $\bftau \in T(\Q_{\ell})$, denote by $[\bftau]$ the corresponding element in $\Z_p[T(\Z_{\ell}) \backslash T(\Q_{\ell}) / T(\Z_{\ell})]$.  We consider \[
    \begin{array}{l}
        \bft_{\ell, 0} = \diag(1, 1, \ell, \ell)\\
        \bft_{\ell, 1} = \diag(1, \ell, 1, \ell)\\
        \bft_{\ell, 2} = \diag( \ell,1,\ell,1)\\
        \bft_{\ell, 3} = \diag(\ell, \ell, 1, 1)
    \end{array} \in T(\Q_{\ell}).
\]
Following \cite[\S 3]{GT-TWGSp4}, we define the \textbf{\textit{Hecke polynomial}} at $\ell$ to be \begin{equation}\label{eq: Hecke polynomial}
    P_{\mathrm{Hecke}, \ell} := \prod_{i=0}^3 (X - [\bft_{\ell, i}]) \in \Z_p[T(\Z_{\ell}) \backslash T(\Q_{\ell}) /T(\Z_{\ell})]^{W_{\GSp_4}}[X] \cong \bbT_{\ell}^{\mathrm{sph}}[X].
\end{equation}

\begin{Remark}\label{Remark: Hecke polynomial}
    The definition of Hecke polynomials in \cite{GT-TWGSp4} is more conceptual, whose definition makes use of the action of the Weyl group on the torus $T$. After an easy computation, one sees that our definition of Hecke polynomials is nothing but an explicit expression of theirs.  
\end{Remark}

\begin{Remark}\label{Remark: at non-neat level, classical forms}
    The discussion above also works when $K^p \subset \GSp_4(\A^{p, \infty})$ is not neat but contains a neat normal subgroup. Indeed, in this situation, let $\widetilde{K}^p$ be a neat normal subgroup in $K^p$ and write $\widetilde{K} = \widetilde{K}^p K_p$; hence $K/\widetilde{K}$ is a finite subgroup. Then, we define \[
        M_k(K) \coloneq M_k(\widetilde{K})^{K/\widetilde{K}}\quad \text{ and }\quad S_k(K) \coloneq S_k(\widetilde{K})^{K /\widetilde{K}}.
    \]
    One observes that the spherical Hecke actions commute with the action by $K/\widetilde{K}$. Note that the quotient $X_K^{\tor} \coloneq X_{\widetilde{K}}^{\tor}/(K/\widetilde{K})$ exists as a scheme by \cite[Exposé V, Proposition 1.8]{SGA1}, but we do not know if the tautological semiabelian scheme $G^{\univ}$ descends to $X_K^{\tor}$. 
\end{Remark}

\subsection{Finite-slope parts}\label{subsection: finite-slope parts, H0}
Choose an open compact subgroup $K_p \subset \GSp_4(\Z_p)$. Given $k = (k_1, k_2)\in \Z^2$ with $k_1 \geq k_2$, consider $M_k(K_p)$ and $S_k(K_p)$ as before. The purpose of this subsection is to study the these spaces when $K_p$ varies. In particular, we prove Proposition \ref{Proposition: finite-slope part it independent to the level at p, H0}. Our discussion in this subsection is highly inspired by \cite[Sect. 4.2]{BP-HigherColeman}.

To this end, let $\bfitu \in \GSp_4(\Q_p)$ and let $K_{p,1}$, $K_{p,2}$ be two open compact subgroups of $\GSp_4(\Z_p)$. Consider the free group $\Z[K_{p,1}\backslash \GSp_4(\Q_p) /K_{p, 2}]$ with basis elements $[K_{p, 1} \bfitu K_{p,2}]\in K_{p,1}\backslash \GSp_4(\Q_p) /K_{p, 2}$. We embed $\Z[K_{p,1}\backslash \GSp_4(\Q_p) /K_{p, 2}]$ into the convolution algebra $C_c^{\infty}(\GSp_4(\Q_p), \R)$ via \[
    \iota_{K_{p,1}, K_{p,2}}: \Z[K_{p,1}\backslash \GSp_4(\Q_p) /K_{p, 2}] \rightarrow C_c^{\infty}(\GSp_4(\Q_p), \R), \quad [K_{p,1}\bfitu K_{p, 2}] \mapsto \frac{1}{\sqrt{\mathrm{vol}(K_{p, 1})\mathrm{vol}(K_{p,2})}} \boldsymbol{1}_{K_{p,1}\bfitu K_{p, 2}},
\]
where $\boldsymbol{1}_{K_{p,1}\bfitu K_{p, 2}}$ denotes the characteristic function of the double coset $K_{p,1}\bfitu K_{p, 2}$. 

Given open compact subgroups $K_{p,1}, K_{p,2}, K_{p,3}\subset \GSp_4(\Z_p)$, there is a multiplication map \begin{align*}
    \Z[K_{p, 1}\backslash \GSp_4(\Q_p) /K_{p, 2}] \times \Z[K_{p, 2}\backslash \GSp_4(\Q_p) /K_{p, 3}] & \rightarrow \Z[K_{p, 1} \backslash \GSp_4(\Q_p) /K_{p, 3}]\\
    ([K_{p, 1}\bfitu_1 K_{p, 2}], [K_{p, 2}\bfitu_2 K_{p, 3}]) & \mapsto [K_{p, 1}\bfitu_1K_{p, 2}][K_{p, 2}\bfitu_2 K_{p, 3}],
\end{align*}
which can be defined by using the embeddings $\iota_{K_{p, i}, K_{p, j}}$ as follows \[
    \iota_{K_{p, 1}, K_{p, 3}}([K_{p, 1}\bfitu_1K_{p, 2}][K_{p, 2}\bfitu_2 K_{p, 3}]) \coloneq \iota_{K_{p, 1}, K_{p, 2}}([K_{p, 1}\bfitu_1K_{p, 2}]) \star \iota_{K_{p, 2}, K_{p, 3}}([K_{p, 2}\bfitu_2 K_{p, 3}]),
\]
where the operator $\star$ is the (usual) convolution on $C_c^{\infty}(\GSp_4(\Q_p), \R)$. To check that the right-hand side lies in the image of $\iota_{K_{p, 1}, K_{p, 3}}$ and for more detailed discussions, we refer the readers to \cite[Sect. 4.2.2]{BP-HigherColeman}.

The following lemma is inspired by \cite[Lemma 4.2.13]{BP-HigherColeman}. 

\begin{Lemma}\label{Lemma: condition for composing Hecke operators naively}
    Let $K_{p, 1}$, $K_{p, 2}$, $K_{p, 3}$ be subgroups of $\GSp_4(\Z_p)$ having parahoric decompositions \[
        K_{p, i} = N_i^{\opp} M_i N_i.
    \]
    Let $\bfitu_1, \bfitu_2\in T_{\GSp_4}(\Q_p)$ such that \begin{itemize}
        \item $\bfitu_1^{-1} N_1^{\opp} \bfitu_1 \cap \bfitu_2 N_3^{\opp} \bfitu_2^{-1} \subset N_2^{\opp} \subset \bfitu_1^{-1} N_1^{\opp} \bfitu_1$;
        \item $\bfitu_1^{-1} N_1 \bfitu_1 \cap \bfitu_2 N_3 \bfitu_2^{-1} \subset N_2 \subset \bfitu_2 N_3 \bfitu_2^{-1}$;
        \item for $i=1,2,3$, $j=1,2$, $\bfitu_j M_i \bfitu_j^{-1} = M_i$ and $\bfitu_j^{-1}M_i \bfitu_j = M_i$;
        \item $M_1 \cap M_3 \subset M_2 \subset M_1 M_3$.
    \end{itemize}
    Then \[
        [K_{p,1} \bfitu_1 K_{p, 2}][K_{p, 2} \bfitu_2 K_{p, 3}] = [K_{p, 1} \bfitu_1\bfitu_2 K_{p, 3}].
    \]
\end{Lemma}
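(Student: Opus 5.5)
Both sides are compared in the convolution algebra via the embeddings $\iota_{K_{p,i},K_{p,j}}$. Since everything is normalised by $\sqrt{\mathrm{vol}\cdot\mathrm{vol}}$, the identity reduces to two statements. First, the product set $K_{p,1}\bfitu_1K_{p,2}\bfitu_2K_{p,3}$ equals the single double coset $K_{p,1}\bfitu_1\bfitu_2K_{p,3}$. Second, the convolution $\boldsymbol{1}_{K_{p,1}\bfitu_1K_{p,2}}\star\boldsymbol{1}_{K_{p,2}\bfitu_2K_{p,3}}$ is the constant
\[
\mathrm{vol}(K_{p,2})\cdot\frac{\sqrt{\mathrm{vol}(K_{p,1})\mathrm{vol}(K_{p,3})}}{\mathrm{vol}(K_{p,2})\cdot\sqrt{\cdots}}
\]
on it. Equivalently, one checks that the multiplication map $K_{p,1}\bfitu_1K_{p,2}\times^{K_{p,2}}K_{p,2}\bfitu_2K_{p,3}\to K_{p,1}\bfitu_1\bfitu_2K_{p,3}$ is a bijection. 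The volume bookkeeping is the same as in \cite[Lemma 4.2.13]{BP-HigherColeman}, and I follow that argument.

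\textbf{Step 1 (single coset).} It suffices to show $\bfitu_1K_{p,2}\bfitu_2\subset K_{p,1}\bfitu_1\bfitu_2K_{p,3}$. Write $k\in K_{p,2}$ as $k=n^-mn$ using the parahoric decomposition of $K_{p,2}$, with $n^-\in N_2^{\opp}$, $m\in M_2$, $n\in N_2$. By the first bullet, $\bfitu_1n^-\bfitu_1^{-1}\in N_1^{\opp}\subset K_{p,1}$, so this factor moves to the left. By the second bullet, $\bfitu_2^{-1}n\bfitu_2\in N_3\subset K_{p,3}$, so this factor moves to the right. For $m\in M_2\subset M_1M_3$, write $m=m_1m_3$; since $\bfitu_j$ normalises $M_i$ (third bullet), $m_1$ moves left past $\bfitu_1$ into $K_{p,1}$ and $m_3$ moves right past $\bfitu_2$ into $K_{p,3}$. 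Together these give the inclusion.

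\textbf{Step 2 (injectivity / multiplicity one).} Suppose $a\bfitu_1k\bfitu_2b=a'\bfitu_1k'\bfitu_2b'$. Then $g:=\bfitu_1^{-1}K_{p,1}\bfitu_1\cap\bfitu_2K_{p,3}\bfitu_2^{-1}$ contains the relevant discrepancy element, and I need to show that this intersection lies in $K_{p,2}$. I decompose the intersection using the Iwahori-type decompositions: because $\bfitu_j$ lies in the torus and normalises the $M_i$, conjugation preserves the factors $N^{\opp}$, $M$, $N$. By uniqueness of the factorisation in the big cell, the intersection decomposes as
\[
\bigl(\bfitu_1^{-1}N_1^{\opp}\bfitu_1\cap\bfitu_2N_3^{\opp}\bfitu_2^{-1}\bigr)\,(M_1\cap M_3)\,\bigl(\bfitu_1^{-1}N_1\bfitu_1\cap\bfitu_2N_3\bfitu_2^{-1}\bigr).
\]
The left inclusions in the first, second and fourth bullets then place this product in $N_2^{\opp}M_2N_2=K_{p,2}$. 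This shows that the number of $K_{p,2}$-classes over each point is one.

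\textbf{Main obstacle.} The hard part is Step 2: proving that the intersection of two conjugated parahorics inherits the triple decomposition. This needs uniqueness of the $N^{\opp}MN$ factorisation inside the open cell, which holds because all groups involved lie in the big cell of $\GSp_4$ relative to the fixed parabolic. After that, comparing normalisation constants (counting cosets $K_{p,1}\bfitu_1K_{p,2}/K_{p,2}$, and so on) is routine and matches the factors in the definition of $\iota$.
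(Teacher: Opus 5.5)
Your strategy is the paper's. Its proof also reduces the identity to two group-theoretic facts and then invokes \cite[Proposition 4.2.3]{BP-HigherColeman}. Writing $A=\bfitu_1^{-1}K_{p,1}\bfitu_1$ and $B=\bfitu_2K_{p,3}\bfitu_2^{-1}$, the two facts are the factorisation $K_{p,2}=(K_{p,2}\cap A)(K_{p,2}\cap B)$ and the containment $A\cap B\subseteq K_{p,2}$.

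Your Step 1 correctly gives $K_{p,2}\subseteq AB$, so the support is a single double coset. The big-cell argument in Step 2 correctly proves $A\cap B\subseteq K_{p,2}$. The gap is the sentence concluding multiplicity one, not the decomposition you flag as the main obstacle. Suppose $a\bfitu_1k\bfitu_2b=a'\bfitu_1k'\bfitu_2b'$. Then $\alpha=\bfitu_1^{-1}a'^{-1}a\bfitu_1\in A$ satisfies $\alpha=k'\beta k^{-1}$ with $\beta=\bfitu_2b'b^{-1}\bfitu_2^{-1}\in B$. So $\alpha$ lies in $A\cap K_{p,2}BK_{p,2}$, not in $A\cap B$, and what you need is $A\cap K_{p,2}BK_{p,2}\subseteq K_{p,2}$.

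That inclusion follows if you can write $k'=a_1b_1$ and $k^{-1}=b_2a_2$ with $a_i\in K_{p,2}\cap A$ and $b_i\in K_{p,2}\cap B$. Then $a_1^{-1}\alpha a_2^{-1}=b_1\beta b_2\in A\cap B\subseteq K_{p,2}$, hence $\alpha\in K_{p,2}$. Your factorisation $k=n^-m_1m_3n$ does not supply this, because $m_1\in M_1$ and $m_3\in M_3$ need not lie in $M_2$. It only gives $\alpha\in a_1K_{p,2}a_2$ with $a_1,a_2\in A$ arbitrary.

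Moreover, the missing splitting cannot be extracted from the hypotheses as stated. On Levi factors your argument would need $M_2=(M_2\cap M_1)(M_2\cap M_3)$, which does not follow from $M_1\cap M_3\subseteq M_2\subseteq M_1M_3$. In fact the lemma fails as written. Take $p$ odd, $\bfitu_1=\bfitu_2=1$, and let $K_{p,i}$ be the preimage in $\GSp_4(\Z_p)$ of a subgroup $\overline{M}_i\subset T(\Z/p^2\Z)$. In the coordinates $(\tau_1,\tau_2,\tau_0)$, with $\mu_{p-1}\subset(\Z/p^2\Z)^\times$ the subgroup of order $p-1$, set $\overline{M}_1=\mu_{p-1}\times1\times1$, $\overline{M}_2=\{(\zeta,\zeta,1)\}$ and $\overline{M}_3=1\times\mu_{p-1}\times1$. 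These groups have Iwahori decompositions with common unipotent parts $N_{B,2}^{\opp}$ and $N_{B,2}$, and all four bullets hold. However, $K_{p,1}K_{p,2}=K_{p,2}K_{p,3}=K_{p,1}K_{p,3}=K$, the preimage of $\mu_{p-1}^2\times1$. Hence the left-hand side is $[K:K_{p,2}]=p-1$ times the right-hand side.

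The paper's proof asserts the factorisation as a consequence of the bullets without argument, so it shares this gap. Strengthen the fourth bullet to $M_1\cap M_3\subseteq M_2=(M_2\cap M_1)(M_2\cap M_3)$. This holds, for example, when $M_2\subseteq M_1$ or $M_2\subseteq M_3$, in particular when $M_1=M_2=M_3=M_P(\Z_p)$ as in Proposition \ref{Proposition: finite-slope part it independent to the level at p, H0}. Then your Step 1 yields the required splitting, with $n^-m_{12}\in K_{p,2}\cap A$ and $m_{23}n\in K_{p,2}\cap B$, and with the argument above the proof goes through.
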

\begin{proof}
    The conditions imply that \[
        K_{p,2} = (K_{p,2} \cap \bfitu_1^{-1} K_{p, 1} \bfitu_1)(K_{p,2} \cap \bfitu_2 K_{p,3} \bfitu_2^{-1})
    \] 
    and \[
        \bfitu_1^{-1} K_{p, 1} \bfitu_1 \cap \bfitu_2 K_{p,3} \bfitu_2^{-1} \subset K_{p,2}.
    \]
    Therefore, the double quotient \[
        (K_{p,2} \cap \bfitu_1^{-1} K_{p, 1} \bfitu_1) \backslash K_{p,2} / (K_{p,2} \cap \bfitu_2 K_{p,3} \bfitu_2^{-1})
    \] is trivial and the index \[
        [\bfitu_1^{-1} K_{p, 1} \bfitu_1 \cap \bfitu_2 K_{p,3} \bfitu_2^{-1} : K_{p,2} \cap \bfitu_1^{-1} K_{p, 1} \bfitu_1 \cap \bfitu_2 K_{p,3} \bfitu_2^{-1}] =1.
    \]
    We then conclude the result by applying \cite[Proposition 4.2.3]{BP-HigherColeman}. 
\end{proof}

\begin{Remark}\label{Remark: condition for composing Hecke operators is independent to GSp}
    One sees that the group $\GSp_4$ plays no special role in Lemma \ref{Lemma: condition for composing Hecke operators naively}. Indeed, as in \cite[Sect. 4.2]{BP-HigherColeman}, one can generalise this lemma to other algebraic groups. We leave such generalisation to the readers. 
\end{Remark}

For $\bfitu\in \GSp_4(\Q_p)$, we again choose suitable cone decompositions $\Sigma$, $\Sigma'$, $\Sigma''$ and consider the correspondence \[
    \begin{tikzcd}
        & X_{K_{p, 1} \cap \bfitu^{-1} K_{p,2} \bfitu}^{\tor, \Sigma''}\arrow[ld, "\pr_1"'] \arrow[rd, "\pr_2"]\\
        X_{K_{p,1}}^{\tor, \Sigma} && X_{K_{p,2}}^{\tor, \Sigma'}
    \end{tikzcd}
\] 
similarly as before. 
Then, the basis element $[K_{p,1}\bfitu K_{p,2}]$ defines the Hecke operator \[ 
    \begin{tikzcd}
        R\Gamma(X_{K_{p,2}}^{\tor, \Sigma'}, \underline{\omega}^k) \arrow[r, "\pr_2^*"]\arrow[rrdd, bend right = 20, "\text{$[K_{p,1}\bfitu K_{p,2}]$}"'] & R\Gamma(X_{K_{p, 1} \cap \bfitu^{-1} K_{p,2} \bfitu}^{\tor, \Sigma''}, \pr_2^* \underline{\omega}^k) \arrow[r] & R\Gamma(X_{K_{p, 1} \cap \bfitu^{-1} K_{p,2} \bfitu}^{\tor, \Sigma''}, \pr_1^* \underline{\omega}^k) \arrow[d]\\
        && R\Gamma(X_{K_{p,1}}^{\tor, \Sigma}, R\pr_{1, *}\pr_1^* \underline{\omega}^k)\arrow[d, "\mathrm{tr}"]\\
        && R\Gamma(X_{K_{p,1}}^{\tor, \Sigma},  \underline{\omega}^k)
    \end{tikzcd}
\] 
Again, using the same trick, one obtains 
\begin{equation}\label{eq: Hecke operator at p; changing level}
    [K_{p,1}\bfitu K_{p,2}]: R\Gamma(X_{K_{p,2}}^{\tor, \Sigma}, \underline{\omega}^k) \rightarrow R\Gamma(X_{K_{p,1}}^{\tor, \Sigma}, \underline{\omega}^k)
\end{equation}
and so we shall drop the $\Sigma$ from the notation. We shall again abuse the notation and still use $[K_{p,1}\bfitu K_{p,2}]$ for the operator defined similarly for $\underline{\omega}^k_{\cusp}$ and on $H^0$'s.  
Note that if $\bfitu$ is the identity and $K_{p,2}\subset K_{p,1}$, the map $[K_{p,2} \bfitu K_{p,1}]$ is nothing but the trace map.

In what follows, we will focus on the following matrices in $T_{\GSp_4}(\Q_p) \subset \GSp_4(\Q_p)$:  \[
    \bfitu_{P_{\mathrm{Si}}} = \bfitu_{\mathrm{Si}} \coloneq \diag(1, 1, p, p), \quad \bfitu_{P_{\mathrm{Kl}}} = \bfitu_{\mathrm{Kl}} \coloneq \diag(1, p, p, p^2), \quad \text{ and }\quad \bfitu_B \coloneq \diag(1, p, p^2, p^3).
\]
For $P \in \{B, P_{\mathrm{Si}}, P_{\mathrm{Kl}}\}$, if $K_{p,1} = K_{p, 2}$ and when the context is clear, we will simplify the notation and denote the (normalised) operator $\frac{1}{p^\nu}[K_{p, 1}\bfitu_P K_{p, 1}]$ by $U_P$, where \[
    \nu = \left\{ \begin{array}{ll}
        3, & P\neq B\\
        6, & P=B
    \end{array} \right. .
\]

\begin{Definition}\label{Definition: finite slope part of the Siegel modular forms}
    Keep the notations as above. Fix $P\in \{B, P_{\mathrm{Si}}, P_{\mathrm{Kl}}\}$. The \textbf{$P$-finite-slope part} of $M_k(K_p)$ is defined to be \[
        M_k(K_p)^{P-\fs} \coloneq M_k(K_p)\otimes_{\Q_p[U_P]}\Q_p[U_P, U_P^{-1}].
    \]
    Similar definition applies to obtain the $P$-finite-slope part $S_k(K_p)^{P-\fs}$ of $S_k(K_p)$.
\end{Definition}

Fix $P\in \{B, P_{\mathrm{Si}}, P_{\mathrm{Kl}}\}$. We begin by defining the parahoric subgroup at $p$, namely \begin{equation}\label{eq: parahoric decomposition}
    \Iw_P := \left\{ \bfgamma \in \GSp_4(\Z_p) : (\bfgamma \mod p)\in P(\Z/p\Z) \right\}.
\end{equation}
When $P = B$, this $p$-adic group is nothing but the usual Iwahori subgroup associated with $\GSp_4$. The parahoric subgroup $\Iw_P$ admits an Iwahori (or parahoric) decomposition \[
    \Iw_P = N^{\opp}_{P, 1} M_P(\Z_p) N_{P, 0}, 
\]
where, for $?\in \{\emptyset, \opp\}$, \[
    N_{P, m}^{?} = \left\{ \begin{array}{ll}
        \ker\left(N^{?}_{P}(\Z_p) \xrightarrow{\mod p^m} N^{?}_{P}(\Z/p^m\Z)\right), & \text{ if }m\in \Z_{>0}  \\
        N^?_{P}(\Z_p), & \text{ if }m=0
    \end{array} \right. .
\] 
One can easily check that \[
    \bfitu_P N_{P, m}^{\opp} \bfitu_P^{-1} \subset N_{P, m+1}^{\opp} \subset N_{P, m}^{\opp}
\]
and \[
    \bfitu_P^{-1}N_{P, m}\bfitu_P \subset N_{P, m+1}\subset N_{P, m}.
\]

The following proposition is inspired by \cite[Corollary 4.2.16]{BP-HigherColeman}.

\begin{Proposition}\label{Proposition: finite-slope part it independent to the level at p, H0}
    Fix $k = (k_1, k_2)\in \Z^2$ with $k_1\geq k_2$ and $P\in \{B, P_{\mathrm{Si}}, P_{\mathrm{Kl}}\}$. Let $K_{p} \subset \Iw_P$ be a compact open subgroup with the induced parahoric decomposition \[
        K_p = N_{K_p}^{\opp} M_{K_p} N_{K_p}, 
    \]
    where $M_{K_p} = K_p \cap M_P(\Z_p)$, $N_{K_p}  = K_p \cap N_{P, 0}$, and $N_{K_p}^{\opp} = K_p \cap N_{P, 1}^{\opp}$. Suppose the following conditions hold: \begin{itemize}
        \item[(i)] there exists $m\in \Z_{>0}$ such that $\bfitu_P N_{P, m}^{\opp} \bfitu_P^{-1} \subset N_{K_p}^{\opp} \subset N_{P, m}^{\opp}$; 
        \item[(ii)] $\bfitu_P^{-1} N_{K_p} \bfitu_P \subset N_{P,0} \subset N_{K_p} \subset \bfitu_P N_{P,0} \bfitu_P^{-1}$; and 
        \item[(iii)] $M_{K_p} = M(\Z_p)$.
    \end{itemize} Then, there are natural isomorphisms \[
        M_k(\Iw_P)^{P-\fs} \cong M_k(K_p)^{P-\fs} \quad \text{ and }\quad S_k(\Iw_P)^{P-\fs} \cong S_k(K_p)^{P-\fs}.
    \]
\end{Proposition}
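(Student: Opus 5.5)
The plan is to build mutually inverse maps between finite-slope parts out of the level-changing Hecke operators \eqref{eq: Hecke operator at p; changing level}, following \cite[Corollary 4.2.16]{BP-HigherColeman}. Consider
\[
    \alpha \coloneq [K_p\, \bfitu_P\, \Iw_P] : M_k(\Iw_P) \rightarrow M_k(K_p), \qquad \beta \coloneq [\Iw_P\, \bfitu_P\, K_p] : M_k(K_p) \rightarrow M_k(\Iw_P),
\]
suitably normalised by $p^{-\nu}$. The aim is to prove, via Lemma \ref{Lemma: condition for composing Hecke operators naively}, the two identities
\[
    [K_p \bfitu_P \Iw_P][\Iw_P \bfitu_P K_p] = [K_p \bfitu_P^2 K_p] = [K_p \bfitu_P K_p]^2, \qquad [\Iw_P \bfitu_P K_p][K_p \bfitu_P \Iw_P] = [\Iw_P \bfitu_P^2 \Iw_P] = [\Iw_P \bfitu_P \Iw_P]^2 .
\]
That is, $\alpha\beta = U_P^2$ on $M_k(K_p)$ and $\beta\alpha = U_P^2$ on $M_k(\Iw_P)$. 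The maps $\alpha,\beta$ also intertwine the $U_P$-actions: $\alpha U_P = U_P\alpha$ and $\beta U_P = U_P\beta$, which is again a double-coset identity of the form $[K_p\bfitu_P\Iw_P][\Iw_P\bfitu_P\Iw_P] = [K_p\bfitu_P^2\Iw_P] = [K_p\bfitu_PK_p][K_p\bfitu_P\Iw_P]$.

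Granting these identities, $\alpha$ and $\beta$ induce $\Q_p[U_P,U_P^{-1}]$-linear maps between the finite-slope parts. Since $U_P^2$ becomes invertible after localising, $\alpha$ and $\beta$ are isomorphisms, with inverses $U_P^{-2}\beta$ and $U_P^{-2}\alpha$ respectively. The cuspidal case is identical, because all the correspondences preserve $\underline{\omega}^k_{\cusp}$. The isomorphism is natural in the sense that it is given by Hecke correspondences. If one prefers the trace and pullback maps, one can instead rewrite the composites as $U_P\circ(\text{pullback})$ and similar expressions.

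The main work, and the expected obstacle, is checking the hypotheses of Lemma \ref{Lemma: condition for composing Hecke operators naively} for each composite, with appropriate choices of $(K_{p,1},K_{p,2},K_{p,3},\bfitu_1,\bfitu_2)$. For $[K_p\bfitu_P\Iw_P][\Iw_P\bfitu_PK_p]$, the middle group is $\Iw_P$. We need
\[
    \bfitu_P^{-1}N_{K_p}^{\opp}\bfitu_P \cap \bfitu_P N_{K_p}^{\opp}\bfitu_P^{-1} \subset N_{P,1}^{\opp} \subset \bfitu_P^{-1}N_{K_p}^{\opp}\bfitu_P .
\]
This follows from condition (i), since $\bfitu_P^{-1}(\bfitu_PN^{\opp}_{P,m}\bfitu_P^{-1})\bfitu_P = N^{\opp}_{P,m}$ and conjugation by $\bfitu_P^{-1}$ enlarges $N^{\opp}$, so that $m$ steps of contraction land inside. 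The unipotent conditions are handled with condition (ii). The Levi conditions hold because $\bfitu_P$ is central in $M_P$ and condition (iii) gives $M_{K_p}=M_P(\Z_p)$.

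The delicate point is that the lemma only shifts levels by one power of $\bfitu_P$. If $m>1$, one $\bfitu_P$ is not enough, and I would replace $U_P^2$ by $U_P^{m+1}$. Concretely, I would factor through a chain of intermediate groups $K^{(j)} = N^{\opp}_{P,j}M_P(\Z_p)N_{K_p}$, with $1\le j\le m$, and apply the lemma once per step. This gives $\alpha_m\cdots\alpha_1$ and $\beta_1\cdots\beta_m$ with composites equal to powers of $U_P$. The inclusions needed at each step are the contraction properties $\bfitu_PN^{\opp}_{P,j}\bfitu_P^{-1}\subset N^{\opp}_{P,j+1}$ and $\bfitu_P^{-1}N_{P,j}\bfitu_P\subset N_{P,j+1}$ recorded before the statement. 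I expect the bookkeeping for $N_{K_p}$ versus $N_{P,0}$ in condition (ii) to be where the care is needed.
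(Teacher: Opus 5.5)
Your argument is correct and is essentially the paper's proof. Both rest on double-coset factorisations via Lemma~\ref{Lemma: condition for composing Hecke operators naively}, and both pass through the deeper parahorics $\Iw_{P,j}$ to handle $m>1$. Note also that (ii), together with $N_{K_p}=K_p\cap N_{P,0}$, forces $N_{K_p}=N_{P,0}$, so the unipotent conditions you were worried about are automatic.

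The only real difference is the choice of comparison maps. The paper factors $U_P$ itself as $\mathrm{tr}\circ[K_p\bfitu_P\Iw_{P,m}]$ on level $\Iw_{P,m}$ and as $[K_p\bfitu_P\Iw_{P,m}]\circ\mathrm{tr}$ on level $K_p$. This makes $U_P$-equivariance automatic and shows that the trace itself is the isomorphism. Your factorisation of $U_P^2$ through the two twisted correspondences instead needs the separate intertwining identity that you list.
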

\begin{proof}
    Let \[
        \Iw_{P, m} \coloneq \{\bfgamma \in \GSp_{4}(\Z_p): (\bfgamma \mod p^m) \in P(\Z/p^m\Z)\}
    \] 
    with the induced parahoric decomposition \[
        \Iw_{P, m} = N_{P,m}^{\opp} M_p(\Z_p) N_{P,0}.
    \]
    The desired isomorphism is proven via the following sequence of isomorphisms \begin{equation}\label{eq: isomorphisms proving finite-slope part independent to the level at p}
        M_k(\Iw_P)^{P-\fs} \cong M_k(\Iw_{P,m})^{P-\fs} \cong M_k(K_p)^{P-\fs}.
    \end{equation}

    We first demonstrate the second isomorphism. We prove in three steps:

    \noindent \textbf{Step 1.}$[\Iw_{P, m} \bfitu_P \Iw_{P,m}] = [\Iw_{P, m} \one_4 K_p][K_p \bfitu_P \Iw_{P, m}]$.
    
    To show this decomposition, we apply Lemma \ref{Lemma: condition for composing Hecke operators naively} by taking $K_{p, 1} = K_{p, 3} = \Iw_{P, m}$, $K_{p,2} = K_p$, $\bfitu_1 = \one_4$, and $\bfitu_2 = \bfitu_P$. We then check the three conditions:\begin{itemize}
        \item $N_{P, m}^{\opp} \cap \bfitu_P N_{P, m}^{\opp} \bfitu_P \subset N_{K_p}^{\opp} \subset N_{P, m}^{\opp}$; this is verified by (i). 
        \item $N_{P, 0} \cap \bfitu_P N_{P, 0}\bfitu_P^{-1} \subset N_{K_p} \subset \bfitu_P N_{P, 0}\bfitu_P^{-1}$; note that $\bfitu_P^{-1}N_{P, 0}\bfitu_P\subset  N_{P, 0}$, thus $N_{P, 0} \cap \bfitu_P N_{P, 0}\bfitu_P^{-1}  = N_{P, 0}$ and this is verified by (ii). 
        \item $M_P(\Z_p)\cap M_P(\Z_p) \subset M_{K_p}\subset M_P(\Z_p)M_P(\Z_p)$; this is verified by (iii). 
    \end{itemize}

    \noindent \textbf{Step 2.} $[K_p \bfitu_P K_p] = [K_p, \bfitu_P \Iw_{P, m}][\Iw_{P, m}\one_4 K_p]$.

    To show this decomposition, we apply Lemma \ref{Lemma: condition for composing Hecke operators naively} by taking $K_{p, 1} = K_{p, 3} = K_p$, $K_{p, 2} = \Iw_P$, $\bfitu_1 = \bfitu_P$, and $\bfitu_2 = \one_4$. We then check the three conditions:\begin{itemize}
        \item $\bfitu_P^{-1} N_{K_p}^{\opp} \bfitu_P \cap N_{K_p} \subset N_{P, m}^{\opp} \subset \bfitu_P^{-1} N_{K_p}^{\opp} \bfitu_P$; this is verified by (i). 
        \item $\bfitu_P^{-1} N_{K_p} \bfitu_P \cap N_{K_p} \subset N_{P, 0} \subset N_{K_p}$; note that $\bfitu_P^{-1} N_{K_p}\bfitu_P \cap N_{K_p} = \bfitu_P^{-1} N_{K_p}\bfitu_P$ and so this is verified by (ii). 
        \item $M_{K_p}\cap M_{K_p} \subset M(\Z_p) \subset M_{K_p}M_{K_p}$; this is verified by (iii).
    \end{itemize} 

    \noindent \textbf{Step 3. }Combining the two steps above, we conclude a commutative diagram \[
        \begin{tikzcd}
            M_k(K_p) \arrow[r, "U_P"]\arrow[d, "\mathrm{tr}"'] & M_k(K_p)\arrow[d, "\mathrm{tr}"]\\
            M_k(\Iw_{P, m}) \arrow[r, "U_P"]\arrow[ru] & M_k(\Iw_{P,m})
        \end{tikzcd},
    \]
    where the diagonal map is given by $[K_p \bfitu_P \Iw_{P,m}]$. Then, by definition, after taking the $P$-finite-slope part, the horizontal maps in this diagram are isomorphisms, which implies that every map in this diagram must be an isomorphism. 

    Finally to show the first isomorphism in \eqref{eq: isomorphisms proving finite-slope part independent to the level at p}, one argues similarly as above to show \[
        M_k(\Iw_P)^{P-\fs} \cong M_k(\Iw_{P,2})^{P-\fs}.
    \]
    We then conclude by induction. 
\end{proof}

\subsection{Weight spaces}\label{subsection: p-adic weight spaces}

First of all, we consider the $p$-adic weight space associated with the Borel subgroup $B$.\footnote{ Or, rather, the weight space associated with the upper-triangular Borel for $\Sp_4$. See Remark \ref{Remark: weight space mod out the wild twist} below. } Consider $\Alg_{(\Z_p, \Z_p)}$ the category of sheafy $(\Z_p, \Z_p)$-algebras. It is well-known that the functor \begin{equation}\label{eq: defining functor for the full weight space}
    \Alg_{(\Z_p, \Z_p)} \rightarrow \Set, \quad (R, R^+)\mapsto \Hom_{\Group}^{\mathrm{cts}}(T_{\GL_2}(\Z_p), R^\times)
\end{equation}
is represented by the Iwasawa algebra $(\Z_p[\![ T_{\GL_2}(\Z_p)]\!], \Z_p[\![ T_{\GL_2}(\Z_p)]\!])$. The $p$-adic weight space associated with $B$, also known as the full $p$-adic weight space for $p$-adic automorphic forms for $\GSp_4$, is defined as \[
    \calW = \Spa(\Z_p[\![ T_{\GL_2}(\Z_p)]\!], \Z_p[\![ T_{\GL_2}(\Z_p)]\!])^{\mathrm{rig}},
\]
where the superscript `$\bullet^{\mathrm{rig}}$' stands for the associated rigid analytic space over $\Spa(\Q_p, \Z_p)$. It is easy to see that $\calW$ is a finite disjoint union of two-dimensional open unit balls. 

Note that for any $\kappa \in \calW(R, R^+)$, \[
    \kappa(\diag(\tau_1, \tau_2, \tau_0\tau_2^{-1}, \tau_0\tau_1^{-1})) = \kappa_1(\tau_1)\kappa_2(\tau_2),
\]
where each $\kappa_i: \Z_p^\times \rightarrow R^\times$ is a continuous group homomorphism. Applying the proof of \cite[Lemma 3.2.1]{LW-BianchiAdjoint}, one sees that the image of $\kappa$ lies in $R^{\circ, \times}$.

Following \cite[Definition 3.3]{BSW-ParabolicEigen}, we make the following definition. 

\begin{Definition}\label{Definition: weight spaces for parabolic subgroups}
    \begin{enumerate}
        \item[(i)] The \textbf{Siegel $p$-adic weight space} $\calW_{\mathrm{Si}}$ is the rigid analytic space associated with the representable functor \[
            \Alg_{(\Z_p, \Z_p)} \rightarrow \Set, \quad (R, R^+)\mapsto \Hom_{\Group}^{\mathrm{cts}}(\GL_2(\Z_p), R^\times).
        \]
        In other words, \[
            \calW_{\mathrm{Si}} = \Spa(\Z_p[\![\GL_2(\Z_p)^{\mathrm{ab}}]\!], \Z_p[\![\GL_2(\Z_p)^{\mathrm{ab}}]\!])^{\rig}.
        \]
        \item[(ii)] The \textbf{Klingen $p$-adic weight space} $\calW_{\mathrm{Kl}}$ is the rigid analytic space associated with the representable functor \[
            \Alg_{(\Z_p, \Z_p)} \rightarrow \Set, \quad (R, R^+)\mapsto \Hom_{\Group}^{\mathrm{cts}}(\Z_p^\times \times \SL_2(\Z_p), R^\times).
        \]
        In other words, \[
            \calW_{\mathrm{Kl}} = \Spa(\Z_p[\![(\Z_p^\times \times \SL_2(\Z_p))^{\mathrm{ab}}]\!], \Z_p[\![(\Z_p^\times \times \SL_2(\Z_p))^{\mathrm{ab}}]\!])^{\rig}.
        \]
        \item[(iii)] The \textbf{$p$-adic weight space $\calW_{\GSp_4}$ for $\GSp_4$}  is the rigid analytic space associated with the representable functor \[
            \Alg_{(\Z_p, \Z_p)} \rightarrow \Set, \quad (R, R^+) \mapsto \Hom_{\Group}^{\Set}(\Sp_4(\Z_p), R^\times).
         \] 
         In other words, \[
            \calW_{\GSp_4} = \Spa(\Z_p[\![\Sp_4(\Z_p)^{\mathrm{ab}}]\!], \Z_p[\![\Sp_4(\Z_p)^{\mathrm{ab}}]\!])^{\rig}.
         \]
    \end{enumerate}
\end{Definition}

To better understand the $p$-adic weight spaces defined above, we need the following lemmas. 

\begin{Lemma}\label{Lemma: abelianisation of the Levi}
    The following algebraic groups are considered over $\Z_p$. 
    \begin{enumerate}
        \item[(i)] The abelianisation of $\GL_2$ is given by the determinant map \[
            \det: \GL_2 \rightarrow \bbG_m \cong \GL_2^{\mathrm{ab}}.
        \]
        \item[(ii)] The abelianisation of $\bbG_m \times \SL_2$ is given by the first projection map \[
            \pr: \bbG_m \times \SL_2 \rightarrow \bbG_m\cong (\bbG_m \times \SL_2)^{\mathrm{ab}}.
        \]
        \item[(iii)] The abelianisation of $\Sp_4$ is the trivial group. 
    \end{enumerate}
\end{Lemma}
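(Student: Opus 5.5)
The plan is to prove all three statements at the level of group schemes over $\Z_p$ (fppf sheaves). The abelianisation of such a group $H$ is $H/\mathscr{D}H$, where $\mathscr{D}H$ is the derived subgroup. The single key input is that $\SL_2$ and $\Sp_4$ are \emph{perfect}, i.e.\ equal to their own derived subgroups. Once this is known, (i) and (ii) follow formally and (iii) is immediate.

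First, perfectness. Both $\SL_2$ and $\Sp_4$ are split, simply connected, semisimple group schemes over $\Z_p$. They are therefore generated, fppf-locally (indeed via the big cell $N^{\opp}TN$), by their root subgroups $U_\alpha \cong \mathbb{G}_a$ together with the maximal torus $T$. Hence it suffices to show that each $U_\alpha$ lies in the derived subgroup. I will use the commutator identity
\[
t\,u_\alpha(x)\,t^{-1}u_\alpha(x)^{-1} = u_\alpha\bigl((\alpha(t)-1)x\bigr), \qquad t\in T.
\]
Fppf-locally on the base one can choose $t$ with $\alpha(t)-1$ a unit (for instance, after adjoining a suitable root, or by taking $t=\alpha^\vee(s)$ with $s^2-1$ invertible). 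Then every section of $U_\alpha$ is a commutator locally, so $U_\alpha\subset\mathscr{D}H$. The torus $T$ is generated by the images of the coroots $\alpha^\vee(\bbG_m)$, because the group is simply connected. Each such image lies in the subgroup generated by $U_{\pm\alpha}$, so $T\subset\mathscr{D}H$ as well. This gives $\mathscr{D}\SL_2=\SL_2$ and $\mathscr{D}\Sp_4=\Sp_4$, and in particular (iii).

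Next, (i). The sequence $1\to\SL_2\to\GL_2\xrightarrow{\det}\bbG_m\to1$ is exact. Since $\bbG_m$ is commutative, $\mathscr{D}\GL_2\subset\ker\det=\SL_2$. Conversely $\SL_2=\mathscr{D}\SL_2\subset\mathscr{D}\GL_2$. Thus $\mathscr{D}\GL_2=\SL_2$, and $\det$ identifies $\GL_2^{\mathrm{ab}}\cong\bbG_m$. For (ii), the derived subgroup of a product is the product of derived subgroups. Hence $\mathscr{D}(\bbG_m\times\SL_2)=1\times\SL_2$, and the first projection realises the abelianisation.

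The main obstacle is the perfectness step over the base $\Z_p$ rather than over a field. The group of $\Z_p$-points need not be perfect: $\SL_2(\Z_3)$ has nontrivial abelianisation. So the argument must genuinely be made at the level of group schemes, using fppf-local choices of torus elements, or by reducing to the statement that $\mathscr{D}H$ is a closed smooth subgroup scheme whose fibres are the derived groups of the fibres and then checking on geometric fibres. I would first try the fibrewise route, using SGA3's results on derived groups of reductive group schemes, since over algebraically closed fields the perfectness of simply connected semisimple groups is classical.
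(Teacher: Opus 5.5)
Your argument is correct, and it proves the lemma exactly as stated: for the group schemes over $\Z_p$, and indeed over any base with no condition on $p$. Apply the identity $t\,u_\alpha(x)\,t^{-1}u_\alpha(x)^{-1}=u_\alpha((\alpha(t)-1)x)$ with $t=\alpha^\vee(s)$, over the smooth (hence fppf) cover obtained by adjoining a universal $s$ with $s(s^2-1)$ invertible. This shows that $\SL_2$ and $\Sp_4$ are perfect as fppf sheaves, and (i) and (ii) then follow formally.

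The paper takes a different, purely citation-based route. It quotes (i) and $\SL_2^{\mathrm{ab}}=1$ as well known, proves (ii) by the same product decomposition you use, and takes (iii) from \cite[Proposition 1.a]{LSTX}. Its closing remark that $p\neq 2$ is needed shows that it really has in mind the abelianisation of the profinite groups of $\Z_p$-points. That is what enters Corollary \ref{Corollary: explicit description of weight spaces}, through continuous characters of $\GL_2(\Z_p)$, $\Z_p^\times\times\SL_2(\Z_p)$ and $\Sp_4(\Z_p)$. Your scheme-level statement does not by itself yield this, as your own example $\SL_2(\Z_3)$ shows.

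If you want the points-level version, your identity still works with $\Z_p$-rational torus elements when $p$ is odd.
\begin{itemize}
\item For $\GL_2$: $t=\diag(-1,1)$ gives $[t,u(x)]=u(-2x)$, hence $[\GL_2(\Z_p),\GL_2(\Z_p)]=\SL_2(\Z_p)$.
\item For $\Sp_4$: $t=\diag(-1,1,1,-1)$ has $\alpha(t)=-1$ on all short roots, so it handles the short root groups. The long root groups then follow from the Chevalley commutator relations, e.g.\ $[u_{e_1-e_2}(x),u_{2e_2}(y)]=u_{e_1+e_2}(\pm xy)\,u_{2e_1}(\pm x^2y)$.
\item For $\SL_2(\Z_p)$ no such trick is available: you need $t\in\Z_p^\times$ with $t^2-1\in\Z_p^\times$, i.e.\ $p\geq 5$. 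Indeed the points-level form of (ii) fails for $p=3$.
\end{itemize}

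So your route gives a clean, self-contained, characteristic-free proof of the scheme-theoretic statement. The paper's citations aim, somewhat loosely, at the points-level statement that is actually used later.
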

\begin{proof}
    It is a well-known fact in the theory of algebraic groups that the commutator group for $\GL_2$ is $\SL_2$, which is the kernel of the determinant map. Hence, (i) follows. Moreover, $(\bbG_m\times \SL_2)^{\mathrm{ab}} = \bbG_m \times \SL_2^{\mathrm{ab}}$, but $\SL_2^{\mathrm{ab}} = \{1\}$, so (ii) also follows. The third assertion is \cite[Proposition 1.a]{LSTX}. Note here that to establish the commutator for $\GL_2$ and $\Sp^{\mathrm{ab}} = \{1\}$, we used the assumption $p\neq 2$.
\end{proof}

\begin{Lemma}\label{Lemma: special one-dimensional subspaces in the full weight space}
    \begin{enumerate}
        \item[(i)] The subfunctor of \eqref{eq: defining functor for the full weight space} \[
            \Alg_{(\Z_p, \Z_p)} \rightarrow \Set, \quad (R, R^+) \mapsto \left\{ \kappa = (\kappa_1, \kappa_2)\in \Hom_{\Group}^{\mathrm{cts}}(T_{\GL_2}(\Z_p), R^\times)  : \kappa_1 = \kappa_2 \right\}
        \] is representable by $(\Z_p[\![\Z_p^\times]\!], \Z_p[\![\Z_p^\times]\!])$. The associated rigid analytic space is regarded as a closed subspace in $\calW$, denoted by $\calW_{\mathrm{par}}$.
        \item[(ii)] The subfunctor of \eqref{eq: defining functor for the full weight space} \[
            \Alg_{(\Z_p, \Z_p)} \rightarrow \Set, \quad (R, R^+) \mapsto \left\{ \kappa = (\kappa_1, \kappa_2)\in \Hom_{\Group}^{\mathrm{cts}}(T_{\GL_2}(\Z_p), R^\times)  : \kappa_2 \text{ is trivial} \right\}
        \] is representable by $(\Z_p[\![\Z_p^\times]\!], \Z_p[\![\Z_p^\times]\!])$. The associated rigid analytic space is regarded as a closed subspace in $\calW$, denoted by $\calW_{1}$.
    \end{enumerate}
\end{Lemma}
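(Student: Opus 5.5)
The plan is to realise each subfunctor as the pullback of the full weight functor \eqref{eq: defining functor for the full weight space} along a surjection of groups, and then to use that the Iwasawa algebra is functorial in the group. We identify $T_{\GL_2}(\Z_p)\cong \Z_p^\times\times\Z_p^\times$, so that a continuous character $\kappa$ of $T_{\GL_2}(\Z_p)$ is the same as a pair $(\kappa_1,\kappa_2)$ of continuous characters of $\Z_p^\times$. The representing object is then $\Z_p[\![\Z_p^\times\times\Z_p^\times]\!]\cong \Z_p[\![\Z_p^\times]\!]\widehat{\otimes}_{\Z_p}\Z_p[\![\Z_p^\times]\!]$.

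For (i), we use the multiplication map $m:\Z_p^\times\times\Z_p^\times\to\Z_p^\times$, $(a,b)\mapsto ab$, which is a continuous surjective homomorphism of profinite abelian groups. The condition $\kappa_1=\kappa_2$ means $\kappa=\kappa_1\circ m$, i.e. $\kappa$ factors through $m$. Conversely, any continuous $\lambda:\Z_p^\times\to R^\times$ gives $(\lambda,\lambda)=\lambda\circ m$. Since $m$ is surjective this factorisation is unique, so the subfunctor is isomorphic to $(R,R^+)\mapsto\Hom^{\mathrm{cts}}(\Z_p^\times,R^\times)$. As in the discussion preceding Definition \ref{Definition: weight spaces for parabolic subgroups}, this functor is represented by $(\Z_p[\![\Z_p^\times]\!],\Z_p[\![\Z_p^\times]\!])$; continuity of characters into $R^\times$ with image in $R^{\circ,\times}$ is handled as in \cite[Lemma 3.2.1]{LW-BianchiAdjoint}.

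For (ii), we do the same with the first projection $\pr_1:(a,b)\mapsto a$: the condition that $\kappa_2$ is trivial is exactly $\kappa=\kappa_1\circ\pr_1$.

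It remains to see that these are closed subspaces. The surjection $m$ (resp. $\pr_1$) induces a surjection of completed group rings $\Z_p[\![\Z_p^\times\times\Z_p^\times]\!]\twoheadrightarrow\Z_p[\![\Z_p^\times]\!]$. Surjectivity holds because it is surjective on each finite level $\Z_p[G/U]$ and passing to the inverse limit of compact modules preserves surjectivity. Its kernel is the closed ideal generated by $[(a,a^{-1})]-1$ (resp. by $[(1,b)]-1$). The map $\Spa$ of a quotient is a closed immersion, and this is preserved under passing to the associated rigid space ${}^{\rig}$. Hence $\calW_{\mathrm{par}}$ and $\calW_1$ are closed subspaces of $\calW$ cut out by the equations $\kappa_1=\kappa_2$ and $\kappa_2=1$ respectively.

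The only mildly delicate point is checking that the functor-of-points identification is compatible with the rigid generic fibre construction. Since both sides are defined by the same universal property on $\Alg_{(\Z_p,\Z_p)}$ and ${}^{\rig}$ is functorial, this is formal.
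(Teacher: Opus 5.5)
Your proposal is correct and follows essentially the same route as the paper: you identify each subfunctor with $\Hom^{\mathrm{cts}}(\Z_p^\times, R^\times)$ by pulling back along the surjection $(\tau_1,\tau_2)\mapsto \tau_1\tau_2$ (resp.\ $(\tau_1,\tau_2)\mapsto\tau_1$), and you realise it through the induced quotient map of Iwasawa algebras. Your additional remarks on the surjectivity of the completed group ring map and on its kernel simply make explicit the closed-subspace claim that the paper leaves implicit.
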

\begin{proof}
    To show (i), for any $(R, R^+)\in \Alg_{(\Z_p, \Z_p)}$, consider the map \[
        \Hom_{\Group}^{\mathrm{cts}}(\Z_p^\times, R^\times) \rightarrow \Hom_{\Group}^{\mathrm{cts}}(T_{\GL_2}(\Z_p), R^\times), \quad \kappa' \mapsto \left( \diag(\tau_1, \tau_2) \mapsto \kappa'(\tau_1\tau_2) = \kappa'(\tau_1)\kappa'(\tau_2)\right).
    \]
    We claim that this map induces an identification \[
        \Hom_{\Group}^{\mathrm{cts}}(\Z_p^\times, R^\times) = \left\{ \kappa = (\kappa_1, \kappa_2)\in \Hom_{\Group}^{\mathrm{cts}}(T_{\GL_2}(\Z_p), R^\times)  : \kappa_1 = \kappa_2 \right\}.
    \] 
    For the injectivity, if $\kappa'(\tau_1\tau_2) = 1$ for all $\diag(\tau_1, \tau_2)\in T_{\GL_2}(\Z_p)$, then $\kappa' = 1$ since $T_{\GL_2}(\Z_p) \xrightarrow{\diag(\tau_1, \tau_2)\mapsto \tau_1\tau_2} \Z_p^\times$ is surjective; on the other hand, if $\kappa = (\kappa_1, \kappa_2) \in \Hom_{\Group}^{\mathrm{cts}}(T_{\GL_2}(\Z_p), R^\times)$ with $\kappa_1 = \kappa_2$, then it is obviously the image of $\kappa_1 \in \Hom_{\Group}^{\mathrm{cts}}(\Z_p^\times, R^\times)$. Therefore, one concludes that the subfunctor in consideration is represented by the quotient map \begin{equation}\label{eq: parallel embedding for weights}
        (\Z_p[\![T_{\GL_2}(\Z_p)]\!], \Z_p[\![T_{\GL_2}(\Z_p)]\!]) \rightarrow (\Z_p[\![\Z_p^\times]\!], \Z_p[\![\Z_p^\times]\!]), \quad \diag(\tau_1, \tau_2)\mapsto \tau_1\tau_2.
    \end{equation}
    
    The proof for (ii) is similar. We only point out that the subfunctor in (ii) is represented by the quotient \begin{equation}\label{eq: x-axis embedding for weights}
        (\Z_p[\![T_{\GL_2}(\Z_p)]\!], \Z_p[\![T_{\GL_2}(\Z_p)]\!]) \rightarrow (\Z_p[\![\Z_p^\times]\!], \Z_p[\![\Z_p^\times]\!]), \quad \diag(\tau_1, \tau_2)\mapsto \tau_1
    \end{equation}
    and leave the detailed arguments to the readers. 
\end{proof}

\begin{Corollary}\label{Corollary: explicit description of weight spaces}
    We have the following identifications of rigid analytic spaces: \begin{enumerate}
        \item[(i)]  $\calW_{\mathrm{Si}} = \calW_{\mathrm{par}}$;
        \item[(ii)] $\calW_{\mathrm{Kl}} = \calW_{1}$;
        \item[(iii)] $\calW_{\GSp_4} = \Spa(\Q_p, \Z_p)$. 
    \end{enumerate}
\end{Corollary}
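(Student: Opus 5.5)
The plan is to reduce everything to the representing Iwasawa algebras and apply Lemma \ref{Lemma: abelianisation of the Levi} together with Lemma \ref{Lemma: special one-dimensional subspaces in the full weight space}. By Definition \ref{Definition: weight spaces for parabolic subgroups}, each of $\calW_{\mathrm{Si}}$, $\calW_{\mathrm{Kl}}$, $\calW_{\GSp_4}$ represents the functor $(R,R^+)\mapsto \Hom^{\mathrm{cts}}_{\Group}(H, R^\times)$ for $H = \GL_2(\Z_p)$, $\Z_p^\times\times\SL_2(\Z_p)$, $\Sp_4(\Z_p)$ respectively. Since $R^\times$ is abelian, every such continuous homomorphism factors through the (closure of the) abelianisation $H^{\mathrm{ab}}$. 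The first step is therefore to identify $H^{\mathrm{ab}}$ on $\Z_p$-points, and the second is to match the resulting functor with the subfunctors of \eqref{eq: defining functor for the full weight space} from Lemma \ref{Lemma: special one-dimensional subspaces in the full weight space}. Both identifications should be compatible with the embeddings $\GL_2 \hookrightarrow M_{\mathrm{Si}}$ and $\bbG_m\times\SL_2 \hookrightarrow M_{\mathrm{Kl}}\cap\Sp_4$ of \eqref{eq: identifications of Levi}, restricted to the diagonal torus $T_{\GL_2}(\Z_p)$.

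\emph{Case (i).} For $p$ odd, the commutator subgroup of $\GL_2(\Z_p)$ is $\SL_2(\Z_p)$. This is the $\Z_p$-point version of Lemma \ref{Lemma: abelianisation of the Levi} (i), checked by writing elementary matrices as commutators of diagonal and unipotent matrices, which is possible because $\Z_p^\times$ contains a unit $a$ with $a^2-1\in\Z_p^\times$ when $p>3$. So $\Hom^{\mathrm{cts}}(\GL_2(\Z_p),R^\times) = \Hom^{\mathrm{cts}}(\Z_p^\times,R^\times)$ via $\kappa'\mapsto\kappa'\circ\det$. Restricting $\kappa'\circ\det$ to $T_{\GL_2}(\Z_p)$ gives $\diag(\tau_1,\tau_2)\mapsto\kappa'(\tau_1\tau_2)$, which is exactly the parametrisation in the proof of Lemma \ref{Lemma: special one-dimensional subspaces in the full weight space} (i). That lemma identifies this family with the subfunctor $\{\kappa_1=\kappa_2\}$, represented by the quotient \eqref{eq: parallel embedding for weights}. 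Hence $\calW_{\mathrm{Si}} = \calW_{\mathrm{par}}$ as closed subspaces of $\calW$.

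\emph{Case (ii).} By Lemma \ref{Lemma: abelianisation of the Levi} (ii), we have $(\Z_p^\times\times\SL_2(\Z_p))^{\mathrm{ab}} = \Z_p^\times$ through the first projection. This uses that $\SL_2(\Z_p)$ is perfect, equivalently that it is generated by commutators. So characters are $\kappa'\circ\pr$. Under the Klingen Levi, the torus element $\diag(\tau_1,\tau_2,\tau_0\tau_2^{-1},\tau_0\tau_1^{-1})$ of $\Sp_4$ (with $\tau_0=1$) corresponds to $(\tau_1,\diag(\tau_2,\tau_2^{-1}))$. Restriction therefore gives $\kappa=(\kappa',1)$, and by Lemma \ref{Lemma: special one-dimensional subspaces in the full weight space} (ii) and \eqref{eq: x-axis embedding for weights} we obtain $\calW_{\mathrm{Kl}}=\calW_1$.

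\emph{Case (iii).} $\Sp_4(\Z_p)^{\mathrm{ab}}$ is trivial by Lemma \ref{Lemma: abelianisation of the Levi} (iii). The Iwasawa algebra is then $\Z_p$, and $\Spa(\Z_p,\Z_p)^{\rig}=\Spa(\Q_p,\Z_p)$.

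\textbf{Main obstacle.} I expect the delicate point to be the passage from the algebraic abelianisations in Lemma \ref{Lemma: abelianisation of the Levi} to the abelianisations of the profinite groups of $\Z_p$-points, including closure issues for continuous characters. There is also a small-prime concern, $p=3$, where perfectness of $\SL_2(\Z_3)$ fails. I would handle this by citing the perfectness of $\SL_2(\Z_p)$ and $\Sp_4(\Z_p)$ for $p\ge5$, as in \cite{LSTX}. If $p=3$ must be covered, I would note that the paper's identification of $\calW_{\mathrm{Si}}$ through $\GL_2(\Z_p)^{\mathrm{ab}}$ should then be read as $\Z_p[\![\Z_p^\times]\!]$ by definition.
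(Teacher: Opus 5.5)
Your argument is the paper's. The paper restricts a continuous character of $\GL_2(\Z_p)$, resp.\ $\Z_p^\times\times\SL_2(\Z_p)$, to $T_{\GL_2}(\Z_p)$. It then uses Lemma~\ref{Lemma: abelianisation of the Levi} to see that the character factors through $\det$, resp.\ the first projection. Finally it identifies the image via the proof of Lemma~\ref{Lemma: special one-dimensional subspaces in the full weight space}, and treats (iii) as immediate. You factor through the abelianisation before restricting, which also justifies the injectivity of restriction that the paper calls obvious.

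Your worry about passing from algebraic to $\Z_p$-point abelianisations is legitimate, but you have put the $p=3$ problem in the wrong case.

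\emph{Part (i) holds for every odd $p$.} With $a=-1$ one has $[\diag(a,1),\begin{pmatrix}1&x\\ &1\end{pmatrix}]=\begin{pmatrix}1&(a-1)x\\ &1\end{pmatrix}$, and $a-1=-2\in\Z_3^\times$. So for $\GL_2$ you only need $a\not\equiv 1 \bmod p$, not $a^2\not\equiv 1$. Hence $\GL_2(\Z_3)^{\mathrm{ab}}=\Z_3^\times$, and no reinterpretation of $\calW_{\mathrm{Si}}$ is needed.

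\emph{The genuine failure is in (ii).} $\SL_2(\Z_3)$ surjects onto $\SL_2(\F_3)^{\mathrm{ab}}\cong\Z/3\Z$. Such a character kills the diagonal torus, whose image in $\SL_2(\F_3)$ is $\{\pm1\}$. So once $R$ contains a primitive cube root of unity, restriction to $T_{\GL_2}(\Z_3)$ is not injective, and the literal $\calW_{\mathrm{Kl}}$ is strictly larger than $\calW_1$. The paper's proof has the same blind spot, because Lemma~\ref{Lemma: abelianisation of the Levi}(ii) is a statement about algebraic groups, not their $\Z_p$-points.

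The correct caveat is therefore one of the following:
\begin{enumerate}
\item[(a)] assume $p\geq 5$ in (ii); or
\item[(b)] define $\calW_{\mathrm{Kl}}$ through $\Z_p[\![\Z_p^\times]\!]$, i.e.\ via characters trivial on $\SL_2(\Z_p)$.
\end{enumerate}
Part (iii) is unaffected at $p=3$, since $\Sp_4(\Z_3)$ is perfect.
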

\begin{proof}
    The third assertion is obvious from the definition and Lemma \ref{Lemma: abelianisation of the Levi} (iii). In what follows, we focus on the proof for (i) and (ii).

    For (i), given any $(R, R^+)\in \Alg_{(\Z_p, \Z_p)}$, we have to show the existence of the identification\[
        \Hom_{\Group}^{\mathrm{cts}}(\GL_2(\Z_p), R^\times)  = \left\{ \kappa = (\kappa_1, \kappa_2)\in \Hom_{\Group}^{\mathrm{cts}}(T_{\GL_2}(\Z_p), R^\times)  : \kappa_1 = \kappa_2 \right\}
    \]
    Given any $\kappa \in \Hom_{\Group}^{\mathrm{cts}}(\GL_2(\Z_p), R^\times)$, we view it as a continuous group homomorphism on $T_{\GL_2}(\Z_p)$ by \[
        T_{\GL_2}(\Z_p) \xrightarrow{\text{natural embedding}} \GL_2(\Z_p) \xrightarrow{\kappa}R^\times.
    \]
    Obviously, this provides an injection $\Hom_{\Group}^{\mathrm{cts}}(\GL_2(\Z_p), R^\times) \hookrightarrow \Hom_{\Group}^{\mathrm{cts}}(T_{\GL_2}(\Z_p), R^\times)$.
    On the other hand, since $\kappa$ factors as \[
        \begin{tikzcd}
            \GL_2(\Z_p) \arrow[rr, "\kappa"]\arrow[dr, "\det"'] && R^\times\\
            & \GL_2(\Z_p)^{\mathrm{ab}} = \Z_p^\times\arrow[ru]
        \end{tikzcd},
    \]
    and by the proof of Lemma \ref{Lemma: special one-dimensional subspaces in the full weight space}, we see that the map $\Hom_{\Group}^{\mathrm{cts}}(\GL_2(\Z_p), R^\times) \hookrightarrow \Hom_{\Group}^{\mathrm{cts}}(T_{\GL_2}(\Z_p), R^\times)$ has image equal to $\left\{ \kappa = (\kappa_1, \kappa_2)\in \Hom_{\Group}^{\mathrm{cts}}(T_{\GL_2}(\Z_p), R^\times)  : \kappa_1 = \kappa_2 \right\}$ as desired.

    For (ii), given any $(R, R^+)\in \Alg_{(\Z_p, \Z_p)}$, we have to show the existence of the identification \[
        \Hom_{\Group}^{\mathrm{cts}}(\Z_p^\times\times \SL_2(\Z_p), R^\times)  = \left\{ \kappa = (\kappa_1, \kappa_2)\in \Hom_{\Group}^{\mathrm{cts}}(T_{\GL_2}(\Z_p), R^\times)  : \kappa_2 \text{ is trivial} \right\}.
    \]
    Given any $\kappa\in \Hom_{\Group}^{\mathrm{cts}}(\Z_p^\times \times \SL_2(\Z_p), R^\times)$, we can again view it as a continuous group homomorphism on $T_{\GL_2}(\Z_p)$ by \[
        T_{\GL_2}(\Z_p) \xrightarrow{\diag(\tau_1, \tau_2) \mapsto (\tau_1, \diag(\tau_2, \tau_2^{-1}))} \Z_p^\times \times \SL_2(\Z_p) \xrightarrow{\kappa} R^\times. 
    \]
    Similar arguments as above then allows one to conclude the desired identification. 
\end{proof}

\begin{Remark}\label{Remark: p-adic weight space for GSp4}
    As shown in Corollary \ref{Corollary: explicit description of weight spaces}, $\calW_{\GSp_4}$ is nothing but the point $\Spa(\Q_p ,\Z_p)$, it is out of our interests since we aim to study \emph{$p$-adic families} of automorphic forms. Therefore, in what follows, when referring to `parabolic subgroups' without any specification, we shall mean the parabolic subgroups $P_{\mathrm{Si}}$, $P_{\mathrm{Kl}}$, and $B$.
\end{Remark}

\begin{Remark}\label{Remark: weight space mod out the wild twist}
    Careful readers may find our definitions of weight spaces are slightly different from the one in \cite{BSW-ParabolicEigen}, where the authors of \emph{loc. cit.} considered directly the continuous characters on the Levi subgroups of the parabolic subgroups. This is because we have factored out the `wild twists' in the present paper; whence, we are looking at the continuous group homomorphisms on the intersection of the Levi subgroups with $\Sp_4(\Z_p)$. Note that a similar discussion applies to the case of $\GSp_2 = \GL_2$: before factoring out the `wild twist', one gets the eigensurface, but one obtains the Coleman--Mazur eigencurve after factorising out the `wild twists' (see \cite[Sect. 4.6]{Hansen-PhD}). We chose the current presentation since the weights in our consideration resonate with the weights for (complex) classical Siegel modular forms of genus $2$. 
\end{Remark}

\begin{Remark}\label{Remark: simplifying notations of Levi}
    Following Remark \ref{Remark: weight space mod out the wild twist}, we make the following simplification of notations: \[
        M^* = M_B^* := T_{\GL_2}, \quad M_{\mathrm{Si}}^* = M_{P_{\mathrm{Si}}}^* := M_{\mathrm{Si}} \cap \Sp_4 = \GL_2, \quad \text{ and }\quad M^*_{\mathrm{Kl}} = M_{P_{\mathrm{Kl}}}^*  := M_{\mathrm{Kl}} \cap \Sp_4 = \bbG_m \times \SL_2.
    \]
    The two embeddings $M^*(\Z_p) \hookrightarrow M_{\mathrm{Si}}^*(\Z_p)$, $M^*(\Z_p) \hookrightarrow M_{\mathrm{Kl}}^*(\Z_p)$ discussed above are compatible with the embedding \[
        T_{\GL_2} \hookrightarrow \Sp_4, \quad \bftau \mapsto \begin{pmatrix}
            \bftau & \\ & \oneanti_2 \trans\bftau^{-1}\oneanti_2
        \end{pmatrix}.
    \]
    Moreover, observe that if $P \in \{B, P_{\mathrm{Si}}, P_{\mathrm{Kl}}\}$, we have $M_P^{\mathrm{ab}} \cong M_P^{*, \mathrm{ab}}$. In particular, any (continuous) character $\kappa$ on $M_P^*(\Z_p)$ can be viewed as a (continuous) character on $M_P$ by $\kappa((\bfgamma, \varsigma)) = \kappa(\bfgamma)$.
\end{Remark}

Finally, as in \cite{BSW-ParabolicEigen}, we may translate the weight spaces. 

\begin{Definition}\label{Definition: translate of weight spaces}
    Let $F$ be a complete field extension of $\Q_p$ in $\C_p$. Let $\kappa = (\kappa_1, \kappa_2) \in \calW(F)$. The \textbf{Siegel $p$-adic weight space} $\calW_{\mathrm{Si}, \kappa}$ (resp., \textbf{Klingen $p$-adic weight space} $\calW_{\mathrm{Kl}, \kappa}$) is defined to be the rigid analytic space associated with the representable subfunctor of \eqref{eq: defining functor for the full weight space} \begin{align*}
        \Alg_{(\Z_p, \Z_p)} \rightarrow \Set, \quad & (R, R^+)\mapsto \left\{ \kappa'= (\kappa_1', \kappa_2')\in \Hom_{\Group}^{\mathrm{cts}}(T_{\GL_2}(\Z_p), (R\widehat{\otimes}_{\Q_p}F)^\times): \kappa_1'\kappa_1^{-1} = \kappa_2'\kappa_2^{-1} \right\}\\
        \left( \text{resp., }\Alg_{(\Z_p, \Z_p)} \rightarrow \Set,\right. \quad & \left.(R, R^+)\mapsto \left\{ \kappa'= (\kappa_1', \kappa_2')\in \Hom_{\Group}^{\mathrm{cts}}(T_{\GL_2}(\Z_p), (R\widehat{\otimes}_{\Q_p}F)^\times):  \kappa_2'\kappa_2^{-1}=1 \right\} \right).
    \end{align*}
\end{Definition}

\subsection{Families of automorphic sheaves}\label{subsection: families of automorphic sheaves}
The goal of this subsection is to define and study \emph{small parabolic} $p$-adic families of (holomorphic) Siegel modular forms. Since the case where $P=B$ is studied in \cite{AIP-2015} and the case where $P= \GSp_4$ is not interesting (Corollary \ref{Corollary: explicit description of weight spaces}), we shall focus on the cases where $P=P_{\mathrm{Si}}$ or $P=P_{\mathrm{Kl}}$ in what follows.

Starting from this subsection, we move to the world of $p$-adic geometry. We denote by $\calX_{K_p}$ and $\calX_{K_p}^{\tor}$ the rigid analytic spaces over $\Q_p$ associated with $X_{K_p}$ and $X_{K_p}^{\tor}$ respectively. Moreover, note that $X_{\GSp_{4}(\Z_p)}^{\tor}$ has a smooth integral model $\frakX_{\GSp_{4}(\Z_p)}^{\tor}$;\footnote{ One can consider the moduli interpretation of $X_{\GSp_{4}(\Z_p)}$ over $\Z_p$, which yields a smooth scheme $X_{\GSp_{4}(\Z_p), \Z_p}$ over $\Z_p$. By fixing a cone decomposition, Faltings--Chai constructed the toroidal compactification $X_{\GSp_{4}(\Z_p), \Z_p}^{\tor}$. Then, $\frakX_{\GSp_{4}(\Z_p)}^{\tor}$ is nothing but the completion of $X_{\GSp_{4}(\Z_p), \Z_p}^{\tor}$ along the special fibre.} we let $\frakX_{K_p}^{\tor}$ be the normalisation of $\frakX_{\GSp_{4}(\Z_p)}^{\tor}$ in $\calX_{K_p}^{\tor}$. In the later discussion, we will consider the following levels at $p$: \begin{align*}
    K(p^n) & \coloneq \ker(\GSp_{4}(\Z_p) \rightarrow \GSp_4(\Z/p^n\Z)), \\
    \Iw_{\mathrm{Kl}, n} & \coloneq \left\{ \bfgamma\in \GSp_4(\Z_p): (\bfgamma \mod p^n)\in P_{\mathrm{Kl}}(\Z/p^n\Z) \right\},\\
    \Iw_{\mathrm{Si}, n} & \coloneq \left\{ \bfgamma\in \GSp_4(\Z_p): (\bfgamma \mod p^n)\in P_{\mathrm{Si}}(\Z/p^n\Z) \right\},\\
    \Iw_{B, n} & \coloneq \left\{ \bfgamma\in \GSp_4(\Z_p): (\bfgamma \mod p^n)\in B(\Z/p^n\Z) \right\}
\end{align*}
for any $n\in \Z_{>0}$.

In what follows, we will choose (auxiliary) non-negative rational numbers $w, w', v$ and consider $p^{w}, p^{w'}, p^v$. We shall then (implicitly) base change the rigid analytic spaces (resp., formal schemes) above to $F$ (resp., $\calO_F$), where $F$ is a finite field extension of $\Q_p$ containing $p^{w}, p^{w'}, p^v$. However, we shall abuse the notation and still use the same symbols to denote them.

\paragraph{The Klingen case.} We start with the Klingen case by summarising the construction in \cite{Pilloni-higherHidaColemanGSp4}; we refer the readers to \cite[Sect. 12]{Pilloni-higherHidaColemanGSp4} for more detailed discussions.  

Denote by $\frakG^{\univ} \rightarrow \frakX_{\GSp_{4}(\Z_p)}^{\tor}$ the tautological semiabelian variety and let $e$ be the identity section. We abuse the notation and again denote by $\underline{\omega}$ the pullback sheaf $e^* \Omega_{\frakG^{\univ}/\frakX_{\GSp_4(\Z_p)}^{\tor}}^1$. We shall keep abusing the notation and still denote by $\underline{\omega}$ the pullback of $\underline{\omega}$ to $\frakX_{K_p}^{\tor}$ for any open compact subgroup $K_p \subset \GSp_4(\Z_p)$.

According to \cite[Proposition 1.2]{Pilloni-Stroh}, there is a morphism of fppf sheaves \[
    \mathrm{HT}: (\Z/p^n\Z)^4 \rightarrow \underline{\omega}/p^n\underline{\omega}
\]
over $\frakX_{K(p^n)}^{\tor}$. It follows from \cite[Proposition 1.10]{Pilloni-Stroh} that there exists a formal scheme $\frakX_{K(p^n)}^{\tor, \mathrm{mod}} \rightarrow \frakX_{K(p^n)}^{\tor}$, which is a normalisation of an admissible formal blowup and carries a locally-free-rank-$2$ modification $\underline{\omega}^{\mathrm{mod}} \hookrightarrow \underline{\omega}$ with the following properties: \begin{itemize}
    \item $p^{\frac{1}{p-1}}\underline{\omega} \subset \underline{\omega}^{\mathrm{mod}} \subset \underline{\omega}$; 
    \item the map $\mathrm{HT}$ factors through $\underline{\omega}^{\mathrm{mod}}/p^n\underline{\omega}^{\mathrm{mod}}$, inducing a surjection \[
        (\Z/p^n\Z)^4\otimes_{\Z} \scrO_{\frakX_{K(p^n)}^{\tor, \mathrm{mod}}} \rightarrow \underline{\omega}^{\mathrm{mod}}/p^{n-\frac{1}{p-1}}\underline{\omega}^{\mathrm{mod}}.
    \]
\end{itemize}
Note that the rigid generic fibre of $\frakX_{K(p^n)}^{\tor, \mathrm{mod}}$ agrees with $\calX_{K(p^n)}^{\tor}$.

Let $\{e_1, e_2, e_3, e_4\}$ be the standard basis on $(\Z/p^n\Z)^4$. For any $v\in [0, n-\frac{1}{p-1}]\cap \Q$, let $\frakX_{K(p^n)}^{\tor}(v)$ be the open formal subscheme of the admissible blowup of $\frakX_{K(p^n)}^{\tor, \mathrm{mod}}$ along the ideal $(\mathrm{HT}(e_1), p^v)$ on which $(\mathrm{HT}(e_1), p^v)$ is generated by $\mathrm{HT}(e_1)$. In particular, we have a natural morphism $\frakX_{K(p^n)}^{\tor}(v) \rightarrow \frakX_{K(p^n)}^{\tor, \mathrm{mod}}$ and $\mathrm{HT}(e_1)=0$ in $\underline{\omega}^{\mathrm{mod}}/p^v\underline{\omega}^{\mathrm{mod}}$ over $\frakX_{K(p^n)}^{\tor}(v)$. We then let $\scrF^{\mathrm{can}}_{v}$ be the coherent subsheaf of $\underline{\omega}^{\mathrm{mod}}/p^v\underline{\omega}^{\mathrm{mod}}$ generated by $\mathrm{HT}(e_2)$ and $\mathrm{HT}(e_3)$. By \cite[Lemma 12.2.2.1]{Pilloni-higherHidaColemanGSp4}, $\scrF_v^{\mathrm{can}}$ is locally free of rank $1$.

Over $\frakX_{K(p^n)}^{\tor, \mathrm{mod}}$, let $\mathfrak{FL}_n$ be the flag variety, parametrising locally-free-rank-$1$ direct summands $\Fil \underline{\omega}^{\mathrm{mod}} \subset \underline{\omega}^{\mathrm{mod}}$. For any rational number $0 \leq w \leq v$, we then define the formal scheme \[
    \mathfrak{FL}_{n, w, v} \rightarrow \mathfrak{FL}_n \times_{\frakX_{K(p^n)}^{\tor, \mathrm{mod}}} \frakX_{K(p^n)}^{\tor}(v) \rightarrow \frakX_{K(p^n)}^{\tor}(v)
\]
parametrising $\Fil \underline{\omega}^{\mathrm{mod}} \subset \underline{\omega}^{\mathrm{mod}}$ such that \[
    \Fil \underline{\omega}^{\mathrm{mod}}/p^w\Fil \underline{\omega}^{\mathrm{mod}} \cong \scrF_v^{\mathrm{can}}/p^w\scrF_v^{\mathrm{can}}.
\]
Moreover, for any rational number $0 \leq w'\leq w$, we define \[
    \mathfrak{FL}_{n, w', w, v}^+ \rightarrow \mathfrak{FL}_{n, w, v}
\]
be the normal admissible formal scheme parametrising the trivialisations $\rho: \scrO_{\frakX_{K(p^n)}^{\tor}(v)} \cong \underline{\omega}^{\mathrm{mod}}/\Fil \underline{\omega}^{\mathrm{mod}}$ such that \[
    \rho \equiv \left( (\Z/p^n\Z)^4/\langle e_1, e_2, e_3\rangle \otimes (\scrO_{\frakX_{K(p^n)}^{\tor}(v)}/p^v) \xrightarrow{\mathrm{HT}_4} \underline{\omega}^{\mathrm{mod}}/\scrF_v^{\mathrm{can}} \right) \mod p^{w'}.
\]

We now take a closer look at the relationships between these formal schemes. For any rational number $w'\geq 0$, let $\frakT_{w'}$ and $\frakT_{w'}^0$ be the formal group schemes such that for any admissible $\calO_{F}$-algebra $A$ (in the sense of \cite[Sect. 4]{AIP-2015}) such that $p^{w'}\in A$, \begin{align*}
    \frakT_{w'}(A) & = \Z_p^\times(1+p^{w'}A),\\
    \frakT_{w'}^0(A) & = 1+p^{w'}A.
\end{align*}
Observe that $\frakT_{w'}$ acts on $\mathfrak{FL}_{n, w', w, v}^+$ as it acts on $\rho$; in fact, $\mathfrak{FL}_{n, w', w, v}^+ \rightarrow \mathfrak{FL}_{n, w, v}$ is a $\frakT_{w'}^0$-torsor. 

For any integer $n\geq w'$, observe that there are morphisms \[
    P_{\mathrm{Kl}}(\Z/p^n\Z) \rightarrow (\Z/p^n\Z)^\times \rightarrow \frakT_{w'}/\frakT_{w'}^0,
\]
where the first map is given by the projection to the last diagonal entry and the second map is the natural projection (since $n\geq w'$). We then define \[
    \frakT_{w',n} \coloneq \frakT_{w'} \times_{\frakT_{w'}/\frakT_{w'}^0} P_{\mathrm{Kl}}(\Z/p^n\Z).
\]
One sees that the $\frakT_{w'}^0$-action on $\mathfrak{FL}_{n, w', w,v}^+$ can be extended to an action of $\frakT_{w', n}$, which induces the $P_{\mathrm{Kl}}(\Z/p^n\Z)$-action on $\frakX_{K(p^n)}^{\tor}(v)$. We denote by $\frakX_{\Iw_{\mathrm{Kl},n}}^{\tor}(v)$ the quotient $\frakX_{K(p^n)}^{\tor}(v)/P_{\mathrm{Kl}}(\Z/p^n\Z)$. Note that this quotient exists as a formal scheme (\cite[Theorem 3.3.4]{Zavyalov-quotient}).

To construct the families of automorphic sheaves, consider the rigid generic fibres of the formal schemes discussed above. We shall use the calligraphic font to denote these rigid analytic spaces. In particular, we have $\mathcal{FL}_{n, w',w,v}^+$, $\mathcal{FL}_{n, w, v}$, $\calX_{K(p^n)}^{\tor}(v)$, $\calX_{\Iw_{\mathrm{Kl}, n}}^{\tor}(v)$ as well as $\calT_{w'}$, $\calT_{w'}^0$, and $\calT_{w', n}$. Summing up, we have a chain of morphisms \[
    \pi: \mathcal{FL}_{n, w',w,v}^+ \rightarrow \mathcal{FL}_{n, w, v} \rightarrow \calX_{K(p^n)}^{\tor}(v) \rightarrow \calX_{\Iw_{\mathrm{Kl}, n}}^{\tor}(v),
\]
where the first morphism is a $\calT_{w'}^0$-torsor and the last map is a $P_{\mathrm{Kl}}(\Z/p^n/Z)$-torsor. Moreover, $\calX_{K(p^n)}^{\tor}(v)$ and $\calX_{\Iw_{\mathrm{Kl}, n}}^{\tor}(v)$ are open subspaces of $\calX_{\Iw_{\mathrm{Kl}, n}}^{\tor}$ and $\calX_{\Iw_{\mathrm{Kl}, n}}^{\tor}$ respectively.

Recall the weight space $\calW_{\mathrm{Kl}}$ (Corollary \ref{Corollary: explicit description of weight spaces}). For any affinoid open subspace $\calU = \Spa(R_{\calU}, R_{\calU}^{\circ}) \subset \calW_{\mathrm{Kl}}$, we view its universal weight as a continuous group homomorphism $\kappa_{\calU}: \Z_p^\times \rightarrow R_{\calU}^{\times}$. By \cite[Lemma 3.4.6]{Urban-2011}, there exists $w_{\calU}\in \Q_{>0}$ such that $\kappa_{\calU}$ can be extended to a continuous group homomorphism \[
    \kappa_{\calU}: \Z_p^{\times}(1+p^{w_{\calU}}\calO_{\C_p}) \rightarrow \left(R_{\calU}^0 \widehat{\otimes}_{\Z_p}\calO_{\C_p}\right)^\times. 
\]

\begin{Definition}\label{Definition: families of automorphic sheaves, Klingen}
    Keep the notations as above. Choose $w', w, v, n$ such that $w_{\calU} \leq w'\leq w\leq v$ and $v\in [0, n-\frac{1}{p-1}]$. Fix $k_2 \in \Z$. \begin{enumerate}
        \item[(i)] The \textbf{$w$-analytic automorphic sheaf in Klingen family of weight $(\kappa_{\calU}, k_2)$} on $\calX_{\Iw_{\mathrm{Kl}, n}}^{\tor}(v)$ is defined to be \[
            \underline{\omega}_{w, v}^{(\kappa_{\calU}, k_2)} \coloneq \left( \pi_* \scrO_{\mathcal{FL}_{n, w', w, v}^+} \widehat{\otimes}_{\Q_p} R_{\calU} \right)^{\calT_{w',n}} \otimes_{\scrO_{\calX_{\Iw_{\mathrm{Kl}, n}}^{\tor}(v)}} (\det \underline{\omega})^{\otimes k_2},
        \]
        where $\calT_{w', n}$ acts diagonally and its action on $R_{\calU}$ is given by $\kappa_{\calU}-k_2$. We write \[
            M_{(\kappa_{\calU}, k_2)}^{w, v}(\Iw_{\mathrm{Kl}, n}) \coloneq H^0(\calX_{\Iw_{\mathrm{Kl}, n}}^{\tor}(v), \underline{\omega}_{w, v}^{(\kappa_{\calU}, k_2)}).
        \]
        \item[(ii)] The \textbf{cuspidal $w$-analytic automorphic sheaf in Klingen family of weight $(\kappa_{\calU}, k_2)$} on $\calX_{\Iw_{\mathrm{Kl}, n}}^{\tor}(v)$ is defined to be \[
            \underline{\omega}_{w, v, \mathrm{cusp}}^{(\kappa_{\calU}, k_2)} \coloneq \underline{\omega}_{w, v}^{(\kappa_{\calU}, k_2)} (-\partial \calX_{\Iw_{\mathrm{Kl},n}}^{\tor}(v)),
        \]
        where $\partial \calX_{\Iw_{\mathrm{Kl},n}}^{\tor}(v) = \partial \calX_{\Iw_{\mathrm{Kl}, n}}^{\tor} \cap \calX_{\Iw_{\mathrm{Kl}, n}}^{\tor}(v)$. We write \[
            S_{(\kappa_{\calU}, k_2)}^{w, v}(\Iw_{\mathrm{Kl}, n}) \coloneq H^0(\calX_{\Iw_{\mathrm{Kl}, n}}^{\tor}(v), \underline{\omega}_{w, v, \mathrm{cusp}}^{(\kappa_{\calU}, k_2)}).
        \]
    \end{enumerate} 
\end{Definition}

\begin{Remark}\label{Remark: families of automorphic sheaves, Klingen}
    \begin{enumerate}
        \item[(i)] Compared with the convention in \cite{Pilloni-higherHidaColemanGSp4}, we choose to let $\calT_{w', n}$ act on $R_{\calU}$ via $\kappa_{\calU}-k_2$ (instead of $\kappa_{\calU}$). This is because the line bundle $(\det\underline{\omega})^{\otimes k_2}$ has weight $(k_2, k_2)$ in our convention but $(0, k_2)$ in Pilloni's convention. 
        \item[(ii)] By \cite[Remark 12.6.2.1]{Pilloni-higherHidaColemanGSp4}, this sheaf is independent to the choice of $w'$. 
    \end{enumerate}
\end{Remark}

The following theorem summarises some nice properties of the automorphic sheaves in Klingen families defined above. 

\begin{Theorem}[Pilloni]\label{Theorem: nice properties of automorphic sheaves in Klingen families}
    Keep the notations as in Definition \ref{Definition: families of automorphic sheaves, Klingen}. \begin{enumerate}
        \item[(i)] The complexes $R\Gamma(\calX_{\Iw_{\mathrm{Kl}, n}}^{\tor}(v), \underline{\omega}_{w, v}^{(\kappa_{\calU}, k_2)})$ and  $R\Gamma(\calX_{\Iw_{\mathrm{Kl}, n}}^{\tor}(v), \underline{\omega}_{w, v, \mathrm{cusp}}^{(\kappa_{\calU}, k_2)})$ are represented by bounded complexes of Banach $R_{\calU}$-modules that has {\normalfont (Pr)} in the sense of \cite{Buzzard_2007}. 
        \item[(ii)] The Hecke operator $U_{\mathrm{Kl}}$ acts compactly on both complexes $R\Gamma(\calX_{\Iw_{\mathrm{Kl}, n}}^{\tor}(v), \underline{\omega}_{w, v}^{(\kappa_{\calU}, k_2)})$ and  $R\Gamma(\calX_{\Iw_{\mathrm{Kl}, n}}^{\tor}(v), \underline{\omega}_{w, v, \mathrm{cusp}}^{(\kappa_{\calU}, k_2)})$. 
        \item[(iii)] Let $h\in \Q_{\geq 0}$ such that the slope-$\leq h$ parts \[
            R\Gamma(\calX_{\Iw_{\mathrm{Kl}, n}}^{\tor}(v), \underline{\omega}_{w, v}^{(\kappa_{\calU}, k_2)})^{\leq h} \quad \text{ and }\quad R\Gamma(\calX_{\Iw_{\mathrm{Kl}, n}}^{\tor}(v), \underline{\omega}_{w, v, \mathrm{cusp}}^{(\kappa_{\calU}, k_2)})^{\leq h},
        \]
        with respect to $U_{\mathrm{Kl}}$, is well-defined (\cite[Sect. 13.1.2]{Pilloni-higherHidaColemanGSp4}). Then, the slope-$\leq h$ parts are independent to the choices of $w, v, n$. 
    \end{enumerate}
\end{Theorem}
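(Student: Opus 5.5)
The plan is to deduce all three assertions from Pilloni's general higher Coleman theory for $\GSp_4$ in \cite{Pilloni-higherHidaColemanGSp4}. The only real work is checking that our normalisations match his: the twist $\kappa_{\calU}-k_2$ of Remark \ref{Remark: families of automorphic sheaves, Klingen} (i), and the normalised operator $U_{\mathrm{Kl}} = p^{-3}[\Iw_{\mathrm{Kl},n}\bfitu_{\mathrm{Kl}}\Iw_{\mathrm{Kl},n}]$.

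\textbf{Step 1: Banach complexes with (Pr).} We use the cover of $\calX_{\Iw_{\mathrm{Kl},n}}^{\tor}(v)$ by finitely many affinoids, pulled back from an affine cover of the formal model $\frakX_{\Iw_{\mathrm{Kl},n}}^{\tor}(v)$. Since $\mathcal{FL}^+_{n,w',w,v} \to \calX_{\Iw_{\mathrm{Kl},n}}^{\tor}(v)$ is a torsor under $\calT_{w',n}$, over each affinoid $\Spa(B,B^\circ)$ of the cover the sheaf $\underline{\omega}^{(\kappa_{\calU},k_2)}_{w,v}$ is, locally, the completed tensor product of $B$ with a space of $w$-analytic functions on a ball, twisted by $\kappa_{\calU}$. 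This is a Banach $B\widehat{\otimes}R_{\calU}$-module which is a direct summand of an orthonormalisable one.

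The \v{C}ech complex of this cover then represents $R\Gamma$. It is bounded, of length at most $3$, and each term has (Pr) over $R_{\calU}$. The cuspidal case is identical after twisting by the invertible ideal of the boundary.

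\textbf{Step 2: compactness of $U_{\mathrm{Kl}}$.} We construct $U_{\mathrm{Kl}}$ through the correspondence $\pr_1,\pr_2$ attached to $\bfitu_{\mathrm{Kl}}$, exactly as in Sect. \ref{subsection: finite-slope parts, H0}. The key input is Pilloni's dynamical estimate: $\pr_2\circ\pr_1^{-1}$ sends $\calX^{\tor}(v)$ into $\calX^{\tor}(v')$ for some $v'>v$ (\cite[Sect. 12--13]{Pilloni-higherHidaColemanGSp4}). It also improves the analyticity radius, so that the map on sheaves factors through the restriction $\underline{\omega}_{w,v'}\to\underline{\omega}_{w',v}$ with $w'<w$.

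This restriction is compact: restriction of functions between strict neighbourhoods, and restriction from $w'$-analytic to $w$-analytic functions, are both compact. Composing with the bounded trace map then gives a compact endomorphism of the \v{C}ech complex, after refining covers so that the map is defined termwise. I expect this step to be the main obstacle. The difficulty is making the cover compatible with the correspondence on the toroidal boundary; for this I would quote Pilloni's treatment directly rather than redo it.

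\textbf{Step 3: independence of the slope-$\le h$ part.} Using Step 2 and \cite{Buzzard_2007}, the slope-$\le h$ part exists locally over $\calU$. Changing $v$ or $w$ gives restriction maps commuting with $U_{\mathrm{Kl}}$. Because $U_{\mathrm{Kl}}$ factors through the restriction in the other direction, the standard argument shows these maps are quasi-isomorphisms on slope-$\le h$ parts: an endomorphism factoring through a map $f$ which is invertible on the finite-slope part forces $f$ to be an isomorphism there.

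Changing $n$ is handled as in Proposition \ref{Proposition: finite-slope part it independent to the level at p, H0}. We use the factorisations of $[\Iw_{\mathrm{Kl},n}\bfitu_{\mathrm{Kl}}\Iw_{\mathrm{Kl},n}]$ through $\Iw_{\mathrm{Kl},n+1}$ provided by Lemma \ref{Lemma: condition for composing Hecke operators naively}. These give a commutative diagram whose horizontal arrows are $U_{\mathrm{Kl}}$ and whose diagonal arrow is the trace; all its maps are then isomorphisms after passing to the slope-$\le h$ parts.
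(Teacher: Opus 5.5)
This is essentially the paper's route: the paper gives no argument of its own and simply cites \cite[Prop.~12.8.2.1, Cor.~13.2.4.1, Cor.~13.2.4.2]{Pilloni-higherHidaColemanGSp4} for (i)--(iii), and your Steps 1--3 are a fair outline of what those results rest on; your Step 3 is also how the paper itself justifies that $\mathrm{FP}_{\calU}(T)$ is independent of $w,v,n$.

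The sketch contains only cosmetic slips. First, the \v{C}ech complex of a cover by $N$ affinoids has length $N-1$, not necessarily at most $3$. Second, $\mathcal{FL}^+_{n,w',w,v}\to\calX^{\tor}_{\Iw_{\mathrm{Kl},n}}(v)$ is not a $\calT_{w',n}$-torsor: only $\mathcal{FL}^+_{n,w',w,v}\to\mathcal{FL}_{n,w,v}$ is a $\calT^0_{w'}$-torsor, and the flag direction is a ball fibration. Third, in Step 2 the compact restriction runs from $\calX(v)$ to $\calX(v')$ and from $w'$-analytic to $w$-analytic sections, which is opposite to the arrow you wrote. Fourth, in the level-change diagram the traces are the vertical maps, while the diagonal map is $[\Iw_{\mathrm{Kl},n+1}\bfitu_{\mathrm{Kl}}\Iw_{\mathrm{Kl},n}]$.
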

\begin{proof}
    We refer the readers to Proposition 12.8.2.1 (for (i)), Corollary 13.2.4.1 (for (ii)), and Corollary 13.2.4.2 (for (iii)) in \cite{Pilloni-higherHidaColemanGSp4}. 
\end{proof}

\begin{Remark}\label{Remark: Pilloni proved more for cuspidal Klingen theory}
    We remark that Pilloni proved more: Given $v$ such that $v \geq w$, there exists $n_v\in \Z_{\geq 0}$ such that for any $n\geq n_{v}$, the complex $R\Gamma(\calX_{\Iw_{\mathrm{Kl}, n}}^{\tor}(v), \underline{\omega}_{w, v, \mathrm{cusp}}^{(\kappa_{\calU}, k_2)})$ is concentrated in degree $[0,1]$ (\cite[Corollary 12.2.9.1]{Pilloni-higherHidaColemanGSp4}). However, we don't need this stronger result in this paper.  
\end{Remark}

The construction above also apply to single weights $\kappa: \Z_p^\times \rightarrow F^\times$. In particular, we have complexes  $R\Gamma(\calX_{\Iw_{\mathrm{Kl}, n}}^{\tor}(v), \underline{\omega}_{w, v, \mathrm{cusp}}^{(k_1, k_2)})$ and $R\Gamma(\calX_{\Iw_{\mathrm{Kl}, n}}^{\tor}(v), \underline{\omega}_{w, v, \mathrm{cusp}}^{(k_1, k_2)})$ for any $k_1 \geq k_2$. A natural question is then to compare with the classical complexes $R\Gamma(X_{\Iw_{\mathrm{Kl}, n}}^{\tor}, \underline{\omega}^{(k_1, k_2)})$ and $R\Gamma(X_{\Iw_{\mathrm{Kl}, n}}^{\tor}, \underline{\omega}^{(k_1, k_2)}_{\mathrm{cusp}})$. One observes from the construction that there are natural maps \begin{equation}\label{eq: comparison map of classical sheaf and overconvergent sheaf, Klingen}
    R\Gamma(X_{\Iw_{\mathrm{Kl}, n}}^{\tor}, \underline{\omega}^{(k_1, k_2)}) \rightarrow  R\Gamma(\calX_{\Iw_{\mathrm{Kl}, n}}^{\tor}(v), \underline{\omega}^{(k_1, k_2)}) \rightarrow R\Gamma(\calX_{\Iw_{\mathrm{Kl}, n}}^{\tor}(v), \underline{\omega}_{w, v}^{(k_1, k_2)}),
\end{equation}
where the first map is the restriction and the second map is constructed in \cite[Lemma 13.3.2.1]{Pilloni-higherHidaColemanGSp4}. Note that this map is Hecke-equivariant. Similar morphisms also apply to the cuspidal cohomology.

\begin{Theorem}[Classicality]\label{Theorem: classicality theorem, Klingen}
    Let $(k_1, k_2)\in \Z^2$ with $k_1\geq k_2>2$. Then, \eqref{eq: comparison map of classical sheaf and overconvergent sheaf, Klingen} induces an isomorphism \[
        M_{(k_1, k_2)}(\Iw_{\mathrm{Kl}, n})^{< k_1-k_2+1} \cong M_{(k_1, k_2)}^{w, v}(\Iw_{\mathrm{Kl}, n})^{< k_1-k_2+1}
    \]
    A similar result holds for the space of cuspforms. 
\end{Theorem}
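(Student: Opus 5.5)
We would argue in the style of Kassaei's analytic continuation and Pilloni's classicality arguments, using the structure of the Klingen overconvergent sheaves from Definition \ref{Definition: families of automorphic sheaves, Klingen}. The map \eqref{eq: comparison map of classical sheaf and overconvergent sheaf, Klingen} is Hecke-equivariant, so it restricts to the slope-$<k_1-k_2+1$ parts for $U_{\mathrm{Kl}}$. We must show that this restriction is injective and surjective.

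\textbf{Injectivity.} The first arrow (restriction from $X_{\Iw_{\mathrm{Kl},n}}^{\tor}$ to the strict neighbourhood $\calX^{\tor}_{\Iw_{\mathrm{Kl},n}}(v)$) is injective on $H^0$. This holds because $\calX^{\tor}_{\Iw_{\mathrm{Kl},n}}(v)$ meets every connected component in a non-empty open set, and sections of a vector bundle vanishing on such an open vanish identically. The second arrow is injective on $H^0$ because, after pulling back to $\mathcal{FL}^+_{n,w',w,v}$, it is the inclusion of $\mathrm{Sym}^{k_1-k_2}\underline{\omega}\otimes\det^{k_2}$ as algebraic functions inside the $w$-analytic functions on the torsor. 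Both are standard and follow from the construction in \cite[Lemma 13.3.2.1]{Pilloni-higherHidaColemanGSp4}.

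\textbf{Surjectivity.} This splits into two steps.

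\emph{(a) From $w$-analytic to algebraic in the fibre direction.} On the $\SL_2$-part of the Levi $M^*_{\mathrm{Kl}}$, the locally analytic induction has a BGG-type filtration. The quotient of $\underline{\omega}^{(k_1,k_2)}_{w,v}$ by the algebraic subsheaf $\underline{\omega}^{(k_1,k_2)}|_{\calX(v)}$ is identified, via a $\theta$-type differential operator $\theta^{k_1-k_2+1}$, with a sheaf of weight $(k_2-1,k_1+1)$, or rather its Klingen analogue. On this sheaf $U_{\mathrm{Kl}}$ is multiplied by $p^{k_1-k_2+1}$. Any eigenvector of slope $<k_1-k_2+1$ then maps to zero in the quotient's $H^0$, because there $U_{\mathrm{Kl}}$ has slopes $\ge k_1-k_2+1$. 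It therefore comes from $H^0(\calX(v),\underline{\omega}^{(k_1,k_2)})$. To make this rigorous, we would use the short exact sequence of sheaves on $\calX(v)$ and take the slope decomposition, which is available by Theorem \ref{Theorem: nice properties of automorphic sheaves in Klingen families} (ii),(iii).

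\emph{(b) Analytic continuation from $\calX(v)$ to all of $\calX_{\Iw_{\mathrm{Kl},n}}^{\tor}$.} Let $f$ be a $U_{\mathrm{Kl}}$-eigenform of weight $(k_1,k_2)$ on the neighbourhood. We use $f=\lambda^{-N}U_{\mathrm{Kl}}^Nf$ to extend $f$ to larger neighbourhoods, since $U_{\mathrm{Kl}}$ contracts towards the Klingen-ordinary locus (the locus where $\mathrm{HT}(e_1)$ is small). To reach the whole space we use Kassaei's gluing. We decompose the Hecke correspondence over the non-ordinary locus and bound the norms of the pieces with the degree/Hasse-invariant estimates of Pilloni--Stroh for the canonical subgroup of order $p$. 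The slope bound $<k_1-k_2+1$ (together with $k_2>2$, which controls the boundary and the $\det^{k_2}$ factor) is exactly what makes the series of glued pieces converge. Finally, Koecher's principle or extension across the boundary (codimension $\ge2$ in the minimal compactification, or via $q$-expansions) handles the toroidal boundary. The cuspidal case is identical after twisting by $-\partial$.

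\textbf{Main obstacle.} We expect step (b) to be the hardest: producing the precise norm estimates for the Klingen correspondence on the non-ordinary strata. Our first attempt would be to reduce to Pilloni's higher Coleman classicality \cite[Sect. 13.3]{Pilloni-higherHidaColemanGSp4} in degree $0$, quoting his vanishing/comparison result for $H^0$ and only checking that his slope bound matches $k_1-k_2+1$ under our normalisation of $U_{\mathrm{Kl}}$ (the factor $p^{-3}$) and our weight convention (Remark \ref{Remark: families of automorphic sheaves, Klingen} (i)).
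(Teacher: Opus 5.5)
Your closing fallback is exactly the paper's proof. The paper quotes \cite[Corollary 14.7.1 (2)]{Pilloni-higherHidaColemanGSp4} and adds only a remark translating conventions. That remark is what your ``check that the slope bound matches'' would uncover, and it is not what you predict. In this paper's normalisation of $U_{\mathrm{Kl}}$ and of weights, Pilloni's bound involves two quantities, $k_1-k_2+1$ and $k_1+k_2-3$. Pilloni does not need $k_2>2$. In this theorem that hypothesis is there to guarantee $k_1-k_2+1<k_1+k_2-3$, so that the single condition ``slope $<k_1-k_2+1$'' suffices.

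This also locates the problem in your self-contained sketch.

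\textbf{Step (a).} This is the Andreatta--Iovita--Pilloni-type BGG argument in the fibre, i.e.\ on the $\GL_2$ of the Siegel Levi. There $\bfitu_{\mathrm{Kl}}$ has component $\diag(1,p)$, while $\bfitu_{\mathrm{Si}}$ has scalar component. The step correctly accounts for the $k_1-k_2+1$ constraint. One correction: the quotient by the algebraic subsheaf only \emph{injects} into the weight-$(k_2-1,k_1+1)$ sheaf, since integration does not preserve $w$-analyticity. Injectivity is all you use, so this is harmless.

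\textbf{Step (b).} This is only a gesture, and its numerology is misattributed. Since the fibre step accounts for $k_1-k_2+1$, the other constraint $k_1+k_2-3$ must come from continuing from $\calX^{\tor}_{\Iw_{\mathrm{Kl},n}}(v)$ to the whole variety, not from $k_1-k_2+1$. Likewise, $k_2>2$ has nothing to do with controlling the boundary or the $\det^{k_2}$ factor.

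There is also a more serious issue. The Kassaei-type gluing available on Siegel threefolds is driven by the Siegel operator and the canonical subgroup of order $p^2$. It is the gluing the paper itself uses for the Siegel classicality theorem, via \cite{Pilloni-prolongementSiegel}, with bound $k_2-3$. With $U_{\mathrm{Kl}}$ alone you cannot simply plug in Pilloni--Stroh canonical-subgroup estimates on the non-ordinary strata. Supplying a substitute is precisely what Pilloni's higher Coleman theory does.

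So your from-scratch route closes only once (b) is replaced by Pilloni's theorem, which is your fallback and the paper's argument.
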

\begin{proof}
    This is \cite[Corollary 14.7.1 (2)]{Pilloni-higherHidaColemanGSp4}. We simply make the following remarks:\begin{itemize}
        \item Translate from Pilloni's convention to our, the slope bound in the original statement should be $\max\{k_1+k_2-3, k_1-k_2+1\}$. However, since we assume $k_2>2$, $k_2>2$, $k_1+k_2-3 > k_1-k_2+1$.
        \item Pilloni actually proved a stronger version. He did not assume $k_2>2$ and he also showed results for higher cohomology groups. For our later purpose, we are only interested in $H^0$ and the case when $k_2>2$. 
    \end{itemize} 
\end{proof}

\begin{Corollary}\label{Corollary: finite projective of cuspidal Siegel forms in Klingen family}
    Keep the notations as in Definition \ref{Definition: families of automorphic sheaves, Klingen}. Suppose $k_2>2$ and let $h\in \Q_{\geq 0}$ such that the slope-$\leq h$ part of $R\Gamma(\calX_{\Iw_{\mathrm{Kl}, n}}^{\tor}(v), \underline{\omega}_{w, v}^{(\kappa_{\calU}, k_2)})$ is well-defined. Then, \[
        M_{(\kappa_{\calU}, k_2)}^{w, v}(\Iw_{\mathrm{Kl}, n})^{\leq h} = H^0(R\Gamma(\calX_{\Iw_{\mathrm{Kl}, n}}^{\tor}(v), \underline{\omega}_{w, v}^{(\kappa_{\calU}, k_2)})^{\leq h})
    \] 
    is a non-trivial finite projective $R_{\calU}$-module. A similar result also holds for the cuspidal case. 
\end{Corollary}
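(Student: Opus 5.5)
The plan is to deduce finite projectivity from the general slope-decomposition formalism for compact operators on complexes of Banach modules with (Pr), and non-triviality from classicality at a suitable integral weight in $\calU$.

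\textbf{Step 1: finite projectivity.} By Theorem \ref{Theorem: nice properties of automorphic sheaves in Klingen families} (i) and (ii), $R\Gamma(\calX_{\Iw_{\mathrm{Kl}, n}}^{\tor}(v), \underline{\omega}_{w, v}^{(\kappa_{\calU}, k_2)})$ is a bounded complex $C^\bullet$ of Banach $R_{\calU}$-modules with property (Pr), on which $U_{\mathrm{Kl}}$ acts compactly. Since the slope-$\leq h$ decomposition exists, it is induced by a factorisation $Q^*(U_{\mathrm{Kl}}) = Q(U_{\mathrm{Kl}})S(U_{\mathrm{Kl}})$, with $Q$ a multiplicative polynomial of slope $\leq h$ and $Q^*$ the Fredholm determinant. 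Applying Buzzard's Riesz theory (\cite{Buzzard_2007}) termwise, each $C^{i,\leq h} = \ker Q^*(U_{\mathrm{Kl}})|_{C^i}$ is finite projective of rank $\deg Q$, and $C^{\bullet} = C^{\bullet,\leq h} \oplus C^{\bullet,>h}$ as complexes. This exhibits $C^{\bullet,\leq h}$ as a bounded complex of finite projective $R_{\calU}$-modules. Its $H^0$ is the kernel of $C^{0,\leq h}\to C^{1,\leq h}$; since the cohomology begins in degree $0$, this kernel is exactly the slope-$\leq h$ part of $M^{w,v}_{(\kappa_{\calU},k_2)}(\Iw_{\mathrm{Kl},n})$.

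Such a kernel is finitely generated, because $R_{\calU}$ is noetherian, but it need not be projective. To obtain projectivity I would use the fact that $\calW_{\mathrm{Kl}}$ is one-dimensional and smooth, so $R_{\calU}$ can be taken to be a principal ideal domain after shrinking $\calU$ (a disc). Then $H^0$ is a submodule of the finite projective module $C^{0,\leq h}$, hence torsion-free, hence projective. If $\calU$ is only assumed to be a smooth affinoid curve, the same argument still works: $R_{\calU}$ is Dedekind, so finitely generated torsion-free modules are projective. I expect this step to be the main subtle point, since one might be tempted to appeal to vanishing of higher cohomology instead; Remark \ref{Remark: Pilloni proved more for cuspidal Klingen theory} is not needed because of the Dedekind argument.

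\textbf{Step 2: non-triviality.} Choose a classical point $k_1 \in \calU$ with $k_1\geq k_2$ and $h < k_1-k_2+1$; this is possible because integers are Zariski dense and the bound grows with $k_1$. The specialisation of $C^{\bullet,\leq h}$ at $k_1$ computes the slope-$\leq h$ part of the complex at weight $(k_1,k_2)$, by the base-change property of slope decompositions. Since $H^0$ is projective and the complex is concentrated in non-negative degrees, $H^0$ of the specialised complex is at least as large as the fibre of $H^0$ at $k_1$, and there is an injection from $H^0\otimes k(k_1)$ obtained via the torsion-free argument. Consequently the rank of $H^0$ equals the dimension of $M_{(k_1,k_2)}^{w,v}(\Iw_{\mathrm{Kl},n})^{\leq h}$ at generic points. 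By Theorem \ref{Theorem: classicality theorem, Klingen}, that dimension equals the dimension of the classical slope-$\leq h$ forms.

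It then remains to produce a nonzero classical form of slope $\leq h$ at such a weight, which is the genuine obstacle. I would interpret ``non-trivial'' as holding for an appropriate $h$ for which the slope-$\leq h$ part contains a classical eigenform. For example, one can take $h$ at least the slope of the $U_{\mathrm{Kl}}$-eigenvalue of a fixed classical form; such forms exist at large weight. The cuspidal case is handled identically.
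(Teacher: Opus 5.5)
Your Step 1 is the paper's argument: $H^0$ of the slope-$\leq h$ complex is the kernel of $C^{0,\leq h}\to C^{1,\leq h}$. It is therefore a finitely generated torsion-free module over the Dedekind ring $R_{\calU}$, hence projective.

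\textbf{The gap is non-triviality.} You weaken the statement to ``an appropriate $h$'', and even for that version the argument fails.

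\begin{itemize}
\item A nonzero classical eigenform of slope $\leq h$ at one weight $k_1$ only gives $H^0(C^{\bullet,\leq h}\otimes k(k_1))\neq 0$.
\item The specialisation map $H^0(C^{\bullet,\leq h})\otimes k(k_1)\hookrightarrow H^0(C^{\bullet,\leq h}\otimes k(k_1))$ yields $\operatorname{rank} H^0(C^{\bullet,\leq h})\leq \dim H^0(C^{\bullet,\leq h}\otimes k(k_1))$. This is the wrong inequality for what you need.
\item Equality holds only outside the nowhere-dense Zariski-closed locus where $d^0\otimes k$ drops rank, i.e.\ where $H^1(C^{\bullet,\leq h})$ has torsion. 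Your chosen $k_1$ may lie in that locus. For example, for $C^0=R_{\calU}\xrightarrow{f}R_{\calU}=C^1$ with $f\neq 0$, $H^0$ vanishes but its fibre at every zero of $f$ does not.
\item Your sentence ``consequently the rank equals the dimension at generic points'' is true. But it then requires non-vanishing at a \emph{generic} weight, which a single prescribed weight does not provide.
\end{itemize}

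The missing idea is to use a Zariski-dense set of weights, which is what the paper does.
\begin{itemize}
\item The weights $k_1\in\calU$ with $h<k_1-k_2+1$ are Zariski dense.
\item At each of them, Theorem~\ref{Theorem: classicality theorem, Klingen} identifies the fibre with the classical slope-$\leq h$ space. The paper asserts this space is nonzero, citing Lan.
\item If $H^0(C^{\bullet,\leq h})$ were $0$, then $d^0$ would be injective, so $d^0\otimes k$ would be injective outside a nowhere-dense closed subset. This contradicts non-vanishing on a dense set.
\end{itemize}
Alternatively, suppose you know the slope-$\leq h$ cohomology vanishes in positive degrees at $k_1$. Then $H^0$ commutes with base change near $k_1$, and a single weight suffices.

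Your doubt about arbitrary $h$ is legitimate. If the slope-$\leq h$ factor of the Fredholm series over $\calU$ is $1$, then $C^{\bullet,\leq h}=0$. So the statement really needs $h$ for which the classical slope-$\leq h$ spaces are nonzero on a dense set of weights. Under that reading, the proof goes through the density argument, not through a single eigenform.
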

\begin{proof}
    In what follows, we prove the case for $\underline{\omega}_{w, v}^{(\kappa_{\calU}, k_2)}$. The cuspidal case can be proven similarly. 
    
    By Theorem \ref{Theorem: nice properties of automorphic sheaves in Klingen families}, we may represent $R\Gamma(\calX_{\Iw_{\mathrm{Kl}, n}}^{\tor}(v), \underline{\omega}_{w, v}^{(\kappa_{\calU}, k_2)})$ by a complex \[
        C^0 \rightarrow C^1 \rightarrow \cdots \rightarrow C^m
    \]
    of Banach $R_{\calU}$-modules that has (Pr). By the definition of slope-$\leq h$ part, we have a Hecke-equivariant commutative diagram \[
        \begin{tikzcd}
            C^0 \arrow[r] \arrow[d] & C^1\arrow[d] \arrow[r] & \cdots \arrow[r] & C^m\arrow[d]\\
            C^{0, \leq h}\arrow[r] \arrow[u, bend right = 20] & C^{1, \leq h}\arrow[u, bend right = 20]\arrow[r] & \cdots \arrow[r] & C^{m, \leq h}\arrow[u, bend right = 20] 
        \end{tikzcd},
    \]
    where the vertical maps mapping from the top to the bottom are surjections and the maps mapping from the bottom to the top are sections of the surjections. This shows that \[
        M_{(\kappa_{\calU}, k_2)}^{w, v}(\Iw_{\mathrm{Kl}, n})^{\leq h} = H^0(R\Gamma(\calX_{\Iw_{\mathrm{Kl}, n}}^{\tor}(v), \underline{\omega}_{w, v}^{(\kappa_{\calU}, k_2)})^{\leq h}). 
    \]

    To show $M_{(\kappa_{\calU}, k_2)}^{w, v}(\Iw_{\mathrm{Kl}, n})^{\leq h}$ is non-trivial, note that the intersection $\calU_{\mathrm{reg},h-\mathrm{small}}$ of $\{k\in \Z_{\geq k}: h < k+1\}$ and $\calU = \Spa(R_{\calU}, R_{\calU}^{\circ})$ is a Zariski dense subset in $\calU$. Hence, for any $k_1\in \Z_{\geq k_2}$ such that $k_1-k_2$ defines a classical weight in $\calU_{\mathrm{reg},h-\mathrm{small}}$, we have \[
        M_{(k_1, k_2)}^{w, v}(\Iw_{\mathrm{Kl}, n})^{\leq h} \cong M_{(k_1, k_2)}(\Iw_{\mathrm{Kl}, n})^{\leq h}
    \]
    by Theorem \ref{Theorem: classicality theorem, Klingen} and the latter space is non-trivial (\cite[Example 4.17]{Lan-vanishing}). In particular, we see that the support of $M_{(\kappa_{\calU}, k_2)}^{w, v}(\Iw_{\mathrm{Kl}, n})^{\leq h}$ contains a Zariski dense subset and so its support must be the entire $\calU$, which concludes the claim.

    We see that $M_{(\kappa_{\calU}, k_2)}^{w, v}(\Iw_{\mathrm{Kl}, n})^{\leq h}$ is a non-trivial submodule of $C^{0, \leq h}$, which is a finite projective $R_{\calU}$-module (by construction). Thus, $M_{(\kappa_{\calU}, k_2)}^{w, v}(\Iw_{\mathrm{Kl}, n})^{\leq h}$ is a finite torsion-free $R_{\calU}$-module. However, $R_{\calU}$ is a one-dimensional noetherian normal domain, hence a Dedekind domain. Then, by \cite[\href{https://stacks.math.columbia.edu/tag/0AUW}{Tag 0AUW}]{stacks-project}, we conclude that $M_{(\kappa_{\calU}, k_2)}^{w, v}(\Iw_{\mathrm{Kl}, n})^{\leq h}$ is a finite projective $R_{\calU}$-module. 
\end{proof}

\paragraph{The Siegel case.} Now we turn our attention to the Siegel case. Our construction is inspired by the Klingen one as well as \cite{AIP-2015}.

Recall the Hodge--Tate map \[
    \mathrm{HT}: (\Z/p^n\Z)^4 \rightarrow \underline{\omega}/p^n\underline{\omega}
\]
over $\frakX_{K(p^n)}^{\tor}$ and the induced surjection \[
    (\Z/p^n\Z)^4 \otimes_{\Z} \scrO_{\frakX_{K(p^n)}^{\tor, \mathrm{mod}}} \rightarrow \underline{\omega}^{\mathrm{mod}}/p^{n-\frac{1}{p-1}}\underline{\omega}^{\mathrm{mod}}
\]
over $\frakX_{K(p^n)}^{\tor, \mathrm{mod}}$. For any $v\in [0, n-\frac{1}{p-1}]\cap \Q$, let $\frakX_{K(p^n)}^{\tor}(v,v)$ be the open formal subscheme of the admissible blowup of $\frakX_{K(p^n)}^{\tor}(v)$ along the ideal $(\mathrm{HT}(e_2), p^v)$ on which $(\mathrm{HT}(e_2), p^v)$ is generated by $\mathrm{HT}(e_2)$. We thus has a natural morphism $\frakX_{K(p^n)}^{\tor}(v,v) \rightarrow \frakX_{K(p^n)}^{\tor, \mathrm{mod}}$ and $\mathrm{HT}(e_i)=0$ ($i=1, 2$) in $\underline{\omega}^{\mathrm{mod}}/p^v\underline{\omega}^{\mathrm{mod}}$ over $\frakX_{K(p^n)}^{\tor}(v,v)$. In particular, over $\frakX_{K(p^n)}^{\tor}(v,v)$, the image of $\mathrm{HT}(e_3)\wedge \mathrm{HT}(e_4)$ generates $\det(\underline{\omega}^{\mathrm{mod}}/p^v\underline{\omega}^{\mathrm{mod}})$. Finally, we let $\frakX_{\Iw_{\mathrm{Si}, n}}^{\tor}(v,v)$ be the quotient $\frakX_{K(p^n)}^{\tor}(v,v)/P_{\mathrm{Si}}(\Z/p^n\Z)$.

Over $\frakX_{K(p^n)}^{\tor, \mathrm{mod}}$, consider the $\bbG_m$-torsor $\frakL_n = \mathrm{Isom}(\scrO_{\frakX_{K(p^n)}^{\tor, \mathrm{mod}}}, \det\underline{\omega}^{\mathrm{mod}})$. For any rational number $0\leq w \leq v$, we define the formal scheme \[
    \frakL_{n,w,v} \rightarrow \frakL_n\times_{\frakX_{K(p^n)}^{\tor, \mathrm{mod}}} \frakX_{K(p^n)}^{\tor}(v, v) \rightarrow \frakX_{K(p^n)}^{\tor}(v, v)
\]
parametrising trivialisations $\psi$ such that $\psi(1)$ generates $\det(\underline{\omega}^{\mathrm{mod}}/p^w\underline{\omega}^{\mathrm{mod}})$. Just for the sake of generalisation, for any $0\leq w'\leq w$, we can further define \[
    \frakL_{n, w', w,v} \rightarrow \frakL_{n, w, v}
\]
be the normal admissible formal scheme parametrising those trivialisations $\psi$ such that \[
    \psi(1) \equiv \mathrm{HT}(e_3)\wedge \mathrm{HT}(e_4) \mod p^{w'}.
\]
From the construction, we see that \[
    \frakL_{n, w, v} \rightarrow \frakX_{K(p^n)}^{\tor}(v, v)
\]
is a $\frakT_{w}$-torsor and \[
    \frakL_{n, w', w, v} \rightarrow \frakX_{K(p^n)}^{\tor}(v, v)
\]
is a $\frakT_{w'}^0$-torsor.

Again, we use the calligraphic font to denote the rigid generic fibres of these formal schemes. In particular, we have \[
    \calL_{n,w,v} \xrightarrow{\pi_{n,w,v}} \calX_{K(p^n)}^{\tor}(v,v) \xrightarrow{\pi_n^{\mathrm{Si}}} \calX_{\Iw_{\mathrm{Si}, n}}^{\tor}(v,v).
\]
Note that $\calX_{K(p^n)}^{\tor}(v,v)$ and $\calX_{\Iw_{\mathrm{Si}, n}}^{\tor}(v,v)$ are open subspaces of $\calX_{K(p^n)}^{\tor}$ and $\calX_{\Iw_{\mathrm{Pi}, n}}^{\tor}$ respectively.

Recall the weight space $\calW_{\mathrm{Si}}$ (Corollary \ref{Corollary: explicit description of weight spaces}). For any affinoid open subspace $\calU = \Spa(R_{\calU}, R_{\calU}^{\circ})$, we again view its universal weight sa a continuous group hommorphism $\kappa_{\calU}: \Z_p^\times \rightarrow R_{\calU}^{\times}$.

\begin{Definition}\label{Definition: families of automorphic sheaves, Siegel}
    Keep the notations as above. Choose $w, v, n$ such that $w_{\calU}\leq w'\leq w\leq v$ and $v\in [0,n-\frac{1}{p-1}]$. Fix $(k_1, k_2)\in \Z^2$ with $k_1 \geq k_2$. 
    \begin{enumerate}
        \item[(i)] The \textbf{$w$-analytic automorphic sheaf in Siegel family of weight $(\kappa_{\calU}, \kappa_{\calU})$ (resp., $(k_1+\kappa_{\calU}, k_2+\kappa_{\calU})$)} on $\calX_{\Iw_{\mathrm{Si}, n}}^{\tor}(v,v)$ is defined to be \begin{align*}
            \underline{\omega}_{w, v}^{(\kappa_{\calU}, \kappa_{\calU})} & \coloneq \left(\pi_{n,*}^{\mathrm{Si}} \left( \pi_{n,w, v, *}\scrO_{\calL_{n,w,v}}\widehat{\otimes}_{\Q_p}R_{\calU}\right)^{\calT_{w}} \right)^{P_{\mathrm{Si}}(\Z/p^n\Z)} \\ \\(\text{resp., } \underline{\omega}_{w, v}^{(k_1+\kappa_{\calU}, k_2+\kappa_{\calU})} & \coloneq \underline{\omega}_{w, v}^{(\kappa_{\calU}, \kappa_{\calU})} \otimes \underline{\omega}^{(k_1, k_2)}),
        \end{align*}
        where we let $\calT_{w}$ act on $R_{\calU}$ via $\kappa_{\calU}$. We write \[
            M_{(k_1+\kappa_{\calU}, k_2+\kappa_{\calU})}^{w, v}(\Iw_{\mathrm{Si}, n}) \coloneq H^0(\calX_{\Iw_{\mathrm{Si}, n}}^{\tor}(v,v), \underline{\omega}_{w, v}^{(k_1+\kappa_{\calU}, k_2+\kappa_{\calU})}).
        \]
        \item[(ii)]  The \textbf{ cuspidal $w$-analytic automorphic sheaf in Siegel family of weight  $(k_1+\kappa_{\calU}, k_2+\kappa_{\calU})$} on $\calX_{\Iw_{\mathrm{Si}, n}}^{\tor}(v,v)$ is defined to be \[
            \underline{\omega}_{w, v, \cusp}^{(k_1+\kappa_{\calU}, k_2+\kappa_{\calU})} \coloneq \underline{\omega}_{w, v}^{(k_1+\kappa_{\calU}, k_2+\kappa_{\calU})}(-\partial \calX_{\Iw_{\mathrm{Si}, n}}^{\tor}(v,v)),
        \]
        where $\partial \calX_{\Iw_{\mathrm{Si}, n}}^{\tor}(v,v) = \partial \calX_{\Iw_{\mathrm{Si}, n}}^{\tor} \cap \calX_{\Iw_{\mathrm{Si}, n}}^{\tor}(v,v)$. We write \[
            S_{(k_1+\kappa_{\calU}, k_2+\kappa_{\calU})}^{w, v}(\Iw_{\mathrm{Si}, n}) \coloneq H^0(\calX_{\Iw_{\mathrm{Si}, n}}^{\tor}(v,v), \underline{\omega}_{w, v, \cusp}^{(k_1+\kappa_{\calU}, k_2+\kappa_{\calU})}).
        \]
    \end{enumerate}
\end{Definition}

\begin{Remark}\label{Remark: construction of Siegel families of automorphic sheaf also works for single weight}
    \begin{enumerate}
        \item[(i)] From the construction, the sheaves $\underline{\omega}_{w, v}^{(\kappa_{\calU}, \kappa_{\calU})}$ and $\underline{\omega}_{w, v, \cusp}^{(\kappa_{\calU}, \kappa_{\calU})}$ are locally free of rank one over $\scrO_{\calX_{\Iw_{\mathrm{Si}, n}}^{\tor}(v,v)}\widehat{\otimes}R_{\calU}$. See also \cite[Lemma 2.2]{Pilloni-GL2}. 
        \item[(ii)] Given a single weight $\kappa: \Z_p^\times \rightarrow F^\times$, one can run through the construction above and obtain the sheaves $\underline{\omega}_{w, v}^{(\kappa, \kappa)}$, $\underline{\omega}_{w, v, \cusp}^{(\kappa, \kappa)}$.
    \end{enumerate}
\end{Remark}

\begin{Lemma}\label{Lemma: complex represented by Banach modules with (Pr), Siegel}
    Keep the notations as in Definition \ref{Definition: families of automorphic sheaves, Siegel}. The complexes $R\Gamma(\calX_{\Iw_{\mathrm{Si}, n}}^{\tor}(v,v), \underline{\omega}_{w, v}^{(k_1+\kappa_{\calU}, k_2+\kappa_{\calU})})$ and $R\Gamma(\calX_{\Iw_{\mathrm{Si}, n}}^{\tor}(v,v), \underline{\omega}_{w, v, \cusp}^{(k_1+\kappa_{\calU}, k_2+\kappa_{\calU})})$ are represented by a bounded complex of Banach $R_{\calU}$-modules that have {\normalfont (Pr)}. 
\end{Lemma}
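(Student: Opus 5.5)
The argument is the standard one for Banach sheaves over affinoid-covered quasi-compact rigid spaces, following the Klingen case (Theorem \ref{Theorem: nice properties of automorphic sheaves in Klingen families} (i), i.e. \cite[Proposition 12.8.2.1]{Pilloni-higherHidaColemanGSp4}) and \cite{AIP-2015}. We first show that the sheaves in question are locally, on a suitable affinoid cover, of the form $\scrO\widehat{\otimes}_{\Q_p}R_{\calU}$ twisted by a vector bundle. Then we compute $R\Gamma$ by a finite Čech complex, and check that each term is a Banach $R_{\calU}$-module with (Pr).

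\textbf{Step 1 (quasi-compactness and affinoid cover).} The space $\calX_{\Iw_{\mathrm{Si}, n}}^{\tor}(v,v)$ is the rigid generic fibre of the formal scheme $\frakX_{\Iw_{\mathrm{Si}, n}}^{\tor}(v,v)$. This formal scheme is an open of an admissible blowup of a quasi-compact formal scheme, taken modulo a finite group. It is therefore quasi-compact and (after possibly passing to a normal model) separated. Hence we may choose a finite cover $\frakU=\{\frakU_i\}$ by affine formal opens such that:
\begin{itemize}
\item the generic fibres $\calU_i$ are affinoid;
\item all finite intersections $\calU_{i_0\cdots i_r}$ are affinoid;
\item on each $\calU_i$ the bundle $\underline{\omega}$ is free;
\item the $\frakT_w$-torsor $\frakL_{n,w,v}$, pulled back along the finite étale map from $\frakX_{K(p^n)}^{\tor}(v,v)$, is trivial.
\end{itemize}

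\textbf{Step 2 (local structure).} By Remark \ref{Remark: construction of Siegel families of automorphic sheaf also works for single weight} (i), $\underline{\omega}_{w,v}^{(\kappa_{\calU},\kappa_{\calU})}$ is locally free of rank one over $\scrO\widehat{\otimes}R_{\calU}$. Concretely, trivialising the torsor gives
\[
\left(\pi_{n,w,v,*}\scrO_{\calL_{n,w,v}}\widehat{\otimes}R_{\calU}\right)^{\calT_w}\cong \scrO\widehat{\otimes}R_{\calU}.
\]
Taking $P_{\mathrm{Si}}(\Z/p^n\Z)$-invariants is an exact direct summand operation, since the group is finite and we are in characteristic $0$. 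It therefore preserves being a direct summand of a module of the form $\scrO(\calU_i)\widehat{\otimes}R_{\calU}$. Tensoring with the locally free $\underline{\omega}^{(k_1,k_2)}$, and with the ideal of the boundary divisor in the cuspidal case, keeps this shape. Thus on each affinoid $V=\calU_{i_0\cdots i_r}$ the sections are a direct summand of $(\scrO(V)\widehat{\otimes}_{\Q_p}R_{\calU})^{\oplus N}$. Since $\scrO(V)$ is a Banach space over a discretely valued field, it has an orthonormal basis (\cite{Buzzard_2007}, Serre). So $\scrO(V)\widehat{\otimes}R_{\calU}$ is orthonormalisable over $R_{\calU}$, and its direct summands have (Pr).

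\textbf{Step 3 (acyclicity and conclusion).} On each affinoid $V$, the sheaf is a direct summand of a completed tensor product of a coherent sheaf with $R_{\calU}$. Its higher cohomology vanishes by Tate acyclicity, using compatibility of completed tensor products with the Čech complex, as in \cite[Sect. 12]{Pilloni-higherHidaColemanGSp4} or \cite{AIP-2015}. Hence the Čech complex $\check{C}^\bullet(\frakU,-)$ computes $R\Gamma$. This complex is bounded because the cover is finite, and by Step 2 its terms are Banach $R_{\calU}$-modules with (Pr).

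\textbf{Main obstacle.} I expect the main obstacle to be the justification that Čech cohomology of $\scrO\widehat{\otimes}R_{\calU}$ on affinoids is acyclic, i.e. exactness of $-\widehat{\otimes}_{\Q_p}R_{\calU}$ on the strict exact Tate–Čech complex. I would handle this by noting that strict exact sequences of Banach spaces over $\Q_p$ stay exact after completed tensoring with a Banach space, since they split topologically over a discretely valued field.
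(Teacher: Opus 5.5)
Your proposal is correct and follows essentially the same route as the paper: a finite affinoid cover trivialising $\underline{\omega}^{(\kappa_{\calU},\kappa_{\calU})}_{w,v}$ and $\underline{\omega}^{(k_1,k_2)}$, a Čech complex computing $R\Gamma$, and the observation that its terms are Banach $R_{\calU}$-modules with (Pr). The paper cites \cite[Theorem A.1.2.2]{AIP-2015} for the affinoid acyclicity that you verify by hand via topological splitting of strict exact sequences, and it uses rank-one local freeness directly to get orthonormalisable terms, where you go through the $P_{\mathrm{Si}}(\Z/p^n\Z)$-averaging idempotent and obtain direct summands.
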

\begin{proof}
    Since the sheaves $\underline{\omega}_{w, v}^{(\kappa_{\calU}, \kappa_{\calU})}$ and $\underline{\omega}_{w, v, \cusp}^{(\kappa_{\calU}, \kappa_{\calU})}$ are locally free of rank one over $\scrO_{\calX_{\Iw_{\mathrm{Si}, n}}^{\tor}(v,v)}\widehat{\otimes}R_{\calU}$, the sheaves $\underline{\omega}_{w, v}^{(k_1+\kappa_{\calU}, k_2+\kappa_{\calU})}$ and $\underline{\omega}_{w, v, \cusp}^{(k_1+\kappa_{\calU}, k_2+\kappa_{\calU})}$ are Banach sheaves in the sense of \cite[Definition A.2.1.2]{AIP-2015} (in fact, they are \emph{rigid generic fibres} of small formal Banach sheaves in the sense of \cite[Definition A.1.2.1]{AIP-2015}). By the construction, we may choose a finite affinoid open covering $\{\calU_i\}_{i\in I}$ for $\calX_{\Iw_{\mathrm{Si}, n}}^{\tor}(v,v)$ that trivialises both $\underline{\omega}_{w, v}^{(\kappa_{\calU}, \kappa_{\calU})}$ and $\underline{\omega}^{(k_1+k_2)}$, so $\underline{\omega}_{w, v}^{(k_1+\kappa_{\calU}, k_2+\kappa_{\calU})}$ and $\underline{\omega}_{w, v, \cusp}^{(k_1+\kappa_{\calU}, k_2+\kappa_{\calU})}$ are isomorphic to $(\scrO_{\calX_{\Iw_{\mathrm{Si}, n}}^{\tor}(v,v)}\widehat{\otimes}R_{\calU})^m$ (for some $m$) over each $\calU_i$. The Čech complexes associated with $\{\calU_i\}_{i\in I}$ compute $R\Gamma(\calX_{\Iw_{\mathrm{Si}, n}}^{\tor}(v,v), \underline{\omega}_{w, v}^{(k_1+\kappa_{\calU}, k_2+\kappa_{\calU})})$ and $R\Gamma(\calX_{\Iw_{\mathrm{Si}, n}}^{\tor}(v,v), \underline{\omega}_{w, v, \cusp}^{(k_1+\kappa_{\calU}, k_2+\kappa_{\calU})})$ (\cite[Theorem A.1.2.2]{AIP-2015}) and are bounded complexes of Banach $R_{\calU}$-modules having (Pr). 
\end{proof}

\begin{Lemma}\label{Lemma: specialisation of the Siegel families of automorphic sheaves are classical}
    Keep the notations as before. For any $k\in \Z$, there is an isomorphism of sheaves \[
        \underline{\omega}_{w, v}^{(k,k)} \cong \underline{\omega}^{(k,k)}|_{\calX_{\Iw_{\mathrm{Si}, n}}^{\tor}(v,v)}.
    \]
\end{Lemma}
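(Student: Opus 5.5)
The strategy is to write down an explicit section of the torsor $\calL_{n,w,v}$ and use it to identify the invariant functions with sections of $(\det\underline{\omega})^{k}$. Recall that $\underline{\omega}^{(k,k)} = (\det\underline{\omega})^{\otimes k}$, since $\mathrm{Sym}^0$ is trivial. Over $\calX_{K(p^n)}^{\tor}(v,v)$, $\calL_{n,w,v}$ is the rigid generic fibre of $\frakL_{n,w,v}$. This is the space of trivialisations $\psi:\scrO\xrightarrow{\simeq}\det\underline{\omega}^{\mathrm{mod}}$ with $\psi(1)$ a generator of $\det(\underline{\omega}^{\mathrm{mod}}/p^w)$, and it is a $\calT_w$-torsor. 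Because $\calT_w$ acts on $R=F$ through the algebraic character $\tau\mapsto\tau^k$, the invariants $(\pi_{n,w,v,*}\scrO_{\calL_{n,w,v}})^{\calT_w,\,\kappa=k}$ should be the functions $f(\psi)$ satisfying $f(\psi\tau)=\tau^{k}f(\psi)$ (up to the sign convention on the action). Such functions are the same as sections of $(\det\underline{\omega}^{\mathrm{mod}})^{\otimes k}$, via $s\mapsto(\psi\mapsto s/\psi(1)^{k})$, or its inverse depending on convention. After inverting $p$ we have $\det\underline{\omega}^{\mathrm{mod}}[1/p]=\det\underline{\omega}[1/p]$, because $p^{1/(p-1)}\underline{\omega}\subset\underline{\omega}^{\mathrm{mod}}\subset\underline{\omega}$.

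The key steps, in order, are as follows.
\begin{enumerate}
\item Work locally on a formal affine open $\mathrm{Spf}\,A$ of $\frakX_{K(p^n)}^{\tor}(v,v)$ on which $\det\underline{\omega}^{\mathrm{mod}}$ is trivialised by the generator $\mathrm{HT}(e_3)\wedge\mathrm{HT}(e_4)$; this lifts a generator mod $p^v$, hence mod $p^w$. This gives $\frakL_{n,w,v}|_{\mathrm{Spf}A}\cong\mathrm{Spf}A\times\frakT_w$, the section being $\psi_0\colon 1\mapsto\mathrm{HT}(e_3)\wedge\mathrm{HT}(e_4)$.
\item On this chart, $\scrO(\calL_{n,w,v})=A[1/p]\widehat{\otimes}\scrO(\calT_w)$, and the $k$-equivariant functions form the rank one module generated by $\tau\mapsto\tau^{k}$. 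This character is analytic on $\calT_w=\Z_p^\times(1+p^w\calO)$ because it is algebraic. So the $k$-equivariant part is a free rank one module generated by $\psi\mapsto(\psi(1)/\psi_0(1))^{k}$, which corresponds to the section $\psi_0(1)^{\otimes k}$ of $(\det\underline{\omega})^{k}$.
\item Glue these chart-wise identifications. They are independent of the chosen generator, since the map $s\mapsto(\psi\mapsto s/\psi(1)^k)$ is intrinsic and does not refer to a trivialisation. This gives $(\pi_{n,w,v,*}\scrO_{\calL})^{\calT_w}\cong(\det\underline{\omega})^{k}|_{\calX_{K(p^n)}^{\tor}(v,v)}$.
\item Descend along the finite étale (Kummer étale at the boundary) $P_{\mathrm{Si}}(\Z/p^n\Z)$-torsor $\pi_n^{\mathrm{Si}}$. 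The intrinsic isomorphism is $P_{\mathrm{Si}}(\Z/p^n\Z)$-equivariant because the group acts compatibly on $\calL$ and on $\det\underline{\omega}$ through pullback of the semiabelian scheme. By Galois descent, $(\pi^{\mathrm{Si}}_{n,*}\pi^{\mathrm{Si},*}\mathcal{F})^{P_{\mathrm{Si}}(\Z/p^n\Z)}=\mathcal{F}$ for $\mathcal{F}=(\det\underline{\omega})^k$ on $\calX_{\Iw_{\mathrm{Si},n}}^{\tor}(v,v)$.
\end{enumerate}

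I expect the main obstacle to be the equivariance in the last step. The section $\mathrm{HT}(e_3)\wedge\mathrm{HT}(e_4)$ is not $P_{\mathrm{Si}}(\Z/p^n\Z)$-invariant: it transforms by the determinant of the lower Levi block, which is exactly the reason $\calT_w$ is enlarged by $P_{\mathrm{Si}}(\Z/p^n\Z)$. The descent must therefore use the intrinsic, chart-free description of the isomorphism rather than the explicit section. I would also check carefully the sign and normalisation conventions for how $\calT_w$ acts on $R_{\calU}$ (via $\kappa$ versus $\kappa^{-1}$), so that one lands on $(\det\underline{\omega})^{k}$ and not on its dual. A minor secondary point is the boundary: there the torsor is only log-étale, but descent for coherent sheaves along Kummer étale quotients still applies, in view of the quotient existence result of \cite{Zavyalov-quotient}.
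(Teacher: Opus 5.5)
Your proposal is correct and follows essentially the paper's route. The paper also works on a cover trivialising $\underline{\omega}$ and identifies $\underline{\omega}^{(k,k)}$ and $\underline{\omega}_{w,v}^{(k,k)}$ there with $\scrO\otimes V_k^{\mathrm{alg}}$ and $\scrO\otimes V_k^{w}$, the algebraic resp.\ $w$-analytic $k$-equivariant functions. It then glues the natural inclusion and concludes, via \cite[Lemma 2.2]{Pilloni-GL2}, because both spaces are one-dimensional, which is your Step~2.

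Your explicit descent along $P_{\mathrm{Si}}(\Z/p^n\Z)$ and your check of the sign convention supply details the paper leaves implicit. Note that this descent needs only $(\pi^{\mathrm{Si}}_{n,*}\scrO)^{P_{\mathrm{Si}}(\Z/p^n\Z)}=\scrO$ and local freeness of $(\det\underline{\omega})^{k}$, not étaleness at the boundary.
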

\begin{proof}
    Consider \begin{align*}
        V_k^{\mathrm{alg}} & \coloneq \{\phi: \bbG_m \rightarrow \A^1: \phi(ab) = k(b)\phi(a) \,\,\forall a, b\in \bbG_m\},\\
        V_k^{w} & \coloneq \left\{\phi: \Z_p^\times \rightarrow \Q_p: \begin{array}{l}
            \phi(ab) = k(b)\phi(a) \,\,\forall a,b\in \Z_p^\times  \\
            \phi \text{ extends to $\Z_p^\times(1+p^w\calO_{\C_p})$} 
        \end{array} \right\}.
    \end{align*} By definition, we have a natural embedding \begin{equation}\label{eq: algebraic 1-dim representation embeds into w-ananlytic 1-dim representation}
        V_k^{\mathrm{alg}} \hookrightarrow V_k^{w},
    \end{equation}
    which is also compatible with the action of $\Z_p^\times$. 

    Let $\{\calV_i\}_{i\in I}$ be an open affinoid covering of $\calX_{\Iw_{\mathrm{Si}, n}}^{\tor}(v,v)$ on which $\underline{\omega}$ is trivialised. One sees from the construction that \[
        \underline{\omega}^{(k,k)}|_{\calV_i} \cong \scrO_{\calV_i} \otimes_{\Q_p} V_k^{\mathrm{alg}} \quad \text{ and } \underline{\omega}_{w, v}^{(k,k)}|_{\calV_i} \cong \scrO_{\calV_i} \otimes_{\Q_p} V_k^{w}.
    \] 
    for all $i$. Moreover, the natural inclusion \[
        \underline{\omega}^{(k,k)}|_{\calV_i} \hookrightarrow \underline{\omega}_{w, v}^{(k,k)}|_{\calV_i}
    \]
    glues to an inclusion \[
        \underline{\omega}^{(k,k)} \hookrightarrow \underline{\omega}_{w, v}^{(k,k)}.
    \]
    However, by \cite[Lemma 2.2]{Pilloni-GL2}, \eqref{eq: algebraic 1-dim representation embeds into w-ananlytic 1-dim representation} is an isomorphism, we thus finish the proof. 
\end{proof}

We now discuss the Hecke action. Note that the Hecke operators away from $p$ can be defined similarly as before, we thus only focus on the Hecke operator at $p$. Observe first that \[
    \Iw_{\mathrm{Si}, n}^+ \coloneq \Iw_{\mathrm{Si}, n} \cap \bfitu_{\mathrm{Si}}^{-1} \Iw_{\mathrm{Si}, n} \bfitu_{\mathrm{Si}} = \left\{ \begin{pmatrix} \bfgamma_a & \bfgamma_b\\ \bfgamma_c & \bfgamma_d \end{pmatrix}\in \GSp_4 (\Z_p) : \begin{array}{l}
        \bfgamma_c \equiv 0 \mod p^n  \\
        \bfgamma_b \equiv 0 \mod p 
    \end{array}\right\}.
\] Thus, $X_{\Iw_{\mathrm{Si}, n} \cap \bfitu_{\mathrm{Si}}^{-1} \Iw_{\mathrm{Si}, n} \bfitu_{\mathrm{Si}}}$ is the moduli space of the data $(A, \lambda, \psi^p, C_n, C)$, where $(A, \lambda)$ is a principally polarised abelian surface, $\psi^p$ is a level structure away from $p$ (given by $K^p$), $C_n \subset A[p^n]$ is an isotropic subgroup of rank $p^n$, and $C\subset A[p]$ is an isotropic subgroup of rank $p$ such that $C \cap C_n = 0$. Recall that $X_{\Iw_{\mathrm{Si}, n}}$ is the moduli space of the data $(A, \lambda, \psi^p, C_n)$. Then the correspondence \begin{equation}\label{eq: Sigel correspondence}
    \begin{tikzcd}
        & X_{\Iw_{\mathrm{Si}, n}^+} \arrow[ld, "\pr_1"']\arrow[rd, "\pr_2"]\\
        X_{\Iw_{\mathrm{Si}, n}} && X_{\Iw_{\mathrm{Si}, n}}
    \end{tikzcd}
\end{equation}
can be explicitly described as follows: \begin{itemize}
    \item the map $\pr_1$ sends $(A, \lambda, \psi^p, C_n, C)$ to $(A, \lambda, \psi^p, C_n)$; 
    \item the map $\pr_2$ sends $(A, \lambda, \psi^p, C_n, C)$ to $(A/C, \lambda^p, \psi^p_{A/C}, \image(C_n \rightarrow A/C))$, where $\lambda^p$ is the induced principal polarisation, $\psi^p_{A/C}$ is the induced level structure away from $p$. 
\end{itemize}
Again, \eqref{eq: Sigel correspondence} extends to the toroidal compactifications \begin{equation}\label{eq: Sigel correspondence, toroidal}
    \begin{tikzcd}
        & X_{\Iw_{\mathrm{Si}, n}^+}^{\tor, \Sigma''} \arrow[ld, "\pr_1"']\arrow[rd, "\pr_2"]\\
        X_{\Iw_{\mathrm{Si}, n}}^{\tor, \Sigma} && X_{\Iw_{\mathrm{Si}, n}}^{\tor, \Sigma'}
    \end{tikzcd},
\end{equation}
where $\Sigma, \Sigma', \Sigma''$ are suitable choices of cone decompositions.

\begin{Lemma}\label{Lemma: Siegel Hecke correspondence improves overconvergence}
    Let $\calX_{\Iw_{\mathrm{Si}, n}^+}^{\tor, \Sigma''}(v,v) \coloneq \calX_{\Iw_{\mathrm{Si}, n}^+}^{\tor, \Sigma''} \times_{\calX_{\Iw_{\mathrm{Si}, n}}^{\tor, \Sigma}} \calX_{\Iw_{\mathrm{Si}, n}}^{\tor, \Sigma}(v,v)$. Then, \eqref{eq: Sigel correspondence, toroidal} induces a correspondence \[
        \begin{tikzcd}
            & \calX_{\Iw_{\mathrm{Si}, n}^+}^{\tor, \Sigma''}(v,v) \arrow[ld, "\pr_1"'] \arrow[rd, "\pr_2"]\\
            \calX_{\Iw_{\mathrm{Si}, n}}^{\tor, \Sigma}(v,v) && \calX_{\Iw_{\mathrm{Si}, n}}^{\tor, \Sigma'}(v+1,v+1)
        \end{tikzcd}.
    \]
\end{Lemma}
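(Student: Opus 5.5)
The plan is to reduce everything to a statement about the degree (valuation) of the Hodge--Tate images of the canonical subgroup under the isogeny $A \to A/C$, in the spirit of \cite[Sect. 6]{AIP-2015} and \cite[Sect. 12--13]{Pilloni-higherHidaColemanGSp4}. Recall that $\calX_{\Iw_{\mathrm{Si}, n}}^{\tor}(v,v)$ is (the quotient by $P_{\mathrm{Si}}(\Z/p^n\Z)$ of) the locus where $\mathrm{HT}(e_1)$ and $\mathrm{HT}(e_2)$ vanish modulo $p^v$ in $\underline{\omega}^{\mathrm{mod}}$. Equivalently, after unwinding the moduli description, it is the locus where the subgroup $C_n$ (the image of $\langle e_1, e_2\rangle$, a Lagrangian of $A[p^n]$) is ``$v$-close to canonical''. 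Concretely, the cokernel of $\underline{\omega}_{A^D[p^n]}\to$ (the differentials of $C_n^D$) is killed by $p^v$, i.e.\ $\mathrm{HT}$ restricted to $C_n$ has image of small defect. First I would make this translation precise on the open (non-boundary) part, using \cite[Proposition 1.2, 1.10]{Pilloni-Stroh} to identify the vanishing of $\mathrm{HT}(e_1), \mathrm{HT}(e_2)$ mod $p^v$ with a bound on the degree $\deg(C_n)$ or, for the level-$1$ part, $\deg(C_1[p]) \geq 2 - p^{-?}v$-type inequality.

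Next, for a point $(A,\lambda,\psi^p,C_n,C)$ of $\calX_{\Iw_{\mathrm{Si}, n}^+}^{\tor}(v,v)$ I use the defining condition $C\cap C_n = 0$. Since $C_n[p]$ is a Lagrangian in $A[p]$ with large degree (close to the canonical Lagrangian), and $C$ is transversal to it, $C$ is ``anti-canonical''. The standard computation with the isogeny $f: A \to A' = A/C$ then applies. The map $f^*:\underline{\omega}_{A'} \to \underline{\omega}_A$ and the dual isogeny $A' \to A$ (with kernel $A[p]/C \supset$ image of $C_n[p]$) satisfy $\deg(\mathrm{im}(C_n)\subset A') = \deg(C_n) + $ (a positive increment coming from $\deg(A[p]/C) = 2 - \deg C$). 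Translating back through the Hodge--Tate map, this gives the following. On $A'$, the images of $e_1,e_2$ under $\mathrm{HT}_{A'}$ are obtained from those on $A$ by composing with $f^{*,-1}$. Because $C$ has degree close to $0$, this effectively multiplies the divisibility by $p$: if $\mathrm{HT}_A(e_i) \equiv 0 \bmod p^v$, then $\mathrm{HT}_{A'}(e_i')\equiv 0 \bmod p^{v+1}$. This is precisely the statement $\pr_2$ lands in $(v+1,v+1)$. The analogous one-variable statement is \cite[Lemma 12.?]{Pilloni-higherHidaColemanGSp4} in the Klingen case and \cite[Proposition 6.2.2.3]{AIP-2015} for $B$; I would follow the latter's proof, using that $\mathrm{HT}$ is functorial for isogenies and that $\underline{\omega}_{A}/f^*\underline{\omega}_{A'} \cong \underline{\omega}_C$ is killed by $p^{\deg C}$ with $\deg C$ small.

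Finally, I extend over the boundary. Points near the boundary correspond to Mumford degenerations, and the Hodge--Tate map and the correspondence are compatible with the toroidal charts (the semiabelian scheme has a torus part where $\mathrm{HT}$ is explicit). So the condition is open-closed compatible, and it suffices to argue on a dense open or via the extension of $\mathrm{HT}$ in \cite{Pilloni-Stroh}. Since $\calX^{\tor}(v+1,v+1)$ is defined by a closed condition on admissible blowups, the inclusion $\pr_2(\calX_{\Iw_{\mathrm{Si}, n}^+}^{\tor,\Sigma''}(v,v)) \subset \calX_{\Iw_{\mathrm{Si}, n}}^{\tor,\Sigma'}(v+1,v+1)$ follows by continuity/density of the interior. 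The main obstacle I expect is the precise degree bookkeeping in the middle step, namely showing that $C$ is genuinely anti-canonical given only the $\bmod p^v$ vanishing, and obtaining the clean shift $v\mapsto v+1$ rather than a weaker $v \mapsto v + 1 - \epsilon$ (it may be necessary to restrict $v< \frac{1}{p-1}$-type ranges or adjust the normalisation $\underline{\omega}^{\mathrm{mod}}$). I would first attempt this by reducing to the $p$-divisible group $A[p^\infty]$ and Fargues' degree function.
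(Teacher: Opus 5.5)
Your proposal has a genuine gap, and it is the one you flag as the ``main obstacle'': the route you describe cannot give the exact shift $v \mapsto v+1$ that the lemma asserts. You control the isogeny through $\underline{\omega}_A/f^*\underline{\omega}_{A'} \cong \underline{\omega}_C$, which is killed by roughly $p^{\deg C}$, and you translate $\mathrm{HT}(e_1), \mathrm{HT}(e_2) \equiv 0 \bmod p^v$ into degree bounds on $C_n$. This only shows that the isogeny acts on differentials as $p$ times an isomorphism up to an error of size about $p^{\deg C}$. Passing between $\underline{\omega}$ and $\underline{\omega}^{\mathrm{mod}}$ costs up to another $p^{1/(p-1)}$, so you end at $v+1-\epsilon$. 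Separately, the step ``$C_n[p]$ is close to canonical, hence $C$ is anti-canonical'' relies on canonical-subgroup existence results. Those force restrictions on $v$ that are not in the statement. Your heuristic direction is right, but the degree bookkeeping is both lossy and unnecessary.

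The missing idea is to work directly with Tate-module lattices at rank-one points. Since all the spaces are topologically of finite type, it suffices to check rank-one points; pass to $\calO_{\C_p}$ and go up to full level $p^n$. There you may choose a basis $e_1,\dots,e_4$ of $T_pG$ lifting the level structure so that $C$ is spanned by the images of $e_3, e_4$; this is possible because $C \cap C_n = 0$. Then $T_pG' = \langle e_1, e_2, p^{-1}e_3, p^{-1}e_4\rangle$ and $\alpha = \diag(\one_2, p\one_2)$.

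The key point is that $\underline{\omega}^{\mathrm{mod},+}_G$ is exactly the $\calO_{\C_p}$-span of $\mathrm{HT}_G(e_1),\dots,\mathrm{HT}_G(e_4)$, and $\mathrm{HT}$ is functorial for $\alpha$. Since $\mathrm{HT}_G(e_1), \mathrm{HT}_G(e_2) \in p^v\underline{\omega}^{\mathrm{mod},+}_G$ with $v>0$, the two vectors $\mathrm{HT}_G(e_3), \mathrm{HT}_G(e_4)$ already generate $\underline{\omega}^{\mathrm{mod},+}_G$. Hence $\underline{\omega}^{\mathrm{mod},+}_{G'}$ is the image of $p^{-1}\underline{\omega}^{\mathrm{mod},+}_G$. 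In other words, in suitable bases the induced map $\alpha^{\vee}\colon \underline{\omega}^{\mathrm{mod},+}_G \to \underline{\omega}^{\mathrm{mod},+}_{G'}$ is exactly $p\one_2$. Therefore the Hodge--Tate images of the first two basis vectors of $T_pG'$ lie in $p^{v+1}\underline{\omega}^{\mathrm{mod},+}_{G'}$ on the nose.

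This is the paper's argument. It needs no degree theory or canonical subgroups and works for every $v$ in range. It also treats boundary points uniformly via Tate modules of the associated $1$-motives, so your separate continuity/density step at the boundary is not needed.
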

\begin{proof}
    The proof is similar to \cite[Lemma 13.2.1.1]{Pilloni-higherHidaColemanGSp4}.
    
    As all adic spaces in the picture are topologically of finite type, it is enough to check on the rank-$1$ points. Let $x = \Spa(F, \calO_F)$ be a rank-$1$ point of $\calX_{\Iw_{\mathrm{Si}, n}^+}^{\tor, \Sigma''}(v,v)$, which, by construction, corresponds to an isogeny $\alpha: G \rightarrow G'$ of semiabelian schemes (if $x$ does not belong to the boundary, this is nothing but $A \rightarrow A/C$). Over $\calO_{\C_p}$, we have a commutative diagram \[
        \begin{tikzcd}
            T_p G \arrow[r, "\alpha"]\arrow[d, "\mathrm{HT}"'] & T_pG'\arrow[d, "\mathrm{HT}"]\\
            \underline{\omega}_G^{\mathrm{mod}, +} \arrow[r, "\alpha^{\vee}"] & \underline{\omega}_{G'}^{\mathrm{mod}, +}
        \end{tikzcd},
    \]
    where $T_pG$ and $T_pG'$ are the Tate modules of the corresponding $1$-motives and $\mathrm{HT}$ is the Hodge--Tate map.

    For any rank-$1$ point $x' = \Spa(F, \calO_F)$ of $\calX_{\Iw_{\mathrm{Si}, n}^+\cap K(p^n)}^{\tor, \Sigma''}$ lifting $x$, choose bases for $T_pG \cong \Z_p^4$ and $T_pG'\cong \Z_p^4$ lifting the bases for $G[p^n]$ and $G'[p^n]$ provided by the moduli problem. For a suitable choice of bases for $\underline{\omega}_G^{\mathrm{mod}, +}$ and $\underline{\omega}_{G'}^{\mathrm{mod}, +}$, the diagram above is isomorphic to the commutative diagram \begin{equation}\label{eq: Siegel correspondence diagram}
        \begin{tikzcd}
            \Z_p^4 \arrow[rr, "\text{$\diag(\one_2, p\one_2)$}"]\arrow[d, "\pr_1"'] && \Z_p^4\arrow[d, "\pr_2"]\\
            \calO_{\C_p}^2 \arrow[rr, "p\one_2"] && \calO_{\C_p}^2
        \end{tikzcd}.
    \end{equation}
    By definition, we see that $\pr_1(e_i)\in p^v\calO_{\C_p}^2$ for $i=1, 2$ and $\pr_2(\diag(\one_2, p\one_2)e_i)\in p^{v+1}\calO_{\C_p}^2$ for $i=1,2$. Therefore, at the level of points, we see that $\pr_2(\calX_{\Iw_{\mathrm{Si}, n}^+}^{\tor, \Sigma''}(v,v))$ factors through $\calX_{\Iw_{\mathrm{Si}, n}}^{\tor, \Sigma'}(v+1,v+1)$.
\end{proof}

\begin{Lemma}\label{Lemma: morphism between sheaves for Siegel families}
    Keep the notations as in Definition \ref{Definition: families of automorphic sheaves, Siegel} and Lemma \ref{Lemma: Siegel Hecke correspondence improves overconvergence}. There exists a $\scrO_{\calX_{\Iw_{\mathrm{Si}, n}^+}^{\tor, \Sigma''}(v,v)}\widehat{\otimes} R_{\calU}$-morphism of sheaves \[ 
        \pr_{2}^* \underline{\omega}_{w, v+1}^{(\kappa_{\calU}, \kappa_{\calU})} \rightarrow \pr_{1}^* \underline{\omega}_{w, v}^{(\kappa_{\calU}, \kappa_{\calU})}.
    \]
\end{Lemma}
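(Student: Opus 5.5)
The plan is to construct the morphism at the level of the $\calT_w$-torsors $\calL_{n,w,v}$, using the isogeny $\alpha: G \rightarrow G'$ over $\calX_{\Iw_{\mathrm{Si}, n}^+}^{\tor, \Sigma''}(v,v)$ from the proof of Lemma \ref{Lemma: Siegel Hecke correspondence improves overconvergence}, and then to take $\calT_w$-invariants twisted by $\kappa_{\calU}$ and $P_{\mathrm{Si}}(\Z/p^n\Z)$-invariants.

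First I pass to level $\Iw_{\mathrm{Si}, n}^+\cap K(p^n)$, where both torsors are defined; the construction is $P_{\mathrm{Si}}(\Z/p^n\Z)$-equivariant, so it descends by Galois descent along the finite étale quotient. Over this cover, $\alpha$ induces $\alpha^*: \underline{\omega}_{G'} \rightarrow \underline{\omega}_G$ on invariant differentials, hence $\det\alpha^*: \det\underline{\omega}_{G'}^{\mathrm{mod}} \rightarrow \det\underline{\omega}_G^{\mathrm{mod}}$. By the local model \eqref{eq: Siegel correspondence diagram}, after suitable bases on the locus where $\mathrm{HT}(e_1),\mathrm{HT}(e_2)$ vanish modulo $p^{v}$, this map is multiplication by a unit times a fixed power of $p$ (the isogeny is étale on the canonical part and multiplication by $p$ on the complementary part); call it $p^{c}u$. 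Compatibility of $\mathrm{HT}$ with $\alpha$ shows that $\mathrm{HT}(e_3)\wedge\mathrm{HT}(e_4)$ for $G'$ is sent to $p^{c}$ times a unit times the corresponding generator for $G$, the unit being $\equiv$ an element of $\Z_p^\times$ modulo $p^{w}$ (using $w \leq v$).

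Hence the normalised map $\psi \mapsto p^{-c}(\det\alpha^*)\circ\psi$ gives a morphism $\pr_2^*\calL_{n,w,v+1} \rightarrow \pr_1^*\calL_{n,w,v}$ over $\calX_{\Iw_{\mathrm{Si}, n}^+}^{\tor, \Sigma''}(v,v)$, equivariant for $\calT_w$ (possibly up to a fixed character of $\Z_p^\times$ twisting the $P_{\mathrm{Si}}(\Z/p^n\Z)$-action, which is absorbed by the identification of $\pr_2$ with the level structure $\image(C_n \rightarrow A/C)$). Pulling back functions and taking the $\kappa_{\calU}$-isotypic parts then yields
\[
    \pr_{2}^* \underline{\omega}_{w, v+1}^{(\kappa_{\calU}, \kappa_{\calU})} \rightarrow \pr_{1}^* \underline{\omega}_{w, v}^{(\kappa_{\calU}, \kappa_{\calU})},
\]
and by construction this map is $\scrO\widehat{\otimes}R_{\calU}$-linear.

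The main obstacle is the second step: checking that $\det\alpha^*$ genuinely preserves the congruence condition defining the torsors modulo $p^{w}$, uniformly including at the boundary. The plan for it is to argue on rank-$1$ points as in Lemma \ref{Lemma: Siegel Hecke correspondence improves overconvergence}, via the explicit diagram \eqref{eq: Siegel correspondence diagram}, and then to use normality of the formal models (the torsors are normal admissible) to extend the map from the generic fibre, following \cite[Sect. 13.2]{Pilloni-higherHidaColemanGSp4} and \cite[Sect. 6]{AIP-2015}.
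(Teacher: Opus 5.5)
Your strategy is the paper's: build a map between the $\calT_w$-torsors $\calL$ from the determinant of the isogeny's action on $\underline{\omega}^{\mathrm{mod}}$, check the condition modulo $p^w$ with the Hodge--Tate compatibility \eqref{eq: Siegel correspondence diagram}, and then take $\kappa_{\calU}$-equivariant functions. As written, however, your last step produces the arrow in the wrong direction.

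The sheaves $\underline{\omega}_{w,v}^{(\kappa_{\calU},\kappa_{\calU})}$ are equivariant \emph{functions} on $\calL_{n,w,v}$, so pulling back along a map of torsors is contravariant. Your map $\pr_2^*\calL_{n,w,v+1}\to\pr_1^*\calL_{n,w,v}$, $\psi\mapsto p^{-c}(\det\alpha^*)\circ\psi$, therefore induces $\pr_1^*\underline{\omega}_{w,v}^{(\kappa_{\calU},\kappa_{\calU})}\to\pr_2^*\underline{\omega}_{w,v+1}^{(\kappa_{\calU},\kappa_{\calU})}$, which is the opposite of the statement.

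The repair is one line. Your map is composition with a fixed linear map, so it is $\calT_w$-equivariant on the nose and no twisting character appears. It is thus an equivariant morphism between two $\calT_w$-torsors over the same base, hence an isomorphism, and you can pull back along its inverse.

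The paper avoids the inversion by going the other way from the start. It pushes a trivialisation $\psi$ of $\det\underline{\omega}_G$ forward along $\det\alpha^\vee\colon\det\underline{\omega}_G\to\det\underline{\omega}_{G'}$. This is the map compatible with $\mathrm{HT}$ in \eqref{eq: Siegel correspondence diagram}; it equals $p\one_2$ in suitable bases, so one normalises by $p^{-2}$. This gives $\pr_1^*\calL_{n,w,v}\to\pr_2^*\calL_{n,w,v+1}$ directly.

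Two smaller remarks.

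First, your constant $c$ is $0$. Write $\lambda,\lambda'$ for the principal polarisations. Then $\alpha^\vee\circ\lambda'\circ\alpha=p\lambda$ gives $\alpha^*\circ(\alpha^\vee)^*=p$ on differentials, under the polarisation identifications. Combined with $\mathrm{HT}_{G'}\circ\alpha=(\alpha^\vee)^*\circ\mathrm{HT}_G$, this shows that $\alpha^*$ sends $\mathrm{HT}_{G'}(e_i')$ to $\mathrm{HT}_G(e_i)$ for $i=3,4$. Your torsor map is then exactly the inverse of the paper's normalised one.

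Second, your ``main obstacle'' is lighter than you expect. Everything lives on rigid generic fibres, so no extension from formal models via normality is needed. The paper simply defines the map functorially on $(R,R^+)$-points of the level-$K(p^n)$ correspondence, where the congruence modulo $p^w$ is read off from the HT-compatibility.
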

\begin{proof}
    Consider the correspondence \[
        \begin{tikzcd}
            & \calX_{K(p^n) \cap \bfitu_{\mathrm{Si}}^{-1}K(p^n) \bfitu_{\mathrm{Si}}}^{\tor, \Sigma''}(v,v) \arrow[ld, "\pr_1"']\arrow[rd, "\pr_2"]\\
            \calX_{K(p^n)}^{\tor, \Sigma}(v,v) && \calX_{K(p^n)}^{\tor, \Sigma'}(v+1,v+1)
        \end{tikzcd},    
    \]
    where $\calX_{K(p^n) \cap \bfitu_{\mathrm{Si}}^{-1}K(p^n) \bfitu_{\mathrm{Si}}}^{\tor, \Sigma''}(v,v) = \calX_{K(p^n) \cap \bfitu_{\mathrm{Si}}^{-1}K(p^n) \bfitu_{\mathrm{Si}}}^{\tor, \Sigma''} \times_{\calX_{K(p^n)}^{\tor, \Sigma}} \calX_{K(p^n)}^{\tor, \Sigma}(v,v)$. We claim that there is a natural morphism \[
        \pr_1^* \calL_{n, w, v} \rightarrow \pr_2^* \calL_{n, w, v+1}
    \]
    of line bundles.

    Indeed, for any point $\Spa(R, R^+) \rightarrow \calX_{K(p^n) \cap \bfitu_{\mathrm{Si}}^{-1}K(p^n) \bfitu_{\mathrm{Si}}}^{\tor, \Sigma''}(v,v)$ corresponding to the isogeny $\alpha: G \rightarrow G'$, we have a commutative diagram \[
        \begin{tikzcd}
            \Z_p^4 \arrow[r]\arrow[d, "\mathrm{HT}"'] & \Z_p^4\arrow[d, "\mathrm{HT}"]\\
            \underline{\omega}_{G}^{\mathrm{mod}, +} \arrow[r, "\alpha^{\vee}"] & \underline{\omega}_{G'}^{\mathrm{mod}, +}
        \end{tikzcd}.
    \]
    Given a trivialisation $(\psi: R \xrightarrow{\cong} \det\underline{\omega}_G) \in \calL_{n, w, v}(R, R^+)$, we have a trivialisation \[
        \alpha^{\vee, *}\psi: R \xrightarrow{\psi} \det\underline{\omega}_G \xrightarrow{\det\alpha^{\vee}} \det\underline{\omega}_{G'};
    \]
    note that $\det \alpha^{\vee}$ is an isomorphism after inverting $p$. As $\psi \in \calL_{n, w, v}(R, R^+)$, one sees that $\alpha^{\vee, *}\psi \in \calL_{n, w, v+1}(R, R^+)$. That is, we have the claimed map of line bundles. 
\end{proof}

An immediate consequence of Lemma \ref{Lemma: morphism between sheaves for Siegel families} is that we have the composition of morphisms \[
    \begin{tikzcd}
        R\Gamma(\calX_{\Iw_{\mathrm{Si}, n}}^{\tor, \Sigma'}(v+1, v+1), \underline{\omega}_{w, v}^{(k_1+\kappa_{\calU}, k_2+\kappa_{\calU})}) \arrow[r, "\pr_2^*"]\arrow[rddd, bend right = 20, "U_{\mathrm{Si}}^{\text{naïve}}"'] & R\Gamma(\calX_{\Iw_{\mathrm{Si}, n}^+}^{\tor, \Sigma''}(v,v), \pr_2^*\underline{\omega}_{w, v+1}^{(k_1+\kappa_{\calU}, k_2+\kappa_{\calU})}) \arrow[d, "\text{Lemma \ref{Lemma: morphism between sheaves for Siegel families}}"]\\
        & R\Gamma(\calX_{\Iw_{\mathrm{Si}, n}^+}^{\tor, \Sigma''}(v,v), \pr_1^*\underline{\omega}_{w, v}^{(k_1+\kappa_{\calU}, k_2+\kappa_{\calU})})\arrow[d]\\
        & R\Gamma(\calX_{\Iw_{\mathrm{Si}, n}}^{\tor, \Sigma}(v, v), R\pr_{1,*}\pr_1^*\underline{\omega}_{w, v}^{(k_1+\kappa_{\calU}, k_2+\kappa_{\calU})})\arrow[d, "\mathrm{tr}"]\\
        & R\Gamma(\calX_{\Iw_{\mathrm{Si}, n}}^{\tor, \Sigma}(v, v), \underline{\omega}_{w, v}^{(k_1+\kappa_{\calU}, k_2+\kappa_{\calU})})
    \end{tikzcd}.
\]
By pre-composing $\frac{1}{p^3}U_{\mathrm{Si}}^{\text{naïve}}$ with the restriction map \begin{equation}\label{eq: restriction map for Siegel Up}
    R\Gamma(\calX_{\Iw_{\mathrm{Si}, n}}^{\tor, \Sigma'}(v, v), \underline{\omega}_{w, v}^{(k_1+\kappa_{\calU}, k_2+\kappa_{\calU})}) \rightarrow  R\Gamma(\calX_{\Iw_{\mathrm{Si}, n}}^{\tor, \Sigma'}(v+1, v+1), \underline{\omega}_{w, v}^{(k_1+\kappa_{\calU}, k_2+\kappa_{\calU})})
\end{equation}
and using the same trick as before, we then define an operator \[
    U_{\mathrm{Si}}: R\Gamma(\calX_{\Iw_{\mathrm{Si}, n}}^{\tor, \Sigma}(v, v), \underline{\omega}_{w, v}^{(k_1+\kappa_{\calU}, k_2+\kappa_{\calU})}) \rightarrow R\Gamma(\calX_{\Iw_{\mathrm{Si}, n}}^{\tor, \Sigma}(v, v), \underline{\omega}_{w, v}^{(k_1+\kappa_{\calU}, k_2+\kappa_{\calU})}). 
\]
We shall then drop the $\Sigma$ in the notation. A similar construction also applies to the cuspidal version.

\begin{Corollary}\label{Corollary: compact operators, Siegel case}
    On the complexes $R\Gamma(\calX_{\Iw_{\mathrm{Si}, n}}^{\tor}(v,v), \underline{\omega}_{w, v}^{(k_1+\kappa_{\calU}, k_2+\kappa_{\calU})})$ and $R\Gamma(\calX_{\Iw_{\mathrm{Si}, n}}^{\tor}(v,v), \underline{\omega}_{w, v, \cusp}^{(k_1+\kappa_{\calU}, k_2+\kappa_{\calU})})$, the operator $U_{\mathrm{Si}}$ acts compactly.
\end{Corollary}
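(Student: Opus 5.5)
The operator $U_{\mathrm{Si}}$ on $R\Gamma(\calX_{\Iw_{\mathrm{Si}, n}}^{\tor}(v,v), \underline{\omega}_{w, v}^{(k_1+\kappa_{\calU}, k_2+\kappa_{\calU})})$ is, by construction, the composite of two maps. The first is the restriction map \eqref{eq: restriction map for Siegel Up} from the overconvergent neighbourhood $\calX_{\Iw_{\mathrm{Si}, n}}^{\tor}(v,v)$ to the strictly smaller neighbourhood $\calX_{\Iw_{\mathrm{Si}, n}}^{\tor}(v+1,v+1)$. The second is the continuous map $\frac{1}{p^3}U_{\mathrm{Si}}^{\text{naïve}}$, which by Lemma \ref{Lemma: Siegel Hecke correspondence improves overconvergence} and Lemma \ref{Lemma: morphism between sheaves for Siegel families} is defined on cohomology over $\calX_{\Iw_{\mathrm{Si}, n}}^{\tor}(v+1,v+1)$. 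Since a composite of a compact map with a continuous one is compact, it suffices to show that the restriction map is compact at the level of Banach complexes representing the two cohomologies.

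To do this, choose a finite affinoid covering $\{\calV_i\}_{i\in I}$ of $\calX_{\Iw_{\mathrm{Si}, n}}^{\tor}(v,v)$ trivialising $\underline{\omega}_{w,v}^{(\kappa_{\calU},\kappa_{\calU})}$ and $\underline{\omega}^{(k_1,k_2)}$, as in the proof of Lemma \ref{Lemma: complex represented by Banach modules with (Pr), Siegel}. Intersecting with $\calX_{\Iw_{\mathrm{Si}, n}}^{\tor}(v+1,v+1)$ gives a covering $\{\calV_i'\}$ with $\calV_i' \subset \calV_i$. The goal is to arrange that each $\calV_i'$ is relatively compact in $\calV_i$ over $\Spa(F,\calO_F)$. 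This is what the extra $p$ in the defining conditions should give: the locus $\calX(v+1,v+1)$ is cut out by the rational conditions $|\mathrm{HT}(e_i)|\le |p|^{v+1}$ for $i=1,2$, inside the locus defined by $|p|^{v}$. Shrinking to a slightly smaller parameter $v'$ with $v<v'<v+1$, and using that the toroidal compactification is proper over the base, one sees that the locus defined by $v+1$ lies in the relative interior of the locus defined by $v'$, hence of the locus defined by $v$.

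With relative compactness in hand, restriction $\scrO(\calV_i)\widehat{\otimes}R_{\calU} \to \scrO(\calV_i')\widehat{\otimes}R_{\calU}$ is a compact map of Banach $R_{\calU}$-modules (a standard fact about inner maps of affinoids, base changed to $R_{\calU}$). The induced map of Čech complexes is then compact termwise, so the operator $U_{\mathrm{Si}}$ is represented on the (Pr) Banach complex by a compact endomorphism. The cuspidal case is identical, since the twist by $-\partial$ is an invertible ideal sheaf and does not affect the argument.

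The main obstacle is the relative-compactness step near the boundary, and the fact that the Čech complexes for the two neighbourhoods use different coverings. I would follow \cite{AIP-2015} and \cite[Sect. 13.2]{Pilloni-higherHidaColemanGSp4}: work on the formal models $\frakX^{\tor}(v,v)$ and use the properness of $\frakX^{\tor}_{K(p^n)}$ to get inner morphisms after taking the $P_{\mathrm{Si}}(\Z/p^n\Z)$-invariants. Alternatively, one can simply cite the general compactness criterion in \cite[Appendix A]{AIP-2015} for maps factoring through a restriction to a strict neighbourhood.
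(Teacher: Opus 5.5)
Your proposal follows the same route as the paper: $U_{\mathrm{Si}}$ factors through the restriction from $\calX_{\Iw_{\mathrm{Si}, n}}^{\tor}(v,v)$ to $\calX_{\Iw_{\mathrm{Si}, n}}^{\tor}(v+1,v+1)$, both complexes are computed by \v{C}ech complexes, and compactness of that restriction gives compactness of $U_{\mathrm{Si}}$. Your reason for the restriction being compact, namely relative compactness of the smaller neighbourhood, is the correct one and is more accurate than the paper's one-line remark that the restriction is compact because it is injective; to make it rigorous, intersect the two neighbourhoods with a pair of finite affinoid covers $W_i' \Subset W_i$ of the proper ambient space, rather than intersecting a single cover of $\calX_{\Iw_{\mathrm{Si}, n}}^{\tor}(v,v)$ with the smaller neighbourhood.
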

\begin{proof}
    By the construction, $U_{\mathrm{Si}}$ factors through the restriction map \eqref{eq: restriction map for Siegel Up}. Note that we can compute these complexes using Čech complexes and the restriction map on global sections is an injection, hence compact. Thus \eqref{eq: restriction map for Siegel Up} is compact, and so is $U_{\mathrm{Si}}$.
\end{proof}

\begin{Theorem}[Classicality]\label{Theorem: classicality theorem, Siegel}
    Let $(k_1, k_2)\in \Z^2$ with $k_1\geq k_2>2$. Then, the restriction map $M_{(k_1, k_2)}(\Iw_{\mathrm{Si}, n}) \rightarrow M_{(k_1, k_2)}^{w, v}(\Iw_{\mathrm{Si}, n})$ induces an isomorphism \[
        M_{(k_1, k_2)}(\Iw_{\mathrm{Si}, n})^{< k_2-3} \cong M_{(k_1, k_2)}^{w, v}(\Iw_{\mathrm{Si}, n})^{<k_2-3}.
    \]
\end{Theorem}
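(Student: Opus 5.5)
The plan is to follow the analytic-continuation method of Kassaei and Pilloni–Stroh, as in the Klingen case (Theorem \ref{Theorem: classicality theorem, Klingen}), but only in the Siegel direction. Lemma \ref{Lemma: specialisation of the Siegel families of automorphic sheaves are classical} identifies $\underline{\omega}_{w,v}^{(k,k)}$ with the classical sheaf on $\calX_{\Iw_{\mathrm{Si},n}}^{\tor}(v,v)$. Hence $\underline{\omega}_{w,v}^{(k_1,k_2)}$ is just $\underline{\omega}^{(k_1,k_2)}|_{\calX_{\Iw_{\mathrm{Si},n}}^{\tor}(v,v)}$, and $M_{(k_1,k_2)}^{w,v}(\Iw_{\mathrm{Si},n})$ is the space of sections of the classical sheaf over the strict neighbourhood $\calX^{\tor}_{\Iw_{\mathrm{Si},n}}(v,v)$ of the Siegel-ordinary (canonical subgroup) locus. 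Injectivity of the restriction map is immediate: $X^{\tor}_{\Iw_{\mathrm{Si},n}}$ is irreducible on each connected component, and the open set meets every component. The whole task is therefore surjectivity on the slope $<k_2-3$ part, which we prove by extending a $U_{\mathrm{Si}}$-eigenform (or generalised eigenvector) $f$ from $\calX^{\tor}(v,v)$ to all of $\calX^{\tor}_{\Iw_{\mathrm{Si},n}}$, and then applying GAGA and Koecher's principle for the boundary.

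The first step is to dynamically extend $f$. Using Lemma \ref{Lemma: Siegel Hecke correspondence improves overconvergence}, iterates of the correspondence move any point whose canonical-subgroup data is not too degenerate into $\calX(v,v)$. From this we define $f = \lambda^{-N} U_{\mathrm{Si}}^{N} f$ on the larger region, which extends $f$ to the whole complement of the ``anti-canonical'' locus. Concretely, this region is the locus where the subgroup $C_n$ is not too far from the multiplicative part, stratified by the Hodge height of the $p$-divisible group. The analysis of $U_{\mathrm{Si}}$ on non-ordinary points uses the Siegel correspondence moduli description: $A\mapsto A/C$, where $C\cap C_n=0$ ranges over the $p^3$ Lagrangian complements.

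The second step is to extend across the remaining locus by a series construction. Near points where $C_n$ is far from canonical, the quotient $A/C$ lies in the good region for all but a controlled set of $C$. We write
\[
f = \lambda^{-1}\Big(\sum_{C \text{ good}} \pr_2^* f + \sum_{C\text{ bad}} (\cdots)\Big)
\]
and iterate to obtain a convergent series $\sum_N \lambda^{-N}(\cdots)$. Convergence follows from the estimate that the map $\underline{\omega}_{A/C}\to \underline{\omega}_A$ of Lemma \ref{Lemma: morphism between sheaves for Siegel families} acts on $\det$ with norm $p^{-1}$ per step on the bad pieces. This gives a factor $p^{-k_2}$ from $\det\underline{\omega}^{k_2}$ (the minimal weight piece of $\mathrm{Sym}^{k_1-k_2}\otimes\det^{k_2}$), against the normalisation $p^{3}$ and $|\lambda|^{-1}=p^{\mathrm{slope}}$. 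The series therefore converges exactly when $\mathrm{slope} < k_2-3$, which explains the bound. The local pieces are then glued via a Kassaei-type gluing lemma on the overlaps (which are admissible covers of the rigid space), giving a section on all of $\calX^{\tor}_{\Iw_{\mathrm{Si},n}}$.

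The main obstacle is this second step: we need precise norm estimates for the Hodge–Tate map $\mathrm{HT}(e_3)\wedge\mathrm{HT}(e_4)$ and for $\det\alpha^\vee$ in terms of the Hodge heights of the $2$-dimensional $p$-divisible group, uniformly including the toroidal boundary. To get these, we would first try to import Pilloni's analysis for $\GSp_4$ in \cite{Pilloni-higherHidaColemanGSp4}, in particular the Siegel-direction part of \cite[Corollary 14.7.1]{Pilloni-higherHidaColemanGSp4}, which already treats the relevant strata. The final step is to handle generalised eigenvectors, which we do by induction on the nilpotence degree, and to pass from $v$ to $v+1$ using Lemma \ref{Lemma: Siegel Hecke correspondence improves overconvergence}. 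Together these show the isomorphism is independent of $w,v$.
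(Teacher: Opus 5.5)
Your route differs from the paper's: you want to redo the Kassaei--Pilloni analytic continuation directly at level $\Iw_{\mathrm{Si},n}$. As written it is not yet a proof, because the step that \emph{is} the theorem is deferred, and for genus $2$ it is described too optimistically.

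Measure the level subgroup by its (Fargues) degree $\deg C_n[p]\in[0,2]$. The $U_{\mathrm{Si}}$-dynamics can at best reach the locus $\deg C_n[p]>1$, not the complement of a neighbourhood of the étale-like locus. To see this, take an ordinary point where $C_n[p]$ meets the multiplicative part $A[p]^0$ in a subgroup of order $p$, so $\deg C_n[p]=1$. Every Lagrangian complement $C$ whose generic fibre also meets $A[p]^0$ in a line (such complements exist) has $\deg C=1$. Then $(A/C,A[p]/C)$ again has degree $1$, so the recursion $f=\lambda^{-N}U_{\mathrm{Si}}^Nf$ never lands in $\calX(v,v)$. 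Lemma \ref{Lemma: Siegel Hecke correspondence improves overconvergence} only controls points already in such neighbourhoods, so it cannot push these points in.

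This middle stratum is exactly where your estimate ``$\det$ divisible by $p$, $\mathrm{Sym}$-part merely integral'' applies, and it is the source of the bound $k_2-3$. Near the locus where $C_n[p]$ is étale-like, the only branch that keeps the point there is the canonical subgroup. For it, $\pi^*$ is essentially $p$ times an isomorphism, with norm about $p^{-(k_1+k_2)}$ on $\underline{\omega}^{(k_1,k_2)}$, so a gluing confined to that locus could not account for the bound $k_2-3$.

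Your second step therefore has to handle all of $\{\deg C_n[p]\le 1\}$. That needs three things your proposal does not supply: an open and closed splitting of the correspondence into good and bad branches over the relevant strata, uniform lower bounds on $\deg C$ along the bad branches, and control at the boundary. This is precisely Pilloni's degree analysis in \cite{Pilloni-prolongementSiegel}. The source you propose instead, \cite[Corollary 14.7.1]{Pilloni-higherHidaColemanGSp4}, is the $U_{\mathrm{Kl}}$-classicality statement the paper uses for Theorem \ref{Theorem: classicality theorem, Klingen}, not a Siegel-direction result.

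The paper avoids all of this.
\begin{enumerate}
\item It makes the same first reduction you do, via Lemma \ref{Lemma: specialisation of the Siegel families of automorphic sheaves are classical}.
\item It quotes \cite[Th\'eor\`eme 1.2]{Pilloni-prolongementSiegel} at Iwahori level: $M_{(k_1,k_2)}(\Iw_{B,n})^{<k_2-3}\cong H^0(\calX^{\tor}_{\Iw_{B,n}}(v,v),\underline{\omega}^{(k_1,k_2)})^{<k_2-3}$.
\item It descends to $\Iw_{\mathrm{Si},n}$ along the trace maps from level $\Iw_{B,n}$. These are surjective and, by Lemma \ref{Lemma: condition for composing Hecke operators naively}, commute with $U_{\mathrm{Si}}$ on both the classical and the overconvergent spaces.
\item Hence surjectivity survives on slope-$<k_2-3$ parts, and the trace of a classical form is classical.
\end{enumerate}
Substituting this citation and trace argument for your second step gives a complete proof. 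Carrying out your direct route would amount to reproving Pilloni's theorem at Siegel parahoric level.
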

\begin{proof}
    First of all, by Lemma \ref{Lemma: specialisation of the Siegel families of automorphic sheaves are classical}, $\underline{\omega}_{w,v}^{(k_1+k, k_2+k)} \cong \underline{\omega}^{(k_1+k, k_2+k)}$ over $\calX_{\Iw_{\mathrm{Si}, n}}^{\tor}(v,v)$. Therefore, \[
        M_{(k_1+k, k_2+k)}^{w,v}(\Iw_{\mathrm{Si}, n}) = H^0(\calX_{\Iw_{\mathrm{Si}, n}}^{\tor}(v,v), \underline{\omega}^{(k_1+k, k_2+k)}).
    \]
    We thus only focus on the algebraic sheaf $\underline{\omega}^{(k_1, k_2)}$. 

    Consider the (toroidally compactified) Siegel threefold $\calX_{\Iw_{B, n}}^{\tor}$ of level $\Iw_{B, n}$. We let $\calX_{\Iw_{B, n}}^{\tor}(v,v) \coloneq \calX_{\Iw_{B, n}}^{\tor} \times_{\calX_{\Iw_{\mathrm{Si}, n}}} \calX_{\Iw_{\mathrm{Si}, n}}^{\tor}(v,v)$. By \cite[Théorèm 1.2]{Pilloni-prolongementSiegel}, we have \[
        M_{(k_1, k_2)}(\Iw_{B, n})^{<k_2-3} \cong H^0(\calX_{\Iw_{B, n}}^{\tor}(v,v), \underline{\omega}^{(k_1, k_2)})^{<k_2-3}.
    \]
    Now, by applying Lemma \ref{Lemma: condition for composing Hecke operators naively}, we have a commutative diagram \[
        \begin{tikzcd}
            M_{(k_1, k_2)}(\Iw_{B, n}) \arrow[r, "U_{\mathrm{Si}}"]\arrow[d, hook]\arrow[ddd, bend right = 90, "\mathrm{tr}"'] & M_{(k_1, k_2)}(\Iw_{B, n})\arrow[d, hook]\arrow[d, hook]\arrow[ddd, bend left = 90, "\mathrm{tr}"]\\
            H^0(\calX_{\Iw_{B, n}}^{\tor}(v,v), \underline{\omega}^{(k_1, k_2)}) \arrow[r, "U_{\mathrm{Si}}"]\arrow[d, "\mathrm{tr}"'] & H^0(\calX_{\Iw_{B, n}}^{\tor}(v,v), \underline{\omega}^{(k_1, k_2)})\arrow[d, "\mathrm{tr}"]\\
            H^0(\calX_{\Iw_{\mathrm{Si}, n}}^{\tor}(v,v), \underline{\omega}^{(k_1, k_2)})\arrow[r, "U_{\mathrm{Si}}"] & H^0(\calX_{\Iw_{\mathrm{Si}, n}}^{\tor}(v,v), \underline{\omega}^{(k_1, k_2)})\\
            M_{(k_1, k_2)}(\Iw_{\mathrm{Si}, n})\arrow[u, hook] \arrow[r, "U_{\mathrm{Si}}"] & M_{(k_1, k_2)}(\Iw_{\mathrm{Si}, n})\arrow[u, hook]
        \end{tikzcd}.
    \]
    Note that, by definition, the trance maps $\mathrm{tr}$ are surjective. Thus, by taking the slope-$<k_2-3$ parts, we deduce that the natural map \[
        M_{(k_1, k_2)}(\Iw_{\mathrm{Si}, n})^{< k_2-3} \hookrightarrow H^0(\calX_{\Iw_{\mathrm{Si}, n}}^{\tor}(v,v), \underline{\omega}^{(k_1, k_2)})^{<k_2-3}.
    \]
    is a surjection, which concludes the result.
\end{proof}

\begin{Corollary}\label{Corollary: finite projective of cuspidal Siegel forms in Siegel family}
    Keep the notations as above. Let $(k_1, k_2)\in \Z^2$ with $k_1 \geq k_2$ and $h\in \Q_{\geq 0}$ such that the slope-$\leq h$ part of $R\Gamma(\calX_{\Iw_{\mathrm{Si}, n}}^{\tor}(v,v), \underline{\omega}_{w, v}^{(k_1+\kappa_{\calU}, k_2+\kappa_{\calU})})$ is well-defined. Then, \[
        M_{(k_1+\kappa_{\calU}, k_2+\kappa_{\calU})}^{w, v}(\Iw_{\mathrm{Si}, n})^{\leq h} =  H^0(R\Gamma(\calX_{\Iw_{\mathrm{Si}, n}}^{\tor}(v,v), \underline{\omega}_{w, v}^{(k_1+\kappa_{\calU}, k_2+\kappa_{\calU})})^{\leq h})
    \]
    is a non-trivial finite projective $R_{\calU}$-module. A similar result also holds for the cuspidal version. 
\end{Corollary}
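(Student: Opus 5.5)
The plan is to follow the proof of Corollary \ref{Corollary: finite projective of cuspidal Siegel forms in Siegel family}'s Klingen analogue, Corollary \ref{Corollary: finite projective of cuspidal Siegel forms in Klingen family}, essentially verbatim, replacing the Klingen inputs by their Siegel counterparts. The argument has three steps.

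First, by Lemma \ref{Lemma: complex represented by Banach modules with (Pr), Siegel}, represent $R\Gamma(\calX_{\Iw_{\mathrm{Si}, n}}^{\tor}(v,v), \underline{\omega}_{w, v}^{(k_1+\kappa_{\calU}, k_2+\kappa_{\calU})})$ by a bounded complex $C^0\to\cdots\to C^m$ of Banach $R_{\calU}$-modules with (Pr), for instance the Čech complex. By Corollary \ref{Corollary: compact operators, Siegel case}, $U_{\mathrm{Si}}$ acts compactly on it. Whenever the slope-$\leq h$ decomposition exists, it gives Hecke-equivariant projections $C^i\to C^{i,\leq h}$ with sections, and each $C^{i,\leq h}$ is finite projective over $R_{\calU}$ (Buzzard). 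Since the complex starts in degree $0$ and taking slope-$\leq h$ parts is exact on the decomposition, we get
\[
H^0(C^\bullet)^{\leq h}=H^0(C^{\bullet,\leq h}).
\]
This yields the displayed equality. Moreover, $M^{\leq h}=\ker(C^{0,\leq h}\to C^{1,\leq h})$ is a finitely generated submodule of a projective module, hence torsion-free.

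Second, for projectivity: $R_{\calU}$ is a one-dimensional noetherian normal domain, hence Dedekind, since we may shrink $\calU$ to a connected affinoid in the one-dimensional space $\calW_{\mathrm{Si}}$. A finite torsion-free module over a Dedekind domain is projective, by the Stacks project tag used earlier.

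Third, and this is the main point, non-triviality. Consider the integer weights $k\in\Z$ lying in $\calU$ with $h<k_2+k-3$. Such $k$ are Zariski dense in $\calU$, since they form infinitely many points accumulating in a one-dimensional affinoid. At such a weight, specialization combined with Lemma \ref{Lemma: specialisation of the Siegel families of automorphic sheaves are classical} and Theorem \ref{Theorem: classicality theorem, Siegel} (applied to weight $(k_1+k,k_2+k)$, which satisfies $k_2+k>2$) identifies the slope-$\leq h$ part of the specialized space with $M_{(k_1+k,k_2+k)}(\Iw_{\mathrm{Si},n})^{\leq h}$. We then need this classical space to be nonzero for a dense set of such $k$; for this we cite non-vanishing of classical Siegel forms as in \cite[Example 4.17]{Lan-vanishing}.

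The delicate step is relating the specialization of $M^{\leq h}$ to the slope-$\leq h$ space at the point. The cleanest route is to note that $C^{\bullet,\leq h}\otimes_{R_{\calU}}k(x)$ computes the slope-$\leq h$ part of the specialized complex. Then either argue that $H^0$ commutes with base change up to an $H^1$-Tor term, or simply deduce that the support of the finite module $M^{\leq h}$ contains a Zariski dense set. The latter follows because $M^{\leq h}$ is the kernel inside a projective module and its generic rank is detected via the Euler characteristic. If that is awkward, I would argue via upper semicontinuity: the generic rank of $M^{\leq h}$ is at most the fibre dimensions, and conversely a nonzero classical form at a point lifts to the family by comparing kernels. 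This non-vanishing/specialization step is where I expect the main obstacle.

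The cuspidal case is proven identically, using the cuspidal versions of each cited result.
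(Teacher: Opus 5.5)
Your three steps match the paper's proof, which is the Klingen argument with the Siegel inputs substituted. First, the Čech complex with (Pr) and compactness of $U_{\mathrm{Si}}$ give $M^{\leq h}=\ker(C^{0,\leq h}\to C^{1,\leq h})$. Second, non-vanishing at a dense set of classical weights comes from Lemma~\ref{Lemma: specialisation of the Siegel families of automorphic sheaves are classical} and Theorem~\ref{Theorem: classicality theorem, Siegel}. Third, a finite torsion-free module over a Dedekind domain is projective.

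\textbf{The step you flag.} The paper simply asserts this step (``the support contains a Zariski dense subset''). Of your suggestions, only the Tor-term one works. Put $P^\bullet=C^{\bullet,\leq h}$; it is a bounded complex of finite projective modules over the hereditary ring $R_{\calU}$, starting in degree $0$. For every closed point $x$ there is an exact sequence
\[
0\to M^{\leq h}\otimes_{R_{\calU}}k(x)\to H^0(P^\bullet\otimes_{R_{\calU}}k(x))\to \mathrm{Tor}_1^{R_{\calU}}\bigl(H^1(P^\bullet),k(x)\bigr)\to 0 .
\]
The Tor term vanishes outside the finitely many points supporting the torsion of the finitely generated module $H^1(P^\bullet)$. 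Slope decompositions commute with base change, so $H^0(P^\bullet\otimes k(x))$ is the slope-$\leq h$ space at $x$. If $M^{\leq h}$ were $0$, these spaces would therefore vanish at all but finitely many points, contradicting their non-vanishing at infinitely many classical weights. Finally, a non-zero torsion-free module automatically has full support.

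Your other routes do not work:
\begin{itemize}
\item The Euler characteristic only computes $\sum_i(-1)^i\mathrm{rank}\,H^i(P^\bullet)$, and no vanishing of higher cohomology is available in the Siegel case.
\item Semicontinuity gives $\mathrm{rank}\,M^{\leq h}\leq\dim H^0(P^\bullet\otimes k(x))$, which is the wrong inequality for proving non-vanishing.
\item A form at a single point need not lift to the family; the Tor term is exactly the obstruction.
\end{itemize}

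\textbf{An input you inherit from the paper.} The Klingen proof, and hence yours, relies on \cite[Example 4.17]{Lan-vanishing}. That result gives non-vanishing of classical Siegel forms with no control on $U_{\mathrm{Si}}$-slopes. It therefore does not give $M_{(k_1+k,k_2+k)}(\Iw_{\mathrm{Si},n})^{\leq h}\neq 0$ for infinitely many $k$. In addition, an arbitrary affinoid $\calU\subset\calW_{\mathrm{Si}}$ need not contain any integral weight, so the density claim also requires $\calU$ to contain one. As written, the non-triviality assertion is conditional on $(\calU,h)$. The finiteness and projectivity conclusions do not depend on it.
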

\begin{proof}
    The proof is similar to Corollary \ref{Corollary: finite projective of cuspidal Siegel forms in Klingen family}, providing Corollary \ref{Corollary: compact operators, Siegel case} and Theorem \ref{Theorem: classicality theorem, Siegel}. We leave the details to the readers. 
\end{proof}

\subsection{Small parabolic eigenvarieties}\label{subsection: small par. eigenvar.}

In this subsection, we construct small parabolic eigenvarieties for (holomorphic) Siegel modular forms. To this end, we fix $P \in \{P_{\mathrm{Kl}}, P_{\mathrm{Si}}\}$\footnote{ Again, the case when $P = B$ is studied in \cite{AIP-2015}. We thus skip our discussion in this paper. } and $k_{\heartsuit} = (k_{\heartsuit, 1}, k_{\heartsuit, 2})\in \Z^2_{\geq 0}$ such that $k_{\heartsuit, 1}\geq k_{\heartsuit, 2}$; if $P = P_{\mathrm{Kl}}$, we further assume $k_{\heartsuit, 2}>2$. We consider the weight space $\calW_{P, k_{\heartsuit}}$ (Definition \ref{Definition: translate of weight spaces}).

For any affinoid open $\calU = \Spa(R_{\calU}, R_{\calU}^{\circ})\subset \calW_P$, denote by $\calU_{k_{\heartsuit}}$ its translate to $\calW_{P, k_{\heartsuit}}$; note that $\calU$ is isomorphic to $\calU_{k_{\heartsuit}}$. For suitable choices of $w, v , n$ as in Sect. \ref{subsection: families of automorphic sheaves}, let $C_{\calU}^{\bullet}(w, v, n)$ be the bounded complex of Banach $R_{\calU}$-modules having (Pr) that computes $R\Gamma(\calX_{\Iw_{\mathrm{Kl}, n}}^{\tor}(v), \underline{\omega}_{w,v}^{(k_{\heartsuit, 1}+\kappa_{\calU}, k_{\heartsuit, 2})})$ (if $P = P_{\mathrm{Kl}}$) or $R\Gamma(\calX_{\Iw_{\mathrm{Si}, n}}^{\tor}(v,v), \underline{\omega}_{w,v}^{(k_{\heartsuit,1}+\kappa_{\calU}, k_{\heartsuit, 2}+\kappa_{\calU})})$ (if $P = P_{\mathrm{Si}}$). By Theorem \ref{Theorem: nice properties of automorphic sheaves in Klingen families} (ii) and Corollary \ref{Corollary: compact operators, Siegel case}, the operator $U_P$ acts compactly on $C_{\calU}^{\bullet}(w,v,n)$. Therefore, we may define the Fredholm power series \[
    \mathrm{FP}_{\calU}(T) \coloneq \det(1-TU_P|\bigoplus C_{\calU}^i(w,v,n)). 
\]
Observe that $\mathrm{FP}_{\calU}(T)$ is independent to the choice of $w, v, n$; indeed, the operator $U_P$ improves convergence and a similar proof as Proposition \ref{Proposition: finite-slope part it independent to the level at p, H0} shows that the finite-slope part of $C_{\calU}^{\bullet}(w,v,n)$ is quasi-isomorphic to the finite-slope part of $C_{\calU}^{\bullet}(w,v,n+1)$.\footnote{ Here, the finite-slope part for $C_{\calU}^{\bullet}(w, v, n)$ is defined to be $C_{\calU}^{\bullet}(w, v, n)\otimes^{L}_{\Q_p[U_P]} \Q_p[U_P, U_P^{-1}]$.} Moreover, one sees from the construction of the family of automorphic sheaves that, for any affinoid open $\calV = \Spa(R_{\calV}, R_{\calV}^{\circ}) \subset \calU$, we have \[
    \underline{\omega}_{w, v}^{k_{\heartsuit, 1}+\kappa_{\calU}, k_{\heartsuit, 2}}\widehat{\otimes}_{R_{\calU}}R_{\calV} = \underline{\omega}_{w, v}^{k_{\heartsuit, 1}+\kappa_{\calV}, k_{\heartsuit, 2}} \quad \text{ and }\quad \underline{\omega}_{w, v}^{k_{\heartsuit, 1}+\kappa_{\calU}, k_{\heartsuit, 2}+\kappa_{\calU}}\widehat{\otimes}_{R_{\calU}}R_{\calV} = \underline{\omega}_{w, v}^{k_{\heartsuit, 1}+\kappa_{\calV}, k_{\heartsuit, 2}+\kappa_{\calV}}.
\]
Therefore, $\mathrm{FP}_{\calU}(T)$ glues to an entire Fredholm determinant $P_{k_{\heartsuit}}(T)$ over $\calW_{P, k_{\heartsuit}}$. We thus have the spectral variety \[
    \calZ_{P, k_{\heartsuit}} \coloneq \text{ the zero locus of $\mathrm{FP}_{k_{\heartsuit}}(T)$ in $\calW_{P, k_{\heartsuit}}\times_{\Spa(\Q_p, \Z_p)}\bbA^{1, \mathrm{rig}}_{\Q_p}$.}
\]

By the construction of $\calZ_{P, k_{\heartsuit}}$, it admits an affinoid open cover $\{\calZ_{\calU, h}\}_{(\calU, h)}$ indexed by pairs $(\calU, h)$, where $\calU$ is an open affinoid of $\calW_P$ and $h\in \Q_{\geq 0}$ such that the slope-$\leq h$ part of $C_{\calU}^{\bullet}(w, v, n)$ is well-defined. Thanks to Corollary \ref{Corollary: finite projective of cuspidal Siegel forms in Klingen family} and Corollary \ref{Corollary: finite projective of cuspidal Siegel forms in Siegel family}, we can define a coherent sheaf $\scrM_{P, k_{\heartsuit}}$ on $\calZ_{P, k_{\heartsuit}}$ by \[
    \scrM_{P, k_{\heartsuit}}(\calZ_{\calU, h}) = \left\{ \begin{array}{ll}
        M_{(k_{\heartsuit, 1}+\kappa_{\calU}, k_{\heartsuit, 2})}^{w, v}(\Iw_{\mathrm{Kl}, n})^{\leq h}, & \text{ if }P = P_{\mathrm{Kl}}  \\ \\
        M_{(k_{\heartsuit, 1}+\kappa_{\calU}, k_{\heartsuit, 2}+\kappa_{\calU})}^{w, v}(\Iw_{\mathrm{Si}, n})^{\leq h}, & \text{ if }P = P_{\mathrm{Si}}
    \end{array} \right. .
\]
Denote by \[
    \bbT \coloneq \left(\bigotimes_{\ell\not\in S_{\mathrm{bad}}}  \Z_p[\GSp_4(\Z_{\ell}) \backslash \GSp_4(\Q_{\ell})/\GSp_4(\Z_{\ell})]\right) \otimes_{\Z_p} \Z_p[U_P]
\]
the abstract Hecke algebra. Then, the Hecke action on $C_{\calU}^{\bullet}(w, v, n)$ induces a morphism \[
    \Psi_{P, k_{\heartsuit}} : \bbT[U_P^{-1}] \rightarrow \End(\scrM_{P, k_{\heartsuit}}).
\]
As a result, the quadruple \[
    (\calZ_{P, k_{\heartsuit}}, \bbT[U_P^{-1}], \scrM_{P, k_{\heartsuit}}, \Psi_{P, k_{\heartsuit}})
\]
is an \emph{eigenvariety datum} in the sense of \cite[Definition 3.1.1]{Johansson-Newton-Irreducible}. We then denote by \begin{equation}\label{eq: weight map of eigenvariety}
    \wt: \calE_{P, k_{\heartsuit}} \rightarrow \calW_{P, k_{\heartsuit}}
\end{equation}
the associated reduced eigenvariety.

Remark that, by considering the coherent sheaf $\scrS_{P, k_{\heartsuit}}$ defined by using (holomorphic) Siegel cuspforms, we again have an eigenvariety datum \[
    (\calZ_{P, k_{\heartsuit}}, \bbT[U_P^{-1}], \scrS_{P, k_{\heartsuit}}, \Psi_{P, k_{\heartsuit}}^{\cusp}: \bbT[U_P^{-1}] \rightarrow \End(\scrS_{P, k_{\heartsuit}})).
\]
We then denote the resulting reduced eigenvariety by \[
    \wt: \calE_{P, k_{\heartsuit}}^{\cusp} \rightarrow \calW_{P, k_{\heartsuit}}.
\]
From the construction, one sees that there is a closed embedding \[
    \calE_{P, k_{\heartsuit}}^{\cusp} \hookrightarrow \calE_{P, k_{\heartsuit}}. 
\]

\begin{Proposition}\label{Proposition: small parabolic eigenvarieties are equidimensional of dimension 1}
    Keep the notations as above. The eigenvarieties $\calE_{P, k_{\heartsuit}}$ and $\calE_{P, k_{\heartsuit}}^{\cusp}$ are equidimensional of dimension $1$. 
\end{Proposition}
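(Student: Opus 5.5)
The plan is to use the general structure theory of eigenvarieties built from an eigenvariety datum in the sense of \cite[Definition 3.1.1]{Johansson-Newton-Irreducible}, reducing equidimensionality to a statement about the modules $\scrM_{P, k_{\heartsuit}}(\calZ_{\calU, h})$ over the one-dimensional base $\calW_{P, k_{\heartsuit}}$. First, the spectral variety $\calZ_{P, k_{\heartsuit}}$ is a Fredholm hypersurface in $\calW_{P, k_{\heartsuit}}\times \bbA^{1,\mathrm{rig}}$. Since $\calW_{P, k_{\heartsuit}}$ is a finite disjoint union of one-dimensional open discs (Corollary \ref{Corollary: explicit description of weight spaces} and Lemma \ref{Lemma: special one-dimensional subspaces in the full weight space}), \cite{Buzzard_2007} shows $\calZ_{P, k_{\heartsuit}}$ is equidimensional of dimension $1$. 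It is covered by the affinoids $\calZ_{\calU, h}$, each finite flat over $\calU$.

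Second, I will show that $\calE_{P, k_{\heartsuit}}$ (resp. $\calE_{P, k_{\heartsuit}}^{\cusp}$) is locally, over $\calZ_{\calU, h}$, of the form $\Spa(\bbT_{\calU,h}, \bbT_{\calU,h}^\circ)$. Here $\bbT_{\calU,h}$ is the reduced image of $\bbT[U_P^{-1}]\otimes R_{\calU}$ in $\End_{R_{\calU}}(\scrM_{P, k_{\heartsuit}}(\calZ_{\calU, h}))$. By Corollary \ref{Corollary: finite projective of cuspidal Siegel forms in Klingen family} and Corollary \ref{Corollary: finite projective of cuspidal Siegel forms in Siegel family}, this module is finite projective over $R_{\calU}$. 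Hence $\bbT_{\calU,h}$ is a finite $R_{\calU}$-algebra that embeds into the endomorphism ring of a torsion-free module. In particular it is torsion-free over the Dedekind domain $R_{\calU}$ (after shrinking $\calU$ to be connected).

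The key step is the following. A finite torsion-free algebra over a one-dimensional domain has every minimal prime lying over $(0)\subset R_{\calU}$. Otherwise some element of $R_{\calU}\smallsetminus\{0\}$ would be a zero-divisor on $\bbT_{\calU,h}$, contradicting torsion-freeness. By going-up for the finite extension $R_{\calU}\to \bbT_{\calU,h}/\frakp$, each irreducible component therefore surjects onto $\calU$ and has dimension exactly $\dim R_{\calU}=1$. Equivalently, I can invoke \cite[Proposition 6.4.2]{Chenevier-determinant}-style results, or the equidimensionality criterion in \cite{Johansson-Newton-Irreducible}, which requires only that $\scrM$ be locally projective (or merely torsion-free) over the weight space. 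Since the $\calZ_{\calU,h}$ cover and these properties are local, gluing gives the result for $\calE_{P, k_{\heartsuit}}$. The cuspidal case is identical, using the cuspidal versions of the corollaries.

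The main obstacle I expect is purely bookkeeping. Torsion-freeness must survive passage to the reduced quotient and to the localisation that defines $\calE$. Since $\bbT_{\calU,h}$ acts faithfully on a projective module, nilpotents do not create $R_{\calU}$-torsion in the reduced quotient, but this needs to be said carefully. I will also need to check that the independence of choices of $w, v, n$ makes the local pieces glue compatibly.
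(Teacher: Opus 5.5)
Your proposal is correct and follows essentially the same route as the paper: finite projectivity of $\scrM_{P,k_{\heartsuit}}(\calZ_{\calU,h})$ over $R_{\calU}$ makes the local reduced Hecke algebra $\bbT_{\calU}^{\leq h}$ a finite $R_{\calU}$-algebra, and one then glues over the cover $\{\calZ_{\calU,h}\}$. Keep your extra minimal-prime step. The paper only records that $R_{\calU}\to\bbT_{\calU}^{\leq h}$ is injective, which gives Krull dimension $1$ but does not by itself rule out lower-dimensional components. Torsion-freeness is exactly what rules them out, and the passage to the reduced quotient that worried you is harmless: if $r^n t^n=0$ with $r\neq 0$, then $t^n=0$ already holds in the torsion-free non-reduced image.
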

\begin{proof}
    We show the assertion for $\calE_{P, k_{\heartsuit}}$; the proof for $\calE_{P, k_{\heartsuit}}^{\cusp}$ is similar. 
    
    For $(\calU, h)$ giving $\calZ_{\calU, h} \subset \calZ_{P, k_{\heartsuit}}$, let \[
        \bbT_{\calU}^{\leq h} \coloneq \text{ reduced $R_{\calU}$-algebra generated by the image of $\bbT[U_P^{-1}] \rightarrow \End_{R_{\calU}}(\scrM(\calZ_{\calU, h}))$}.
    \] 
    Thus, $\bbT_{\calU}^{\leq h}$ is a finite $R_{\calU}$-algebra. Since $\scrM(\calZ_{\calU, h})$ is a finite projective $R_{\calU}$-module (Corollary \ref{Corollary: finite projective of cuspidal Siegel forms in Klingen family} and Corollary \ref{Corollary: finite projective of cuspidal Siegel forms in Siegel family}), the structure morphism $R_{\calU} \rightarrow \bbT_{\calU}^{\leq h}$ is injective. Consequently, $\bbT_{\calU}^{\leq h}$ has Krull dimension $1$. Note that $\calE_{P, k_{\heartsuit}}$ is constructed by glueing $\Spa(\bbT_{\calU}^{\leq h}, \bbT_{\calU}^{\leq h, \circ})$ over $\calZ_{P, k_{\heartsuit}}$, we concludes the result. 
\end{proof}

\begin{Remark}\label{Remark: comparison with small parabolic eigenvarieties of BSW}
    By the Eichler--Shimura decomposition (\cite[Chapter VI, Theorem 6.2]{Faltings-Chai}) and \cite[Theorem 3.2.1]{Johansson-Newton-Irreducible}, one can show that $\calE_{P, k_{\heartsuit}}$ embeds into the small parabolic eigenvariety constructed by Barrera Salazar--Williams in \cite{BSW-ParabolicEigen}.\footnote{ To be more precise, we note that the construction in \cite{BSW-ParabolicEigen} considers every Hecke operator at $p$ (here, we only consider the one corresponds to $P$) and does not invert the controlling operator. So, what we mean is, if one instead works with overconvergent cohomology groups but use the similar eigenvariety datum as we do, one would obtain an embedding.  } Proposition \ref{Proposition: small parabolic eigenvarieties are equidimensional of dimension 1} implies that their small parabolic eigenvariety admits irreducible components of dimension 1.  When $P = P_{\mathrm{Si}}$ and $k_{\heartsuit, 1}= k_{\heartsuit, 2}$, this is a new result since it cannot be deduced from \cite[Corollary 5.16]{BSW-ParabolicEigen} (because $k_{\heartsuit} = (k_{\heartsuit, 1}, k_{\heartsuit, 1})$ is not a regular weight). 
\end{Remark}

\begin{Remark}\label{Remark: non-neat subgroup away from p, eigenvarieties}
    Similarly as in Remark \ref{Remark: at non-neat level, classical forms}, we may loosen the assumption that $K^p$ is a neat subgroup to a subgroup containing a normal neat subgroup. One uses the same trick as in Remark \ref{Remark: at non-neat level, classical forms} to construct the corresponding families of (cuspidal) Siegel modular forms. One can then again run through the construction above to obtain the corresponding (cuspidal) small parabolic eigenvarieties, which are again equidimensional of dimension $1$ over $\calW_{P, k_{\heartsuit}}$. 
\end{Remark}
\section{Refined families of symplectic Galois representations over small parabolic eigenvarieties}\label{section: R=T}
The goal of this section is to construct and study families of Galois representations attached to the (cuspidal) small parabolic eigenvarieties for $\GSp_4$. Throughout this section, we (again) fix a compact open $K^p\subset \GSp_4(\A^{\infty, p})$ containing a normal neat subgroup and, for $P \in \{B, P_{\mathrm{Si}}, P_{\mathrm{Kl}}\}$, choose $K_p \subset \GSp_4(\Z_p)$ as in Proposition \ref{Proposition: finite-slope part it independent to the level at p, H0}; let $K = K^p K_p$ and consider the Siegel modular threefold $\calX_{K}^{\tor} = \calX_{K_p}^{\tor}$ and its subspaces as in the previous section. Note that, as before, we shall mainly focus on the case when $B \subsetneq P$.

\subsection{Galois representations attached to Siegel cuspforms of genus 2}\label{subsection: Gal reps for GSp4}

Thanks to the work of many mathematicians (see, for example, \cite{Taylor-Siegel, Laumon, Weissauer, Urban-GSp4, Sorensen-HilbertSiegel, Jorza-GSp, Mok-GL2CM}), we have the following theorem. 

\begin{Theorem}\label{Theorem: Galois representation attached to Siegel forms}
    Let $(k_1, k_2)\in \Z^2$ with $k_1 \geq k_2 >2$. Let $f\in S_{(k_1, k_2)}(K) = S_{(k_1, k_2)}(K_p)$ be a cuspidal Siegel eigenform corresponding to an automorphic representation $\pi = \bigotimes_v'\pi_v$ with central character $\omega_{\pi}$ and Hecke eigensystem $\lambda_f = \lambda_{\pi}: \bbT \rightarrow \overline{\Q}_p$. There exists a semisimple Galois representation \[
        \rho_f = \rho_{\pi} : \Gal_{\Q} \rightarrow \GSp_4(\overline{\Q}_p)
    \]
    such that the following properties are satisfied: \begin{itemize}
        \item[(i)] We have $\mathrm{sim} \rho_{\pi} = \omega_{\pi} \chi_{\cyc}^{3-k_1-k_2}$.
        \item[(ii)] Let $\mathtt{S}_{\mathrm{bad}} \coloneq \{\ell : K_{\ell}\neq \GSp_4(\Z_{\ell})\} \cup \{2, p\}$. The Galois representation $\rho_{\pi}$ is unramified outside $\mathtt{S}_{\mathrm{bad}}$.
        \item[(iii)] The local Galois representation $\rho_{\pi, p} := \rho_{\pi}|_{\Gal_{\Q_p}}$ is de Rham with Hodge--Tate cocharacter $(0, k_2-2; k_1+k_2-3)$ (so the Hodge--Tate weight for $\mathrm{std}\circ \rho_{\pi, p}$ is $(0, k_2-2, k_1-1, k_1+k_2-3)$).
        \item[(iv)] (Local-global compatibility at good primes) For any $\ell\not\in \mathtt{S}_{\mathrm{bad}}$, \[
            \det(1-\rho_{\pi}(\Frob_{\ell})X) = \lambda_{\pi}(P_{\mathrm{Hecke}, \ell}(X))\in \overline{\Q}_p[X]. 
        \]
    \end{itemize}
\end{Theorem}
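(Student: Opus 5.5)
This theorem is essentially a compilation of known results, so the plan is to assemble the cited literature rather than prove anything new. First we pass from the holomorphic eigenform $f$ to a cuspidal automorphic representation $\pi$ of $\GSp_4(\A)$ whose archimedean component $\pi_\infty$ is a holomorphic (limit of) discrete series determined by $(k_1,k_2)$; the condition $k_2>2$ makes $\pi_\infty$ a genuine discrete series of regular weight, i.e.\ cohomological. We then split into cases according to Arthur's classification (Weissauer, Arthur): $\pi$ is either of general type, or endoscopic (Yoshida type), or CAP (Saito--Kurokawa or Soudry type). In the general-type case, Taylor, Laumon and Weissauer construct a $4$-dimensional $\rho_\pi$ in the étale cohomology of Siegel threefolds with coefficients in the local system attached to $(k_1,k_2)$. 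In the endoscopic and CAP cases we write $\rho_\pi$ explicitly as a direct sum: $\rho_{\pi_1}\oplus\rho_{\pi_2}$ for Yoshida lifts, and $\chi_{\cyc}^{a}\oplus\rho_g\oplus\chi_{\cyc}^{a+1}$ (suitably twisted) for Saito--Kurokawa lifts. Taking semisimplification gives a semisimple representation in every case.

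Next we check that $\rho_\pi$ lands in $\GSp_4$ with the stated similitude. For general type, we use that $\rho_\pi^\vee\cong\rho_\pi\otimes(\omega_\pi\chi_{\cyc}^{3-k_1-k_2})^{-1}$, which follows from comparing Frobenius traces via (iv) and Chebotarev. If $\rho_\pi$ is irreducible, the resulting pairing is either symplectic or orthogonal; Weissauer (and Bellaïche--Chenevier's sign argument, or Taylor's) rules out the orthogonal case. For the reducible (endoscopic/CAP) cases, the symplectic form can be written down by hand on the explicit sum. This step yields (i). Alternatively, for regular weight we can cite Mok or Sorensen directly for the $\GSp_4$-valued statement, which is cleaner.

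Unramifiedness outside $\mathtt{S}_{\mathrm{bad}}$, part (ii), comes from the smooth integral model of $X_K$ away from $\mathtt{S}_{\mathrm{bad}}$. The Frobenius characteristic polynomial at good $\ell$, part (iv), is the Eichler--Shimura congruence relation proved by Taylor and Laumon, combined with the Satake normalisation of $P_{\mathrm{Hecke},\ell}$ in \eqref{eq: Hecke polynomial} as in \cite{GT-TWGSp4}. For (iii), de Rhamness follows from Faltings' comparison theorem, since $\rho_\pi$ occurs in geometric étale cohomology. The Hodge--Tate weights $(0,k_2-2,k_1-1,k_1+k_2-3)$ are read off from the BGG/Faltings--Chai decomposition of de Rham cohomology of the local system, which places the $H^0(\underline{\omega}^{(k_1,k_2)})$ contribution at the extremal weight. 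For the endoscopic and CAP cases, (iii) is checked directly from the known weights of the constituents.

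The main obstacle is normalisation bookkeeping rather than any deep new input. Different references use different twists of $\rho_\pi$, different sign conventions for Hodge--Tate weights, and different Satake normalisations. The plan is to fix the twist by comparing determinants at a single good prime, and to fix the weight convention by matching against the known case of the Saito--Kurokawa lift, where $\rho$ is explicit. The symplectic (rather than orthogonal) nature in the irreducible case is the one genuinely non-formal point, and for that step we simply invoke Weissauer/Mok.
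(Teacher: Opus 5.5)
Your proposal is correct and matches the paper, which gives no argument of its own and simply cites Taylor, Laumon, Weissauer, Urban, Sorensen, Jorza and Mok; your case split by Arthur type, with Weissauer/Mok supplying the symplectic structure, is a faithful expansion of that citation. One small correction of attribution: the congruence relation by itself only shows that $\Frob_\ell$ is annihilated by the Hecke polynomial on the $\pi_f$-isotypic part, which does not determine multiplicities, so the exact equality of characteristic polynomials in (iv) should be credited to Laumon's and Weissauer's Langlands--Kottwitz point-counting.
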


For our later purpose, we further assume the following (mild) hypothesis on the explicit local-global compatibility at $p$. 

\begin{Hypothesis}[Explicit local-global compatibility at $p$]\label{Hypothesis: explicit local-global compatibility at p}
    Keep the notations as in Theorem \ref{Theorem: Galois representation attached to Siegel forms}. Suppose $K_p \subset \Iw_{P, n}$ (for some $n\in \Z_{>0}$) and let $f\in S_{(k_1, k_2)}(K_p)$. Then, $f\in S_{(k_1, k_2)}(K_p)^{P-\fs}$ if and only if the following properties hold: \begin{itemize}
            \item[(i)] If $P = P_{\mathrm{Si}}$, then $\varphi_1 = \lambda_{\pi}(U_{\mathrm{Si}})$ and $\varphi_2 = p^{k_1+k_2-3}\lambda_{\pi}(U_{\mathrm{Si}})^{-1}$ are two Frobenius eigenvalues on $\D_{\pst}(\mathrm{std}\circ \rho_{\pi, p})$, their corresponding eigenspaces $M_i$ are dual to each other under the symplectic pairing, and $M_1$ is $(\varphi, N, \Gal_{\Q_p})$-stable.
            \item[(ii)] If $P = P_{\mathrm{Kl}}$, then $\varphi_1 = p^{k_2-2}\lambda_{\pi}(U_{\mathrm{Kl}})$ and $\varphi_2 = p^{2k_1+k_2-4}\lambda_{\pi}(U_{\mathrm{Kl}})^{-1}$ are Frobenius eigenvalues on $\bigwedge^2 \D_{\pst}(\mathrm{std}\circ \rho_{\pi,p})$ and their corresponding eigenspaces are the determinants of two $2$-dimensional $\varphi$-stable isotropic subspaces $M_i$ in $\D_{\pst}(\mathrm{std}\circ \rho_{\pi,p})$, dual to each other under the symplectic pairing, and $M_1$ is $(\varphi, N, \Gal_{\Q_p})$-stable.
            \item[(iii)] If $P = B$, (i) and (ii) both hold. 
        \end{itemize} 
\end{Hypothesis}

\begin{Remark}\label{Remark: hypothesis is mild}
    Hypothesis \ref{Hypothesis: explicit local-global compatibility at p} is expected and inspired by \cite[Coloraire 1]{Urban-GSp4}, where the case when $f$ is ordinary and crystalline at $p$ is proven. 
\end{Remark}

\begin{Remark}\label{Remark: local-global comp. at p}
    The statement for the local-global compatibility at $p$ might seem ad hoc. A rather standard statement would be \begin{equation}\label{eq: standard lpcal-global compatibility}
        \mathrm{WD}(\D_{\pst}(\mathrm{std}\circ \rho_{\pi, p}))^{F-\mathrm{ss}} \cong \mathrm{rec}_p(\mathrm{BC}_{\GSp_4}^{\GL_4}(\pi_p)\otimes |\mathrm{sim}|^{-3/2})
    \end{equation}
    where $\mathrm{WD}(\D_{\pst}(\mathrm{std}\circ \rho_{\pi, p}))^{F-\mathrm{ss}}$ is the $\varphi$-semisimplification of the Weil--Deligne representation associated with $\D_{\pst}(\mathrm{std}\circ \rho_{\pi, p})$, $\mathrm{BC}_{\GSp_4}^{\GL_4}(\pi_p)$ is the functorial lift of $\pi_p$ to $\GL_4$, and $\mathrm{rec}_p(\mathrm{BC}_{\GSp_4}^{\GL_4}(\pi_p)\otimes |\mathrm{sim}|^{-3/2})$ is the local Langlands correspondence of $\mathrm{BC}_{\GSp_4}^{\GL_4}(\pi_p)\otimes |\mathrm{sim}|^{-3/2}$ (see, for example, \cite{GT-LLGSp4} and \cite[Theorem A]{BLGGT-localglobal2}). However, it is important for us to work with an explicit description between the Frobenius eigenvalues on $\D_{\pst}(\mathrm{std}\circ \rho_{\pi, p})$ and the Hecke operators at $p$. At present, it is not clear to us how to deduce Hypothesis \ref{Hypothesis: explicit local-global compatibility at p} directly from \eqref{eq: standard lpcal-global compatibility}. 
\end{Remark}

\begin{Remark}\label{Remark: hypothesis on Gal reps without a choice of faithful representation}
    One sees that the standard representation $\mathrm{std}: \GSp_4 \rightarrow \GL_4$ plays an important role in the formulation of Hypothesis \ref{Hypothesis: explicit local-global compatibility at p}. While it would be elegant to express the hypothesis independently of a specific faithful representation, our current understanding relies on this natural choice.
\end{Remark}

\begin{Definition}\label{Definition: small-slope forms}
    Fix $P\in \{P_{\mathrm{Kl}}, P_{\mathrm{Si}}, B\}$. Let $(k_1, k_2)\in \Z^2$ with $k_1\geq k_2 >2$ (we assume $k_2>3$ if $P = P_{\mathrm{Si}}$ or $B$). A cuspidal Siegel eigenform $f\in S_{(k_1, k_2)}(K_p)$ is said to be of \textbf{$P$-small slope} if \[
        \begin{array}{ll}
            v_p(\lambda_f(U_{\mathrm{Kl}}))<k_1-k_2+1, & \text{ if }P = P_{\mathrm{Kl}}  \\
            v_p(\lambda_f(U_{\mathrm{Si}}))<k_2-3, & \text{ if }P = P_{\mathrm{Si}} \\ 
            \text{above two are satisfied}, & \text{ if }P=B
        \end{array} .
    \]
\end{Definition}

\begin{Lemma}\label{Lemma: small slope implies Galois non-critical}
    Fix $P\in \{P_{\mathrm{Kl}}, P_{\mathrm{Si}}, B\}$. Let $(k_1, k_2)\in \Z^2$ with $k_1\geq k_2>2$ (we assume $k_2>3$ if $P = P_{\mathrm{Si}}$ or $B$). Suppose $f\in S_{(k_1, k_2)}(K_p)$ is of $P$-small slope. Assuming Hypothesis \ref{Hypothesis: explicit local-global compatibility at p}, the $(\varphi, \Gamma)$-module with $\GSp_4$-structure $\D_{\rig}^{\dagger}(\rho_{\pi, p})$ admits a $P^{\vee}$-non-critical $P^{\vee}$-flag (in the sense of Definition \ref{Definition: P-non-critical}), where \[
    P^{\vee} = \left\{ \begin{array}{ll}
        P_{\mathrm{Kl}}, & \text{ if }P = P_{\mathrm{Si}} \\
        P_{\mathrm{Si}}, & \text{ if }P = P_{\mathrm{Kl}} \\
        B, & \text{ if }P=B
    \end{array} \right..
    \]
\end{Lemma}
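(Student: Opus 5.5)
The plan is to reduce everything to the standard representation and a weak-admissibility slope computation. By Lemma \ref{Lemma: P-non-criticality can be checked on faithful representations}, applied with $\sigma_0=\mathrm{std}$, it suffices to produce a $P^{\vee}$-flag on $\D_{\rig}^{\dagger}(\rho_{\pi,p})$ whose image on $\D_{\rig}^{\dagger}(\mathrm{std}\circ\rho_{\pi,p})$ is non-critical in Bergdall's sense. For $P=B$ I will treat the two maximal parabolics separately and intersect, so the main work is the two maximal cases.

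\textbf{Constructing the flag.} Since $f$ has $P$-small slope, it is $P$-finite-slope, so Hypothesis \ref{Hypothesis: explicit local-global compatibility at p} supplies a $(\varphi,N,\Gal_{\Q_p})$-stable isotropic subspace $M_1\subset\D_{\pst}(\mathrm{std}\circ\rho_{\pi,p})$. This subspace has dimension $1$ if $P=P_{\mathrm{Si}}$ and dimension $2$ if $P=P_{\mathrm{Kl}}$. As in the proof of Proposition \ref{Proposition: comparison between different notions of families} (ii), the chain $M_1\subset M_1^{\perp}\subset\D_{\pst}$ is $(\varphi,N,\Gal_{\Q_p})$-stable. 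Its stabiliser is a Klingen-type parabolic when $\dim M_1=1$ and a Siegel-type parabolic when $\dim M_1=2$, which explains the swap $P\mapsto P^{\vee}$. This chain is therefore a $P^{\vee}$-refinement, and Proposition \ref{Proposition: P-refinements are equivalent to P-flags} turns it into a $P^{\vee}$-flag on $\D_{\rig}^{\dagger}(\rho_{\pi,p})$, after enlarging $L$ so that it contains the Frobenius eigenvalues.

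\textbf{Non-criticality.} The flag has three steps, $\Fil_1=M_1$, $\Fil_2=M_1^{\perp}$ and the whole space. Since the Hodge--Tate weights $0<k_2-2<k_1-1<k_1+k_2-3$ are distinct, Bergdall's condition amounts to the following: each $\calD_{\dR}(\Fil_iD)$ has Hodge--Tate weights equal to the smallest $\dim\Fil_i$ weights. Equivalently, it must be transverse to the de Rham filtration step of the complementary size.

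By symplectic duality it suffices to check the first step $\Fil_1$. The middle step then follows, because $(M_1^{\perp})^{\vee}\cong \D/M_1$ up to a twist by the similitude, so the dual statement of Claim~1 in the proof of Lemma \ref{Lemma: P-non-criticality can be checked on faithful representations} gives it.

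For the first step, weak admissibility of the sub-$(\varphi,N)$-module $M_1$ gives
\[
t_H(M_1)\le t_N(M_1)=v_p(\varphi_1).
\]
I will suppose that $\Fil_1$ is critical. Then the induced Hodge filtration on $M_1$ has $t_H(M_1)$ strictly larger than the minimal possible value. The next possible value exceeds the minimum by at least the gap between consecutive weights:
\begin{itemize}
\item In the Siegel case the minimum is $0$ and the next value is at least $k_2-2$. I expect to need $v_p(\varphi_1)=v_p(\lambda_\pi(U_{\mathrm{Si}}))<k_2-3$, with care about the normalisation between $k_2-2$ and $k_2-3$.
\item In the Klingen case the minimum is $k_2-2$ and the next value is at least $k_1-1$. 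The hypothesis gives $v_p(\varphi_1)=k_2-2+v_p(\lambda_\pi(U_{\mathrm{Kl}}))<k_2-2+(k_1-k_2+1)=k_1-1$.
\end{itemize}
In each case this contradicts the displayed inequality. The Klingen bound matches exactly, which is reassuring.

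The main obstacle I expect is the bookkeeping in the Siegel case. One must check that the $U_{\mathrm{Si}}$ normalisation with the factor $p^{-3}$ and the Hodge--Tate convention line up, so that small slope $<k_2-3$ really forces $v_p(\varphi_1)<k_2-2$. A further point is that weak admissibility should be applied to $M_1$ as a $(\varphi,N,\Gal_{\Q_p})$-submodule of $\D_{\pst}$, which requires descending to a finite extension where $\rho_{\pi,p}$ is semistable. Finally, the $\tau$-components in Definition \ref{Definition: P-non-critical} are trivial here since $F=\Q_p$.
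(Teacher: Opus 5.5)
Your proposal is correct and follows essentially the same route as the paper. Both arguments build the chain $M_1\subset M_1^{\perp}$ from the explicit local-global compatibility hypothesis, reduce to $\mathrm{std}$ via the faithful-representation lemma, and pin down the Hodge--Tate weights of the steps by weak admissibility against the slope bound. The one difference is the middle step: the paper handles it directly, using $v_p(\varphi_2)=k_1+k_2-3-v_p(\varphi_1)>k_1$ to force the top weight onto the quotient, whereas you deduce it from the first step by symplectic self-duality. Your worry about the Siegel normalisation is unnecessary: the hypothesis sets $\varphi_1=\lambda_\pi(U_{\mathrm{Si}})$, and $v_p(\varphi_1)<k_2-3$ is already stronger than the bound $v_p(\varphi_1)<k_2-2$ that you actually need.
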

\begin{proof}
    We show the case when $P = P_{\mathrm{Si}}$, the other cases are similar and are left to the readers. 

    Under Hypothesis \ref{Hypothesis: explicit local-global compatibility at p}, $\D_{\pst}(\mathrm{std}\circ \rho_{\pi, p})$ admits a $P_{\mathrm{Kl}}$-refinement \[
        M_1 \subset M_1^{\perp} \subset \D_{\pst}(\mathrm{std}\circ \rho_{\pi, p}), 
    \]
    which gives rise to a $P_{\mathrm{Kl}}$-refinement on $\D_{\pst}(\rho_{\pi, p})$ (again, since $\mathrm{std}$ is faithful). We then let $\Fil_{\bullet}\D_{\rig}^{\dagger}(\rho_{\pi, p})$ be the corresponding $P_{\mathrm{Kl}}$-flag (Proposition \ref{Proposition: P-refinements are equivalent to P-flags}). By Lemma \ref{Lemma: P-non-criticality can be checked on faithful representations}, to check $\Fil_{\bullet}\D_{\rig}^{\dagger}(\rho_{\pi, p})$ is $P_{\mathrm{Kl}}$-non-critical, it is enough to check it on $\Fil_{\bullet}\D_{\rig}^{\dagger}(\mathrm{std}\circ \rho_{\pi, p})$. To simplify the notation, write \[
        D_1 = \Fil_1\D_{\rig}^{\dagger}(\mathrm{std}\circ \rho_{\pi, p}) \quad \text{ and }\quad D_1^{\perp} = \Fil_2\D_{\rig}^{\dagger}(\mathrm{std}\circ \rho_{\pi ,p}).
    \]
    By construction, we have that $\calD_{\pst}(D_1) = M_1$ and $\calD_{\pst}(D_1^{\perp}) = M_1^{\perp}$ (hence the notation). 

    By Hypothesis \ref{Hypothesis: explicit local-global compatibility at p} and the $P$-small slope, we have \[
        v_p(\varphi_1) < k_2-3  \quad \text{ and }\quad v_p(\varphi_2) >k_1.
    \]
    By weak admissibility of $\D_{\pst}(\mathrm{std}\circ \rho_{\pi, p})$, we see that $\calD_{\pst}(D_1) = M_1$ (resp., $\calD_{\pst}(D_1^{\perp}) = M_1^{\perp}$) can only have Hodge--Tate weight $0$ (resp., $(0, k_2-2, k_1-1)$). This concludes the proof. 
\end{proof}

\subsection{Refined families of symplectic Galois representations over small parabolic eigenvarieties}\label{subsection: families of Gal reps over small par. eigenvar}

Fix $P\in \{P_{\mathrm{Si}}, P_{\mathrm{Kl}}\}$\footnote{As we have seen in many cases (\emph{e.g.}, Remarks \ref{Remark: when the parabolic subgroup is not maximal} and \ref{Remark: deformation problems for non-maximal parabolics}), the case for $P=B$ can be generalised by combining the situation for $P_{\mathrm{Si}}$ and $P_{\mathrm{Kl}}$; and it can be studied by adapting methods in \cite{BC} directly. Hence, starting from this subsection, we shall only focus on the situation when $B \subsetneq P$. } and $k_{\heartsuit} = (k_{\heartsuit, 1}, k_{\heartsuit, 2})\in \Z_{\geq 0}^2$ with $k_{\heartsuit, 1} \geq k_{\heartsuit, 2}$ (when $P_{\mathrm{Kl}}$, we assume $k_{\heartsuit, 2}>2$). We consider the cuspidal small parabolic eigenvariety (and the weight map) \[
    \wt: \calE_{P, k_{\heartsuit}}^{\cusp} \rightarrow \calW_{P, k_{\heartsuit}}
\] constructed in Sect. \ref{subsection: small par. eigenvar.}. By Proposition \ref{Proposition: small parabolic eigenvarieties are equidimensional of dimension 1}, we know that $\calE_{P, k_{\heartsuit}}^{\cusp}$ is equidimensional of dimension $1$.

We denote by $\calE_{P, k_{\heartsuit}}^{\cusp,\cl}$ the set of classical points of $\calE_{P, k_{\heartsuit}}^{\cusp}(\overline{\Q}_p)$. Then, for any $x\in \calE_{P, k_{\heartsuit}}^{\cusp, \cl}$ with corresponding Siegel cuspform $f$, we write $\rho_x = \rho_{f}$ under Theorem \ref{Theorem: Galois representation attached to Siegel forms}. We further define $\calE_{P, k_{\heartsuit}}^{\cusp, \nc} \subset \calE_{P, k_{\heartsuit}}^{\cusp, \cl}$ to be the subset of classical points having small slopes. Immediately from the proofs of Corollary \ref{Corollary: finite projective of cuspidal Siegel forms in Klingen family} and Corollary \ref{Corollary: finite projective of cuspidal Siegel forms in Siegel family}, one sees that $\calE_{P, k_{\heartsuit}}^{\cusp, \nc}$ is also Zariski dense in $\calE_{P, k_{\heartsuit}}^{\cusp}$ and it is an accumulation subset.

By viewing $\calW_{P, k_{\heartsuit}}$ as a subspace in $\calW$, one obtains universal continuous character \[
    \kappa_{\calW_{P, k_{\heartsuit}}} = (\kappa_{\calW_{P, k_{\heartsuit}}, 1}, \kappa_{\calW_{P, k_{\heartsuit}}, 2}): T_{\GL_2}(\Z_p) \rightarrow \scrO_{\calW_{P, k_{\heartsuit}}}(\calW_{P, k_{\heartsuit}})^{\times}.
\] 
We shall abuse the notation and still denote by $\kappa_{\calW_{P, k_{\heartsuit}}}$ the composition \[
    T_{\GL_2}(\Z_p) \xrightarrow{\kappa_{\calW_{P, k_{\heartsuit}}}} \scrO_{\calW_{P, k_{\heartsuit}}}(\calW_{P, k_{\heartsuit}})^{\times} \rightarrow \scrO_{\calE_{P, k_{\heartsuit}}}(\calE_{P, k_{\heartsuit}})^{\times}. 
\]
We shall also simplify the notation and denote $\kappa_{\calW_{P, k_{\heartsuit}}}$ by \[
    \kappa_{\calW_{P, k_{\heartsuit}}} = \kappa_{P} = (\kappa_{P, 1}, \kappa_{P, 2})
\]
when $k_{\heartsuit}$ is clear in the context.

\begin{Proposition}\label{Proposition: global symplectic determinant over small parabolic eigenvarieties}
    Keep the notations as above. There exists a symplectic Galois determinant \[
        (\Det, \Pf): (\scrO_{\calE_{P, k_{\heartsuit}}^{\cusp}}(\calE_{P, k_{\heartsuit}}^{\cusp})[\Gal_{\Q}], \varsigma^{\univ}) \rightarrow \scrO_{\calE_{P, k_{\heartsuit}}^{\cusp}}(\calE_{P, k_{\heartsuit}}^{\cusp}) ,
    \]
    where $\varsigma^{\univ}$ is the composition \[
        \varsigma^{\univ}: \Gal_{\Q} \xrightarrow{\chi_{\cyc}} \Z_p^{\times} \xrightarrow{\kappa_{P, 1}+\kappa_{P, 2}+3} \scrO_{\calE_{P, k_{\heartsuit}}^{\cusp}}^{\times}
    \] such that \[
        \Det(1-\Frob_{\ell} X) = P_{\mathrm{Hecke}, \ell}(X)
    \] for all $\ell \not\in \mathtt{S}_{\mathrm{bad}}$.
\end{Proposition}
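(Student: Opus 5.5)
The plan is to run the standard Chenevier-style interpolation argument, using Corollary \ref{Corollary: continuous symplectic determinants and glueing} as the glueing tool. We work affinoid-locally: cover $\calE_{P, k_{\heartsuit}}^{\cusp}$ by the affinoids $\calE_{\calU, h} = \Spa(\bbT_{\calU}^{\leq h}, \bbT_{\calU}^{\leq h, \circ})$ from the proof of Proposition \ref{Proposition: small parabolic eigenvarieties are equidimensional of dimension 1}. Since the symplectic determinant axioms are compatible with restriction, the local determinants will glue over the cover. Set $A = \bbT_{\calU}^{\leq h, \circ}$, which is compact because $\bbT_{\calU}^{\leq h}$ is a reduced affinoid algebra. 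Take the index set $J = \calE_{P,k_{\heartsuit}}^{\cusp,\nc}\cap \calE_{\calU,h}(\overline{\Q}_p)$ and $A_j = \overline{\Q}_p$ (or the residue field $L_j$), with $\iota: A \rightarrow \prod_j A_j$ the evaluation map.

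\textbf{Injectivity of $\iota$.} This holds because $\bbT_{\calU}^{\leq h}$ is reduced and the small-slope classical points are Zariski dense in each $\calE_{\calU,h}$; this density comes from the accumulation property noted just before the proposition, and is where Theorems \ref{Theorem: classicality theorem, Klingen} and \ref{Theorem: classicality theorem, Siegel} enter.

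\textbf{Pointwise determinants.} At each $j$, Theorem \ref{Theorem: Galois representation attached to Siegel forms} gives $\rho_{f_j}: \Gal_{\Q,\mathtt{S}_{\mathrm{bad}}} \rightarrow \GSp_4(L_j)$. By Example \ref{Example: symplectic representations and symplectic determinants}, this yields a continuous symplectic determinant $(\det\rho_{f_j}, \mathrm{pf}(\rho_{f_j}\mathbb{J}))$ on $(L_j[\Gal_{\Q,\mathtt{S}_{\mathrm{bad}}}], \mathrm{sim}\rho_{f_j})$. Take $G = \Gal_{\Q,\mathtt{S}_{\mathrm{bad}}}$, which is compact, and let $X$ be the set of Frobenius elements $\Frob_\ell$ for $\ell\notin \mathtt{S}_{\mathrm{bad}}$, which is dense by Chebotarev. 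By Theorem \ref{Theorem: Galois representation attached to Siegel forms}(iv), the characteristic polynomial coefficients of $\Frob_\ell$ at $j$ are the values at $j$ of the coefficients of $P_{\mathrm{Hecke},\ell}$. These lie in the image of $\bbT$ in $A$, once one checks that Hecke eigenvalues are power-bounded; this is a standard integrality point, and if needed I would rescale or shrink the affinoids. Hence the hypotheses of Corollary \ref{Corollary: continuous symplectic determinants and glueing} hold. It produces $(\Det,\Pf)$ over $A$, and therefore over $\bbT_{\calU}^{\leq h}$, with $\Det(1-\Frob_\ell X) = P_{\mathrm{Hecke},\ell}(X)$.

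\textbf{The similitude.} The remaining point, and the one I expect to be the main obstacle, is that the involution must be the fixed one $\varsigma^{\univ}$, whereas the pointwise similitudes $\omega_{\pi_j}\chi_{\cyc}^{3-k_1-k_2}$ a priori vary. First I would check that $\omega_{\pi_j}$ is constant along the family and can be absorbed by the level $K^p$ together with the wild-twist normalisation of Remark \ref{Remark: weight space mod out the wild twist}. Second, I would check that the algebraic parts interpolate: $(\kappa_{P,1}+\kappa_{P,2})$ evaluated at $j$ equals $k_1+k_2$ up to the paper's sign conventions. Then $\varsigma^{\univ}$ specialises to $\mathrm{sim}\rho_{f_j}$ at every $j$, and by density this characterises it. To make this rigorous, I would run the glueing argument on the involutive algebra $(A[G], \varsigma^{\univ})$ directly. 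The needed input is that the pointwise conditions of Definition \ref{Definition: symplectic determinants} hold for each $j$, namely $\mathrm{D}(x^*)=\mathrm{D}(x)$, $\mathrm{P}^2=\mathrm{D}|_{M^+}$, and $\mathrm{CH}(\mathrm{P})\subset\ker\mathrm{D}$. These conditions are polynomial identities in coefficients determined on the dense set, so they pass to $A$ by injectivity of $\iota$.
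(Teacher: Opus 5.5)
Your argument breaks at the choice of $A$. You take $A=\bbT_{\calU}^{\leq h,\circ}$ and assert that it is compact because $\bbT_{\calU}^{\leq h}$ is a reduced affinoid algebra. This is false. The unit ball of a positive-dimensional affinoid algebra is never compact: already $\Z_p\langle T\rangle$ has the open ideal $p\Z_p\langle T\rangle$ with infinite quotient $\F_p[T]$, and in general the reduction $A^{\circ}/A^{\circ\circ}$ has the same positive Krull dimension as $A$, so it is infinite.

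Compactness is not a side condition in Lemma~\ref{Lemma: continoues homogeneous polynomial gluing}. It is exactly what makes $\iota(A)$ closed in $\prod_j A_j$, and that is the only thing that lets you pass from the Frobenius elements in $X$ to an arbitrary $g\in G$. Over an affinoid, the functions $j\mapsto \Lambda_i^{\mathrm{D}_j}(g)$ are only pointwise limits on $J$ of elements of $A$, and such limits need not be analytic. For instance, on the closed unit disc, $S^{n!}$ converges pointwise on $\overline{\Q}_p$-points to the indicator function of $\{|S|=1\}$. So Corollary~\ref{Corollary: continuous symplectic determinants and glueing} cannot be invoked over $\bbT_{\calU}^{\leq h}$, and you have no local determinants to glue.

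The missing ingredient is a global compactness statement: the ring $\scrO^+_{\calE_{P,k_{\heartsuit}}^{\cusp}}(\calE_{P,k_{\heartsuit}}^{\cusp})$ of power-bounded global functions is compact. This holds because the eigenvariety is nested, i.e.\ exhausted by an increasing chain of affinoids with compact restriction maps (cf.\ \cite{BC}); the paper takes it from \cite[Corollary 5.4.4]{Johansson-Newton}. The paper then takes $A$ to be this global ring and $J$ the set of all classical points. It applies Lemma~\ref{Lemma: continoues homogeneous polynomial gluing} and Corollary~\ref{Corollary: continuous symplectic determinants and glueing} once, globally, with no cover and no glueing; affinoid statements follow by restriction.

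The same device handles your main worry about the similitude more cheaply. The paper applies Lemma~\ref{Lemma: continoues homogeneous polynomial gluing} to the degree-one laws $(\mathrm{sim}\rho_x)_x$ to produce $\varsigma^{\univ}$ as a character valued in the compact global ring, and uses it to define the involution. This avoids tracking the central characters $\omega_{\pi_j}$ and the weights by hand.

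Your remaining steps go through unchanged once $A$ is the global ring. These are the density of Frobenius elements, the pointwise determinants from Example~\ref{Example: symplectic representations and symplectic determinants}, and the transfer of the identities in Definition~\ref{Definition: symplectic determinants} via injectivity of $\iota$.
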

\begin{proof}
    For any $x\in \calE_{P, k_{\heartsuit}}^{\cusp, \cl}$, let $L_x$ be its residue field. We consider the morphism \[
        \iota: \scrO_{\calE_{P, k_{\heartsuit}}^{\cusp}}^+(\calE_{P, k_{\heartsuit}}^{\cusp}) \rightarrow \prod_{x\in \calE_{P, k_{\heartsuit}}^{\cusp, \cl}} L_x, \quad f \mapsto (f(x))_x.
    \]
    We equipped with $\prod_{x\in \calE_{P, k_{\heartsuit}}^{\cusp, \cl}} L_x$ the product topology and so one sees that $\iota$ is continuous. Moreover, since $\calE_{P, k_{\heartsuit}}^{\cusp, \cl}$ is Zariski dense in the reduced rigid variety $\calE_{P, k_{\heartsuit}}^{\cusp}$, $\iota$ is injective. By applying \cite[Corollary 5.4.4]{Johansson-Newton}, we know $\scrO_{\calE_{P, k_{\heartsuit}}^{\cusp}}^+(\calE_{P, k_{\heartsuit}}^{\cusp})$ is compact and so the image of $\iota$ is closed in $\prod_{x\in \calE_{P, k_{\heartsuit}}^{\cusp, \cl}} L_x$.
    
    By Theorem \ref{Theorem: Galois representation attached to Siegel forms}, there exists a symplectic determinant \[
        \left(\prod_{x\in \calE_{P, k_{\heartsuit}}^{\cusp, \cl}} L_x\right)[\Gal_{\Q}] \rightarrow \prod_{x\in \calE_{P, k_{\heartsuit}}^{\cl}} L_x
    \]
    with respect to the character $(\mathrm{sim} \rho_{x})_x$. By applying Lemma \ref{Lemma: continoues homogeneous polynomial gluing}, one sees that $\varsigma^{\univ}$ is a polynomial law on $\scrO_{\calE_{P, k_{\heartsuit}}^{\cusp}}^+(\calE_{P, k_{\heartsuit}}^{\cusp})$ whose specialisation to each $x\in \calE_{P, k_{\heartsuit}}^{\cusp, \cl}$ agrees with $\mathrm{sim}\rho_x$. We then consider the involutive algebra structure on $\scrO_{\calE_{P, k_{\heartsuit}}^{\cusp}}^+(\calE_{P, k_{\heartsuit}}^{\cusp})[\Gal_{\Q}]$ by \[
        \sigma \mapsto \varsigma^{\univ}(\sigma)\sigma^{-1}.
    \] 
    One obtains the desired symplectic determinant $(\Det, \Pf)$ by applying Corollary \ref{Corollary: continuous symplectic determinants and glueing}.
\end{proof}

\begin{Corollary}\label{Corollary: refined families over small parabolic eigenvarieties}
    Keep the notations as above. Under Hypothesis \ref{Hypothesis: explicit local-global compatibility at p}, the datum \[
        (\calE_{P, k_{\heartsuit}}^{\cusp}, \calE_{P, k_{\heartsuit}}^{\cusp, \nc}, \varsigma^{\univ}, \Det, \Pf, [\mu_{\HTS}], U_P, U_P^{-1}),
    \] where $[\mu_{\HTS}]$ is the $M_P$-conjugacy class of the cocharacter \begin{align*}
        \mu_{\HTS}: \Gamma & \rightarrow \GSp_4(\scrO_{\calE_{P, k_{\heartsuit}, \Q_p^{\cyc}}^{\cusp}}(\calE_{P, k_{\heartsuit}, \Q_p^{\cyc}}^{\cusp})) \rtimes \Gamma, \\
        \gamma & \mapsto (\diag(1, \kappa_{P,2}(\chi_{\cyc}(\gamma))\chi_{\cyc}^{-2}(\gamma), \kappa_{P,1}(\chi_{\cyc}(\gamma))\chi_{\cyc}^{-1}(\gamma), \kappa_{P,1}(\chi_{\cyc}(\gamma))\kappa_{P,2}(\chi_{\cyc}(\gamma))\chi_{\cyc}^{-3}(\gamma), \gamma),
    \end{align*}
    is a $P^{\vee}$-refined family of symplectic $\Gal_{\Q}$-representations of dimension $4$. 
\end{Corollary}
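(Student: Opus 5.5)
The strategy is to check, one after another, the items and conditions (RFGal1)--(RFGal5) of Definition \ref{Definition: refined families via determinant}, with the parabolic taken to be $P^{\vee}$. We take $\calX = \calE_{P, k_{\heartsuit}}^{\cusp}$; it is reduced because it was built as a reduced eigenvariety. For $\calX_{\nc}$ we use the set $\calE_{P, k_{\heartsuit}}^{\cusp, \nc}$ of classical points of $P$-small slope. We already know this set is Zariski dense and an accumulation subset, which gives (RFGal5). The character $\varsigma^{\univ}$ and the continuous symplectic determinant $(\Det, \Pf)$ are supplied by Proposition \ref{Proposition: global symplectic determinant over small parabolic eigenvarieties}. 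It then remains to see that $\mu_{\HTS}$ is a continuous cocharacter. This holds because $\kappa_{P,1}$ and $\kappa_{P,2}$ are continuous characters of $\Z_p^\times$ and $\chi_{\cyc}$ is continuous.

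For a point $x \in \calE_{P, k_{\heartsuit}}^{\cusp, \nc}$ with weight $(k_1,k_2)$, the pair $(\Det_x, \Pf_x)$ is, by construction, the symplectic determinant attached to $\rho_f$. This uses Chebotarev density together with Theorem \ref{Theorem: Galois representation attached to Siegel forms}(iv) and Theorem \ref{Theorem: symplectic version of Chenevier's Thm A and B}(i).

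\begin{itemize}
\item \textbf{(RFGal2).} Theorem \ref{Theorem: Galois representation attached to Siegel forms}(iii) says $\rho_{f,p}$ is de Rham. It is then potentially semistable by Proposition \ref{Proposition: potentially semistable equivalent to de Rham}.
\item \textbf{(RFGal1) and (RFGal3).} Specialise $\kappa_{P,i}$ to the classical weight. The cocharacter $\mu_{\HTS}(x)$ becomes $\diag(\chi^{0},\chi^{k_2-2},\chi^{k_1-1},\chi^{k_1+k_2-3})$, up to the sign convention of Remark \ref{Remark: comparison with classical definition}. This is the Hodge--Tate cocharacter $(0,k_2-2;k_1+k_2-3)$ from Theorem \ref{Theorem: Galois representation attached to Siegel forms}(iii).
\item \textbf{The inequalities in (RFGal3).} Write $m$ for the size of the block attached to $P^{\vee}$. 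Then $m=1$ when $P^{\vee}=P_{\mathrm{Kl}}$ and $m=2$ when $P^{\vee}=P_{\mathrm{Si}}$. The required strict inequalities come from $0<k_2-2<k_1-1<k_1+k_2-3$. For the Siegel block with $k_1=k_2$, only the block-separation inequalities are needed, and these still hold.
\item \textbf{(RFGal4).} This is the step that uses Hypothesis \ref{Hypothesis: explicit local-global compatibility at p}. With $F_1=U_P$ and $F_2=U_P^{-1}$, we compute $p^{\sum_{i\le m}\mu_{\HTS,i}(x)}F_1(x)$ and $p^{\sum_{i>4-m}\mu_{\HTS,i}(x)}F_2(x)$. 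These should equal the eigenvalues $\varphi_1$ and $\varphi_2$ in the Hypothesis:
\begin{itemize}
\item for $P=P_{\mathrm{Si}}$, $\varphi_1=\lambda(U_{\mathrm{Si}})$ and $\varphi_2=p^{k_1+k_2-3}\lambda(U_{\mathrm{Si}})^{-1}$;
\item for $P=P_{\mathrm{Kl}}$, $\varphi_1=p^{k_2-2}\lambda(U_{\mathrm{Kl}})$ and $\varphi_2=p^{2k_1+k_2-4}\lambda(U_{\mathrm{Kl}})^{-1}$.
\end{itemize}
The Hypothesis also provides isotropic subspaces $M_i$ that are dual to each other, with $M_1$ being $(\varphi,N,\Gal_{\Q_p})$-stable.
\end{itemize}

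The main obstacle is the multiplicity-one and distinctness assertion in (RFGal4), which the Hypothesis does not state outright. I would get it from small slope and weak admissibility, as in Lemma \ref{Lemma: small slope implies Galois non-critical}. Small slope forces $v_p(\varphi_1)$ to be strictly smaller than the valuation of every other Frobenius eigenvalue on $\bigwedge^m$ of the Weil--Deligne module. The mechanism is that the Hodge polygon bounds the Newton polygon, so any other eigenvalue with equal valuation would contradict weak admissibility. Applying the same argument to the dual shows $\varphi_2$ is also isolated. Distinctness of $\varphi_1$ and $\varphi_2$ follows from the valuation gap, since $v_p(\varphi_1)$ is small and $v_p(\varphi_2)$ is large. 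If this argument fails to exclude some degenerate eigenvalue coincidence, the fallback is to shrink $\calX_{\nc}$ to the points where it does not occur. That subset should still be Zariski dense and accumulating, because the coincidences cut out a proper analytic condition.

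Finally, the normalisations must match. The normalising factor $p^{-3}$ in $U_P$ and the twist convention for $\varsigma^{\univ}$, namely $\kappa_{P,1}+\kappa_{P,2}+3$, have to agree with $\mathrm{sim}\rho=\chi_{\cyc}^{3-k_1-k_2}$ from Theorem \ref{Theorem: Galois representation attached to Siegel forms}(i). I would check this by specialising at one classical point and adjusting signs if needed.
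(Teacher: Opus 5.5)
Your plan is the paper's proof. The paper verifies (RFGal1)--(RFGal4) by citing Theorem~\ref{Theorem: Galois representation attached to Siegel forms}, Hypothesis~\ref{Hypothesis: explicit local-global compatibility at p} and Lemma~\ref{Lemma: small slope implies Galois non-critical}, and gets (RFGal5) from the accumulation of small-slope classical points. Your matching of $p^{\sum_i \mu_{\HTS,i}(x)}F_i(x)$ with $\varphi_1,\varphi_2$ is correct, and so is your valuation argument for $\varphi_1\neq\varphi_2$.

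The step that fails is your derivation of multiplicity one in (RFGal4).

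\textbf{Newton-above-Hodge does not isolate $v_p(\varphi_1)$.} Take $P=P_{\mathrm{Si}}$, so $m=1$, and set $w=k_1+k_2-3$. The Hodge--Tate weights are then $0<k_2-2<k_1-1<w$. Suppose the Frobenius eigenvalues are $\alpha,\alpha',p^{w}/\alpha',p^{w}/\alpha$ with $v_p(\alpha)=v_p(\alpha')=v$, possibly $\alpha=\alpha'$. The Newton partial sums are $v,2v,w+v,2w$ and the Hodge partial sums are $0,k_2-2,w,2w$. So the polygon inequality only requires $2v\geq k_2-2$. This is compatible with the small-slope bound $v<k_2-3$ as soon as $k_2\geq 5$. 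Hence neither strict isolation of $v_p(\varphi_1)$ nor multiplicity one follows from the mechanism you cite, and the same applies to the dual argument for $\varphi_2$.

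A partial rescue exists in the crystalline case with $\varphi$ semisimple. There, a $2$-dimensional $\varphi_1$-eigenspace meets the $3$-dimensional $\Fil^{k_2-2}$ in a $\varphi$-stable line $\ell$. That line satisfies $t_H(\ell)\geq k_2-2>v_p(\varphi_1)=t_N(\ell)$, contradicting weak admissibility. However, a Jordan block, or a non-trivial $N$ or inertia action, defeats this argument.

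\textbf{Your fallback does not work either.} The only Frobenius eigenvalues at $p$ that come from analytic functions on $\calE_{P,k_{\heartsuit}}^{\cusp}$ are $\varphi_1$ and $\varphi_2$, via $U_P$. The remaining ones, on $M_1^{\perp}/M_1$ or on the rest of $\bigwedge^2$, are not interpolated. So the coincidence locus is not an analytic subset, and you have no control over whether its complement is still Zariski dense and accumulating.

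The paper does not derive this clause from small slope. Its proof takes (RFGal4) wholesale from Hypothesis~\ref{Hypothesis: explicit local-global compatibility at p}, which is phrased in terms of the eigenspaces $M_i$ (resp.\ $\det M_i$). You should do the same and treat multiplicity one as part of that input, strictly read as algebraic multiplicity one, instead of claiming it follows from weak admissibility.
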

\begin{proof}
    One observes that (RFGal1)--(RFGal4) are verified by Theorem \ref{Theorem: Galois representation attached to Siegel forms}, Hypothesis \ref{Hypothesis: explicit local-global compatibility at p} and Lemma \ref{Lemma: small slope implies Galois non-critical}. The condition (RFGal5) follows from Corollary \ref{Corollary: finite projective of cuspidal Siegel forms in Klingen family} and Corollary \ref{Corollary: finite projective of cuspidal Siegel forms in Siegel family} as discussed above. 
\end{proof}

\begin{Corollary}\label{Corollary: deformation is refined}
    Keep the notations as above. Let $x\in \calE_{P, k_{\heartsuit}}^{\cusp, \nc}$. Suppose $\rho_x$ is absolutely irreducible and satisfies ($P^{\vee}$-REG) (see Sect. \ref{subsection: deformation}). Let $\widehat{\bbT}_x$ be the completed local ring at $x$. Then, there exists a unique Galois representation \[
        \rho_{\widehat{\bbT}_x}: \Gal_{\Q} \rightarrow \GSp_4(\widehat{\bbT}_x)
    \]
    lifting $\rho_x$ such that, for any ideal $\frakI\subset \widehat{\bbT}_x$ of cofinite length and let $\rho_{\frakI} = \rho_{\widehat{\bbT}_x} \otimes_{\widehat{\bbT}_x}\widehat{\bbT}_x/\frakI$, \[
         \rho_{\frakI, p} = \rho_{\frakI}|_{\Gal_{\Q_p}}\in \Def_{\rho_{x, p}, \Fil_{\bullet}}^{\mathrm{small}}(\widehat{\bbT}_x/\frakI),
    \] where $\rho_{x, p} = \rho_x|_{\Gal_{\Q_p}}$. 
\end{Corollary}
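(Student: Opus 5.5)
The plan is to follow the strategy of \cite[Sect. 7.6]{BC}, adapted to the $P^{\vee}$-flagged setting. First, I construct $\rho_{\widehat{\bbT}_x}$. By Proposition \ref{Proposition: global symplectic determinant over small parabolic eigenvarieties}, we have a symplectic determinant $(\Det,\Pf)$ over $\scrO(\calE_{P,k_{\heartsuit}}^{\cusp})$. Base changing to the henselian local ring $\widehat{\bbT}_x$ (complete, residue field $L_x$, enlarged to be algebraically closed if needed, or via an absolute irreducibility argument), the residual representation is $\rho_x$, which is absolutely irreducible. Theorem \ref{Theorem: symplectic version of Chenevier's Thm A and B} (ii) then yields a unique $\rho_{\widehat{\bbT}_x}:\Gal_{\Q}\to\GSp_4(\widehat{\bbT}_x)$ with the prescribed determinant and Pfaffian; this settles existence and uniqueness of the lift. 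Each $\rho_{\frakI}$ is just its reduction.

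The main work is to show that $\rho_{\frakI,p}$ lies in $\Def^{\mathrm{small}}_{\rho_{x,p},\Fil_\bullet}(\widehat{\bbT}_x/\frakI)$. By Corollary \ref{Corollary: refined families over small parabolic eigenvarieties}, together with Proposition \ref{Proposition: comparison between different notions of families} (i), there is an affinoid neighbourhood $\calU=\Spa(A,A^\circ)$ of $x$ and a $P^{\vee}$-refined family of $(\varphi,\Gamma)$-modules $(\calU,\calU\cap\calE^{\cusp,\nc},D,\delta_1,\delta_2)$ with $D=\D^\dagger_{\rig}(\rho_A|_{\Gal_{\Q_p}})$, $\delta_1(p)=U_P$, $\delta_2(p)=U_P^{-1}$, and weights given by $\mu_{\HTS}$. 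The Hodge--Tate--Sen conditions in the definition of the small deformation functor are immediate from the explicit form of $\mu_{\HTS}$: the first entry is $1$, and the ratio with $\mu_{\rho,\HT}$ varies only through $\kappa_P$. Because $\calW_{P,k_\heartsuit}$ is the parallel (resp.\ $k_1$-)direction, this ratio lies in $Z_{M_{P^{\vee}}}$, which I would check explicitly for both $P$. What remains is to produce the flag, i.e.\ a rank-one $(\varphi,\Gamma)$-submodule $\calR(\delta_1)\hookrightarrow\bigwedge^m D$ over $\widehat{\bbT}_x/\frakI$ lifting the one at $x$.

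For this step I would prove the analogue of \cite[Theorem 4.3.2 / Proposition 4.3.4]{BC} (the ``triangulation in families'' result of Kedlaya--Pottharst--Xiao) for the rank-one piece $\bigwedge^m D$: consider $H^0_{\varphi,\Gamma}(\bigwedge^m D(\delta_1^{-1}))$. At non-critical classical points $y$, (RF4) and the non-criticality from Lemma \ref{Lemma: small slope implies Galois non-critical} give a nonzero saturated inclusion $\calR(\delta_{1,y})\hookrightarrow\bigwedge^m D_y$. Zariski density and the accumulation property then propagate this to the whole neighbourhood, after replacing $\delta_1$ by a twist by $t^{k}$ at worst. At $x$ itself, ($P^\vee$-REG) and ($P^\vee$-NCR) force the inclusion to be saturated, and the corresponding $\Hom$ space to be one-dimensional; this is the vanishing argument from the proof of Lemma \ref{Lemma: P-flagged deformation is a subfunctor of unconditional deformation}. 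Then \cite[Proposition 2.5.8 / Lemma 2.3.7-type arguments]{BC} upgrade this to a flag over the artinian quotients $\widehat{\bbT}_x/\frakI$, which gives the required $\Fil_\bullet$; the dual step $D_1^\perp$ comes for free as in Proposition \ref{Proposition: comparison between different notions of families} (ii).

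I expect the main obstacle to be this interpolation of the flag. Specifically, it requires showing the generic rank-one submodule specialises at $x$ to exactly the saturated line $\calR(\delta_{1,x})$ rather than to $t^k\calR(\delta_{1,x})$. This is precisely where ($P^\vee$-NCR) at $x$ must be used, and it requires a base-change argument for $H^0_{\varphi,\Gamma}$ of $(\varphi,\Gamma)$-modules over $\widehat{\bbT}_x/\frakI$, controlled by $H^1$ vanishing as in \cite{BC}.
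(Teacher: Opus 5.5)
Your first two steps agree with the paper. Existence and uniqueness of $\rho_{\widehat{\bbT}_x}$ is Theorem \ref{Theorem: symplectic version of Chenevier's Thm A and B} (ii). Your Hodge--Tate--Sen check is also correct and is something the paper leaves implicit: the ratio to $\mu_{\rho,\HT}$ is $\diag(1,\lambda,\lambda,\lambda^2)\in Z_{M_{\mathrm{Kl}}}$ for $P=P_{\mathrm{Si}}$, and $\diag(1,1,\lambda,\lambda)\in Z_{M_{\mathrm{Si}}}$ for $P=P_{\mathrm{Kl}}$.

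The gap is in the one step with real content: producing a free rank-one line over every $\widehat{\bbT}_x/\frakI$. Nonvanishing of $H^0_{\varphi,\Gamma}(\bigwedge^m D_y(\delta_{1,y}^{-1}))$ at a Zariski-dense, accumulating set of classical $y$ gives nothing over $\calU$ or over thickenings of $x$, because $H^0$ does not commute with base change. So ``Zariski density and the accumulation property then propagate this'' is not a mechanism, and twisting by $t^k$ does not supply one. What is needed is an analytic-continuation argument that uses that the \emph{other} Hodge--Tate weights tend to infinity along the classical points (the sets $\Sigma_C$ of Sect. \ref{subsection: Setup, App A}), not just density.

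The paper does this in Appendix \ref{section: pstperiod} by extending Kisin's method to $\D_{\pst}$. For $y\in\Sigma_C$ with $C$ large compared with $k$ and $\sup v_p(F)$, weak admissibility forces every equivariant map from the dual of the fibre to $(B^+_{\dR}/t^kB^+_{\dR})\otimes L_y$ to factor through $(B^+_{\st}\otimes L_y)^{\varphi=F(y)}$. Then \cite[Corollaries 5.15, 5.16]{Kisin-FM} produce a period over an $F$-small affinoid. This is applied to $(\calE_{P,k_{\heartsuit}}^{\cusp},\calE_{P,k_{\heartsuit}}^{\cusp,\nc},\bigwedge^m\Det,U_P)$, with $\kappa$ the minimal weight of $\bigwedge^m\mathrm{std}$ (namely $0$, resp.\ $\kappa_{P,2}-2$). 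Note that \cite[Theorem 4.3.2]{BC} is exactly this Kisin-type continuation of crystalline periods, not the Kedlaya--Pottharst--Xiao theorem. A genuinely KPX-based route (global triangulation after a proper birational modification, using $H^2$ of the dual, which is right exact) is conceivable, but you have not set it up, and it would still hit the next problem.

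Even granting a family period, or a family triangulation on a modification $\calU'\to\calU$, its image over $\widehat{\bbT}_x/\frakI$ need not generate a free rank-one module. The section may vanish at $x$, and you cannot divide by a uniformiser because $\widehat{\bbT}_x$ is not known to be regular. Indeed, smoothness at $x$ is what Theorem \ref{Theorem: infinitesimal R=T} deduces \emph{from} this corollary. This is a different issue from the saturation ($t^k$ versus $1$) issue you flag, and ($P^{\vee}$-NCR) at $x$ does not address it.

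The paper resolves it in three moves. It blows up along the ideal generated by the period. It obtains freeness at the points above $x$ from \cite[Lemma 3.3.9 (i)]{BC}. It then descends using the length identity \cite[Proposition 3.2.2]{BC} and \cite[Lemma 3.3.9 (ii)]{BC}. This gives Corollary \ref{Corollary: generalisation of BC Lemma 4.3.10}: $(\bigwedge^m\D_{\pst}(\mathrm{std}\circ\rho_{\frakI}(\kappa)))^{\varphi=U_P}$ is free of rank one, where the hypothesis used at $x$ is ($P^{\vee}$-REG). After that, $(\varphi,N,\Gal_{\Q_p})$-stability follows from the coprimality argument of Lemma \ref{Lemma: relationships between de Rham deformations}, and the flag $\widetilde{D_1}\subset\widetilde{D_1}^{\perp}$ is built as you intend.

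Your proposed substitute, a base change for $H^0_{\varphi,\Gamma}$ ``controlled by $H^1$ vanishing'', cannot work as stated. By the Euler--Poincar\'e formula, $\dim H^1=\dim H^0+\dim H^2+\rank$, so $H^1(\bigwedge^m D_x(\delta_{1,x}^{-1}))$ never vanishes. Base change for $H^0$ would instead require control (e.g.\ flatness) of the higher cohomology of the family near $x$, which is not available a priori.
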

\begin{proof}
    Since $\rho_x$ is absolutely irreducible, the existence and the uniqueness of $\rho_{\widehat{\bbT}_x}$ follows from Theorem \ref{Theorem: symplectic version of Chenevier's Thm A and B} (ii). 
    
    Fix $\frakI$ as in the statement. Let $m = 1$ if $P=P_{\mathrm{Si}}$ and $m=2$ if $P=P_{\mathrm{Kl}}$. Also, let $\kappa = 0$ if $P= P_{\mathrm{Si}}$ and $\kappa = \kappa_{P,2}-2$ if $P = P_{\mathrm{Kl}}$. To show $\rho_{\frakI}|_{\Gal_{\Q_p}}\in \Def_{\rho_{x, p}, \Fil_{\bullet}}^{\mathrm{small}}(\widehat{\bbT}_x/\frakI)$, we first observe that \[
        (\calE_{P, k_{\heartsuit}}^{\cusp}, \calE_{P, k_{\heartsuit}}^{\cusp, \nc}, \bigwedge^m \Det, U_P)
    \]
    is a tuple satisfying the conditions in Sect. \ref{subsection: Setup, App A}. Thus, Corollary \ref{Corollary: generalisation of BC Lemma 4.3.10} implies that \[
        \left(\bigwedge^m \D_{\pst}(\mathrm{std}\circ\rho_{\frakI}(\kappa))\right)^{\varphi = U_P} 
    \]
    is free of rank $1$. 

    Now, let $\widetilde{M_1} \subset \bigwedge^m \D_{\pst}(\mathrm{std}\circ \rho_{\frakI})$ be defined by \[
        \widetilde{M_1}(\kappa) = \left(\bigwedge^m \D_{\pst}(\mathrm{std}\circ\rho_{\frakI}(\kappa))\right)^{\varphi = U_P}.
    \]
    By construction, $\widetilde{M_1}$ lifts $\det M_{x,1}$, where $M_{x,1}\subset \D_{\pst}(\mathrm{std} \circ \rho_{x,p})$ as in Hypothesis \ref{Hypothesis: explicit local-global compatibility at p}. Similarly as in the proof of Lemma \ref{Lemma: relationships between de Rham deformations}, we see that $\widetilde{M_1}$ is $(\varphi, N, \Gal_{\Q_p})$-stable. Therefore, it defines a rank-$1$ submodule $\widetilde{D_1}\subset \bigwedge^m \D_{\rig}^{\dagger}(\mathrm{std}\circ \rho_{\frakI, p})$. We then define $\Fil_{\bullet}\bigwedge^m \D_{\rig}^{\dagger}(\mathrm{std}\circ \rho_{\frakI,p})$ by \[
        \widetilde{D_1}\subset \widetilde{D_1}^{\perp} \subset \bigwedge^m \D_{\rig}^{\dagger}(\mathrm{std}\circ \rho_{\frakI,p}). 
    \]
    From the construction, one easily sees that this filtration lifts $\Fil_{\bullet}\bigwedge^m\D_{\rig}^{\dagger}(\mathrm{std}\circ \rho_{x, p})$, which concludes the result. 
\end{proof}

\begin{Corollary}[Critical points do not accumulate]\label{Corollary: critical points cannot accumulate}
    Keep the notations as above. Let $x\in \calE_{P, k_{\heartsuit}}^{\cusp, \cl}$ with Galois representation $\rho_x$. Suppose the $P^{\vee}$-flag on $\D_{\rig}^{\dagger}(\rho_{x, p})$ given by Hypothesis \ref{Hypothesis: explicit local-global compatibility at p} is $P^{\vee}$-critical. Then, there exists an affinoid open neighbourhood $\calV$ of $x$ such that for any other $y\in \calV(\overline{\Q}_p) \cap \calE_{P, k_{\heartsuit}}^{\cusp, \cl}$, the $P^{\vee}$-flag on $\D_{\rig}^{\dagger}(\rho_{y, p})$ is $P^{\vee}$-non-critical. 
\end{Corollary}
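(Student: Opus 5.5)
The plan is to argue by slopes and weights, in the spirit of \cite[Proposition 2.4.? / Lemma 7.5.?]{BC}, where critical points are shown to be isolated among classical points. The key observation is that $P^{\vee}$-criticality at a classical point forces the slope to be large relative to the weight, in the sense that the $P$-small-slope condition of Definition \ref{Definition: small-slope forms} must fail. On the other hand, in a small neighbourhood the slope is locally constant while the weight gap grows, so small slope holds at all other classical points nearby.

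First, I choose an affinoid neighbourhood $\calV$ of $x$ of the form $\calV \subset \calZ_{\calU,h}$, a connected component of $\wt^{-1}(\calU)$ over a small affinoid $\calU \ni \wt(x)$. I arrange it so that $v_p(U_P)$ is constant on $\calV$. This is possible because $U_P$ is invertible on $\calE_{P,k_\heartsuit}^{\cusp}$ and $|U_P|$ is locally constant near $x$ after shrinking. Let $s = v_p(U_P(x))$.

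Second, I shrink $\calU$ so that the only classical weight in $\calU$ violating the small-slope bound for slope $s$ is possibly $\wt(x)$ itself. In the Klingen case the bound is $s < k_1-k_2+1$; in the Siegel case it is $s< k_2-3$ along the parallel direction. The set of integers $k$ that fail these bounds is finite, since the bound grows with the varying parameter. For $k$ beyond it one would use that classical weights in $\calW_{P,k_\heartsuit}$ are of the form $k_\heartsuit+k(1,0)$ or $k_\heartsuit+k(1,1)$ with $k$ varying $p$-adically. Hence, after shrinking $\calU$ to avoid those finitely many integer points other than $\wt(x)$, every classical $y\in\calV$ with $\wt(y)\neq\wt(x)$ is of $P$-small slope. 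Lemma \ref{Lemma: small slope implies Galois non-critical} then gives that its $P^\vee$-flag is $P^\vee$-non-critical.

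The main obstacle is classical $y \neq x$ in $\calV$ with $\wt(y) = \wt(x)$. Since $\wt|_{\calV}$ is finite, the fibre $\wt^{-1}(\wt(x))\cap \calV$ is finite, so I simply shrink $\calV$ further to exclude these finitely many points. One subtlety is that the small-slope bound is only a sufficient condition. That is harmless here, because we only need non-criticality at the other points, not criticality at $x$. A second subtlety is making sure the $P^\vee$-flag used at $y$ is the one of Hypothesis \ref{Hypothesis: explicit local-global compatibility at p}. Lemma \ref{Lemma: small slope implies Galois non-critical} constructs exactly that flag, so this is consistent.
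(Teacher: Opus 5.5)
Your proposal is correct and follows the paper's argument. The $U_P$-slope is locally constant near $x$, and by Lemma \ref{Lemma: small slope implies Galois non-critical} a $P^{\vee}$-critical classical point must violate the $P$-small-slope bound. For a fixed slope only finitely many classical weights on $\calW_{P,k_{\heartsuit}}$ do so, and one then shrinks $\calV$.

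Your version is slightly more careful than the paper's: you make explicit that $\wt$ is finite on $\calV$. That finiteness is what turns finitely many bad weights into finitely many bad points, including the other points in the fibre over $\wt(x)$.

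One small caveat applies in the Siegel case. Finiteness of the bad weights also needs a lower bound on classical weights, for instance $k_{y,2}>3$, or $k_{y,2}\geq 0$ as the paper implicitly assumes. Otherwise the parameter $k$ could run to $-\infty$ and infinitely many weights would fail the bound.
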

\begin{proof}
    From slope decomposition (see, for example, \cite[Sect. 4]{Ash-Stevens}), one observes that the slope is locally a constant on $\calE_{P, k_{\heartsuit}}^{\cusp}$. Hence, if $x$ has slope $h$, then there exists an affinoid open neighbourhood $\calV$ of $x$ such that every $y\in \calV(\overline{\Q}_p) \cap \calE_{P, k_{\heartsuit}}^{\cusp, \cl}$ has slope $h$. Since small-slope classical points admit $P^{\vee}$-non-critical $P^{\vee}$-flags (Lemma \ref{Lemma: small slope implies Galois non-critical}), for a classical point $y$ such that $\D_{\rig}^{\dagger}(\rho_{y,p})$ has a $P^{\vee}$-critical $P^{\vee}$-flag, it can only happen when the weight $(k_{y, 1}, k_{y, 2})$ of $y$ satisfies \[
        \begin{array}{ll}
            h \geq k_{y, 1}-k_{y, 2} +1, & \text{ if }P = P_{\mathrm{Kl}} \\
            h \geq k_{y, 2}-3, & \text{ if } P = P_{\mathrm{Si}}
        \end{array} .
    \]
    Note that, by construction, $k_{y, 1}$ is fixed if $P = P_{\mathrm{Kl}}$ while $k_{y, 1} = k_{y, 2}$ if $P  = P_{\mathrm{Si}}$. Therefore, there are only finitely many choices of $(k_{y, 1}, k_{y, 2}) \in \Z_{\geq 0}^2$ so that the condition above is satisfied. This implies that the set \[
        \left\{ y\in \calV(\overline{\Q}_p) \cap \calE_{P, k_{\heartsuit}}^{\cusp, \cl}: \D_{\rig}^{\dagger}(\rho_{y, p}) \text{ is $P^{\vee}$-critically flagged} \right\}
    \]
    is a finite set. Hence, by shrinking $\calV$ if necessary, we conclude the desired result. 
\end{proof}

\subsection{Adjoint Bloch--Kato Selmer groups and an infinitesimal \texorpdfstring{$R = \bbT$}{R=T} theorem}\label{subsection: adj Selmer}

In this subsection, we fix a cuspidal Siegel eigenform $f$ of weight $(k_1, k_2)$ with $k_1\geq k_2>3$ such that $f$ appears in $\calE_{P, k_{\heartsuit}}^{\cusp}$; that is, $f$ corresponds to a point $x = x_{f}\in \calE_{P, k_{\heartsuit}}^{\cusp}(L_x)$, where $L_x$ is the residue field of $x$. Consider the Galois representation $\rho_x = \rho_f$. Note that Hypothesis \ref{Hypothesis: explicit local-global compatibility at p} yields a $P^{\vee}$-refinement on $\D_{\pst}(\rho_x)$ and so $\D_{\rig}^{\dagger}(\rho_x)$ admits a $P^{\vee}$-flag $\Fil_{\bullet}\D_{\rig}^{\dagger}(\rho_x)$ (by Proposition \ref{Proposition: P-refinements are equivalent to P-flags}). We further assume the following in this subsection: \begin{itemize}
    \item The Galois representation $\rho_x$ is absolutely irreducible. 
    \item The $P^{\vee}$-flag $\Fil_{\bullet}\D_{\rig}^{\dagger}(\rho_x)$ satisfies ($P^{\vee}$-REG) and ($P^{\vee}$-NCR)  (see Sect. \ref{subsection: deformation}). 
\end{itemize}

Recall the global deformation functors \eqref{eq: global deformation problems} \begin{align*}
    \Def_{\rho_x, \Fil_{\bullet}}^{\mathrm{small}}  & \coloneq (\{\Def_{\rho_{x,\ell}}^{\unr}\}_{\ell\not\in S_{\mathrm{bad}}}, \{\Def_{\rho_{x,\ell}}^{\min}\}_{\ell\in S_{\mathrm{bad}} \smallsetminus \{p\}}, \Def_{\rho_{x,p}, \Fil_{\bullet}}^{\mathrm{small}}), \\
    \Def_{\rho_x, \dR}  & \coloneq (\{\Def_{\rho_{x,\ell}}^{\unr}\}_{\ell\not\in S_{\mathrm{bad}}}, \{\Def_{\rho_{x,\ell}}^{\min}\}_{\ell\in S_{\mathrm{bad}} \smallsetminus \{p\}}, \Def_{\rho_{x,p}, \dR}), \\
    \Def_{\rho_x, \dR}^{\mathrm{small}}  & \coloneq (\{\Def_{\rho_{x,\ell}}^{\unr}\}_{\ell\not\in S_{\mathrm{bad}}}, \{\Def_{\rho_{x,\ell}}^{\min}\}_{\ell\in S_{\mathrm{bad}} \smallsetminus \{p\}}, \Def_{\rho_{x,p}, \dR}^{\mathrm{small}}),
\end{align*}
and their tangent spaces $\Def_{\rho_x, \Fil_{\bullet}}^{\mathrm{small}}(L[\epsilon])$, $\Def_{\rho_x, \dR}(L_x[\epsilon])$, $\Def_{\rho_x, \dR}^{\mathrm{small}}(L_x[\epsilon])$ (for $\epsilon^2 = 0$). By construction, we see that \begin{align*}
    \Def_{\rho_x, \dR}(L_x[\epsilon]) & = H^1_{g, \min}(\Q, \ad\rho_x) \\ 
    & = \ker\left( H^1(\Q, \ad\rho_x) \rightarrow \frac{H^1(\Q_p, \ad\rho_x)}{W_p^{\dR}} \prod_{\ell\not\in \mathtt{S}_{\mathrm{bad}}} \frac{H^1(\Q_{\ell}, \ad\rho_x)}{W_{\ell}^{\unr}} \prod_{\ell\in \mathtt{S}_{\mathrm{bad}}\smallsetminus \{p\}} \frac{H^1(\Q_{\ell}, \ad\rho_x)}{W_{\ell}^{\min}} \right).
\end{align*} 
On the other hand, the geometric adjoint Bloch--Kato Selmer group is defined as \[
    H^1_g(\Q, \ad\rho_x) = \ker\left( H^1(\Q, \ad\rho_x) \rightarrow \frac{H^1(\Q_p, \ad\rho_x)}{W_p^{\dR}} \prod_{\ell\not\in \mathtt{S}_{\mathrm{bad}}} \frac{H^1(\Q_{\ell}, \ad\rho_x)}{W_{\ell}^{\unr}}\right).
\]
Hence, by construction, we have \begin{equation}\label{eq: injections of adjoint BK Selmer groups}
    \Def_{\rho_x, \dR}^{\mathrm{small}}(L_x[\epsilon]) \hookrightarrow H^1_{g, \min}(\Q, \ad\rho_x)\hookrightarrow H^1_{g}(\Q, \ad\rho_x).
\end{equation}
Note that, if the conditions in Lemma \ref{Lemma: need not worry about minimal deformation} hold, then the last inclusion is an isomorphism.

The following is the (geometric) Bloch--Kato conjecture of $\ad \rho_x$ (see also \cite[Conjecture 5.1.3 \& Remark 5.2.4]{BC}).

\begin{Conjecture}[Geometric Bloch--Kato conjecture for $\ad \rho_x$]\label{Conjecture: geometric adjoint BK conjecture}
    The geometric Bloch--Kato Selmer group $H^1_g(\Q, \ad\rho_x)$ vanishes. 
\end{Conjecture}

\begin{Remark}\label{Remark: justification of the geometric adjoint BK conjecture}
    Compared with \cite[Conjecture 5.1.3 \& Remark 5.2.4]{BC}, one sees that we stated the adjoint Bloch--Kato conjecture using the geometric Bloch--Kato Selmer group instead of the crystalline Bloch--Kato Selmer group $H^1_f$. A priori, our version is stronger than the one in \cite{BC}. However, \cite[Prediction 4.2]{Bellaiche-BK} and the following proposition provides evidence for Conjecture \ref{Conjecture: geometric adjoint BK conjecture}.
\end{Remark}

\begin{Proposition}\label{Proposition: geometric adjoint  BK conjecture holds}
    Keep the notations and assumptions as above. Suppose the weak functorial transfer from $\GSp_4$ to $\GL_4$ exists for $f$ and the restriction $\rho_x|_{\Gal_{F(\zeta_p^{\infty})}}$ is irreducible.
    Then, $H^1_g(\Q, \ad \rho_x) = 0$.
\end{Proposition}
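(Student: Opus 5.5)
The plan is to transfer the problem to $\GL_4$ and invoke the known vanishing of adjoint Bloch--Kato Selmer groups for regular algebraic, polarised (here essentially self-dual up to twist) automorphic Galois representations, proved via automorphy lifting theorems (Allen's vanishing for $H^1_f(\Q,\ad)$ in the polarisable setting, or Newton--Thorne). First, using the weak functorial transfer $\Pi$ of $\pi_f$ to $\GL_4(\A)$, together with the fact that $\rho_x$ is absolutely irreducible, we identify $\mathrm{std}\circ\rho_x$ with the Galois representation $\rho_\Pi$ attached to a cuspidal, cohomological, regular algebraic $\Pi$. This uses Theorem \ref{Theorem: Galois representation attached to Siegel forms}(iv) and Chebotarev, after which the Hodge--Tate weights $(0,k_2-2,k_1-1,k_1+k_2-3)$ from Theorem \ref{Theorem: Galois representation attached to Siegel forms}(iii) are regular because $k_1\ge k_2>3$. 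Since $\mathrm{std}\circ\rho_x$ preserves a symplectic form up to the similitude character, $\Pi$ is polarised: $\Pi^\vee\cong\Pi\otimes\omega^{-1}$.

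Second, we reduce the $\GSp_4$-adjoint to the $\GL_4$-adjoint. Write $\mathfrak{gsp}_4\subset\mathfrak{gl}_4$. Since $p$ is odd, $\ad\rho_x$ is a $\Gal_\Q$-stable direct summand of $\ad(\mathrm{std}\circ\rho_x)=\mathfrak{gl}_4$; its complement is $\wedge^2$-type, namely $\mathfrak{gl}_4/\mathfrak{gsp}_4$. The local conditions defining $H^1_g$ (de Rham at $p$, unramified at good $\ell$) are functorial and compatible with direct sums. Hence $H^1_g(\Q,\ad\rho_x)\hookrightarrow H^1_g(\Q,\ad(\mathrm{std}\circ\rho_x))$, and it suffices to show that the latter vanishes.

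Third, we deduce $H^1_g=H^1_f$ for $\ad(\mathrm{std}\circ\rho_x)$ at $p$, and then apply the vanishing theorem. For the de Rham adjoint representation the difference $H^1_g/H^1_f$ at $p$ is controlled by $D_{\cris}((\ad)^\vee(1))^{\varphi=1}$. We would argue this vanishes by a weight argument: the $\varphi$-eigenvalues on $\D_{\pst}$ are Weil numbers of distinct weights (purity, via the transfer and Caraiani-type local--global compatibility), so $\varphi=p^{-1}\cdot(\text{ratio})$ cannot equal $1$. Alternatively, one can work directly with $H^1_g$, since the cited results (e.g.\ Newton--Thorne, Allen) are phrased with de Rham local conditions at $p$. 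The hypothesis that $\rho_x|_{\Gal_{\Q(\zeta_{p^\infty})}}$ is irreducible supplies the ``enormous/adequate image'' input needed for Taylor--Wiles patching in characteristic zero, which Allen's and Newton--Thorne's theorems require. Their conclusion, that the de Rham adjoint Selmer group of a polarised regular algebraic cuspidal $\GL_n$-representation vanishes, then finishes the proof.

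The main obstacle is the third step: matching the exact hypotheses of the available vanishing theorems. In particular, we need the big-image condition phrased via $\Q(\zeta_{p^\infty})$ rather than a residual condition, and we need control at $p$ without crystallinity, so that $H^1_g$ rather than $H^1_f$ is covered. I would first try Newton--Thorne's characteristic-zero result, which assumes only irreducibility over $\Q(\zeta_{p^\infty})$ and handles de Rham deformations. I would fall back on the purity argument above to pass between $H^1_f$ and $H^1_g$ if needed.
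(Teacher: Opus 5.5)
Your argument is essentially the paper's: realise $\ad\rho_x$ as a Galois-stable summand of $\ad(\mathrm{std}\circ\rho_x)$ so that $H^1_g$ injects, identify $\mathrm{std}\circ\rho_x$ with the Galois representation of the weak transfer to $\GL_4$, and apply the vanishing of the $\GL_4$ adjoint $H^1_g$ under irreducibility over $\Q(\zeta_{p^\infty})$ (the paper cites Thorne's Theorem 6.3, and also Allen's Theorem B). Since that theorem is already stated for $H^1_g$, your purity detour comparing $H^1_f$ with $H^1_g$ is not needed; also, the direct-summand splitting holds because the coefficients have characteristic zero, not because $p$ is odd.
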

\begin{proof}
    Note that the Galois representation for the weak transfer is $\mathrm{std}\circ \rho_f$ and we have a natural Galois-equivariant inclusion \[
        \ad \rho_x \hookrightarrow \ad (\mathrm{std}\circ \rho_x),
    \]
    which induces the inclusion \[
        H^1_g(\Q, \ad \rho_x) \hookrightarrow H^1_g(\Q, \ad(\mathrm{std}\circ \rho_x)).
    \]
    However, under the assumptions above, \cite[Theorem 6.3]{Thorne-nonvanishing} (see also \cite[Theorem B]{Allen-dukepaper}) implies that \[
        H^1_g(\Q, \ad (\mathrm{std}\circ \rho_x)) = 0.
    \]
    We then conclude the result. 
\end{proof}

\begin{Remark}\label{Remark: evidence of transfer from GSp4 to GL4 over totally real}
    In \cite{PSS-transfer}, Pitale--Saha--Schmidt showed the existence of the weak functorial transfer if $f$ has full level. Also note that, Asgari--Shahidi \cite{AS-GSp4toGL4} established such a transfer for generic automorphic representations for $\GSp_4$. However, automorphic representations associated with holomorphic Siegel modular forms are not generic.    
\end{Remark}

\begin{Theorem}\label{Theorem: infinitesimal R=T}
    Keep the notations as above. Denote by $\widehat{\bbT}_x = \widehat{\scrO}_{\calE_{P, k_{\heartsuit}}^{\cusp}, x}$ the completed local ring at $x$ and $R_{\rho_x, \Fil_{\bullet}}^{\mathrm{small}, \univ}$ the universal deformation ring for $\Def_{\rho_x, \Fil_{\bullet}}^{\mathrm{small}}$. Suppose the following hold:  \begin{itemize}
        \item Conditions in Lemma \ref{Lemma: need not worry about minimal deformation} hold, i.e., for any $\ell \in S_{\mathrm{bad}} \smallsetminus \{p\}$, $\rho_x|_{I_{\ell}}$ is irreducible and $p\nmid \ell^{12}-1$.
        \item Conjecture \ref{Conjecture: geometric adjoint BK conjecture} is true.
    \end{itemize} 
    Then, there is a canonical isomorphism \[
        R_{\rho_x, \Fil_{\bullet}}^{\mathrm{small}, \univ} \cong \widehat{\bbT}_x.
    \] 
    Moreover, $\widehat{\bbT}_x$ is smooth. 
\end{Theorem}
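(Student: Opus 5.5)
Following the strategy of \cite[Sect. 7.6]{BC}, the plan is to produce a surjection $R_{\rho_x, \Fil_{\bullet}}^{\mathrm{small}, \univ} \twoheadrightarrow \widehat{\bbT}_x$. We then show that the tangent space of the deformation ring has dimension at most $1$. Since $\dim \widehat{\bbT}_x = 1$ by Proposition \ref{Proposition: small parabolic eigenvarieties are equidimensional of dimension 1}, a complete noetherian local ring with one-dimensional tangent space surjecting onto a one-dimensional ring must be a power series ring in one variable, and the surjection must then be an isomorphism.

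\textbf{Step 1: the map.} By Corollary \ref{Corollary: deformation is refined}, the lift $\rho_{\widehat{\bbT}_x}$ reduces modulo every cofinite ideal $\frakI$ to an object of $\Def_{\rho_{x,p},\Fil_{\bullet}}^{\mathrm{small}}(\widehat{\bbT}_x/\frakI)$ at $p$. This uses the Hodge--Tate--Sen cocharacter $\mu_{\HTS}$ of Corollary \ref{Corollary: refined families over small parabolic eigenvarieties}, whose ratio to $\mu_{\rho_x,\HT}$ lies in $Z_{M_{P^\vee}}$ because the weight varies only in the one-parameter direction $\calW_{P,k_\heartsuit}$. The other local conditions are also satisfied. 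Away from $\mathtt S_{\mathrm{bad}}$ the lift is unramified by Theorem \ref{Theorem: Galois representation attached to Siegel forms}(ii) and a density argument. At bad $\ell\neq p$ the minimal condition is automatic by Lemma \ref{Lemma: need not worry about minimal deformation}, since $\mathrm{lcm}(1,\dots,4)=12$. Passing to the limit gives a map $R^{\mathrm{small},\univ}_{\rho_x,\Fil_\bullet}\to\widehat{\bbT}_x$. It is surjective because $\widehat{\bbT}_x$ is topologically generated over $L_x$ by the images of the Hecke operators. The spherical operators at good $\ell$ are coefficients of $\Det(1-\Frob_\ell X)$, hence come from traces of the universal deformation. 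The operator $U_P$ is the $\varphi$-eigenvalue on the rank-one piece $\widetilde{D}_1$ of the flag; the weight, and hence $R_\calU$, is read off from $\mu_{\HTS}$. All of these lie in the image.

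\textbf{Step 2: the tangent space bound.} By Corollary \ref{Corollary: sort exact sequence of tangent spaces}, there is an exact sequence
\[
0\to \Def^{\mathrm{small}}_{\rho_x,\dR}(L_x[\epsilon])\to \Def^{\mathrm{small}}_{\rho_x,\Fil_\bullet}(L_x[\epsilon])\to L_x,
\]
where the last map is the derivative of the weight $\wt(\delta_{2})$. Its kernel injects into $H^1_g(\Q,\ad\rho_x)$ by \eqref{eq: injections of adjoint BK Selmer groups}, and this group vanishes by Conjecture \ref{Conjecture: geometric adjoint BK conjecture}. Hence $\dim \Def^{\mathrm{small}}_{\rho_x,\Fil_\bullet}(L_x[\epsilon])\le 1$. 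Because the problem is pro-representable (Proposition \ref{Proposition: representibility of deformation problems}), $R:=R^{\mathrm{small},\univ}_{\rho_x,\Fil_\bullet}$ is a quotient of $L_x[\![T]\!]$. Since $R$ surjects onto the one-dimensional ring $\widehat{\bbT}_x$, we get $R\cong L_x[\![T]\!]$, and the surjection $L_x[\![T]\!]\to\widehat{\bbT}_x$ between one-dimensional domains is an isomorphism. This gives both the isomorphism and the smoothness of $\widehat{\bbT}_x$.

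\textbf{Main obstacle.} The delicate point is the input to Corollary \ref{Corollary: sort exact sequence of tangent spaces}. Showing that a fixed-weight small-flagged deformation is de Rham uses ($P^\vee$-NCR) and ($P^\vee$-REG) in an essential way, through the Constant Weight Lemma and the Sen/$\calD_{\dR}$ comparison. A second difficulty is the interpolation in Step 1, which requires Corollary \ref{Corollary: generalisation of BC Lemma 4.3.10}: the $U_P$-eigenspace in $\bigwedge^m\D_{\pst}$ must stay free of rank one over every Artinian quotient. I would treat these as the crux and check them carefully. The remaining steps are formal.
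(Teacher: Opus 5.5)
Your proposal is correct and follows essentially the same route as the paper: the map comes from Corollary~\ref{Corollary: deformation is refined} together with Lemma~\ref{Lemma: need not worry about minimal deformation}, surjectivity comes from local--global compatibility away from $p$ and from Hypothesis~\ref{Hypothesis: explicit local-global compatibility at p} for $U_P$, and the isomorphism comes from the tangent-space bound of Corollary~\ref{Corollary: sort exact sequence of tangent spaces} combined with Conjecture~\ref{Conjecture: geometric adjoint BK conjecture}. Your endgame, that a quotient of $L_x[\![T]\!]$ of Krull dimension $1$ is $L_x[\![T]\!]$ itself, is the paper's argument in different words: there, $1\geq \dim R \geq \dim \widehat{\bbT}_x = 1$ forces $R$ to be regular, hence a domain, so a nonzero kernel would drop the dimension; your remark that the weight functions must also lie in the image is a point the paper passes over silently.
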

\begin{proof}
    We prove the theorem by the following steps: 

    \noindent \textbf{Step 1.} There exists a canonical ring morphism $R_{\rho_x, \Fil_{\bullet}}^{\mathrm{small}, \univ}  \rightarrow \widehat{\bbT}_x$.

    To construct this canonical ring homomorphism, it is enough to show that for any ideal $\frakI \subset \widehat{\bbT}_x$ of cofinite length, there exists a Galois representation \[
        \rho_{\frakI}: \Gal_{\Q} \rightarrow \GSp_4(\widehat{\bbT}_x/\frakI)
    \]
    and a $P^{\vee}$-flag $\Fil_{\bullet}\bigwedge^m\D_{\rig}^{\dagger}(\mathrm{std}\circ\rho_{\frakI,p})$ ($m=1$ if $P=P_{\mathrm{Si}}$, $m=2$ if $P=P_{\mathrm{Kl}}$) such that the pair $(\rho_{\frakI}, \Fil_{\bullet}\bigwedge^m\D_{\rig}^{\dagger}(\mathrm{std}\circ \rho_{\frakI,p}))\in \Def_{\rho_x, \Fil_{\bullet}}^{\mathrm{small}}(\widehat{\bbT}_x/\frakI)$. Indeed, if such a pair exists, then there is a natural map \[
        R_{\rho_x, \Fil_{\bullet}}^{\mathrm{small}, \univ} \rightarrow \widehat{\bbT}_x/\frakI;
    \]
    since $\widehat{\bbT}_x = \varprojlim_{\frakI \text{ cofinite ideal}} \widehat{\bbT}_x/\frakI$, one obtains a natural map \[
        R_{\rho_x, \Fil_{\bullet}}^{\mathrm{small}, \univ} \rightarrow \widehat{\bbT}_x.
    \]
    However, Corollary \ref{Corollary: deformation is refined} and Lemma \ref{Lemma: need not worry about minimal deformation} yields the desired pair. 


    \noindent \textbf{Step 2.} The ring homomorphism $R_{\rho_x, \Fil_{\bullet}}^{\mathrm{small}, \univ} \rightarrow \widehat{\bbT}_x$ is surjective. 

    Since $\widehat{\bbT}_x$ is generated by the Hecke operators, it is enough to show that the Hecke operators are mapped to. Indeed, it follows from the local-global compatibility at good primes (Theorem \ref{Theorem: Galois representation attached to Siegel forms} (iv)) that the Hecke operators away from $S_{\mathrm{bad}}$ are mapped to. On the other hand, the explicit local-global compatibility at $p$ (Hypothesis \ref{Hypothesis: explicit local-global compatibility at p}) states an explicit relationship between the operator $U_P$ and the potentially semistable Frobenius. As $p$ is invertible in both $R_{\rho_x, \Fil_{\bullet}}^{\mathrm{small}, \univ}$ and $\widehat{\bbT}_x$, we see that the operator $U_P$ is also mapped to.

    \noindent \textbf{Step 3.} The ring homomorphism $R_{\rho_x, \Fil_{\bullet}}^{\mathrm{small}, \univ} \rightarrow \widehat{\bbT}_x$ is an isomorphism.

    By Corollary \ref{Corollary: sort exact sequence of tangent spaces} and \eqref{eq: injections of adjoint BK Selmer groups}, we have \[
        1 \geq \dim_{L_x} \Def_{\rho_x, \Fil_{\bullet}}^{\mathrm{small}}(L_x[\epsilon]).
    \]
    On the other hand, since $R_{\rho_x, \Fil_{\bullet}}^{\mathrm{small}, \univ}$ is a local noetherian ring, its Krull dimension is bounded by the dimension of its tangent space (\cite[\href{https://stacks.math.columbia.edu/tag/00KD}{Tag 00KD}]{stacks-project}). In other words, we have \[
        1 \geq \dim_{\mathrm{Krull}} R_{\rho_x, \Fil_{\bullet}}^{\mathrm{small}, \univ}.
    \]
    Note that the equality holds if and only if $R_{\rho_x, \Fil_{\bullet}}^{\mathrm{small}, \univ}$ is regular (and so smooth by \cite[\href{https://stacks.math.columbia.edu/tag/00TV}{Tag 00TV}]{stacks-project}). 

    On the other hand, since $R_{\rho_x, \Fil_{\bullet}}^{\mathrm{small}, \univ} \rightarrow \widehat{\bbT}_x$ is a surjection, we have \[
        1 \geq \dim_{\mathrm{Krull}} R_{\rho_x, \Fil_{\bullet}}^{\mathrm{small}, \univ} \geq \dim_{\mathrm{Krull}} \widehat{\bbT}_x = 1,
    \]
    where the last equality follows from that $\calE_{P, k_{\heartsuit}}^{\cusp}$ is equidimensional of dimension $1$. We conclude that \[
        \dim_{\mathrm{Krull}} R_{\rho_x, \Fil_{\bullet}}^{\mathrm{small}, \univ} = 1 
    \]
    and $R_{\rho_x, \Fil_{\bullet}}^{\mathrm{small}, \univ}$ is regular. To conclude the proof, suppose \[
        \frakk = \ker(R_{\rho_x, \Fil_{\bullet}}^{\mathrm{small}, \univ} \rightarrow \widehat{\bbT}_x)
    \]
    is non-zero. Since $R_{\rho_x, \Fil_{\bullet}}^{\mathrm{small}, \univ}$ is a regular local ring, it is a domain (\cite[\href{https://stacks.math.columbia.edu/tag/00NP}{Tag 00NP}]{stacks-project}) and so \[
        1 = \dim_{\mathrm{Krull}} R_{\rho_x, \Fil_{\bullet}}^{\mathrm{small}, \univ} > \dim_{\mathrm{Krull}} R_{\rho_x, \Fil_{\bullet}}^{\mathrm{small}, \univ}/\frakk = \dim_{\mathrm{Krull}} \widehat{\bbT}_x = 1,
    \]
    which is a contradiction. 
\end{proof}

\section{Saito--Kurokawa points I: finite slope}\label{section: SK points; finite-slope}

The purposes of this section and the next section are to apply the knowledge on families of symplectic Galois representations over small parabolic eigenvarieties for $\GSp_4$ developed above to study the geometry and arithmetic of points given by Saito--Kurokawa lifts. To this end, we recall the definition of paramodular groups as follows. For any prime number $\ell$ and any positive integer $n\in \Z_{>0}$, define \[
    K^{\mathrm{par}}_{\ell^n} \coloneq \begin{pmatrix}
        \Z_{\ell} & \Z_{\ell} & \Z_{\ell} & \frac{1}{\ell^n}\Z_{\ell}\\
        \ell^n\Z_{\ell} & \Z_{\ell} & \Z_{\ell} & \Z_{\ell}\\
        \ell^n\Z_{\ell} & \Z_{\ell} & \Z_{\ell} & \Z_{\ell}\\
        \ell^n\Z_{\ell} & \ell^n\Z_{\ell} & \ell^n\Z_{\ell} & \Z_{\ell}
    \end{pmatrix} \cap \GSp_4(\Q_{\ell}).
\]
For any positive integer $N\in \Z_{>0}$, we define \[
    K^{\mathrm{par}}(N) \coloneq \begin{pmatrix}
        \widehat{\Z} & \widehat{\Z} & \widehat{\Z} & \frac{1}{N}\widehat{\Z}\\
        N\widehat{\Z} & \widehat{\Z} & \widehat{\Z} & \widehat{\Z}\\
        N\widehat{\Z} & \widehat{\Z} & \widehat{\Z} & \widehat{\Z}\\
        N\widehat{\Z} & N\widehat{\Z} & N\widehat{\Z} & \widehat{\Z}
    \end{pmatrix} \cap \GSp_4(\A^{\infty}).
\]
Thus, if we write $N = \prod_{\ell \mid N} \ell^{n_{\ell}}$, then \[
    K^{\mathrm{par}}(N) = \prod_{\ell\mid N} K^{\mathrm{par}}_{\ell^{n_{\ell}}} \times \prod_{\ell \nmid N} \GSp_4(\Z_{\ell}).
\]
Note that $K^{\mathrm{par}}(N)$ is not neat.

\subsection{Saito--Kurokawa lifts}\label{subsection: SK lifts}

Let $N$ and $k$ be positive integers and $k>2$. Let $f$ be a cuspidal newform of level $\Gamma_0(N)$ and weight $2k-2$.\footnote{ Here, by a newform, we always normalise the $q$-expansion so that the first coefficient in the $q$-expansion is $1$.} Recall the functional equation for the $L$-function attached to $f$ is given by \[
    L(f, s) = \epsilon_f L(f, 2k-2-s),
\]
where $\epsilon_f = (-1)^{\mathrm{ord}_{s=k-1} L(f, s)}$ is the sign of $f$. 

If $\epsilon_f = -1$, $f$ admits a lift to a holomorphic cuspidal Siegel eigenform, \emph{i.e.}, the so-called Saito--Kurokawa lift, which we denote by $\SK(f)$. The following theorem summarises its properties.

\begin{Theorem}\label{Theorem: SK lifts}
    Let $f$ be a cuspidal newform as above and suppose $\epsilon_f = -1$. Then, there exists a holomorphic cuspidal Siegel eigenform $\SK(f)$ of level $K^{\mathrm{par}}(N)$ and weight $(k, k)$ whose Galois representation, after choosing a suitable basis, is of the form \[
        \mathrm{std}\circ \rho_{\SK(f)}  \sim \begin{pmatrix}
            \chi_{\cyc}^{2-k} & & \\ & \rho_f & \\ & & \chi_{\cyc}^{1-k}
        \end{pmatrix},
    \]
    where \[
        \rho_f: \Gal_{\Q} \rightarrow \GL_2(\overline{\Q}_p)
    \]
    is the Galois representation attached to $f$ and $\chi_{\cyc}$ is the $p$-adic cyclotomic character.
\end{Theorem}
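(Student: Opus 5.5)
The theorem combines known results, so I would prove it by assembling the automorphic construction of the lift with the known description of its Galois representation, not by arguing from scratch. The first step is the construction of $\SK(f)$. Because $\epsilon_f=-1$, the Shimura--Waldspurger correspondence gives a half-integral weight form $g$ of weight $k-\tfrac12$ in Kohnen's plus space. The Eichler--Zagier map then produces a Jacobi form of weight $k$ and index $1$, and the Maass lift turns this into a holomorphic Siegel cuspform of weight $(k,k)$. For general squarefree or arbitrary level $N$, I would instead invoke the paramodular version (Gritsenko's lift, or Schmidt's representation-theoretic construction of Saito--Kurokawa packets). In that setting the local newform theory of Roberts--Schmidt shows that the paramodular group $K^{\mathrm{par}}(N)$ fixes a vector. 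The sign condition $\epsilon_f=-1$ is precisely what makes the global packet element holomorphic at infinity (Piatetski-Shapiro's multiplicity formula), and it ensures the lift is cuspidal rather than zero.

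The second step identifies the Galois representation. The automorphic representation $\Pi=\SK(\pi_f)$ lies in the Arthur packet of type $\pi_f\boxplus \mathbf 1\boxtimes \mathrm{Sp}_2$. Its unramified local components at $\ell\nmid Np$ are the Langlands quotients of $\mathrm{Ind}(\nu^{1/2}\pi_{f,\ell}\rtimes\nu^{-1/2})$. Their Satake parameters, under the normalisation of $P_{\mathrm{Hecke},\ell}$ fixed above, are $\{\alpha_\ell,\beta_\ell,\ell^{k-2},\ell^{k-1}\}$, where $\alpha_\ell,\beta_\ell$ are the Hecke roots of $f$. Theorem \ref{Theorem: Galois representation attached to Siegel forms}(iv) then says that the characteristic polynomial of $\rho_{\SK(f)}(\Frob_\ell)$ equals that of $\chi_{\cyc}^{2-k}\oplus\rho_f\oplus\chi_{\cyc}^{1-k}$ for almost all $\ell$. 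Since $\rho_{\SK(f)}$ is semisimple, Chebotarev density together with Brauer--Nesbitt gives the isomorphism of $\mathrm{std}\circ\rho_{\SK(f)}$ with this direct sum. As a sanity check, the similitude should be $\chi_{\cyc}^{3-2k}$, which matches Theorem \ref{Theorem: Galois representation attached to Siegel forms}(i) with trivial central character, and the Hodge--Tate weights should be $(0,k-2,k-1,2k-3)$. These agree with the weights $\{k-2,k-1\}$ of the two characters together with the weights $\{0,2k-3\}$ of $\rho_f$, up to the sign convention.

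The third step is to choose a symplectic basis. The pairing restricted to $\rho_f$ must be $\det\rho_f=\chi_{\cyc}^{3-2k}$, and the two characters must be paired with each other, since $\chi_{\cyc}^{2-k}\chi_{\cyc}^{1-k}=\chi_{\cyc}^{3-2k}$. Ordering the basis so that the characters occupy the first and last slots makes the matrix lie in $\GSp_4$ for the antidiagonal form $\mathbb J$. This gives the stated shape, which is a statement only up to conjugacy.

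I expect the first step to be the main obstacle. One has to ensure, for non-squarefree $N$ and with the paper's normalisations, that the lift is a genuine nonzero cuspidal eigenform of exactly paramodular level $K^{\mathrm{par}}(N)$. I would rely on Schmidt's results on paramodular newforms in Saito--Kurokawa packets, and I would not attempt to reprove them.
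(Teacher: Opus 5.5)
Your proposal is correct and follows essentially the paper's route. The paper's proof simply cites Kurokawa, Maa{\ss}, Andrianov and Zagier for $N=1$, and Gritsenko (with Schmidt's representation-theoretic treatment) for general $N$. It leaves implicit the identification of the Galois representation, which you make explicit through the Satake parameters $\{\alpha_\ell,\beta_\ell,\ell^{k-2},\ell^{k-1}\}$, Chebotarev and Brauer--Nesbitt. One small misattribution: in level one the hypothesis $\epsilon_f=-1$ (equivalently, $k$ even) is used at the Eichler--Zagier step $J^{\mathrm{cusp}}_{k,1}\cong S^{+}_{k-1/2}$, which fails for odd $k$ since then $J_{k,1}=0$. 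It is not used at the Kohnen--Shimura step, which works for every $k$.
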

\begin{proof}
    The case where $N = 1$ is due to a series of papers by N. Kurokawa (\cite{Kurokawa-SiegelExample}), H. Maa{\ss} (\cite{Maass}), A. Andrianov (\cite{Andrianov-SKConjecture}), D. Zagier (\cite{Zagier-SKConjecture}). V. Gritsenko (\cite{Gritsenko}) generalised it to any $N$. A more representation-theoretical approach can be found in \cite{Schmidt-functoriality} (see also \cite[Theorem 5.3]{Schmidt-CAPGSp4}). 
\end{proof}

\subsection{Saito--Kurokawa lifts for finite-slope cuspforms}\label{subsection: finite-slope SK lifts}

Let $f$ be a cuspidal newform as before. In the rest of this section, we assume $p \nmid N$ and let $\alpha, \beta$ be the two roots of the Hecke polynomial  \[
    T^2 - a_p(f) T + p^{2k-3}, 
\]
where $a_p(f)$ is the $p$-th coefficient of the $q$-expansion of $f$. In this situation, $f$ admits two $p$-stabilisations $f_{\alpha}$ and $f_{\beta}$, corresponding to the two choices of triangulations on the Galois representation $\rho_f$ at $p$. We further assume the following properties:

\begin{Assumption}\label{Assumption: nice properties for finite-slope cuspidal newform}
    \begin{itemize}
        \item {\normalfont(REG)} We have $\alpha \neq \beta$ and $\alpha, \beta \neq p^{k-1}$.
        \item {\normalfont(ST)} Let $\pi_f = \otimes_{v}' \pi_{f, v}$ be the automorphic representation of $\GL_2$ attached to $f$. If $\ell \mid N$, then $\pi_{f, \ell} \cong \mathrm{St}_{\ell} \otimes \xi$, where $\mathrm{St}_{\ell}$ is the Steinberg representation for $\GL_2(\Q_{\ell})$ and $\xi$ is the unramified character with $\xi(\ell) = -1$. 
    \end{itemize}
\end{Assumption}

Let $S_{(k,k)}$ be the space of cuspidal Siegel modular forms of weight $(k,k)$ and level $K^{\mathrm{par}}(N)$. Note that $\SK(f)\in S_{(k,k)}$. Recall the Iwahori subgroup $\Iw_B$ for the (upper triangular) Borel subgroup $B \subset \GSp_4$ and let $S_{(k,k)}(\Iw_B)$ be the space of cuspidal Siegel modular forms of weight $(k,k)$ and level $\Iw_B \times \prod_{\ell \mid N} K^{\mathrm{par}}_{\ell^{n_{\ell}}} \times \prod_{\ell \nmid pN} \GSp_4(\Z_{\ell})$. Then, we have a natural morphism \[
    S_{(k,k)} \rightarrow S_{(k,k)}(\Iw_B)
\]
given by the pullback map. By \cite[Table 7.1 (IIb)]{MY}, the image of $\SK(f)$ in $S_{(k,k)}(\Iw_B)$ decomposes into a sum of $4$ Hecke-eigenforms. These are the $p$-stabilisations of $\SK(f)$, which, in terms of Galois representations, correspond (again) to the choices of (certain) ordering of crystalline Frobenius eigenvalues; more precisely, the four choices are the following (\cite[Sect. 9.2]{MY}) \[
    (\alpha, p^{k-1}, p^{k-2}, \beta), (p^{k-1}, \alpha, \beta, p^{k-2}), (p^{k-1}, \beta, \alpha, p^{k-2}), (\beta, p^{k-1}, p^{k-2}, \alpha).
\] We denote by $\SK(f)_{1, \alpha}$ the $p$-stabilisation corresponding to the ordering $(p^{k-1}, \alpha, \beta, p^{k-2})$. 

Note that the $p$-stabilisations for $\SK(f)$ are $U_{\mathrm{Si}}$- and $U_{\mathrm{Kl}}$-finite-slope. In particular, we have \begin{align*}
    U_{\mathrm{Si}}(\SK(f)_{1,\alpha}) & = p^{k-1},\\
    p^{k-2}U_{\mathrm{Kl}}U_{\mathrm{Si}}^{-1}(\SK(f)_{1,\alpha}) & = \alpha,\\
    p^{k-1}U_{\mathrm{Kl}}^{-1}U_{\mathrm{Si}}(\SK(f)_{1,\alpha}) & = \beta,\\
    p^{2k-3} U_{\mathrm{Si}}^{-1}(\SK(f)_{1,\alpha}) & = p^{k-2}.
\end{align*}
Therefore, by working with Siegel modular forms of $\Iw_B$-level at $p$ and considering the abstract Hecke algebra \[
    \bbT = \left(\bigotimes_{\ell\not\in S_{\mathrm{bad}}}  \Z_p[\GSp_4(\Z_{\ell}) \backslash \GSp_4(\Q_{\ell})/\GSp_4(\Z_{\ell})]\right) \otimes_{\Z_p} \Z_p[U_{\mathrm{Si}}, U_{\mathrm{Kl}}], 
\]
the machinery in Sect. \ref{subsection: small par. eigenvar.} produces a small Klingen parabolic eigenvariety $\calE_{\mathrm{Kl}, (k,k)}^{\cusp}$.\footnote{To be more precise, one can pullback the torsors constructed in Sect. \ref{subsection: families of automorphic sheaves} to the variety of Iwahori level. } Moreover, $\SK(f)_{1, \alpha}$ defines a point $x = x_{1, \alpha}\in \calE_{\mathrm{Kl}, (k,k)}^{\cusp}(\overline{\Q}_p)$.\footnote{ The reader will find that our later discussions also apply to the $p$-stabilisation that corresponds to the ordering $(p^{k-1}, \beta, \alpha, p^{k-2})$.} 

\begin{Remark}\label{Remark: slightly different eigenvariety}
    Recall, in the construction for the small Klingen parabolic eigenvariety, we also invert $U_{\mathrm{Kl}}$. This small Klingen parabolic eigenvariety is slightly different from the one in Sect. \ref{subsection: small par. eigenvar.} (although denoted by the same notation) as we added $U_{\mathrm{Si}}$ in the construction.\footnote{ Note that the construction in \cite{BSW-ParabolicEigen} also considers every Hecke operator at $p$.} We thus do not know if the infinitesimal $R=\bbT$ (Sect. \ref{section: R=T}) holds in this situation. However, it turns out that, by adding $U_{\mathrm{Si}}$ to the construction, it is more convenient for our later purpose (see, Propostion \ref{Proposition: reducibility idea = maximal ideal; fs}). 
\end{Remark}

By the construction of the family of symplectic Galois determinants in Proposition \ref{Proposition: global symplectic determinant over small parabolic eigenvarieties}, there is a symplectic Galois determinant \[
    (\Det, \Pf): (\scrO_{\calE_{\mathrm{Kl}, (k,k)}^{\cusp}}(\calE_{\mathrm{Kl}, (k,k)}^{\cusp})[\Gal_{\Q}], \varsigma^{\univ}) \rightarrow \scrO_{\calE_{\mathrm{Kl}, (k,k)}^{\cusp}}(\calE_{\mathrm{Kl}, (k,k)}^{\cusp}).
\] 
Let $\bbT_x$ be the local ring at $x$ with maximal ideal $\frakm_x$. Then, the symplectic Galois determinant above gives rise to a symplectic determinant \[
    (\Det_x, \Pf_x): (\bbT_x[\Gal_{\Q}], \varsigma^{\univ}) \rightarrow \bbT_x.
\]
Let $A = \bbT_x[\Gal_{\Q}]/\ker \Det_x$.

To simplify the notation, we set \[
    \rho_0 = \rho_f, \quad \rho_1 = \chi_{\cyc}^{1-k}, \quad \text{ and }\quad \rho_2 = \chi_{\cyc}^{2-k}.
\]
On the set $\{\rho_0, \rho_1, \rho_2\}$, consider the partition $\Xi = \{I_0 = \{\rho_0\}, I_1 = \{\rho_1\}, I_2 = \{\rho_2\}\}$ and $\sigma = (12)$ be the permutation on $(0,1,2)$, swapping $1$ and $2$. Then, by \cite[Theorem 5.12]{MQ-SympDet}, $A$ has a symplectic GMA structure of type \[
    \Delta = ((I_0, I_1, I_2), \sigma, (1, 2, 1)).
\]
We refer the reader to \cite[Definition 5.4]{MQ-SympDet} for the definition of types of symplectic GMA structure. In particular, \[
    A \cong \begin{pmatrix}
        A_{11} & M_{12}(A_{10}) & A_{12}\\
        M_{21}(A_{01}) & M_{22}(A_{00}) & M_{21}(A_{02})\\
        A_{22} & M_{12}(A_{20}) & A_{22}
    \end{pmatrix},
\]
where $A_{ab}$ are some $\bbT_x$-submodules in $\mathrm{Frac} \bbT_x$ such that $A_{aa} \cong A$, $A_{ab}A_{bc} \subset A_{ac}$, and $A_{ab}A_{ba} \subset \frakm_x$ (\cite[Theorem 1.4.4]{BC}).

Thanks to Proposition \ref{Proposition: determinant and trace}, we define the \emph{reducibility ideal} of $\Det_x$ as the reducibility ideal for the associated pseudocharacter as in \cite[Definition 1.5.2]{BC}. In particular, in our situation, the reducibility ideal is given by \[
    \frakI^{\mathrm{red}} = A_{10}A_{01} + A_{20}A_{02} + A_{12}A_{21}.
\] 
By construction, we have $\frakI^{\mathrm{red}} \subset \frakm_x$.

\begin{Proposition}\label{Proposition: reducibility idea = maximal ideal; fs}
    Keep the notations as above. The inclusion $\frakI^{\mathrm{red}} \subset \frakm_x$ is in fact an equality. 
\end{Proposition}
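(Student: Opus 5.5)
Since $\calE_{\mathrm{Kl}, (k,k)}^{\cusp}$ is reduced and equidimensional of dimension $1$ (Proposition \ref{Proposition: small parabolic eigenvarieties are equidimensional of dimension 1}), the local ring $\bbT_x$ is one-dimensional with a finite weight map to the one-dimensional weight space. The plan is to argue by contradiction, in the spirit of \cite[Sect. 9]{BC} and \cite{BB22}. Suppose $\frakI^{\mathrm{red}} \subsetneq \frakm_x$. Then $V(\frakI^{\mathrm{red}})$ contains a one-dimensional irreducible component $\calC$ of $\calE_{\mathrm{Kl}, (k,k)}^{\cusp}$ through $x$, possibly after shrinking to an affinoid neighbourhood and passing to a minimal prime of $\bbT_x$ containing $\frakI^{\mathrm{red}}$. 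Along $\calC$ the determinant $\Det$ splits as $\Det_0\Det_1\Det_2$, where $\Det_0$ has dimension $2$ and lifts $\det\rho_f$, and $\Det_1$, $\Det_2$ are characters lifting $\chi_{\cyc}^{1-k}$ and $\chi_{\cyc}^{2-k}$. (If the ideal is only partially reducible one gets a coarser splitting, for instance a $3+1$ splitting; the same argument handles that case.) Because $\calE^{\cusp,\nc}$ accumulates (RFGal5), $\calC$ contains a Zariski dense set of classical points $y$ of weight $(k_{y,1}, k)$ with $k_{y,1}\neq k$. At such points $\rho_y$ is reducible with constituents deforming $\chi_{\cyc}^{2-k}$, $\rho_f$ and $\chi_{\cyc}^{1-k}$.

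The key step is to show that this is impossible, using Hodge--Tate weights together with the constraint that the second weight is constant in the Klingen family. By Theorem \ref{Theorem: Galois representation attached to Siegel forms}(iii), $\mathrm{std}\circ\rho_y$ has Hodge--Tate weights $(0, k-2, k_{y,1}-1, k_{y,1}+k-3)$. The characters $\Det_1$ and $\Det_2$ are characters of $\Gal_{\Q}$ that are de Rham at $p$ at the points $y$. Unramified outside $S$ and $p$, each must be a finite-order character times an integral power of $\chi_{\cyc}$. Deforming continuously, and since $\calC$ is connected and the Sen weights vary analytically, each of them is \emph{constantly} $\chi_{\cyc}^{1-k}$ (respectively $\chi_{\cyc}^{2-k}$) times a locally constant finite-order twist, so their Hodge--Tate weights stay fixed, namely $k-1$ and $k-2$ up to the sign convention. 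However, the weights at $y$ that vary with $k_{y,1}$ are $k_{y,1}-1$ and $k_{y,1}+k-3$, while only $0$ and $k-2$ are fixed. The fixed weight $k-2$ can be matched, but $k-1$ is not among the weights of $\rho_y$ once $k_{y,1}\neq k$. This gives a contradiction, at least in the full $1+2+1$ splitting. In a partially reducible case I would compare the similitude $\varsigma^{\univ}$ with the determinant of the pieces, together with the symplectic structure: the pairing forces the two characters to be dual up to $\varsigma^{\univ}$, so a one-dimensional piece forces its dual piece, and this reduces to the $1+2+1$ case.

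I expect the main obstacle to be the bookkeeping of which subsets of $\{\rho_0,\rho_1,\rho_2\}$ can split off, because the symplectic GMA type $\Delta$ only pairs $I_1$ with $I_2$. I would first show that reducibility along $\calC$ forces a rank-one symplectic-isotropic piece. At a generic classical $y$ that piece must be a character with Hodge--Tate weight in $\{0, k-2, k_{y,1}-1, k_{y,1}+k-3\}$, and it specialises to $\chi_{\cyc}^{1-k}$ or $\chi_{\cyc}^{2-k}$ at $x$. The weight $0$ specialising correctly would require $k=1$ or $k=2$, which is excluded since $k>2$. The alternative route uses $U_{\mathrm{Si}}$: on $\calC$ its eigenvalue would be forced to equal the Frobenius eigenvalue of the split character, namely $p^{k-1}$ times a constant, and by (REG) this is incompatible with the varying slopes and weights. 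This is presumably why $U_{\mathrm{Si}}$ was added to the eigenvariety, and I would use it as a back-up if the Hodge--Tate argument does not close. Finally, (ST) should rule out the remaining possibility that a piece is a twist by a character ramified at $\ell\mid N$.
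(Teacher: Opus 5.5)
Your key step is false: the characters $\Det_1$, $\Det_2$ on $\mathcal{C}$ need not have constant Hodge--Tate weight, and in fact they cannot both do so. Being de Rham with integral weight at a Zariski-dense set of points does not make a family of characters constant. For example, $\chi_{\cyc}^{1-\kappa_{P,1}}$ is unramified outside $p$, de Rham at every classical point, and specialises to $\chi_{\cyc}^{1-k}$ at $x$. Moreover, the symplectic structure pairs $I_1$ with $I_2$, so $\Det_1\Det_2=\varsigma^{\univ}$, whose weight $\kappa_{P,1}+k-3$ moves along $\mathcal{C}$.

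Done correctly, the bookkeeping goes as follows. On the irreducible curve $\mathcal{C}$, the Sen weight of $\Det_1$ coincides identically with one of $0$, $k-2$, $\kappa_{P,1}-1$, $\kappa_{P,1}+k-3$. Since it equals $k-1$ at $x$ and $k>2$, it must be $\kappa_{P,1}-1$, while $\Det_2$ keeps weight $k-2$. At a classical $y$ of weight $(k_{y,1},k)$, the putative constituents then have weights $k-2$, $k_{y,1}-1$ and $\{0,k_{y,1}+k-3\}$. This is exactly the list of weights of $\rho_y$, and it is compatible with the similitude. So your remark that $k-1$ is not a weight of $\rho_y$ is true but irrelevant, and no Hodge--Tate or similitude argument excludes a reducible component.

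Your $U_{\mathrm{Si}}$ fallback, as written (``$p^{k-1}$ times a constant''), rests on the same false constancy. To make it work you would need a Kisin-type interpolation of the $\varphi=U_{\mathrm{Si}}$ period, showing that the constituent lifting $\chi_{\cyc}^{1-k}$ keeps Frobenius eigenvalue $U_{\mathrm{Si}}$, of locally constant slope $k-1$; that would contradict its slope $k_{y,1}-1$. Such an interpolation needs a dense set of points where this refinement is non-critical. When the paper needs such periods, it arranges this by passing to the Andreatta--Iovita--Pilloni eigenvariety (Step 0 of the proof of Proposition~\ref{Proposition: i0 and BK Selmer; fs}). Partial $3+1$ splittings are not an issue: $\frakI^{\mathrm{red}}$ is the ideal of total reducibility for $\Xi$.

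The missing idea is automorphic, and it is exactly what the paper uses. Take a neighbourhood $\calV$ of $x$ on which $U_{\mathrm{Si}}$ and $U_{\mathrm{Kl}}$ are invertible; it embeds into the Andreatta--Iovita--Pilloni eigenvariety by \cite[Theorem 3.2.1]{Johansson-Newton-Irreducible}. The $p$-stabilised classical points in $\calV$ are very Zariski dense, as in \cite[Lemma B.3]{BB22}. By \cite[Lemma 2.5]{Schmidt-Packet} and \cite[Proposition 5.2]{Schmidt-CAPGSp4}, such points are of type \textbf{(P)} or \textbf{(G)}. Type \textbf{(P)} forces parallel weight, and $(k,k)$ is the only parallel weight in $\calW_{\mathrm{Kl},(k,k)}$. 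Hence every classical point of $\calV$ other than $x$ has a stably irreducible Galois representation. In other words, the non-adjacent splitting $\chi_{\cyc}^{2-k}\oplus\rho\oplus\chi_{\cyc}^{1-k_{y,1}}$ is excluded by Arthur's classification as made explicit by Schmidt, not by weights.

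Note finally that excluding a reducible component only gives $V(\frakI^{\mathrm{red}})=\{x\}$ near $x$, that is, $\sqrt{\frakI^{\mathrm{red}}}=\frakm_x$. Your opening step (``$\frakI^{\mathrm{red}}\subsetneq\frakm_x$ yields a minimal prime containing $\frakI^{\mathrm{red}}$'') overlooks $\frakm_x$-primary ideals such as $\frakm_x^2$. The paper's proof also stops at showing that $x$ is the only reducible classical point nearby. Since the equality $\frakm_x=A_{10}A_{01}$ is used later to count generators, the passage from the radical to equality needs a separate rigidity argument for $\bbT_x/\frakI^{\mathrm{red}}$. For example, one could show that the three constituents, and hence all Hecke eigenvalues, are constant over it.
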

\begin{proof}
    Since $\SK(f)_{1,\alpha}$ is both $U_{\mathrm{Si}}$- and $U_{\mathrm{Kl}}$-finite-slope, there exists an affinoid neighbourhood $\calV$ of $x$ in $\calE_{\mathrm{Kl}, (k,k)}^{\cusp}$ such that $U_{\mathrm{Kl}}$ is invertible on $\calV$. Therefore, by applying \cite[Theorem 3.2.1]{Johansson-Newton-Irreducible}, one sees that $\calV$ can be embedded into the eigenvariety constructed by Andreatta--Iovita--Pilloni (\cite{AIP-2015}). Then, by applying the argument as in \cite[Lemma B.3]{BB22}, we see that classical points that correspond to $p$-stabilisations in $\calV$ are very Zariski dense. By \cite[Lemma 2.5]{Schmidt-Packet} and \cite[Proposition 5.2]{Schmidt-CAPGSp4}, we see that those classical points can be Saito--Kurokawa lifts or genuine Siegel cuspforms (type \textbf{(P)} and \textbf{(G)} respectively in the sense of \cite{Schmidt-Packet}, see also the summary in Sect. \ref{subsection: heuristic of geometry}). However, since Saito--Kurokawa lifts must have parallel weight and $(k,k)$ is the only parallel weight on the Klingen weight space $\calW_{\mathrm{Kl}, (k,k)}$, we conclude that $x$ is the only classical point of type \textbf{(P)}. This implies the desired result. 
\end{proof}

\subsection{Bloch--Kato Selmer groups}\label{subsection: BK Selmer; fs}

We keep the notations as in the previous subsection. For $a, b, c\in \{0, 1, 2\}$, if $\rho_a$, $\rho_b$, $\rho_c$ are pairwise distinct, \cite[Theorem 1.5.5]{BC} implies that \begin{equation}\label{eq: mysterious isomorphism; fs}
    \Hom_{L_x}(A_{ab}/A_{ac}A_{cb}, L_x) \cong \Ext^1_{A}(\rho_b, \rho_a).
\end{equation}
The following propositions show the relationship between $\Ext_A^1(\rho_b, \rho_a)$ and Bloch--Kato Selmer groups.

\begin{Proposition}\label{Proposition: i0 and BK Selmer; fs}
    \begin{enumerate}
        \item[(i)] There is a natural embedding \[
            \Ext_A^1(\rho_1, \rho_0) \hookrightarrow H^1_f(\Q, \rho_f(k-1)). 
        \]
        \item[(ii)] There is a natural embedding \[
            \Ext_A^1(\rho_2, \rho_0) \hookrightarrow H^1_{f'}(\Q, \rho_f(k-2)),
        \]
        where $H^1_{f'}$ stands for the Selmer group with the unramified condition away from $p$ and no conditions at $p$.
    \end{enumerate}
\end{Proposition}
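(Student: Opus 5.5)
Our plan is to follow the approach of \cite[Sect. 8.2 and Chapter 9]{BC} and \cite{BB22}. First, note that $\Ext^1_A(\rho_b,\rho_a)$ sits inside $\Ext^1_{\Gal_{\Q}}(\rho_b,\rho_a)=H^1(\Q,\rho_a\otimes\rho_b^{-1})$. For (i), this group is $H^1(\Q,\rho_f\otimes\chi_{\cyc}^{k-1})=H^1(\Q,\rho_f(k-1))$; for (ii), it is $H^1(\Q,\rho_f(k-2))$. It then suffices to verify the local conditions on the image of $\Ext^1_A$. The extensions come from the GMA $A$ via the standard construction of \cite[Theorem 1.5.5]{BC}: from a linear form on $A_{ab}/A_{ac}A_{cb}$ one builds a representation of $\Gal_{\Q}$ over $L_x$ that is a quotient of $A$ (more precisely, of a suitable $A$-module $\bigl(\begin{smallmatrix}\rho_a & *\\ & \rho_b\end{smallmatrix}\bigr)$). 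Since $\Det_x$ interpolates determinants of Galois representations that are unramified outside $S_{\mathrm{bad}}$, every element of $A$ factors through $\Gal_{\Q,S_{\mathrm{bad}}}$, so these extensions are unramified outside $S_{\mathrm{bad}}$.

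At primes $\ell\mid N$, we would use (ST). Every classical point near $x$ is either $x$ itself or a genuine Siegel form whose local component at $\ell$ is forced by the paramodular level and (ST) to be of a specific Steinberg-type shape. Following the argument of \cite[Proposition 8.2.11]{BC}, or of \cite{BB22} for the $\ell\mid N$ case, the extension restricted to $I_\ell$ is then controlled and satisfies the unramified/$H^1_f$ condition. Concretely, $H^1(\Q_\ell,\rho_f(k-1))$ is $H^1_f$ at $\ell\ne p$ as soon as $H^0(\Q_\ell,\rho_f(k-1))$ and its dual vanish, and for Steinberg twisted by $\xi$ with $\xi(\ell)=-1$ the relevant weight/eigenvalue computation gives this vanishing. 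For part (ii), only the unramified condition at $\ell\nmid Np$ is asserted, so this step is needed only for (i), if at all.

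The main step is the condition at $p$ in (i), namely showing that the image lies in $H^1_f(\Q_p,\rho_f(k-1))$. The plan is to use the refined family structure of Corollary~\ref{Corollary: refined families over small parabolic eigenvarieties}, in the form of Corollary~\ref{Corollary: generalisation of BC Lemma 4.3.10} applied to $\bigwedge^2\Det$ with the Frobenius eigenvalue attached to $U_{\mathrm{Kl}}$, together with the extra operator $U_{\mathrm{Si}}$ applied to $\Det$. Along the family, this gives a $\varphi$-eigenvector in $\D_{\cris}$ (or $\D_{\pst}$) with eigenvalue interpolating $p^{k-1}$, i.e.\ a rank-one $(\varphi,\Gamma)$-submodule lifting $\rho_1$ with the ordering $(p^{k-1},\alpha,\beta,p^{k-2})$. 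Transporting this through the GMA as in \cite[Theorem 4.3.? / Sect. 9.? ]{BC} (the ``$\D_{\cris}^{\varphi=\cdot}$ is free'' argument), we obtain that the extension of $\rho_1=\chi^{1-k}_{\cyc}$ by $\rho_f$ acquires a crystalline period: a non-zero $\varphi$-eigenvector with eigenvalue $p^{k-1}$ on the extension that lifts the one on $\rho_1$. Since (REG) gives $\alpha,\beta\neq p^{k-1}$, the eigenvalue $p^{k-1}$ does not occur in $\D_{\cris}(\rho_f)$, and a dimension count shows that $\D_{\cris}$ of the extension is $3$-dimensional, so the extension is crystalline and lies in $H^1_f$. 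We expect this step to be the obstacle, because in the small Klingen family the weight varies only in $k_1$, and one has to check that the $U_{\mathrm{Si}}$-eigenvalue, equal to $p^{k_2-1}$ times a family-constant factor, does interpolate the right Frobenius with constant Hodge--Tate weight on that piece. In (ii), no condition at $p$ is required, which is why the statement there is only for $H^1_{f'}$.
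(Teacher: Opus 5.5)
Your argument at $p$ for (i) has a genuine gap, and it is exactly the step you flag. On $\calE^{\cusp}_{\mathrm{Kl},(k,k)}$ every classical point has weight $(k_1,k)$, so $\mathrm{std}\circ\rho_y$ has Hodge--Tate weights $0,k-2,k_1-1,k_1+k-3$. Take the data $(\Det,\kappa=0,F=U_{\mathrm{Si}})$. The set $\Sigma_C$ of points whose other weights exceed $\kappa+C$ is empty once $C\geq k-2$. So the problem is not whether $U_{\mathrm{Si}}$ interpolates the right Frobenius eigenvalue. It is that the accumulation hypothesis behind Kisin's interpolation (\cite[Theorem 4.3.6]{BC}, or Theorem \ref{Theorem: pst period}) fails on the small Klingen eigenvariety.

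Your fallback, the appendix machinery for $\bigwedge^2\Det$ with $U_{\mathrm{Kl}}$, also fails at $x$, for three reasons. First, Corollary \ref{Corollary: generalisation of BC Lemma 4.3.10} needs a free lattice (FM), which is not available in general at the reducible point $x$. Second, passing to extensions through the GMA needs Theorem \ref{Theorem: pst period}, whose hypothesis (MF) fails: $\bigwedge^2(\chi_{\cyc}^{2-k}\oplus\rho_f\oplus\chi_{\cyc}^{1-k})$ contains $\chi_{\cyc}^{3-2k}$ twice. Third, even if it applied, it would only give a period with eigenvalue $p^{k-2}U_{\mathrm{Kl}}(x)=\alpha p^{k-1}$ on a piece of $\bigwedge^2$. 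That is not a $\varphi=p^{k-1}$ period on an extension of $\chi_{\cyc}^{1-k}$ by $\rho_f$.

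The missing idea is to leave the small Klingen eigenvariety. Since $\SK(f)_{1,\alpha}$ has finite slope for both $U_{\mathrm{Si}}$ and $U_{\mathrm{Kl}}$, a neighbourhood of $x$ embeds into the Andreatta--Iovita--Pilloni eigenvariety $\calE^{\mathrm{AIP}}$ by \cite[Theorem 3.2.1]{Johansson-Newton-Irreducible}. There both weights move, and $(\calE^{\mathrm{AIP}},\calE^{\mathrm{AIP}}_{\nc},\widetilde{\Det},\{0,\kappa_{B,2}-2,\kappa_{B,1}-1,\kappa_{B,1}+\kappa_{B,2}-3\},U_{\mathrm{Si}})$ is a weakly refined family in the sense of \cite[Definition 4.2.7]{BC}. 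So \cite[Theorem 4.3.6]{BC} applies to $\widetilde{A}=\widetilde{\bbT}_x[\Gal_{\Q}]/\ker\widetilde{\Det}_x$. Since $A$ is a quotient of $\widetilde{A}$, you have $\Ext^1_A(\rho_1,\rho_0)\hookrightarrow\Ext^1_{\widetilde{A}}(\rho_1,\rho_0)$. This gives $\dim\D_{\cris}(V|_{\Gal_{\Q_p}})^{\varphi=p^{k-1}}=1$, and your (REG) eigenvalue count then finishes.

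Separately, you misread $H^1_{f'}$. It imposes the unramified condition at every $\ell\neq p$, including $\ell\mid N$; this is used later, in the exact sequence bounding $\dim H^1_{f'}(\Q,\rho_f(k-2))$. So (ii) does need an argument at $\ell\mid N$, and your local method supplies it. Under (ST), $\rho_f(k)^{I_\ell}$ is a line on which Frobenius acts by $\pm\ell^{\pm 2}\neq 1$. Hence $H^2(\Q_\ell,\rho_f(k-2))\cong H^0(\Q_\ell,\rho_f(k))^{\vee}=0$, and the local Euler characteristic formula gives $H^1(\Q_\ell,\rho_f(k-2))=H^1_{\unr}(\Q_\ell,\rho_f(k-2))$. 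For (i), $\rho_f(k-1)$ is self-dual and in fact $H^1(\Q_\ell,\rho_f(k-1))=0$. This local route is more elementary than the paper's appeal to \cite[Lemme 4.1.3]{Skinner--Urban}.
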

\begin{proof}
    First of all, let $S = \{\ell: \ell\mid N\}\cup\{p, \infty\}$ and recall the Galois group $\Gal_{\Q, S} = \Gal(\Q^{S}/\Q)$. By \cite[Theorem 1.5.10]{BC} and twisting, we have \[
        \Ext_A^1(\rho_i, \rho_0) \hookrightarrow H^1(\Gal_{\Q, S}, \rho_f(k-i)). 
    \]
    We thus need to show that any extension \begin{equation}\label{eq: extension in GMA; fs}
        0 \rightarrow \rho_f \rightarrow V \rightarrow \chi_{\cyc}^{k-i} \rightarrow 0
    \end{equation} is unramified at primes dividing $N$ and is crystalline at $p$ when $i=1$.

    \noindent \textbf{Step 0.} Preparation.

    As in the proof of Proposition \ref{Proposition: reducibility idea = maximal ideal; fs}, we may view a neighbourhood $\calV$ of $x$ as a subspace in $\calE^{\mathrm{AIP}}$. We denote by $\widetilde{\bbT}_x$ the local ring $\scrO_{\calE^{\mathrm{AIP}}, x}$ and consider the Galois determinant $\widetilde{\Det}_x: \widetilde{\bbT}_x[\Gal_{\Q}] \rightarrow \widetilde{\bbT}_x$ (\cite{BB22}). Let $\widetilde{A} = \widetilde{\bbT}_x[\Gal_{\Q}]/\ker\widetilde{\Det}_x$. Then, by construction, we have $A = \widetilde{A} \otimes_{\widetilde{\bbT}_x}\bbT_x$. In particular, we have a natural inclusion $\Ext_A^1(\rho_1, \rho_0) \hookrightarrow \Ext_{\widetilde{A}}^1(\rho_i, \rho_0)$. In what follows, we may thus view our extension $V$ as an extension in $\Ext_{\widetilde{A}}^1(\rho_i, \rho_0)$. The advantage of this point of view is that we may now consider the weakly refined family \begin{equation}\label{eq: AIP weakly refined family}
        (\calE^{\mathrm{AIP}}, \calE^{\mathrm{AIP}}_{\nc}, \widetilde{\Det}, \{0, \kappa_{B, 2}-2, \kappa_{B, 1}-1, \kappa_{B, 1}+\kappa_{B,2}-3\}, U_{\mathrm{Si}})
    \end{equation}
    in the sense of \cite[Definition 4.2.7]{BC}, where $\calE^{\mathrm{AIP}}_{\nc}$ is the set of small-slope point and $\kappa_B = (\kappa_{B, 1}, \kappa_{B, 2}): T_{\GL_2}(\Z_p) \rightarrow \scrO_{\calW}(\calW)^{\times}$ is the universal weight map.

    \noindent \textbf{Step 1.} The extension $V$ is crystalline at $p$ when $i=1$. 

    When $i=1$, we may apply \cite[Theorem 4.3.6]{BC} to \eqref{eq: extension in GMA; fs} and see that \[
        \dim \D_{\cris}(V|_{\Gal_{\Q_p}})^{\varphi = p^{k-1}} =  1.
    \] 
    On the other hand, by applying the left-exact functor $\D_{\cris}$ to \eqref{eq: extension in GMA; fs}, we have \[
        \D_{\cris}(\rho_{f, p}) \hookrightarrow \D_{\cris}(V|_{\Gal_{\Q_p}}). 
    \]
    Since $\alpha, \beta \neq p^{k-1}$ and $\rho_{f, p}$ is crystalline, we obtain three different $\varphi$-eigenvalues on $\D_{\cris}(V|_{\Gal_{\Q_p}})$ and so it is crystalline.

    \noindent \textbf{Step 2.} The extension $V$ is unramified at primes dividing $N$. 
    
    Since $\pi_{f, \ell}$ is a twisted Steinberg representation (Assumption \ref{Assumption: nice properties for finite-slope cuspidal newform} (ST)), this has been shown by Skinner--Urban in \cite[Lemme 4.1.3]{Skinner--Urban}.
\end{proof}

\begin{Proposition}\label{Proposition: 12 and BK Selmer; fs}
    There is a natural embedding \[
        \Ext_A^1(\rho_1, \rho_2) \hookrightarrow H^1_f(\Q, \chi_{\cyc}). 
    \]
\end{Proposition}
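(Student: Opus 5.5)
We follow the template of Proposition \ref{Proposition: i0 and BK Selmer; fs}. Since $\rho_1 = \chi_{\cyc}^{1-k}$ and $\rho_2 = \chi_{\cyc}^{2-k}$ are distinct characters, \cite[Theorem 1.5.10]{BC} gives a natural injection
\[
    \Ext_A^1(\rho_1, \rho_2) \hookrightarrow H^1(\Gal_{\Q, S}, \rho_2\otimes\rho_1^{-1}) = H^1(\Gal_{\Q, S}, \chi_{\cyc}),
\]
with $S = \{\ell : \ell \mid N\}\cup\{p,\infty\}$. It then suffices to show that every extension $0 \rightarrow \chi_{\cyc}^{2-k} \rightarrow V \rightarrow \chi_{\cyc}^{1-k} \rightarrow 0$ in the image is unramified at each $\ell \mid N$ and crystalline at $p$.

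\textbf{Step 0.} As in Step 0 of Proposition \ref{Proposition: i0 and BK Selmer; fs}, we embed a neighbourhood of $x$ into $\calE^{\mathrm{AIP}}$ and regard $V$ as an extension over $\widetilde{A}$. This lets us use the weakly refined family \eqref{eq: AIP weakly refined family}.

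\textbf{Step 1 (primes $\ell\mid N$).} Any class in $H^1(\Q_\ell,\chi_{\cyc})$ restricted to inertia is a homomorphism $I_\ell \rightarrow \Q_p(1)$. It therefore factors through the tame $\Z_p(1)$ quotient, and the relevant Kummer class is $\ell$-adic valuation. We would argue as in \cite[Lemme 4.1.3]{Skinner--Urban}, using (ST): the inertia at $\ell$ acts on nearby classical Siegel forms through a fixed monodromy type determined by the paramodular/Steinberg local component, so no new unipotent part linking $\rho_1$ and $\rho_2$ can appear. An alternative is Kummer theory: $H^1(\Q,\chi_{\cyc})\cong \Z[1/S]^\times\otimes\Q_p$, so we need to rule out the $\ell$-valuation components. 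This could be done via the GMA compatibility with $A_{10}A_{02}\subset A_{12}$.

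\textbf{Step 2 (crystalline at $p$).} This is the main obstacle. We apply \cite[Theorem 4.3.6]{BC} (or its analogue from Appendix \ref{section: pstperiod}) to the refined family, with the refinement $(p^{k-1},\alpha,\beta,p^{k-2})$, whose first $\varphi$-eigenvalue $p^{k-1}$ corresponds to $\rho_1$ (the $U_{\mathrm{Si}}$-eigenvalue). The aim is to obtain $\dim \D_{\cris}(V|_{\Gal_{\Q_p}})^{\varphi=p^{k-1}}\ge 1$, coming from the quotient $\rho_1$ lifting into $V$. The sub $\chi_{\cyc}^{2-k}$ contributes the eigenvalue $p^{k-2}\neq p^{k-1}$. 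This yields two distinct $\varphi$-eigenvalues on a $2$-dimensional space, so $V$ is crystalline at $p$.

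The difficulty is that \cite[Theorem 4.3.6]{BC} requires $\rho_1$ to appear at the correct place in the ordering relative to the extension direction. If the theorem only controls extensions in the other direction, i.e.\ $\Ext^1(\rho_2,\rho_1)$, we would dualise via the symplectic involution. Using the symplectic GMA structure of type $\Delta$ with $\sigma=(12)$, $A_{12}$ is identified with its transpose, and $\Ext^1_A(\rho_1,\rho_2)\cong\Ext^1_A(\rho_2^{\vee}\varsigma,\rho_1^{\vee}\varsigma)$. This swaps the roles so that the Frobenius-eigenvalue argument applies.

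Combining Steps 1 and 2 gives the embedding into $H^1_f(\Q,\chi_{\cyc})$.
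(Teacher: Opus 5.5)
Your condition at $p$ is handled as in the paper. The condition at primes $\ell\mid N$, however, is not actually proved, and that is where the real content lies.

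\textbf{The gap at $\ell\mid N$.} Since $\rho_2\otimes\rho_1^{-1}=\chi_{\cyc}$, a class in $H^1(\Q_\ell,\chi_{\cyc})$ is ramified precisely when the corresponding extension has non-zero monodromy, i.e.\ is of Steinberg type, with $N$ linking $\rho_1$ to $\rho_2$. Nearby classical forms at paramodular level at $\ell$ may well have rank-one monodromy at $\ell$. Nothing you say excludes that this monodromy specialises at $x$ into the character block.

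Your sentence ``no new unipotent part linking $\rho_1$ and $\rho_2$ can appear'' is the claim to be proved, not an argument for it. The Kummer-theoretic alternative also does not work: a GMA inclusion such as $A_{10}A_{02}\subset A_{12}$ carries no local information at $\ell$. Moreover, the module governing $\Ext^1_A(\rho_1,\rho_2)$ is $A_{21}$, not $A_{12}$. The paper does not use \cite[Lemme 4.1.3]{Skinner--Urban} here; that lemma is only invoked for the extensions involving $\rho_f$ in Proposition \ref{Proposition: i0 and BK Selmer; fs}.

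Instead the paper follows \cite[Theorem 4.3]{BB22} and makes a rank count:
\begin{itemize}
\item Over $Q=\mathrm{Frac}\,\bbT_x$ there is a representation $\rho_Q$, since $\bbT_x$ is reduced. Its monodromy $N_Q$ at $\ell$ has rank at most $1$, by local--global compatibility at the classical points \cite{Mok-GL2CM, Sorensen-HilbertSiegel} together with \cite[Proposition 7.8.19]{BC}.
\item One chooses idempotents $\widetilde e_1,\widetilde e_2,\widetilde e_0$ of $A$ lifting those of $\rho_1,\rho_2,\rho_0$, with $\widetilde e=\widetilde e_1+\widetilde e_2$ central in the image $A_\ell$ of $\bbT_x[\Gal_{\Q_\ell}]$ \cite[Proposition 2.3]{BB22}, \cite[Lemmas 8.2.11--8.2.12]{BC}. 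One then views $N_Q\in A_\ell$ via \cite[Lemma 7.8.14]{BC}.
\item By (ST) the monodromy of $\rho_{f,\ell}$ has rank $1$. Hence \cite[Proposition 7.8.8]{BC}, applied to $(1-\widetilde e)A_\ell(1-\widetilde e)$, shows that $(1-\widetilde e)N_Q$ has rank $1$.
\item Since $N_Q$ has rank at most $1$, this forces $N_Q=(1-\widetilde e)N_Q$, so $\widetilde eN_Q=0$. That is exactly the statement that the $\rho_1$--$\rho_2$ extensions are unramified at $\ell$.
\end{itemize}
The missing idea is this saturation: (ST) forces $\rho_f$ to absorb the only available rank of monodromy, leaving none for the character block.

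\textbf{The condition at $p$.} Your Step 2 coincides with the paper's argument. Applying \cite[Theorem 4.3.6]{BC} to the family \eqref{eq: AIP weakly refined family} gives $\dim\D_{\cris}(V|_{\Gal_{\Q_p}})^{\varphi=p^{k-1}}=1$. The sub $\D_{\cris}(\chi_{\cyc}^{2-k})$ contributes the distinct eigenvalue $p^{k-2}$, so $V$ is crystalline.

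Your worry about the direction of the extension is unnecessary. In $\Ext^1_A(\rho_1,\rho_2)$ the quotient is $\rho_1=\chi_{\cyc}^{1-k}$, which carries the refinement eigenvalue $p^{k-1}=U_{\mathrm{Si}}(x)$. That is precisely the configuration of \cite[Theorem 4.3.6]{BC} (cf.\ Theorem \ref{Theorem: pst period} (ii)).

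The proposed fallback would have done nothing anyway. With $\varsigma=\chi_{\cyc}^{3-2k}$ one has $\rho_2^{\vee}\varsigma=\rho_1$ and $\rho_1^{\vee}\varsigma=\rho_2$, so the symplectic duality maps $\Ext^1_A(\rho_1,\rho_2)$ to itself.
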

\begin{proof}
    Let \[
        0 \rightarrow \chi_{\cyc}^{2-k} \rightarrow V \rightarrow \chi_{\cyc}^{1-k}\rightarrow 0
    \] 
    be an $A$-extension in $\Ext^1_A(\rho_1, \rho_2)$, which embeds into $H^1(\Gal_{\Q, S}, \chi_{\cyc})$ by \cite[Theorem 1.5.10]{BC}. We have to show that such an extension is crystalline at $p$ and unramified at primes $\ell$ dividing $N$.  

    \noindent \textbf{Step 1.} The extension $V$ is crystalline at $p$. 

    Thanks to Step 0 in the proof of Proposition \ref{Proposition: i0 and BK Selmer; fs}, we may again apply \cite[Theorem 4.3.6]{BC} and see that \[
        \dim \D_{\cris}(V|_{\Gal_{\Q_p}})^{\varphi = p^{k-1}} = 1.
    \]
    However, since $\D_{\cris}(\chi_{\cyc}^{2-k}) \hookrightarrow \D_{\cris}(V|_{\Gal_{\Q_p}})$, we obtain two different $\varphi$-eigenvalues on $\D_{\cris}(V|_{\Gal_{\Q_p}})$ and so it is crystalline.

    \noindent \textbf{Step 2.} The extension $V$ is unramified at primes $\ell$ dividing $N$. 
    
    We argue as in the proof of \cite[Theorem 4.3]{BB22}. We sketch the proof for the convenience of the reader. 
    
    To simplify the notation, we write $Q = \mathrm{Frac} \bbT_x$. Since $\bbT_x$ is reduced, \cite[Theorem 1.4.4 (ii)]{BC} implies a Galois representation \[
        \rho_Q:\Gal_{\Q} \rightarrow \GL_4(Q).
    \]
    Let $N_{Q} = N_{Q, \ell}$ be the monodromy operator on the Weil--Deligne representation $\mathrm{WD}(\rho_{Q, \ell})$. By \cite[Theorem 3.5]{Mok-GL2CM}, \cite[Corollary 1]{Sorensen-HilbertSiegel}, and \cite[Proposition 7.8.19 (ii)]{BC}, the rank of $N_{Q}$ is bounded by $1$.

    Let $A_{\ell}$ be the image of $\bbT_x[\Gal_{\Q_{\ell}}]$ in $A$. Then, \cite[Proposition 2.3]{BB22} and \cite[Lemma 8.2.11]{BC}, there exists idempotents $\widetilde{e}_1$, $\widetilde{e}_2$, $\widetilde{e}_0$ of $A$, lifting the idempotents corresponding to $\chi_{\cyc}^{1-k}$, $\chi_{\cyc}^{2-k}$, $\rho_f$ respectively such that $\widetilde{e} = \widetilde{e}_1 + \widetilde{e}_2$ lands in the centre of $A_{\ell}$ (\cite[Lemma 8.2.12]{BC}). As a result, we see that \[
        A_{\ell} \cong \begin{pmatrix}
            * & * & * \\ * & * & * \\ & & M_2(\bbT_x)
        \end{pmatrix}.
    \] 
    By \cite[Lemma 7.8.14]{BC}, we can view $N_Q$ as an element in $A_{\ell}$ and so $N_Q$ has the form \[
        N_Q = \begin{pmatrix}
            N_{Q, a} & N_{Q, b}\\ & N_{Q, d}
        \end{pmatrix},
    \]
    where each $N_{Q, ?}$ is $2$-by-$2$ matrices. 

    To show the extensions $V$ are unramified at $\ell$, it is enough to show that $\widetilde{e}N_{Q} \in \widetilde{e}A_{\ell}$ is trivial. To show that $\widetilde{e}N_Q$ is trivial, first note that the rank of the monodromy operator on $\mathrm{WD}(\rho_{f, \ell})$ is $1$ since $f$ satisfies (ST). Moreover, by \cite[Sect. 1.5.2]{BC}, there is a surjection $(1-\widetilde{e})(A/\frakm_x A) (1-\widetilde{e}) \twoheadrightarrow \rho_f$. Hence, by applying \cite[Proposition 7.8.8]{BC} to $(1-\widetilde{e})A_{\ell} (1-\widetilde{e})$, we see that $(1-\widetilde{e})N_Q$ has rank $1$. Since $N_Q$ has rank at most $1$, this implies that $N_Q$ has rank $1$ and \[
        (1-\widetilde{e})N_Q = N_Q.
    \]
    However, by the definition of the idempotent $\widetilde{e}$, we see that this implies $N_{Q, a} = 0 = N_{Q, b}$. We thus conclude that $\widetilde{e}N_Q = 0$. 
\end{proof}

\begin{Corollary}\label{Corollary: explicity reducibility ideal; fs}
    There exists $g\in \mathrm{Frac}\bbT_x$ such that the following equalities of fractional ideals of $\mathrm{Frac} \bbT_x$ hold \[
        \frakm_x = \frakI^{\red} = A_{10}A_{01} 
    \]
\end{Corollary}
\begin{proof}
    The first equality is exactly Proposition \ref{Proposition: reducibility idea = maximal ideal; fs}. To show the second equality, we have to show \[
        A_{20}A_{02} \subset A_{10}A_{01}  \quad \text{ and }\quad A_{12}A_{21} \subset A_{10}A_{01}.
    \] 
    Recall the permutation $\sigma = (12)$ on $(0, 1, 2)$. By \cite[Remark 5.5]{MQ-SympDet} (see also \cite[Lemma 8.2.16]{BC}), we see that \[
        A_{20}A_{02} = A_{10}A_{01}.
    \]
    On the other hand, \eqref{eq: mysterious isomorphism; fs} yields \[
        \Hom_{L_x}(A_{21}/A_{20}A_{01}, L_x) \cong \Ext_A^1(\chi_{\cyc}^{1-k}, \chi_{\cyc}^{2-k}) \hookrightarrow H^1_f(\Q, \chi_{\cyc}),
    \]
    where the inclusion follows from Proposition \ref{Proposition: 12 and BK Selmer; fs}. However, $H^1_f(\Q, \chi_{\cyc}) = 0$ and so $A_{21} = A_{20}A_{01}$. Thus, \[
        A_{12}A_{21} = A_{12}A_{20}A_{01} \subset A_{10}A_{01} 
    \]
    and we conclude the result. 
\end{proof}

\begin{Theorem}\label{Theorem: bound of tangent space; fs}
    Let $d$ be the dimension of $H^1_f(\Q,\rho_f(k-1))$ and $t$ be the dimension of the tangent space of $\calE_{\mathrm{Kl}, (k,k)}^{\cusp}$ at $x$. Then, \[
        t \leq d(d+1).
    \]
\end{Theorem}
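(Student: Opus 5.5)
The plan is to bound $\dim_{L_x}\frakm_x/\frakm_x^2$ by the dimension of $(A_{10}A_{01})/(\frakm_x A_{10}A_{01})$, and then to bound that via the extension groups. First, Corollary \ref{Corollary: explicity reducibility ideal; fs} gives $\frakm_x = A_{10}A_{01}$. The tangent space has dimension $t = \dim_{L_x}\frakm_x/\frakm_x^2$. Next I would control the generators of $A_{10}$ and $A_{01}$ as $\bbT_x$-modules, using Nakayama's lemma: each is a finitely generated $\bbT_x$-submodule of $\mathrm{Frac}\,\bbT_x$ (by \cite[Theorem 1.4.4]{BC} and finiteness of $A$). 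So the minimal number of generators of $A_{10}$ equals $\dim_{L_x} A_{10}/\frakm_x A_{10}$, and similarly for $A_{01}$.

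The key step is to relate these fibres to the $\Ext^1$ groups through \eqref{eq: mysterious isomorphism; fs}. We have
\[
\Hom_{L_x}(A_{10}/(A_{12}A_{20}), L_x) \cong \Ext^1_A(\rho_0,\rho_1),
\]
and since $A_{12}A_{20}$ and $A_{10}\frakm_x$ are related, I would show $A_{12}A_{20} \subset \frakm_x A_{10}$ up to a correction. Using $A_{20}A_{02} = A_{10}A_{01}$ and the symplectic symmetry $\sigma=(12)$ of \cite[Remark 5.5]{MQ-SympDet}, I expect $A_{20}$ to be identified (after multiplying by a fixed element $g$, as in the statement of the corollary) with $A_{10}$ or $A_{01}$. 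Then the quantity $\dim A_{10}/A_{12}A_{20}$ is at most $\dim\Ext^1_A(\rho_0,\rho_1)$, and $\dim A_{01}/A_{02}A_{21}$ is at most $\dim\Ext^1_A(\rho_1,\rho_0) \le d$ by Proposition \ref{Proposition: i0 and BK Selmer; fs}(i). The piece $\Ext^1_A(\rho_0,\rho_1)$ is dual via the symplectic pairing (twisting by the similitude) to an extension of $\rho_f$ by $\chi^{2-k}$ type, or to $\Ext^1(\rho_2,\rho_0)$. With Poitou--Tate-type duality, the crystalline condition $\rho_f(k-1)\cong\rho_f^\vee(k-1)^\vee$ should give a bound by $d$ (or $d+1$), since there is possibly one extra dimension at $p$ coming from $H^1_{f'}$ versus $H^1_f$. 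Since $A_{21}=A_{20}A_{01}$ (as shown in the corollary), the correction terms collapse, and we get generator counts $r_{10}\le d+1$ and $r_{01}\le d$.

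Finally, we have $\frakm_x = A_{10}A_{01}$, generated by the products of generators, so $t \le \dim (A_{10}A_{01})/\frakm_x(A_{10}A_{01}) \le r_{10}r_{01} \le d(d+1)$. The main obstacle is the asymmetric bound $d+1$: I need to show that the ``no condition at $p$'' Selmer group $H^1_{f'}(\Q,\rho_f(k-2))$ (equivalently, the $\Ext^1_A(\rho_0,\rho_1)$ side) exceeds a Bloch--Kato group of dimension $d$ by at most one. I would try to do this by combining the duality $\rho_f^\vee\cong\rho_f(2k-3)$ with the Greenberg--Wiles formula, which gives $\dim H^1_{f'}-\dim H^1_f$ equal to $\dim H^1(\Q_p,\cdot)/H^1_f(\Q_p,\cdot)=1$, using (REG) to kill the $H^0$ terms.
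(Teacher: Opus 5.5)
Your skeleton matches the paper's: $t=\dim_{L_x}\frakm_x/\frakm_x^2$ with $\frakm_x=A_{10}A_{01}$, so $t\le \#\mathrm{min.gen}(A_{10})\cdot\#\mathrm{min.gen}(A_{01})$. Your bound $\#\mathrm{min.gen}(A_{01})\le d$ is also correct: it comes from $\Ext^1_A(\rho_1,\rho_0)\hookrightarrow H^1_f(\Q,\rho_f(k-1))$ together with $A_{02}A_{21}=A_{02}A_{20}A_{01}\subset\frakm_xA_{01}$. The gap is the bound $\#\mathrm{min.gen}(A_{10})\le d+1$, which fails as you set it up, for two reasons.

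First, $\dim A_{10}/A_{12}A_{20}$ only bounds the generators of $A_{10}$ if $A_{12}A_{20}\subset\frakm_xA_{10}$, and nothing gives you this. The collapse $A_{21}=A_{20}A_{01}$ concerns $A_{21}$. The correction here involves $A_{12}$, which is governed by $\Ext^1_A(\rho_2,\rho_1)\hookrightarrow H^1(\Q,\chi_{\cyc}^{-1})$, a one-dimensional space, not zero. So you only get $A_{12}=A_{10}A_{02}+g'\bbT_x$, hence $A_{12}A_{20}\subset\frakm_xA_{10}+g'A_{20}$, and $g'A_{20}$ can contribute up to $\#\mathrm{min.gen}(A_{20})$ further generators.

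Second, $\Ext^1_A(\rho_0,\rho_1)\cong\Ext^1_A(\rho_2,\rho_0)$ lands in $H^1_{f'}(\Q,\rho_f(k-2))$. The Bloch--Kato subgroup of codimension at most one inside it is $H^1_f(\Q,\rho_f(k-2))$, not $H^1_f(\Q,\rho_f(k-1))$. By your own duality $\rho_f^\vee\cong\rho_f(2k-3)$, the Kummer dual of $\rho_f(k-2)$ is $\rho_f(k)$, and only the central twist $\rho_f(k-1)$ is self-dual. So Greenberg--Wiles does not bring $d$ into this bound. Even granting $\dim\Ext^1_A(\rho_0,\rho_1)\le d+1$, the first point would leave you with $\#\mathrm{min.gen}(A_{10})\le 2d+1$, hence only $t\le d(2d+1)$.

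The missing ingredient is arithmetic. Since $k-2$ is a non-central critical twist, \cite[Theorem 14.2]{Kato-2004} gives $H^1_f(\Q,\rho_f(k-2))=0$. Combined with the local computation $\dim H^1(\Q_p,\rho_f(k-2))/H^1_f(\Q_p,\rho_f(k-2))=1$, this gives $\dim H^1_{f'}(\Q,\rho_f(k-2))\le 1$, hence $A_{10}=A_{12}A_{20}+g\bbT_x$. Substituting $A_{12}=A_{10}A_{02}+g'\bbT_x$ and $A_{02}A_{20}=A_{10}A_{01}=\frakm_x$ yields $A_{10}=\frakm_xA_{10}+g'A_{20}+g\bbT_x$. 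Therefore $\#\mathrm{min.gen}(A_{10})\le 1+\#\mathrm{min.gen}(A_{20})$. Finally $\#\mathrm{min.gen}(A_{20})\le d$, because $\Ext^1_A(\rho_0,\rho_2)\cong\Ext^1_A(\rho_1,\rho_0)\hookrightarrow H^1_f(\Q,\rho_f(k-1))$ and $A_{21}A_{10}=A_{20}A_{01}A_{10}\subset\frakm_xA_{20}$. So in the correct count the $d$ in $d+1$ enters through $A_{20}$, via exactly the correction term you discarded. The $+1$ comes from $H^1_{f'}(\Q,\rho_f(k-2))$ being at most one-dimensional --- the reverse of your allocation.
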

\begin{proof}
    The proof is inspired by the proof of \cite[Theorem 6.8 \& Corollary 6.10]{BB22}.
    
    First of all, by Proposition \ref{Proposition: i0 and BK Selmer; fs} with $i=1$ and \eqref{eq: mysterious isomorphism; fs}, we see that \[
        \Hom_{L_x}(A_{01}/A_{02}A_{21}) \cong \Ext_A^1(\rho_1, \rho_0) \hookrightarrow H^1_f(\Q, \rho_f(k-1)).
    \]
    Hence, by Nakayama's Lemma, $A_{01}$ is generated by at most $d$ elements.

    Secondly, by Proposition \ref{Proposition: i0 and BK Selmer; fs} with $i=2$ and \eqref{eq: mysterious isomorphism; fs}, we see that \[
        \Hom_{L_x}(A_{10}/A_{12}A_{20}) \cong \Ext_A^1(\rho_0, \rho_a) = \Ext_A^1(\rho_f, \chi_{\cyc}^{1-k}) \cong \Ext_A^1(\chi_{\cyc}^{2-k}, \rho_f) \hookrightarrow H^1_{f'}(\Q, \rho_f(k-2)).
    \] 
    By the definition of $H^1_{f'}$, we have an exact sequence \[
        0 \rightarrow H^1_f(\Q, \rho_f(k-2)) \rightarrow H^1_{f'}(\Q, \rho_f(k-2)) \rightarrow H^1(\Q_p, \rho_f(k-2))/H^1_f(\Q_p, \rho_f(k-2)).
    \]
    By the proof of \cite[Proposition 5.2.7]{BC}, \[
        \dim \left(\frac{H^1(\Q_p, \rho_f(k-2))}{H^1_f(\Q_p, \rho_f(k-2))}\right) = \dim \rho_f(k-2) + \dim H^0(\Q_p, \rho_f^{\vee}(3-k)) - \dim\left(\frac{\D_{\dR}(\rho_f(k-2))}{\Fil^0\D_{\dR}(\rho_f(k-2))}\right)=1.
    \]
    Moreover, it follows from  \cite[Theorem 14.2]{Kato-2004} that $H^1_f(\Q, \rho_f(k-2)) = 0$. Hence, \[
        \dim H^1_{f'}(\Q, \rho_f(k-2)) \leq 1.
    \] So, Nakayama's Lemma yields  \[
        A_{10} = A_{12}A_{20} +g\bbT_x
    \]
    for some $g\in A_{10}$.

    Now, \eqref{eq: mysterious isomorphism; fs} also yields \[
        \Hom_{L_x}(A_{12}/A_{10}A_{02}) \cong \Ext_A^1(\rho_2, \rho_1) = \Ext_A^1(\chi_{\cyc}^{2-k}, \chi_{\cyc}^{1-k}) \hookrightarrow H^1(\Q, \chi_{\cyc}^{-1}).
    \]
    Note that $\dim H^1(\Q, \chi_{\cyc}^{-1}) =  1$ (see, for example, \cite[Proposition 6.5]{BB22}) and so \[
        A_{12} = A_{10}A_{02} + g'\bbT_x
    \]
    for some $g'\in A_{12}$. Therefore, \[
        A_{10} = A_{10}A_{02}A_{20} + g'A_{20} + g\bbT_x = A_{10} \frakm_x + g'A_{20} + g\bbT_x,
    \]
    where the second equation follows from the proof of Corollary \ref{Corollary: explicity reducibility ideal; fs}. Denote by $\mathrm{min.gen}(M)$ a minimal generating set of a $\bbT_x$-module $M$, then Nakayama's Lemma yields \[
        \#\mathrm{min.gen}(A_{10}) \leq  1 + \#\mathrm{min.gen}(A_{20}).
    \]  However, \eqref{eq: mysterious isomorphism; fs} also gives \[
        \Hom_{L_x}(A_{20}/A_{21}A_{10}) \cong \Ext_A^1(\rho_0, \rho_2) = \Ext_A^1(\rho_f, \chi_{\cyc}^{2-k}) \cong \Ext_A^1(\chi_{\cyc}^{1-k}, \rho_f) \hookrightarrow H^1_f(\Q, \rho_f(k-1)).
    \]
    In particular, $\#\mathrm{min.gen}(A_{20}) \leq d$.

     Since $\frakm_x = A_{10}A_{01}$ by Corollary \ref{Corollary: explicity reducibility ideal; fs}, we see that $\#\mathrm{min.gen}(\frakm_x) \leq d(d+1)$, which proves the desired result.  
\end{proof}

\begin{Corollary}\label{Corollary: non-vanishing of BK Selmer group; fs}
    Keep the notations as above. We have \[
        \dim H^1_f(\Q, \rho_f(k-1)) \geq  1. 
    \]
\end{Corollary}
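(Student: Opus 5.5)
The plan is to deduce the corollary from Theorem \ref{Theorem: bound of tangent space; fs} by a lower bound on the tangent dimension $t$. Since $\calE_{\mathrm{Kl}, (k,k)}^{\cusp}$ is equidimensional of dimension $1$ (Proposition \ref{Proposition: small parabolic eigenvarieties are equidimensional of dimension 1}, which applies equally to the variant eigenvariety of Remark \ref{Remark: slightly different eigenvariety}), the local ring $\bbT_x$ has Krull dimension $1$. For any noetherian local ring the Krull dimension is bounded by the embedding dimension, so $t = \dim_{L_x}\frakm_x/\frakm_x^2 \geq 1$.

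Now argue by contradiction. If $d = \dim H^1_f(\Q, \rho_f(k-1)) = 0$, then Theorem \ref{Theorem: bound of tangent space; fs} gives $t \leq d(d+1) = 0$, so $\frakm_x = \frakm_x^2$. Nakayama's lemma then forces $\frakm_x = 0$, so $\bbT_x$ would be a field of Krull dimension $0$. This contradicts the previous paragraph, and hence $d \geq 1$.

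One can also see this directly from the proof of Theorem \ref{Theorem: bound of tangent space; fs}. If $d = 0$, the Nakayama arguments there show that $A_{01}$ and $A_{20}$ are zero modulo the relevant products, hence zero. Corollary \ref{Corollary: explicity reducibility ideal; fs} then gives $\frakm_x = A_{10}A_{01} = 0$, again contradicting $\dim \bbT_x = 1$.

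The only point needing care is that the local ring at $x$ on the modified eigenvariety, which also includes $U_{\mathrm{Si}}$, is still one-dimensional. This holds because that eigenvariety is built from the same finite projective slope-decomposed modules over the one-dimensional weight space, so the argument of Proposition \ref{Proposition: small parabolic eigenvarieties are equidimensional of dimension 1} carries over verbatim. Everything else is formal.
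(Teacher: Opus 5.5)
Your proposal is correct and is essentially the paper's argument: equidimensionality of $\calE_{\mathrm{Kl},(k,k)}^{\cusp}$ gives $t \geq 1$, and combining this with the bound $t \leq d(d+1)$ forces $d \geq 1$. Your extra remarks (Krull dimension bounded by embedding dimension, the direct GMA variant, and the check that adding $U_{\mathrm{Si}}$ does not affect one-dimensionality) only make explicit what the paper's two-line proof leaves implicit.
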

\begin{proof}
    Since $\calE_{\mathrm{Kl}, (k,k)}^{\cusp}$ is equidimensional of dimension $1$, the tangent space at $x$ has at least dimension $1$. Theorem \ref{Theorem: bound of tangent space; fs} yields \[
        1 \leq t \leq d(d+1),
    \]
    which concludes the desired result. 
\end{proof}

\section{Saito--Kurokawa points II: infinite slope}\label{section: SK points; infinite slope}

In this section, we continue to assume that $f$ is a cuspidal newform of level $\Gamma_0(N)$, weight $2k-2$, and $\epsilon_f = -1$. However, we assume that $f$ has infinite slope at $p$. Therefore, By \cite[Proposition 2.8]{Loeffler--Weinstein}, we know that $p^2 \mid N$. Let $r$ be the largest integer such that $p^r \mid N$. For simplicity, we will assume $N = p^r$ in this section and so $\SK(f) \in S_{(k,k)}(K^{\mathrm{par}}_{p^r})$.

\subsection{Saito--Kurokawa lifts for infinite-slope cuspforms}\label{subsection: SK lifts for infinite-slope cuspforms}

Recall the (deeper) Iwahori subgroup \[
    \Iw_{B, r}  = \left\{ \bfgamma\in \GSp_4(\Z_p): (\bfgamma \mod p^r)\in B(\Z/p^r/Z) \right\}.
\]
Observe that $\Iw_{B, r}$ is a subgroup of $K^{\mathrm{par}}_{p^r}$.
We thus have the natural injection \[
    \Psi^{\mathrm{par}}_{B}: S_{(k,k)}(K^{\mathrm{par}}_{p^r}) \rightarrow S_{(k,k)}(\Iw_{B, r}).
\]
Note that $\Psi^{\mathrm{par}}_{B}$ is Hecke-equivariant away from $p$. In particular, $\Psi^{\mathrm{par}}_{B}(\SK(f))$ is an eigenform for Hecke operators away from $p$.

On $S_{(k,k)}(\Iw_{B, r})$, we can look at the operators $U_{\mathrm{Si}}$ and $U_{\mathrm{Kl}}$ at $p$. We have the following proposition. 

\begin{Proposition}\label{Proposition: p-stabilisations of infinite-slope SK(f)}
    There exist at most two Hecke-eigenforms $\SK(f)_1$ and $\SK(f)_2$ in $S_{(k,k)}(\Iw_{\mathrm{Si}, r})$ such that \[
        \Psi^{\mathrm{par}}_{B}(\SK(f)) = \SK(f)_1 + \SK(f)_2
    \]
    and \[
        v_p(U_{\mathrm{Si}}(\SK(f)_i)) = k-i, \quad U_{\mathrm{Kl}}(\SK(f)_i) = 0
    \]
    for $i=1, 2$. We call each $\SK(f)_i$ a Siegel stabilisation of $\SK(f)$.
\end{Proposition}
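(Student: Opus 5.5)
We plan to argue on the Galois side via Hypothesis \ref{Hypothesis: explicit local-global compatibility at p}, combined with a local representation-theoretic analysis of the parahoric-fixed vectors of the local component $\Pi_p$ of $\SK(f)$. First, since $f$ is a newform of level $p^r$ with $r\geq 2$ and infinite slope, $\pi_{f,p}$ is either supercuspidal or a ramified twist of Steinberg/principal series with no $U_p$-eigenvalue, and $\rho_{f,p}$ is potentially semistable but not semistable. Thus $\D_{\pst}(\rho_{f,p})$ is a $2$-dimensional $(\varphi,N,\Gal_{\Q_p})$-module with \emph{no} $\Gal_{\Q_p}$-stable line (inertia acts through a nontrivial finite quotient). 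By Theorem \ref{Theorem: SK lifts},
$$\D_{\pst}(\mathrm{std}\circ\rho_{\SK(f),p}) = \D_{\pst}(\chi_{\cyc}^{2-k})\oplus \D_{\pst}(\rho_{f,p})\oplus \D_{\pst}(\chi_{\cyc}^{1-k}),$$
with Frobenius eigenvalues $p^{k-2}$ and $p^{k-1}$ on the two characters.

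Next we enumerate the possible refinements. For the Klingen condition (Hypothesis \ref{Hypothesis: explicit local-global compatibility at p} (ii)) one needs a $2$-dimensional isotropic $(\varphi,N,\Gal_{\Q_p})$-stable subspace $M_1$; we will check that any $\Gal_{\Q_p}$-stable isotropic plane must be spanned by the two character lines, which are paired with each other by the symplectic form, hence not isotropic; the alternative $\D_{\pst}(\rho_{f,p})$ is also non-isotropic (the pairing restricts nondegenerately). Hence no form in the $\SK(f)$-eigenspace is $U_{\mathrm{Kl}}$-finite-slope, and since $U_{\mathrm{Kl}}$ preserves the finite-dimensional generalised eigenspace, it must act nilpotently. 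For the Siegel condition (i), the stable isotropic lines are exactly the two character lines, giving $\lambda(U_{\mathrm{Si}})\in\{p^{k-1},p^{k-2}\}$ up to the normalisation, i.e.\ $v_p = k-1$ or $k-2$; matching indices with $i=1,2$ gives $v_p(U_{\mathrm{Si}}(\SK(f)_i))=k-i$.

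Then we decompose $\Psi^{\mathrm{par}}_B(\SK(f))$. The $U_{\mathrm{Si}}$-action on the span of the Hecke-translates of $\Psi^{\mathrm{par}}_B(\SK(f))$ (a finite-dimensional $U_{\mathrm{Si}}$-stable space, computed locally in $\Pi_p^{\Iw_{B,r}}$ via the double coset decompositions of Lemma \ref{Lemma: condition for composing Hecke operators naively}) has finite-slope eigenvalues only among $\{p^{k-1},p^{k-2}\}$ (with normalisation), each with multiplicity at most one by the refinement count above. Decomposing the image of $\Psi^{\mathrm{par}}_B(\SK(f))$ into $U_{\mathrm{Si}}$-generalised eigenvectors gives at most two finite-slope components, and we must show the remaining slope-infinite component vanishes and the components are genuine eigenvectors. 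For this we would use the explicit paramodular-to-Siegel-parahoric computation (in the spirit of \cite{MY} and Roberts--Schmidt's tables for the local P-packet of type IIb/nontempered) to show $\Psi^{\mathrm{par}}_B(\SK(f))$ lies in a $U_{\mathrm{Si}}$-stable subspace on which $U_{\mathrm{Si}}$ is semisimple with eigenvalues the two character Frobenii and $U_{\mathrm{Kl}}$ is zero; descent to $\Iw_{\mathrm{Si},r}$-level follows since these eigenvectors are fixed by the Siegel parahoric part (by Proposition \ref{Proposition: finite-slope part it independent to the level at p, H0}-type arguments).

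The main obstacle is the last step: showing that the nilpotent ($U_{\mathrm{Si}}$-infinite-slope) part of $\Psi^{\mathrm{par}}_B(\SK(f))$ vanishes and that $U_{\mathrm{Kl}}$ kills the stabilisations exactly, rather than merely nilpotently. I would first attempt a direct local computation of $U_{\mathrm{Si}}$ and $U_{\mathrm{Kl}}$ on the paramodular newvector of $\Pi_p$ (a Saito--Kurokawa representation induced from the Siegel parabolic with $\pi_{f,p}$ on the Levi), where the Klingen operator should vanish because its coset sum integrates the supercuspidal/ramified $\GL_2$-part over a full unipotent, and the Siegel operator acts through the one-dimensional character data; the "at most two" then reflects the possibility that one component is zero.
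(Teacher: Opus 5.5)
Your Galois-side count is essentially the paper's proof. Each Hecke-eigen summand of $\Psi^{\mathrm{par}}_B(\SK(f))$ has the eigenvalues of $\SK(f)$ away from $p$, hence Galois representation $\rho_{\SK(f)}$ by Chebotarev plus semisimplicity. You use this implicitly when you apply Hypothesis~\ref{Hypothesis: explicit local-global compatibility at p} to the $\SK(f)$-eigenspace, and you should state it. The hypothesis then turns $U_{\mathrm{Si}}$-finite-slope summands into $(\varphi,N,\Gal_{\Q_p})$-stable isotropic lines of $\D_{\pst}(\mathrm{std}\circ\rho_{\SK(f),p})$. Your explicit check that there is no stable isotropic plane, so that $U_{\mathrm{Kl}}$ has only the eigenvalue $0$ there, is more careful than the paper, which leaves that half implicit.

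\textbf{The gap.} The input both counts rest on is justified incorrectly. You allow $\pi_{f,p}$ to be a ramified twist of Steinberg or a ramified principal series. You then conclude that $\D_{\pst}(\rho_{f,p})$ has no stable line because inertia acts through a nontrivial finite quotient. That inference is false in those cases:
\begin{itemize}
\item for $\pi(\chi_1,\chi_2)$ with both $\chi_i$ ramified, $\D_{\pst}(\rho_{f,p})$ is a direct sum of two rank-one $(\varphi,N,\Gal_{\Q_p})$-modules;
\item for $\chi\otimes\mathrm{St}$, the line $\ker N$ is stable.
\end{itemize}
Such a line $\ell$ is a further stable isotropic line, so more $U_{\mathrm{Si}}$-eigenvalues are allowed. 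Moreover, since $\D_{\pst}(\chi_{\cyc}^{1-k})\perp\D_{\pst}(\rho_{f,p})$, the plane $\D_{\pst}(\chi_{\cyc}^{1-k})\oplus\ell$ is a stable isotropic plane, so $U_{\mathrm{Kl}}$-finite slope is no longer excluded. Your argument then yields neither ``at most two'' nor $U_{\mathrm{Kl}}=0$. What is needed is that $\pi_{f,p}$ is supercuspidal, so that $\D_{\pst}(\rho_{f,p})$ is $(\varphi,N,\Gal_{\Q_p})$-irreducible. This is exactly what the paper takes from \cite{Loeffler--Weinstein} and \cite{CDP-plocallanglandsGL2}.

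\textbf{The decomposition step.} Your last two paragraphs, decomposing $\Psi^{\mathrm{par}}_B(\SK(f))$ via a local computation, go beyond what the paper proves and are left open. The paper does no computation in $\Pi_p^{\Iw_{B,r}}$. It reads the statement as a bound on the Hecke-eigen summands of $\Psi^{\mathrm{par}}_B(\SK(f))$: each summand, having its own eigensystem, determines a distinct $P_{\mathrm{Kl}}$-flag. It does not address the possible $U_{\mathrm{Si}}$-nilpotent component or the semisimplicity you worry about. So that step is not needed to reproduce the paper's argument. If you keep it, two points:
\begin{itemize}
\item Your claim that each eigenvalue has multiplicity at most one ``by the refinement count'' does not follow. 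The hypothesis restricts which $U_{\mathrm{Si}}$-eigenvalues occur, not the dimensions of the eigenspaces.
\item Drop the descent to $\Iw_{\mathrm{Si},r}$. Proposition~\ref{Proposition: finite-slope part it independent to the level at p, H0} requires $M_{K_p}=M_P(\Z_p)$, which fails for $K_p=\Iw_{B,r}$. The paper's proof involves no descent: the forms live in $S_{(k,k)}(\Iw_{B,r})$, where $\Psi^{\mathrm{par}}_B$ lands and where $\calE_{\mathrm{Si},(0,0)}^{\cusp}$ is built in Sect.~\ref{subsection: heuristic of geometry}.
\end{itemize}
One in fact expects no $U_{\mathrm{Si}}$-finite-slope vectors at level $\Iw_{\mathrm{Si},r}$. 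The middle graded piece $\D_{\pst}(\rho_{f,p})$ of either refinement corresponds to a supercuspidal $\GL_2$-factor of the Siegel Jacquet module, which has no $\GL_2(\Z_p)$-fixed vectors.
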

\begin{proof}
    Suppose $g$ is a Hecke-eigen summand of $\Psi^{\mathrm{par}}_{B}(\SK(f))$ and let $\rho_g$ be the associated Galois representation. Since both $\rho_{\SK(f)}$ and $\rho_g$ are semisimple and are isomorphic outside $p$, they are isomorphic to each other by the Chebotarev density theorem. Hence, by Hypothesis \ref{Hypothesis: explicit local-global compatibility at p}, we see that these summands correspond to the choices of $P_{\mathrm{Kl}}$-flags on the potentially semistable module $\D_{\pst}(\rho_{\SK(f), p})$. So, we argue using the Galois information. 
    
    Consider the potentially semistable module \[
        \D_{\pst}(\mathrm{std}\circ \rho_{\SK(f), p}) = \D_{\pst}(\chi_{\cyc}^{2-k}) \oplus \D_{\pst}(\rho_{f, p}) \oplus \D_{\pst}(\chi_{\cyc}^{1-k}).
    \] 
    Since $f$ is an infinite-slope newform at $p$, its corresponding local $\GL_2(\Q_p)$-representation is supercuspidal (\cite[Proposition 2.8]{Loeffler--Weinstein}). Hence, $\D_{\pst}(\rho_{f, p})$ is $(\varphi, N, \Gal_{\Q_p})$-irreducible (\cite[Theorem 1.3]{CDP-plocallanglandsGL2}). Therefore, the only $P_{\mathrm{Kl}}$-refinements on $\D_{\pst}(\mathrm{std}\circ \rho_{\SK(f), p})$ can only be either \[
        \D_{\pst}(\chi_{\cyc}^{2-k}) \subset \D_{\pst}(\chi_{\cyc}^{2-k}) \oplus \D_{\pst}(\rho_{f,p}) \subset \D_{\pst}(\rho_{\SK(f), p}) 
    \]
    or \[
        \D_{\pst}(\chi_{\cyc}^{1-k}) \subset \D_{\pst}(\chi_{\cyc}^{1-k}) \oplus \D_{\pst}(\rho_{f,p}) \subset \D_{\pst}(\rho_{\SK(f), p}).
    \]
    Note that both $\D_{\pst}(\chi_{\cyc}^{i-k})$ are isotropic subspaces in $\D_{\pst}(\rho_{\SK(f), p})$ with respect to the symplectic pairing. 
    Since $\mathrm{std}: \GSp_4 \hookrightarrow \GL_4$ is a tensor generator, we conclude the result. 
\end{proof}

\begin{Remark}\label{Remark: automorphic proof for the p-stabilisation for infty-slope SK lifts}
    We argued in Proposition \ref{Proposition: p-stabilisations of infinite-slope SK(f)} using Galois information. However, it would be an interesting task to prove the statement via a pure automorphic language. Similar results have been done in \cite{Schmidt-classicalSK} for those cuspidal eigenforms with square-free levels. On the other hand, comparing with the $p$-stabilisations in the case when $f$ is of finite-slope, it seems to suggest there could be only one $p$-stabilisations when $f$ is of infinite-slope. In any case, our strategy can be adjusted to work for a different choice of $p$-stabilisation.  
\end{Remark}

\begin{Corollary}\label{Corollary: p-stabilisations for SK lifts of infty-slope forms are critical}
    Keep the notations as in Proposition \ref{Proposition: p-stabilisations of infinite-slope SK(f)}. The $P_{\mathrm{Kl}}$-flags associated with $\SK(f)_1$ and $\SK(f)_2$ are both critical (in the sense of Definition \ref{Definition: P-non-critical}). 
\end{Corollary}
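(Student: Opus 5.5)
By Lemma \ref{Lemma: P-non-criticality can be checked on faithful representations}, it suffices to check criticality on the standard representation. So the plan is to work with the filtrations on $\D_{\rig}^{\dagger}(\mathrm{std}\circ\rho_{\SK(f),p})$ produced in the proof of Proposition \ref{Proposition: p-stabilisations of infinite-slope SK(f)}, namely
\[
D_1 \subset D_1^{\perp} \subset \D_{\rig}^{\dagger}(\mathrm{std}\circ\rho_{\SK(f),p}),
\]
where $D_1 = \D_{\rig}^{\dagger}(\chi_{\cyc}^{i-k})$ for $i=2$ (corresponding to $\SK(f)_1$, slope $k-1$) or $i=1$ (corresponding to $\SK(f)_2$). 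The Hodge--Tate weights of $\mathrm{std}\circ\rho_{\SK(f),p}$ are $(0,k-2,k-1,2k-3)$ by Theorem \ref{Theorem: Galois representation attached to Siegel forms} (iii) with $k_1=k_2=k$. Up to the sign convention, $\chi_{\cyc}^{2-k}$ has Hodge--Tate weight $k-2$ and $\chi_{\cyc}^{1-k}$ has weight $k-1$. The two weights of $\rho_f$ are then $0$ and $2k-3$.

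Non-criticality of the step $\Fil_1$ would require $\calD_{\dR}(D_1)$ to be complementary to some $\Fil^{a}_{\dR}$ of dimension $3$. With distinct jumps, that subspace is $\Fil^{1}_{\dR}$, which is spanned by the weights $\geq 1$. So $\calD_{\dR}(D_1)$ would have to carry the lowest Hodge--Tate weight $0$. Because the decomposition $\chi^{2-k}\oplus\rho_f\oplus\chi^{1-k}$ is a direct sum of de Rham modules, the de Rham filtration is the direct sum of the filtrations on the summands. Hence $\calD_{\dR}(D_1)$ is exactly the line $\D_{\dR}(\chi_{\cyc}^{i-k})$, which lies inside $\Fil^{k-i}_{\dR}$ with $k-i\geq 1$ since $k>2$. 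It therefore meets $\Fil^1_{\dR}$ nontrivially, and no $a$ gives a direct sum decomposition: for $a\le 1$ the line lies in $\Fil^a_{\dR}$, while for $a>1$ the dimensions are wrong. This shows that the flag fails (in particular at $\Fil_1$) to be non-critical.

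Alternatively, one can argue via $\Fil_2 = D_1^{\perp}$. It contains $\D_{\rig}^{\dagger}(\rho_{f,p})$, which carries the top weight $2k-3$, so it cannot be complementary to $\Fil^{2k-3}_{\dR}$ either. I will include this as a remark to cover either convention on the ordering of weights.

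The only delicate point is matching sign conventions for Hodge--Tate weights of $\chi_{\cyc}^{j}$ against those in Theorem \ref{Theorem: Galois representation attached to Siegel forms}. I will handle this robustly by the observation above: whatever the convention, the middle weights $k-2,k-1$ belong to the characters and the extreme weights $0,2k-3$ belong to $\rho_f$. The first step of the flag is therefore always a character with a non-extreme weight, and the second step always contains the full $\rho_f$ with both extreme weights, so one of the two steps fails the non-criticality condition.
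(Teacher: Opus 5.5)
Your proof is correct and is essentially the paper's argument. The paper's one-line proof cites Hypothesis~\ref{Hypothesis: explicit local-global compatibility at p} and the explicit list of the two $P_{\mathrm{Kl}}$-refinements from the proof of Proposition~\ref{Proposition: p-stabilisations of infinite-slope SK(f)}, whose first step is a cyclotomic line with a middle Hodge--Tate weight, which is exactly what you spell out. There are two harmless slips: the slope-$(k-1)$ form $\SK(f)_1$ goes with $\chi_{\cyc}^{1-k}$ (since $\varphi=p^{k-1}$ on $\D_{\cris}(\chi_{\cyc}^{1-k})$), not $\chi_{\cyc}^{2-k}$; and for $1<a\le k-2$ the step $\Fil^a_{\dR}$ does have dimension $3$, but it equals $\Fil^1_{\dR}$, so your conclusion is unaffected.
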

\begin{proof}
    This follows immediately from Hypothesis \ref{Hypothesis: explicit local-global compatibility at p} and the proof of Proposition \ref{Proposition: p-stabilisations of infinite-slope SK(f)}.
\end{proof}

\subsection{Families passing through points given by Saito--Kurokawa lifts of infinite-slope cuspforms}\label{subsection: heuristic of geometry}

Similarly as before, since we are working with Siegel cuspforms of $\Iw_{B, r}$-level at $p$, we consider the abstract Hecke algebra \[
    \bbT = \left(\bigotimes_{\ell\not\in S_{\mathrm{bad}}}  \Z_p[\GSp_4(\Z_{\ell}) \backslash \GSp_4(\Q_{\ell})/\GSp_4(\Z_{\ell})]\right) \otimes_{\Z_p} \Z_p[U_{\mathrm{Si}}, U_{\mathrm{Kl}}].
\] 
By the machinery in Sect. \ref{subsection: small par. eigenvar.}, we produce a small Siegel parabolic eigenvariety $\calE_{\mathrm{Si}, (0,0)}^{\cusp}$ over the parallel weight space $\calW_{\mathrm{Si}} \subset \calW$.

\begin{Remark}\label{Remark: slightly different eigenvariety; infintie slope}
    Similarly as in Remark \ref{Remark: slightly different eigenvariety}, we remind the reader that $U_{\mathrm{Si}}$ is inverted in the construction. We should also mention that this eigenvariety is slightly different from the one in Sect. \ref{subsection: small par. eigenvar.} since we also include $U_{\mathrm{Kl}}$ in the construction. As a result, we do not know if the infinitesimal $R=\bbT$ (Sect. \ref{section: R=T}) holds in this situation.
\end{Remark}

Thanks to Proposition \ref{Proposition: p-stabilisations of infinite-slope SK(f)} and Corollary \ref{Corollary: p-stabilisations for SK lifts of infty-slope forms are critical},  $\SK(f)_1$ and $\SK(f)_2$ define points in $\calE_{\mathrm{Si}, (0,0)}^{\cusp}$. In what follows, let $x\in \calE_{\mathrm{Si}, (0,0)}^{\cusp}(\overline{\Q}_p)$ be either the point correspond to $\SK(f)_1$ or $\SK(f)_2$. We would like to understand how $\calE_{\mathrm{Si}, (0,0)}^{\cusp}$ behaves around $x$. 

To this end, we first prove the following proposition. 

\begin{Proposition}\label{Proposition: dijoint union of Kl-fs and Kl-infs parts}
    Recall from the construction that $U_{\mathrm{Kl}}$ defines a global section on $\calE_{\mathrm{Si}, (0,0)}^{\cusp}$, which we still denote by $U_{\mathrm{Kl}}$. Let $\calE_{\mathrm{Si}, (0,0)}^{\cusp, \mathrm{Kl}=0}$ (resp., $\calE_{\mathrm{Si}, (0,0)}^{\cusp, \mathrm{Kl}\neq 0}$) be the locus on which $U_{\mathrm{Kl}} = 0$ (resp., $U_{\mathrm{Kl}} \neq 0$). Then, $\calE_{\mathrm{Si}, (0,0)}^{\cusp, \mathrm{Kl}=0}$ is a one-dimensional closed subspace in $\calE_{\mathrm{Si}, (0,0)}^{\cusp}$ and there is a decomposition \[
        \calE_{\mathrm{Si}, (0,0)}^{\cusp} = \calE_{\mathrm{Si}, (0,0)}^{\cusp, \mathrm{Kl}=0} \sqcup \calE_{\mathrm{Si}, (0,0)}^{\cusp, \mathrm{Kl}\neq 0}.
    \]
\end{Proposition}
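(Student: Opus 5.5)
The decomposition will be proved locally on the affinoid pieces $\calE_{\calU,h} = \Spa(\bbT_{\calU}^{\leq h}, \bbT_{\calU}^{\leq h,\circ})$ that glue to $\calE_{\mathrm{Si},(0,0)}^{\cusp}$ over the spectral variety. The key object is the finite $R_{\calU}$-algebra $\bbT_{\calU}^{\leq h}$. It acts faithfully on the finite projective $R_{\calU}$-module $\scrS(\calZ_{\calU,h})$ (Corollary \ref{Corollary: finite projective of cuspidal Siegel forms in Siegel family}), and it contains the commuting operator $U_{\mathrm{Kl}}$.

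First, I would take the Fitting (generalised eigenspace) decomposition of $U_{\mathrm{Kl}}$ on $\scrS(\calZ_{\calU,h})$, relative to the factor $T^m$ of its characteristic polynomial over $R_{\calU}$. Since $U_{\mathrm{Kl}}$ is $R_{\calU}$-linear on a finite projective module, the characteristic polynomial $\det(T-U_{\mathrm{Kl}})\in R_{\calU}[T]$ factors as $T^{m}Q(T)$ with $Q(0)$ a unit. To arrange this, I first shrink $\calU$ so that the multiplicity of the root $0$ is constant; this uses a Riesz-type slope factorisation in the spirit of \cite[Sect. 4]{Ash-Stevens}, applied to the polynomial in $T$ rather than a Fredholm series. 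Hensel/Riesz theory then gives a coprime factorisation, hence Hecke-stable idempotents $e_0, e_{\neq 0}\in \bbT_{\calU}^{\leq h}$. We get $U_{\mathrm{Kl}}$ nilpotent on $e_0\scrS$ and invertible on $e_{\neq 0}\scrS$.

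Second, since the eigenvariety is \emph{reduced}, the nilpotent element $e_0U_{\mathrm{Kl}}$ of the reduced ring $e_0\bbT_{\calU}^{\leq h}$ vanishes. Therefore $\Spa(e_0\bbT)$ is exactly the zero locus $U_{\mathrm{Kl}}=0$ and $\Spa(e_{\neq 0}\bbT)$ is its complement. This gives a disjoint open-and-closed decomposition of each affinoid piece, and the idempotents are compatible on overlaps because they are characterised intrinsically. They therefore glue to the claimed decomposition, and $\calE^{\cusp,\mathrm{Kl}=0}_{\mathrm{Si},(0,0)}$ is closed (and also open).

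Third, for one-dimensionality, each $e_0\scrS(\calZ_{\calU,h})$ is a direct summand of a finite projective module, hence finite projective over the Dedekind domain $R_{\calU}$. Whenever it is nonzero, $R_{\calU}\to e_0\bbT_{\calU}^{\leq h}$ is injective and finite, exactly as in Proposition \ref{Proposition: small parabolic eigenvarieties are equidimensional of dimension 1}. So every component of the $U_{\mathrm{Kl}}=0$ locus has dimension $1$, and it is nonempty since it contains the Saito--Kurokawa point $x$ by Proposition \ref{Proposition: p-stabilisations of infinite-slope SK(f)}.

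I expect the main obstacle to be the first step. Factorising the characteristic polynomial requires that the multiplicity of the root $T=0$ be locally constant over the weight space, since a priori zero eigenvalues can deform into small nonzero ones. I would handle this by noting that $U_{\mathrm{Kl}}$ has entries in $\bbT_{\calU}^{\leq h}$, which is finite over $R_{\calU}$. The set where $\det(T-U_{\mathrm{Kl}})$ has more than the generic number of zero roots is Zariski closed, and in dimension one it is finite. Shrinking $\calU$ away from these points, or working componentwise on the normalisation, would then give the decomposition everywhere. If the multiplicity does jump at isolated points, I would instead argue directly with the reduced closed subspace $V(U_{\mathrm{Kl}})$. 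Its complement is open, so it remains to show $V(U_{\mathrm{Kl}})$ is open. For that I would use the fact that $U_{\mathrm{Kl}}$, restricted to each irreducible component (which is one-dimensional, with dense classical points), is either identically zero or has isolated zeros, and then rule out isolated zeros using the Galois-theoretic characterisation of $U_{\mathrm{Kl}}$ from Hypothesis \ref{Hypothesis: explicit local-global compatibility at p}.
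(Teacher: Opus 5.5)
Your Step~1 has a genuine gap, and it is the obstacle you flag yourself. A factorisation $\det(T-U_{\mathrm{Kl}})=T^mQ(T)$ with $Q(0)\in R_{\calU}^{\times}$ exists precisely when no eigenvalue of $U_{\mathrm{Kl}}$ that is generically nonzero specialises to $0$ at a weight of $\calU$. Equivalently, $U_{\mathrm{Kl}}$ must have no isolated zeros on the irreducible components of $\calE^{\cusp}_{\mathrm{Si},(0,0)}$ on which it is not identically zero. None of your proposed fixes removes such zeros.

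A Riesz/slope factorisation separates slopes $\le h$ from slopes $>h$ only for finite $h$. An eigenvalue tending to $0$ has slope tending to $\infty$, so no slope-adapted pair isolates the eigenvalue $0$ near such a weight. Shrinking $\calU$ away from the finitely many bad weights in each affinoid proves the decomposition only over the complement of those weights, which could include $\wt(x)$; it says nothing on the whole eigenvariety. Passing to the normalisation does not change where $U_{\mathrm{Kl}}$ vanishes on a component.

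The Galois-theoretic fallback does not help either. Hypothesis~\ref{Hypothesis: explicit local-global compatibility at p} constrains only classical points, and an isolated zero need not be classical. Classical points approaching such a zero have $v_p(U_{\mathrm{Kl}})\to\infty$, which weak admissibility permits because their weights grow. Ruling out such zeros is a properness (``no holes'') statement, which is deep even for the Coleman--Mazur eigencurve. So the open-and-closed decomposition you aim for, and with it your Step~2, is not established.

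The paper never needs this. It reads $\calE^{\cusp}_{\mathrm{Si},(0,0)}=\calE^{\cusp,\mathrm{Kl}=0}_{\mathrm{Si},(0,0)}\sqcup\calE^{\cusp,\mathrm{Kl}\neq 0}_{\mathrm{Si},(0,0)}$ as a set-theoretic partition of points, which is immediate. It then proves one-dimensionality using the actual kernel $\ker U_{\mathrm{Kl}}$ on $S^{w,v}_{\kappa_{\calU}}(\Iw_{B,r})^{\le h}$, not a generalised eigenspace. This kernel is a finitely generated torsion-free module over the Dedekind domain $R_{\calU}$, hence finite projective, on all of $\calU$ with no shrinking and no constancy of multiplicity. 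The reduced Hecke algebra acting on it is finite and torsion-free over $R_{\calU}$, so its spectrum is one-dimensional. Replacing $e_0\scrS$ by $\ker U_{\mathrm{Kl}}$ repairs your Step~3.

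Be aware of what this construction actually cuts out. Because $\ker U_{\mathrm{Kl}}$ is $R_{\calU}$-torsion-free, its support is exactly the union of the irreducible components on which $U_{\mathrm{Kl}}\equiv 0$. Identifying that union with the full zero locus of $U_{\mathrm{Kl}}$ requires the same absence of isolated zeros that blocks your Step~1. The paper's assertion that these affinoids cover $\calE^{\cusp,\mathrm{Kl}=0}_{\mathrm{Si},(0,0)}$ passes over this point silently.
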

\begin{proof}
    We only need to show that $\calE_{\mathrm{Si}, (0,0)}^{\cusp, \mathrm{Kl}=0}$ is of one-dimensional since the decomposition is obvious as the conditions are disjoint. 
    
    To this end, for any affinoid open neighbourhood $\calU = \Spa(R_{\calU}, R_{\calU}^{\circ}) \subset \calW_{\mathrm{Si}}$, let $\kappa_{\calU}$ be the corresponding weights, choose $w, v$ as in Definition \ref{Definition: families of automorphic sheaves, Siegel}, and consider the space $S_{\kappa_{\calU}}^{w, v}(\Iw_{B, r})$ of families of $w$-analytic Siegel cuspforms of weight $\kappa_{\calU}$. Let $h\in \Q_{\geq 0}$ such that the slope-$\leq h$ part (with respect to $U_{\mathrm{Si}}$) of $S_{\kappa_{\calU}}^{w, v}(\Iw_{B, n})$ is well-defined. We further define \[
        S_{\kappa_{\calU}}^{w, v}(\Iw_{B, r})^{\leq h, \mathrm{Kl}=0} 
        \coloneq \ker U_{\mathrm{Kl}}|_{S_{\kappa_{\calU}}^{w, v}(\Iw_{B, r})^{\leq h}}.
    \]
    We claim that $ S_{\kappa_{\calU}}^{w, v}(\Iw_{B, r})^{\leq h, \mathrm{Kl}=0}$ is a finite projective $R_{\calU}$-module. 

    This is claim can be reinterpreted purely in terms of commutative algebra: Let $R$ be a Dedekind domain, $M$ be a finite projective $R$-module, $\psi$ be an endomorphism on $M$, and $M^{\psi = 0} \coloneq \ker \psi$; we claim that $M^{\psi = 0}$ is a finite projective $R$-module. To show this, choose a finite projective $R$-module $N$ such that $M \oplus N  = R^n$ (for some $n\in \Z_{>0}$). We extend $\psi$ to $M \oplus N$ by putting $\psi(N) = 0$. Then, we have the following commutative diagram \[
        \begin{tikzcd}
            0 \arrow[r] & N \arrow[r]\arrow[d, equal] & \ker\psi \arrow[r]\arrow[d] & M^{\psi = 0}\arrow[d]\\
            0 \arrow[r] & N \arrow[r] &  M \oplus N \arrow[l, bend left = 20] \arrow[r] & M \arrow[r] \arrow[r] & 0
        \end{tikzcd},
    \]
    where the rows are exact sequences and the map $M \oplus N \rightarrow N$ is the splitting. By the snake lemma, we see that the top row is also a short exact sequence. Note that, since $\ker \psi \subset M \oplus N \cong R^n$ and $R$ is a Dedekind domain, $\ker \psi$ is a finite projective $R$-module. Moreover, the splitting on the bottom row yields a splitting on the top row. In other words, we see that $M^{\psi = 0}$ is a direct summand of a finite projective $R$-module, hence it is finite projective.  

    Therefore, by letting \[
        \bbT_{\calU}^{\leq h, \mathrm{Kl}=0} \coloneq \text{reduced $R_{\calU}$-algebra generated by the image of $\bbT[U_{\mathrm{Si}}^{-1}] \rightarrow \End_{R_{\calU}}(S_{\kappa_{\calU}}^{w, v}(\Iw_{B, r})^{\leq h, \mathrm{Kl}=0})$},
    \]
    we see that $\bbT_{\calU}^{\leq h, \mathrm{Kl}=0}$ is a finite $R_{\calU}$-algebra. However, by construction, $\{ \Spa( \bbT_{\calU}^{\leq h, \mathrm{Kl}=0}, \bbT_{\calU}^{\leq h, \mathrm{Kl}=0, \circ})\}_{(\calU, h)}$ form an affinoid open covering for $\calE_{\mathrm{Si}, (0,0)}^{\cusp, \mathrm{Kl}=0}$. We thus conclude the result. 
\end{proof}

From the construction, we see that $x\in \calE_{\mathrm{Si}, (0,0)}^{\cusp, \mathrm{Kl} = 0}$. Let $\calV$ be an open affinoid neighbourhood of $x$ in $\calE_{\mathrm{Si}, (0,0)}^{\cusp, \mathrm{Kl} = 0}$. Our next task is to understand what kind of classical points can appear in $\calV$ (up to shrinking).  To this end, recall the space of classical Siegel cuspforms $S_{(k_1, k_2)}(\Iw_{B, r})$ of weight $(k_1, k_2)$. According to Arthur's classification (see \cite[Sect. 1.1]{Schmidt-Packet}), there are six different types of parameters that can appear in $S_{(k_1, k_2)}(\Iw_{B, r})$: \begin{enumerate}
    \item[\textbf{(G)}] This is the \emph{general type}, whose parameter is given by $\pi = \pi_{\GL_4} \boxtimes 1$, where $\pi_{\GL_4}$ is a self-dual, symmplectic, unitary, cuspidal automorphic representation of $\GL_4$. Their associated Galois representations are stable irreducible $\GSp_4$-valued Galois representations. Here, `stable' means that it remains irreducible after composing with $\mathrm{std}: \GSp_4 \rightarrow \GL_4$.  
    \item[\textbf{(Y)}] This is the \emph{Yoshida type}, whose parameter is given by $\pi = (\pi_1 \boxtimes 1)\boxplus (\pi_2\boxtimes 1)$, where $\pi_1$ and $\pi_2$ are distinct, unitary, cuspidal automorphic representations of $\GL_2$. Their associated Galois representations are of the form \[
        \rho_{\pi} \sim \begin{pmatrix} \rho_{\pi_1, a} & & \rho_{\pi_1, b}\\ & \rho_{\pi_2}(j) & \\ \rho_{\pi_1, c} & & \rho_{\pi_1, d} \end{pmatrix},
    \]
    (for some $j\in \Z$), where $\rho_{\pi_i} \sim \begin{pmatrix}
        \rho_{\pi_i, a} & \rho_{\pi_i, b} \\ \rho_{\pi_i, c} & \rho_{\pi_i, d}
    \end{pmatrix}$ is the Galois representation attached to $\pi_i$. Note that $\rho_{\pi}$ can be irreducible but not stable as $\mathrm{std} \circ \rho_{\pi} \sim \diag(\rho_{\pi_1}, \rho_{\pi_2})$.
    \item[\textbf{(Q)}] This is the \emph{Klingen type}, whose parameters are given by $\pi = \pi_{\GL_2} \boxtimes \nu(2)$, where $\pi_{\GL_2}$ is a self-dual, unitary, cuspidal automorphic representation of $\GL_2$ and $\nu(2)$ is the two-dimensional irreducible representation of $\SL_2(\C)$. Their associated Galois representations are of the form $\rho_{\pi} \sim \diag(\rho_{\pi_{\GL_2}}, \oneanti_2\trans\rho_{\pi_{\GL_2}}^{-1}(j)\oneanti_2)$ (for some $j\in \Z$).
    \item[\textbf{(P)}] This is the \emph{Saito--Kurokawa type}, whose parameter is given by $\pi = (\pi_{\GL_2} \boxtimes 1) \boxplus (\sigma \boxtimes \nu(2))$, where $\pi_{\GL_2}$ is a unitary, cuspidal automorphic representation of $\GL_2$ and $\sigma$ is a quadratic Hecke character. Their associated Galois representations are of the form described in Theorem \ref{Theorem: SK lifts}.
    \item[\textbf{(B)}] This is the \emph{Howe--Piatetski-Shapiro type}, whose parameter is given by $\pi  = (\sigma_1 \boxtimes \nu(2)) \boxplus (\sigma_2 \boxtimes \nu(2))$, where $\sigma_i$'s are distinct quadratic Hecke characters. Their associated Galois representations are of the form $\rho_{\pi} \sim \diag(\rho_{\sigma_1}, \rho_{\sigma_2}, \rho_{\sigma_2}^{-1}(j), \rho_{\sigma_1}^{-1}(j))$ (for some $j\in \Z$), where $\rho_{\sigma_i}$ is the one-dimensional Galois representation attached to $\sigma_i$. 
    \item[\textbf{(F)}] The parameters of this type are given by $\pi = \sigma \boxtimes \nu(4)$, where $\sigma$ is a quadratic Hecke character and $\nu(4)$ is the four-dimensional irreducible representation of $\SL_2(\C)$. Their associated Galois representations are of the form $\rho_{\pi} \sim \diag(\rho_{\sigma}(j), \rho_{\sigma}(j-1), \rho_{\sigma}^{j-2}(1), \rho_{\sigma}^{-1}(j-3))$ (for some $j\in \Z$).
\end{enumerate}

Under Hypothesis \ref{Hypothesis: explicit local-global compatibility at p}, we see that classical Siegel cuspforms in $S_{(k_1, k_2)}(\Iw_{B, r})$ of types \textbf{(Q)}, \textbf{(B)}, \textbf{(F)} are of $U_{\mathrm{Kl}}$-finite slope. Hence, these classical forms do not appear in $\calV$. To address what happens to other type of classical Siegel cuspforms, we have the following proposition. 

\begin{Proposition}\label{Proposition: a generic family passing through SK lifts of infinite-slope forms}
    Keep the notations as above. Assume the elliptic cuspform $f$ satisfies the following properties: \begin{enumerate}
        \item[(i)] The residual representation $\overline{\rho} = \overline{\rho}_f$ is absolutely irreducible. 
        \item[(ii)] The cuspform $f$ is not approximable by finite-slope modular forms. 
    \end{enumerate}
    Then, up to shrinking, classical points in $\calV$ are either $x$ or of type {\normalfont \textbf{(G)}}.
\end{Proposition}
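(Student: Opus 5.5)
The proof goes type by type through Arthur's classification. Types \textbf{(Q)}, \textbf{(B)}, \textbf{(F)} are already excluded from $\calV$, since under Hypothesis \ref{Hypothesis: explicit local-global compatibility at p} they are $U_{\mathrm{Kl}}$-finite-slope while $\calV \subset \calE_{\mathrm{Si}, (0,0)}^{\cusp, \mathrm{Kl}=0}$. It therefore remains to show that, after shrinking $\calV$, no classical point $y \neq x$ is of type \textbf{(P)} or \textbf{(Y)}.

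\textbf{Type (Y).} Suppose $y\in \calV$ is classical of Yoshida type, so that $\mathrm{std}\circ\rho_y \sim \rho_{\pi_1}\oplus\rho_{\pi_2}(j)$. First shrink $\calV$ to an affinoid on which every point has the same residual pseudocharacter, namely that of $\overline{\rho}_{\SK(f)}$. The residual semisimplification of $\mathrm{std}\circ\rho_y$ must then equal $\overline{\chi}_{\cyc}^{2-k}\oplus\overline{\rho}_f\oplus\overline{\chi}_{\cyc}^{1-k}$. Since $\overline{\rho}_f$ is absolutely irreducible by (i), one of $\overline{\rho}_{\pi_i}$ is isomorphic to $\overline{\rho}_f$ and the other is residually $\overline{\chi}_{\cyc}^{2-k}\oplus\overline{\chi}_{\cyc}^{1-k}$.

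Now use the fact that $y$ lies on the $U_{\mathrm{Kl}}=0$ locus. By Hypothesis \ref{Hypothesis: explicit local-global compatibility at p}, as in the proof of Proposition \ref{Proposition: p-stabilisations of infinite-slope SK(f)}, the Klingen-type refinement of $\D_{\pst}(\mathrm{std}\circ\rho_{y,p})$ must fail to exist. This forces the $2$-dimensional summand $\D_{\pst}(\rho_{\pi_i,p})$ congruent to $\overline{\rho}_f$ to be $(\varphi,N,\Gal_{\Q_p})$-irreducible, so that summand is an infinite-slope (supercuspidal at $p$) form. The remaining freedom is then pinned down by varying the weight. Along $\calV$ the weight varies in the parallel direction, and the parallel weights $(k',k')$ of $y$ force the Hodge--Tate weights of the two summands. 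As $y$ accumulates at $x$, the summand congruent to $\rho_f$ gives a sequence of classical forms of varying level-$p^r$ weights converging $p$-adically to $f$ in the sense of Coleman--Stein. I expect that summand to be of finite slope, which would contradict (ii). Making this precise (identifying that summand's slope and its convergence to $f$) is the main obstacle. I would first try to control the Hodge--Tate weights by continuity of $[\mu_{\HTS}]$ from Corollary \ref{Corollary: refined families over small parabolic eigenvarieties}, and then read off convergence of Hecke eigenvalues away from $p$ from $\Det$. If the summand congruent to $\rho_f$ has fixed weight $2k-2$ instead, finiteness of forms of fixed weight and level lets us shrink $\calV$ to exclude them.

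\textbf{Type (P).} If $y\neq x$ is a Saito--Kurokawa point $\SK(g)$, then $y$ has weight $(k',k')$ and $g$ has weight $2k'-2$ and level $p^r$. By Proposition \ref{Proposition: p-stabilisations of infinite-slope SK(f)} applied to $g$ (which requires $g$ to have infinite slope, as forced by $U_{\mathrm{Kl}}=0$ and the Klingen refinement argument above), we obtain a family $g_y$ of level $p^r$ with varying weights converging to $f$. The argument from the Yoshida case then gives the same contradiction with (ii). Alternatively, if $g$ has finite slope, then $f$ is approximable by finite-slope forms, contradicting (ii) directly. Again there are only finitely many exceptional points of any bounded weight, so shrinking $\calV$ removes them, leaving only $x$ and points of type \textbf{(G)}.
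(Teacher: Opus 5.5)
There is a genuine gap: as written, neither the Yoshida case nor the Saito--Kurokawa case produces a contradiction. You never use the tool that actually excludes most of these points, namely criticality of the $P_{\mathrm{Kl}}$-flag together with Corollary \ref{Corollary: critical points cannot accumulate}.

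\textbf{Type (Y).} Your step ``this forces the summand congruent to $\overline{\rho}_f$ to be $(\varphi,N,\Gal_{\Q_p})$-irreducible'' is unjustified. Write $\mathrm{std}\circ\rho_y=\rho_g\oplus\rho_h(1-m)$ as an orthogonal sum of two non-degenerate planes, with $g$ of weight $2m-2$ and $h$ of weight $2$ by Saha--Schmidt. Then $U_{\mathrm{Kl}}=0$ (no stable Lagrangian) and $U_{\mathrm{Si}}$-finite slope (a stable line exists) only tell you that exactly one of the two summands is supercuspidal at $p$. Nothing says which one. Two sentences later you ``expect that summand to be of finite slope'', which contradicts what you just derived. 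In the configuration you actually derived, the forms approximating $f$ are infinite-slope, so (ii) gives no contradiction. The two configurations have to be handled separately:
\begin{itemize}
\item If the finite-slope summand is the weight-$2$ one, the $P_{\mathrm{Kl}}$-flag begins with a line in $\rho_h(1-m)$ of Hodge--Tate weight $m-2$ or $m-1$ rather than $0$. So it is critical, and Corollary \ref{Corollary: critical points cannot accumulate} removes such points after shrinking $\calV$.
\item If $g$ is the finite-slope one, you must still exclude $\overline{\rho}\cong\overline{\rho}_h(1-m)$. The paper does this via Khare's level-one Serre conjecture, since there is no level-one weight-$2$ cuspform. Only after that does $\overline{\rho}\cong\overline{\rho}_g$, with $g$ finite slope, let you invoke (ii).
\end{itemize}
Your remark about a summand of ``fixed weight $2k-2$'' does not cover either configuration.

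\textbf{Type (P).} You correctly note that $g$ must have infinite slope, since otherwise $\SK(g)$ lies in the $U_{\mathrm{Kl}}\neq 0$ component. So your ``alternative'' with finite-slope $g$ is vacuous. But then a sequence of $\SK(g)$'s accumulating at $x$ only approximates $f$ by infinite-slope forms, which (ii) does not forbid. Hence ``the argument from the Yoshida case'' yields nothing here. The missing idea is this:
\begin{itemize}
\item By Corollary \ref{Corollary: p-stabilisations for SK lifts of infty-slope forms are critical}, both $P_{\mathrm{Kl}}$-flags of such a point are critical.
\item Critical points do not accumulate, because the $U_{\mathrm{Si}}$-slope is locally constant and criticality then confines the weight to a finite set.
\end{itemize}
Concretely, by Proposition \ref{Proposition: p-stabilisations of infinite-slope SK(f)}, $\SK(g)_i$ of weight $(m,m)$ has $U_{\mathrm{Si}}$-slope $m-i$. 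This must equal the locally constant slope at $x$, so only finitely many $m$ can occur. Your closing sentence about ``finitely many exceptional points of bounded weight'' has the right shape, but you never explain why the weights are bounded. That explanation is exactly what the criticality and slope argument supplies.
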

\begin{proof}
    We first rule out the existence of classical points of type \textbf{(P)}. Suppose $g$ is a cuspidal eigenform of sign $-1$ such that $\SK(g)$ defines a point $y$, possibly up to a choice of $p$-stabilisation, in $\calV$. First of all, note that $g$ cannot be of finite-slope, otherwise $\D_{\rig}^{\dagger}(\rho_{\SK(g), p})$ would be triangulisable, which implies that $y \in \calE_{\mathrm{Si}, (0,0)}^{\cusp, \mathrm{Kl} \neq 0}$ by Hypothesis \ref{Hypothesis: explicit local-global compatibility at p}. Hence, $g$ must be of infinite-slope. However, our discussion in Proposition \ref{Proposition: p-stabilisations of infinite-slope SK(f)} shows that both $P_{\mathrm{Kl}}$-flags on $\D_{\rig}^{\dagger}(\rho_{\SK(g), p})$ are $P_{\mathrm{Kl}}$-critical. Hence, by Corollary \ref{Corollary: critical points cannot accumulate}, up to shrinking $\calV$, we may have $y\not\in \calV(\overline{\Q}_p)$.

    Now, we rule out the existence of classical points of type \textbf{(Y)}. Let $g, h$ be two elliptic cuspidal eigenforms. According to \cite[Proposition 3.1]{Saha--Schmidt}, for $g$ and $h$ to form a Yoshida lift of parallel weight $(m, m)$ and possibly appearing in $\calV$, $g$ and $h$ satisfy the following conditions: \begin{itemize}
        \item One of $g, h$ is of weight $2m-2$ and the other is of weight $2$. Without loss of generality, suppose $g$ has weight $2m-2$ with $m>2$ and $h$ has weight $2$.
        \item Since we built our eigenvariety by looking at $S_{\kappa_{\calU}}^{w, v}(\Iw_{B, r})$ with trivial tame level, $g$ and $h$ can only have level at $p$. Moreover, their Atkin--Lehner eigenvalues at $p$ coincide. 
    \end{itemize}  
    We denote by $\Yos(g, h)$ the Yoshida lift of the pair $(g, h)$. Let $\rho_g$ and $\rho_h$ be the $2$-dimensional Galois representation attached to $g$ and $h$ respectively, the Galois representation attached to $\Yos(g, h)$ is then of the form \[
        \rho_{\Yos(g, h)} \sim \begin{pmatrix}
            \rho_{g, a} && \rho_{g, b}\\
            & \rho_h(1-m) & \\
            \rho_{g, c} && \rho_{g, d}
        \end{pmatrix}.
    \]

    Observe that exactly one of $g$ or $h$ is of infinite-slope. Indeed, if both of them are  of infinite-slope, one cannot obtain a $P_{\mathrm{Kl}}$-flag on $\D_{\rig}^{\dagger}(\rho_{\Yos(g, h), p})$; on the other hand, if both of them are of finite-slope, then $\D_{\rig}^{\dagger}(\rho_{\Yos(g, h), p})$ is triangulisable and so the points defined by $\Yos(g, h)$ land in $\calE_{\mathrm{Si}, (0,0)}^{\cusp, \mathrm{Kl}\neq 0}$ by Hypothesis \ref{Hypothesis: explicit local-global compatibility at p}. 

    Suppose $g$ has infinite slope and $h$ has finite slope. Let $\Fil_{\bullet} \D_{\rig}^{\dagger}(\rho_h(1-m)_{p})$ be one of the triangulations on $\D_{\rig}^{\dagger}(\rho_h(1-m)_{p})$. Then, \[
        \Fil_1 \D_{\rig}^{\dagger}(\rho_h(1-m)_{p}) \subset \Fil_1 \D_{\rig}^{\dagger}(\rho_h(1-m)_{p}) \oplus \D_{\rig}^{\dagger}(\rho_{g, p}) \subset \D_{\rig}^{\dagger}(\mathrm{std} \circ \rho_{\Yos(g, h), p})
    \]
    gives a $P_{\mathrm{Kl}}$-flag on $\D_{\rig}^{\dagger}(\mathrm{std} \circ \rho_{\Yos(g, h), p})$. However, note that the Hodge--Tate weights on $\rho_{h}(1-m)$ are $\{m-2, m-1\}$. Hence, this $P_{\mathrm{Kl}}$-flag is $P_{\mathrm{Kl}}$-critical. By Corollary \ref{Corollary: critical points cannot accumulate} (again), we may exclude this point by shrinking $\calV$. 

    We may thus assume $h$ has infinite slope and $g$ has finite slope. We consider the residual representations $\mathrm{std} \circ \overline{\rho}_{\SK(f)}$ and $\mathrm{std} \circ \overline{\rho}_{\Yos(g, h)}$. Since \[
        \mathrm{std} \circ \overline{\rho}_{\SK(f)} = \overline{\chi_{\cyc}^{2-k}} \oplus \overline{\rho} \oplus \overline{\chi_{\cyc}^{1-k}}
    \]
    and $\overline{\rho}_f$ is absolutely irreducible, $\mathrm{std} \circ \overline{\rho}_{\SK(f)}$ is a semisimple $\Gal_{\Q}$-representation. On the other hand, \[
        \mathrm{std}\circ \overline{\rho}_{\Yos(g, h)} = \overline{\rho}_g \oplus \overline{\rho}_h(1-m). 
    \]
    If $\Yos(g, h)$ defines a point in $\calV$, then \[
        \mathrm{std} \circ \overline{\rho}_{\SK(f)} \cong \mathrm{std}\circ \overline{\rho}_{\Yos(g, h)}.  
    \]
    Since $\mathrm{std} \circ \overline{\rho}_{\SK(f)}$ is a semisimple $\Gal_{\Q}$-representation, either \[
        \overline{\rho} \cong \overline{\rho}_g  \quad \text{or} \quad \overline{\rho} \cong \overline{\rho}_h(1-m). 
    \]

    We claim that $\overline{\rho} \cong \overline{\rho}_h(1-m)$ cannot happen. We prove by contradiction and so we assume $\overline{\rho} \cong \overline{\rho}_h$. Note that both $\overline{\rho}$ and $\overline{\rho}_h(1-m)$ are odd representations since they are residual representations of cuspforms of even weights. By \cite[Theorem 1.1]{Khare-SerreConj}, $\overline{\rho}(m-1)$ is the residual representation of the Galois representation of a cuspform of level-1 and of the smallest weight $k(\rho)\geq 2$. On the other hand, we see that $\overline{\rho}(m-1)\cong \overline{\rho}_h$ is the residual representation of weight $2$. Thus, $\overline{\rho}(m-1)$ is the residual representation of a cuspform of level $1$ and weight $2$; however, there exists no such cuspforms. 

    We thus conclude that, if $\Yos(g,h)$ defines a point in a small enough neighbourhood $\calV$ of $x$, then \begin{itemize}
        \item $h$ has infinite slope and $g$ has finite slope; 
        \item $\overline{\rho} \cong \overline{\rho}_g$. 
    \end{itemize}
    However, if any affinoid open neighbourhood of $f$ consists of such points, then $f$ can be approximated by finite-slope modular forms $g$. As we assumed that $f$ is not approximable by finite-slope modular forms, by shrinking $\calV$ if necessary, we exclude these points.  
\end{proof}

\begin{Remark}\label{Remark: infinite-slope forms not approximable by finite-slope forms}
    \begin{enumerate}
        \item[(i)] Let us justify condition (ii) in Proposition \ref{Proposition: a generic family passing through SK lifts of infinite-slope forms}. In \cite{Coleman--Stein}, Coleman--Stein studied infinite-slope modular forms that can be approximated by finite-slope modular forms. In particular, in \cite[Guess 4.3]{Coleman--Stein}, they guessed that, in our situation, if $r >2$, then $f$ cannot be approximated by finite-slope modular forms.  
        \item[(ii)] Proposition \ref{Proposition: a generic family passing through SK lifts of infinite-slope forms} suggests the following interesting open question: 

        {\it For infinite-slope newform $f$ of even weight and sign $-1$ but of more general level, what kind of classical points can appear in an affinoid neighbourhood of $x$ (in the corresponding small Siegel parabolic eigenvariety)? More precisely, let $\calV$ be a (small enough) affinoid neighbourhood of $x$, does there exist an irreducible component $\calV_{\normalfont \textbf{(G)}} \subset \calV$ such that classical points in $\calV_{\normalfont \textbf{(G)}}$ are either $x$ or of type {\normalfont \textbf{(G)}}?}
    \end{enumerate}
\end{Remark}

In what follows, suppose $f$ satisfies the conditions in Proposition \ref{Proposition: a generic family passing through SK lifts of infinite-slope forms}. Recall we have a symplectic determinant \[
    (\mathrm{Det}_x, \mathrm{Pf}_x): \Gal_{\Q} \rightarrow \bbT_x,
\]
where $\bbT_x$ is the local ring $\scrO_{\calV, x}$ with maximal ideal $\frakm_x$. Similarly as before, let $A = \bbT_x[\Gal_{\Q}]/\ker \mathrm{Det}_x$, $\Xi = \{I_0=\{\rho_0 = \rho_f\}, I_1 = \{\rho_1 = \chi_{\cyc}^{1-k}\}, I_2 = \{\rho_2 = \chi_{\cyc}^{2-k}\}\}$, and $\sigma = (12)$ be the permutation on $(0, 1, 2)$. Then, $A$ has a symplectic GMA structure of type \[
    \Delta = ((I_0, I_1, I_2), \sigma, (1, 2, 1))
\]
and \[
    A = \begin{pmatrix}
        A_{11} & M_{12}(A_{10}) & A_{12}\\
        M_{21}(A_{01}) & M_{22}(A_{00}) & M_{21}(A_{02})\\
        A_{22} & M_{12}(A_{20}) & A_{22}
    \end{pmatrix},
\]
where $A_{ab}$ are some $\bbT_x$-submodules in $\mathrm{Frac} \bbT_x$ such that $A_{aa} \cong A$, $A_{ab}A_{bc} \subset A_{ac}$, and $A_{ab}A_{ba} \subset \frakm_x$ (\cite[Theorem 1.4.4]{BC}). We can again define the reducibility ideal of $\Det_x$ as in \cite[Definition 1.5.2]{BC}, which is given by \[
    \frakI^{\red} = A_{10}A_{01} + A_{20}A_{02} + A_{12}A_{21}.
\]
Thanks to Proposition \ref{Proposition: a generic family passing through SK lifts of infinite-slope forms}, we have the following nice result.

\begin{Corollary}\label{Corollary: reducibility idea = maximal ideal; infinite slope}
    Keep the notations as above and assume the conditions in Proposition \ref{Proposition: a generic family passing through SK lifts of infinite-slope forms} are satisfied. The natural inclusion $\frakI^{\red} \subset \frakm_x$ is in fact an equality. 
\end{Corollary}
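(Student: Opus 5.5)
The plan is to prove $\bbT_x/\frakI^{\red} = L_x$ directly, rather than only that $\frakI^{\red}$ has radical $\frakm_x$. Set $B \coloneq \bbT_x/\frakI^{\red}$; it is a local ring with residue field $L_x$, and it suffices to show that the image of every generator of $\bbT_x$ in $B$ is a scalar. By construction, $\bbT_x$ is generated, after completion and as a module over the weight ring $\Lambda = \scrO_{\calW_{\mathrm{Si}}, \wt(x)}$, by the coefficients $\Lambda_i^{\Det_x}(\Frob_\ell)$ for $\ell \notin S_{\mathrm{bad}}$, together with $U_{\mathrm{Si}}^{\pm1}$. Note that $U_{\mathrm{Kl}} = 0$ on $\calV$ by Proposition~\ref{Proposition: dijoint union of Kl-fs and Kl-infs parts}. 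So I will show that $\Det_x \otimes B$ is the constant determinant $\det(\mathrm{std}\circ\rho_{\SK(f)})\otimes B$, that the weight is constant on $B$, and that $U_{\mathrm{Si}}$ is constant on $B$.

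\textbf{Step 1: splitting and the rank-one factors.} By \cite[Proposition 1.5.1]{BC} and Proposition~\ref{Proposition: determinant and trace}~(iii), over $B$ we have
\[
\Det_x\otimes B = D_0 D_1 D_2,
\]
where $D_0$ is a $2$-dimensional determinant lifting $\det\rho_f$, and $D_1, D_2$ are characters $\Gal_{\Q,S}\to B^\times$ lifting $\chi_{\cyc}^{1-k}$ and $\chi_{\cyc}^{2-k}$. Since $\rho_f$ is absolutely irreducible, Theorem~\ref{Theorem: symplectic version of Chenevier's Thm A and B}~(ii) (in its $\GL_2$ form, \cite{Chenevier-determinant}) gives a genuine representation $\rho_{0,B}$ lifting $\rho_f$ with determinant $D_0$. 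Each $D_i$ is a deformation of $\chi_{\cyc}^{i-k}$ unramified outside $S = \{p,\infty\}$. By class field theory, $D_i = \chi_{\cyc}^{i-k}\psi_i$ with $\psi_i$ factoring through $1+p\Z_p$, so $\psi_i$ is determined by its Sen weight. The symplectic structure (\cite[Remark 5.5]{MQ-SympDet}) gives $D_1D_2 = \varsigma^{\univ}\det(\rho_{0,B})^{-1}\cdot(\text{const})$, and the Sen weights of $\Det_x\otimes B$ are $\{0, \kappa-2, \kappa-1, 2\kappa-3\}$, specialising at $x$ to $\{0,k-2,k-1,2k-3\}$. Matching weights of the rank-one pieces, using that the Sen polynomial is multiplicative in the decomposition, forces $\wt(D_2) = -(\kappa-2)$ and $\wt(D_1) = -(\kappa-1)$ modulo nilpotents. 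Hence $\psi_i = \langle\chi_{\cyc}\rangle^{k-\kappa}$ on $B$, and everything reduces to showing that $\kappa - k$ vanishes in $B$.

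\textbf{Step 2: constancy of weight, $\rho_{0,B}$ and $U_{\mathrm{Si}}$ (main obstacle).} The weights of $\rho_{0,B}$ are then $\{0, 2\kappa-3\}$ shifted as at $x$. Since $\rho_{0,B}$ is a deformation of the supercuspidal-type $\rho_{f,p}$ with \emph{no} triangulation, the only source of variation is the weight. I first try to show that $\rho_{0,B}|_{\Gal_{\Q_p}}$ is de Rham. The pst-Frobenius eigenvalue $U_{\mathrm{Si}}$ of the rank-one step of the $P_{\mathrm{Kl}}$-flag (Corollary~\ref{Corollary: deformation is refined}, applied through the analogue of Corollary~\ref{Corollary: generalisation of BC Lemma 4.3.10} over $B$) sits on $D_2$ or $D_1$, the $\chi_{\cyc}$-power piece. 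Over $B$, a rank-one $(\varphi,\Gamma)$-module $\calR_B(\delta)$ with $\delta(p) = U_{\mathrm{Si}}$ is a submodule of $\D_{\rig}^\dagger(D_i)$. Comparing $\delta|_{\Z_p^\times}$ with $\wt(D_i)$ and using the Constant Weight Lemma (Proposition~\ref{Proposition: constant weight lemma}) in the reverse direction, the crystalline period forces $\wt(D_i)$ to be an integer constant. This gives $\kappa = k$ in $B$, and also $U_{\mathrm{Si}} = p^{k-i}\cdot(\text{unit constant})$, so $U_{\mathrm{Si}}$ equals its value at $x$. This comparison step is where I expect the real difficulty. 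If it fails, I would instead use condition~(ii) infinitesimally: a non-constant weight in $B$ would produce a first-order family of infinite-slope $\GL_2$-eigenforms through $f$ of varying weight, which is impossible since infinite-slope forms have a $0$-dimensional weight space.

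\textbf{Step 3: conclusion.} With the weight constant, $\rho_{0,B}$ is a fixed-weight deformation of $\rho_f$ that is unramified outside $p$, and it is potentially semistable at $p$ by the same argument as in the proof of Corollary~\ref{Corollary: sort exact sequence of tangent spaces}. Its tangent directions lie in $H^1_g(\Q,\ad\rho_f)$ with minimal conditions. This group vanishes by the $\GL_2$ adjoint results (\cite{Allen-dukepaper}), using~(i), the absolute irreducibility of $\overline{\rho}_f$. Hence $\rho_{0,B} \cong \rho_f\otimes B$. All Frobenius traces are therefore constant in $B$, as are $\kappa$ and $U_{\mathrm{Si}}$, so $B = L_x$ and $\frakI^{\red} = \frakm_x$. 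Proposition~\ref{Proposition: a generic family passing through SK lifts of infinite-slope forms} enters as the geometric input guaranteeing that $\frakI^{\red}$ is $\frakm_x$-primary, so that $B$ is artinian and the arguments above over artinian $B$ apply.
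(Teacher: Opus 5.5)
The paper's proof is a single line: it invokes Proposition~\ref{Proposition: a generic family passing through SK lifts of infinite-slope forms}. After shrinking, every classical point of $\calV$ other than $x$ is of type \textbf{(G)}, so the reducible locus of $\Det$ near $x$ is $\{x\}$. You are right that this by itself only shows $\frakI^{\red}$ is $\frakm_x$-primary. Your plan to prove $B=\bbT_x/\frakI^{\red}=L_x$ directly is therefore a genuinely different and more demanding route. Steps~1--2 can be repaired (see below), and they then show that the weight, the characters $D_{1,B},D_{2,B}$, and $U_{\mathrm{Si}}$ are constant in $B$. The argument breaks at Step~3, which is where the remaining content lies.

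In Step~3 you claim $\rho_{0,B}|_{\Gal_{\Q_p}}$ is potentially semistable ``by the same argument as in Corollary~\ref{Corollary: sort exact sequence of tangent spaces}''. Constant, distinct, integral Sen weights give Hodge--Tate, not de Rham. Nothing in the family supplies periods for the piece lifting $\rho_f$:
\begin{itemize}
\item $\D_{\pst}(\rho_{f,p})$ is $(\varphi,N,\Gal_{\Q_p})$-irreducible;
\item $U_{\mathrm{Si}}$ is a period of $D_{1,B}$ only;
\item $U_{\mathrm{Kl}}=0$ on $\calV$.
\end{itemize}
The implication is in fact false here. Since $\rho_f$ is odd, the global Euler characteristic formula gives $\dim H^1(\Gal_{\Q,\{p,\infty\}},\ad^0\rho_f)\geq 2$. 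With fixed determinant, the variation of the Sen weight is a single linear functional on this space. So there are nonzero first-order deformations of $\rho_f$, unramified outside $p$, with constant determinant and constant Sen weights. Whenever $H^1_g(\Q,\ad^0\rho_f)=0$ (the very vanishing you want to use), none of the nonzero ones is de Rham. Nothing in Steps~1--2 prevents $\rho_{0,B}$ from being such a deformation. Hence the Frobenius traces, and so the spherical Hecke eigenvalues, are not shown to be constant in $B$, and $B=L_x$ does not follow. In the finite-slope case a triangulation of $\rho_{f,p}$ would supply the missing period; here there is none. Separately, the $\GL_2$ adjoint vanishing you invoke needs more than hypothesis~(i), e.g.\ a large-image condition on $\overline{\rho}_f|_{\Gal_{\Q(\zeta_p)}}$.

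For Steps~1--2: Corollary~\ref{Corollary: deformation is refined} and Corollary~\ref{Corollary: generalisation of BC Lemma 4.3.10} do not apply at $x$, because $x$ is critical and $\rho_x$ is reducible. Use instead Theorem~\ref{Theorem: pst period}~(i) with $\frakI=\frakI^{\red}$, together with (REG) of \S\ref{subsection: BK Selmer groups; infinite slope}. This ideal is cofinite by the Proposition and contains the reducibility ideal of $\Det_x|_{\Gal_{\Q_p}}$. The theorem then makes $\D_{\pst}(D_{1,B})^{\varphi=U_{\mathrm{Si}}}$ free of rank one. So $D_{1,B}$, the lift of $\chi_{\cyc}^{1-k}$ that carries $U_{\mathrm{Si}}$ for $\SK(f)_1$, is de Rham. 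Proposition~\ref{Proposition: constant weight lemma} then forces the weight, and with it $D_{1,B}$ and $U_{\mathrm{Si}}$, to be constant in $B$.

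In Step~1, ``modulo nilpotents'' is vacuous over an artinian $B$. Match the Sen polynomials by Hensel's lemma instead, using that the residual roots $0,k-2,k-1,2k-3$ are distinct because $k>2$. Your fallback via condition~(ii) is not an argument: (ii) constrains classical points, not the nilpotent structure of $B$.
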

\begin{proof}
    This follows from Proposition \ref{Proposition: a generic family passing through SK lifts of infinite-slope forms} immediately. 
\end{proof}

\subsection{Bloch--Kato Selmer groups}\label{subsection: BK Selmer groups; infinite slope}
We keep the notations as in the previous subsection. We also assume the conditions in Proposition \ref{Proposition: a generic family passing through SK lifts of infinite-slope forms} holds. From now on, we assume $x$ correspond to $\SK(f)_1$.\footnote{The discussion for $\SK(f)_2$ is similar; we leave it to the interested readers.} Similarly as before, we further assume the following condition:\begin{itemize}
    \item (REG) The Frobenius eigenvalues on $\D_{\pst}(\rho_{f, p})$ are different from $p^{k-1}$. 
\end{itemize}

For $a, b, c\in \{0, 1, 2\}$, if $\rho_a$, $\rho_b$, $\rho_c$ are two-by-two distinct, \cite[Theorem 1.5.5]{BC} implies that \begin{equation}\label{eq: mysterious isomorphism; infinite slope}
    \Hom_{L_x}(A_{ab}/A_{ac}A_{cb}, L_x) \cong \mathrm{Ext}^1_{A}(\rho_b, \rho_a).
\end{equation}
We prove the following proposition showing the relationship between $\Ext^1_A(\rho_b, \rho_a)$ and Bloch--Kato Selmer groups.

\begin{Proposition}\label{Proposition: i0 and BK Selmer; infinite slope}
    \begin{enumerate}
        \item[(i)] There is a natural embedding \[
            \Ext_A^1(\rho_1, \rho_0) \hookrightarrow H^1_f(\Q, \rho_f(k-1)). 
        \] 
        \item[(ii)] There is a natural embedding \[
            \Ext_A^1(\rho_2, \rho_0) \hookrightarrow H^1_{f'}(\Q, \rho_f(k-2)),
        \]
        where $H^1_{f'}$ is as defined in Proposition \ref{Proposition: i0 and BK Selmer; fs}.
    \end{enumerate}
\end{Proposition}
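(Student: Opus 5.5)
We follow the strategy of Proposition \ref{Proposition: i0 and BK Selmer; fs}, but the finite-slope inputs (crystallinity via Bellaïche--Chenevier's Theorem 4.3.6 on the Andreatta--Iovita--Pilloni eigenvariety, and the Steinberg argument at primes dividing $N$) must be replaced. First, \cite[Theorem 1.5.10]{BC} and twisting give an injection
\[
    \Ext_A^1(\rho_i, \rho_0) \hookrightarrow H^1(\Gal_{\Q, S}, \rho_f(k-i)), \qquad S = \{p, \infty\},
\]
since $N = p^r$ and the tame level is trivial. Unramifiedness away from $p$ is therefore automatic. Hence (ii) is immediate: $H^1_{f'}$ imposes no condition at $p$. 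For (i), it remains to show that any extension
\[
    0 \rightarrow \rho_f \rightarrow V \rightarrow \chi_{\cyc}^{k-1} \rightarrow 0
\]
coming from $A$ lies in $H^1_f$ at $p$. Since $\rho_{f,p}$ is only potentially semistable, the natural target is $H^1_g$ at $p$, and we upgrade it afterwards.

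The main step is to show that $V|_{\Gal_{\Q_p}}$ is de Rham. We will use the generalisation of \cite[Theorem 4.3.6]{BC} proved in Appendix \ref{section: pstperiod} (the $\D_{\pst}$-version, Corollary \ref{Corollary: generalisation of BC Lemma 4.3.10}), applied to the family over $\calV$ with $\bigwedge^m\Det$, $m=1$ (Siegel case), and the controlling function $U_{\mathrm{Si}}$. This datum satisfies the hypotheses of Sect. \ref{subsection: Setup, App A}: by Proposition \ref{Proposition: a generic family passing through SK lifts of infinite-slope forms}, the nearby classical points are of type \textbf{(G)} and are Zariski dense and accumulating, and they have small Siegel slope. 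Combining this with the GMA formalism as in Step 0 of the finite-slope proof, we get
\[
    \dim \D_{\pst}(V|_{\Gal_{\Q_p}})^{\varphi = p^{k-1}} \geq 1,
\]
where $p^{k-1}$ is the $U_{\mathrm{Si}}$-eigenvalue at $\SK(f)_1$ (Proposition \ref{Proposition: p-stabilisations of infinite-slope SK(f)} with $i=1$). The delicate part is lifting the flag $\D_{\pst}(\chi_{\cyc}^{1-k})$ to the extension. We expect to argue by taking the $\varphi = U_{\mathrm{Si}}$ eigenline in $\D_{\pst}$ of the family specialised to $\bbT_x/\frakI$ and transporting it to $V$ via the idempotents $e_0,e_1$ of the GMA, just as in \cite[Sect. 4.3]{BC}.

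Next, $\D_{\pst}$ is left exact, so $\D_{\pst}(\rho_{f,p}) \hookrightarrow \D_{\pst}(V|_{\Gal_{\Q_p}})$. By (REG), $p^{k-1}$ is not a Frobenius eigenvalue on $\D_{\pst}(\rho_{f,p})$, so the eigenvector above is new. This gives $\dim \D_{\pst}(V|_{\Gal_{\Q_p}}) = 3$, i.e. $V|_{\Gal_{\Q_p}}$ is potentially semistable, hence de Rham. So the class lies in $H^1_g(\Q_p, \rho_f(k-1))$.

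Finally, we upgrade from $H^1_g$ to $H^1_f$. By Bloch--Kato, $H^1_g/H^1_f$ is dual to $\D_{\cris}(\rho_f^\vee(2-k))^{\varphi=1}$, and since $\rho_{f,p}$ is supercuspidal the $(\varphi,N)$-module has no nonzero crystalline $\varphi$-fixed part, so $H^1_g(\Q_p,\rho_f(k-1)) = H^1_f(\Q_p,\rho_f(k-1))$. Alternatively, (REG) gives that $\varphi$ has no eigenvalue $p^{k-1}$ on $\D_{\pst}(\rho_{f,p})$, which rules out monodromy coupling the new line to $\rho_f$.

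We expect the main obstacle to be the first step, namely producing the $\varphi$-eigenvector in $\D_{\pst}$ of the extension in a non-crystalline, infinite-slope setting. This is exactly where the Appendix generalisation of Bellaïche--Chenevier, together with the genericity from Proposition \ref{Proposition: a generic family passing through SK lifts of infinite-slope forms}, must be used.
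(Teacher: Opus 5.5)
Your proposal is correct and follows the paper's route. Unramifiedness away from $p$ is automatic, and the Appendix result gives a $\varphi = p^{k-1}$ line in $\D_{\pst}(V)$. By (REG), $V$ is then potentially semistable, hence de Rham, and $H^1_g(\Q_p,\rho_f(k-1)) = H^1_f(\Q_p,\rho_f(k-1))$ because $\D_{\cris}(\rho_f^{\vee}(2-k))^{\varphi=1}=0$.

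Two small corrections. First, the step you leave as ``we expect to argue'' is exactly Theorem \ref{Theorem: pst period}(ii), the extension version; it is not Corollary \ref{Corollary: generalisation of BC Lemma 4.3.10}, which concerns a free module. Second, the quotient should be $\chi_{\cyc}^{1-k}$ rather than $\chi_{\cyc}^{k-1}$, since that is what makes its Frobenius eigenvalue $p^{k-1}$.
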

\begin{proof}
    Since $\rho_f$ is unramified outside $p$, it is enough to show that, any $A$-extension \[
        0 \rightarrow \rho_f \rightarrow V \rightarrow \chi_{\cyc}^{1-k} \rightarrow 0
    \]
    is crystalline at $p$ (in the sense that $\dim \D_{\cris}(V|_{\Gal_{\Q_p}})  = \dim \D_{\cris}(\rho_{f, p}) + \dim \D_{\cris}(\chi_{\cyc}^{1-k})$). Since $\D_{\pst}(\rho_{f, p})$ is an irreducible $(\varphi, N, \Gal_{\Q_p})$-module, $\D_{\cris}(\rho_{f, p}) = \D_{\pst}(\rho_{f, p})^{\Gal_{\Q_p}, N=0} = 0$. Hence, it is enough to show that $\dim \D_{\cris}(V|_{\Gal_{\Q_p}}) = 1$.

    By Theorem \ref{Theorem: pst period}, we see that $\dim \D_{\pst}(V)^{\varphi = p^{k-1}} = 1$.
    This implies that $p^{k-1}$ is a Frobenius eigenvalue in $\D_{\pst}(V)$. Since $\D_{\pst}(\rho_{f, p}) \hookrightarrow \D_{\pst}(V)$ and the Frobenius eigenvalues of $\D_{\pst}(\rho_{f, p})$ are different from $p^{k-1}$, we conclude that $V$ is potentially semistable and hence de Rham. This shows that, after restricting at $p$, the image of $\Ext_A^1(\rho_1, \rho_0)$ lands in $H^1_g(\Q_p, \rho_f(k-1))$. However, in our situation, the difference between the dimension of $H^1_g(\Q_p, \rho_f(k-1))$ and the dimension of $H^1_f(\Q_p, \rho_f(k-1))$ is $\dim \D_{\cris}(\rho_f^{\vee}(2-k))^{\varphi= 1} = 0$. We thus conclude the desired result. 
\end{proof}

\begin{Corollary}\label{Corollary: explicity reducibility ideal; infs}
    There exists $g\in \mathrm{Frac}\bbT_x$ such that following equalities of fractional ideals of $\mathrm{Frac} \bbT_x$ hold \[
        \frakm_x = \frakI^{\red} = A_{10}A_{01} + gA_{12}.
    \]
\end{Corollary}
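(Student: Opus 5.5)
The plan is to follow the proof of Corollary \ref{Corollary: explicity reducibility ideal; fs} as closely as possible, changing only the treatment of the summand $A_{12}A_{21}$. That summand now controls extensions of $\chi_{\cyc}^{1-k}$ by $\chi_{\cyc}^{2-k}$, and in the infinite-slope setting we no longer know these extensions are crystalline.

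First I would note that $\frakm_x = \frakI^{\red}$ is exactly Corollary \ref{Corollary: reducibility idea = maximal ideal; infinite slope}. Next, since $A$ carries a symplectic GMA structure of type $\Delta=((I_0,I_1,I_2),\sigma,(1,2,1))$ with $\sigma=(12)$, I would invoke \cite[Remark 5.5]{MQ-SympDet} (compare \cite[Lemma 8.2.16]{BC}). This gives $A_{20}A_{02} = A_{10}A_{01}$, so that
\[
\frakI^{\red} = A_{10}A_{01} + A_{12}A_{21}.
\]

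It then remains to control $A_{21}$. By \eqref{eq: mysterious isomorphism; infinite slope}, we have
\[
\Hom_{L_x}(A_{21}/A_{20}A_{01}, L_x)\cong \Ext^1_A(\chi_{\cyc}^{1-k},\chi_{\cyc}^{2-k}) \hookrightarrow H^1(\Gal_{\Q,S},\chi_{\cyc}),
\]
where the embedding comes from \cite[Theorem 1.5.10]{BC} after twisting, with $S=\{p,\infty\}$ because $N=p^r$. Here lies the expected difficulty. In the finite-slope case, the crystalline property supplied by \cite[Theorem 4.3.6]{BC} forced this group into $H^1_f(\Q,\chi_{\cyc})=0$. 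In the present case we lack the analogous control; Theorem \ref{Theorem: pst period} addresses the $\varphi=p^{k-1}$ eigenline, which is not obviously relevant for this pair. So I would give up on vanishing and instead bound the dimension.

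For that bound I would use $H^1(\Gal_{\Q,S},\Q_p(1))\cong (\Z[1/p]^\times\otimes\Q_p)$, which is one-dimensional (Kummer theory, generated by the class of $p$). The cokernel $A_{21}/A_{20}A_{01}$ is therefore at most one-dimensional, and Nakayama's lemma produces $g\in A_{21}\subset \mathrm{Frac}\,\bbT_x$ with
\[
A_{21} = A_{20}A_{01} + g\bbT_x .
\]

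Substituting this into $\frakI^{\red}$ gives
\[
A_{12}A_{21} = A_{12}A_{20}A_{01} + gA_{12}.
\]
The first term lies in $A_{10}A_{01}$ because $A_{12}A_{20}\subset A_{10}$. Hence $\frakI^{\red}\subset A_{10}A_{01}+gA_{12}$. The reverse inclusion follows from $gA_{12}\subset A_{21}A_{12}\subset \frakI^{\red}$. Together these yield
\[
\frakm_x = \frakI^{\red} = A_{10}A_{01} + gA_{12},
\]
which is the claimed identity.
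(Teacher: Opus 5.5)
Your proposal is correct and follows the paper's proof essentially step for step: the first equality is quoted from Corollary~\ref{Corollary: reducibility idea = maximal ideal; infinite slope}, and the symplectic GMA symmetry gives $A_{20}A_{02}=A_{10}A_{01}$. Then $\Ext^1_A(\chi_{\cyc}^{1-k},\chi_{\cyc}^{2-k})$ embeds into the one-dimensional space $H^1(\Gal_{\Q,\{p,\infty\}},\chi_{\cyc})$ (by Kummer theory), Nakayama yields $A_{21}=A_{20}A_{01}+g\bbT_x$, and $A_{12}A_{20}\subset A_{10}$ finishes the argument, with your only addition being the explicit reverse inclusion $gA_{12}\subset A_{12}A_{21}\subset\frakI^{\red}$, which the paper leaves implicit.
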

\begin{proof}
    We have the first equality from Corollary \ref{Corollary: reducibility idea = maximal ideal; infinite slope}. To show the second equality, we have to show \[
        A_{20}A_{02} \subset A_{10}A_{01} + gA_{12} \quad \text{ and }\quad A_{12}A_{21} \subset A_{10}A_{01} + gA_{12}
    \] 
    for some $g\in \mathrm{Frac}\bbT_x$.
    Similarly as in the proof of Corollary \ref{Corollary: explicity reducibility ideal; fs}, we have \[
        A_{20}A_{02} = A_{10}A_{01}.
    \] 

    On the other hand, \eqref{eq: mysterious isomorphism; infinite slope} yields \[
        \Hom_{L_x}(A_{21}/A_{20}A_{01}, L_x) \cong \Ext_A^1(\chi_{\cyc}^{1-k}, \chi_{\cyc}^{2-k}).
    \]
    By \cite[Theorem 1.5.10]{BC}, the latter space embeds into $H^1(\Gal_{\Q, \{p, \infty\}}, \chi_{\cyc}) \cong L_x$, where the isomorphism follows from Kummer theory. Hence, $A_{21} = A_{20}A_{01} + g\bbT_x$ for some $g\in A_{21}$. Thus, \[
        A_{12}A_{21} = A_{12}A_{20}A_{01} + gA_{12}\subset A_{10}A_{01} + gA_{12}
    \]
    and we conclude the result. 
\end{proof}

\begin{Theorem}\label{Theorem: lower bound theorem for infinite forms}
    Let $d = \dim_{L_x} H^1_f(\Q, \rho_f(k-1))$ and $t$ be the dimension of the tangent space of $\calE_{\mathrm{Si}, (0,0)}^{\cusp}$ at $x$. Then, \[
        t \leq \frac{(d+1)(3d+2)}{2}+1.
    \]
\end{Theorem}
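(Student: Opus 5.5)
We follow the model of Theorem \ref{Theorem: bound of tangent space; fs}: bound the minimal number of generators of $\frakm_x$ via Nakayama, using Corollary \ref{Corollary: explicity reducibility ideal; infs}, which gives
\[
  \frakm_x = A_{10}A_{01} + gA_{12}.
\]
This yields
\[
  \#\mathrm{min.gen}(\frakm_x) \leq \#\mathrm{min.gen}(A_{10})\cdot \#\mathrm{min.gen}(A_{01}) + \#\mathrm{min.gen}(A_{12}).
\]
The plan is to bound each of these generator counts in terms of $d$ through the isomorphisms \eqref{eq: mysterious isomorphism; infinite slope} and Proposition \ref{Proposition: i0 and BK Selmer; infinite slope}. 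Since $t = \dim \frakm_x/\frakm_x^2 \leq \#\mathrm{min.gen}(\frakm_x)$, this gives the desired bound on $t$.

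\textbf{Step 1: $A_{01}$ and $A_{20}$.} From $\Hom_{L_x}(A_{01}/A_{02}A_{21},L_x)\cong \Ext^1_A(\rho_1,\rho_0)$ and Proposition \ref{Proposition: i0 and BK Selmer; infinite slope}(i), we get
\[
  A_{01} = A_{02}A_{21} + (\text{at most } d \text{ generators}).
\]
Since $\Ext^1_A(\rho_0,\rho_2)\cong\Ext^1_A(\rho_1,\rho_0)$ by the symplectic duality (the permutation $\sigma=(12)$), $A_{20}$ is likewise generated modulo $A_{21}A_{10}$ by at most $d$ elements.

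\textbf{Step 2: $A_{10}$, $A_{02}$, $A_{12}$, $A_{21}$.} We use Proposition \ref{Proposition: i0 and BK Selmer; infinite slope}(ii), which embeds $\Ext^1_A(\rho_2,\rho_0)$ into $H^1_{f'}(\Q,\rho_f(k-2))$. The exact sequence
\[
  0\to H^1_f(\Q,\rho_f(k-2))\to H^1_{f'}(\Q,\rho_f(k-2))\to H^1(\Q_p,\rho_f(k-2))/H^1_f(\Q_p,\rho_f(k-2))
\]
bounds the target. Kato's theorem gives $H^1_f(\Q,\rho_f(k-2))=0$. The local quotient is computed by the dimension formula as in the finite-slope case; the $\D_{\rm cris}$ terms vanish because $\D_{\rm pst}(\rho_{f,p})$ is irreducible, so the quotient has dimension $1$. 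Hence $A_{10}$ (and dually $A_{02}$) is generated modulo $A_{12}A_{20}$ by one element. Next, $\Ext^1_A(\rho_2,\rho_1)$ injects into $H^1(\Q,\chi_{\cyc}^{-1})$, which has dimension $1$, so $A_{12}=A_{10}A_{02}+g'\bbT_x$. Finally, $A_{21}=A_{20}A_{01}+g\bbT_x$ by the proof of Corollary \ref{Corollary: explicity reducibility ideal; infs}.

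\textbf{Step 3: assembling the bounds.} Substituting, we obtain
\[
  A_{10}=A_{10}A_{02}A_{20}+g'A_{20}+(\text{one generator}),
\]
and $A_{02}A_{20}\subset\frakm_x$, so Nakayama gives $\#\mathrm{min.gen}(A_{10})\leq d+1$. Substituting $A_{21}$ into $A_{01}$ gives
\[
  A_{01}\subset A_{02}A_{20}A_{01}+gA_{02}+(d \text{ generators}),
\]
and $gA_{02}$ is generated by at most one element modulo $\frakm_x A_{02}$. So, up to terms already absorbed into $\frakm_x A_{01}$, $\#\mathrm{min.gen}(A_{01})\le d+1$ in the worst case. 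Likewise $A_{12}=A_{10}A_{02}+g'\bbT_x$ gives $\#\mathrm{min.gen}(A_{12})\leq (d+1)+1$, using that $A_{02}$ is generated by about one element modulo $\frakm$-terms.

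\textbf{Main obstacle.} The crude product bound gives something like $(d+1)^2+d+2$, which is not the stated $\frac{(d+1)(3d+2)}{2}+1$. The hardest step is the careful counting of generators of $A_{10}A_{01}+gA_{12}$ modulo $\frakm_x^2$. Using the symplectic GMA symmetry, which identifies $A_{20}A_{02}=A_{10}A_{01}$ and exchanges roles under $\sigma$, products of generators coming from dual pieces coincide up to symmetry. The count of the $d$ Selmer-generators in $A_{10}A_{01}$ should then be a symmetric square, $\binom{d+1}{2}$-type terms, rather than a full $d^2$. To get this, I would first write explicit generators $g_i$ of $A_{01}$ and their symplectic duals in $A_{20}$, and check which products $g_ig_j^*$ agree modulo $\frakm_x^2$. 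Adding the contributions from $g$, $g'$ and the single local-at-$p$ generator should then produce exactly $\frac{(d+1)(3d+2)}{2}+1$. If the symmetry argument fails, the fallback is the weaker bound $(d+1)^2+d+2$, which still implies non-vanishing of the Selmer group whenever $t$ is large.
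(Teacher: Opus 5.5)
As written, your argument does not prove the stated inequality, and you say so yourself. The count
\[
\#\mathrm{min.gen}(A_{10})\cdot\#\mathrm{min.gen}(A_{01})+\#\mathrm{min.gen}(A_{12})\le(d+1)^2+d+2
\]
exceeds $\frac{(d+1)(3d+2)}{2}+1$ when $d\in\{0,1\}$. At $d=0$ it gives only $t\le 3$, whereas Corollary \ref{Corollary: non-vanishing} needs $t\le 2$ in exactly that case.

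The repair you propose cannot work as described. The involution attached to $\sigma=(12)$ exchanges $A_{10}$ with $A_{02}$ and $A_{01}$ with $A_{20}$. It never pairs $A_{10}$ with $A_{01}$, so $A_{10}A_{01}$ has no symmetric-square structure to exploit. In the paper the symmetric square lives in the other summand. Writing $A_{12}=A_{10}A_{02}+h\bbT_x$, one expands $gA_{12}=gA_{10}A_{02}+gh\bbT_x$. Then $A_{02}\cong A_{10}$ \cite[Lemma 1.8.5 (ii)]{BC} bounds $A_{10}A_{02}$ by $\binom{d+2}{2}$ generators, and $A_{10}A_{01}$ is counted crudely as $(d+1)\cdot d$. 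This gives $d(d+1)+\frac{(d+1)(d+2)}{2}+1$.

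Your decomposition can still be completed, and more cheaply, by counting in $\frakm_x/\frakm_x^2$ instead of bounding generators of a product by a product. With $A_{01}=A_{02}A_{21}+\langle a_1,\dots,a_d\rangle$ and $A_{21}=A_{20}A_{01}+g\bbT_x$, one has
\[
A_{10}A_{02}A_{21}=(A_{10}A_{01})(A_{02}A_{20})+gA_{10}A_{02}\subset\frakm_x^2+gA_{12}.
\]
Hence $\frakm_x=\frakm_x^2+A_{10}\langle a_1,\dots,a_d\rangle+gA_{12}$. Since $\frakm_xA_{10}a_i\subset\frakm_x^2$ and $\frakm_x gA_{12}\subset\frakm_x^2$ (as $g\in A_{21}$ and $A_{21}A_{12}\subset\frakm_x$), this yields
\[
t\le d\cdot\#\mathrm{min.gen}(A_{10})+\#\mathrm{min.gen}(A_{12}).
\]

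Your bound $\#\mathrm{min.gen}(A_{12})\le d+2$ is correct, but it needs the precise form rather than ``$A_{02}$ is about one element modulo $\frakm$-terms''. From $\dim\Ext^1_A(\rho_2,\rho_0)\le 1$ one gets $A_{02}=A_{01}A_{12}+e\bbT_x$. Then $A_{12}=A_{10}A_{02}+h\bbT_x\subset\frakm_xA_{12}+eA_{10}+h\bbT_x$, because $A_{10}A_{01}\subset\frakm_x$. Nakayama now gives $\#\mathrm{min.gen}(A_{12})\le\#\mathrm{min.gen}(A_{10})+1$. With $\#\mathrm{min.gen}(A_{10})\le d+1$ you obtain $t\le(d+1)^2+1$, which is at most $\frac{(d+1)(3d+2)}{2}+1$ for every $d\ge 0$.

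This bookkeeping also settles the issue you hit with $A_{01}$. Because of the term $gA_{02}\subset A_{02}A_{21}$, the safe count really is $\#\mathrm{min.gen}(A_{01})\le d+1$. The paper's ``at most $d$'' is carried over from the finite-slope case, where $A_{21}=A_{20}A_{01}$. However, the extra generator $ge$ enters $\frakm_x$ only through $A_{10}ge\subset gA_{10}A_{02}\subset gA_{12}$, so modulo $\frakm_x^2$ it costs nothing.
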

\begin{proof}
    The proof is similar to Theorem \ref{Theorem: bound of tangent space; fs}. 
    
    First of all, by Proposition \ref{Proposition: i0 and BK Selmer; infinite slope} with $i=1$, we have \[
        \Hom_{L_x}(A_{01}/A_{02}A_{21}) \cong \Ext_A^1(\rho_1, \rho_0) \hookrightarrow H^1_f(\Q, \rho_f(k-1))
    \]
    and so the minimal generating set for $A_{01}$ has at most $d$ element. Secondly, by Proposition \ref{Proposition: i0 and BK Selmer; infinite slope} with $i=2$, we have \[
        \Hom_{L_x}(A_{10}/A_{12}A_{20}) \cong \Ext_A^1(\rho_0, \rho_a) = \Ext_A^1(\rho_f, \chi_{\cyc}^{1-k}) \cong \Ext_A^1(\chi_{\cyc}^{2-k}, \rho_f) \hookrightarrow H^1_{f'}(\Q, \rho_f(k-2)).
    \]
    Similar as in the proof of Theorem \ref{Theorem: bound of tangent space; fs}, $\dim H^1_{f'}(\Q, \rho_f(k-2))\leq 1$ and so $A_{10} = A_{12}A_{20} + g'\bbT_x$ for some $g'\in A_{10}$. 
    
    Next, \eqref{eq: mysterious isomorphism; infinite slope} also yields \[
        \Hom_{L_x}(A_{12}/A_{12}A_{02}) \cong \Ext_A^1(\rho_2, \rho_1) = \Ext_A^1(\chi_{\cyc}^{2-k}, \chi_{\cyc}^{1-k}) \hookrightarrow H^1(\Q, \chi_{\cyc}^{-1}).
    \] 
    Since $\dim H^1(\Q, \chi_{\cyc}^{-1}) =  1$, we again obtain $ A_{12} = A_{10}A_{02} + g''\bbT_x$ for some $g''\in A_{12}$. Therefore, \[
        A_{10} = A_{10}A_{02}A_{20} + g'A_{20} +g'\bbT_x \subset A_{10} \frakm_x + g''A_{20} + g'\bbT_x,
    \]
    where the inclusion follows from the proof of Corollary \ref{Corollary: explicity reducibility ideal; infs}. By Nakayama's Lemma, $\#\mathrm{min.gen}(A_{10}) \leq \#\mathrm{min.gen}(A_{20})+1$. However, \eqref{eq: mysterious isomorphism; infinite slope} also gives \[
        \Hom_{L_x}(A_{20}/A_{21}A_{10}) \cong \Ext_A^1(\rho_0, \rho_2) = \Ext_A^1(\rho_f, \chi_{\cyc}^{2-k}) \cong \Ext_A^1(\chi_{\cyc}^{1-k}, \rho_f) \hookrightarrow H^1_f(\Q, \rho_f(k-1)).
    \]
    In particular, the cardinality of the minimal generating set of $A_{20}$ is at most $d$.

    Together with Corollary \ref{Corollary: explicity reducibility ideal; infs}, we see that \[
        \frakm_x = A_{10}A_{01} + gA_{12} = A_{10}A_{01} + gA_{10}A_{02} + gg'\bbT_x.
     \]
     By \cite[Lemma 1.8.5 (ii)]{BC}, $A_{02} \cong A_{10}$ as $\bbT_x$-modules and so $A_{10}A_{02} \cong A_{10}A_{10}$ as $\bbT_x$-modules. Since $\#\mathrm{min.gen}(A_{10}) \leq d+1$, \[
        \#\mathrm{min.gen}(A_{10}A_{10}) \leq \frac{(d+1)(d+2)}{2}.
     \] Hence, \[
        \#\mathrm{min.gen}(\frakm_x) \leq d(d+1)+\frac{(d+1)(d+2)}{2} +1 = \frac{(d+1)(3d+2)}{2}+1,
     \]
     which proves the result. 
\end{proof}

\begin{Corollary}\label{Corollary: non-vanishing}
    If the tangent space of $\calE_{\mathrm{Si}, (0,0)}^{\cusp}$ at $x$ has dimension strictly bigger than $2$, then the Bloch--Kato Selmer group $H^1_f(\Q, \rho_f(k-1))$ does not vanish. 
\end{Corollary}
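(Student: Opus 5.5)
The plan is to derive the statement as the contrapositive of Theorem \ref{Theorem: lower bound theorem for infinite forms}, whose bound is already available. Set $d = \dim_{L_x} H^1_f(\Q, \rho_f(k-1))$ and let $t$ be the dimension of the tangent space of $\calE_{\mathrm{Si}, (0,0)}^{\cusp}$ at $x$. Theorem \ref{Theorem: lower bound theorem for infinite forms} gives
\[
    t \leq \frac{(d+1)(3d+2)}{2}+1 .
\]
We argue by contradiction: assume $H^1_f(\Q, \rho_f(k-1)) = 0$, so that $d=0$.

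Substituting $d=0$ into the bound gives $t \leq \frac{1\cdot 2}{2}+1 = 2$. This contradicts the hypothesis $t>2$, so $d \geq 1$, which is exactly the asserted non-vanishing.

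The statement also implicitly requires the standing assumptions of Sect.~\ref{subsection: BK Selmer groups; infinite slope} under which the theorem was proven. These are the conditions of Proposition \ref{Proposition: a generic family passing through SK lifts of infinite-slope forms}, namely residual absolute irreducibility of $\overline{\rho}_f$ and non-approximability of $f$ by finite-slope forms, together with (REG) and $x$ corresponding to $\SK(f)_1$. I will simply invoke these as in force.

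There is no real obstacle here; the only point needing care is the arithmetic at $d=0$. One might hope for a better threshold than $2$, for instance via the dimension-one lower bound from Proposition \ref{Proposition: dijoint union of Kl-fs and Kl-infs parts}, which gives $t\geq 1$. The available bound, however, only yields non-vanishing when $t>2$, matching the statement as given.
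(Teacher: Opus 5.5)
Your argument is correct and is the same as the paper's. The corollary is the contrapositive of Theorem \ref{Theorem: lower bound theorem for infinite forms}: at $d=0$ the bound reads $t\le \frac{1\cdot 2}{2}+1=2$. The paper also cites equidimensionality of dimension $1$, but as you note, that lower bound on $t$ plays no role once $t>2$ is assumed.
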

\begin{proof}
    This follows immediately from Theorem \ref{Theorem: lower bound theorem for infinite forms} and that $\calE_{\mathrm{Si}, (0,0)}^{\cusp}$ is equidimensional of dimension $1$. 
\end{proof}

\begin{Remark}
    Since $f$ has sign $-1$, the Bloch--Kato conjecture already predicts that $H^1_f(\Q, \rho_f(k-1))$ is non-vanishing. As both $p$-stabilisations of $\SK(f)$ are critical, Corollary \ref{Corollary: non-vanishing} suggests that the local geometry of $\calE_{\mathrm{Si}, (0,0)}^{\cusp}$ at $x$ is an interesting problem to study. 
\end{Remark}

\begin{appendix}
    \section{A note on potentially semistable period}\label{section: pstperiod}

\subsection{Setup and the statement of the main result}\label{subsection: Setup, App A}
Suppose we are given the following data \[
    (\calX, \Sigma, \Det, \kappa, F),
\]
where $\calX$ is a reduced rigid analytic variety over $\Q_p$, $\Sigma \subset \calX(\overline{\Q}_p)$ is an accumulating set of points, $\Det: \scrO_{\calX}(\calX)[\Gal_{\Q_p}] \rightarrow \scrO_{\calX}(\calX)$ is a continuous determinant of dimension $d$ on $\calX$, $\kappa\in \scrO_{\calX}(\calX)$, and $F \in \scrO_{\calX}(\calX)^\times$ such that the following properties hold: \begin{itemize}
    \item for any $x\in \Sigma$, there exists a de Rham Galois representation $\rho_x: \Gal_{\Q_p} \rightarrow \GL_d(L_x)$ such that $\Det(x) = \det \rho_x$; 
    \item for any $x\in \Sigma$, $\kappa(x)\in \Z$ is the minimal Hodge--Tate weight of $\rho_x$;
    \item for any $x\in \Sigma$, the natural morphism $(\Fil_{\dR}^0 \D_{\pst}(\rho_x(\kappa(x))))^{\varphi = F(x)} \rightarrow \D_{\pst}(\rho_x(\kappa(x)))^{\varphi = F(x)}$ is an isomorphism and $\D_{\pst}(\rho_x)^{\varphi = p^{\kappa(x)F(x)}} $ has dimension $1$.\footnote{ Here, we fix an embedding $B_{\st} = B_{\cris}[X] \hookrightarrow B_{\dR}$ by $X \mapsto \log \frac{[p^{\flat}]}{p}$.} 
    \item For any $C\in \Z_{\geq 0}$, the set \[
        \scalemath{0.9}{ \Sigma_C \coloneq \left\{ x\in \Sigma: \text{Hodge--Tate weights of $\rho_x$ other than $\kappa(x)$ are bigger than } \kappa(x)+C \right\} }
    \]
    accumulates at any $x\in \Sigma$. 
\end{itemize}

We fix $x\in \calX(\overline{\Q}_p)$, not necessarily in $\Sigma$, and denote by $\bbT_x = \scrO_{\calX, x}$ the local ring at $x$ and $\frakm_x$ the corresponding maximal ideal. The Galois determinant $\Det$ induced a Galois determinant $\Det_x$ on $\bbT_x$ and we denote by $A = \bbT_x[\Gal_{\Q_p}]/\ker \Det_x$. We further assume the following: \begin{itemize}
    \item[(ACC)] The set $\Sigma$ accumulates at $x$. 
    \item[(MF)] The Galois determinant $\Det(x)$ is multiplicity free and so it corresponds to a Galois representation $\rho_x: \Gal_{\Q_p} \rightarrow \GL_d(L_x)$. 
    \item[(REG)] The representation $\rho_x$ is de Rham and $\D_{\pst}(\rho_x(\kappa(x)))^{\varphi = F(x)}$ has dimension $1$. 
\end{itemize}
From these assumptions, we see that $\rho_x = \bigoplus_{i=1}^r\rho_{x, r}$ such that $\rho_{x, i}$'s are absolutely irreducible Galois representations that are pairwise non-isomorphic. We denote by $d_i = \dim \rho_{x, i}$. Moreover, there exists a unique $j\in \{1, ..., r\}$ such that \[
    \D_{\pst}(\rho_{x, j}(\kappa(x)))^{\varphi = F(x)} \neq 0.
\]

On the other hand, by \cite[Theorem 1.4.4 (ii)]{BC}, there exists a representation \[
    \widetilde{\rho}: \bbT_x[\Gal_{\Q_p}] \rightarrow M_d(\mathrm{Frac} \bbT_x)
\]
whose determinant agrees with the induced determinant and $\ker\widetilde{\rho} = \ker \Det_x$. We fix a GMA structure on $A$ and let $M_j \subset (\mathrm{Frac}\bbT_x)^d$ be the `$j$-th column' $A$-submodule defined \cite[Sect. 1.5.7]{BC}. Suppose $\frakI \supset \frakI^{\red}$ is a cofinite ideal in $\bbT_x$. For any $i = 1, ..., r$, let \[
    \widetilde{\rho}_i: \Gal_{\Q_p} \rightarrow \GL_{d_i}(\bbT_x/\frakI)
\]
be the Galois representation defined in \cite[Definition 1.5.3]{BC}, which lifts $\rho_{x, i}$.

The goal of this section is to prove the following theorem, which is a generalisation of \cite[Theorem 4.3.2 \& Theorem 4.3.6]{BC}:

\begin{Theorem}\label{Theorem: pst period}
    Keep the notations as above. Let $\frakI \supset \frakI^{\red}$ be an ideal of $\bbT_x$ with cofinite length. \begin{enumerate}
        \item[(i)] The modules $\D_{\pst}(\widetilde{\rho}_j(\kappa))^{\varphi = F}$ and $\D_{\pst}(M_j/\frakI M_j(\kappa))^{\varphi = F}$ are free of rank $1$ over $(\bbT_x/\frakI) \otimes_{\Q_p}\Q_p^{\unr}$. 
        \item[(ii)] Let $i \in \{1, ..., r\}$ such that $i\neq j$ and let $\widetilde{\rho}_c$ be an extension of $\widetilde{\rho}_j$ by $\widetilde{\rho}_i$ in $\Ext_{A/\frakI}^1(\widetilde{\rho}_j, \widetilde{\rho}_i)$. Then, $\D_{\pst}(\widetilde{\rho}_c(\kappa))^{\varphi = F}$ is free of rank $1$. 
    \end{enumerate}
\end{Theorem}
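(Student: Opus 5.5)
The plan is to follow the strategy of \cite[Sect. 4.3]{BC}, replacing crystalline periods $\D_{\cris}(-)^{\varphi = F}$ by potentially semistable periods throughout. The basic input is a family version of the statement before specialisation. By Kisin's method, as generalised in \cite[Theorem 3.3.3 \& Proposition 3.3.5]{BC}, the hypotheses on $\Sigma$ give the following. On a small affinoid neighbourhood $\calU$ of $x$, and for any finite-type module $M$ over $\scrO(\calU)$ carrying a continuous $\Gal_{\Q_p}$-action compatible with $\Det$, one has $(M\widehat{\otimes} B_{\cris}^+)^{\Gal_{\Q_p}, \varphi = F}$-type periods. The key assumption is that $(\Fil^0_{\dR}\D_{\pst}(\rho_y(\kappa)))^{\varphi = F(y)} = \D_{\pst}(\rho_y(\kappa))^{\varphi=F(y)}$ is one-dimensional for $y\in\Sigma_C$ with $C$ large. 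Since the other Hodge--Tate weights exceed $\kappa+C$, this eigenvector lies in the \emph{positive} part. So it comes from $(\rho_y(\kappa)\otimes B_{\st}^+)^{\varphi = F(y)}$ with bounded slope, and those periods interpolate. I will first pass to a finite Galois extension $K/\Q_p$ over which every $\rho_y$, $y\in\Sigma$, and $\rho_x$ becomes semistable. The point to verify is that $K$ can be chosen uniformly near $x$. The inertial type is locally constant on $\Sigma$ near $x$: it is determined by $\rho_y|_{I_{\Q_p}}$ up to finite data and varies discretely, so after shrinking $\calU$ it is constant. This reduces $\D_{\pst}$ to $\D_{\st}$ over $K$, with a residual $\Gal(K/\Q_p)$-action. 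The log-variable $X$ is handled by the fixed embedding $B_{\st}\hookrightarrow B_{\dR}$, and $N$ is nilpotent, so the $\varphi=F$ eigenspace of $\D_{\st}$ is controlled by the $N$-kernel inside it plus a finite filtration.

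For (i), I take $M = M_j/\frakI M_j$, viewed as a $\bbT_x/\frakI$-module with $\Gal_{\Q_p}$-action. The first step is to show that $M$ is a quotient of the specialisation to $\bbT_x/\frakI$ of a lattice in the family representation $\widetilde{\rho}$. Here I use \cite[Sect. 1.5.7 \& Lemma 4.3.?]{BC} exactly as in the proof of \cite[Theorem 4.3.2]{BC}. The second step applies the family period result: this gives a nonzero element of $\D_{\pst}(M(\kappa))^{\varphi = F}$ whose image modulo $\frakm_x$ is nonzero. That image lands in the one-dimensional space given by (REG), since $M/\frakm_x M$ has $\rho_{x,j}$ as its only constituent carrying the eigenvalue $F(x)$, by (MF) and the uniqueness of $j$. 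The third step is a length count. Left exactness of $\D_{\pst}$ gives $\dim_{L_x}\D_{\pst}(M(\kappa))^{\varphi=F}\le \mathrm{length}(\bbT_x/\frakI)$ by dévissage along $\frakm_x$-adic filtrations, using the one-dimensionality at $x$. The family argument gives the reverse inequality, because the sections are $\bbT_x/\frakI$-linear and generate. Nakayama then gives freeness of rank one over $(\bbT_x/\frakI)\otimes\Q_p^{\unr}$. The statement for $\widetilde{\rho}_j$ follows because $\widetilde{\rho}_j$ is a quotient of $M_j/\frakI M_j$ by the submodule built from the $\widetilde{\rho}_i$ with $i\neq j$, none of which carry the eigenvalue $F(x)$ residually. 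Their $\varphi=F$ periods therefore vanish by the same dévissage, and left exactness plus counting transfers the result.

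For (ii), I realise $\widetilde{\rho}_c$ as a subquotient of $M_j/\frakI M_j$ via \cite[Theorem 1.5.5]{BC}: extensions of $\widetilde{\rho}_j$ by $\widetilde{\rho}_i$ come from $A_{ij}$-columns. I then run the same argument as in (i). The lower bound comes from the family. The upper bound comes from the exact sequence $0\to\widetilde{\rho}_i\to\widetilde{\rho}_c\to\widetilde{\rho}_j\to0$, together with the vanishing $\D_{\pst}(\widetilde{\rho}_i(\kappa))^{\varphi=F}=0$ and part (i).

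The main obstacle is the family interpolation step, namely producing potentially semistable periods of the right eigenvalue in the limit. The difficulties are the uniformity of the extension $K$ and the presence of the monodromy operator, since Kisin's argument is crystalline in nature. I would first try to work over $K$ with $B_{\st}^+ = B_{\cris}^+[X]$. The idea is to show that the sections constructed by \cite[Proposition 3.3.5]{BC} from the $B^+_{\cris}$-part already suffice. For $y\in\Sigma_C$, the relevant eigenvector lies in $\Fil^0$ and is killed by $N$, because $N$ lowers the $\varphi$-slope by one and $F(y)p^{-1}$ is not an eigenvalue in the positive range for large $C$.
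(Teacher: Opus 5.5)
\textbf{The gap is the lower bound in (i).} Everything hinges on $\mathrm{length}\,\D_{\pst}(M_j/\frakI M_j(\kappa))^{\varphi=F}\geq\mathrm{length}(\bbT_x/\frakI)$, and your sketch does not actually supply it.

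Kisin's interpolation only gives a nonzero period over an affinoid $\calU$. That period may vanish along an ideal $\frakJ$ through $x$, so its image in $\D_{\pst}(M_j/\frakm_x M_j(\kappa))^{\varphi=F}$ need not be nonzero. Even granting such an element, $M_j/\frakI M_j$ is in general not free over $\bbT_x/\frakI$. So the element need not generate a free submodule, and you have no control over $\D_{\pst}(M_j/\frakI M_j(\kappa))^{\varphi=F}$ modulo $\frakm_x$. Hence the claim that ``the sections are $\bbT_x/\frakI$-linear and generate'', and the Nakayama step that follows, are unjustified. Realising $M_j/\frakI M_j$ as a \emph{quotient} of a specialised lattice would not help either: $\D_{\pst}$ is only left exact, so lower bounds on periods do not pass to quotients.

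The paper's Lemma~\ref{Lemma: key lemma for pst period} obtains the length equality in four steps.
\begin{enumerate}
\item[(1)] By \cite[Lemma 4.3.9]{BC}, choose an $A$-submodule $N$ with $(M_j\oplus N)\,\mathrm{Frac}\,\bbT_x=(\mathrm{Frac}\,\bbT_x)^d$ and no $\rho_{x,j}$ in $(N/\frakm_x N)^{\mathrm{ss}}$. Then $\D_{\pst}(N/\frakI N(\kappa))^{\varphi=F}=0$, so $M=M_j\oplus N$ has the same $F$-periods modulo $\frakI$ as $M_j$.
\item[(2)] By \cite[Lemma 4.3.7]{BC}, spread $M$ out to a torsion-free coherent sheaf $\scrM$ with Galois action on a neighbourhood of $x$.
\item[(3)] Shrink $\Sigma$ by generic flatness so that $\scrM|_y\cong\rho_y$ for $y\in\Sigma$.
\item[(4)] Apply Corollary~\ref{Corollary: pst period for non-torsion sheaves}. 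Its proof is exactly the blow-up along $\frakJ$, which makes the period locally free of rank one, followed by the length comparison of \cite[Proposition 3.2.2]{BC}.
\end{enumerate}

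Freeness is then obtained in the reverse order to yours. The kernel in \cite[Theorem 1.5.6 (0)]{BC} has no $F$-periods, so $\D_{\pst}(M_j/\frakI M_j(\kappa))^{\varphi=F}$ injects into $\D_{\pst}(\widetilde\rho_j(\kappa))^{\varphi=F}$. The upper bound for the \emph{free} $\bbT_x/\frakI$-module $\widetilde\rho_j$ (your dévissage gives this) forces an isomorphism. Then \cite[Lemma 3.3.9]{BC} gives freeness for $\widetilde\rho_j$, and hence for $M_j/\frakI M_j$. The same issue affects (ii): the lower bound needs $\widetilde\rho_c$ to be a \emph{quotient} of $M_j/\frakI M_j$ by a submodule with no $F$-periods, as in \cite[Theorem 1.5.6 (2)]{BC}, not merely a subquotient obtained via \cite[Theorem 1.5.5]{BC}. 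Your upper bound for (ii) is fine.

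Separately, your reduction to $\D_{\st}$ over a single finite extension $K$ rests on the claim that the inertial type is locally constant on $\Sigma$ near $x$. This is false in general. Nothing in the hypotheses of Sect.~\ref{subsection: Setup, App A} bounds the $p$-conductors of the $\rho_y$, and in typical families such points accumulate everywhere; for example, classical points with $p$-power nebentypus of growing conductor on an eigencurve. The paper makes no such reduction. It runs \cite[Corollaries 5.15 and 5.16]{Kisin-FM} directly with $B^+_{\st}=B^+_{\cris}[X]$, using that on an $F$-small affinoid $(B^+_{\st}\widehat{\otimes}_{\Q_p}\scrO(\calU))^{\varphi=F}$ is a finite direct sum of the spaces $(B^+_{\cris}\widehat{\otimes}_{\Q_p}\scrO(\calU))^{\varphi=F/p^i}$. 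Your slope argument that the monodromy operator kills the $F$-eigenvector at points of $\Sigma_C$ for $C$ large is correct, but it does not repair the missing uniformity of $K$.
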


\subsection{A generalisation of a result of Kisin and Bella{\"i}che--Chenevier}\label{subsection: generalisation of Kisina nd BC}

The purpose of this subsection is to prepare ourselves for the proof of Theorem \ref{Theorem: pst period}, which slightly generalises \cite[Theorem 3.3.3 \& Theorem 3.4.1]{BC}. We will work with different (but related) assumptions as in Sect. \ref{subsection: Setup, App A}. More precisely, suppose we are given the following data \[
    (\calX, \Sigma, \scrM, \kappa, F),
\]
where $(\calX, \Sigma, \kappa, F)$ are as in Sect. \ref{subsection: Setup, App A} and $\scrM$ is a coherent sheaf on $\calX$ that admits a continuous action by $\Gal_{\Q_p}$ as described in \cite[Sect. 3.2.1]{BC}. We further assume the following conditions hold: \begin{itemize}
    \item For any $x\in \Sigma$, the fibre $\scrM|_x $ is a de Rham representation and $\kappa(x)\in \Z$ is its minimal Hodge--Tate weight. 
    \item For any $x\in \Sigma$, the natural morphism $(\Fil_{\dR}^0 \D_{\pst}(\scrM|_x (\kappa(x))))^{\varphi = F(x)} \rightarrow \D_{\pst}(\scrM|_x(\kappa(x)))^{\varphi = F(x)}$  is an isomorphism and $\D_{\pst}(\scrM|_x(\kappa(x)))^{\varphi = F(x)}$ has dimension $1$. 
    \item For any $C\in \Z_{\geq 0}$, the set $\Sigma_C$ accumulates at any $x\in \Sigma$. 
\end{itemize}

The following theorem is a generalisation of \cite[Theorem 3.3.3]{BC}. 

\begin{Proposition}\label{Proposition: pst period for locally free sheaves}
    Suppose $\scrM$ is locally free. \begin{enumerate}
        \item[(i)] For all $x\in \calX(\overline{\Q}_p)$, $\D_{\pst}(\scrM|_x(\kappa(x)))^{\varphi = F(x)}$ is non-zero. 
        \item[(ii)] Let $x\in \calX(\overline{\Q}_p)$ such that $\D_{\pst}((\scrM|_x)^{\mathrm{ss}}(\kappa(x)))^{\varphi = F(x)}$ is $1$-dimensional. Then, for any ideal $\frakI \subset \scrO_{\calX, x}$ of cofinite length, $\D_{\pst}((\scrM_x/\frakI\scrM_x)(\kappa))^{\varphi = F}$ is free of rank $1$. 
    \end{enumerate}
\end{Proposition}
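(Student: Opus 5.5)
We follow the strategy of \cite[Theorem 3.3.3]{BC}, which is Kisin's interpolation argument for crystalline periods, and replace $\D_{\cris}$ by $\D_{\pst}$. The key observation is that $\D_{\pst}$ of a de Rham representation is computed after restriction to $\Gal_K$ for a finite Galois extension $K/\Q_p$, on which the representation becomes semistable. Since $\calX$ is quasi-compact locally, and the statement is local, we may work on an affinoid neighbourhood $\calU=\Spa(R,R^\circ)$ of $x$ on which $\scrM$ is free of rank $d$. The first task is to fix a single finite extension $K/\Q_p$ that works uniformly for a set of points of $\Sigma$ accumulating at $x$. For this we use that the pst period line at $y\in\Sigma$ has slope given by $F(y)$, so the inertia type acting on the $F(y)$-eigenline is a character of finite order. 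By shrinking $\calU$ and passing to a subset of $\Sigma_C$ that is still Zariski dense, we arrange that this character factors through a fixed finite quotient $\Gal(K/\Q_p)$; this is plausible because finitely many inertial types of bounded conductor occur in a connected family, by continuity of $\Det$ and the Sen/Hodge data. We may then replace $\D_{\pst}$ by $\D_{\st,K}=(B_{\st}\otimes\scrM|_y)^{\Gal_K}$ with its residual $\Gal(K/\Q_p)$-action and Frobenius $\varphi^{f_K}$.

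For (i), we mimic Kisin. For $y\in\Sigma_C$, the hypothesis $\Fil^0_{\dR}\D_{\pst}^{\varphi=F}=\D_{\pst}^{\varphi=F}$ says that the period lies in $(B^+_{\st}\otimes \scrM|_y(\kappa))^{\Gal_K,\varphi=F}$, and in fact in $(B^+_{\max}[\log u]\otimes\cdots)$. Since the monodromy on this line must vanish, because the slope is the minimal one (this is the \emph{Fil}$^0$ condition combined with weak admissibility), we are reduced to $(B^{+}_{\cris,K}\otimes\scrM|_y(\kappa))^{\varphi^{f}=F^{f}}$ restricted to $\Gal_K$. We then apply \cite[Theorem 3.3.3]{BC} to the family $\scrM|_{\Gal_K}$ over $\calU$, with Frobenius $F^{f_K}$ and the same $\kappa$, which is valid for the unramified subfield of $K$ after Weil restriction. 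This yields a nonzero crystalline period over $K$ at every point of $\calU$, which is then a pst period over $\Q_p$. The Galois descent of the $\Gal(K/\Q_p)$-action is automatic because the period is unique up to scalars at generic points.

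For (ii), we run \cite[Theorem 3.4.1]{BC}, applying it again over $K$. Via the argument above, $\D_{\pst}(\scrM_x/\frakI\scrM_x(\kappa))^{\varphi=F}$ is identified with $\D_{\cris,K}(\cdots)^{\varphi^{f}=F^{f}}$ equipped with a semilinear $\Gal(K/\Q_p)$-action. BC gives freeness of rank $1$ over $(\scrO_{\calX,x}/\frakI)\otimes K_0$ under the one-dimensionality hypothesis on the semisimplification. Finally, Hilbert 90, in the form of Galois descent for finite free modules over $K_0\otimes(\scrO_x/\frakI)$ to $\Q_p^{\unr}\otimes(\scrO_x/\frakI)$, produces the desired freeness.

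The main obstacle is the uniformity step: choosing one $K$ that works on a neighbourhood of $x$ while retaining accumulation. We would try first to derive it from local constancy of the inertial type on the $\varphi=F$ eigenline, using that this eigenline is a rank-one object whose $\Gal_{\Q_p}$-character has Sen weight $\kappa$ and $\varphi$-eigenvalue $F$, both of which vary analytically.
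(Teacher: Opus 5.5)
The gap is the uniformity step you flag yourself, and it carries the whole argument. Your reduction (to crystalline periods over a single $K$, then to \cite[Theorem 3.3.3]{BC}, then Hilbert 90 descent in (ii)) needs one finite extension $K/\Q_p$ with the following property: on an affinoid $\calU\ni x$, inertia of $K$ fixes the $F$-eigenline of $\D_{\pst}(\scrM|_y(\kappa(y)))$ for a still-accumulating set of $y\in\Sigma_C$.

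Nothing in the hypotheses controls the finite-order character by which inertia acts on that line. The data you propose to use are blind to it:
\begin{itemize}
\item $\Det$ only sees the whole semisimplified representation.
\item The Sen operator and the Hodge--Tate weights depend only on the restriction to an arbitrary open subgroup of inertia, so they cannot detect finite-order inertial data.
\end{itemize}
Nor is ``bounded conductor in a connected family'' a general principle: one connected component of the weight space already contains finite-order characters of every $p$-power conductor.

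The natural route to local constancy is circular. It would interpolate the eigenlines by a rank-one family $\calR(\delta)\hookrightarrow\D_{\rig}^{\dagger}(\scrM(\kappa))$ with $\delta(p)=F$, and observe that $\delta|_{\Z_p^\times}$ has weight identically $0$, hence is locally constant. But that presupposes an analytic family of periods, which is exactly what (i) is meant to produce.

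Two smaller points. First, even with $K$ fixed, inducing from $\Gal_K$ to $\Gal_{\Q_p}$ (your ``Weil restriction'') multiplies every Hodge--Tate multiplicity by $[K:\Q_p]$. This destroys the input of Kisin's argument that \emph{every} map to $B_{\dR}^+/t^kB_{\dR}^+$ factors through the $F$-period, so you would have to redo the interpolation for $\Gal_K$ directly. Second, your monodromy claim is fine on $\Sigma_C$ once $C>2\sup_{\calU}v_p(F)$, by weak admissibility applied to the $(\varphi,N)$-module spanned by $v,Nv,N^2v,\dots$.

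The paper never chooses $K$. For (i), on an $F$-small affinoid $\calU=\Spa(R,R^\circ)$ where $\scrM$ is free, it runs Kisin's interpolation \cite[Corollaries 5.15, 5.16]{Kisin-FM} with $B_{\st}^+$ in place of $B_{\cris}^+$.
\begin{itemize}
\item The only new input is that $(B_{\st}^+\widehat{\otimes}_{\Q_p}R)^{\varphi=F}$ is a finite sum of pieces $(B_{\cris}^+\widehat{\otimes}_{\Q_p}R)^{\varphi=F/p^i}$. Hence injectivity with closed image into $(B_{\dR}^+/t^kB_{\dR}^+)\widehat{\otimes}_{\Q_p}R$ persists.
\item The required factorisation at points of $\Sigma_{C_k}$, with $C_k=k+\sup v_p(F)+1$, comes from weak admissibility.
\end{itemize}
Part (ii) is the blow-up argument of \cite[Theorem 3.3.3]{BC}. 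Blow up $\calU$ along the smallest ideal $\frakJ$ with $\D_{\pst}(\scrM(\calU))^{\varphi=F}\subset\frakJ\,(\scrM(\calU)\widehat{\otimes}B_{\st})$. Get freeness upstairs from (i) and \cite[Lemma 3.3.9 (i)]{BC}. Come back down via \cite[Proposition 3.2.2]{BC} and \cite[Lemma 3.3.9 (ii)]{BC}. No Galois descent is used.

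Because the paper works throughout with $\Gal_{\Q_p}$-equivariant $B_{\st}^+$-periods, the inertial-type issue you isolate is absorbed into the factorisation step at $\Sigma_{C_k}$, not handled by a choice of $K$. If you want to keep your route, you must add a hypothesis fixing the inertial type of the $F$-line along $\Sigma$ (that is, fixing $K$).
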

\begin{proof}
    By replacing $\scrM$ with $\scrM(\kappa)$, we may assume $\kappa = 0$. For any $z\in \calX(\overline{\Q}_p)$, choose an open affinoid $\calU = \Spa(R, R^{\circ}) \subset \calX$ containing $z$ such that $\scrM$ is free over $\calU$, $\Sigma \cap \calU$ is Zariski dense in $\calU$, and $\calU$ is $F$-small in the sense of \cite[Sect. 5.2]{Kisin-FM}. Note that, for any $C\in \Z_{\geq 0}$, $\Sigma_C \cap \calU$ is again Zariski dense in $\calU$. 

    For any $k\in \Z_{>0}$, we set $C_k = k+\sup_{y\in \calU(\overline{\Q}_p)}\{v_p(F(y))\} +1$. Then, by weak admissibility, for any $x\in \Sigma_{C_k}$, any map \[
        \scrM^{\vee}|_x \rightarrow (B_{\dR}^+/\textbf{\textit{t}}^k B_{\dR}^+)\otimes_{\Q_p} L_x
    \]
    factors through $(B_{\st}^+ \otimes_{\Q_p}L_x)^{\varphi = F(x)}$ under our assumption. Here, $\textbf{\textit{t}} = \log[\bfepsilon]\in B_{\dR}^+$. Then, similar as in the proof of \cite[Theorem 3.3.3]{BC}, we may apply \cite[Corollary 5.15 \& Corollary 5.16]{Kisin-FM} and obtain a non-trivial morphism \[
        \scrM^{\vee}(\calU)\rightarrow (B_{\st}^+ \widehat{\otimes}_{\Q_p}R)^{\varphi = F}.
    \] 
    This concludes the first assertion in this case. Note that Kisin worked with $B_{\cris}^+$, however the statements in Proposition 5.14, Corollary 5.15, and Corollary 5.16 in \cite{Kisin-FM} remain true by replacing $B_{\cris}^+$ with $B_{\st}^+$.\footnote{The key observation in Kisin's argument is that there exists $k\in \Z_{\geq 0}$ (large enough) such that the map $(B_{\cris}^+ \widehat{\otimes}_{\Q_p}R)^{\varphi = F} \rightarrow (B_{\dR}^+/\textbf{\textit{t}}^k B_{\dR}^+)\widehat{\otimes}_{\Q_p}R$ is injective and has a closed image (\cite[Corollary 3.7]{Kisin-FM}). This remains true by replacing $B_{\cris}^+$ with $B_{\st}^+$ since $(B_{\st}^+ \widehat{\otimes}_{\Q_p}R)^{\varphi = F}$ is isomorphic to a finite direct sum of $(B_{\cris}^+ \widehat{\otimes}_{\Q_p}R)^{\varphi = F/p^i}$ (for some $i$'s). For each $i$, there exists a large enough $k_i$ such that $(B_{\cris}^+ \widehat{\otimes}_{\Q_p}R)^{\varphi = F/p^i} \rightarrow (B_{\dR}^+/\textbf{\textit{t}}^{k_i} B_{\dR}^+)\widehat{\otimes}_{\Q_p}R $ is injective and has a closed image. We then take $k$ to be the largest $k_i$.  }

    To show (ii), we fix $x\in \calX(\overline{\Q}_p)$ such that $\D_{\pst}((\scrM|_x)^{\mathrm{ss}})^{\varphi = F(x)}$ is $1$-dimensional. Let $\calU = \Spa(R, R^{\circ})$ be an $F$-small affinoid open neighbourhood of $x$ such that $\scrM$ is free over $\calU$. Let $\frakJ \subset \scrO_{\calU}(\calU)$ be the smallest ideal such that \[
        \D_{\pst}(\scrM(\calU))^{\varphi = F} \subset \frakJ (\scrM(\calU) \widehat{\otimes}_{\Q_p} B_{\st}).
    \]
    Let $\calZ \subset \calU$ be the closed subspace defined by $\frakJ$. We then consider the blowup \[
        \pi: \calU' \rightarrow \calU
    \] 
    along the ideal $\frakJ$ and let $\scrM'$ be the pullback of $\scrM$. For any $x'\in \pi^{-1}(x)$, we claim that if $\frakI'$ is an ideal in $\scrO_{\calU', x'}$ with cofinite length, then $\D_{\pst}(\scrM'_{x'}/\frakI'\scrM'_{x'})^{\varphi = F}$ is free of rank $1$ over $(\scrO_{\calU', x'}/\frakI')\otimes_{\Q_p}\Q_p^{\unr}$.

    By (i), we see that $\D_{\pst}(\scrM|_x)^{\varphi = F}$ is $1$-dimensional. Hence $\D_{\pst}(\scrM'|_{x'})^{\varphi = F} = \D_{\pst}(\scrM|_x)^{\varphi = F} \otimes L_{x'}$ is of dimension $1$. On the other hand, the specialisation map \[
        \D_{\pst}(\scrM'_{x'}/\frakI'\scrM'_{x'})^{\varphi = F} \rightarrow \D_{\pst}(\scrM'|_{x'})^{\varphi = F}
    \]  
    is non-zero. Hence, by \cite[Lemma 3.3.9 (i)]{BC}, we conclude the claim. 

    To finish the proof, we can now apply \cite[Proposition 3.2.2]{BC} and get \[
        \mathrm{length}(\D_{\pst}(\scrM_x/\frakI \scrM_x)^{\varphi = F}) = \mathrm{length}(\scrO_{\calX, x}/\frakI).
    \]
    Then, \cite[Lemma 3.3.9 (ii)]{BC} implies the desired result. 
\end{proof}

The following corollary is a generalisation of \cite[Theorem 3.4.1]{BC}. 

\begin{Corollary}\label{Corollary: pst period for non-torsion sheaves}
    Suppose $\scrM$ is torsion-free (not necessarily locally free). Let $x\in \calX(\overline{\Q}_p)$ such that $\D_{\pst}((\scrM|_x)^{\mathrm{ss}}(\kappa(x)))^{\varphi = F(x)}$ is $1$-dimensional. Then, for any ideal $\frakI \subset \scrO_{\calX, x}$ of cofinite length, \[
        \mathrm{length}(\D_{\pst}((\scrM_x/\frakI\scrM_x)(\kappa))^{\varphi = F}) = \mathrm{length}(\scrO_{\calX, x}/\frakI).
    \] 
\end{Corollary}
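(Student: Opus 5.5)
The plan is to reduce to the locally free case of Proposition \ref{Proposition: pst period for locally free sheaves}, following the proof of \cite[Theorem 3.4.1]{BC}. As before, twisting by $\kappa$ lets me assume $\kappa=0$. I then fix an affinoid neighbourhood $\calU$ of $x$ on which $\Sigma\cap\calU$ is Zariski dense and $F$-small. The target equality is local at $x$ and only involves finite-length quotients. So I will first replace $\calX$ by the normalisation of each irreducible component through $x$, and then blow up, in order to make $\scrM$ locally free after pullback.

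First I pass to the torsion-free quotient. Since $\scrM$ is torsion-free and $\calX$ is reduced, there is a proper birational morphism $\pi:\calX'\rightarrow\calU$ such that $\scrM':=\pi^*\scrM/\text{torsion}$ is locally free; this is Raynaud--Gruson flattening, or the blow-up of a Fitting ideal, exactly as in \cite[Sect. 3.4]{BC}. Because the blow-up centre is nowhere dense, $\Sigma'=\pi^{-1}(\Sigma)$ minus the exceptional locus is still accumulating. The fibres of $\scrM'$ over points of $\Sigma'$ agree with those of $\scrM$ over points where $\scrM$ is locally free. Hence $(\calX',\Sigma',\scrM',\kappa\circ\pi,F\circ\pi)$ satisfies the hypotheses of Sect. \ref{subsection: generalisation of Kisina nd BC}. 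I also need the fibre condition at every $x'\in\pi^{-1}(x)$. The Galois representation $(\scrM'|_{x'})^{\mathrm{ss}}$ has the same Jordan--Hölder constituents as $(\scrM|_x)^{\mathrm{ss}}\otimes L_{x'}$, because determinants and pseudocharacters are constant on fibres. Therefore $\D_{\pst}((\scrM'|_{x'})^{\mathrm{ss}})^{\varphi=F}$ is $1$-dimensional. Proposition \ref{Proposition: pst period for locally free sheaves}(ii) then says that $\D_{\pst}(\scrM'_{x'}/\frakI'\scrM'_{x'})^{\varphi=F}$ is free of rank $1$ for every cofinite $\frakI'\subset\scrO_{\calX',x'}$.

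Second, I descend. For this I use \cite[Proposition 3.2.2]{BC}, which is the descent statement for $B$-period functors along proper birational maps: if the period module is free of rank one at every point of the fibre upstairs, then for every cofinite $\frakI\subset\scrO_{\calX,x}$ the length of the period module of $\scrM_x/\frakI\scrM_x$ equals $\mathrm{length}(\scrO_{\calX,x}/\frakI)$. This is the step already used at the end of the proof of Proposition \ref{Proposition: pst period for locally free sheaves}. The functor $\D_{\pst}(-)^{\varphi=F}=(-\otimes B_{\st}\otimes\overline{\Q}_p)^{\Gal,\varphi=F}$ satisfies the formal properties needed there: left exactness, compatibility with finite base change, and the closed-image property explained in the footnote of that proof. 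Applying the proposition gives the asserted equality of lengths.

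I expect the main obstacle to be checking that \cite[Proposition 3.2.2]{BC} applies verbatim with $\D_{\pst}$ in place of $\D_{\cris}$. The reason is that $\D_{\pst}$ involves taking invariants under a larger group and the result is naturally a module over $\Q_p^{\unr}$. My first approach is to restrict to $\Gal_K$ for a finite Galois extension $K/\Q_p$ over which every relevant fibre becomes semistable. Over $\calU$ this is possible because only finitely many inertial types occur near $x$; I would argue via the finiteness of the $N$-nilpotency and continuity of the Weil--Deligne restriction. Once restricted, $\D_{\pst}$ becomes $\D_{\st,K}$ with descent data, and lengths can be compared after taking $\Gal(K/\Q_p)$-invariants, which is exact in characteristic $0$.
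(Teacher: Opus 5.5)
Your overall route is the paper's. The paper proves the corollary by saying that, given the locally free Proposition, the argument of \cite[Theorem 3.4.1]{BC} goes through verbatim, and that argument is the reduction you describe: a proper birational $\pi$ making $\pi^*\scrM/\mathrm{torsion}$ locally free, part (ii) at each $x'\in\pi^{-1}(x)$, then descent via \cite[Proposition 3.2.2]{BC}.

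However, your check of the fibre hypothesis at $x'$ is wrong exactly in the case this corollary adds. When $\scrM$ is not locally free at $x$, the fibre $\scrM|_x$ is in general strictly bigger than $\scrM'|_{x'}$: if $\scrM$ has constant generic rank $d$, then $\dim\scrM|_x>d=\dim\scrM'|_{x'}$. So no constancy of determinants on fibres can identify their Jordan--H\"older constituents. What the surjection $\pi^*\scrM\twoheadrightarrow\scrM'$ actually gives is that $\scrM'|_{x'}$ is a quotient of $\scrM|_x\otimes L_{x'}$. Its constituents therefore form a sub-multiset of those of $(\scrM|_x)^{\mathrm{ss}}\otimes L_{x'}$, and the hypothesis yields only $\dim\D_{\pst}((\scrM'|_{x'})^{\mathrm{ss}})^{\varphi=F}\leq 1$. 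For the reverse inequality, apply part (i) of the Proposition to $(\calX',\Sigma',\scrM',\kappa\circ\pi,F\circ\pi)$ and use left exactness along a Jordan--H\"older filtration. Only after that may you invoke part (ii). The paper combines (i) with the semisimplified hypothesis in the same way inside its own proof of (ii).

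Second, drop the fallback in your last paragraph, because it is not needed. All the $\D_{\pst}$-specific input is already contained in parts (i) and (ii). The paper also applies \cite[Proposition 3.2.2]{BC} to $\D_{\pst}$ without further comment in its proof of (ii). Once the fibre condition is checked as above, the corollary follows formally. As sketched, the fallback would not work anyway. Nothing in the setup controls the inertial types of the points of $\Sigma$ near $x$. Moreover, points outside $\Sigma$ and the artinian quotients $\scrM_x/\frakI\scrM_x$ need not be potentially semistable. So there is neither a Weil--Deligne representation whose continuity you could use, nor a single $\Gal_K$ over which every relevant fibre becomes semistable.
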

\begin{proof}
    Providing Proposition \ref{Proposition: pst period for locally free sheaves}, the proof is exactly the same as in \cite[Theorem 3.4.1]{BC}. 
\end{proof}

\subsection{Proof of Theorem \ref{Theorem: pst period}}\label{subsection: proof of pst periods}

We keep the notations and assumptions as in Sect. \ref{subsection: Setup, App A}. We first prove the following key lemma: 

\begin{Lemma}\label{Lemma: key lemma for pst period}
    We have the following equality \[
        \mathrm{length}(\D_{\pst}(M_j/\frakI M_j(\kappa))^{\varphi = F}) = \mathrm{length}(\bbT_x/\frakI). 
    \]
\end{Lemma}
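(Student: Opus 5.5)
This lemma is the $\D_{\pst}$ analogue of the length equality in the proof of \cite[Theorem 4.3.2]{BC}, and I plan to deduce it from Corollary \ref{Corollary: pst period for non-torsion sheaves} by spreading $M_j$ out to a torsion-free coherent sheaf with $\Gal_{\Q_p}$-action on a neighbourhood of $x$.

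First, I realise $M_j$ geometrically. By \cite[Sect. 1.5.7]{BC}, $M_j = A\cdot e_j$ is a finitely generated $\bbT_x$-submodule of $(\mathrm{Frac}\bbT_x)^d$, stable under $\Gal_{\Q_p}$ through $\widetilde{\rho}$. It is torsion-free, since it sits inside a vector space over the total fraction ring of the reduced ring $\bbT_x$. Moreover $M_j/\frakm_x M_j$ has semisimplification containing $\rho_{x,j}$ with multiplicity one, and otherwise only the $\rho_{x,i}$ with $i\neq j$ (each with some multiplicity). I then choose an affinoid neighbourhood $\calU$ of $x$ on which $\widetilde{\rho}$, the GMA idempotents and the generators of $M_j$ are defined. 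Arguing as in \cite[Sect. 3.2 and Lemma 4.3.3]{BC}, I obtain a torsion-free coherent sheaf $\scrM$ on $\calU$ with continuous $\Gal_{\Q_p}$-action whose stalk at $x$ is $M_j$ and whose generic fibre is a subrepresentation of the family $\widetilde{\rho}$. Shrinking $\calU$ if necessary, (ACC) guarantees that $\Sigma\cap\calU$ and every $\Sigma_C\cap\calU$ are Zariski dense in $\calU$.

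Next, I check the hypotheses of Sect. \ref{subsection: generalisation of Kisina nd BC} for the data $(\calU,\Sigma\cap\calU,\scrM,\kappa,F)$. At a point $y\in\Sigma$ away from a proper Zariski closed set, $\scrM|_y$ is isomorphic to $\rho_y$, because $\scrM$ is generically a lattice in the full family. Hence $\scrM|_y$ is de Rham, its minimal Hodge--Tate weight is $\kappa(y)$, and its $\varphi=F(y)$ period line lies in $\Fil^0$ and has dimension $1$. I simply replace $\Sigma$ by this dense open part, which is still accumulating at $x$. At $x$ itself, $(\scrM|_x)^{\mathrm{ss}}$ is a direct sum of copies of the $\rho_{x,i}$ in which $\rho_{x,j}$ appears exactly once. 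By (REG) and the uniqueness of $j$, the space $\D_{\pst}((\scrM|_x)^{\mathrm{ss}}(\kappa(x)))^{\varphi=F(x)}$ is therefore $1$-dimensional. Corollary \ref{Corollary: pst period for non-torsion sheaves} then gives
\[
\mathrm{length}\,\D_{\pst}\bigl((M_j/\frakI M_j)(\kappa)\bigr)^{\varphi=F}=\mathrm{length}(\bbT_x/\frakI),
\]
which is the statement of the lemma.

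I expect the construction of $\scrM$ to be the main obstacle. The delicate points are making $\scrM$ coherent with a continuous Galois action in the sense of \cite[Sect. 3.2.1]{BC}, and guaranteeing that its fibres at a dense set of points of $\Sigma$ really coincide with $\rho_y$ rather than with some non-split variant. My first approach is to copy the argument of \cite[Lemma 4.3.3]{BC} verbatim, replacing $\D_{\cris}$ by $\D_{\pst}$ wherever it occurs. The multiplicity count for $\rho_{x,j}$ inside $(M_j/\frakm_x M_j)^{\mathrm{ss}}$ should follow directly from the GMA description of $M_j$.
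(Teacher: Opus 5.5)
You have the right framework (spread out, then apply Corollary~\ref{Corollary: pst period for non-torsion sheaves}). However, the verification of the hypotheses at the points of $\Sigma$ fails, because you apply the corollary to $M_j$ itself. You say the generic fibre of your sheaf $\scrM$ is a \emph{sub}representation of $\widetilde{\rho}$, and then assert that $\scrM$ is ``generically a lattice in the full family'', so that $\scrM|_y\cong\rho_y$ for generic $y\in\Sigma$. Nothing justifies the passage from the first statement to the second. The column module $M_j$ is built from the fractional ideals $A_{i,j}$. It satisfies $M_j\otimes_{\bbT_x}\mathrm{Frac}\,\bbT_x=(\mathrm{Frac}\,\bbT_x)^d$ only if these are nonzero on every irreducible component of $\calX$ through $x$. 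The setup of Sect.~\ref{subsection: Setup, App A} allows the family to be reducible along some component, and $\bbT_x$ need not be a domain. On such a component, $\scrM|_y$ is a proper subrepresentation of $\rho_y$ at the points $y\in\Sigma$. You then have no control over whether $\kappa(y)$ is still its minimal Hodge--Tate weight, or whether $\D_{\pst}(\scrM|_y(\kappa(y)))^{\varphi=F(y)}$ is still nonzero. These points can be Zariski dense in that component, so you cannot simply discard them. The step you propose to copy from the proof of \cite[Lemma 4.3.3]{BC} is exactly where this issue is handled.

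The missing ingredient is \cite[Lemma 4.3.9]{BC}, which the paper uses. It provides an $A$-submodule $N\subset(\mathrm{Frac}\,\bbT_x)^d$ with $(M_j\oplus N)\,\mathrm{Frac}\,\bbT_x=(\mathrm{Frac}\,\bbT_x)^d$, such that $(N/\frakm_xN)^{\mathrm{ss}}$ involves only the $\rho_{x,i}$ with $i\neq j$.
\begin{itemize}
\item The corollary is applied to $M=M_j\oplus N$, which is torsion-free of generic rank $d$ with determinant $\Det_x$.
\item Spread out via \cite[Lemma 4.3.7]{BC}, $M$ has semisimple generic fibre. So after removing a proper Zariski closed subset, one gets $\scrM|_y\cong\rho_y$ exactly for $y\in\Sigma$. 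Equality of semisimplifications, which is all that ``generically a lattice'' would give, is not enough: a non-split reduction need not be de Rham, nor have its period line in $\Fil^0_{\dR}$.
\item At $x$, (REG) gives $\dim\D_{\pst}((M/\frakm_xM)^{\mathrm{ss}}(\kappa(x)))^{\varphi=F(x)}=1$. The corollary then yields $\mathrm{length}\,\D_{\pst}(M/\frakI M(\kappa))^{\varphi=F}=\mathrm{length}(\bbT_x/\frakI)$.
\item All constituents of $N/\frakI N$ lie among the $\rho_{x,i}$ with $i\neq j$. So (REG) and d\'evissage give $\D_{\pst}(N/\frakI N(\kappa))^{\varphi=F}=0$, and the length for $M$ equals the length for $M_j$.
\end{itemize}
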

\begin{proof}
    The following proof is similar to \cite[Lemma 4.3.3]{BC}. 
    
    By \cite[Lemma 4.3.9]{BC}, there exists an $A$-submodule $N \subset (\mathrm{Frac}\bbT_x)^d$ such that $(N \oplus M_j) \mathrm{Frac}\bbT_x = (\mathrm{Frac}\bbT_x)^d$ and the semisimplification of $N/\frakm_x$ is isomorphic to a direct sum of copies of $\rho_{x, i}$ with $i \neq j$. Put $M = M_j \oplus N$. Then, by (REG), we see that \[
        \dim \D_{\pst}((M/\frakm_x)^{\mathrm{ss}}(\kappa(x)))^{\varphi = F} = 1. 
    \]
    Since $\D_{\pst}(M/\frakI M ) = \D_{\pst}(M_j/\frakI M_j) \oplus \D_{\pst}(N/\frakI N)$, we see that $\D_{\pst}(N/\frakI N(\kappa))^{\varphi = F} = 0$. 

    By \cite[Lemma 4.3.7]{BC}, there exists an affinoid neighbourhood $\calU \subset \calX$ of $x$ such that $\Sigma \cap \calU$ is Zariski dense and $M$ extends to a torsion-free coherent sheaf on $\calU$ with continuous $\Gal_{\Q_p}$-action. 

    Since $\scrM(\calU)$ is torsion-free of generic rank $d$ with determinant $\Det$, by the generic flatness theorem, there exists a proper Zariski closed subset $\calZ \subset \calU$ such that, for any $y\in (\calU \smallsetminus \calZ)\cap \Sigma$, $(\scrM|_y)^{\mathrm{ss}} = \rho_y$. Since $\scrM(\calU) \otimes_{\scrO_{\calU}(\calU)} \mathrm{Frac}\scrO_{\calU}(\calU)$ is semisimple by \cite[Lemma 4.3.7]{BC}, we may enlarge $\calZ$ so that for every $y\in (\calU \smallsetminus \calZ)\cap \Sigma$, $(\scrM|_y)^{\mathrm{ss}} = \scrM|_y$. We replace $\Sigma$ by $(\calU \cap \Sigma) \smallsetminus (\calU \cap \Sigma \cap \calZ)$, then the property of $\Sigma_C$ still holds by (ACC). Then, by the conditions in Sect. \ref{subsection: Setup, App A}, we see that $(\calU, \Sigma \cap \calU, \scrM, \kappa, F)$ satisfies the conditions in Sect. \ref{subsection: generalisation of Kisina nd BC}. 

    We conclude the proof by applying Corollary \ref{Corollary: pst period for non-torsion sheaves}.  
\end{proof}

\begin{Corollary}\label{Corollary: generalisation of BC Lemma 4.3.10}
    Instead of (MF), we assume \begin{itemize}
        \item[(FM)] There exists a free $\bbT_x$-module $M$ of rank $d$ with a $\bbT_x$-linear action of $\Gal_{\Q_p}$ whose corresponding determinant agrees with $\Det$, and such that $M\otimes \mathrm{Frac}\bbT_x$ is a semisimple $\mathrm{Frac}\bbT_x[\Gal_{\Q_p}]$-module.  
    \end{itemize}
    Then, for any ideal of cofinite length $\frakI \subset\bbT_x$, $\D_{\pst}(M/\frakI M (\kappa))^{\varphi = F}$ is free of rank $1$ over $(\bbT_x/\frakI)\otimes_{\Q_p}\Q_p^{\unr}$. 
\end{Corollary}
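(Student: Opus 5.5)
This is the analogue of \cite[Lemma 4.3.10]{BC}. The plan is to reduce to Proposition \ref{Proposition: pst period for locally free sheaves} (ii) by spreading $M$ out to a locally free sheaf on a neighbourhood of $x$.

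First, since $M$ is free of rank $d$ over $\bbT_x = \scrO_{\calX,x}$ and the $\Gal_{\Q_p}$-action is continuous, we extend it as follows, in the same way as \cite[Lemma 4.3.7]{BC}. Choose a basis of $M$. The matrix coefficients of a topologically finite set of generators of $\Gal_{\Q_p}$ are defined on some affinoid neighbourhood $\calU=\Spa(R,R^\circ)$ of $x$. Using continuity and shrinking $\calU$, this yields a free $R$-module $\scrM(\calU)$ of rank $d$ with a continuous $R$-linear $\Gal_{\Q_p}$-action, whose stalk at $x$ is $M$ and whose determinant is $\Det|_{\calU}$. By (ACC) we may also arrange that $\Sigma\cap\calU$ is Zariski dense in $\calU$.

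Second, we check the hypotheses of Sect. \ref{subsection: generalisation of Kisina nd BC} on a smaller set of points. Since $M\otimes\mathrm{Frac}\bbT_x$ is semisimple, the generic fibre of $\scrM$ is semisimple. By generic flatness, as in the proof of Lemma \ref{Lemma: key lemma for pst period}, there is a proper Zariski closed $\calZ\subset\calU$ such that for all $y\in(\Sigma\cap\calU)\smallsetminus\calZ$ we have $\scrM|_y\cong\rho_y$. The conditions of Sect. \ref{subsection: Setup, App A} then transfer to these $y$: $\scrM|_y$ is de Rham with minimal Hodge--Tate weight $\kappa(y)$, the $\Fil^0$-condition holds, and the $F(y)$-eigenspace is one-dimensional. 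Removing the closed set $\calZ$ preserves the accumulation property of the sets $\Sigma_C$. So $(\calU,\Sigma',\scrM,\kappa,F)$ with $\Sigma'=(\Sigma\cap\calU)\smallsetminus\calZ$ satisfies those conditions, and $\scrM$ is locally free.

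Third, we verify the fibre hypothesis of Proposition \ref{Proposition: pst period for locally free sheaves} (ii) at $x$. The representation $(\scrM|_x)^{\mathrm{ss}}$ has determinant $\Det(x)$, so $(\scrM|_x)^{\mathrm{ss}}\cong\rho_x^{\mathrm{ss}}$, where $\rho_x$ is attached to $\Det(x)$; here (MF) is not needed, since semisimple representations are determined by their determinants. By (REG), $\D_{\pst}(\rho_x(\kappa(x)))^{\varphi=F(x)}$ is one-dimensional. Proposition \ref{Proposition: pst period for locally free sheaves} (ii) then gives directly that $\D_{\pst}(M/\frakI M(\kappa))^{\varphi=F}$ is free of rank $1$ over $(\bbT_x/\frakI)\otimes_{\Q_p}\Q_p^{\unr}$.

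The main obstacle is the first step. One must spread $M$ out over an affinoid with a genuinely continuous action, compatible with the stalk, and keep $\scrM$ free rather than merely torsion-free so that Proposition \ref{Proposition: pst period for locally free sheaves} applies instead of the weaker length statement of Corollary \ref{Corollary: pst period for non-torsion sheaves}. I would follow \cite[Lemma 4.3.7]{BC} verbatim, passing to the lattice generated by a finite basis. If freeness fails on the nose, the fallback is to use Corollary \ref{Corollary: pst period for non-torsion sheaves} to get $\mathrm{length}=\mathrm{length}(\bbT_x/\frakI)$, and then combine this with non-vanishing of the specialisation to $x$ and \cite[Lemma 3.3.9 (ii)]{BC} to upgrade to freeness of rank $1$.
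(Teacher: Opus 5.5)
Your argument is correct. It differs from the paper's only in which family-level result it feeds into.

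\textbf{The paper's route.} The paper reduces to Lemma \ref{Lemma: key lemma for pst period} and \cite[Lemma 4.3.10]{BC}. One runs the argument of Lemma \ref{Lemma: key lemma for pst period} with $M$ in place of $M_j\oplus N$: spread $M$ out to a torsion-free coherent sheaf, shrink $\Sigma$ by generic flatness and generic semisimplicity, and apply Corollary \ref{Corollary: pst period for non-torsion sheaves}. This gives $\mathrm{length}\,\D_{\pst}(M/\frakI M(\kappa))^{\varphi=F}=\mathrm{length}(\bbT_x/\frakI)$. Only then is the freeness of $M/\frakI M$ over $\bbT_x/\frakI$, together with the one-dimensional residual eigenspace, used to upgrade length to freeness of rank one via \cite[Lemma 3.3.9]{BC}. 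That is exactly your fallback.

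\textbf{Your route.} Your main route uses the freeness of $M$ at the outset to land in the locally free case. You then quote Proposition \ref{Proposition: pst period for locally free sheaves}~(ii), which already packages the length count and the freeness upgrade through its blow-up argument. So you never need Corollary \ref{Corollary: pst period for non-torsion sheaves}.

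\textbf{Your hedge is unnecessary.} By \cite[Lemma 4.3.7]{BC} there is an affinoid $\calU\ni x$ and a torsion-free coherent $\scrM$ on it, with continuous $\Gal_{\Q_p}$-action and $\scrM_x\cong M$. This stalk is free of rank $d$. A map $\scrO_{\calU}^{d}\to\scrM$ lifting a basis therefore has coherent kernel and cokernel with zero stalk at $x$. Both vanish near $x$, so $\scrM$ is free on a smaller affinoid neighbourhood and your main route goes through.

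\textbf{Do not use the generator sketch in your first step.} You propose spreading out via matrix coefficients of finitely many topological generators. This does not work as stated:
\begin{itemize}
\item (FM) puts no topology on the action over $\bbT_x$.
\item Matrices of finitely many generators with entries in some $\scrO(\calU)$ do not, by themselves, give a continuous action of all of $\Gal_{\Q_p}$ over a fixed affinoid.
\end{itemize}
The extension has to go through $\Det$, which is defined and continuous over $\scrO_{\calX}(\calX)$, as in \cite[Lemma 4.3.7]{BC}. This is where the semisimplicity of $M\otimes\mathrm{Frac}\,\bbT_x$ in (FM) is used: it identifies the image of $\bbT_x[\Gal_{\Q_p}]$ in $\End_{\bbT_x}(M)$ with $\bbT_x[\Gal_{\Q_p}]/\ker\Det_x$. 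That identification is what lets that lemma apply to $M$.
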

\begin{proof}
    Providing Lemma \ref{Lemma: key lemma for pst period}, the proof is exactly the same as \cite[Lemma 4.3.10]{BC}.
\end{proof}

\begin{proof}[Proof of Theorem \ref{Theorem: pst period}]
    By \cite[Theorem 1.5.6 (0)]{BC}, we have a short exact sequence \[
        0 \rightarrow N \rightarrow M_j/\frakI M_j \rightarrow \widetilde{\rho}_j \rightarrow 0,
    \]
    where all simple subquotients of $N$ are isomorphic to $\rho_{x, i}$ for some $i \neq j$. By (REG), we see that $\D_{\pst}(N(\kappa))^{\varphi = F} = 0$ and so \[
        \D_{\pst}(M_j/\frakI M_j(\kappa))^{\varphi = F} \hookrightarrow \D_{\pst}(\widetilde{\rho}_j(\kappa))^{\varphi = F}.
    \]
    Therefore, Lemma \ref{Lemma: key lemma for pst period} implies $\mathrm{length}(\D_{\pst}(\widetilde{\rho}_j(\kappa))^{\varphi = F} )\geq \mathrm{length}(\bbT_x/\frakI)$. On the other hand, by \cite[Lemma 3.2.9 (i)]{BC}, we have $\mathrm{length}(\D_{\pst}(\widetilde{\rho}_j(\kappa))^{\varphi = F} )= \mathrm{length}(\bbT_x/\frakI)$. Hence, we have an isomorphism \[
        \D_{\pst}(M_j/\frakI M_j(\kappa))^{\varphi = F} \cong \D_{\pst}(\widetilde{\rho}_j(\kappa))^{\varphi = F}.
    \]
    Then, by \cite[Lemma 3.3.9]{BC}, we see that $\D_{\pst}(\widetilde{\rho}_j(\kappa))^{\varphi = F}$ is free of rank $1$. This concludes (i). 

    The proof for (ii) is exactly the same except that, instead of applying \cite[Theorem 1.5.6 (0)]{BC} in the beginning, we implement \cite[Theorem 1.5.6 (2)]{BC}. 
\end{proof}

\end{appendix}

\printbibliography[heading=bibintoc]

@article {Allen-dukepaper,
    AUTHOR = {Allen, Patrick B.},
     TITLE = {Deformations of polarized automorphic {G}alois representations
              and adjoint {S}elmer groups},
   JOURNAL = {Duke Math. J.},
  FJOURNAL = {Duke Mathematical Journal},
    VOLUME = {165},
      YEAR = {2016},
    NUMBER = {13},
     PAGES = {2407--2460},
      %ISSN = {0012-7094,1547-7398},
   MRCLASS = {11F80 (11F70 11R34)},
  MRNUMBER = {3546966},
MRREVIEWER = {Ivan\ Mati\'c},
       %DOI = {10.1215/00127094-3477342},
       %URL = {https://doi.org/10.1215/00127094-3477342},
}

@article{AIP-2015,
    %ISSN = {0003486X},
    author = {Fabrizio Andreatta and Adrian Iovita and Vincent Pilloni},
    journal = {Annals of Mathematics},
    number = {2},
    pages = {623--697},
    publisher = {Annals of Mathematics},
    title = {$p$-adic families of Siegel modular cuspforms},
    volume = {181},
    year = {2015}
}

@article {AIS-2014,
    AUTHOR = {Andreatta, Fabrizio and Iovita, Adrian and Stevens, Glenn},
     TITLE = {Overconvergent modular sheaves and modular forms for {${\bf
              GL}_{2/F}$}},
   JOURNAL = {Israel J. Math.},
  FJOURNAL = {Israel Journal of Mathematics},
    VOLUME = {201},
      YEAR = {2014},
    NUMBER = {1},
     PAGES = {299--359},
   MRCLASS = {11F41 (11F85 14F30)},
  MRNUMBER = {3265287},
MRREVIEWER = {G.\ K.\ Sankaran},
}

@article {Andrianov-SKConjecture,
    AUTHOR = {Andrianov, Anatolii N.},
     TITLE = {Modular descent and the {S}aito--{K}urokawa conjecture},
   JOURNAL = {Invent. Math.},
  FJOURNAL = {Inventiones Mathematicae},
    VOLUME = {53},
      YEAR = {1979},
    NUMBER = {3},
     PAGES = {267--280},
   MRCLASS = {10D20 (10D24)},
  MRNUMBER = {549402},
MRREVIEWER = {Hiroshi\ Saito},
}

@article {AS-GSp4toGL4,
    AUTHOR = {Asgari, Mahdi and Shahidi, Freydoon},
     TITLE = {Generic transfer from {$\rm GSp(4)$} to {$\rm GL(4)$}},
   JOURNAL = {Compos. Math.},
  FJOURNAL = {Compositio Mathematica},
    VOLUME = {142},
      YEAR = {2006},
    NUMBER = {3},
     PAGES = {541--550},
   MRCLASS = {11F70 (11R42 22E50 22E55)},
  MRNUMBER = {2231191},
MRREVIEWER = {Wee\ Teck\ Gan},
}

@misc{Ash-Stevens,
    author = {Avner Ash and Glenn Stevens},
    title = {$p$-adic deformations of arithmetic cohomology},
    howpublished = {Preprint. Available at \url{http://math.bu.edu/people/ghs/preprints/Ash-Stevens-02-08.pdf}},
    year = {2008}
}

@article {BSDW26,
    AUTHOR = {Barrera Salazar, Daniel and Dimitrov, Mladen and Williams,
              Chris},
     TITLE = {On {$p$}-adic {$L$}-functions for {${\rm GL}_{2n}$} in finite
              slope {S}halika families},
   JOURNAL = {Adv. Math.},
  FJOURNAL = {Advances in Mathematics},
    VOLUME = {486},
      YEAR = {2026},
     PAGES = {Paper No. 110741, 104},
   MRCLASS = {11F33 (11F67 11R23)},
  MRNUMBER = {5006161},
  SHORTHAND = {BSDW26},
}

@misc{Brasca--Rosso,
      title={Eigenvarieties for non-cuspidal modular forms over certain PEL Shimura varieties}, 
      author={Riccardo Brasca and Giovanni Rosso},
      year={2018},
      howpublished = {Preprint. Available at: \url{url{https://arxiv.org/abs/1605.05065}}},
}

@article {Breuil--Ding-BernsteinEigen,
    AUTHOR = {Breuil, Christophe and Ding, Yiwen},
     TITLE = {Bernstein eigenvarieties},
   JOURNAL = {Peking Math. J.},
  FJOURNAL = {Peking Mathematical Journal},
    VOLUME = {7},
      YEAR = {2024},
    NUMBER = {2},
     PAGES = {471--642},
   MRCLASS = {11S37 (11F80 11S23 22E35 22E50)},
  MRNUMBER = {4792986},    
}

@misc{Bellaiche-BK,
    author = {Joël Bellaïche}, 
    title = {An introduction to the conjecture of Bloch and Kato},
    year = {2009},
    howpublished = {Lecture Notes at  the Clay Mathematical Institute Summer School. Available at: \url{https://virtualmath1.stanford.edu/~conrad/BSDseminar/refs/BKintro.pdf}}
}

@book{BC,
     author = {Joël Bellaïche and Gaëtan Chenevier},
     title = {Families of Galois representations and Selmer groups},
     series = {Ast{\'e}risque},
     publisher = {Soci{\'e}t{\'e} math{\'e}matique de France},
     number = {324},
     year = {2009},
     mrnumber = {2656025},
     zbl = {1192.11035},
     %language = {en},
     %url = {http://www.numdam.org/item/AST_2009__324__R1_0/}
}

@article{Bellovin-GDeform,
     author = {Bellovin, Rebecca},
     title = {Generic smoothness for $G$-valued potentially semi-stable deformation rings},
     journal = {Annales de l'Institut Fourier},
     pages = {2565--2620},
     publisher = {Association des Annales de l{\textquoteright}institut Fourier},
     volume = {66},
     number = {6},
     year = {2016},
     %doi = {10.5802/aif.3072},
     language = {en},
     %url = {http://www.numdam.org/articles/10.5802/aif.3072/}
}

@article {Bergdall-paraboline,
    AUTHOR = {Bergdall, John},
     TITLE = {Paraboline variation over {$p$}-adic families of
              {$(\phi,\Gamma)$}-modules},
   JOURNAL = {Compos. Math.},
  FJOURNAL = {Compositio Mathematica},
    VOLUME = {153},
      YEAR = {2017},
    NUMBER = {1},
     PAGES = {132--174},
      %ISSN = {0010-437X,1570-5846},
   MRCLASS = {11F80 (11F33 11F55 11F85 14F30)},
  MRNUMBER = {3622874},
MRREVIEWER = {Ivan\ Mati\'c},
       %DOI = {10.1112/S0010437X16007831},
       %URL = {https://doi.org/10.1112/S0010437X16007831},
}

@article {Berger-pDifferential,
    AUTHOR = {Berger, Laurent},
     TITLE = {Repr\'{e}sentations {$p$}-adiques et \'{e}quations
              diff\'{e}rentielles},
   JOURNAL = {Invent. Math.},
  FJOURNAL = {Inventiones Mathematicae},
    VOLUME = {148},
      YEAR = {2002},
    NUMBER = {2},
     PAGES = {219--284},
      %ISSN = {0020-9910,1432-1297},
   MRCLASS = {14F30 (11S20 12H25 14F40 14G20)},
  MRNUMBER = {1906150},
MRREVIEWER = {Adolfo\ Quir\'{o}s},
       %DOI = {10.1007/s002220100202},
       %URL = {https://0-doi-org.pugwash.lib.warwick.ac.uk/10.1007/s002220100202},
}

@incollection {Berger-phiN,
    AUTHOR = {Berger, Laurent},
     TITLE = {\'Equations diff\'erentielles {$p$}-adiques et
              {$(\phi,N)$}-modules filtr\'es},
      NOTE = {Repr\'esentations $p$-adiques de groupes $p$-adiques. I.
              Repr\'esentations galoisiennes et $(\phi,\Gamma)$-modules},
   JOURNAL = {Ast\'erisque},
  FJOURNAL = {Ast\'erisque},
    NUMBER = {319},
      YEAR = {2008},
     PAGES = {13--38},
      %ISSN = {0303-1179,2492-5926},
      %ISBN = {978-2-85629-256-3},
   MRCLASS = {11F80 (12H25 14F30)},
  MRNUMBER = {2493215},
MRREVIEWER = {Michael\ M.\ Schein},
}

@incollection {Berger-Intro,
    AUTHOR = {Berger, Laurent},
     TITLE = {An introduction to the theory of {$p$}-adic representations},
 BOOKTITLE = {Geometric aspects of {D}work theory. {V}ol. {I}, {II}},
     PAGES = {255--292},
 PUBLISHER = {Walter de Gruyter, Berlin},
      YEAR = {2004},
      %ISBN = {3-11-017478-2},
   MRCLASS = {11S25 (11F80 12H25 13K05 14F30)},
  MRNUMBER = {2023292},
MRREVIEWER = {Mark\ Kisin},
}

@article {BB22,
    AUTHOR = {Berger, Tobias and Betina, Adel},
     TITLE = {On {S}iegel eigenvarieties at {S}aito--{K}urokawa points},
   JOURNAL = {Ann. Inst. Fourier (Grenoble)},
  FJOURNAL = {Universit{\'e} de Grenoble. Annales de l'Institut Fourier},
    VOLUME = {72},
      YEAR = {2022},
    NUMBER = {3},
     PAGES = {901--961},
      %ISSN = {0373-0956,1777-5310},
   MRCLASS = {11F33 (11F46 11F80 11F85 14G35)},
  MRNUMBER = {4485817},
MRREVIEWER = {Christopher\ Birkbeck},
       %DOI = {10.5802/aif.3482},
       %URL = {https://doi.org/10.5802/aif.3482},
}

@article {Benois-LInvariant,
    AUTHOR = {Benois, Denis},
     TITLE = {A generalization of {G}reenberg's {$\mathcal{L}$}-invariant},
   JOURNAL = {Amer. J. Math.},
  FJOURNAL = {American Journal of Mathematics},
    VOLUME = {133},
      YEAR = {2011},
    NUMBER = {6},
     PAGES = {1573--1632},
      %ISSN = {0002-9327,1080-6377},
   MRCLASS = {11G40 (11F67)},
  MRNUMBER = {2863371},
MRREVIEWER = {Sarah\ Livia\ Zerbes},
}

@article{BLGGT-localglobal2,
     author = {Barnet-Lamb, Thomas and Gee, Toby and Geraghty, David and Taylor, Richard},  
     shorthand = {BLGGT14},
     title = {Local-global compatibility for~$l=p$, {II}},
     journal = {Annales scientifiques de l'\'Ecole Normale Sup\'erieure},
     pages = {165--179},
     publisher = {Soci\'et\'e Math\'ematique de France. Tous droits r\'eserv\'es},
     volume = {Ser. 4, 47},
     number = {1},
     year = {2014},
     %doi = {10.24033/asens.2212},
     mrnumber = {3205603},
     zbl = {1395.11081},
     %url = {https://www.numdam.org/articles/10.24033/asens.2212/}
}

@misc{BP-HigherColeman,
    author = {George Boxer and Vincent Pilloni}, 
    title = {Higher Coleman Theory},
    year = {2020}, 
    howpublished = {Preprint. Available at: \url{https://perso.ens-lyon.fr/vincent.pilloni/HigherColeman.pdf}}
}

@inbook{Buzzard_2007, 
    place={Cambridge}, 
    series={London Mathematical Society Lecture Note Series}, 
    title={Eigenvarieties}, 
    %DOI={10.1017/CBO9780511721267.004}, 
    booktitle={$L$-Functions and Galois Representations}, 
    publisher={Cambridge University Press}, 
    author={Buzzard, Kevin}, 
    editor={Burns, David and Buzzard, Kevin and Nekovář, Jan}, 
    year={2007}, 
    pages={59–120}, 
    collection={London Mathematical Society Lecture Note Series}
}

@misc{Buzzard-HTNotes,
    author = {Kevin Buzzard}, 
    title = {Hodge--Tate theory} ,
    howpublished = {Notes. Available at: \url{https://www.ma.imperial.ac.uk/~buzzard/maths/research/notes/hodge_tate_theory.pdf}}, 
    year = {2012}
}

@article {BSW-ParabolicEigen,
    AUTHOR = {Barrera Salazar, Daniel and Williams, Chris},
 SHORTHAND = {BSW21},
     TITLE = {Parabolic eigenvarieties via overconvergent cohomology},
   JOURNAL = {Math. Z.},
  FJOURNAL = {Mathematische Zeitschrift},
    VOLUME = {299},
      YEAR = {2021},
    NUMBER = {1-2},
     PAGES = {961--995},
      %ISSN = {0025-5874,1432-1823},
   MRCLASS = {11F85 (11F75)},
  MRNUMBER = {4311626},
MRREVIEWER = {Manouchehr\ Misaghian},
       %DOI = {10.1007/s00209-021-02707-9},
       %URL = {https://0-doi-org.pugwash.lib.warwick.ac.uk/10.1007/s00209-021-02707-9},
}

@incollection {Chenevier-determinant,
    AUTHOR = {Chenevier, Ga\"etan},
     TITLE = {The {$p$}-adic analytic space of pseudocharacters of a
              profinite group and pseudorepresentations over arbitrary
              rings},
 BOOKTITLE = {Automorphic forms and {G}alois representations. {V}ol. 1},
    SERIES = {London Math. Soc. Lecture Note Ser.},
    VOLUME = {414},
     PAGES = {221--285},
 PUBLISHER = {Cambridge Univ. Press, Cambridge},
      YEAR = {2014},
      %ISBN = {978-1-107-69192-6},
   MRCLASS = {11F70 (11F80 14G22 20E18)},
  MRNUMBER = {3444227},
MRREVIEWER = {Cameron\ Franc},
}

@article {Coleman-pAdicFamilies,
    AUTHOR = {Coleman, Robert F.},
     TITLE = {{$p$}-adic {B}anach spaces and families of modular forms},
   JOURNAL = {Invent. Math.},
  FJOURNAL = {Inventiones Mathematicae},
    VOLUME = {127},
      YEAR = {1997},
    NUMBER = {3},
     PAGES = {417--479},
      %ISSN = {0020-9910},
   MRCLASS = {11F33 (11F67 11F85 11G07 14E30)},
  MRNUMBER = {1431135},
MRREVIEWER = {Alexey A. Panchishkin},
       %DOI = {10.1007/s002220050127},
       %URL = {https://0-doi-org.pugwash.lib.warwick.ac.uk/10.1007/s002220050127},
}

@inbook{Coleman_Mazur, 
    place={Cambridge}, 
    series={London Mathematical Society Lecture Note Series}, 
    title={The Eigencurve}, 
    %DOI={10.1017/CBO9780511662010.003}, 
    booktitle={Galois Representations in Arithmetic Algebraic Geometry}, 
    publisher={Cambridge University Press}, 
    author={Coleman, Robert F. and Mazur, Barry C.}, 
    editor={Scholl, A. J. and Taylor, R. L.Editors}, 
    year={1998}, 
    pages={1–114}, 
    collection={London Mathematical Society Lecture Note Series}
}

@incollection {Coleman--Stein,
    AUTHOR = {Coleman, Robert F. and Stein, William A.},
     TITLE = {Approximation of eigenforms of infinite slope by eigenforms of
              finite slope},
 BOOKTITLE = {Geometric aspects of {D}work theory. {V}ol. {I}, {II}},
     PAGES = {437--449},
 PUBLISHER = {Walter de Gruyter, Berlin},
      YEAR = {2004},
   MRCLASS = {11F33 (11F11)},
  MRNUMBER = {2023296},
MRREVIEWER = {Mark\ Kisin},
}

@incollection {Colmez-trianguline,
    AUTHOR = {Colmez, Pierre},
     TITLE = {Repr\'esentations triangulines de dimension 2},
      NOTE = {Repr\'esentations $p$-adiques de groupes $p$-adiques. I. Repr\'esentations galoisiennes et $(\phi,\Gamma)$-modules},
   JOURNAL = {Ast\'erisque},
  FJOURNAL = {Ast\'erisque},
    NUMBER = {319},
      YEAR = {2008},
     PAGES = {213--258},
   MRCLASS = {11S37 (11S23)},
  MRNUMBER = {2493219},
MRREVIEWER = {Jan\ Nekov\'a\v r},
}

@article {CDP-plocallanglandsGL2,
    AUTHOR = {Colmez, Pierre and Dospinescu, Gabriel and Pa\v sk\=unas,
              Vytautas},
     TITLE = {The {$p$}-adic local {L}anglands correspondence for {${\rm
              GL}_2(\mathbb{Q}_p)$}},
   JOURNAL = {Camb. J. Math.},
  FJOURNAL = {Cambridge Journal of Mathematics},
    VOLUME = {2},
      YEAR = {2014},
    NUMBER = {1},
     PAGES = {1--47},
   MRCLASS = {11S37 (11F80 22E50)},
  MRNUMBER = {3272011},
MRREVIEWER = {Wen-Wei\ Li},
}

@article {Dimitrov--Hsu,
    AUTHOR = {Dimitrov, Mladen and Hsu, Chi-Yun},
     TITLE = {Eigenvariety for partially classical {H}ilbert modular forms},
   JOURNAL = {Tunis. J. Math.},
  FJOURNAL = {Tunisian Journal of Mathematics},
    VOLUME = {7},
      YEAR = {2025},
    NUMBER = {3-4},
      ISSN = {2576-7658,2576-7666},
   MRCLASS = {11F33},
  MRNUMBER = {4959870},
}

@inproceedings{Deligne-Milne,
    title = {Tannakian Categories}, 
    author = {Deligne, Pierre and Milne, James S},
    booktitle = {Hodge cycles, motives, and Shimura varieties},
    series = {Lecture Notes in Mathematics}, 
    number = {900}, 
    publisher = {Springer-Verlag, Berlin-New York}, 
    year = {1982}
}

@article{DR,
  title={Diagonal cycles and Euler systems I: A $p$-adic Gross--Zagier formula},
  author={Henri Darmon and Victor Rotger},
  journal={Annales Scientifiques de l'École Normale Supérieure},
  volume={47},
  number={4},
  pages={779-832},
  year={2014}
}

@article {Emerton,
    AUTHOR = {Emerton, Matthew},
     TITLE = {On the interpolation of systems of eigenvalues attached to
              automorphic {H}ecke eigenforms},
   JOURNAL = {Invent. Math.},
  FJOURNAL = {Inventiones Mathematicae},
    VOLUME = {164},
      YEAR = {2006},
    NUMBER = {1},
     PAGES = {1--84},
   MRCLASS = {22E55 (11F70 11F75 11F85)},
  MRNUMBER = {2207783},
MRREVIEWER = {Payman\ L.\ Kassaei},
       
}

@book{Faltings-Chai,
    author = {Faltings, Gerd and Chai, Ching-Li}, 
    title = {Degeneration of Abelian Varieties},
    series = {Ergebnisse der Mathematik und ihrer Grenzgebiete},
    %DOI = {10.1007/978-3-662-02632-8},
    publisher = {Springer-Verlag Berlin Heidelberg}, 
    year = {1990}
}

@incollection {Fontaine-pRep1,
    AUTHOR = {Fontaine, Jean-Marc},
     TITLE = {Repr\'{e}sentations {$p$}-adiques des corps locaux. {I}},
 BOOKTITLE = {The {G}rothendieck {F}estschrift, {V}ol. {II}},
    SERIES = {Progr. Math.},
    VOLUME = {87},
     PAGES = {249--309},
 PUBLISHER = {Birkh\"{a}user Boston, Boston, MA},
      YEAR = {1990},
      %ISBN = {0-8176-3428-2},
   MRCLASS = {11S23 (14F30 14L05)},
  MRNUMBER = {1106901},
MRREVIEWER = {Rutger\ Noot},
}

@article {GT-LLGSp4,
    AUTHOR = {Gan, Wee Teck and Takeda, Shuichiro},
     TITLE = {The local {L}anglands conjecture for {${\rm GSp}(4)$}},
   JOURNAL = {Ann. of Math. (2)},
  FJOURNAL = {Annals of Mathematics. Second Series},
    VOLUME = {173},
      YEAR = {2011},
    NUMBER = {3},
     PAGES = {1841--1882},
      %ISSN = {0003-486X,1939-8980},
   MRCLASS = {22E50 (20G25 22E35)},
  MRNUMBER = {2800725},
MRREVIEWER = {B.\ Sury},
       %DOI = {10.4007/annals.2011.173.3.12},
       %URL = {https://doi.org/10.4007/annals.2011.173.3.12},
}

@incollection {Gritsenko,
    AUTHOR = {Gritsenko, Valeri},
     TITLE = {Arithmetical lifting and its applications},
 BOOKTITLE = {Number theory ({P}aris, 1992--1993)},
    SERIES = {London Math. Soc. Lecture Note Ser.},
    VOLUME = {215},
     PAGES = {103--126},
 PUBLISHER = {Cambridge Univ. Press, Cambridge},
      YEAR = {1995},
   MRCLASS = {11F46 (11F55 11F66)},
  MRNUMBER = {1345176},
MRREVIEWER = {Aloys\ Krieg},
}

@book {SGA1,
    AUTHOR = {Grothendieck, Alexander},
     TITLE = {Rev\^etements \'etales et groupe fondamental. {F}asc. {I}:
              {E}xpos\'es 1 \`a{} 5},
      NOTE = {Troisi\`eme \'edition, corrig\'ee,
              S\'eminaire de G\'eom\'etrie Alg\'ebrique, 1960/61},
 PUBLISHER = {Institut des Hautes \'Etudes Scientifiques, Paris},
      YEAR = {1963},
     PAGES = {pp. iv+143},
    SHORTHAND = {SGA1},
   %MRCLASS = {14.55},
  %MRNUMBER = {217087},
}

@incollection{GT-TWGSp4,
     author = {Genestier, Alain and Tilouine, Jacques},
     title = {Syst\`emes de {Taylor-Wiles} pour $GSp_4$},
     booktitle = {Formes automorphes (II) - Le cas du groupe $GSp(4)$},
     editor = {Tilouine, Jacques and Carayol, Henri and Harris, Michael and Vign\'eras, Marie-France},
     series = {Ast\'erisque},
     pages = {177--290},
     publisher = {Soci\'et\'e math\'ematique de France},
     number = {302},
     year = {2005},
     mrnumber = {2234862},
     zbl = {1142.11036},
     %url = {http://www.numdam.org/item/AST_2005__302__177_0/}
}

@article {Harris-partial,
    AUTHOR = {Harris, Michael},
     TITLE = {Automorphic forms of {$\overline\partial$}-cohomology type as
              coherent cohomology classes},
   JOURNAL = {J. Differential Geom.},
  FJOURNAL = {Journal of Differential Geometry},
    VOLUME = {32},
      YEAR = {1990},
    NUMBER = {1},
     PAGES = {1--63},
      %ISSN = {0022-040X,1945-743X},
   MRCLASS = {11G18 (17B56 22E46 32L10 32N10)},
  MRNUMBER = {1064864},

}

@article{Hansen-PhD, 
    author = {David Hansen},
    title = {Universal eigenvarieties, trianguline Galois representations, and $p$-adic Langlands functoriality},
    journal = {Journal für die reine und angewandte Mathematik}, 
    year = {2017},
    %doi = {https://doi.org/10.1515/crelle-2014-0130},
    issue = {730},
    pages = {1--64}
}

@article {HT-GLn,
    AUTHOR = {Hansen, David and Thorne, Jack A.},
     TITLE = {On the {$\mathrm{GL}_n$}-eigenvariety and a conjecture of {V}enkatesh},
   JOURNAL = {Selecta Math. (N.S.)},
  FJOURNAL = {Selecta Mathematica. New Series},
    VOLUME = {23},
      YEAR = {2017},
    NUMBER = {2},
     PAGES = {1205--1234},
      %ISSN = {1022-1824,1420-9020},
   MRCLASS = {11F75 (11F80 11F85 14F30)},
  MRNUMBER = {3624909},
MRREVIEWER = {Anton\ Deitmar},
       %DOI = {10.1007/s00029-017-0303-0},
       %URL = {https://doi.org/10.1007/s00029-017-0303-0},
}

@article {Hida-IwasawaCongruence,
    AUTHOR = {Hida, Haruzo},
     TITLE = {Iwasawa modules attached to congruences of cusp forms},
   JOURNAL = {Ann. Sci. \'Ecole Norm. Sup. (4)},
  FJOURNAL = {Annales Scientifiques de l'\'Ecole Normale Sup\'erieure.
              Quatri\`eme S\'erie},
    VOLUME = {19},
      YEAR = {1986},
    NUMBER = {2},
     PAGES = {231--273},
   MRCLASS = {11F33 (11G18 11R23)},
  MRNUMBER = {868300},
MRREVIEWER = {Yasutaka\ Ihara},
}

@article {Hida-bigGalois,
    AUTHOR = {Hida, Haruzo},
     TITLE = {Galois representations into {${\rm GL}_2({\bf Z}_p[[X]])$}
              attached to ordinary cusp forms},
   JOURNAL = {Invent. Math.},
  FJOURNAL = {Inventiones Mathematicae},
    VOLUME = {85},
      YEAR = {1986},
    NUMBER = {3},
     PAGES = {545--613},
      %ISSN = {0020-9910},
   MRCLASS = {11F11 (11F33 11F85 11R23)},
  MRNUMBER = {848685},
MRREVIEWER = {Ernst-Wilhelm Zink},
       %DOI = {10.1007/BF01390329},
       %URL = {https://0-doi-org.pugwash.lib.warwick.ac.uk/10.1007/BF01390329},
}

@article {Hill-Loeffler,
    AUTHOR = {Hill, Richard and Loeffler, David},
     TITLE = {Emerton's {J}acquet functors for non-{B}orel parabolic
              subgroups},
   JOURNAL = {Doc. Math.},
  FJOURNAL = {Documenta Mathematica},
    VOLUME = {16},
      YEAR = {2011},
     PAGES = {1--31},
   MRCLASS = {11F75 (11F70 22E50)},
  MRNUMBER = {2804506},
MRREVIEWER = {Neven\ Grbac},
}

@article {Huber-1994,
    AUTHOR = {Huber, Roland},
     TITLE = {A generalization of formal schemes and rigid analytic
              varieties},
   JOURNAL = {Math. Z.},
  FJOURNAL = {Mathematische Zeitschrift},
    VOLUME = {217},
      YEAR = {1994},
    NUMBER = {4},
     PAGES = {513--551},
      %ISSN = {0025-5874,1432-1823},
   MRCLASS = {14A20 (13A18 14L15 32P05)},
  MRNUMBER = {1306024},
MRREVIEWER = {W.\ Bartenwerfer},
       %DOI = {10.1007/BF02571959},
       %URL = {https://doi.org/10.1007/BF02571959},
}

@incollection{Illusie,
    author = {Illusie, Luc},
    title = {An overview of the work of K. Fujiwara, K. Kato, and C. Nakayama on logarithmic \'etale cohomology},
    booktitle = {Cohomologies $p$-adiques et applications arithm\'etiques (II)},
    editor = {Berthelot, Pierre and Fontaine, Jean-Marc and Illusie, Luc and Kato, Kazuya and Rapoport, Michael},
    series = {Ast\'erisque},
    publisher = {Soci\'et\'e math\'ematique de France},
    number = {279},
    year = {2002},
    pages = {271-322},
}

@article{Johansson-Newton,
    title = "Extended eigenvarieties for overconvergent cohomology",
    author = "Christian Johansson and James Newton",
    year = "2019",
    month = "2",
    day = "13",
    %doi = "10.2140/ant.2019.13.93",
    %language = "English",
    volume = "13",
    pages = "93--158",
    journal = "Algebra and Number Theory",
    %issn = "1937-0652",
    publisher = "Mathematical Sciences Publishers",
    number = "1",
}

@article {Johansson-Newton-Irreducible,
    AUTHOR = {Johansson, Christian and Newton, James},
     TITLE = {Irreducible components of extended eigenvarieties and
              interpolating {L}anglands functoriality},
   JOURNAL = {Math. Res. Lett.},
  FJOURNAL = {Mathematical Research Letters},
    VOLUME = {26},
      YEAR = {2019},
    NUMBER = {1},
     PAGES = {159--201},
      %ISSN = {1073-2780,1945-001X},
   MRCLASS = {14D24 (11F33)},
  MRNUMBER = {3963980},
MRREVIEWER = {Atsushi\ Yamagami},
       %DOI = {10.4310/MRL.2019.v26.n1.a9},
       %URL = {https://doi.org/10.4310/MRL.2019.v26.n1.a9},
}

@article {Jorza-GSp,
    AUTHOR = {Jorza, Andrei},
     TITLE = {{$p$}-adic families and {G}alois representations for {$\rm
              GS_p(4)$} and {$\rm GL(2)$}},
   JOURNAL = {Math. Res. Lett.},
  FJOURNAL = {Mathematical Research Letters},
    VOLUME = {19},
      YEAR = {2012},
    NUMBER = {5},
     PAGES = {987--996},
   MRCLASS = {11F70 (11F46 11F80)},
  MRNUMBER = {3039824},
MRREVIEWER = {Anton\ Deitmar},
}

@article {Khare-SerreConj,
    AUTHOR = {Khare, Chandrashekhar},
     TITLE = {Serre's modularity conjecture: the level one case},
   JOURNAL = {Duke Math. J.},
  FJOURNAL = {Duke Mathematical Journal},
    VOLUME = {134},
      YEAR = {2006},
    NUMBER = {3},
     PAGES = {557--589},
   MRCLASS = {11F80 (11F11 11R39)},
  MRNUMBER = {2254626},
MRREVIEWER = {Thomas\ A.\ Weston},
}

@incollection {Kato-2004,
    AUTHOR = {Kato, Kazuya},
     TITLE = {{$p$}-adic {H}odge theory and values of zeta functions of
              modular forms},
      NOTE = {Cohomologies $p$-adiques et applications arithm\'etiques. III},
   JOURNAL = {Ast\'erisque},
  FJOURNAL = {Ast\'erisque},
    NUMBER = {295},
      YEAR = {2004},
     PAGES = {ix, 117--290},
   MRCLASS = {11F85 (11F67 11G40 11R33 11S80 14G10 14G35)},
  MRNUMBER = {2104361},
MRREVIEWER = {Fabrizio\ Andreatta},
}

@article {KPX,
    AUTHOR = {Kedlaya, Kiran S. and Pottharst, Jonathan and Xiao, Liang},
     TITLE = {Cohomology of arithmetic families of
              {$(\varphi,\Gamma)$}-modules},
   JOURNAL = {J. Amer. Math. Soc.},
  FJOURNAL = {Journal of the American Mathematical Society},
    VOLUME = {27},
      YEAR = {2014},
    NUMBER = {4},
     PAGES = {1043--1115},
      %ISSN = {0894-0347,1088-6834},
   MRCLASS = {11F33 (11R23 11S25 11S31 13D09)},
  MRNUMBER = {3230818},
MRREVIEWER = {Th\cfac ong\ Nguy\cftil en-Quang-\Dbar\cftil o},
       %DOI = {10.1090/S0894-0347-2014-00794-3},
       %URL = {https://doi.org/10.1090/S0894-0347-2014-00794-3},
}

@article {Kisin-FM,
    AUTHOR = {Kisin, Mark},
     TITLE = {The {F}ontaine-{M}azur conjecture for {${\rm GL}_2$}},
   JOURNAL = {J. Amer. Math. Soc.},
  FJOURNAL = {Journal of the American Mathematical Society},
    VOLUME = {22},
      YEAR = {2009},
    NUMBER = {3},
     PAGES = {641--690},
   MRCLASS = {11F80 (11F33)},
  MRNUMBER = {2505297},
MRREVIEWER = {David\ L.\ Savitt},
}

@article {Kurokawa-SiegelExample,
    AUTHOR = {Kurokawa, Nobushige},
     TITLE = {Examples of eigenvalues of {H}ecke operators on {S}iegel cusp
              forms of degree two},
   JOURNAL = {Invent. Math.},
  FJOURNAL = {Inventiones Mathematicae},
    VOLUME = {49},
      YEAR = {1978},
    NUMBER = {2},
     PAGES = {149--165},
   MRCLASS = {10D20},
  MRNUMBER = {511188},
MRREVIEWER = {Rita\ Hall},
}

@book {Lan-PhD,
    AUTHOR = {Lan, Kai-Wen},
     TITLE = {Arithmetic compactifications of {PEL}-type {S}himura
              varieties},
    SERIES = {London Mathematical Society Monographs Series},
    VOLUME = {36},
 PUBLISHER = {Princeton University Press, Princeton, NJ},
      YEAR = {2013},
     PAGES = {xxvi+561},
      %ISBN = {978-0-691-15654-5},
   MRCLASS = {14G35 (11G18 14D23 14M27)},
  MRNUMBER = {3186092},
MRREVIEWER = {Rolf Berndt},
       %DOI = {10.1515/9781400846016},
       %URL = {https://0-doi-org.pugwash.lib.warwick.ac.uk/10.1515/9781400846016},
}

@article {Lan-vanishing,
    AUTHOR = {Lan, Kai-Wen},
     TITLE = {Vanishing theorems for coherent automorphic cohomology},
   JOURNAL = {Res. Math. Sci.},
  FJOURNAL = {Research in the Mathematical Sciences},
    VOLUME = {3},
      YEAR = {2016},
     PAGES = {Paper No. 39, 43},
      %ISSN = {2522-0144,2197-9847},
   MRCLASS = {11G18 (11F75 14F17 14F30 14G35)},
  MRNUMBER = {3571345},
MRREVIEWER = {Giovanni\ Rosso},
}

@incollection {Laumon,
    AUTHOR = {Laumon, G\'erard},
     TITLE = {Fonctions z\^etas des vari\'et\'es de {S}iegel de dimension
              trois},
      NOTE = {Formes automorphes. II. Le cas du groupe $\rm GSp(4)$},
   JOURNAL = {Ast\'erisque},
  FJOURNAL = {Ast\'erisque},
    NUMBER = {302},
      YEAR = {2005},
     PAGES = {1--66},
      %ISSN = {0303-1179,2492-5926},
   MRCLASS = {22E55 (11F46 14G10 14G35)},
  MRNUMBER = {2234859},
}

@article {Liu-triangulation,
    AUTHOR = {Liu, Ruochuan},
     TITLE = {Triangulation of refined families},
   JOURNAL = {Comment. Math. Helv.},
  FJOURNAL = {Commentarii Mathematici Helvetici. A Journal of the Swiss
              Mathematical Society},
    VOLUME = {90},
      YEAR = {2015},
    NUMBER = {4},
     PAGES = {831--904},
      %ISSN = {0010-2571,1420-8946},
   MRCLASS = {11F80 (11F33)},
  MRNUMBER = {3433281},
MRREVIEWER = {Daniel\ Disegni},
       %DOI = {10.4171/CMH/372},
       %URL = {https://doi.org/10.4171/CMH/372},
}

@article {LSTX,
    AUTHOR = {Landesman, Aaron and Swaminathan, Ashvin A. and Tao, James and
              Xu, Yujie},
     TITLE = {Lifting subgroups of symplectic groups over
              {$\mathbb{Z}/\ell\mathbb{Z}$}},
   JOURNAL = {Res. Number Theory},
  FJOURNAL = {Research in Number Theory},
    VOLUME = {3},
      YEAR = {2017},
     PAGES = {Paper No. 14, 12},
      %ISSN = {2522-0160,2363-9555},
   MRCLASS = {11F55},
  MRNUMBER = {3667841},
MRREVIEWER = {Mehmet\ Haluk\ \c Seng\"un},
       %DOI = {10.1007/s40993-017-0078-6},
       %URL = {https://doi.org/10.1007/s40993-017-0078-6},
}

@article {Loeffler--Weinstein,
    AUTHOR = {Loeffler, David and Weinstein, Jared},
     TITLE = {On the computation of local components of a newform},
   JOURNAL = {Math. Comp.},
  FJOURNAL = {Mathematics of Computation},
    VOLUME = {81},
      YEAR = {2012},
    NUMBER = {278},
     PAGES = {1179--1200},
   MRCLASS = {11F70 (11F11)},
  MRNUMBER = {2869056},
MRREVIEWER = {Nathan\ C.\ Ryan},
}

@article {Loeffler-UnivDeform,
    AUTHOR = {Loeffler, David},
     TITLE = {{$p$}-adic {$L$}-functions in universal deformation families},
   JOURNAL = {Ann. Math. Qu\'{e}.},
  FJOURNAL = {Annales Math\'{e}matiques du Qu\'{e}bec},
    VOLUME = {47},
      YEAR = {2023},
    NUMBER = {1},
     PAGES = {117--137},
      %ISSN = {2195-4755,2195-4763},
   MRCLASS = {11F67 (11F85)},
  MRNUMBER = {4569756},
MRREVIEWER = {Neven\ Grbac},
       %DOI = {10.1007/s40316-021-00187-1},
       %URL = {https://0-doi-org.pugwash.lib.warwick.ac.uk/10.1007/s40316-021-00187-1},
}

@article{LW-BianchiAdjoint,
    AUTHOR = {Lee, Pak-Hin and Wu, Ju-Feng},
     TITLE = {On {$p$}-adic adjoint {$L$}-functions for {B}ianchi cuspforms:
              the {$p$}-split case},
   JOURNAL = {Trans. Amer. Math. Soc.},
  FJOURNAL = {Transactions of the American Mathematical Society},
    VOLUME = {378},
      YEAR = {2025},
    NUMBER = {8},
     PAGES = {5579--5640},
      %ISSN = {0002-9947,1088-6850},
   MRCLASS = {11F33 (11F41 11F67 55N30)},
  MRNUMBER = {4929855},
}

@article {Maass,
    AUTHOR = {Maa{\ss}, Hans},
     TITLE = {\"Uber eine {S}pezialschar von {M}odulformen zweiten {G}rades I, II, III},
   JOURNAL = {Invent. Math.},
  FJOURNAL = {Inventiones Mathematicae},
    VOLUME = {52},
      YEAR = {1979},
    NUMBER = {1},
     PAGES = {95--104},
   MRCLASS = {10D12},
  MRNUMBER = {532746},
MRREVIEWER = {J.\ Spilker},
}

@incollection {Mazur-deformation,
    AUTHOR = {Mazur, Barry},
     TITLE = {An introduction to the deformation theory of {G}alois
              representations},
 BOOKTITLE = {Modular forms and {F}ermat's last theorem ({B}oston, {MA},
              1995)},
     PAGES = {243--311},
 PUBLISHER = {Springer, New York},
      YEAR = {1997},
      %ISBN = {0-387-94609-8},
   MRCLASS = {11F80},
  MRNUMBER = {1638481},
}

@book{Milne-AG,
    AUTHOR = {Milne, James S.},
     TITLE = {Algebraic groups},
    SERIES = {Cambridge Studies in Advanced Mathematics},
    VOLUME = {170},
      NOTE = {The theory of group schemes of finite type over a field},
 PUBLISHER = {Cambridge University Press, Cambridge},
      YEAR = {2017},
     PAGES = {xvi+644},
      %ISBN = {978-1-107-16748-3},
   MRCLASS = {14L15 (14-01 17B45 20-01 20G15)},
  MRNUMBER = {3729270},
MRREVIEWER = {Boris\ \`E.\ Kunyavski\u i},
       %DOI = {10.1017/9781316711736},
       %URL = {https://doi.org/10.1017/9781316711736},
}

@misc{Mile-tannakian,
    author = {Milne, James S.}, 
    title = {Tannakian Categories}, 
    year = {2025},
    howpublished = {\url{https://jmilne.org/math/Books/tcdraft.pdf}}
}

@article {Mok-GL2CM,
    AUTHOR = {Mok, Chung Pang},
     TITLE = {Galois representations attached to automorphic forms on {${\rm
              GL}_2$} over {CM} fields},
   JOURNAL = {Compos. Math.},
  FJOURNAL = {Compositio Mathematica},
    VOLUME = {150},
      YEAR = {2014},
    NUMBER = {4},
     PAGES = {523--567},
      %ISSN = {0010-437X,1570-5846},
   MRCLASS = {11R39 (11F80 22E55)},
  MRNUMBER = {3200667},
}

@article{MY,
     author = {Miyauchi, Michitaka and Yamauchi, Takuya},
     title = {An explicit computation of $p$-stabilized vectors},
     journal = {Journal de th\'eorie des nombres de Bordeaux},
     pages = {531--558},
     year = {2014},
     publisher = {Soci\'et\'e Arithm\'etique de Bordeaux},
     volume = {26},
     number = {2},
     mrnumber = {3320491},
}

@misc{MQ-SympDet,
      title={Symplectic determinant laws and invariant theory}, 
      author={Mohamed Moakher and Julian Quast},
      year={2023},
      howpublished = {Preprint. Available at: \url{https://arxiv.org/abs/2310.15822}}, 
}

@article{Pilloni-Stroh,
    author="Pilloni, Vincent and Stroh, Beno{\^i}t",
    title="Cohomologie coh{\'e}rente et repr{\'e}sentations Galoisiennes",
    journal="Annales math{\'e}matiques du Qu{\'e}bec",
    year="2016",
    volume="40",
    number="1",
    pages="167--202",
    %issn="2195-4763",
    %doi="10.1007/s40316-015-0056-0",
}

@article {Pilloni-prolongementSiegel,
    AUTHOR = {Pilloni, Vincent},
     TITLE = {Prolongement analytique sur les vari\'et\'es de {S}iegel},
   JOURNAL = {Duke Math. J.},
  FJOURNAL = {Duke Mathematical Journal},
    VOLUME = {157},
      YEAR = {2011},
    NUMBER = {1},
     PAGES = {167--222},
      %ISSN = {0012-7094,1547-7398},
   MRCLASS = {11F46 (11G18 14G22 14G35)},
  MRNUMBER = {2783930},
}

@article{Pilloni-GL2,
     author = {Pilloni, Vincent},
     title = {Overconvergent modular forms},
     journal = {Annales de l'Institut Fourier},
     pages = {219--239},
     year = {2013},
     publisher = {Association des Annales de l{\textquoteright}institut Fourier},
     volume = {63},
     number = {1},
     mrnumber = {3097946},
     zbl = {06177080},
}

@article {Pilloni-higherHidaColemanGSp4,
    AUTHOR = {Pilloni, Vincent},
     TITLE = {Higher coherent cohomology and {$p$}-adic modular forms of
              singular weights},
   JOURNAL = {Duke Math. J.},
  FJOURNAL = {Duke Mathematical Journal},
    VOLUME = {169},
      YEAR = {2020},
    NUMBER = {9},
     PAGES = {1647--1807},
      %ISSN = {0012-7094,1547-7398},
   MRCLASS = {11F33 (11G18)},
  MRNUMBER = {4105535},
MRREVIEWER = {Chan-Ho\ Kim},
       %DOI = {10.1215/00127094-2019-0075},
       %URL = {https://doi.org/10.1215/00127094-2019-0075},
}

@article{PSS-transfer,
    AUTHOR = {Pitale, Ameya and Saha, Abhishek and Schmidt, Ralf},
     TITLE = {Transfer of {S}iegel cusp forms of degree 2},
   JOURNAL = {Mem. Amer. Math. Soc.},
  FJOURNAL = {Memoirs of the American Mathematical Society},
    VOLUME = {232},
      YEAR = {2014},
    NUMBER = {1090},
     PAGES = {vi+107},
}

@article {Saha--Schmidt,
    AUTHOR = {Saha, Abhishek and Schmidt, Ralf},
     TITLE = {Yoshida lifts and simultaneous non-vanishing of dihedral
              twists of modular {$L$}-functions},
   JOURNAL = {J. Lond. Math. Soc. (2)},
  FJOURNAL = {Journal of the London Mathematical Society. Second Series},
    VOLUME = {88},
      YEAR = {2013},
    NUMBER = {1},
     PAGES = {251--270},
   MRCLASS = {11F30 (11F46 11F66 11F70)},
  MRNUMBER = {3092267},
MRREVIEWER = {Shuichiro\ Takeda},
}

@article {Schmidt-functoriality,
    AUTHOR = {Schmidt, Ralf},
     TITLE = {The {S}aito--{K}urokawa lifting and functoriality},
   JOURNAL = {Amer. J. Math.},
  FJOURNAL = {American Journal of Mathematics},
    VOLUME = {127},
      YEAR = {2005},
    NUMBER = {1},
     PAGES = {209--240},
   MRCLASS = {11F70 (11F32 11F46 22E55)},
  MRNUMBER = {2115666},
MRREVIEWER = {Mahdi\ Asgari},
}

@article {Schmidt-classicalSK,
    AUTHOR = {Schmidt, Ralf},
     TITLE = {On classical {S}aito--{K}urokawa liftings},
   JOURNAL = {J. Reine Angew. Math.},
  FJOURNAL = {Journal f\"ur die Reine und Angewandte Mathematik. [Crelle's
              Journal]},
    VOLUME = {604},
      YEAR = {2007},
     PAGES = {211--236},
   MRCLASS = {11F46 (11F11 11F70)},
  MRNUMBER = {2320318},
MRREVIEWER = {Rainer\ Schulze-Pillot},
}

@article {Schmidt-Packet,
    AUTHOR = {Schmidt, Ralf},
     TITLE = {Packet structure and paramodular forms},
   JOURNAL = {Trans. Amer. Math. Soc.},
  FJOURNAL = {Transactions of the American Mathematical Society},
    VOLUME = {370},
      YEAR = {2018},
    NUMBER = {5},
     PAGES = {3085--3112},
   MRCLASS = {11F46 (11F70)},
  MRNUMBER = {3766842},
MRREVIEWER = {Min\ Ho\ Lee},
}

@article {Schmidt-CAPGSp4,
    AUTHOR = {Schmidt, Ralf},
     TITLE = {Paramodular forms in {CAP} representations of {${\rm
              GSp}(4)$}},
   JOURNAL = {Acta Arith.},
  FJOURNAL = {Acta Arithmetica},
    VOLUME = {194},
      YEAR = {2020},
    NUMBER = {4},
     PAGES = {319--340},
   MRCLASS = {11F46 (11F70)},
  MRNUMBER = {4103275},
MRREVIEWER = {Tomoyoshi\ Ibukiyama},
}

@inproceedings {Serre-Antwerp,
    AUTHOR = {Serre, Jean-Pierre},
     TITLE = {Formes modulaires et fonctions z\^{e}ta {$p$}-adiques},
 BOOKTITLE = {Modular functions of one variable, {III} ({P}roc. {I}nternat.
              {S}ummer {S}chool, {U}niv. {A}ntwerp, {A}ntwerp, 1972)},
    SERIES = {Lecture Notes in Math., Vol. 350},
     PAGES = {191--268},
 PUBLISHER = {Springer, Berlin},
      YEAR = {1973},
   MRCLASS = {10D05},
  MRNUMBER = {0404145},
MRREVIEWER = {K.-B. Gundlach},
}

@misc{stacks-project,
  author       = {The {Stacks project authors}},
  title        = {The Stacks project},
  howpublished = {\url{https://stacks.math.columbia.edu}},
  year         = {2025},
}

@book{Scholze-Weinstein-Berkeley,
    title = {Berkeley lectures on $p$-adic geometry}, 
    author = {Peter Scholze and Jared Weinstein}, 
    year = {2020}, 
    series = {Annals of Mathematics Studies}, 
    publisher = {Princeton University Press},
}

@article {Skinner--Urban,
    AUTHOR = {Skinner, Christopher and Urban, Eric},
     TITLE = {Sur les d\'eformations {$p$}-adiques de certaines
              repr\'esentations automorphes},
   JOURNAL = {J. Inst. Math. Jussieu},
  FJOURNAL = {Journal of the Institute of Mathematics of Jussieu. JIMJ.
              Journal de l'Institut de Math\'ematiques de Jussieu},
    VOLUME = {5},
      YEAR = {2006},
    NUMBER = {4},
     PAGES = {629--698},
      %ISSN = {1474-7480,1475-3030},
   MRCLASS = {11G40 (11F33 11F46 11F80 11F85)},
  MRNUMBER = {2261226},
MRREVIEWER = {Neil\ P.\ Dummigan},
       %DOI = {10.1017/S147474800600003X},
       %URL = {https://doi.org/10.1017/S147474800600003X},
}

@article {Sorensen-HilbertSiegel,
    AUTHOR = {Sorensen, Claus M.},
     TITLE = {Galois representations attached to {H}ilbert-{S}iegel modular
              forms},
   JOURNAL = {Doc. Math.},
  FJOURNAL = {Documenta Mathematica},
    VOLUME = {15},
      YEAR = {2010},
     PAGES = {623--670},
      %ISSN = {1431-0635,1431-0643},
   MRCLASS = {11F80 (11F33 11F46 11F70 11G18 22E55)},
  MRNUMBER = {2735984},
}

@book {SR-Tannakian,
    AUTHOR = {Saavedra Rivano, Neantro},
     TITLE = {Cat\'egories {T}annakiennes},
    SERIES = {Lecture Notes in Mathematics},
    VOLUME = {Vol. 265},
 PUBLISHER = {Springer-Verlag, Berlin-New York},
      YEAR = {1972},
     PAGES = {ii+418},
   MRCLASS = {14L15 (18D10 20G05)},
  MRNUMBER = {338002},
MRREVIEWER = {P.\ Abellanas},
 SHORTHAND = {SR72},
}

@article{Taylor-Siegel,
    AUTHOR = {Taylor, Richard},
     TITLE = {On the {$l$}-adic cohomology of {S}iegel threefolds},
   JOURNAL = {Invent. Math.},
  FJOURNAL = {Inventiones Mathematicae},
    VOLUME = {114},
      YEAR = {1993},
    NUMBER = {2},
     PAGES = {289--310},
      %ISSN = {0020-9910,1432-1297},
   MRCLASS = {11G18 (11F46 14F30 14G35)},
  MRNUMBER = {1240640},
MRREVIEWER = {Ernst-Wilhelm\ Zink},
       %DOI = {10.1007/BF01232672},
       %URL = {https://doi.org/10.1007/BF01232672},
}

@article{Thorne-nonvanishing,
    AUTHOR = {Thorne, Jack A.},
     TITLE = {On the vanishing of adjoint {B}loch-{K}ato {S}elmer groups of
              irreducible automorphic {G}alois representations},
   JOURNAL = {Pure Appl. Math. Q.},
    VOLUME = {18},
      YEAR = {2022},
    NUMBER = {5},
     PAGES = {2159--2202},
}

@incollection{Urban-GSp4,
     author = {Urban, Eric},
     title = {Sur les repr\'esentations $p$-adiques associ\'ees aux repr\'esentations cuspidales de $\mathrm{GSp}_{4/\mathbb{Q}}$},
     booktitle = {Formes automorphes (II) - Le cas du groupe $\mathrm{GSp}(4)$},
     editor = {Tilouine, Jacques and Carayol, Henri and Harris, Michael and Vign\'eras, Marie-France},
     series = {Ast\'erisque},
     pages = {151--176},
     publisher = {Soci\'et\'e math\'ematique de France},
     number = {302},
     year = {2005},
     mrnumber = {2234861},
     zbl = {1100.11017},
     %url = {https://www.numdam.org/item/AST_2005__302__151_0/}
}

@article{Urban-2011,
    author = {Eric Urban},
    journal = {Annals of Mathematics},
    number = {3},
    pages = {1685--1784},
    publisher = {Annals of Mathematics},
    title = {Eigenvarieties for reductive groups},
    volume = {174},
    year = {2011}
}

@incollection{Weissauer,
     author = {Weissauer, Rainer},
     title = {Four dimensional {Galois} representations},
     booktitle = {Formes automorphes (II) - Le cas du groupe $\mathrm{GSp}(4)$},
     editor = {Tilouine, Jacques and Carayol, Henri and Harris, Michael and Vign\'eras, Marie-France},
     series = {Ast\'erisque},
     pages = {67--150},
     publisher = {Soci\'et\'e math\'ematique de France},
     number = {302},
     year = {2005},
     mrnumber = {2234860},
     zbl = {1097.11027},
     %language = {en},
     %url = {http://www.numdam.org/item/AST_2005__302__67_0/}
}

@incollection {Zagier-SKConjecture,
    AUTHOR = {Zagier, Don},
     TITLE = {Sur la conjecture de {S}aito-{K}urokawa (d'apr\`es {H}.
              {M}aass)},
 BOOKTITLE = {Seminar on {N}umber {T}heory, {P}aris 1979--80},
    SERIES = {Progr. Math.},
    VOLUME = {12},
     PAGES = {371--394},
 PUBLISHER = {Birkh\"auser, Boston, MA},
      YEAR = {1981},
   MRCLASS = {10D24 (10D20)},
  MRNUMBER = {633910},
MRREVIEWER = {A.\ N.\ Andrianov},
}

@article {Zavyalov-quotient,
    AUTHOR = {Zavyalov, Bogdan},
     TITLE = {Quotients of admissible formal schemes and adic spaces by
              finite groups},
   JOURNAL = {Algebra Number Theory},
  FJOURNAL = {Algebra \& Number Theory},
    VOLUME = {18},
      YEAR = {2024},
    NUMBER = {3},
     PAGES = {409--475},
      %ISSN = {1937-0652,1944-7833},
   MRCLASS = {14A99 (14G22 14L30)},
  MRNUMBER = {4705884},
}

\vspace{10mm}

\begin{tabular}[t]{l}
    M.M.\\
    Concordia University   \\
    Department of Mathematics and Statistics\\
    Montr\'{e}al, Qu\'{e}bec, Canada\\
    \textit{E-mail address: }\texttt{muhammad.manji@concordia.ca}
\end{tabular}

\vspace{5mm}

\begin{tabular}[t]{l}
    F.E.T.\\
    School of Mathematical Sciences\\
    University of Nottingham\\
    Nottingham, UK\\
    \textit{E-mail address: }\texttt{Frederick.Thogersen@nottingham.ac.uk}
\end{tabular}

\vspace{5mm}

\begin{tabular}[t]{l}
    J.-F.W.\\
    School of Mathematics and Statistics\\
    University College Dublin\\
    Belfield, Dublin 4, Ireland\\
    \textit{E-mail address: }\texttt{ju-feng.wu@ucd.ie}
\end{tabular}

\end{document}